\patchcmd{\@thm}{\let\thm@indent\indent}{\let\thm@indent\noindent}{}{}
\patchcmd{\@thm}{\thm@headfont{\scshape}}{\thm@headfont{\bfseries}}{}{}
\let\U\undefined
\definecolor{dkgreen}{rgb}{0,0.6,0}
\definecolor{gray}{rgb}{0.5,0.5,0.5}
\definecolor{mauve}{rgb}{0.58,0,0.82}
\tiny\color{gray},
\newcommand{\xar}[1]{\xrightarrow{{#1}}}
\def\@tocline#1#2#3#4#5#6#7{\relax
  \ifnum #1>\c@tocdepth 
  \else
    \par \addpenalty\@secpenalty\addvspace{#2}%
    \begingroup \hyphenpenalty\@M
    \@ifempty{#4}{%
      \@tempdima\csname r@tocindent\number#1\endcsname\relax
    }{%
      \@tempdima#4\relax
    }%
    \parindent\z@ \leftskip#3\relax \advance\leftskip\@tempdima\relax
    \rightskip\@pnumwidth plus4em \parfillskip-\@pnumwidth
    #5\leavevmode\hskip-\@tempdima
      \ifcase #1
       \or\or \hskip 1em \or \hskip 2em \else \hskip 3em \fi%
      #6\nobreak\relax
    \hfill\hbox to\@pnumwidth{\@tocpagenum{#7}}\par
    \nobreak
    \endgroup
  \fi}
\crefname{section}{\S}{\S\S}
\crefname{subsection}{\S}{\S\S}
\crefname{axioms}{Axiom}{Axioms}
\crefname{exercise}{Exercise}{Exercises}
\crefname{exercisenum}{Exercise}{Exercises}
\crefname{construction}{Construction}{Constructions}
\crefname{problem}{Problem}{Problems}
\crefname{theorem}{Theorem}{Theorems}
\crefname{definition}{Definition}{Definitions}
\crefname{prop}{Proposition}{Propositions}
\crefname{lemma}{Lemma}{Lemmas}
\crefname{example}{Example}{Examples}
\crefname{examplealph}{Example}{Examples}
\crefname{corollary}{Corollary}{Corollaries}
\crefname{nonexample}{Nonexample}{Nonexamples}
\crefname{equation}{}{}
\crefname{summary}{Summary}{Summaries}
\crefname{recollection}{Recollection}{Recollections}
\Crefname{recollection}{Recollection}{Recollections}
\Crefname{nonexample}{Nonexample}{Nonexamples}
\Crefname{corollary}{Corollary}{Corollaries}
\Crefname{corollary}{Corollary}{Corollaries}
\Crefname{axioms}{Axiom}{Axioms}
\Crefname{exercise}{Exercise}{Exercises}
\Crefname{exercisenum}{Exercise}{Exercises}
\Crefname{construction}{Construction}{Constructions}
\Crefname{problem}{Problem}{Problems}
\Crefname{theorem}{Theorem}{Theorems}
\Crefname{definition}{Definition}{Definitions}
\Crefname{prop}{Proposition}{Propositions}
\Crefname{lemma}{Lemma}{Lemmas}
\Crefname{example}{Example}{Examples}
\Crefname{examplealph}{Example}{Examples}
\Crefname{section}{\S}{\S\S}
\Crefname{subsection}{\S}{\S\S}
\DeclareMathOperator{\Fun}{Fun}
\DeclareMathOperator{\Hom}{Hom}
\DeclareMathOperator{\Ext}{Ext}
\DeclareMathOperator{\spec}{Spec}
\DeclareMathOperator{\spf}{Spf}
\DeclareMathOperator{\cofib}{cofib}
\DeclareMathOperator{\fib}{fib}
\newcommand{\Fr}{\mathrm{Frob}}
\DeclareMathOperator{\QCoh}{\mathrm{QCoh}}
\DeclareMathOperator{\colim}{colim}
\DeclareMathOperator{\hocolim}{hocolim}
\DeclareMathOperator{\Tot}{Tot}
\DeclareMathOperator{\Sym}{Sym}
\newtheorem{lemma}{Lemma}[subsection]
\newtheorem{corollary}[lemma]{Corollary}
\newtheorem{theorem}[lemma]{Theorem}
\newtheorem*{corno}{Corollary}
\newtheorem*{thmno}{Theorem}
\newtheorem*{propo}{Proposition}
\newtheorem{prop}[lemma]{Proposition}
\newtheorem{conjecture}[lemma]{Conjecture}
\newtheorem*{conjectureno}{Conjecture}
\newtheorem*{conj-moore}{Conjecture~\ref{moore-splitting}}
\newtheorem*{conj-cent}{Conjecture~\ref{centrality-conj}}
\newtheorem*{conj-tmf}{Conjecture~\ref{tmf-conj}}
\newtheorem*{thm-main}{Theorem~\ref{main-thm}}
\newtheorem*{cor-bpn}{Corollary~\ref{bpn}}
\newtheorem*{thm-string}{Theorem~\ref{mstring}}
\theoremstyle{definition}
\newtheorem{definition}[lemma]{Definition}
\newtheorem{warning}[lemma]{Warning}
\newtheorem{example}[lemma]{Example}
\newtheorem{construction}[lemma]{Construction}
\newtheorem{remark}[lemma]{Remark}
\newtheorem{notation}[lemma]{Notation}
\newtheorem{variant}[lemma]{Variant}
\newtheorem{recall}[lemma]{Recollection}
\newtheorem{slogan}[lemma]{Slogan}
\newtheorem{observe}[lemma]{Observation}
\renewcommand{\AA}{\mathbf{A}}
\newcommand{\FF}{\mathbf{F}}
\newcommand{\Z}{\mathbf{Z}}
\newcommand{\QQ}{\mathbf{Q}}
\newcommand{\cc}{\mathbf{C}}
\newcommand{\cC}{\mathscr{C}}
\newcommand{\cd}{\mathscr{D}}
\newcommand{\RR}{\mathbf{R}}
\newcommand{\Deltab}{\mathbf{\Delta}}
\newcommand{\Lk}{L_{K(n)}}
\newcommand{\Sp}{\mathrm{Sp}}
\newcommand{\Mfg}{\M_\mathrm{FG}}
\newcommand{\Mell}{\M_\mathrm{ell}}
\newcommand{\co}{\mathscr{O}}
\newcommand{\ce}{\mathscr{E}}
\newcommand{\GG}{\mathbf{G}}
\newcommand{\LMod}{\mathrm{LMod}}
\newcommand{\Mod}{\mathrm{Mod}}
\newcommand{\cL}{\mathscr{L}}
\newcommand{\CP}{{\mathbf{C}P}}
\newcommand{\HHP}{\mathbf{H}P}
\newcommand{\RP}{\mathbf{R}P}
\newcommand{\OP}{\mathbf{O}P}
\newcommand{\Map}{\mathrm{Map}}
\newcommand{\Lone}{L_{K(1)}}
\newcommand{\cg}{\mathscr{G}}
\newcommand{\cA}{\mathscr{A}}
\newcommand{\Sq}{\mathrm{Sq}}
\newcommand{\Eoo}{{\mathbf{E}_\infty}}
\newcommand{\CAlg}{\mathrm{CAlg}}
\newcommand{\cf}{\mathscr{F}}
\newcommand{\ev}{\mathrm{ev}}
\newcommand{\cN}{\mathscr{N}}
\newcommand{\ul}[1]{\underline{#1}}
\newcommand{\ol}[1]{\overline{#1}}
\newcommand{\E}[1]{{\mathbf{E}_{{#1}}}}
\newcommand{\mmod}{/\!\!/}
\newcommand{\id}{\mathrm{id}}
\newcommand{\GL}{\mathrm{GL}}
\newcommand{\BGL}{\mathrm{BGL}}
\newcommand{\MO}{\mathrm{MO}}
\newcommand{\MU}{\mathrm{MU}}
\newcommand{\BU}{\mathrm{BU}}
\renewcommand{\O}{\mathrm{O}}
\newcommand{\SO}{\mathrm{SO}}
\newcommand{\BO}{\mathrm{BO}}
\newcommand{\ku}{\mathrm{ku}}
\newcommand{\ko}{\mathrm{ko}}
\newcommand{\bo}{\mathrm{bo}}
\newcommand{\tmf}{\mathrm{tmf}}
\newcommand{\BP}[1]{\mathrm{BP}\langle{#1}\rangle}
\newcommand{\U}{\mathrm{U}}
\newcommand{\SU}{\mathrm{SU}}
\newcommand{\BPP}{\mathrm{BP}}
\renewcommand{\H}{\mathrm{H}}
\newcommand{\BSU}{\mathrm{BSU}}
\newcommand{\BSpin}{\mathrm{BSpin}}
\newcommand{\MSpin}{\mathrm{MSpin}}
\newcommand{\MString}{\mathrm{MString}}
\newcommand{\BString}{\mathrm{BString}}
\newcommand{\Spin}{\mathrm{Spin}}
\newcommand{\F}{\mathrm{F}}
\newcommand{\B}{{B}}
\newcommand{\W}{{W}}
\newcommand{\HH}{\mathrm{HH}}
\newcommand{\HP}{\mathrm{HP}}
\newcommand{\THH}{\mathrm{THH}}
\newcommand{\TC}{\mathrm{TC}}
\newcommand{\TP}{\mathrm{TP}}
\newcommand{\op}{\mathrm{op}}
\newcommand{\crys}{\mathrm{crys}}
\newcommand{\dR}{\mathrm{dR}}
\newcommand{\fr}[1]{\mathfrak{#1}}
\newcommand{\pr}{\mathrm{pr}}
\newcommand{\gr}{\mathrm{gr}}
\newcommand{\fil}{\mathrm{fil}}
\newcommand{\cQ}{\mathscr{Q}}
\newcommand{\bH}{\mathbf{H}}
\newcommand{\cM}{\mathcal{M}}
\newcommand{\ppar}{{(p)}}
\newcommand{\conj}{\mathrm{conj}}
\renewcommand{\log}{\mathrm{log}}
\newcommand{\M}{\cM}
\newcommand{\cZ}{\mathscr{Z}}
\renewcommand{\S}{Section }
\newcommand{\WW}{\mathbf{W}}
\newcommand{\NN}{\mathbf{N}}
\providecommand{\leftsquigarrow}{%
  \mathrel{\mathpalette\reflect@squig\relax}%
}
\newcommand{\reflect@squig}[2]{%
  \reflectbox{$\m@th#1\rightsquigarrow$}%
}
\newcommand{\bull}{\bullet}
\newcommand{\bD}{\mathbf{D}}
\newcommand{\pw}[1]{[\![#1]\!]}
\newcommand{\ls}[1]{(\!(#1)\!)}
\newcommand{\wt}{\mathrm{wt}}
\newcommand{\Cp}{{\Z/p}}
\newcommand{\Cpn}{{\Z/m}}
\newcommand{\Cn}{{\Z/n}}
\newcommand{\Efr}[1]{{\mathbf{E}_{{#1}}^\mathrm{fr}}}
\newcommand{\sh}{\mathrm{sh}}
\renewcommand{\tilde}{\widetilde}
\newcommand{\Lag}{\mathrm{Lag}}
\newcommand{\Gr}{\mathrm{Gr}}
\newcommand{\BSp}{\mathrm{BSp}}
\newcommand{\Syn}{\mathrm{Syn}}
\newcommand{\mot}{\mathrm{mot}}
\newcommand{\diffr}{\widehat{\Omega}^{\slashed{D}}}
\DeclareSymbolFontAlphabet{\mathbb}{AMSb} 
\DeclareSymbolFontAlphabet{\mathbbl}{bbold}
\newcommand{\prism}{{\mathbbl{\Delta}}}
\newcommand{\prismht}{\overline{\prism}}
\newcommand{\MSp}{\mathrm{MSp}}
\newcommand{\ptl}{{\tilde{p}}}
\newcommand{\pdb}[1]{\langle{#1}\rangle}
\newcommand{\Ainf}{A_\mathrm{inf}}
\newcommand{\WCart}{\mathrm{WCart}}
\newcommand{\HT}{\mathrm{HT}}
\title[$\THH$, $\BP{n}$, and a topological Sen operator]{Topological Hochschild homology, truncated Brown-Peterson spectra, and a topological Sen operator}
\author{S. K. Devalapurkar}
\address{1 Oxford St, Cambridge, MA 02139}
\email{sdevalapurkar@math.harvard.edu, \today}
\thanks{Part of this work was done when the author was supported by the PD Soros Fellowship and NSF DGE-2140743. The present article is far from being a final version, so any comments and suggestions for improvement are greatly appreciated. I'll post major updates to the arXiv, but I'll upload minor edits to my website; so please see there for the most up-to-date version.}
\begin{document}

\maketitle


\begin{abstract}
    In this article, we study the topological Hochschild homology of $\E{3}$-forms of truncated Brown-Peterson spectra, taken relative to certain Thom spectra $X(p^n)$ (introduced by Ravenel and used by Devinatz-Hopkins-Smith in the proof of the nilpotence theorem). We prove analogues of B\"okstedt's calculations $\THH(\FF_p) \simeq \FF_p[\Omega S^3]$ and $\THH(\Z_p) \simeq \Z_p[\Omega S^3\pdb{3}]$. We also construct a topological analogue of the Sen operator of Bhatt-Lurie-Drinfeld, and study a higher chromatic extension. The behavior of these ``topological Sen operators'' is dictated by differentials in the Serre spectral sequence for Cohen-Moore-Neisendorfer fibrations.
\end{abstract}


\tableofcontents
\newpage

\section{Introduction}
\subsection{Summary}

Fix a prime $p$. A fundamental calculation of B\"okstedt's \cite{bokstedt-thh} says that $\pi_\ast \THH(\FF_p)$ is isomorphic to a polynomial ring $\FF_p[\sigma]$ with $|\sigma|=2$. Recent work of Hahn-Wilson shows that this polynomiality phenomenon persists at higher heights, provided one works relative to $\MU$ instead of the sphere. Namely, \cite[Theorem E]{hahn-wilson-bpn} states that if $\BP{n}$ is an $\E{3}$-form of the truncated Brown-Peterson spectrum, then $\pi_\ast \THH(\BP{n}/\MU)$ is a polynomial algebra over $\pi_\ast \BP{n}$ on generators in even degree. Moreover, the first such generator is the double suspension $\sigma^2(v_{n+1})$. 

In this article, we will show that the ``polynomial $\THH$'' phenomenon persists if one instead considers $\THH$ relative to the Ravenel spectra $X(p^n)$, introduced in \cite{ravenel-loc} and used by \cite{dhs-i} in the proof of the nilpotence theorem. Motivated by \cite{bpn-thom}, the thesis of this article is that many statements involving the study of $\FF_p$- or $\Z_p$-algebras relative to the sphere spectrum admit natural generalizations when studying $\BP{n-1}$- or $\BP{n}$-algebras relative to $X(p^n)$. Many of the results presented here were motivated by the perspective that there should be a chromatic analogue of integral $p$-adic Hodge theory (where $p$ is replaced by the chromatic element $v_n$; see \cref{bpn-vs-tj})\footnote{I'd also like to direct the reader to \url{https://www.royalacademy.org.uk/art-artists/work-of-art/prismatic-colour-wheel}; but I hope our \cref{bpn-vs-tj} is more mathematically informative!}.

The $\Efr{2}$-ring $X(p^n)$ is the Thom spectrum of the $\Efr{2}$-map $\Omega \SU(p^n) \to \Omega \SU \simeq \BU$, so that $X(1) = S^0$ and $X(\infty) = \MU$. Just as $\MU_\ppar$ splits as a direct sum of shifts of $\BPP$, the spectrum $X(p^n)_\ppar$ splits into a direct sum of shifts of an $\E{1}$-ring denoted\footnote{This is \textit{not} the telescope of a $v_n$-self map! See \cref{clash-of-notation}.} $T(n)$. If $\cC$ is a left $X(p^n)$-linear $\infty$-category, then \cite[Corollary 2.9 and Corollary 3.7]{framed-e2} ensures that it makes sense to define the relative topological Hochschild homology $\THH(\cC/X(p^n))$, and furthermore that $\THH(\cC/X(p^n))$ admits an $S^1$-action.\footnote{We warn the reader that even if $\cC$ admits the structure of a monoidal $\infty$-category, $\THH(\cC/X(p^n))$ rarely inherits any multiplicative structure from $\cC$, since $X(p^n)$ does not admit the structure of an $\E{3}$-ring (see \cref{Xn-e3}).}

Our main result is an analogue of B\"okstedt's calculation. If $R$ is a ring spectrum, let $R[\B\Delta_n]$ denote the free $R$-module whose homotopy groups are isomorphic to a divided power algebra $\pi_\ast(R)\pdb{y_{i} | 1\leq i \leq p^n-1, i\neq p^k}$ where $|y_j| = 2j$.\footnote{The contribution $\B\Delta_n$ plays essentially no practical/meaningful role in this article. Its appearance in the equivalences below can be removed if $T(n)\subseteq X(p^n)_\ppar$ admits the structure of an $\Efr{2}$-algebra. We strongly believe this to be possible (enough to state it as \cref{tn-e2}!), so we suggest the reader ignore $\B\Delta_n$ --- and simultaneously replace $X(p^n)$ by $T(n)$ --- on a first pass.}
Morally, $R[\B\Delta_n]$ is the $R$-chains on the ``classifying space of $\prod_{i=1}^n \SU(p^i-1)/\SU(p^{i-1})$''; so, if $X$ is another space, we will write $R[\B\Delta_n\times X]$ to denote $R[\B\Delta_n] \otimes_R R[X]$.
Fix an $\E{3}$-form of the truncated Brown-Peterson spectrum $\BP{n-1}$ (which exists by \cite[Theorem A]{hahn-wilson-bpn}). Motivated by the results of \cite{bpn-thom}, and using the calculations of \cite{angeltveit-rognes, homological-hfpss}, we show:

\begin{thmno}[\cref{thh-calculations}(a)]
There is a $p$-complete equivalence 
$$\THH(\BP{n-1}/X(p^n)) \simeq \BP{n-1}[\B\Delta_n \times \Omega S^{2p^n+1}]$$
of $\BP{n-1}$-modules; in particular, there is a $p$-complete isomorphism
$$\pi_\ast \THH(\BP{n-1}/X(p^n)) \simeq \pi_\ast \BP{n-1}[\B\Delta_n][\theta_n],$$
where $\theta_n\in \pi_{2p^n} \THH(\BP{n-1}/X(p^n))$ is $\sigma^2(v_n)$.

Moreover, there are $p$-complete isomorphisms
\begin{align*}
    \pi_\ast \TC^-(\BP{n-1}/X(p^n)) & \cong \pi_\ast (\BP{n}[\B\Delta_n])\pw{\hbar}[\theta_n]/(\theta_n \hbar - v_n), \\
    \pi_\ast \TP(\BP{n-1}/X(p^n)) & \cong \pi_\ast (\BP{n}^{tS^1}[\B\Delta_n]),
\end{align*}
where $\hbar\in \pi_{-2} \BP{n}^{hS^1}$. Under the map $\TP(\BP{n-1}/X(p^n)) \to \TP(\BP{n-1}/\MU)$, the image of $v_n\in \pi_{2p^n-2} \TP(\BP{n-1}/X(p^n))$ can be identified with the image of $v_n\in \pi_{2p^n-2} \MU^{tS^1}$ under the map $\MU^{tS^1} \to \TP(\BP{n-1}/\MU)$.
\end{thmno}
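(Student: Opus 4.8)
The plan is to reduce the whole statement to a mod-$p$ homology computation, bootstrap to a $\BP{n-1}$-module equivalence, run the $S^1$-fixed-point spectral sequences, and then compare with the $\MU$-relative picture. Since $X(p^n)$ is the Thom spectrum of the $\Efr{2}$-map $\Omega\SU(p^n)\to\BU$, the Thom isomorphism identifies $H_\ast(X(p^n);\FF_p)$ with $H_\ast(\Omega\SU(p^n);\FF_p)\cong\FF_p[b_1,\dots,b_{p^n-1}]$, $|b_i|=2i$, sitting inside $H_\ast(\MU;\FF_p)=\FF_p[b_1,b_2,\dots]$, while $H_\ast(\BP{n-1};\FF_p)$ is the usual quotient of the dual Steenrod algebra. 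I would run the relative B\"okstedt spectral sequence
\[E^2_{\ast,\ast}=\HH^{H_\ast(X(p^n);\FF_p)}_{\ast,\ast}\!\left(H_\ast(\BP{n-1};\FF_p)\right)\Longrightarrow H_\ast(\THH(\BP{n-1}/X(p^n));\FF_p),\]
computing $E^2$ from a Koszul-type resolution once the module structure of $H_\ast(\BP{n-1};\FF_p)$ over $\FF_p[b_1,\dots,b_{p^n-1}]$ is made explicit via the Hopf-algebroid bookkeeping of \cite{angeltveit-rognes}. As in B\"okstedt's original computation and in \cite{homological-hfpss}, I expect only the differentials forced by the $\sigma^2$/transpotence relations, so that the abutment is the tensor product of divided-power algebras on the classes $\sigma b_i$ with $i\neq p^k$ (these assemble into $H_\ast(\B\Delta_n;\FF_p)$) with a divided-power algebra on a single class $\sigma^2 v_n$ in degree $2p^n$, i.e.\ $H_\ast(\Omega S^{2p^n+1};\FF_p)$; matching with $H_\ast(\BP{n-1}[\B\Delta_n\times\Omega S^{2p^n+1}];\FF_p)$ gives the homology equivalence.

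To promote this to an equivalence of $\BP{n-1}$-modules I would use that $\BP{n-1}[\B\Delta_n\times\Omega S^{2p^n+1}]$ is a \emph{free} $\BP{n-1}$-module: it then suffices to construct a $\BP{n-1}$-module map to $\THH(\BP{n-1}/X(p^n))$ realizing the classes $\sigma b_i$ and $\theta_n:=\sigma^2(v_n)$, and to check it induces a mod-$p$ homology isomorphism, after which $p$-completeness and connectivity finish the argument; the identification $\theta_n=\sigma^2(v_n)$ is built into the construction, just as $\sigma^2(v_{n+1})$ is the bottom polynomial generator of $\THH(\BP{n}/\MU)$ in \cite[Theorem E]{hahn-wilson-bpn}.

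For $\TC^-$ and $\TP$, the relevant input $\pi_\ast\THH(\BP{n-1}/X(p^n))$ is torsion-free and concentrated in even degrees, so the $S^1$-homotopy-fixed-point spectral sequence degenerates with no extension problems and $\pi_\ast\TC^-\cong\pi_\ast\THH(\BP{n-1}/X(p^n))\pw{\hbar}$; rewriting this by introducing the class $v_n:=\theta_n\hbar$ --- which is precisely what promotes the coefficient ring from $\BP{n-1}$ to $\BP{n}$, in analogy with the relation $\theta\hbar=p$ in $\pi_\ast\TC^-(\FF_p)$ --- gives the stated presentation of $\pi_\ast\TC^-$, and inverting $\hbar$ gives $\pi_\ast\TP\cong\pi_\ast(\BP{n}^{tS^1}[\B\Delta_n])$. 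One invokes \cite{homological-hfpss} to control the multiplicative structure here, and by \cref{tn-e2} the factor $\B\Delta_n$ --- hence most of this bookkeeping --- disappears once $T(n)\subseteq X(p^n)_\ppar$ is known to be $\Efr{2}$.

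Finally, for the comparison statement: the $\Efr{2}$-map $X(p^n)\to\MU$ induces compatible maps on $\THH$, $\TC^-$, and $\TP$, and by naturality of the $S^1$-action and of the double-suspension operation these carry $\hbar\mapsto\hbar$ and $\theta_n=\sigma^2(v_n)\mapsto\sigma^2(v_n)$, the target being the bottom polynomial generator of $\THH(\BP{n-1}/\MU)$ of \cite{hahn-wilson-bpn}. Hence $v_n=\theta_n\hbar\in\pi_{2p^n-2}\TP(\BP{n-1}/X(p^n))$ maps to $\sigma^2(v_n)\cdot\hbar\in\pi_{2p^n-2}\TP(\BP{n-1}/\MU)$, and it remains to identify that element with the image of $v_n\in\pi_{2p^n-2}\MU^{tS^1}$ under the structure map $\MU^{tS^1}\to\TP(\BP{n-1}/\MU)$. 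I would obtain this as a ``B\"okstedt periodicity relative to $\MU$'': the identity $\sigma^2(v_n)\cdot\hbar=v_n$ holds in $\TP(\BP{n-1}/\MU)$, which should be read off from the computation of $\THH(\BP{n-1}/\MU)$ and its Tate spectral sequence in \cite{hahn-wilson-bpn} together with a direct tracking of $v_n$ through the Quillen formal group on $\pi_\ast\MU^{tS^1}$. Pinning down the precise structure map $\MU^{tS^1}\to\TP(\BP{n-1}/\MU)$ and verifying this identity --- which is a genuine, non-formal input --- is, I expect, the main obstacle; everything else is bookkeeping.
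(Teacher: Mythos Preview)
Your approach is essentially the paper's: reduce to a mod-$p$ B\"okstedt spectral sequence computation, import the differentials and multiplicative extensions from \cite{angeltveit-rognes}, and appeal to \cite{hahn-wilson-bpn} for the identification of $v_n$ in the $\MU$-relative picture. Two small points of divergence are worth noting. First, your description of the abutment as containing a ``divided-power algebra on $\sigma^2 v_n$'' is a slip: after the differentials and the extensions $(\sigma\tau_j)^p=\sigma\tau_{j+1}$ (at odd $p$; $(\sigma\zeta_j)^2=\sigma\zeta_{j+1}$ at $p=2$) are resolved, the contribution is a \emph{polynomial} algebra on one class, which is what $H_\ast(\Omega S^{2p^n+1};\FF_p)$ actually is. Second, for $\TC^-$ and $\TP$ the paper does not run the homotopy fixed-point spectral sequence on $\pi_\ast\THH$ as you propose; instead it computes the \emph{continuous} mod-$p$ homology via the homological homotopy fixed-point spectral sequence of \cite{homological-hfpss} and then runs an Adams spectral sequence. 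Your shortcut is legitimate (the paper notes the even degeneration in a remark), but the paper's route makes the appearance of $v_n$ as an Adams-filtration-one class intrinsic, rather than first defining $v_n:=\theta_n\hbar$ and then checking compatibility with $\MU$ after the fact.
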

\begin{remark}
If $T(n)\subseteq X(p^n)_\ppar$ admits the structure of an $\Efr{2}$-algebra (\cref{tn-e2}), then \cref{thh-calculations} would give the cleaner statements that $\THH(\BP{n-1}/T(n)) \simeq \BP{n-1}[\Omega S^{2p^n+1}]$, and that $\pi_\ast \TP(\BP{n-1}/T(n)) \cong \pi_\ast \BP{n}^{tS^1}$.
The map $\pi_\ast \THH(\BP{n-1}/T(n)) \to \pi_\ast \THH(\BP{n-1}/\MU)$ is injective, and exhibits the source as the submodule $\pi_\ast \BP{n-1}[\sigma^2(v_n)]$ of $\pi_\ast \THH(\BP{n-1}/\MU)$.
\end{remark}
\cref{thh-calculations} implies the following result, which, for $n=0$, is a very special case of the main result of \cite{petrov-vologodsky}:

\begin{corno}[\cref{graded-drw-lift}]
Let $R = \BP{n}[\Z_{\geq 0}^j]$ be a flat polynomial ring over $\BP{n}$, viewed as a $\Z_{\geq 0}^j$-graded $\Efr{2}$-$\BP{n}$-algebra.
Then there is a  $p$-complete isomorphism of $\Z_{\geq 0}^j$-graded modules equipped with a map from $\pi_\ast \BP{n}^{tS^1}[\B\Delta_n] \cong \pi_\ast \TP(\BP{n-1}/X(p^n))$:
$$\pi_\ast \TP^\gr((R/v_n)/X(p^n)) \cong \pi_\ast \HP^\gr(R/\BP{n})[\B\Delta_n].$$
Here, the superscript $\gr$ denotes the Tate construction taken in \emph{$\Z_{\geq 0}^j$-graded} spectra.
\end{corno}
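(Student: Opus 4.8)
The plan is to reduce the statement, by a K\"unneth argument, to the computation of $\TP(\BP{n-1}/X(p^n))$ in \cref{thh-calculations}, and then to transport that identification across the $\Z_{\geq 0}^j$-grading one weight at a time.

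\smallskip
\noindent\emph{Step 1 (K\"unneth reduction).} Since $R=\BP{n}[\Z_{\geq 0}^j]$ is a graded monoid algebra, $R/v_n$ is a $\BP{n-1}$-algebra and may be written as $R/v_n\simeq\BP{n-1}\otimes_{X(p^n)}X(p^n)[\Z_{\geq 0}^j]$, a tensor product over $X(p^n)$ of two $\Efr{2}$-$X(p^n)$-algebras. Using the K\"unneth formula for relative $\THH$ over the $\Efr{2}$-ring $X(p^n)$ (see \cite{framed-e2}) and the identification of $\THH$ of a monoid algebra with the chains on its cyclic bar construction $B^{\cyc}(-)$, one obtains an $S^1$-equivariant $p$-complete equivalence
\[
\THH((R/v_n)/X(p^n))\;\simeq\;\THH(\BP{n-1}/X(p^n))\otimes S^0[B^{\cyc}\Z_{\geq 0}^j],
\]
with $S^1$ acting diagonally (cyclic rotation on the second factor, the canonical $\THH$-action on the first; the $\Z_{\geq 0}^j$-grading is concentrated on the second factor). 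The analogous formal computation over $\BP{n}$ gives $\THH(R/\BP{n})\simeq\BP{n}\otimes S^0[B^{\cyc}\Z_{\geq 0}^j]$. Thus both graded $S^1$-spectra whose graded Tate constructions we must compare have the form $M\otimes S^0[B^{\cyc}\Z_{\geq 0}^j]$: once with $M=\THH(\BP{n-1}/X(p^n))$, once with $M=\BP{n}[\B\Delta_n]$ carrying the trivial $S^1$-action. By \cref{thh-calculations} and its $\TC^-$-refinement, these two choices of $M$ have the same $S^1$-Tate construction $\BP{n}^{tS^1}[\B\Delta_n]$: the relation $\theta_n\hbar=v_n$ is exactly what promotes the ``$\BP{n-1}[\theta_n]$''-coefficients to ``$\BP{n}$''-coefficients after $\hbar$ is inverted.

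\smallskip
\noindent\emph{Step 2 (graded Tate, weight by weight).} In $\Z_{\geq 0}^j$-graded spectra the functor $(-)^{tS^1}$ is computed weight by weight, and the weight-$\underline d$ part of $S^0[B^{\cyc}\Z_{\geq 0}^j]$ is $\Sigma^\infty_+Y_{\underline d}$ for a \emph{finite} $S^1$-CW complex $Y_{\underline d}=(B^{\cyc}\Z_{\geq 0}^j)_{\underline d}$ --- a torus $(S^1)^{\#\{i\,:\,d_i>0\}}$ with an explicit rotation action, $Y_{\underline 0}=\ast$. As $(-)^{tS^1}$ is exact, $(M\otimes\Sigma^\infty_+Y_{\underline d})^{tS^1}$ is assembled in finitely many cofiber sequences from the contributions of the $S^1$-cells $S^1/\mu_m$; by the projection formula $M\otimes\Sigma^\infty_+(S^1/\mu_m)\simeq\Ind_{\mu_m}^{S^1}\Res_{\mu_m}M$, and since $(\Ind_{\mu_m}^{S^1}(-))^{tS^1}$ annihilates $\mu_m$-induced spectra, the assignment $M\mapsto(M\otimes\Sigma^\infty_+(S^1/\mu_m))^{tS^1}$ factors through the $\mu_m$-Tate construction $M^{t\mu_m}$. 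Consequently, weight by weight, $(M\otimes S^0[B^{\cyc}\Z_{\geq 0}^j])^{tS^1}$ depends on $M$ only through the family $\{M^{t\mu_m}\}_{m\geq1}$, so it suffices to check $\THH(\BP{n-1}/X(p^n))^{t\mu_m}\simeq(\BP{n}[\B\Delta_n])^{t\mu_m}$ for all $m$. For $p\nmid m$ both sides vanish $p$-completely; for $m$ a power of $p$, the relation $\theta_n\hbar=v_n$ restricts to the $\mu_m$-homotopy fixed points and, after inverting the $\mu_m$-periodicity class, identifies the two sides exactly as in the $S^1$-case. Assembling over all weights and reading off the weight-$\underline 0$ part yields the claimed $p$-complete isomorphism $\pi_\ast\TP^\gr((R/v_n)/X(p^n))\cong\pi_\ast\HP^\gr(R/\BP{n})[\B\Delta_n]$ over $\pi_\ast\TP(\BP{n-1}/X(p^n))\cong\pi_\ast\BP{n}^{tS^1}[\B\Delta_n]$; $p$-completeness takes care of convergence, and the factor $\B\Delta_n$ (trivial $S^1$-action throughout) rides passively along.

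\smallskip
\noindent\emph{The main obstacle.} The delicate input is the one at the end of Step 2: one must know the $S^1$-homotopy type of $\THH(\BP{n-1}/X(p^n))$ --- not merely its $S^1$-Tate construction --- well enough to pin down all the $(-)^{t\mu_m}$, compatibly. This is supplied by the \emph{proof} of \cref{thh-calculations}: the comparison map to $\THH(\BP{n-1}/\MU)$ is $S^1$-equivariant, the $\MU$-relative side is computed genuinely (in fact cyclotomically) via \cite{hahn-wilson-bpn, angeltveit-rognes, homological-hfpss}, so $\theta_n\hbar=v_n$ holds compatibly for $S^1$ and for each finite $\mu_m$, and evenness of $\pi_\ast\THH(\BP{n-1}/X(p^n))$ and $\pi_\ast\BP{n}$ forces the relevant subgroup-Tate spectral sequences to degenerate. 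Alternatively, Step 2 may be organized as a comparison of the two ``Tate/HKR spectral sequences'' converging to $\TP^\gr$ and to $\HP^\gr$: their $E_2$-pages differ only in the coefficient factor $\pi_\ast\THH(\BP{n-1}/X(p^n))$ versus $\pi_\ast\BP{n}$, and $\theta_n\hbar=v_n$ is the differential (or hidden extension) reconciling them --- this is the shape of the argument of \cite{petrov-vologodsky} for $n=0$, to which the present strategy specializes.
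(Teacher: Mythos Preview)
Your approach is essentially the same as the paper's: both use the K\"unneth decomposition $\THH((R/v_n)/X(p^n))\simeq\THH(\BP{n-1}/X(p^n))\otimes\THH(S[\Z_{\geq 0}^j])$, the orbit decomposition of $\THH(S[\Z_{\geq 0}^j])$, the identification $(M\otimes(S^1/\mu_m)_+)^{tS^1}\simeq\Sigma M^{t\mu_m}$, and then the input $\pi_\ast\THH(\BP{n-1}/X(p^n))^{t\Z/m}\cong\pi_\ast\BP{n}^{t\Z/m}[\B\Delta_n]$ from \cref{thh-calculations}(a). The paper simply restricts to $j=1$ (where each graded piece is a single orbit $(S^1/\mu_m)_+$) and reads off the answer in one line.

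One point deserves care. Your Step~2 phrases the reduction as ``it suffices to check $\THH(\BP{n-1}/X(p^n))^{t\mu_m}\simeq\BP{n}[\B\Delta_n]^{t\mu_m}$''. If you mean this at the spectrum level, that is precisely \cref{conjecture-thh} and is \emph{not} proved in the paper---only the $\pi_\ast$-isomorphism is. Since the corollary is a statement about $\pi_\ast$, this is harmless for $j=1$: each weight contributes a single summand $\Sigma M^{t\mu_m}$, so the $\pi_\ast$-isomorphism suffices. For $j>1$, your ``assembled in finitely many cofiber sequences'' could in principle create extension problems on $\pi_\ast$ if you only know the pieces on $\pi_\ast$. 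The clean fix is to note that for $\underline d$ with all $d_i>0$ the $S^1$-space $\prod_i(S^1/\mu_{d_i})$ is equivariantly $(S^1/\mu_{\gcd(d_i)})\times T^{j-1}$ with $S^1$ acting only on the first factor, and that $T^{j-1}_+$ splits stably as a wedge of spheres; so the weight-$\underline d$ contribution to $\pi_\ast$ is a direct sum of shifts of $\pi_\ast M^{t\mu_{\gcd(d_i)}}$, with no extensions to resolve.
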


\begin{remark}
\cref{thh-calculations} quickly implies redshift for $K(\BP{n-1})$ (see \cref{k-thy-redshift}). 
When $n=0$, the first part of \cref{thh-calculations}(a) recovers B\"okstedt's calculation of $\THH(\FF_p)$, since $\BP{-1} = \FF_p$ and $X(0) = S^0$. When $p=2$, the statement of \cref{thh-calculations} can be simplified using \cite[Remark 3.1.9]{bpn-thom}; for instance, we obtain the following \textit{additive} equivalences and isomorphisms: for $n=1$, we have
\begin{align*}
    \THH(\Z_2/T(1)) \simeq \Z_2[\sigma^2(v_1)],\ & \pi_\ast \TP(\Z_2/T(1))^\wedge_2 \simeq \pi_\ast (\ku^{tS^1})^\wedge_2.
\end{align*}
Since $\tmf_1(3)$ is a form of $\BP{2}$ by \cite{lawson-naumann}, for $n=2$, we have
\begin{align*}
    \THH(\ku^\wedge_2/T(2)) \simeq \ku^\wedge_2[\sigma^2(v_2)],\ & \pi_\ast \TP(\ku^\wedge_2/T(2))^\wedge_2 \simeq \pi_\ast (\tmf_1(3)^{tS^1})^\wedge_2.
\end{align*}
\end{remark}

We also prove an analogue of B\"okstedt's calculation \cite{bokstedt-thh} of $\THH(\Z_p)$:
\begin{thmno}[\cref{thh-calculations}(b)]
There is an equivalence of $\BP{n}$-modules
$$\THH(\BP{n}/X(p^n))^\wedge_p \cong \BP{n}[\B\Delta_n]^\wedge_p \oplus \left(\bigoplus_{j\geq 1} \Sigma^{2jp^{n+1}-1} \BP{n}[\B\Delta_n]/pj\right)^\wedge_p.$$
Moreover, $\pi_{2p^{n+1}-3} \TC^-(\BP{n}/X(p^n))^\wedge_p$ detects the class $\sigma_n\in \pi_{2p^{n+1}-3} X(p^n)$ from \cite[Lemma 3.1.12]{bpn-thom}.
\end{thmno}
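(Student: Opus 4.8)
The plan is to mirror B\"okstedt's passage from $\THH(\FF_p)\simeq\FF_p[\Omega S^3]$ to $\THH(\Z_p)\simeq\Z_p[\Omega S^3\pdb{3}]$ \cite{bokstedt-thh}, one chromatic height up, with \cref{thh-calculations}(a) in the role of B\"okstedt's calculation of $\THH(\FF_p)$. I would first prove the ($p$-complete, a priori stronger) equivalence of $\BP{n}$-modules
$$\THH(\BP{n}/X(p^n))\simeq\BP{n}\bigl[\B\Delta_n\times\Omega S^{2p^n+1}\pdb{2p^n+1}\bigr],$$
where $S^{2p^n+1}\pdb{2p^n+1}$ denotes the $(2p^n+1)$-connected cover of $S^{2p^n+1}$; the stated decomposition then drops out of the computation of $H_\ast(\Omega S^{2p^n+1}\pdb{2p^n+1};\Z_p)$ together with an Atiyah--Hirzebruch argument over $\BP{n}$ (and, for $n=0$, recovers B\"okstedt's calculation of $\THH(\Z_p)$). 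The appearance of the connected cover rather than $\Omega S^{2p^n+1}$, as in part (a), is forced by the only structural difference between $\BP{n}$ and $\BP{n-1}$: the element $v_n$ is a non-zero-divisor in $\pi_\ast\BP{n}$ but vanishes in $\pi_\ast\BP{n-1}$, so its double suspension $\theta_n=\sigma^2(v_n)$ --- a polynomial generator of $\THH(\BP{n-1}/X(p^n))$ by part (a) --- must instead be ``$p$-divisibly killed'' in $\THH(\BP{n}/X(p^n))$, exactly as the degree-$2$ class of $\FF_p[\Omega S^3]$ disappears in $\Z_p[\Omega S^3\pdb{3}]=\Z_p[\fib(\Omega S^3\to K(\Z,2))]$.

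To set up the equivalence, I would use the machinery behind \cref{thh-calculations}(a) --- the Thom-spectrum description of $\BP{n}$ over $X(p^n)$ from \cite{bpn-thom}, the Thom isomorphism for $\THH$, and the mod-$p$ homology computations of \cite{angeltveit-rognes} --- to identify $\THH(\BP{n}/X(p^n))$, $p$-completely, with the $\BP{n}$-chains on the geometric realization of a cyclic bar construction of spaces, and then identify that space with $\B\Delta_n\times\Omega S^{2p^n+1}\pdb{2p^n+1}$. This last step is organized around the \emph{Cohen--Moore--Neisendorfer fibration} $\Omega S^{2p^n+1}\pdb{2p^n+1}\to\Omega S^{2p^n+1}\to K(\Z,2p^n)$: one exploits the $\E{3}$-ring map $\BP{n}\to\BP{n-1}=\BP{n}/v_n$ and the induced map on $\THH$ (which on spaces is the base change $\BP{n}\to\BP{n-1}$ followed by the covering map $\Omega S^{2p^n+1}\pdb{2p^n+1}\to\Omega S^{2p^n+1}$), tracking attaching maps to pin down the source --- equivalently, one runs the spectral sequence of the $v_n$-adic filtration on $\BP{n}$, whose associated graded ring is $\BP{n-1}[v_n]$, so that by part (a) the $E_1$-page is obtained from $\pi_\ast\THH(\BP{n-1}/X(p^n))$ by adjoining a polynomial class $v_n$ and an exterior class $\sigma v_n$ in degree $2p^n-1$, with differentials the CMN transgressions. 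Granting the space, the Serre spectral sequence of the CMN fibration --- carrying the classical Cohen--Moore--Neisendorfer differentials --- gives $H_\ast(\Omega S^{2p^n+1}\pdb{2p^n+1};\Z_p)\cong\Z_p\oplus\bigoplus_{j\geq1}\Sigma^{2jp^{n+1}-1}\Z/p^{1+v_p(j)}$ (which recovers $\pi_\ast\THH(\Z_p)$ for $n=0$); the Atiyah--Hirzebruch spectral sequence over $\BP{n}$ then collapses, since only finitely many cells contribute in each total degree and the possible differentials land in vanishing groups, and the extensions split because each torsion contribution is $\Sigma^{2jp^{n+1}-1}$ of the cyclic $\BP{n}$-module $\BP{n}/p^{1+v_p(j)}\simeq\BP{n}/pj$, realized by a subquotient of the space. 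Smashing over $\BP{n}$ with $\Sigma^\infty_+\B\Delta_n$ then produces the stated equivalence of $\BP{n}$-modules.

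For the detection statement, note that the decomposition gives $\pi_{2p^{n+1}-1}\THH(\BP{n}/X(p^n))^\wedge_p\cong\FF_p$, generated by the bottom class $\lambda_n$ of the $j=1$ torsion summand, while $\pi_{2p^{n+1}-3}\THH(\BP{n}/X(p^n))^\wedge_p=0$ (an odd degree lying below the bottom of every torsion summand). Thus $\sigma_n$ maps to $0$ in $\pi_\ast\THH(\BP{n}/X(p^n))$, and in the homotopy fixed-point spectral sequence $\pi_\ast\THH(\BP{n}/X(p^n))[\hbar]\Rightarrow\pi_\ast\TC^-(\BP{n}/X(p^n))^\wedge_p$ (with $|\hbar|=-2$) the only positive-filtration class that can detect $\sigma_n$ is $\lambda_n\hbar$, in filtration $2$. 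It remains to show (i) that $\lambda_n\hbar$ is a permanent cycle --- so it lifts to $\pi_{2p^{n+1}-3}\TC^-(\BP{n}/X(p^n))^\wedge_p$ --- and (ii) that the image of $\sigma_n$ there is nonzero with leading term $\lambda_n\hbar$. For (i), I would compare along $X(p^n)\to\MU$ with $\TC^-(\BP{n}/\MU)$, where the needed permanence is part of the Segal-conjecture package of \cite{hahn-wilson-bpn}, or else lift $\lambda_n$ from a ``B\"okstedt class'' already present in $\TC^-(\BP{n-1}/X(p^n))$ via part (a); for (ii), one matches the construction with that of $\sigma_n$ in \cite[Lemma 3.1.12]{bpn-thom}, where $\sigma_n$ arises by exactly this $\THH$-theoretic mechanism (heuristically, $\sigma_n$ is the ``$v_{n+1}$ in negative Tate weight'' detected by $\TP(\BP{n}/X(p^n))$, parallel to the appearance of $v_n$ in $\TP(\BP{n-1}/X(p^n))$ recorded in part (a)). I expect (i)--(ii) to be the main obstacle: this is the Segal-conjecture-flavored input that powers redshift (\cref{k-thy-redshift}), and, unlike the module decomposition, it does not follow from part (a) by a formal spectral-sequence argument. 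The secondary difficulty is the space identification in the second paragraph --- establishing that $v_n$ being a non-zero-divisor is precisely what forces the connected cover to appear, the ``$\Omega S^3$ versus $\Omega S^3\pdb{3}$'' phenomenon one height up.
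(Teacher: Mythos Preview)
Your space-level identification is wrong for $n\geq 1$, and this is fatal for the approach as written. The connected cover $\Omega S^{2p^n+1}\pdb{2p^n+1}$ coincides with the correct space only when $n=0$: there it is $\Omega S^3\pdb{3}$, which by the Toda fibration $S^{2p-1}\to\Omega S^3\pdb{3}\to\Omega S^{2p+1}$ does have the claimed homology. But for $n\geq 1$ the bottom nontrivial homology of $\Omega S^{2p^n+1}\pdb{2p^n+1}$ lies in degree $2p^n+2p-3$ (coming from $\alpha_1\in\pi_{2p-3}^s$), whereas the theorem requires the first torsion at degree $2p^{n+1}-1$. The fibration you wrote down is not the Cohen--Moore--Neisendorfer fibration; the relevant CMN-type fibration is $S^{2p^{n+1}-1}\to\B\fib(\phi_{n+1})\to\Omega S^{2p^{n+1}+1}$ (see \cref{b-fib-phin}), and the space one actually wants is $\B\fib(\phi_{n+1})$, which is \emph{not} a connected cover of anything simple for $n\geq 1$. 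The paper records this space-level interpretation only as a consequence of the theorem (\cref{bockstein-serre}), not as its proof. Separately, even with the correct space, the Thom-spectrum description of $\BP{n}$ over $X(p^n)$ from \cite{bpn-thom} that you invoke is conditional on Conjectures~D and~E of that paper, so your argument would not be unconditional.

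The paper's proof proceeds quite differently and without any space-level identification. One first computes $\H_\ast(\THH(\BP{n}/X(p^n));\FF_p)$ via the B\"okstedt spectral sequence (\cref{homology-thh-bpn}(b)), obtaining $\H_\ast(\BP{n}[\B\Delta_n];\FF_p)[y]\otimes\Lambda(x)$ with $|x|=2p^{n+1}-1$, $|y|=2p^{n+1}$, together with the Bockstein $\beta(y)=x$. This gives $\pi_\ast\THH(\BP{n}/X(p^n);\FF_p)\cong\FF_p[x,y]/x^2$, and the argument then climbs back up via successive Bockstein spectral sequences in $v_0,v_1,\dots,v_n$. The $v_0$-Bockstein is the interesting one: it has $d_1(y)=v_0x$, but since $X(p^n)$ is only $\E{2}$, $\THH(\BP{n}/X(p^n))$ is not a ring and the spectral sequence is \emph{not} multiplicative. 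The paper circumvents this by lifting $x,y$ to the multiplicative spectral sequence for $\THH(\BP{n};\Z_p)$ (which is $\E{2}$ since $\BP{n}$ is $\E{3}$) and applying May's formula \cite[Proposition~6.8]{may-steenrod} for differentials on $p$th powers in terms of Dyer--Lashof operations; a short computation shows the correction terms vanish, yielding $d_{v_p(j)+1}(y^j)\dot{=}v_0^{v_p(j)+1}xy^{j-1}$. The higher $v_i$-Bocksteins then collapse by parity. This is the substantive content you are missing: the non-multiplicativity of the relevant spectral sequence, and the May/Dyer--Lashof maneuver that handles it.
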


\def\stacktype{S} 
\begin{figure}[H]
\hspace*{-3.5cm}
\begin{tikzcd}
	& {} &&& {} && {} && {} \\
	& {\scriptstyle{n=2}} & 
	{\mathrm{BP}\langle 2\rangle/S^0}
	&& {\mathrm{BP}\langle 2\rangle/T(1)} && \mathbin{\stackon[0pt]{\mathrm{BP}\langle 2\rangle/T(2)}{\tmf/B}} & \cdots & {\mathrm{BP}\langle 2\rangle/\BPP} \\
	\\
	& {\scriptstyle{n=1}} & \mathbin{\stackon[0pt]{j/S^0}{\mathrm{BP}\langle 1\rangle/S^0}} && \mathbin{\stackon[0pt]{\mathrm{BP}\langle 1\rangle/T(1)}{\ko/A}} && {\mathrm{BP}\langle 1\rangle/T(2)} & \cdots & {\mathrm{BP}\langle 1\rangle/\BPP} \\
	&&&&&& {} \\
	& {\scriptstyle{n=0}} & {\mathbf{Z}_p/S^0} && {\mathbf{Z}_p/T(1)} & {} & \cdots & \cdots & {\Z_p/\BPP} \\
	{} && {} && {} && {} && {} & {} \\
	& {\scriptstyle{n=-1}} & {\mathbf{F}_p/S^0} & {} & \cdots && \cdots & \cdots & {\FF_p/\BPP} \\
	& {}
	\arrow["{\Theta_{T(1)}}", Rightarrow, from=6-5, to=6-3]
	\arrow["{\Theta_{T(1)}}", Rightarrow, from=4-5, to=4-3]
	\arrow["{\Theta_{T(1)}}", Rightarrow, from=2-5, to=2-3]
	\arrow["{\Theta_{T(2)}}", Rightarrow, from=2-7, to=2-5]
	\arrow["{\Theta_{T(2)}}", Rightarrow, from=4-7, to=4-5]
	\arrow["{\Theta_\BPP}", Rightarrow, from=2-9, to=2-8]
	\arrow["{\Theta_\BPP}", Rightarrow, from=4-9, to=4-8]
	\arrow["{\Theta_\BPP}", Rightarrow, from=6-9, to=6-8]
	\arrow["{\Theta_\BPP}", Rightarrow, from=8-9, to=8-8]
	\arrow["{T(m)}"'{pos=1}, shift left=5, from=7-1, to=7-10]
	\arrow[color={rgb,255:red,255;green,51;blue,54}, no head, from=4-2, to=1-5]
	\arrow[color={rgb,255:red,255;green,51;blue,54}, no head, from=6-2, to=1-7]
	\arrow[color={rgb,255:red,255;green,51;blue,54}, no head, from=8-2, to=2-8]
	\arrow["{\mathrm{TP}}"', shift left=4, dashed, from=4-7, to=2-7]
	\arrow["{\mathrm{TP}, \scriptstyle{q\Omega, A\Omega}}"'{pos=0.7}, shift left=4, dashed, from=6-5, to=4-5]
	\arrow["{\mathrm{TP}, \scriptstyle{\mathrm{crys}}}", shift left=4, dashed, from=8-3, to=6-3]
	\arrow["{\mathrm{TP}, \prism}", shift left=4, dashed, from=6-3, to=4-3]
	\arrow["{\mathrm{BP}\langle n\rangle}"{pos=1}, shift right=5, from=9-2, to=1-2]
	\arrow["{\text{Mod }v_1,\text{ is submodule} \text{ gen.~by }\theta_1^{pk}}", color={rgb,255:red,54;green,51;blue,255}, from=4-7, to=6-5]
	\arrow["{\text{Mod }p,\text{ is submodule} \text{ gen.~by }\sigma^{pk}}", color={rgb,255:red,54;green,51;blue,255}, from=6-5, to=8-3]
\end{tikzcd}
\captionsetup{width=\linewidth}
\caption[]{Heuristic picture suggested by this article, where we have assumed for simplicity that $T(m)$ admits the structure of a framed $\E{2}$-ring. 
{\begin{itemize}[wide]
    \item The spectra sandwiched between diagonal lines of slope $1$ (partitioned by a red line) display similar structural behaviour. Here, $A$ and $B$ are studied in \cite{mahowald-thom} (where $A$ is denoted $X_5$), \cite[Construction 3.1]{tmf-witten}, and \cite{hopkins-mahowald-orientations}.
    \item The horizontal double arrows indicate the topological Sen operators of \cref{topological-sen}, i.e., the descent spectral sequence for the map $\THH(-/T(n-1)) \to \THH(-/T(n))$. This is closely related to the Cohen-Moore-Neisendorfer map $\Omega^2 S^{2p^n+1} \to S^{2p^n-1}$.
    \item The (slightly offset) vertical dashed lines going from $(n,n-1)$ to $(n,n)$ indicate the $p$-completed isomorphism $\pi_\ast \TP(\BP{n-1}/T(n)) \cong \pi_\ast \BP{n}^{tS^1}$ of \cref{thh-calculations}. The other vertical arrow from $(0,0)$ to $(0,1)$ is the identification of $\THH(\Z_p)$ with $\tau_{\geq 0}(j^{t\Cp})$, which will appear in future work with Arpon Raksit. (Here, $j$ is the connective complex image-of-J spectrum.) This equivalence is already predicted by the pioneering work of B\"okstedt-Madsen in \cite{bokstedt-madsen}.
    \item The downwards-sloping blue arrows indicate that $\THH(\BP{n}/T(n+1))/v_n$ is a submodule of $\THH(\BP{n-1}/T(n))$ generated by $\theta_n^{pk}$ for $k\geq 0$. See \cref{frobenius-thh}, \cref{frob-in-topology}, and \cref{similarity-ehp} for an explanation of this phenomenon using the EHP sequence.
    \item The columns continue infinitely far out (i.e., $\THH(\BP{n-1}/T(m))$ for $m>n$). However, the drawing is truncated because these terms do not detect any more information than $\THH(\BP{n-1}/T(n))$ itself. The ``exception'' is the final column, where the descent from $\THH(\BP{n-1}/\BPP)$ to $\THH(\BP{n-1})$ can be described algebro-geometrically via
    the $p$-typical Witt ring scheme.
\end{itemize}
}}
\label{bpn-vs-tj}
\end{figure}

\begin{remark}
If one replaces $X(p^n)$ in the left-hand side of \cref{thh-calculations}(b) with $X(p^{n+1}-1)$, the only change to the right-hand side is that $\B\Delta_n$ is replaced by $\B\Delta_{n+1}$.
Let us mention the following mild variant of \cref{thh-calculations} (see \cref{modulo-moore}): the $\FF_p[v_{n-j}, \cdots, v_{n-1}]$-module $\pi_\ast \THH(\BP{n-1}/X(p^j))/(p,\cdots,v_{n-1-j})$ is isomorphic to the tensor product of $\BP{n-1}[\Omega S^{2p^n+1} \times \B\Delta_j]/(p,\cdots,v_{n-1-j})_\ast$ with an exterior algebra on classes $\lambda_{j+1}, \cdots, \lambda_n$, where $|\lambda_m| = 2p^m-1$.
We also prove an analogue of \cref{thh-calculations} for $\ko$ and $\tmf$ in \cref{analogues-ko-tmf}. For example, if the spectra $A$ and $B$ \cite[Section 3]{bpn-thom} lift to $\Efr{2}$-rings, there are $2$-complete equivalences
\begin{align*}
    \THH(\ko/A) & \simeq \ko \oplus \left(\bigoplus_{j\geq 1} \Sigma^{8j - 1} \ko/2j\right), \\
    \THH(\tmf/B) & \simeq \tmf \oplus \left(\bigoplus_{j\geq 1} \Sigma^{16j - 1} \tmf/2j\right).
\end{align*}
\end{remark}
\begin{remark}
If \cref{tn-e2} (or rather, a weaker version which only asks that $T(n)$ admit the structure of an $\Efr{2}$-ring) were true, then the contribution of $\B\Delta_n$ could be eliminated from \cref{thh-calculations}(b): namely, there would be a $p$-complete equivalence
$$\THH(\BP{n}/T(n)) \simeq \BP{n} \oplus \bigoplus_{j\geq 1} \Sigma^{2jp^{n+1}-1} \BP{n}/pj.$$
\end{remark}
We warn the reader that all the equivalences proved above are only additive, so one cannot directly use them to study the stacks associated to $\THH$ (defined via the even filtration of \cite{even-filtr}). 
As a perhaps more digestible example of this phenomenon (see \cref{T2 and y2 relationship}), note that since $\FF_2$ is the Thom spectrum of an $\E{1}$-map $\U(2) \to \BGL_1(\ku)$, there is an equivalence $\HH(\FF_2/\ku) \simeq \FF_2[\BU(2)]$; however, this cannot be upgraded to an equivalence of $\FF_2$-algebras, since the right-hand side is not even obviously a ring!

In \cref{frobenius-thh}, \cref{frob-in-topology}, and \cref{similarity-ehp}, we use the EHP sequence to explain the similarity in the calculation of $\THH(\BP{n}/X(p^{n+1}))$ and $\THH(\BP{n-1}/X(p^n))$ given by \cref{thh-calculations}. This discussion in fact yields the following more general structural uniformity in the truncated Brown-Peterson spectra (see \cref{bpn-vs-tj} for a visual illustration): 
\begin{slogan}[\cref{mod-vj-vn1} and \cref{mod-jvn-thh} for precise statements]\label{uniformity-slogan}
If $n\geq j-1$, the structure of $\BP{n}$ as an $\E{1}$-$X(p^j)$-algebra (i.e., $\THH(\BP{n}/X(p^j))$) mirrors the structure of $\BP{n-1}$ as an $\E{1}$-$X(p^{j-1})$-algebra (i.e., $\THH(\BP{n-1}/X(p^{j-1}))$), which in turn mirrors the structure of $\BP{n-j}$ as an $\E{1}$-algebra over the sphere (i.e., $\THH(\BP{n-j})$).
\end{slogan}

Let $\cC$ be a left $X(p^n)$-linear $\infty$-category. Then, the descent spectral sequence from $\THH$ relative to $X(p^n)$ to $\THH$ relative to $X(p^n-1)$ runs:
\begin{thmno}[\cref{topological-sen}]
There is a map $\Theta_\cC: \Sigma^{-2p^n} \THH(\cC/X(p^n)) \to \THH(\cC/X(p^n))$ such that there is a cofiber sequence
\begin{equation}\label{intro-sen}
    \THH(\cC/X(p^n-1)) \xar{\iota} \THH(\cC/X(p^n)) \xar{\Theta_\cC} \Sigma^{2p^n} \THH(\cC/X(p^n)),
\end{equation}
where the map $\iota$ is $S^1$-equivariant, and the cofiber of $\iota$ is (at least nonequivariantly) identified with $\Sigma^{2p^n} \THH(\cC/X(p^n))$.
\end{thmno}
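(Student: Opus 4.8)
\emph{Overview and base change.} The plan is to reduce the statement to the structure of the single \(\E{1}\)-ring-with-circle-action \(R:=\THH(X(p^n)/X(p^n-1))\) (which exists by \cite{framed-e2}), and then to feed a two-term cofiber sequence of \(R\)-modules into a base-change formula. For \(\cC\) a left \(X(p^n)\)-linear \(\infty\)-category — hence left \(X(p^n-1)\)-linear via \(X(p^n-1)\to X(p^n)\) — lax symmetric monoidality of \(\THH(-/X(p^n-1))\) on \(X(p^n-1)\)-linear categories (cf. \cite{framed-e2}) makes \(\THH(\cC/X(p^n-1))\) a module over \(\THH(\Mod_{X(p^n)}/X(p^n-1))\simeq R\), and there is a canonical \(S^1\)-equivariant equivalence
\[\THH(\cC/X(p^n))\;\simeq\;\THH(\cC/X(p^n-1))\otimes_{R}X(p^n),\]
base change along the augmentation \(\epsilon\colon R=\THH(X(p^n)/X(p^n-1))\to\THH(X(p^n)/X(p^n))=X(p^n)\). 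This is the relative version of the identity \(\THH(\cC/B)\simeq\THH(\cC)\otimes_{\THH(B)}B\). Granting it, the map \(\iota\) of the statement is the one induced by \(\epsilon\); since \(\epsilon\) is a map of \(\E{1}\)-rings-with-\(S^1\)-action in which \(S^1\) acts trivially on \(X(p^n)\), this already gives the \(S^1\)-equivariance of \(\iota\).

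\emph{The square-zero structure of \(R\).} Since \(\SU(p^n)/\SU(p^n-1)\simeq S^{2p^n-1}\), looping the bundle and applying the Thom-spectrum functor to \(X(m)=\Th(\Omega\SU(m)\to\BU)\) shows that \(H_\ast X(p^n)\) is a polynomial extension of \(H_\ast X(p^n-1)\) on a single generator \(b_{p^n-1}\) of even degree \(2p^n-2\). The Bökstedt spectral sequence for \(H_\ast\THH(X(p^n)/X(p^n-1))\) therefore has \(E_2=H_\ast X(p^n)\otimes\Lambda(\sigma b_{p^n-1})\) with \(|\sigma b_{p^n-1}|=2p^n-1\); being supported in the two columns \(0,1\) it degenerates, and — using the explicit computations of \cite{angeltveit-rognes,homological-hfpss}, cf. \cite{bpn-thom}, to control the algebra structure and the \(\B\Delta_n\)-bookkeeping — one obtains an equivalence of \(\E{1}\)-\(X(p^n)\)-algebras
\[R\;=\;\THH(X(p^n)/X(p^n-1))\;\simeq\;X(p^n)\oplus\Sigma^{2p^n-1}X(p^n)\]
exhibiting \(R\) as the \emph{trivial} square-zero extension of \(X(p^n)\) by \(\Sigma^{2p^n-1}X(p^n)\); equivalently, the augmentation ideal \(I:=\fib(\epsilon)\) is, as an \(R\)-module, the shift \(\Sigma^{2p^n-1}X(p^n)\) (with \(R\) acting through \(\epsilon\)). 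This is the topological avatar of the fact that in Bhatt–Lurie–Drinfeld's picture \(\sO(\mathbf{G}_a^{\sharp})\) is a divided-power (hence square-zero) thickening of the unit.

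\emph{Assembling the cofiber sequence.} Apply the exact functor \(\THH(\cC/X(p^n-1))\otimes_R(-)\) to the defining cofiber sequence of \(R\)-modules \(I\to R\to X(p^n)\). The middle and right terms become \(\THH(\cC/X(p^n-1))\) and \(\THH(\cC/X(p^n))\) by the base-change equivalence, and the left term becomes \(\Sigma^{2p^n-1}\THH(\cC/X(p^n))\) by \(I\simeq\Sigma^{2p^n-1}X(p^n)\); rotating the resulting triangle gives exactly
\[\THH(\cC/X(p^n-1))\xrightarrow{\ \iota\ }\THH(\cC/X(p^n))\xrightarrow{\ \Theta_\cC\ }\Sigma^{2p^n}\THH(\cC/X(p^n)),\]
with \(\Theta_\cC\) the connecting map — equivalently, the action of the Koszul-dual generator \(\theta\in\pi_{-2p^n}\End_R(X(p^n))\) of the class \(\sigma(b_{p^n-1})\), which is the topological Sen operator. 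By construction \(\cofib(\iota)=\Sigma^{2p^n}\THH(\cC/X(p^n))\).

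\emph{Failure of equivariance, and the main obstacle.} The caveat "at least nonequivariantly" is forced because the equivalence \(I\simeq\Sigma^{2p^n-1}X(p^n)\) used above is one of \(R\)-modules but not of \(S^1\)-equivariant \(R\)-modules: the circle acts nontrivially on the ideal \(I\subset R=\THH(X(p^n)/X(p^n-1))\) — the Connes operator carries \(\sigma(b_{p^n-1})\) to a nonzero class — and this is precisely where the Serre spectral sequence differential for the Cohen–Moore–Neisendorfer fibration \(\Omega^2 S^{2p^n+1}\to S^{2p^n-1}\) enters; hence \(\Theta_\cC\) and the identification of \(\cofib(\iota)\) are asserted only nonequivariantly. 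The genuinely nontrivial input is the trivial-square-zero identification of \(R\) in the second step — that \(\sigma(b_{p^n-1})\) squares to zero and that the \(\B\Delta_n\)-contributions are as stated, for which one leans on \cite{angeltveit-rognes,homological-hfpss,bpn-thom} — together with a careful enough analysis of the circle action on \(I\) to pin down the Cohen–Moore–Neisendorfer differential governing the failure of equivariance; everything else is formal.
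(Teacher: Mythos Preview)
Your argument is correct and arrives at the same cofiber sequence as the paper, but the execution is more roundabout than it needs to be, and a few of your citations are misplaced.

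The paper's construction is shorter and purely geometric. Using the Thom-spectrum formula \cite{thh-thom}, one has $\THH(X(m))\simeq X(m)[\SU(m)]$ outright, so
\[
\THH(\cC/X(p^n-1))\simeq \THH(\cC)\otimes_{X(p^n)[\SU(p^n-1)]}X(p^n),\qquad
\THH(\cC/X(p^n))\simeq \THH(\cC)\otimes_{X(p^n)[\SU(p^n)]}X(p^n),
\]
and the fiber sequence $\SU(p^n-1)\to\SU(p^n)\to S^{2p^n-1}$ (equivalently the Gysin sequence for $S^{2p^n-1}\to\BSU(p^n-1)\to\BSU(p^n)$) immediately produces the cofiber sequence. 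No B\"okstedt spectral sequence is needed: the identification $R=\THH(X(p^n)/X(p^n-1))\simeq X(p^n)[S^{2p^n-1}]$ is a one-line consequence of the same Thom formula, since $X(p^n)$ is the Thom spectrum of a map $\Omega S^{2p^n-1}\to\BGL_1(X(p^n-1))$.

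Two specific points. First, your invocation of \cite{angeltveit-rognes,homological-hfpss} and ``$\B\Delta_n$-bookkeeping'' is misplaced: those ingredients feed into the calculation of $\THH(\BP{n-1}/X(p^n))$, not into this construction; there is no $\B\Delta_n$ in sight when comparing $X(p^n-1)$ to $X(p^n)$. Second, you do not need the \emph{trivial} square-zero extension claim---only that $R$ acts on the augmentation ideal $I$ through $\epsilon$, which is automatic from $R=X(p^n)\otimes S^{2p^n-1}_+$ and the cofiber sequence $S^{2p^n-1}_+\to S^0\to S^{2p^n}$ of spectra. Your handling of the $S^1$-equivariance of $\iota$ and the failure of equivariance for $\Theta_\cC$ is fine and matches the paper's.
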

\begin{remark}
Motivated by \cite{apc, drinfeld-prism}, we dub the map $\Theta_\cC$ the \textit{topological Sen operator}; its construction is motivated by the work of \cite{bpn-thom} relating $\BP{n}$ to Cohen-Moore-Neisendorfer type fiber sequences \cref{b-fib-phin}. When $\cC = \LMod_{\BP{n-1}}$, \cref{thh-calculations} implies that the map $\Theta$
sends 
$$\Theta: \theta_n^j \mapsto jp\theta_n^{j-1}.$$
When $n=1$, it therefore behaves like the Sen operator on the diffracted Hodge complex of $\Z_p$ which computes $\prismht_{\Z_p}\{\ast\}$.
\end{remark}
\begin{remark}
In \cref{analogues-ko-tmf} (see \cref{quaternionic-sen}), we describe a quaternionic analogue of \cref{intro-sen}, obtained by replacing $X(n)$ by the Thom spectrum $X_\bH(n)$ of the tautological symplectic bundle over $\Omega(\SU(2n)/\Sp(n))$ obtained via the map $\Omega(\SU(2n)/\Sp(n)) \to \Omega(\SU/\Sp) \simeq \BSp$ given by Bott periodicity.
\end{remark}

In \cref{xp-polynomial}, we define an $\Efr{2}$-ring $J(p)$ which admits an $\Efr{2}$-map $J(p) \to X(p)$ such that $\THH(T(1)/J(p)) \simeq T(1)[J_{p-1}(S^2)]$. The underlying $\E{1}$-ring of $J(p)$ is $S[\Z] = S[t^{\pm 1}]$ with $|t|=0$, but they differ as $\Efr{2}$-rings. The \textit{raison d'\^etre} for $J(p)$ is that $\THH(\Z_p/J(p))$ is polynomial on a class $x$ in degree $2$ which is a $p$th root of $\theta\in \pi_{2p} \THH(\Z_p/X(p))$. More precisely, there is an equivalence $\THH(\Z_p/J(p)) \simeq \Z_p[\Omega S^3]$ such that the map $\THH(\Z_p/J(p)) \to \THH(\Z_p/X(p))$ is induced by $\Z_p$-chains of the Hopf map $\Omega S^3 \to \Omega S^{2p+1}$. In \cref{higher-jp}, we also construct two $\Efr{2}$-rings (as Thom spectra over $\Omega\U(2)$ and $\Omega\Spin(4)$) which play the role of $J(p)$ for $\ku$ when $p=2$.

We construct the following cofiber sequence analogous to \cref{intro-sen} for any $J(p)$-linear $\infty$-category $\cC$:
$$\THH(\cC) \xar{\iota} \THH(\cC/J(p)) \xar{\Theta_\cC'} \Sigma^{2} \THH(\cC/J(p)).$$
It turns out that upon reducing the above cofiber sequence mod $p$, one obtains the following important example:
\begin{example}
If $\cC$ is a $\Z_p$-linear $\infty$-category, there is a cofiber sequence (see \cref{topological-sen-jpn})
\begin{equation}\label{intro-mod-p-sen}
    \THH(\cC) \otimes_{\Z_p} \FF_p \xar{\iota} \THH(\cC\otimes_{\Z_p} \FF_p) \xar{\Theta'} \Sigma^{2} \THH(\cC\otimes_{\Z_p} \FF_p).
\end{equation}
When $\cC = \Mod_{\Z_p}$, the effect of the map $\Theta'$ on homotopy is given by the map $\FF_p[\sigma] \to \Sigma^2 \FF_p[\sigma]$ which sends $\sigma^j\mapsto j\sigma^{j-1}$.
There is also a cofiber sequence
\begin{equation}\label{hp-cofib}
    \THH(\cC)^{t\Cp} \otimes_{\Z_p} \FF_p \xar{\iota} \HP(\cC\otimes_{\Z_p} \FF_p/\FF_p) \xar{\Theta'} \HP(\cC\otimes_{\Z_p} \FF_p/\FF_p).
\end{equation}
If $\cC = \Mod_R$ for an animated $\Z_p$-algebra $R$, we expect the maps in \cref{intro-mod-p-sen} and \cref{hp-cofib} to respect the motivic filtrations. Taking $\gr^i_\mot[-2i]$ would then produce the following cofiber sequences involving the associated graded pieces of the Nygaard filtration on the prismatic cohomologies of $R$ and $R/p$:
\begin{align*}
    (\cN^i \hat{\prism}_R)/p & \to \F^\conj_i \dR_{(R/p)/\FF_p} \to \F^\conj_{i-1} \dR_{(R/p)/\FF_p}, \\
    \prismht_R/p & \to \dR_{(R/p)/\FF_p} \to \dR_{(R/p)/\FF_p}.
\end{align*}
Such cofiber sequences on Hodge-Tate cohomology do indeed exist, and can be constructed purely algebraically using the methods of \cite{bhatt-mathew-syntomic} and \cite[Proposition 6.4.8]{apc}; see \cref{bhatt-mathew-cofiber} and \cref{dr-modp}.
\end{example}

We also show by explicit calculation:
\begin{propo}[\cref{polynomial-t1} and \cref{a1-inv} for precise statements]
There is an isomorphism $\pi_\ast \TP(\Z_p[t]/X(p))\cong \pi_\ast \HP(\BP{1}[t]/\BP{1})$.

Furthermore, the map $\TP^\gr(\BP{n-1}[t]/X(p^n)) \to \TP(\BP{n-1}/X(p^n))$ is an equivalence after $K(n)$-localization, and \cref{conjecture-thh} implies that (up to a Nygaard-type completion) $\Lk \TP(-/X(p^n))$ is $\AA^1$-invariant.
\end{propo}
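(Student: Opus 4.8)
The statement bundles three assertions; I would prove them in the order given, the third being a formal consequence of \cref{conjecture-thh} once the first two are in place.

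\textbf{Part 1: $\pi_\ast\TP(\Z_p[t]/X(p))\cong\pi_\ast\HP(\BP{1}[t]/\BP{1})$.} The plan is to reduce to \cref{thh-calculations}(a) with $n=1$ via base change for relative $\THH$. Writing $\Z_p[t]$ as the extension of scalars $\Z_p\otimes_{\sph}\sph[\Z_{\geq 0}]$ (a ``trivial'' map of $X(p)$-algebras through the base, so no issue with $X(p)$ failing to be $\E{3}$), one gets an $S^1$-equivariant equivalence $\THH(\Z_p[t]/X(p))\simeq\THH(\Z_p/X(p))\otimes_{\Z_p}\THH(\Z_p[t]/\Z_p)$, where $\THH(\Z_p[t]/\Z_p)$ has homotopy the free graded-commutative $\Z_p[t]$-algebra on a class $dt$ of degree $1$, with $S^1$ acting through the de Rham differential $t^w\mapsto w\,t^{w-1}dt$; the identical computation over $\BP{1}$ gives $\HH(\BP{1}[t]/\BP{1})$ with the same homotopy and circle action. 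I would then run the Tate spectral sequence on each side. By \cref{thh-calculations}(a) one has $\pi_\ast\TP(\Z_p/X(p))\cong\pi_\ast\BP{1}^{tS^1}[\B\Delta_1]=\pi_\ast\HP(\BP{1}/\BP{1})[\B\Delta_1]$ (and similarly for $\TC^-$); since $\pi_{\mathrm{odd}}\THH(\Z_p/X(p))=0$, the Connes operator vanishes on the $\theta_1$- and $\B\Delta_1$-classes, so every differential on both sides is forced by the de Rham differential on the $dt$-factor, and the two spectral sequences literally coincide. Comparing $E_\infty$-pages and resolving the (mild) extensions using the known structure of $\pi_\ast\BP{1}^{tS^1}$ then yields the isomorphism, with the $\B\Delta_1$-factor carried along harmlessly.

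\textbf{Part 2: the $K(n)$-local equivalence $\TP^\gr(\BP{n-1}[t]/X(p^n))\to\TP(\BP{n-1}/X(p^n))$.} Running the same base change in $\Z_{\geq 0}$-graded spectra ($t$ in weight $1$) gives $\THH^\gr(\BP{n-1}[t]/X(p^n))\simeq\THH(\BP{n-1}/X(p^n))\otimes_{\BP{n-1}}\THH^\gr(\BP{n-1}[t]/\BP{n-1})$, and since the graded Tate construction is computed weight by weight, the weight-$w$ ($w\geq1$) part is $\bigl(\THH(\BP{n-1}/X(p^n))\otimes_{\BP{n-1}}C_w\bigr)^{tS^1}$, where $C_w=\BP{n-1}\{t^w\}\oplus\BP{n-1}\{t^{w-1}dt\}$ has Connes operator ``multiplication by $w$''. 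The map to $C_w$ from the free $S^1$-module on a weight-$w$, degree-$0$ generator has cofiber $\Sigma\BP{n-1}/w$ with trivial $S^1$-action, and free $S^1$-modules — together with induced modules $M\otimes\BP{n-1}[S^1]$ — have vanishing Tate, so the weight-$w$ part of $\TP^\gr$ is $\Sigma\,\TP(\BP{n-1}/X(p^n))/w$, which vanishes for $p\nmid w$. Thus the cofiber of the weight-$0$ comparison map is assembled from the pieces $\Sigma\,\TP(\BP{n-1}/X(p^n))/p^{v_p(w)}$, $w\geq 1$. The content — and the step I expect to be the main obstacle — is that after $K(n)$-localization (and the product-over-weights assembly, which is the ``Nygaard-type completion'' appearing in the statement) this cofiber is $K(n)$-acyclic: here one must exploit that $v_n=\theta_n\hbar$ is a unit in $\Lk\TP(\BP{n-1}/X(p^n))$ by \cref{thh-calculations}, together with the fact that the $p$-divisibilities $v_p(w)$ grow only logarithmically in $w$, so that the $p^{v_p(w)}$-torsion contributions become $K(n)$-acyclically negligible in aggregate. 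I expect this is where the careful bookkeeping of \cref{a1-inv} lives.

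\textbf{Part 3: $\AA^1$-invariance of $\Lk\TP(-/X(p^n))$ assuming \cref{conjecture-thh}.} Given \cref{conjecture-thh}, I would bootstrap Part 2 along descent. Both $\THH(-/X(p^n))$ and $\THH(-/\BP{n})$ satisfy fppf descent, and \cref{conjecture-thh} controls $\Lk\TP(-/X(p^n))$ in terms of the even filtration and hence in terms of the $\BP{n}$-relative picture; so it suffices to establish $\AA^1$-invariance on polynomial $\BP{n}$-algebras $R=\BP{n}[\Z_{\geq 0}^j]$, where \cref{graded-drw-lift} reduces it to the graded-versus-ungraded comparison $\TP^\gr((R/v_n)/X(p^n))\to\TP((R/v_n)/X(p^n))$ handled exactly as in Part 2, and then to left Kan extend and descend to all animated $\BP{n}$-algebras. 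The only new ingredient beyond Part 2 is the compatibility of the weight filtration with the Nygaard-type filtration on the relevant $\TP$, which is precisely what \cref{conjecture-thh} is designed to supply; absent it, one obtains the statement only up to that completion, as asserted.
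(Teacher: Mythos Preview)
Your Part 2 contains a genuine error that propagates through the rest. The weight-$w$ summand of $\THH^\gr(\BP{n-1}[t]/X(p^n))$ is $\THH(\BP{n-1}/X(p^n))\otimes(S^1/\mu_w)_+$ with the diagonal $S^1$-action, and its $S^1$-Tate is $\Sigma\,\THH(\BP{n-1}/X(p^n))^{t\mu_w}$ (this is $S^1$-equivariant Poincar\'e duality for $S^1/\mu_w$; see \cref{tate-product}), not $\Sigma\,\TP(\BP{n-1}/X(p^n))/w$. Your cofiber-sequence argument fails because the cofiber of $(S^1)_+\to(S^1/\mu_w)_+$, while nonequivariantly $\Sigma S/w$, does \emph{not} carry the trivial $S^1$-action. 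A concrete check: for $w=p$ and $X=\THH(\BP{n-1}/X(p^n))$, \cref{thh-calculations}(a) gives $\pi_\ast X^{t\Cp}\cong\pi_\ast\BP{n}^{t\Cp}[\B\Delta_n]$, in which $v_n$ is nilpotent modulo $(p,\ldots,v_{n-1})$; by contrast, in $\pi_\ast X^{tS^1}/p\cong\pi_\ast\BP{n}^{tS^1}[\B\Delta_n]/p$ the class $v_n$ remains a polynomial generator. These are not isomorphic.

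This misidentification wrecks your $K(n)$-acyclicity argument: being $p^{v_p(w)}$-torsion is irrelevant for $K(n)$-acyclicity when $n\geq 1$ (every $K(n)$-local spectrum is already $p$-complete). The actual mechanism in the paper is that $\THH(\BP{n-1}/X(p^n))^{t\Z/p^m}$ has homotopy isomorphic to $\pi_\ast\BP{n}^{t\Z/p^m}[\B\Delta_n]$, and \cref{lk-bpn} shows $\Lk\BP{n}^{t\Z/p^m}=0$ because the $p^m$-series forces $v_n$ to be nilpotent in $\pi_\ast k(n)^{t\Z/p^m}$; since these are $\MU$-modules, that suffices. With this correction, Part 3 becomes a one-liner rather than a descent argument: \cref{conjecture-thh} makes $\THH(\BP{n-1}/X(p^n))$ an $S^1$-equivariant $\BP{n}$-module, hence $\THH(\cC/X(p^n))^{t\Z/p^m}$ is a $\BP{n}^{t\Z/p^m}$-module for any left $\BP{n-1}$-linear $\cC$, and \cref{lk-bpn} kills it $K(n)$-locally. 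Your Part 1 via the Tate spectral sequence is plausible but not the paper's route; the paper instead uses the decomposition $\THH(S[t],(t))\simeq\bigoplus_{m\geq 1}(S^1/\mu_m)_+$ together with \cref{tate-product} to reduce directly to computing $\THH(\Z_p/T(1))^{t\Z/m}$, which via \cref{thh-calculations}(a) matches $\BP{1}^{t\Z/m}$ on homotopy and sidesteps the extension problems you flag.
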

We also have:
\begin{conjectureno}[\cref{rel-jp-diffracted} and \cref{p-tilde-t1}]
Let $R$ be an animated $\Z_p$-algebra, and let $\F^\conj_\star \diffr_R$ denote the conjugate-filtered ($p$-completed) diffracted Hodge complex of \cite[Construction 4.7.1]{apc}. Then $\THH(R/J(p))$ admits a motivic filtration such that $\gr^i_\mot \THH(R/J(p)) \simeq (\F^\conj_i \diffr_R)[2i]$, and such that the map $\Theta'_R: \THH(R/J(p)) \to \Sigma^2 \THH(R/J(p))$ respects the motivic filtration and induces the map $\Theta + i: \F^\conj_i \diffr_R \to \F^\conj_{i-1} \diffr_R$ on $\gr^i_\mot$.

Similarly, $\THH(R/X(p))$ admits a motivic filtration such that $\gr^i_\mot \THH(R/X(p)) \simeq (\F^\conj_{pi} \diffr_R)[2pi] \otimes_R R[\BSU(p-1)]$.
Moreover, $\TP(R/X(p))$ admits a motivic filtration $\F^\star_\mot \TP(R/X(p))$ such that $\gr^i_\mot \TP(R/X(p)) \simeq \hat{\prism}_{R/\Z_p\pw{\ptl}}[2i] \otimes_R \epsilon^R$, where $\hat{\prism}_{R/\Z_p\pw{\ptl}}$ is the Nygaard completion of $\ptl \Omega_R$.
\end{conjectureno}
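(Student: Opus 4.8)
\medskip
\noindent\emph{Proposed proof strategy.} All three filtrations should be produced by quasisyntomic descent, in the style of \cite{bhatt-mathew-syntomic} and the even filtration of \cite{even-filtr}. The first task is to show that, for a quasiregular semiperfectoid $\Z_p$-algebra $R$, the spectra $\THH(R/J(p))^\wedge_p$, $\THH(R/X(p))^\wedge_p$ and $\TP(R/X(p))^\wedge_p$ are concentrated in even degrees. For the first two this follows by induction up the homotopy groups from the topological Sen cofiber sequences --- namely $\THH(R)\to \THH(R/J(p))\to \Sigma^2 \THH(R/J(p))$ and \cref{intro-sen} with $n=1$ --- using that $\THH(R)^\wedge_p$ is even for QRSP $R$ (Bhatt--Morrow--Scholze), and that relative $\THH$ of a connective algebra over a connective base is connective: in the long exact sequence an odd homotopy group of the relative $\THH$ injects into a strictly lower-indexed one, and hence vanishes by downward induction starting from connectivity. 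Evenness of $\TP(R/X(p))^\wedge_p$ then follows from the homotopy fixed-point and Tate spectral sequences. Having established evenness, one takes the motivic filtrations to be the even filtrations (equivalently, quasisyntomic sheafifications of the double-speed Postnikov filtration); these are automatically $S^1$-equivariant, and they satisfy flat, hence quasisyntomic, descent.

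\medskip
The heart of the matter is the identification of the graded pieces on a QRSP ring $R$. Using the base-change equivalence $\THH(R/J(p))\simeq \THH(R)\otimes_{\THH(\Z_p)}\THH(\Z_p/J(p))$ in the framed $\E{2}$ framework of \cite{framed-e2} (where $\Z_p$ is a $J(p)$-algebra), together with the polynomiality $\THH(\Z_p/J(p))\simeq \Z_p[\Omega S^3]$ of \cref{xp-polynomial}, the computation of $\pi_\ast \THH(R/J(p))^\wedge_p$ reduces to the structure of $\THH(R)^\wedge_p$, which for QRSP $R$ is controlled by Nygaard-completed prismatic (equivalently, conjugate-filtered Hodge--Tate) cohomology. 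Equivalently, one runs the topological Sen cofiber sequence: it exhibits $\THH(R)$ as $\fib(\Theta'\colon \THH(R/J(p))\to \Sigma^2\THH(R/J(p)))$, matching on the algebraic side the description of Hodge--Tate cohomology as $\fib(\Theta\colon \diffr_R\to \diffr_R)$ from \cite{apc}. Chasing this comparison through the even filtration should identify $\gr^i_\mot \THH(R/J(p))$ with $(\F^\conj_i \diffr_R)[2i]$ --- matching the conjugate graded pieces (the appropriate shifts of $\wedge^\bullet L_{R/\Z_p}$) and, crucially, matching the $x$-adic structure of $\Z_p[\Omega S^3]$ with the weight grading on $\diffr_R$. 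The same argument with $\THH(\Z_p/X(p))\simeq \Z_p[\B\Delta_1\times \Omega S^{2p+1}]$ (\cref{thh-calculations}(a), $n=1$) produces the $p$-fold speed-up $(\F^\conj_{pi}\diffr_R)[2pi]$ together with the extra tensor factor $R[\BSU(p-1)]$ coming from $\B\Delta_1\simeq \BSU(p-1)$. Finally, the relation $\Theta'_R \sim \Theta + i$ on $\gr^i_\mot$ is checked on $\pi_0$ of the graded pieces, where (by the discussion following \cref{topological-sen}) $\Theta'$ acts by $\theta^j\mapsto jp\,\theta^{j-1}$, resp.\ $x^j\mapsto j\,x^{j-1}$; on the weight-$i$ summand this is exactly the sum of the internal Sen operator $\Theta$ on $\diffr_R$ and the weight $i$, which is the form of the Sen operator on the conjugate filtration in \cite{apc}.

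\medskip
Granting this, the statements globalize: both the motivic-filtered relative $\THH$'s and the conjugate-filtered diffracted Hodge complex $\F^\conj_\star\diffr_{(-)}$ (with its Sen operator) satisfy quasisyntomic descent, so agreement on QRSP rings propagates to all animated $\Z_p$-algebras. For $\TP(R/X(p))$ one applies the Tate construction for the residual $S^1$-action on the filtered object: since this circle action is the ``topological $\G_m^\sharp$'', passing to $(-)^{tS^1}$ at the filtered level converts $\F^\conj_\star \diffr_R$ equipped with its Sen operator into its ``Sen-equivariant cohomology'', which by the Bhatt--Lurie description of prismatization relative to a $\delta$-ring base is the Nygaard completion $\hat{\prism}_{R/\Z_p\pw{\ptl}}$ of $\ptl\Omega_R$; and the surviving $R[\BSU(p-1)]$-factor, after the Tate construction on its own circle action, becomes the invertible $R$-module $\epsilon^R$.

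\medskip
The main obstacle lies in the second step, and in two places in particular. First, the paper emphasizes repeatedly that all the equivalences in play are only \emph{additive}: one cannot transport multiplicative structures, so the identification of the motivic filtration with $\F^\conj_\star\diffr_R$ must be carried out at the level of filtered spectra (or filtered $R$-modules), by first identifying the graded pieces as modules and then arguing that the filtered object is rigidified by its graded pieces together with the (algebraic, hence rigid) Sen operator. Second, and more seriously, one must prove that the \emph{topological} Sen operator $\Theta'$ --- whose construction runs through the Cohen--Moore--Neisendorfer fibrations of \cref{b-fib-phin} --- agrees, functorially in $R$, with the \emph{algebraic} Sen operator of \cite{apc} built from the Hodge--Tate stack. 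This is a comparison between a homotopy-theoretic and an algebro-geometric avatar of the same endomorphism; making it in families over all animated $\Z_p$-algebras, and checking compatibility with the Nygaard/conjugate filtrations, is the genuinely hard technical input, and is presumably why the statement is left as a conjecture.
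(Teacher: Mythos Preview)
The statement you were asked to prove is not a theorem in the paper: it is explicitly a \emph{conjecture} (labeled \texttt{conjectureno} in the introduction, and \cref{rel-jp-diffracted}, \cref{p-tilde-t1} in the body). The paper gives no proof; instead it collects supporting evidence (\cref{evidence-diffracted}, the perfectoid and $\Z/p^n$ calculations of \cref{thh-perfd}--\cref{zpn-xp}, the DVR example of \cref{cdvr-sen}, and the explicit polynomial computation of \cref{polynomial-t1}). You correctly recognize this in your final paragraph, identifying the comparison of the topological and algebraic Sen operators as ``presumably why the statement is left as a conjecture.'' So there is no ``paper's own proof'' to compare against.

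That said, your outline is a reasonable blueprint and is broadly consistent with the heuristics the paper offers. The evenness argument for $\THH(R/J(p))^\wedge_p$ on quasiregular semiperfectoid rings via the cofiber sequence and downward induction is clean and appears to work. A couple of small corrections: the factor $\epsilon^R = R[\BSU(p-1)]$ is not an invertible $R$-module (it is the $R$-chains on a space with nontrivial higher homology), and it does not carry a separate circle action on which one takes a Tate construction---it appears simply because $X(p)$ differs from $T(1)$ by a factor of $\Omega\SU(p-1)$, and this factor passes through $\TP$ inertly. Also, the identification of the cofiber of $\iota$ with a suspension in \cref{jp-sen} is only asserted \emph{nonequivariantly} in the paper, which is harmless for your evenness argument but would need to be upgraded for the $\TP$ statement.
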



In \cref{applications-thh}, we supplement \cref{rel-jp-diffracted} with some examples (such as $R$ being a $p$-complete perfectoid ring, $R = \Z/p^n$ for odd $p$, $R$ being a complete DVR of mixed characteristic $(0,p)$, and $R = \Z_p[t]$).
\begin{remark}
For the case $R = \Z/p^n$, we give ``two'' calculations of the diffracted Hodge complex $\diffr_{\Z/p^n}$; one uses abstract properties of the diffracted Hodge complex (and was explained to us by Bhatt), and the other (provided in \cref{alt-pf-zpn}) is via concrete calculations in the ring $W(\Z_p)$. In particular, in \cref{diffr-zpn}, we refine the calculation of \cite[Example 5.15]{prismatization} to show that there is an equivalence $\WCart^\HT_{\Z/p^n} \cong \GG_a^\sharp/\GG_m^\sharp$ of stacks over $\Z/p^n$.
\end{remark}
In \cref{aside-segal}, we also study an analogue of the Segal conjecture for $\THH$ relative to $J(p)$ and $T(n)$. One interesting consequence (\cref{reg-noeth-jp-segal}) is that if $R$ is a $p$-torsionfree discrete commutative ring such that $R/p$ is regular Noetherian and $L\Omega^n_R = 0$ for $n\gg 0$, then \cite[Remark 4.7.4]{apc} and \cref{rel-jp-diffracted} imply that $R$ satisfies a version of the Segal conjecture for $\THH$ relative to $J(p)$.

In \cref{general-cartier}, we prove an analogue of the Cartier isomorphism in Hochschild homology for a flat polynomial algebra over any $\E{2}$-ring, and show that it specializes to homotopical analogues of several known examples of the Cartier isomorphism. (This is quite likely well-known to some experts, but we could not find a source.)
\begin{propo}[\cref{general-cartier}]
Let $R$ be an $\E{2}$-ring. Then there is a $S^1$-equivariant map $\fr{C}: \HH(R^{t\Cp}[t]/R^{t\Cp}) \to \HH(R[t]/R)^{t\Cp}$ sending $t\mapsto t^p$, where $S^1$ acts on $\HH(R[t]/R)^{t\Cp}$ via the residual $S^1/\mu_p$-action, and on $\HH(R^{t\Cp}[t]/R^{t\Cp})$ via the diagonal action on Hochschild homology and residual $S^1/\mu_p$-action on $R^{t\Cp}$. If $t$ is given weight $1$, then $\fr{C}$ induces an $S^1$-equivariant equivalence $\HH(R^{t\Cp}[t]/R^{t\Cp})_{\wt \leq m} \to (\HH(R[t]/R)_{\wt\leq mp})^{t\Cp}$ of \textit{graded} $R^{t\Cp}$-modules.
\end{propo}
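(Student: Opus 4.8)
The plan is to reduce to the universal computation over the sphere spectrum and then use the cyclotomic structure on $\THH$ of the pointed monoid ring $\sph[\Z_{\ge 0}] = \sph[t]$. Since $R[t] = R\otimes_\sph \sph[t]$ and $R^{t\Cp}[t] = R^{t\Cp}\otimes_\sph \sph[t]$ --- and $R^{t\Cp}$ is again an $\E{2}$-ring, as $(-)^{t\Cp}\colon \Sp^{B\Cp}\to\Sp$ is lax symmetric monoidal --- base change for Hochschild homology gives $S^1$-equivariant, weight-graded equivalences
\[
\HH(R[t]/R)\simeq R\otimes_\sph \THH(\sph[t]), \qquad \HH(R^{t\Cp}[t]/R^{t\Cp})\simeq R^{t\Cp}\otimes_\sph \THH(\sph[t]),
\]
where $R$ and $R^{t\Cp}$ carry the trivial $S^1$-action and $t$ has weight $1$. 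I would then define $\fr{C}$ as the composite
\[
R^{t\Cp}\otimes_\sph \THH(\sph[t]) \xrightarrow{\ \id\otimes\varphi_p\ } R^{t\Cp}\otimes_\sph \THH(\sph[t])^{t\Cp} \longrightarrow \bigl(R\otimes_\sph \THH(\sph[t])\bigr)^{t\Cp},
\]
where $\varphi_p$ is the cyclotomic Frobenius of $\THH(\sph[t])$ (for $\Cp\subseteq S^1$) and the second map is the lax symmetric monoidal structure map of $(-)^{t\Cp}$ applied to $R$ (with trivial action) and $\THH(\sph[t])$. Because $\varphi_p$ is equivariant for $S^1\twoheadrightarrow S^1/\Cp$ and the lax structure map is natural, $\fr{C}$ is $S^1$-equivariant for the residual action on the target; because $\varphi_p$ multiplies weights by $p$ and is the $p$-th power map on $\pi_0$, $\fr{C}$ sends $t\mapsto t^p$; and $\fr{C}$ is $R^{t\Cp}$-linear.

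To see that the induced map $\HH(R^{t\Cp}[t]/R^{t\Cp})_{\wt\le m}\to\bigl(\HH(R[t]/R)_{\wt\le mp}\bigr)^{t\Cp}$ is an equivalence, I would compare weight by weight. The $\Cp$-action preserves the weight grading (it is generated by the Connes operator $B$, which does), so $\bigl(\HH(R[t]/R)_{\wt\le mp}\bigr)^{t\Cp}$ is the \emph{finite} sum $\bigoplus_{n=0}^{mp}\bigl(R\otimes_\sph \THH(\sph[t])_{\wt=n}\bigr)^{t\Cp}$. Recall the standard weight decomposition $\THH(\sph[t]) = \Sigma^\infty_+ B^{\cyc}\Z_{\ge 0}$: the weight-$0$ summand is $\sph = \THH(\sph)$, and for $n\ge 1$ the weight-$n$ summand is $\Sigma^\infty_+ S^1$ with $S^1$ acting through its degree-$n$ self-map, so $\Cp\subseteq S^1$ acts freely on it when $p\nmid n$ and trivially when $p\mid n$. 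Hence $\bigl(R\otimes \THH(\sph[t])_{\wt=n}\bigr)^{t\Cp}$ vanishes for $p\nmid n$ (the $\Cp$-spectrum is induced), while for $n=pj$ the trivial $\Cp$-action and perfectness of $\Sigma^\infty_+ S^1$, combined with the cyclotomic structure on $\THH(\sph[\Z_{\ge 0}])$ --- which $S^1/\Cp$-equivariantly identifies the $\Cp$-fixed points of the weight-$pj$ part with the weight-$j$ part (the $p$-th power reparametrization $S^1\twoheadrightarrow S^1/\Cp$ turning the degree-$pj$ action into the degree-$j$ one) --- yield $\bigl(R\otimes \THH(\sph[t])_{\wt=pj}\bigr)^{t\Cp}\simeq R^{t\Cp}\otimes\THH(\sph[t])_{\wt=j}$. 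So the target is $\bigoplus_{j=0}^{m}\bigl(R\otimes\THH(\sph[t])_{\wt=pj}\bigr)^{t\Cp}$, matched summand by summand with the source $\bigoplus_{j=0}^{m} R^{t\Cp}\otimes\THH(\sph[t])_{\wt=j}$; and under these identifications $\varphi_p$ restricts on the weight-$j$ summand to $\id\otimes(\sph\to\sph^{t\Cp})$ (the canonical map, which is the lax unit of $(-)^{t\Cp}$), so the weight-$j$ component of $\fr{C}$ is the identity by the right-unit coherence of the lax monoidal functor $(-)^{t\Cp}$. Summing over $j$ gives the asserted $S^1$-equivariant equivalence of graded $R^{t\Cp}$-modules.

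The main obstacle is assembling the facts about the cyclotomic structure on $\THH(\sph[\Z_{\ge 0}])$ that this rests on: the weight decomposition together with the free-versus-trivial dichotomy for the $\Cp$-action, and the $S^1/\Cp$-equivariant identification of the weight-$pj$ $\Cp$-fixed points with the weight-$j$ part (whence $\varphi_p$ restricts on weight pieces to $\id\otimes(\sph\to\sph^{t\Cp})$). These are classical --- implicit in the B\"okstedt-Hsiang-Madsen and Hesselholt-Madsen analyses of $\THH$, $\TR$, and $\TC$ of polynomial and pointed monoid rings --- so the real work is to transport them through the base-change equivalences and the lax monoidal structure map while keeping the residual circle actions straight; once that is set up, what remains are the routine weight-by-weight comparisons above. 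Specializing $R$ (to $\FF_p$, $\Z_p$, $\ku$, and so on) and comparing with the computations already in the literature then produces the homotopical Cartier isomorphisms advertised after the statement.
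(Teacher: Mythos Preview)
Your proof is correct and follows essentially the same route as the paper. You construct $\fr{C}$ exactly as the paper does (via $\id\otimes\varphi_p$ followed by the lax monoidal structure map of $(-)^{t\Cp}$), and your weight-by-weight analysis is the content of the paper's Lemma~\ref{smash-with-sphere-tate}: the vanishing for $p\nmid n$ comes from freeness of the $\Cp$-action on $S^1/\mu_n$, and the equivalence for $n=pj$ comes from dualizability of $(S^1/\mu_{pj})_+$ together with the Frobenius reparametrization $(S^1/\mu_j)_+\to((S^1/\mu_{pj})_+)^{h\Cp}$. The only difference is packaging: the paper isolates this as a preliminary lemma and phrases everything in terms of the Nikolaus--Scholze Frobenius directly (so there is no need to invoke genuine $\Cp$-fixed points), whereas you argue it inline and briefly appeal to the older cyclotomic picture.
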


In \cref{mfg-sen}, we describe the topological Sen operator from the perspective of the moduli stack $\Mfg$ of formal groups.
We begin by describing an algebraic analogue of $\THH$. This is given by an Adams-Novikov analogue of the B\"okstedt spectral sequence: if $R$ is a $p$-local homotopy commutative ring such that $\MU_\ast(R)$ is concentrated in even degrees, one can define a stack $\cM_R$ whose coherent cohomology is the $E_2$-page of the Adams-Novikov spectral sequence for $R$ (see \cite[Chapter 9]{tmf}). 
\begin{propo}[\cref{anss-bokstedt}]
If $\gr^\bull_\ev \THH(R)$ denotes the associated graded of the \textit{even filtration} of \cite{even-filtr} on $\THH(R)$ (which recovers the motivic filtration on $\THH(R)$ of \cite{bms-ii} when $R$ is quasisyntomic), then there is a spectral sequence:
$$\pi_\ast \HH(\cM_R/\Mfg) \Rightarrow \pi_\ast \gr^\bull_\ev \THH(R).$$
There is also an analogue for relative $\THH$.
\end{propo}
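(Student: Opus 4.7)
The plan is to realize the even filtration of \cite{even-filtr} as a descent filtration along the Hopkins-Miller cover $\spec \MU \to \Mfg$, and then identify the resulting associated graded with the sheaf-theoretic Hochschild homology of $\cM_R$ relative to $\Mfg$. First I would recall that the even filtration on $\THH(R)$ is defined by
$$\F^\star_\ev \THH(R) \simeq \lim_{[n]\in\Deltab} \tau_{\geq 2\star}\bigl(\THH(R)\otimes \MU^{\otimes(n+1)}\bigr),$$
so that
$$\gr^\star_\ev \THH(R) \simeq \Tot\Bigl( [n]\mapsto \pi_{2\star}\bigl(\THH(R)\otimes \MU^{\otimes(n+1)}\bigr)[2\star]\Bigr)$$
as an object of the category of graded spectra. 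Running the Bousfield--Kan spectral sequence associated to this cosimplicial object (equivalently, the descent spectral sequence for $\spec \MU \to \Mfg$) yields a spectral sequence converging (conditionally) to $\pi_\ast\gr^\star_\ev\THH(R)$.

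The next step is to identify the $E_2$-page. Because $\MU_\ast R$ is concentrated in even degrees by hypothesis, the Bökstedt spectral sequence
$$\HH^{\MU_\ast \MU^{\otimes(n+1)}}_\ast\bigl(\MU_\ast\bigl(R\otimes \MU^{\otimes(n+1)}\bigr)\bigr) \Rightarrow \MU_\ast\THH\bigl(R\otimes \MU^{\otimes(n+1)}\bigr)$$
collapses (being concentrated on an even $E_2$-page, as in the even case of \cite{bms-ii}). This identifies the $n$th cosimplicial level $\pi_{2\star}(\THH(R)\otimes \MU^{\otimes(n+1)})[2\star]$ with the Hochschild homology $\HH^{\MU_\ast\MU^{\otimes(n+1)}}_\ast(\MU_\ast R\otimes(\MU_\ast\MU)^{\otimes n})$, which in stacky language is exactly the value of the sheaf of relative Hochschild homology of $\cM_R\to\Mfg$ on the $n$-fold fiber product $(\spec\MU)^{\times_{\Mfg} (n+1)}$. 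Taking the totalization assembles these into $\pi_\ast \HH(\cM_R/\Mfg)$ by faithfully flat descent along $\spec \MU\to\Mfg$, yielding the desired spectral sequence
$$E_2^{\ast,\star} = \pi_\ast\HH(\cM_R/\Mfg) \Rightarrow \pi_\ast\gr^\star_\ev \THH(R).$$

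For the relative version, one replaces $\MU$ by the appropriate base (e.g.\ $X(p^n)$ or $T(n)$, using their even-degree homology groups) and runs the analogous descent procedure; the resulting stack over $\Mfg$ is the quotient $\spec(X(p^n)_\ast)/\spec(X(p^n)_\ast X(p^n))$, which maps to $\Mfg$ via the standard formal group law on $X(p^n)$, and one concludes as above using \cref{thh-calculations} to guarantee evenness of $X(p^n)_\ast\THH(R/X(p^n))$ after base change.

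The main obstacle is verifying that the cobar complex above genuinely computes the even filtration in the stated generality, which amounts to checking that $\MU_\ast\THH(R\otimes\MU^{\otimes(n+1)})$ is concentrated in even degrees for each $n$. This follows from the evenness of $\MU_\ast R$ together with Bökstedt's equivalence $\THH(\MU)\simeq \MU[\Omega S^3]$ (and its Thom-spectral generalization implicit in \cref{thh-calculations}), but requires care when one replaces $\MU$ by $X(p^n)$ --- where the analogous evenness must be extracted from the $\B\Delta_n$-factor appearing in the theorem. A secondary technical point is convergence: the conditional convergence of the descent spectral sequence becomes strong convergence once one knows the even filtration is complete on $\THH(R)$, which is part of the formalism of \cite{even-filtr} under the standing assumptions on $R$.
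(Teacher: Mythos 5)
Your proposal has a genuine gap, concentrated in its two load-bearing steps. First, the formula $\F^\star_\ev \THH(R) \simeq \lim_{\Deltab}\tau_{\geq 2\star}\bigl(\THH(R)\otimes \MU^{\otimes \bullet+1}\bigr)$ is not available here: identifying the even filtration with the d\'ecalage of an $\MU$-cobar resolution requires the cosimplicial levels to be \emph{even}, and $\THH(R)\otimes\MU^{\otimes(n+1)}\simeq \THH(R\otimes\MU^{\otimes(n+1)}/\MU^{\otimes(n+1)})$ has no reason to be even --- the justification you offer is based on a misquotation, since $\THH(\MU)\simeq \MU\otimes\Sigma^\infty_+\SU$ has exterior \emph{odd} homotopy ($\MU[\Omega S^3]$ is B\"okstedt's computation of $\THH(\FF_p)$, not of $\THH(\MU)$), and the standing hypothesis is only that $\MU_\ast R$ is even. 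Second, and independently, the claimed collapse of the levelwise B\"okstedt spectral sequence is false: if $\MU_\ast R$ is even, then its Hochschild homology is even in \emph{internal} degree but occupies all homological degrees, so the $E_2$-page has odd total-degree classes ($\sigma x$'s, exterior generators), and these spectral sequences genuinely do not degenerate --- the B\"okstedt differentials on divided powers are exactly what drives \cref{homology-thh-bpn} and \cref{thh-calculations}, and the stacky avatar \cref{hh-bpn-tn-mfg} exhibits odd classes $dt_i$ and nonzero $d^{p-1}$-differentials on precisely the page you need to be even and degenerate. Without that collapse, neither the identification of the cosimplicial levels with Hochschild homology of the Hopf algebroid nor the identification of the resulting $E_2$-page with $\pi_\ast\HH(\cM_R/\Mfg)$ goes through (there is also an unaddressed flatness/K\"unneth issue in writing those levels as Hochschild homology at all).

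More structurally, you are filtering in the wrong direction and conflating two different spectral sequences. The cobar/descent filtration you run is (when it applies) the Adams--Novikov, i.e.\ motivic, filtration of $\THH(R)$, whose spectral sequence abuts to $\pi_\ast\THH(R)$ with $E_2$-page $\pi_\ast\gr^\bull_\ev\THH(R)$; the proposition asks for a spectral sequence one layer deeper, converging to that $E_2$-page itself. The paper's construction (\cref{anss-bokstedt}) filters the \emph{simplicial} direction instead: write $\THH(B/A)$ as the geometric realization of the cyclic bar construction $B^{\otimes_A\bullet+1}$, observe that each level has even $\MU$-homology because $\MU_\ast A$ and $\MU_\ast B$ do, apply $\F^\star_\ev$ (equivalently, $p$-completely, the Adams--Novikov tower, by \cref{even-filtr}) levelwise, and take the geometric realization spectral sequence of the resulting simplicial graded object; its input is the cyclic bar construction of $\cM_B$ over $\cM_A$, i.e.\ $\pi_\ast\HH(\cM_B/\cM_A)$, and its abutment is $\pi_\ast\gr^\bull_\ev\THH(B/A)$ interpreted via this levelwise filtration (which is also how the statement makes sense when $A$ and $B$ are merely $\E{2}$). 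Taking $A=S^0$, $B=R$ gives the proposition. If you insist on a cobar-style argument, you would have to resolve by relative $\THH$'s such as $\THH(R/\MU^{\otimes\bullet+1})$, which can be even in good cases; but that computes $\gr^\bull_\ev\THH(R)$ directly and does not produce $\HH(\cM_R/\Mfg)$ as input without redoing the levelwise B\"okstedt analysis that your proposal skips.
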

This spectral sequence behaves essentially like the B\"okstedt spectral sequence in most examples. In particular, if $R \to R'$ is a map of $p$-local homotopy commutative rings whose $\MU$-homologies are concentrated in even degrees, then $\HH(\cM_R/\cM_{R'})$ can be viewed as the ``Adams-Novikov-B\"okstedt associated graded'' of $\gr^\bull_\ev \THH(R/R')$.
Motivated by this perspective, we describe an analogue of the topological Sen operator of \cref{topological-sen} as a Gauss-Manin connection on stacks related to $\Mfg$ (see \cref{algebraic-sen}):
\begin{thmno}[\cref{algebraic-sen} and \cref{gm-sharp}]
The stack $\cM_{T(n)}$ is isomorphic to the moduli stack of graded $p$-typical formal groups equipped with a $p$-typical coordinate of order $\leq p^n$. Moreover, the Adams-Novikov analogue of \cref{topological-sen} is a fiber sequence
$$\HH(X/\cM_{T(n-1)}) \to \HH(X/\cM_{T(n)}) \xar{\Theta_\mot} \Sigma^{2p^n,p^n} \HH(X/\cM_{T(n)})$$
associated to any stack $X \to \cM_{T(n)}$, where $\Sigma^{n,w}$ denotes a shift by homological degree $n$ and weight $w$.

Similarly, there is a fiber sequence
$$\HH(X/\Mfg) \to \HH(X/\cM_{J(p)}) \xar{\Theta_\mot} \Sigma^{2,1} \HH(X/\cM_{J(p)})$$
associated to any stack $X \to \cM_{J(p)}$.
\end{thmno}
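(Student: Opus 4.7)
The strategy has three ingredients: identify $\cM_{T(n)}$ concretely, establish the fiber sequence for $\cM_{T(n)}$ geometrically, and then adapt for $\cM_{J(p)}$. To identify $\cM_{T(n)}$, I would start from the definition of $\cM_R$ as the stack associated to the Hopf algebroid $(\MU_\ast R, \MU_\ast R \otimes_{\MU_\ast} \MU_\ast \MU \otimes_{\MU_\ast} \MU_\ast R)$. Since $X(p^n)$ is the Thom spectrum of $\Omega \SU(p^n) \to \BU$ we have $\MU_\ast X(p^n) \cong \MU_\ast[b_1, \dots, b_{p^n-1}]$; passing to the summand $T(n) \subseteq X(p^n)_\ppar$ gives $\MU_\ast T(n) \cong \MU_\ast[t_1, \dots, t_n]$ with the Hazewinkel/Araki generators in degrees $|t_i|=2(p^i-1)$. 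Translating the comodule structure maps through the standard dictionary (where $t_i$ records the $i$th coefficient of a strict $p$-typical isomorphism of $p$-typical formal groups) then presents $\cM_{T(n)}$ as the moduli of pairs $(F,(t_1,\dots,t_n))$ with $F$ a graded $p$-typical formal group and $(t_i)$ the specification of a $p$-typical coordinate through order $p^n$.

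For the fiber sequence, the $\Efr{2}$-map $X(p^{n-1})_\ppar \to X(p^n)_\ppar$ induces the forgetful map $\cM_{T(n)} \to \cM_{T(n-1)}$ that drops $t_n$. Geometrically this exhibits $\cM_{T(n)} \to \cM_{T(n-1)}$ as a $\GG_a$-torsor of bidegree $(2p^n, p^n)$ (the fiber direction is a line bundle on which the grading weight $p^n$ and homological shift $2p^n$ match the weight and degree of $v_n$). The fiber sequence is then the Euler/descent cofiber sequence associated to this $\GG_a$-torsor: for any map $X \to \cM_{T(n)}$, there is a two-term Koszul resolution of $\sO_X$ over $\cM_{T(n-1)}$, and applying $\HH(-/\cM_{T(n-1)})$ yields a cofiber sequence of the claimed shape, with the connecting map $\Theta_\mot$ arising from the canonical $\GG_a$-action on $\cM_{T(n)}$ relative to $\cM_{T(n-1)}$ (the weight-$p^n$ infinitesimal generator). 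The step that requires care is to check that the connecting map really globalizes to a single operator on $\HH(X/\cM_{T(n)})$ rather than a system of boundary maps; this follows from trivializing the $\GG_a$-torsor smooth-locally on $\cM_{T(n-1)}$ and using that translation commutes with itself to descend the infinitesimal action to the total space. The compatibility of this $\Theta_\mot$ with the topological Sen operator $\Theta_\cC$ of \cref{topological-sen} is then automatic via the spectral sequence of \cref{anss-bokstedt} once one observes that both arise from the same $\GG_a$-torsor structure (topologically coming from the EHP-type fiber sequence underlying $X(p^{n-1}) \to X(p^n)$, algebraically from the forgetful map on coordinates).

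For the $\cM_{J(p)}$ statement, the underlying $\E{1}$-ring structure gives $\MU_\ast J(p) \cong \MU_\ast[t^{\pm 1}]$ with $|t|=0$, but the $\Efr{2}$-structure through $J(p) \to X(p)$ forces $t$ to be a unit sitting in tangential grading weight $1$; dually, $\cM_{J(p)} \to \Mfg$ is the $\GG_m$-torsor of nowhere-vanishing sections of the tangent line $\omega$ (with $t \leftrightarrow$ a trivialization of $\omega$), and the weight $1$ reflects the natural grading action on $\omega$. Running the descent argument of the previous paragraph with a $\GG_m$-torsor of weight $1$ in place of a $\GG_a$-torsor of weight $p^n$, the associated Euler/Koszul cofiber sequence produces exactly
$$\HH(X/\Mfg) \to \HH(X/\cM_{J(p)}) \xar{\Theta_\mot} \Sigma^{2,1} \HH(X/\cM_{J(p)}),$$
with $\Theta_\mot$ the Euler derivation of the $\GG_m$-action; compatibility with the topological story follows as before. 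The main obstacle in both cases is the bookkeeping of weights versus homological degrees in the descent spectral sequence, since the $\GG_a$ (resp.\ $\GG_m$) torsor only becomes trivial after restricting to the even filtration, and one must carefully track how the $\Mfg$-gradings interact with the grading imposed by the Adams-Novikov $E_2$-page in \cref{anss-bokstedt}.
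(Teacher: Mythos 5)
Your overall strategy coincides with the paper's: identify $\cM_{T(n)}$ via the standard Hopf-algebroid dictionary, observe that $\cM_{T(n)} \to \cM_{T(n-1)}$ is an affine ($\GG_a$-)bundle and that $\cM_{J(p)} \to \Mfg$ is a $\GG_m$-bundle, and extract the two fiber sequences as the associated Gauss--Manin/Euler sequences; the paper packages the last step via the transitivity cofiber sequence of cotangent complexes (\cref{gauss-manin}), which also disposes of your "globalization" worry, since the relative cotangent complex is globally free of rank one on $dt_n$ (resp.\ $d\log t$), so no smooth-local trivialization argument is needed (the affine-bundle structure itself is quoted from \cite{ericbook}).

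Two of your identifications should be corrected, though neither affects the shape of the final sequences. First, the $\GG_a$-direction of $\cM_{T(n)} \to \cM_{T(n-1)}$ carries the bidegree of the coordinate $t_n$, namely degree $2p^n-2$ and weight $p^n-1$ (the paper writes the quotient as being by $\GG_a^{(p^n-1)}/\GG_m$); the bidegree $(2p^n,p^n)$ appearing in the fiber sequence is that of $\sigma(dt_n)$ after the degree shift and the weight twist built into the de Rham/Hochschild construction, and matches $\sigma^2(v_n)$, not $v_n$. Second, your identification of $\cM_{J(p)} \to \Mfg$ as the torsor of trivializations of $\omega$, with $t$ "in weight $1$", is inconsistent with the fact you quote that $\MU \otimes J(p) \simeq \MU[t^{\pm 1}]$ with $|t|=0$: the class $t$ comes from $\pi_0 J(p)$, hence is a degree-$0$, weight-$0$ comodule primitive, so the fiber coordinate of this $\GG_m$-bundle sits in weight $0$ rather than being a section of $\omega$. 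The paper (\cref{gm-sharp}) only uses that it is some $\GG_m$-bundle, and since $d\log t$ has bidegree $(2,1)$ after shearing independently of the weight of $t$, your conclusion $\Sigma^{2,1}$ is unaffected — but the identification itself should be dropped or justified differently.
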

\begin{remark}
In \cref{analogues-ko-tmf} (\cref{stack-XHn} and \cref{quaternionic-sen}), we also study a quaternionic analogue of the above fiber sequence. This description crucially relies on the twistor fibration $\CP^{2n-1} \to \HHP^{n-1}$, which is given in coordinates by the map $[z_1: \cdots: z_{2n}] \mapsto [z_1 + z_2 \mathbf{j} : \cdots: z_{2n-1} + z_{2n} \mathbf{j}]$.
\end{remark}

\subsection{Some complements}

In \cref{conjecture-thh}, we suggest that the identification of $\pi_\ast \TP(\BP{n-1}/X(p^n))$ can be extended to an equivalence $\TP(\BP{n-1}/X(p^n)) \simeq \BP{n}^{tS^1}[\B\Delta_n]$ of spectra:
\begin{conjectureno}[\cref{conjecture-thh}]
The spectrum $\THH(\BP{n-1}/X(p^n))$ admits the structure of an $S^1$-equivariant $\BP{n}$-module, and 
the isomorphism $\pi_\ast \TP(\BP{n-1}/T(n)) \cong \pi_\ast \BP{n}^{tS^1}$ lifts to an equivalence of spectra $\TP(\BP{n-1}/T(n)) \simeq \BP{n}^{tS^1}$. 
\end{conjectureno}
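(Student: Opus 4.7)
\emph{Plan.} The strategy is to construct a comparison map $\BP{n}^{tS^1} \to \TP(\BP{n-1}/T(n))$ and reduce the conjectured equivalence to the homotopy-group identification from \cref{thh-calculations}. Concretely, I would first try to produce an $S^1$-equivariant, $\BP{n-1}$-linear ring map $\rho \colon \BP{n} \to \TC^-(\BP{n-1}/X(p^n))$ (with $\BP{n}$ given the trivial $S^1$-action) sending $v_n \in \pi_{2p^n-2}\BP{n}$ to the class $\theta_n \hbar$, which already realizes $v_n$ in homotopy via the relation $\theta_n \hbar = v_n$ from \cref{thh-calculations}. Using the Hahn--Wilson $\E{3}$-$\BP{n-1}$-algebra structure on $\BP{n}$, the lift of $\rho$ can be attempted inductively by killing a sequence of Postnikov obstructions; the cofiber sequence
\[
  \THH(-/X(p^n-1)) \longrightarrow \THH(-/X(p^n)) \xrightarrow{\Theta} \Sigma^{2p^n} \THH(-/X(p^n))
\]
of \cref{topological-sen} provides the inductive handle, expressing these obstructions in terms of the corresponding data for $X(p^n-1)$, which by induction is comparatively well-understood.

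Given $\rho$, I would then apply the Tate construction $(-)^{tS^1}$ to obtain a map
\[
  \BP{n}^{tS^1} \longrightarrow \TP(\BP{n-1}/X(p^n));
\]
restricting to the $T(n)$-summand of $X(p^n)_\ppar$ on the right produces the desired candidate comparison $\BP{n}^{tS^1} \to \TP(\BP{n-1}/T(n))$. Both sides are complete with respect to an $\hbar$-adic filtration, and the induced map on associated graded pieces realizes the known isomorphism from \cref{thh-calculations}, so a Nakayama-type argument in the complete-filtered setting should suffice to upgrade the homotopy-group isomorphism to a spectrum-level equivalence. The $\BP{n}$-module structure on $\THH(\BP{n-1}/X(p^n))$ itself then falls out by composing $\rho$ with the projection $\TC^- \to \THH$ sending $\hbar \mapsto 0$, and the $S^1$-equivariance is built into the construction.

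The hardest step will be the construction of $\rho$, because $X(p^n)$ does not admit an $\E{3}$-structure (see \cref{Xn-e3}), and so $\TC^-(\BP{n-1}/X(p^n))$ does not a priori carry a canonical $\E{1}$-ring structure making the obstruction-theoretic lift immediately applicable. The most natural workaround is to first establish \cref{tn-e2}: if $T(n)\subseteq X(p^n)_\ppar$ admits an $\Efr{2}$-structure, then $\TC^-(\BP{n-1}/T(n))$ is a genuine $\E{1}$-ring, the spurious $\B\Delta_n$ factor in \cref{thh-calculations} disappears, and the Postnikov obstruction calculus becomes available in standard form. Alternatively, one could construct $\rho$ first in $\TC^-(\BP{n-1}/\MU)$ (where $\MU$-linearity makes the map essentially tautological) and then try to descend along $X(p^n) \to \MU$; executing this descent $S^1$-equivariantly and compatibly with the splitting off of the $T(n)$-summand is itself a substantial undertaking and would likely require a finer understanding of the relative cyclotomic structure coming from the $\Efr{2}$-map $X(p^n)\to\MU$.
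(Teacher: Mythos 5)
You should note first that the statement you are addressing is stated in the paper as \cref{conjecture-thh} and is \emph{not} proved there: the paper offers only partial unconditional evidence (for $n=1$, a map $\TP(\Z_p/X(p)) \to \bigoplus_{j>-(p-1)}\Sigma^{2j}\BP{1}$ inducing the expected inclusion of summands in a range, produced by an Adams spectral sequence vanishing argument), and it explicitly identifies the obstruction: it is not clear how to endow $\TP(\BP{n-1}/X(p^n))$ or $\THH(\BP{n-1}/X(p^n))^{t\Cpn}$ with the structure of a $\BP{n}$-module. Your proposal does not close this gap. Everything in your plan hinges on the construction of the $S^1$-equivariant map $\rho\colon \BP{n}\to \TC^-(\BP{n-1}/X(p^n))$ realizing $v_n\mapsto \theta_n\hbar$, and your two routes to it are deferred either to \cref{tn-e2} (itself an open conjecture) or to an unspecified equivariant descent along $X(p^n)\to\MU$ compatible with the Quillen-idempotent splitting, which you yourself flag as a substantial undertaking. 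So what you have is a reduction of one open problem to other open problems, not a proof; this is essentially the same state of affairs the paper records.

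Moreover, one specific step fails as written. Even granting \cref{tn-e2}, a framed $\E{2}$-structure on $T(n)$ only equips $\THH(\BP{n-1}/T(n))$ with an $S^1$-action and an $S^1$-equivariant unit map from $T(n)$; it does not make $\THH(\BP{n-1}/T(n))$, hence $\TC^-(\BP{n-1}/T(n))$, an $\E{1}$-ring. The paper warns about exactly this point: relative $\THH$ over a (framed) $\E{2}$ base rarely inherits multiplicative structure from its input, and getting a ring structure would require roughly an $\E{3}$-structure on the base, which $X(p^n)$ provably lacks (\cref{Xn-e3}). So the Postnikov-tower obstruction calculus ``in standard form'' is not available even under your auxiliary hypothesis, and the obstruction groups you would need are not defined. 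The back end of your plan is fine once $\rho$ exists --- applying $(-)^{tS^1}$, comparing filtrations, and invoking the computation of \cref{thh-calculations}(a) together with completeness (which holds here by bounded-below and finite-type considerations) does upgrade a $\pi_*$-isomorphism to an equivalence --- but the construction of $\rho$, equivalently of the $S^1$-equivariant $\BP{n}$-linear structure, is the entire content of the conjecture and remains unaddressed.
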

This discussion suggests viewing the pair $(\pi_0 \BP{n}^{tS^1}, (\frac{[p](\hbar)}{\hbar}))$ as a higher chromatic analogue of the crystalline prism $(\Z_p, (p))$, where $\hbar$ is the complex orientation of $\BP{n}$ and $[p](\hbar)$ is its $p$-series. Note that the pair $(\pi_0 \BP{n}^{tS^1}, (\frac{[p](\hbar)}{\hbar}))$ has no reason to naturally admit the structure of a prism.

Finally, it would be interesting to know whether \cref{uniformity-slogan} can be used to prove \cite[Conjecture 6.1]{lee-thh}. A first step in this direction would be to show that the topological Sen operators on $\THH(\BP{n}/X(p^j))$, $\THH(\BP{n-1}/X(p^{j-1}))$, ..., and $\THH(\BP{n-j})$ can also be matched up under the structural uniformity of \cref{uniformity-slogan}. (Also see \cref{mod-p-vn}.)

This article suggests several directions in which the work presented here can be extended; we have recorded these as \cref{tn-e2}, \cref{conjecture-thh}, \cref{jpn-conjecture}, \cref{rel-jp-diffracted}, the closely related \cref{p-tilde-t1}, \cref{c2-equiv-analogues}, and \cref{a-b-e2}. We wish to emphasize that, unlike \cite[Theorem A and Corollary B]{bpn-thom}, the main results of this article are entirely unconditional, and can be viewed as (in our opinion, substantial) evidence for the conjectures presented here and in \cite{bpn-thom}.

\subsection{Acknowledgements}
I'm grateful to Ben Antieau, Elden Elmanto, Jeremy Hahn, Ishan Levy, Sasha Petrov, Arpon Raksit, and Andy Senger for conversations on these and related topics; to Bhargav Bhatt for explaining \cref{ht-div-pow} to me; to Akhil Mathew for telling me about the cofiber sequence \cref{dr-modp}; and to Andy Baker for a discussion about the spectra $X_\bH(n)$ from \cref{XH-n}. Some of the ideas in this article started during a visit to Northwestern in March 2022, and I'm especially grateful to Ben Antieau for the opportunity to visit; I would have never been able to understand \cite{apc} --- more generally, this subject area --- were it not for him. I would also like to thank my advisors Dennis Gaitsgory and Mike Hopkins for their advice, support, and influence on me.
\newpage


\section{Calculation of $\THH$}
\subsection{Review of $X(p^n)$}
\begin{definition}[{Ravenel, \cite[Section 3]{ravenel-loc}}]
Let $X(n)$ denote the Thom spectrum of the $\E{2}$-map $\Omega \SU(n) \subseteq \BU \xrightarrow{J} \B\GL_1(S)$, where the first map arises from Bott periodicity.
\end{definition}
\begin{example}
The $\E{2}$-ring $X(1)$ is the sphere spectrum, while $X(\infty)$ is $\MU$. Since the map $\Omega \SU(n) \to \BU$ is an equivalence in dimensions $\leq 2n-2$, the same is true for the map $X(n) \to \MU$; the first dimension in which $X(n)$ has an element in its homotopy which is not detected by $\MU$ is $2n-1$. In other words, writing $\pi_\ast \MU = \Z[b_1, b_2, \cdots]$ with $|b_i| = 2i$, the classes $b_1, \cdots, b_{n-1}$ lift to $X(n)$; there is an inclusion $\Z[b_1, \cdots, b_{n-1}] \subseteq \pi_\ast X(n)$.
\end{example}
\begin{remark}\label{Xn-e3}
The $\E{2}$-structure on $X(n)$ does \emph{not} extend to an $\E{3}$-structure (see \cite[Example 1.5.31]{lawson-dl}).
\end{remark}
After localizing at a prime $p$, the spectrum $\MU$ splits as a wedge of suspensions of $\BPP$; this splitting comes from the Quillen idempotent on $\MU$. The same is true of the $X(n)$ spectra, as explained in \cite[Section 6.5]{green}: a multiplicative map $X(n)_{(p)}\to X(n)_{(p)}$ is determined by a polynomial $f(x) = \sum_{0\leq i\leq n-1} a_i x^{i+1}$, with $a_0 = 1$ and $a_i\in\pi_{2i}(X(n)_{(p)})$. One can use this to define a truncated form of the Quillen idempotent $\epsilon_n$ on $X(n)_{(p)}$ (see \cite[Proposition 1.3.7]{hopkins-thesis}), and thereby obtain a summand of $X(n)_{(p)}$. We summarize the necessary results in the following theorem.
\begin{theorem}\label{tn-def}
Let $n$ be such that $p^n\leq k\leq p^{n+1}-1$. Then $X(k)_{(p)}$ splits as a wedge of suspensions of the spectrum $T(n) = \epsilon_{p^n}\cdot X(p^n)_{(p)}$.
\begin{itemize}
    \item $T(n)$ admits the structure of an $\E{1}$-ring such that the map $T(n) \to X(p^n)$ is a map of $\E{1}$-rings (see \cite[Section 7.5]{tn-e1}).
	\item The map $T(n) \to \BPP$ is an equivalence in dimensions $\leq |v_{n+1}|-2$, so there is an indecomposable element $v_i\in \pi_\ast T(n)$ which maps to an indecomposable element in $\pi_\ast \BPP$ for $0\leq i\leq n$. In particular (by (a)), there is an inclusion $\Z_\ppar[v_1, \cdots, v_n] \subseteq \pi_\ast T(n)$.
	\item The map $T(n) \to \BPP$ induces the inclusion $\BPP_\ast T(n) = \BPP_\ast[t_1,\cdots,t_n] \subseteq \BPP_\ast(\BPP)$ on $\BPP$-homology, and the inclusions $\FF_2[\zeta_1^2, \cdots, \zeta_n^2] \subseteq \FF_2[\zeta_1^2, \zeta_2^2, \cdots]$ and $\FF_p[\zeta_1, \cdots, \zeta_n] \subseteq \FF_p[\zeta_1, \zeta_2, \cdots]$ on mod $2$ and mod $p$ homology, respectively.
\end{itemize}
\end{theorem}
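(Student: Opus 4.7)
The proof is a synthesis of the Quillen-type idempotent construction, as worked out for $X(p^n)_{(p)}$ in Hopkins' thesis, with the $\E{1}$-multiplicative refinement from \cite{tn-e1}. First I would classify multiplicative self-maps of $X(p^n)_{(p)}$: since $H_\ast X(p^n)_{(p)} = \Z_{(p)}[b_1, \ldots, b_{p^n - 1}]$ and this agrees with $H_\ast \MU_{(p)}$ through degrees $\leq 2p^n - 2$, a multiplicative self-map is classified by a polynomial $f(x) = x + \sum_{i=1}^{p^n-1} a_i x^{i+1}$ with $a_i \in \pi_{2i} X(p^n)_{(p)}$, exactly as in \cite[Proposition 1.3.7]{hopkins-thesis}. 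Specializing $f$ to the degree-$p^n$ truncation of Quillen's $p$-typification power series produces a candidate $\epsilon_{p^n}$, and idempotence follows from the standard formal-group-law calculation on $\BPP$-homology. The summand $T(n) := \epsilon_{p^n}\cdot X(p^n)_{(p)}$ is then a well-defined spectrum as the homotopy colimit of the idempotent telescope.

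To upgrade $\epsilon_{p^n}$ to an $\E{1}$-self-map, and thereby give $T(n)$ the $\E{1}$-structure of part (a) together with the $\E{1}$-map $T(n) \to X(p^n)_{(p)}$, I would appeal to the obstruction-theoretic analysis of Section 7.5 of \cite{tn-e1}, which controls the moduli of $\E{1}$-self-maps of $X(p^n)_{(p)}$. For the wedge decomposition $X(k)_{(p)} \simeq \bigvee \Sigma^? T(n)$ when $p^n \leq k \leq p^{n+1}-1$, I would use the $\E{2}$-map $X(p^n) \to X(k)$ to view $X(k)_{(p)}$ as a left $T(n)$-module and then peel off suspended copies of $T(n)$ corresponding to each additional polynomial generator $b_{p^n}, \ldots, b_{k-1}$ by a standard cell-attaching/Postnikov argument; the key input is that $\BPP$ has no indecomposable generator strictly between $|v_n|$ and $|v_{n+1}|$, so these additional cells contribute only to the multiplicity of $T(n)$ rather than producing a new summand.

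For (b), the composite $T(n) \hookrightarrow X(p^n)_{(p)} \to \MU_{(p)} \to \BPP$ is compatible with Quillen's full idempotent on $\MU_{(p)}$ by construction of $f$; using that $X(p^n) \to \MU$ is a $(2p^n-2)$-equivalence and that $\BPP$ has no indecomposables strictly between $|v_n|$ and $|v_{n+1}|$, this composite is an equivalence in dimensions $\leq |v_{n+1}|-2$, producing the generators $v_0, \ldots, v_n$. For (c), I would compute $\BPP_\ast T(n)$ by applying $\epsilon_{p^n}$ to $\BPP_\ast X(p^n)_{(p)} = \BPP_\ast[b_1, \ldots, b_{p^n-1}]$ and invoking the standard translation between the $b_i$ and Ravenel's $t_j \in \BPP_\ast\BPP$; the image is exactly the polynomial subring $\BPP_\ast[t_1, \ldots, t_n]$. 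Reducing mod $p$ (resp.\ mod $2$) via the Milnor basis then gives the $\FF_p[\zeta_1, \ldots, \zeta_n]$ and $\FF_2[\zeta_1^2, \ldots, \zeta_n^2]$ identifications. The hard part throughout will be the $\E{1}$-refinement: ordinary idempotence is classical, but upgrading $\epsilon_{p^n}$ to an honest $\E{1}$-self-map (so that $T(n)$ is genuinely an $\E{1}$-ring rather than merely homotopy associative) requires the full strength of the obstruction theory of \cite{tn-e1}, which I would cite rather than reprove.
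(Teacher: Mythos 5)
Your proposal is correct and takes essentially the same route as the paper, which does not argue this theorem from scratch but records it as a summary of known results — the truncated Quillen idempotent and its classification via multiplicative self-maps from \cite[Section 6.5]{green} and \cite[Proposition 1.3.7]{hopkins-thesis}, and the $\E{1}$-refinement from \cite[Section 7.5]{tn-e1} — precisely the ingredients you assemble. The one step you sketch beyond these citations (realizing the $\BPP_\ast T(n)$-module generators of $\BPP_\ast X(k)_{(p)}$ by genuine homotopy classes so as to peel off suspended copies of $T(n)$) is exactly the content of Ravenel's splitting argument, so nothing essential is missing.
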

\begin{example}\label{definition-of-t1}
The $\E{1}$-ring $T(1)$ is the Thom spectrum of the $\E{1}$-map $\Omega S^{2p-1} \to \BGL_1(S)$ which detects $\alpha_1\in \pi_{2p-2} \BGL_1(S) \cong \pi_{2p-3} S$ on the bottom cell of $\Omega S^{2p-1}$. Since $p\alpha_1 = 0$, a nullhomotopy of $p\alpha_1$ defines a class $v_1\in \pi_{2p-2} T(1)$. Under the unit map $T(1) \to \BPP$, this class is sent to the eponymous class $v_1\in \pi_{2p-2} \BPP$.
\end{example}
\begin{warning}\label{clash-of-notation}
Unfortunately, \cref{tn-def} leads to an egregious clash of notation, since $T(n)$ is also often used to denote the telescope of a $v_n$-self map of a finite type $n$ spectrum. In this article, we will \textit{only} use $T(n)$ to mean the $\E{1}$-ring from \cref{tn-def}. We propose using the notation $\mathrm{Tel}(n)$ to denote the telescope of a $v_n$-self map.
\end{warning}
\begin{notation}
If $R$ is a commutative ring, we write $\Lambda_R(x)$ to denote an exterior $R$-algebra on a class $x$, and $R\pdb{x}$ to denote a divided power $R$-algebra on a class $x$. The notation $\gamma_j(x)$ denotes the $j$th divided power of $x$, so that $j! \gamma_j(x) = x^j$. We will also often write $R\pdb{x}$ to denote the underlying $R$-module of $R\pdb{x}$.
\end{notation}
\begin{construction}
Define a space $\Delta_n$ by 
$$\Delta_n = \prod_{i=1}^n \SU(p^i-1)/\SU(p^{i-1}),$$
and let $\ol{\Delta}_i$ denote the $i$th term in this product.
If $R$ is a ring spectrum, write $R[\Omega \Delta_n]$ to denote the $\E{2}$-polynomial $R$-algebra $R[x_i | 1\leq i \leq p^n-1, i\neq p^k-1]$, where $|x_i| = 2i$. 
Let $R[\B\Delta_n]$ denote the $2$-fold bar construction of the augmentation $R[\Omega \Delta_n] \to R$, so that it is an $\E{2}$-$R$-coalgebra whose homotopy groups are isomorphic to $\pi_\ast(R)\pdb{y_{i} | 1\leq i \leq p^n-1, i\neq p^k}$ where $|y_j| = 2j$. As mentioned in the introduction, $R[\B\Delta_n]$ morally should be viewed as the $R$-chains on the ``classifying space of $\prod_{i=1}^n \SU(p^i-1)/\SU(p^{i-1})$''; to this end, if $X$ is another space, we will write $R[\B\Delta_n\times X]$ to denote $R[\B\Delta_n] \otimes_R R[X]$; and if $R$ is a discrete ring, we will often write $\H_\ast(\B\Delta_n; R)$ to denote $\pi_\ast R[\B\Delta_n]$. The factor $R[\B\Delta_n]$ will primarily be an unfortunate annoyance in this article.
Note that $\Delta_1 = \SU(p-1)$. Then, we have $X(p^n) = T(n)[\Omega \Delta_n]$ and $X(p^n-1) = T(n-1)[\Omega \Delta_n]$, so that
\begin{align*}
    \H_\ast(X(2^n); \FF_2) & \cong \FF_2[\zeta_1^2, \cdots, \zeta_n^2] \otimes_{\FF_p} \H_\ast(\Omega \Delta_n; \FF_p), \\
    \H_\ast(X(p^n); \FF_p) & \cong \FF_p[\zeta_1, \cdots, \zeta_n] \otimes_{\FF_p} \H_\ast(\Omega \Delta_n; \FF_p),
\end{align*}
and similarly for $X(p^n-1)$.
\end{construction}
It is believed that $T(n)$ admits more structure (see also \cite{ak-quigley} for some discussion):
\begin{conjecture}\label{tn-e2}
    The $\cQ_1$-ring structure on $T(n)$ extends to a \textit{framed} $\E{2}$-ring structure.
\end{conjecture}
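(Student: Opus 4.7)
The plan is to realize $T(n)$ as a framed $\E{2}$-Thom spectrum in its own right, rather than trying to refine the $\E{1}$-splitting of $X(p^n)_\ppar$ a posteriori. Since $X(p^n)$ already carries a framed $\E{2}$-structure coming from the Bott map $\Omega\SU(p^n) \to \BU$, it would suffice to exhibit a map of framed $\E{2}$-spaces $f\colon Y_n \to \Omega\SU(p^n)_\ppar$ whose composite with the $J$-homomorphism gives a framed $\E{2}$-map $Y_n \to \B\GL_1(S)_\ppar$, and whose Thom spectrum has the homology $\FF_p[\zeta_1,\dots,\zeta_n]$ (resp.\ $\FF_2[\zeta_1^2,\dots,\zeta_n^2]$) identifying it with $T(n)$. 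The candidate for $Y_n$ is the double loop space of a ``primary factor'' in a $p$-local decomposition of $\SU(p^n)$, complementary to $\Omega \Delta_n$; the factorization $X(p^n)_\ppar \simeq T(n)\otimes_S S[\Omega \Delta_n]$ suggests that the splitting $\SU(p^n)_\ppar \simeq \Delta_n \times Y_n'$ already exists as spaces (with $Y_n'$ built out of the $p$-local Serre/James summands surviving the Quillen projection), and the issue is purely one of multiplicative structure.

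First I would attempt the direct geometric route. One would try to promote the splitting $\SU(p^n)_\ppar \simeq \Delta_n\times Y_n'$ of pointed spaces to a splitting of $H$-spaces (or better, a splitting at the level of $\Omega\SU(p^n)_\ppar$ as an $\E{2}$-space, since that is all we need). The combinatorial/geometric input should come from a double loop-space version of the James-type splittings used in \cite{tn-e1} to produce the $\E{1}$-structure: one refines the $\E{1}$-decomposition by working with framed little $2$-disk operations on Bott classes and on the generators $b_1,\dots,b_{p^n-1}$, and one checks that the Bott $\E{2}$-product preserves the sub-operad generated by the classes $b_{p^i-1}$ modulo the $\Delta_n$-factor. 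Given such a splitting, Thom spectrum functoriality (and the framed $\E{2}$-multiplicative structure on Thomification of \cite{framed-e2}) automatically produces the desired framed $\E{2}$-structure on $T(n)$.

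As a fallback, I would use obstruction theory. Given the existing $\cQ_1$-structure (equivalently, an $\E{1}$-structure compatible with the Quillen idempotent), the moduli of framed $\E{2}$-refinements sits in a tower whose successive layers are controlled by André--Quillen-style cohomology groups of $T(n)$ with coefficients shifted appropriately. The sparseness of $\pi_\ast T(n)$ (indecomposables only in degrees $|v_i|$ for $1\leq i\leq n$) and the comparison with the framed $\E{2}$-ring $\BPP$ --- itself conjectural but reducible to our problem in the limit --- should let one pinpoint where obstructions could live, and the hope is that they vanish for degree reasons.

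The hard part, and the reason the conjecture has remained open, is exactly that the Quillen idempotent $\epsilon_{p^n}\colon X(p^n)_\ppar \to X(p^n)_\ppar$ is only known to be $\E{1}$-multiplicative: compatibility of the idempotent with the framed $\E{2}$-structure is genuinely additional data. Equivalently, neither of the two strategies above can succeed without new input about the interaction between Bott periodicity (which supplies the $\E{2}$-structure) and the multiplicative orientations used in Hopkins' construction of the idempotent. A plausible source of this input is a framed $\E{2}$-refinement of \cite[Proposition 1.3.7]{hopkins-thesis}, perhaps using the recent technology of \cite{framed-e2}, and I expect this to be the crux of any unconditional proof.
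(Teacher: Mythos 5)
The statement you are addressing is stated in the paper as a conjecture and is left open there: no proof is given, and the paper's results that depend on it are explicitly conditional (the unconditional statements are routed through $X(p^n)$, which is known to be $\Efr{2}$ by \cite{framed-e2}, precisely to avoid needing this). Your submission is likewise not a proof, and you say as much in your last paragraph; so the honest summary is that both you and the paper leave the same gap, namely the compatibility of the truncated Quillen idempotent $\epsilon_{p^n}$ (an $\E{1}$/homotopy-multiplicative construction) with the framed $\E{2}$-structure coming from Bott periodicity.

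Two concrete points where your sketch overreaches. First, your "direct geometric route" presumes a space-level splitting $\Omega\SU(p^n)_\ppar \simeq \Omega\Delta_n \times Y_n$ with $T(n)$ the Thom spectrum over the factor $Y_n$, and asserts that only the multiplicative structure is at issue. But the known splitting of $X(p^n)_\ppar$ into shifts of $T(n)$ is a spectrum-level splitting produced by the idempotent; the notation $T(n)[\Omega\Delta_n]$ in the paper is additive bookkeeping, not a statement that $T(n)$ is a Thom spectrum over a factor of $\Omega\SU(p^n)_\ppar$. For $n\geq 2$ at odd primes no such space is known, and the only unconditional cases of the conjecture ($T(1)$ and $T(2)$ at $p=2$) are obtained by a genuinely different mechanism: they are Thom spectra of $\U$-bundles over $\Omega\Sp(1)$ and $\Omega\Sp(2)$ via quaternionic Bott periodicity, i.e.\ over subspaces of $\Omega\SU(2n)$ that are not product factors. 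Any serious attempt along your first route would have to either produce the analogous small geometric model at odd primes (none is known; even $T(1)$, the Thom spectrum of $\alpha_1$ over $\Omega S^{2p-1}$, is only known to be $\E{1}$ because the relevant Browder brackets on $\alpha_1$ have not been trivialized coherently) or explain why a stable summand should be realized unstably. Second, the obstruction-theoretic fallback names the right kind of machinery but contains no computation: the obstructions to rigidifying a $\cQ_1$-structure to a framed $\E{2}$-structure live in (framed) $\E{2}$-Andr\'e--Quillen-type cohomology of $T(n)$, and "sparseness of $\pi_\ast T(n)$" is far from enough to force vanishing --- $\pi_\ast T(n)$ is large (it surjects onto $\pi_\ast \BP{n}$ and contains much torsion), so "vanish for degree reasons" is unsupported. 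As it stands, your proposal is a reasonable research plan that correctly locates the difficulty, but it does not close it, and neither does the paper.
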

\begin{remark}\label{t2-thom}
When $p=2$, both $X(2) = T(1)$ and $T(2)$ admit the structure of $\Efr{2}$-algebras by \cite[Remark 3.8]{framed-e2}: they are Thom spectra of $\U$-bundles over $\Omega \Sp(1) \simeq \Omega S^3$ and $\Omega \Sp(2)$, respectively. These $\U$-bundles are defined via double loops of the the composite
$$\BSp(n) \to \BSU(2n) \to \BSU \simeq \B^3 \U.$$
\end{remark}
\begin{prop}[{\cite[Corollary 2.9 and Corollary 3.7]{framed-e2}}]\label{xn-framed-e2}
The $\E{2}$-structure on $X(n)$ refines to an $\Efr{2}$-structure.
\end{prop}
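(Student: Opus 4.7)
The plan is to apply the general Thom spectrum machinery: since $X(n)$ is defined as the Thom spectrum of the $\E{2}$-map $J: \Omega\SU(n) \to \BGL_1(S)$, it suffices to exhibit this map as one of $\Efr{2}$-algebras, after which functoriality of the Thom spectrum construction (which is lax symmetric monoidal, and in particular respects framed $\E{2}$-structures) yields the conclusion. Recall that an $\Efr{2}$-structure amounts to an $\E{2}$-structure together with a compatible action of $\SO(2)$ by $\E{2}$-algebra automorphisms; equivalently, it is an algebra for the semidirect product operad $\E{2} \rtimes \SO(2)$.

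First, I would put a framed $\E{2}$-structure on the source. The identification $\Omega \SU(n) \simeq \Omega^2 \BSU(n)$ exhibits $\Omega \SU(n)$ as a genuine two-fold loop space, and any such $\Omega^2 Y$ inherits a natural $\Efr{2}$-structure where the $\SO(2)$-action is given by rotation of the outer loop coordinate (this is the basic example that motivates the definition of framed little disks). This is presumably the content of \cite[Corollary 2.9]{framed-e2}, applied to $Y = \BSU(n)$.

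Next, I would check that the composite $J: \Omega\SU(n) \to \BU \to \BGL_1(S)$ is framed $\E{2}$. The inclusion $\Omega\SU(n) \hookrightarrow \Omega\SU$ is the two-fold loop map induced by $\BSU(n) \to \BSU$, hence $\Efr{2}$ by naturality of Step 1. Bott periodicity $\Omega\SU \simeq \BU$ is $\Eoo$, in particular $\Efr{2}$; and the $J$-homomorphism $\BU \to \BGL_1(S)$ is $\Eoo$ and so is automatically $\Efr{2}$ (the $\SO(2)$-action is trivial on $\Eoo$-objects, which is compatible with the induced action from the framed structure on the source). Composing, we obtain an $\Efr{2}$-map $\Omega\SU(n) \to \BGL_1(S)$ whose underlying $\E{2}$-map is the standard one defining $X(n)$.

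Finally, I would apply the Thom spectrum functor, which \cite[Corollary 3.7]{framed-e2} establishes as a symmetric monoidal (hence $\Efr{2}$-preserving) construction on maps into $\BGL_1(S)$; this produces the desired $\Efr{2}$-ring structure on $X(n)$, refining the known $\E{2}$-structure. The main obstacle in this argument is verifying the compatibility of the various $\SO(2)$-actions at each stage — in particular, confirming that Bott periodicity really is an equivalence of $\Efr{2}$-spaces rather than merely of $\E{2}$-spaces. This is precisely the kind of coherence question that the framework of \cite{framed-e2} is designed to handle, and so the essential work is subsumed into the cited corollaries.
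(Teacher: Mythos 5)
This proposition is not proved in the paper at all: it is imported verbatim from the cited reference, so there is no internal argument to compare against, and your outline (loop-rotation framing on $\Omega\SU(n)\simeq\Omega^2\BSU(n)$, naturality along double loop maps, symmetric monoidality of the Thom spectrum functor) is the natural shape of the argument and, as far as one can tell, the shape of the cited one. In that sense you are doing essentially what the paper does — deferring the substance to \cite{framed-e2} — and the scaffolding you supply around the citation is reasonable.

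That said, the one step you describe as a routine coherence check is the entire mathematical content, and your stated justification for it is not valid on its own. Restricting the $\Eoo$-structure on $\BGL_1(S)$ (or on $\BU$) along an operad map from framed $\E{2}$ gives a framed structure with a \emph{trivialized} $\SO(2)$-action, whereas the relevant framed structure on $\BU\simeq\Omega\SU\simeq\Omega^2\BSU$ is the loop-rotation one; these do not agree for formal reasons, and "the $\SO(2)$-action is trivial on $\Eoo$-objects, which is compatible with the induced action from the framed structure on the source" is precisely the assertion that needs proof. In general the rotation action on a double loop space is stably nontrivial: on $\pi_1\SO(2)$ it is governed by the $J$-homomorphism and detects $\eta$, so for an arbitrary $\E{2}$-map $\Omega^2 Y\to\BGL_1(S)$ there is no canonical way to intertwine the rotation action with the trivial one on the target. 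What makes $X(n)$ work is the specific geometry: the map factors as double loops of $\BSU(n)\to\BSU$ followed by the (complex) $J$-homomorphism, i.e.\ the bundle is a $\U$-bundle classified by a doubly-delooped map, and the rotation twist is then trivialized using Bott periodicity/the complex orientation — this is exactly the work done in the cited paper (compare the emphasis on "Thom spectra of $\U$-bundles" in \cref{t2-thom}). Also, as a minor point, the division of labor you assign to the two cited corollaries is likely off: in this paper Corollary 2.9 of \cite{framed-e2} is invoked for the existence of the $S^1$-action on relative $\THH$ over a framed $\E{2}$-ring, not for the framing on double loop spaces. So: acceptable as a reconstruction that leans on the same citation, but as a standalone proof it has a gap exactly where you declare the compatibility of the two framed structures on $\BU$ to be automatic.
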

\begin{corollary}
Let $\cC$ be an $X(n)$-linear $\infty$-category. Then $\THH(\cC/X(n))$ acquires the structure of an $S^1$-equivariant spectrum with an $S^1$-equivariant unit map $X(n) \to \THH(\cC/X(n))$.
\end{corollary}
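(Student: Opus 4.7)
The plan is to invoke the general machinery of \cite{framed-e2} together with the preceding \cref{xn-framed-e2}.

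First, I would recall the following abstract fact: if $A$ is any framed $\E{2}$-algebra in spectra, then the $\infty$-category $\LMod_A$ inherits an $\Efr{2}$-monoidal structure (since $\Efr{2} \otimes \Efr{1} \to \Efr{2}$ is given by framed disk-operad structure). In this situation, \cite[Corollary 2.9]{framed-e2} provides, for any $A$-linear $\infty$-category $\cC$, a cyclic bar construction computing $\THH(\cC/A)$ as a colimit over $S^1$ (viewed as a framed $1$-manifold). Consequently, $\THH(\cC/A)$ naturally carries an $S^1$-action, and the constant loop inclusion $\ast \hookrightarrow S^1$ induces an $S^1$-equivariant unit map $A \to \THH(\cC/A)$ (where $A$ has the trivial $S^1$-action).

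Applying this with $A = X(n)$ is exactly what we want: by \cref{xn-framed-e2} the $\E{2}$-ring $X(n)$ lifts to an $\Efr{2}$-ring, so the general construction above applies verbatim. The $S^1$-equivariance of the unit map and the identification of $\THH(\cC/X(n))$ as an $S^1$-spectrum are then supplied by \cite[Corollary 3.7]{framed-e2}.

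The only real content here is \cref{xn-framed-e2} (which has already been established) together with the observation that the framed $\E{2}$-structure is precisely what is needed to get an $S^1$-action on $\THH$ relative to $X(n)$, even though $X(n)$ itself is not $\E{3}$ (cf.~\cref{Xn-e3}). There is no main obstacle: once one accepts the framed little disks formalism of \cite{framed-e2}, the corollary is a formal consequence.
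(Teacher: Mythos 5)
Your proposal is correct and matches the paper's approach: the paper states this corollary with no separate argument, treating it as an immediate consequence of \cref{xn-framed-e2} together with the cited results of \cite{framed-e2} (Corollaries 2.9 and 3.7), which is exactly the route you take.
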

\subsection{Computation of $\THH$ relative to $X(p^n)$}
Unless explicitly stated otherwise, all fiber sequences in this section (as well as the following sections) will be localized at $p$.
\begin{recall}\label{homology-bpn}
There are isomorphisms
\begin{align*}
    \H_\ast(\BP{n-1}; \FF_2) & \cong \FF_2[\zeta_1^2, \cdots, \zeta_n^2, \zeta_{n+1}, \cdots] \\
    & \cong \H_\ast(T(n); \FF_2) \otimes_{\FF_2} \FF_2[\zeta_j | j\geq n+1], \\
    \H_\ast(\BP{n-1}; \FF_p) & \cong \Lambda_{\FF_p}[\tau_j | j \geq n] \otimes_{\FF_p} \FF_p[\zeta_1, \zeta_2, \cdots] \\
    & \cong \H_\ast(T(n); \FF_p) \otimes_{\FF_p} \FF_p[\zeta_j | j\geq n+1] \otimes_{\FF_p} \Lambda_{\FF_p}[\tau_j | j \geq n], p>2.
\end{align*}
We note that the ``$Q_0$-Margolis homology'' of $\H_\ast(\BP{n-1}; \FF_2)$ (i.e., the homology of $\Sq^1$ viewed as a differential acting on $\H_\ast(\BP{n-1}; \FF_2)$) is precisely $\H_\ast(T(n); \FF_2)$, because $\Sq^1$ is a derivation and $\Sq^1(\zeta_j) = \zeta_{j-1}^2$.
\end{recall}
\begin{recall}\label{hahn-wilson-review}
We need to recall some results from \cite{hahn-wilson-bpn}. First, \cite[Theorem A]{hahn-wilson-bpn} tells us that there exists an $\E{3}$-form of $\BP{n}$. First, \cite[Theorem 2.5.4]{hahn-wilson-bpn} states that $\pi_\ast \THH(\BP{n-1}/\MU)$ is isomorphic to a polynomial algebra over $\pi_\ast \BP{n-1}$ on infinitely many generators, the first of which is denoted $\sigma^2(v_n)$. The class $\sigma^2(v_n)$ lives in degree $2p^n$. Finally, \cite[Theorem 5.0.1]{hahn-wilson-bpn} states that there is an isomorphism $\pi_\ast \TC^-(\BP{n-1}/\MU) \simeq (\pi_\ast \THH(\BP{n-1}/\MU))\pw{\hbar}$ of $\Z_p[v_1, \cdots, v_{n-1}]$-algebras. Moreover, under the map $\MU^{hS^1} \to \TC^-(\BP{n-1}/\MU)$, the class $v_n \in \pi_\ast \MU^{hS^1} \cong (\pi_\ast \MU)\pw{\hbar}$ is sent to $\sigma^2(v_n)\hbar$. In particular, $\pi_\ast \TC^-(\BP{n-1}/\MU)$ detects the classes $p, v_1, \cdots, v_{n-1}, v_n := \sigma^2(v_n)\hbar$. Similarly, $\pi_\ast \TP(\BP{n-1}/\MU)$ detects the classes $p, \cdots, v_n$ under the map $\MU^{tS^1} \to \TP(\BP{n-1}/\MU)$, and $\pi_\ast \THH(\BP{n-1}/\MU)^{t\Cp}$ detects the classes $p, \cdots, v_{n-1}$ under the map $\MU^{t\Cp} \to \THH(\BP{n-1}/\MU)^{t\Cp}$.
\end{recall}
\begin{notation}
If $R$ is a complex-oriented ring spectrum, we will write $\hbar$ to denote the complex orientation of $R$, viewed as a class in $\pi_{-2} R^{hS^1}$. The motivation for this notation comes from geometric representation theory (in the case where $R$ is a $\Z_p$-algebra), where the complex orientation $\hbar\in \H^2(\CP^\infty; R)$ plays the role of a quantization parameter.
\end{notation}
The main result of this section is the following analogue of B\"okstedt's theorem on $\THH(\FF_p)$ and $\THH(\Z_p)$.
\begin{theorem}\label{thh-calculations}
Fix $\E{3}$-forms of the truncated Brown-Peterson spectra $\BP{n-1}$ and $\BP{n}$.
We have:
\begin{enumerate}
    \item There is a $p$-complete equivalence of $\BP{n-1}$-modules:
    $$\THH(\BP{n-1}/X(p^n)) \simeq \BP{n-1}[\B\Delta_n\times \Omega S^{2p^n+1}].$$
    Write $\theta_n\in \pi_{2p^n} \THH(\BP{n-1}/X(p^n))$ to denote the class corresponding to the map $E: S^{2p^n} \to \Omega S^{2p^n+1}$. Under the $S^1$-equivariant map $\THH(\BP{n-1}/X(p^n)) \to \THH(\BP{n-1}/\MU)$, the class $\theta_n$ is sent to the class $\sigma^2(v_n)$ from \cref{hahn-wilson-review}.
    There are also $p$-complete isomorphisms
    \begin{align*}
    \pi_\ast \THH(\BP{n-1}/X(p^n))^{t\Cpn} & \cong \BP{n}^{t\Cpn}[\B\Delta_n]_\ast, \\
    \pi_\ast \TC^-(\BP{n-1}/X(p^n)) & \cong \BP{n}[\B\Delta_n]_\ast\pw{\hbar}[\tfrac{v_n}{\hbar}] \\
    & \cong \BP{n}[\B\Delta_n]_\ast\pw{\hbar}[\theta_n]/(\theta_n\hbar - v_n),\\
    \pi_\ast \TP(\BP{n-1}/X(p^n)) & \cong \BP{n}^{tS^1}[\B\Delta_n]_\ast \\
    & \cong \BP{n}[\B\Delta_n]_\ast\ls{\hbar}.
    \end{align*}
    Here, the equation $\theta_n\hbar = v_n$ is to be understood modulo decomposables.
    These isomorphisms satisfy the following property: under the maps 
    \begin{align*}
        \TC^-(\BP{n-1}/X(p^n)) & \to \TC^-(\BP{n-1}/\MU), \\
        \TP(\BP{n-1}/X(p^n)) & \to \TP(\BP{n-1}/\MU),
    \end{align*}
    the classes $\{v_i\}_{0\leq i\leq n}$ on the left-hand side are sent to the eponymous classes in the right-hand side (via \cref{hahn-wilson-review}).
    \item 
    There is an equivalence of $\BP{n}$-modules:
    $$\THH(\BP{n}/X(p^n))^\wedge_p \cong \BP{n}[\B\Delta_n]^\wedge_p \oplus \left(\bigoplus_{j\geq 1} \Sigma^{2jp^{n+1}-1} \BP{n}[\B\Delta_n]/p^{v_p(j)+1}\right)^\wedge_p.$$
    In particular, there is an additive equivalence
    $$\THH(\BP{n}/X(p^n))/p \cong \BP{n}[S^{2p^{n+1}-1} \times \Omega S^{2p^{n+1}+1} \times \B\Delta_n]/p.$$
    Moreover, $\pi_{2p^{n+1}-3} \TC^-(\BP{n}/X(p^n))^\wedge_p$ detects the class $\sigma_n\in \pi_{2p^{n+1}-3} X(p^n)$ from \cite[Lemma 3.1.12]{bpn-thom}.
\end{enumerate}
\end{theorem}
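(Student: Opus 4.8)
The proof runs by first computing $\H_\ast(-;\FF_p)$ of the spectra in question by a B\"okstedt spectral sequence, importing the absolute computation of \cite{angeltveit-rognes} and the $\MU$-relative one of \cite{hahn-wilson-bpn}, then upgrading the homology-level identifications to module equivalences, and finally running homotopy-fixed-point and Tate spectral sequences for $\TC^-$, $\TP$, and the $\Cp$-Tate construction. Throughout one works $p$-locally; since $X(p^n)=T(n)[\Omega\Delta_n]$ by \cref{tn-def}, the factor $[\Omega\Delta_n]$ contributes a (formal) K\"unneth factor $\B\Delta_n$ that is carried along silently and dealt with at the end.

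\textbf{Step 1: homology.} For part (a) I would run the relative B\"okstedt spectral sequence with $E_2$-page $\HH^{\H_\ast X(p^n)}_\ast(\H_\ast\BP{n-1})$. Using \cref{homology-bpn} to write $\H_\ast(\BP{n-1};\FF_p)$ as a free module over $\H_\ast(T(n);\FF_p)$ on $C=\FF_p[\zeta_j\mid j\ge n+1]\otimes\Lambda[\tau_j\mid j\ge n]$, this page is $\H_\ast(T(n))\otimes\HH_\ast(C)$, tensored with the contribution of $\B\Delta_n$. The differentials do not vanish --- already in B\"okstedt's $\THH(\FF_p)$ they do not --- but they are forced to mirror his classical ones term by term; this is the rigidified form of \cref{uniformity-slogan}, which I would prove by comparing along the evident maps to the absolute B\"okstedt spectral sequence of \cite{angeltveit-rognes} and to the $\MU$-relative one of \cite{hahn-wilson-bpn} (where the relevant degeneration is known), leaving an $E_\infty$-page which is additively $\H_\ast(\BP{n-1}[\B\Delta_n\times\Omega S^{2p^n+1}];\FF_p)$. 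The guiding picture here --- and, if one is willing to invoke it, a shortcut --- is the Hopkins--Mahowald--type presentation of the chosen $\Efr{3}$-form of $\BP{n-1}$ as a Thom spectrum over $X(p^n)$ along the Cohen--Moore--Neisendorfer fiber sequence \cref{b-fib-phin} of \cite{bpn-thom}, for which the Blumberg--Cohen--Schlichtkrull formula $\THH(\Th f/X(p^n))\simeq \BP{n-1}[\mathrm{B}G]$ gives the answer directly (with $\mathrm{B}G\simeq\B\Delta_n\times\Omega S^{2p^n+1}$). For part (b) one repeats the spectral-sequence argument with $\H_\ast\BP{n}$ in place of $\H_\ast\BP{n-1}$; the surviving page now retains a divided-power class beyond the $\BP{n-1}$-case, and the passage from the mod $p$ answer to the integral statement, with torsion order $p^{v_p(j)+1}$ in degree $2jp^{n+1}-1$, is a $v_n$-Bockstein computation exactly parallel to B\"okstedt's $\THH(\Z_p)$.

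\textbf{Step 2: module equivalence and cyclotomic invariants.} The class $\theta_n\in\pi_{2p^n}$ is the image of the bottom cell $E\colon S^{2p^n}\to\Omega S^{2p^n+1}$; that it maps to $\sigma^2(v_n)$ under the $S^1$-equivariant, unit-compatible map $\THH(\BP{n-1}/X(p^n))\to\THH(\BP{n-1}/\MU)$ follows by naturality, since both are bottom-cell generators in this degree (\cref{hahn-wilson-review}). Using $\theta_n$ and the $X(p^n)$-module structure of $\THH(\BP{n-1}/X(p^n))$ one builds a $\BP{n-1}$-module map out of $\BP{n-1}[\B\Delta_n\times\Omega S^{2p^n+1}]$, which is an equivalence by Step 1. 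For the cyclotomic invariants: since $\pi_\ast\THH(\BP{n-1}/X(p^n))\cong\pi_\ast(\BP{n-1}[\B\Delta_n])[\theta_n]$ is concentrated in even degrees, the homotopy-fixed-point and Tate spectral sequences for the $S^1$- and $\Cp$-actions degenerate at $E_2$, and the only remaining content is the extension $\theta_n\hbar=v_n$ (modulo decomposables). This I would pin down by mapping into $\TC^-(\BP{n-1}/\MU)$ and invoking the identity $v_n=\sigma^2(v_n)\hbar$ of \cite[Theorem 5.0.1]{hahn-wilson-bpn} recalled in \cref{hahn-wilson-review}; this single extension is precisely what replaces $\BP{n-1}$ by $\BP{n}$ in the displayed formulas, and the compatibility of the $v_i$ with their $\MU$-relative counterparts is then automatic. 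The $\TC^-$ assertion in part (b) is handled the same way, the extra input being that $\pi_{2p^{n+1}-3}\TC^-(\BP{n}/X(p^n))$ detects $\sigma_n$ of \cite[Lemma 3.1.12]{bpn-thom}: this I would read off by tracing $\sigma_n$ through the unit map into $\THH^{hS^1}$ and noting, via Step 1, that it generates modulo lower fixed-point filtration the torsion summand created in degree $2p^{n+1}$.

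\textbf{The main obstacle, and a cross-check.} I expect the weight of the proof to fall in two places. The first is the rigidification of \cref{uniformity-slogan} at the level of spectral sequences: the relative B\"okstedt differentials are ``morally'' B\"okstedt's, but one must actually construct the comparison maps --- to the absolute and $\MU$-relative spectral sequences and across the columns of \cref{bpn-vs-tj} --- that transport them, while simultaneously keeping honest track of the $\B\Delta_n$ K\"unneth factor forced by $X(p^n)=T(n)[\Omega\Delta_n]$; this bookkeeping, together with the upgrade from homology-level to module-level statements (and the passage from $X(p^{n+1}-1)$ to $X(p^n)$, which adjusts $\B\Delta_{n+1}$ to $\B\Delta_n$ via $X(p^{n+1}-1)=X(p^n)[\Omega\ol\Delta_{n+1}]$), is where the methods of \cite{homological-hfpss} are needed. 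The second is the exact torsion orders in part (b): as for $\THH(\Z_p)$, this is a genuine Bockstein calculation rather than a formal one. A convenient cross-check is that the topological Sen cofiber sequence of \cref{topological-sen}, applied to $\LMod_{\BP{n}}$ and combined with part (a) for $\BP{n}$ over $X(p^{n+1})$, has connecting map $\theta^{\,j}\mapsto jp\,\theta^{\,j-1}$, whose cokernel is exactly the pattern $\bigoplus_{j\ge1}\Sigma^{2jp^{n+1}-1}\BP{n}[\B\Delta_n]/p^{v_p(j)+1}$ predicted by the theorem.
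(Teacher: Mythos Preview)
Your outline is essentially the paper's: relative B\"okstedt spectral sequence for $\H_\ast(-;\FF_p)$ (with differentials and multiplicative extensions imported from \cite{angeltveit-rognes}), then the Adams spectral sequence for $\pi_\ast$ (the paper also computes the continuous homology of $\TC^-$ and $\TP$ via the homological homotopy-fixed-point spectral sequence of \cite{homological-hfpss} before running Adams, rather than arguing directly by even-degeneration of the homotopy fixed point spectral sequence as you do, but both routes are valid; the paper records yours in \cref{theta-hbar-vn}), and finally Bockstein spectral sequences for part (b).

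There is one genuine obstacle you have not named. In part (b), $\THH(\BP{n}/X(p^n))$ is \emph{not a ring} (since $X(p^n)$ is only $\Efr{2}$), so its $v_0$-Bockstein spectral sequence is not multiplicative, and the differential $d_1(y)=v_0x$ cannot be propagated to $d_{v_p(j)+1}(y^j)\dot{=}v_0^{v_p(j)+1}xy^{j-1}$ by the Leibniz rule alone. The paper's device is to compare along the map from the Bockstein spectral sequence for $\THH(\BP{n};\Z_p)$, which \emph{is} multiplicative (the $\E{3}$-structure on $\BP{n}$ makes $\THH(\BP{n})$ an $\E{2}$-ring); there the higher differentials on $y^{p^k}$ follow from May's formula \cite[Proposition 6.8]{may-steenrod} involving the Dyer--Lashof operation $Q_1$, together with a separate check that the correction commutators $[xy^{j-1},xy^{p-j-1}]$ vanish after mapping to $\THH(\BP{n}/X(p^n);\FF_p)$. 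This is where the real work in (b) lies, beyond ``parallel to B\"okstedt''. (Also, a slip: it is the $v_0$-Bockstein that produces the $p$-torsion orders; the $v_1,\ldots,v_n$-Bocksteins collapse for parity reasons.)

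One caution: the Thom-spectrum ``shortcut'' you mention in Step 1 is conditional on the conjectures of \cite{bpn-thom}, and the paper invokes it only as motivation (see \cref{bockstein-serre}), not as proof.
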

\begin{remark}\label{mod-p-vn}
Let $v_{[j,m)}$ denote the regular sequence $v_j,\cdots,v_{m-1}$ in $\pi_\ast \BPP$. Then the argument used to prove \cref{thh-calculations} in fact shows the following (somewhat more general) result: for $j\leq n$, there is an isomorphism of $\BP{n-1}_\ast$-modules
\begin{equation}\label{modulo-moore}
    \pi_\ast \THH(\BP{n-1}/X(p^j))/v_{[0,n-j)} \cong \BP{n-1}[\B\Delta_j]_\ast[\theta_n]/v_{[0,n-j)} \otimes_{\FF_p} \Lambda_{\FF_p}(\lambda_{j+1}, \cdots, \lambda_n),
\end{equation}
where $|\lambda_i| = 2p^i-1$. When $j=0$, \cref{modulo-moore} recovers \cite[Proposition 2.9]{thh-truncated-bp}. For brevity, the discussion below only includes the cases $j=n$ and $j=n-1$. Similarly, using that $T(1)$ (resp. $T(2)_{(2)}$) is a Thom spectrum over $\Omega S^{2p+1}$ (resp. $\Omega \Sp(2)$), there are equivalences
\begin{align*}
    \THH(\Z_p)/p & \simeq \FF_p[S^{2p-1} \times \Omega S^{2p+1}], \\
    \THH(\ku)/(2,\beta) & \simeq \FF_2[\Sp(2) \times \Omega S^9].
\end{align*}
\end{remark}
\begin{remark}
If we write $\pi_\ast \MU = \Z[x_1, x_2, \cdots]$ where $|x_i| = 2i$, and define $\MU\pdb{n-1} = \MU/(x_n, x_{n+1}, \cdots)$, then one can similarly prove an analogue of \cref{thh-calculations} with $\BP{n-1}$ replaced by $\MU\pdb{n-1}$. Namely, if $n$ is a power of $p$, there is an equivalence 
$$\THH(\MU\pdb{n-1}/X(n))^\wedge_p \simeq \MU\pdb{n-1}[\Omega S^{2n+1}]^\wedge_p$$
of $\MU\pdb{n-1}$-modules. There is also a $p$-complete isomorphism 
$$\pi_\ast \TP(\MU\pdb{n-1}/X(n))^\wedge_p \cong \pi_\ast (\MU\pdb{n}^{tS^1})^\wedge_p.$$
We expect (see \cref{conjecture-thh} below) that this refines to a $p$-complete equivalence $\TP(\MU\pdb{n-1}/X(n))^\wedge_p \simeq (\MU\pdb{n}^{tS^1})^\wedge_p$.
\end{remark}
\begin{example}
One can make \cref{thh-calculations}(a) very explicit for $\Z_p$ (note that \cref{thh-calculations}(b) for $\Z_p$ is B\"okstedt's result). For instance,
$$\pi_\ast \TC^-(\Z_p/T(1)) \cong \Z_p[v_1]\pw{\hbar}[\theta]/(\hbar\theta = v_1).$$
Let us view $\BP{1}$ as $(\ku^\wedge_p)^{h\FF_p^\times}$, and let $\beta\in \pi_2 \ku$ be the Bott class. Then, $\pi_\ast \ku^{tS^1} \cong \Z[\beta]\ls{\hbar}$ is isomorphic to $\Z\pw{q-1}\ls{\hbar}$, where $q = 1 + \beta \hbar$ lives in degree $0$. If $\Z_p\pw{\ptl}$ is as in \cite[Corollary 3.8.8]{apc}, then $\pi_\ast \BP{1}^{tS^1} \cong \Z_p\pw{\ptl}\ls{\hbar}$. If we assume (for simplicity) that $T(1)$ is an $\Efr{2}$-algebra, then replacing $X(p)$ by $T(1)$, we obtain:
\begin{align*}
    \pi_\ast \TP(\Z_p/T(1)) & \cong \Z_p\pw{\ptl}\ls{\hbar}.
\end{align*}
Here, $\FF_p^\times$ acts on $\Z_p\pw{q-1}$ as specified before \cite[Proposition 3.8.6]{apc}; indeed, the $\Z_p^\times$-action on $\Z_p\pw{q-1} = \pi_0 (\ku^\wedge_p)^{tS^1}$ agrees with the action of the Adams operations on $\pi_\ast (\ku^\wedge_p)^{tS^1}$, as one can check by calculating the Adams operations on the $p$-completed complex K-theory of $\CP^\infty$. Indeed, if $g\in \Z_p^\times$, then
$$\psi^g(\hbar) = \frac{1}{g} \sum_{j\geq 1} \binom{g}{j} \beta^{j-1} \hbar^j = \frac{1}{g} \frac{(1+\beta \hbar)^g - 1}{\beta},$$
so that
$$\psi^g(q) = \psi^g(1+\beta\hbar) = 1 + g\beta \psi^g(\hbar) = (1+\beta\hbar)^g = q^g.$$
\end{example}
\begin{remark}
Recall from \cite[Section 3.4]{rotinv} that there is an $\E{2}$-monoidal functor $\sh: \Sp^\gr \to \Sp^\gr$ given by shearing: this functor sends $M_\bull \mapsto M_\bull[2\bull]$. Assume for simplicity that $T(n)$ admits the structure of an $\Efr{2}$-algebra. From this perspective, part of \cref{thh-calculations}(a) simply states that there is an equivalence of ungraded $\BP{n-1}$-modules
$$\THH(\BP{n-1}/T(n)) \simeq \sh(\gr_{v_n} \BP{n}),$$
where $\sh(\gr_{v_n} \BP{n})$ denotes the shearing of the associated graded of the $v_n$-adic filtration $\F^\star_{v_n} \BP{n}$ on $\BP{n}$.
\end{remark}

An immediate implication of \cref{thh-calculations} is the following.
\begin{corollary}[{\cite[Corollary 5.0.2]{hahn-wilson-bpn}}]\label{k-thy-redshift}
Fix an $\E{3}$-form of the truncated Brown-Peterson spectrum $\BP{n-1}$. We have $\Lk K(\BP{n-1}) \neq 0$.
\end{corollary}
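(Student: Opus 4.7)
The plan is to pair the cyclotomic trace $K(\BP{n-1}) \to \TC(\BP{n-1})$ with the $v_n$-detection provided by \cref{thh-calculations}(a), following the template of \cite[Corollary 5.0.2]{hahn-wilson-bpn}. Since $\BP{n-1}$ is an $\E{3}$-algebra and $X(p^n)$ is $\Efr{2}$ by \cref{xn-framed-e2}, one can promote the unit $X(p^n) \to \BP{n-1}$ to an $\Efr{2}$-algebra structure; this yields a natural map of cyclotomic spectra
$$\TC(\BP{n-1}) \longrightarrow \TC(\BP{n-1}/X(p^n)),$$
compatible with the cyclotomic trace, and hence a composite $K(\BP{n-1}) \to \TC^-(\BP{n-1}/X(p^n))$.

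By \cref{thh-calculations}(a), the codomain contains the class $v_n = \theta_n \hbar$ in degree $2p^n-2$, mapping compatibly to the genuine $v_n \in \pi_{2p^n-2}\BP{n}^{hS^1}$ by \cref{hahn-wilson-review}. Since $L_{K(n)}\BP{n}$ is $v_n$-periodic, this class is a unit after $K(n)$-localization, so that $L_{K(n)} \TC^-(\BP{n-1}/X(p^n))$ is nonzero.

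The substantive step is to show that $v_n$ lies in the image of $L_{K(n)} K(\BP{n-1})$. I would invoke the Dundas-Goodwillie-McCarthy theorem to identify the fiber of $K(\BP{n-1}) \to K(\pi_0\BP{n-1}) = K(\Z_p)$ with the fiber of $\TC(\BP{n-1}) \to \TC(\Z_p)$. Since $\pi_{2p^n-2}\Z_p = 0$ and, by Clausen-Mathew-Naumann, $L_{K(n)} K(\Z_p) = 0$ for $n \geq 2$ (with a separate but standard lower-height input for $n=1$), the class $v_n$ cannot originate from the base; thus it is hit by $L_{K(n)} K(\BP{n-1}) \to L_{K(n)}\TC^-(\BP{n-1}/X(p^n))$, forcing $L_{K(n)} K(\BP{n-1}) \neq 0$.

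The main obstacle is the identification of the image of $\theta_n$ in $\TC^-$ with the expected lift of $v_n$ coming from the $X(p^n)$-algebra structure on $\BP{n-1}$, and the verification that this identification is compatible with cyclotomic trace maps in a way that lets Dundas-Goodwillie-McCarthy be applied. The calculations behind \cref{thh-calculations}(a), together with the parallel arguments in \cite[Section 5]{hahn-wilson-bpn}, supply exactly what is required: the class $\theta_n \hbar$ maps to the canonical $v_n$ in the target of $\MU^{hS^1} \to \TC^-(\BP{n-1}/\MU)$, and this source-side detection suffices to pull the obstruction back to $K$-theory.
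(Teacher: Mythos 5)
There is a genuine gap in the final step. After producing the class $v_n = \theta_n\hbar$ in $\pi_{2p^n-2}\TC^-(\BP{n-1}/X(p^n))$, you assert that it must be "hit by" $\Lk K(\BP{n-1})$, arguing via Dundas--Goodwillie--McCarthy and the vanishing of $\Lk K(\Z_p)$. This does not follow: DGM identifies $\fib(K(\BP{n-1})\to K(\Z_p))$ with $\fib(\TC(\BP{n-1})\to\TC(\Z_p))$, but that says nothing about surjectivity of the composite $K(\BP{n-1})\to\TC(\BP{n-1})\to\TC^-(\BP{n-1}/X(p^n))$ on any homotopy group after $K(n)$-localization, and there is no exact sequence in which ``$v_n$ does not come from the base'' forces it to lie in the image of $K$-theory. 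Indeed, it is not even clear that $v_n$ lifts from the relative $\TC^-$ back to the absolute $\TC(\BP{n-1})$. A smaller issue: the map you write as a map of cyclotomic spectra $\TC(\BP{n-1})\to\TC(\BP{n-1}/X(p^n))$ is not defined, since $\THH(-/X(p^n))$ carries only an $S^1$-action and no known cyclotomic structure (the paper explicitly flags this); one must instead pass through $\TC^-$ or $\TP$.

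The fix is to drop the image-chasing entirely: none of it is needed. The cyclotomic trace gives a map of $\E{2}$-rings $K(\BP{n-1})\to\TP(\BP{n-1})$ (or $\TC^-$), and $\TP(\BP{n-1}/X(p^n))$ is a module over $\TP(\BP{n-1})$, hence over $K(\BP{n-1})$. If $\Lk K(\BP{n-1})$ were zero, i.e.\ $K(\BP{n-1})$ were $K(n)$-acyclic, then every module over it would be $K(n)$-acyclic; but \cref{thh-calculations}(a) shows $\pi_\ast\Lk\TP(\BP{n-1}/X(p^n))$ is nonzero (it is $\Z_p[v_1,\cdots,v_{n-1},v_n^{\pm 1}]^\wedge_{(p,\cdots,v_{n-1})}\ls{\hbar}$ tensored with the homology of $\B\Delta_n$). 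This is exactly the paper's two-line argument: your computation-side input (detection of $v_n$ via $\theta_n\hbar$) is the right one, but the way it forces $\Lk K(\BP{n-1})\neq 0$ is through the module structure over the ring-valued trace, not through lifting a specific class to $K$-theory.
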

\begin{proof}
There is a trace map $K(\BP{n-1}) \to \TP(\BP{n-1})$, which is a map of $\E{2}$-rings. It therefore suffices to exhibit a nonzero module over $\Lk \TP(\BP{n-1})$ --- but we may take the module $\Lk \TP(\BP{n-1}/X(p^n))$, which is nonzero by \cref{thh-calculations}(a). (In fact, \cref{thh-calculations}(a) implies $\pi_\ast \Lk \TP(\BP{n-1}/X(p^n))$ is isomorphic to $\Z_p[v_1, \cdots, v_{n-1}, v_n^{\pm 1}]^\wedge_{(p, \cdots, v_{n-1})}\ls{\hbar}$ tensored with the $\Z_p$-homology of $\B\Delta_n$.)
\end{proof}
\begin{remark}
It is easy to see that $T(n) \to \BP{n}$ is a nilpotent extension. This implies in particular that the following square is Cartesian by the Dundas-Goodwillie-McCarthy theorem \cite[Theorem 7.2.2.1]{dgm}:
$$\xymatrix{
K(T(n)) \ar[r] \ar[d] & K(\BP{n}) \ar[d]\\
\TC(T(n)) \ar[r] & \TC(\BP{n}).
}$$
Note that there is also a commutative square
$$\xymatrix{
\TC(T(n)) \ar[r] \ar[d] & \TC(\BP{n}) \ar[d]\\
\TC^-(T(n)) \ar[r] & \TC^-(\BP{n}),
}$$
and \cref{thh-calculations} and \cref{topological-sen} give an inductive approach to calculating the bottom row. One might therefore view the results of this article as a first step to fully computing $K(\BP{n})$. It would be very interesting to describe $\TC(T(n))$. For example, we expect that for a general odd prime, the spectrum $\TP(T(1))$ is closely related to the $\E{1}$-quotient $S\mmod\alpha_{p/p}$. (Here, $\alpha_{p/p}\in \pi_{2p(p-1)-1}(S)$ is an element in the $\alpha$-family.)

However, more is true about the map $T(n) \to \BP{n}$: in fact, every element in $\ker(\pi_\ast T(n) \to \pi_\ast \BP{n})$ is nilpotent.
To see this, first observe that this map is a rational equivalence (indeed, it is an equivalence on $Q_0$-Margolis homology), so $\fib(T(n)\to \BP{n})$ is torsion. Moreover, the map $T(n) \to \BP{n}$ is surjective on homotopy (since it is a ring map, and the generators $p, v_1, \cdots, v_n\in \pi_\ast \BP{n}$ lift to $T(n)$), so that the map $\fib(T(n)\to \BP{n}) \to T(n)$ induces an injection on homotopy. If $x\in \pi_\ast T(n)$ is in the image of the map $\fib(T(n)\to \BP{n}) \to T(n)$, then the image of $x$ under the Hurewicz map $\pi_\ast T(n) \to \MU_\ast T(n)$ is also torsion; but $\MU_\ast T(n) \cong \MU_\ast[t_1, \cdots, t_n]$ is torsion-free, so $x$ must be nilpotent by the main theorem of \cite{dhs-i}\footnote{In some sense, this is a rather perverse argument, because the heart of the proof of the nilpotence theorem relies crucially on showing that every element in $\ker(\pi_\ast T(n) \to \pi_\ast \BP{n})$ is nilpotent.}. This is the desired claim.

More generally, recall \cite[Table 1]{bpn-thom}, reproduced here as \cref{the-table} (for the definitions of these spectra, see \cite{mahowald-thom} for $A$, where it is denoted $X_5$; \cite[Construction 3.1]{tmf-witten} and \cite{hopkins-mahowald-orientations} for $B$; \cite{mrs} for $y(n)$; and \cite{yz} for $y_\Z(n)$).
\begin{table}[h!]
    \centering
    \begin{tabular}{c | c c c c c c c c c c}
	Height & 0 & 1 & 2 & $n$ & $n$ & $n$\\
	\hline
	Base $\E{1}$-ring $R$ & $(S^0)^\wedge_p$ & $A$ & $B$ & $T(n)$ & $y(n)$ & $y_\Z(n)$ \\
	\hline
	Designer chromatic spectrum $\Theta(R)$ & $\Z_p$ & $\bo$ & $\tmf$ & $\BP{n}$ & $k(n)$ & $k_\Z(n)$\\
    \end{tabular}
    \vspace{0.5cm}
    \caption{The relation between $R$ and $\Theta(R)$ is analogous to the relationship between $T(n)$ and $\BP{n}$.}
    \label{the-table}
\end{table}

In a manner similar to above, if $R$ is an $\E{1}$-ring as in the second line of \cref{the-table}, and $\Theta(R)$ is the associated designer spectrum, one can show that every element in $\ker(\pi_\ast R \to \pi_\ast \Theta(R))$ is nilpotent. It follows, for example, that there is a Cartesian square
$$\xymatrix{
K(R) \ar[r] \ar[d] & K(\Theta(R)) \ar[d]\\
\TC(R) \ar[r] & \TC(\Theta(R)).
}$$
Moreover, the proof of \cref{thh-calculations} shows that were $R$ to admit the structure of an $\E{2}$-ring (which is generally \textit{not true}\footnote{For instance, $y(n)$ cannot admit the structure of an $\E{2}$-ring, thanks to the Steinberger identity on the action of the Dyer-Lashof operation $Q_1$ on the dual Steenrod algebra (see \cite[Theorems III.2.2 and III.2.3]{bmms}).}, $\THH(\Theta(R)/R)$ would be $p$-completely equivalent to $R \oplus \bigoplus_{j\geq 1} \Sigma^{2jp^{n+1}-1} R/pj$ (where $n$ is the ``height'' of $R$). If $R = y(n)$ or $y_\Z(n)$, this result is literally true by \cref{thh-calculations}, as long as one assumes \cref{tn-e2} and interprets $\THH(\Theta(R)/R)$ to mean $\THH(\BP{n}/T(n)) \otimes_{T(n)} R$. This does not cover the cases $R=A,B$, though; see \cref{analogues-ko-tmf} for further discussion of these cases.
\end{remark}
\begin{remark}\label{relative-to-large-xp}
It is natural to ask whether \cref{thh-calculations} can be generalized to describe $\THH(\BP{n-1}/X(p^m))$ if $m\neq n$. For $m<n$, we do not know a full description (after killing $p, \cdots, v_{n-m-1}$, see \cref{mod-p-vn}); but the techniques of \cref{topological-sen} below provide a conceptual approach to addressing this question. For $m>n$, the proof of \cref{thh-calculations} easily implies that there is an additive isomorphism
\begin{align*}
    \pi_\ast \THH(\BP{n-1}/X(p^m)) & \cong \pi_\ast \THH(\BP{n-1}/X(p^n)) \otimes_{\BP{n-1}_\ast} \BP{n-1}_\ast\pdb{y_i | p^n < i \leq p^m}\\
    & \cong \BP{n-1}[\Omega S^{2p^n+1}]_\ast \pdb{y_i | 1\leq i \leq p^m \text{ such that } i\neq p^k \text{ for } 0\leq k\leq n}.
\end{align*}
Here, $y_i$ lives in degree $2i$. For example, if $n=0$, the divided power factor is just $\BP{n-1}_\ast[\BSU(p^m)]$. For instance, in the limit as $m\to \infty$, we recover the statement that $\pi_\ast \THH(\FF_p/\MU) \simeq \FF_p[\BSU \times \Omega S^3]_\ast$.
\end{remark}
\begin{remark}\label{descent-sseq}
\cref{thh-calculations}(b) implies that 
$$\pi_\ast \THH(\BP{n-1}/X(p^n-1)) \cong \BP{n-1}[\B\Delta_n]_\ast \oplus \bigoplus_{j\geq 1} \BP{n-1}[\B\Delta_n]_{\ast - 2jp^{n}+1}/p^{v_p(j)+1}.$$
This can be compared to \cref{thh-calculations}(a) (we will study in this in further detail in \cref{applications-thh}): the complexity of $\pi_\ast \THH(\BP{n-1}/X(p^n-1))$ compared to $\pi_\ast \THH(\BP{n-1}/X(p^n))$ can be understood as arising via the descent spectral sequence for the map $\THH(\BP{n-1}/X(p^n-1)) \to \THH(\BP{n-1}/X(p^n))$. Note that $X(p^n) \otimes_{X(p^n-1)} X(p^n) \simeq X(p^n)[\Omega S^{2p^n-1}]$; using this, one can calculate using methods similar to the proof of \cref{thh-calculations} that the $E_2$-page of the descent spectral sequence is
$$E_2^{\ast,\ast} 
\cong \pi_\ast \THH(\BP{n-1}/X(p^n))[\epsilon]/\epsilon^2,$$
where $|\epsilon| = 2p^n-1$. Calculating the differentials gives an ``alternative'' proof of \cref{thh-calculations}(b) given \cref{thh-calculations}(a); we will expand on this below in \cref{bockstein-serre}. In fact, inductively studying $\THH$ of $\BP{n-1}$ relative to $X(p^j)$ for $j\leq n$ gives a conceptual explanation for the families of differentials visible in the calculations of $\pi_\ast \THH(\BP{n-1})$ in \cite[Section 8]{angeltveit-rognes}, \cite{mcclure-staffeldt}, and \cite{thh-bp1}; see \cref{topological-sen} and \cref{algebraic-sen}.
\end{remark}

The proof of \cref{thh-calculations} will be broken into several components. Let us begin by illustrating \cref{thh-calculations}(a) in the case $n=0,1$.
\begin{proof}[Proof of \cref{thh-calculations}(a) for $n=0,1$]
We need to show that there are equivalences of spectra $\THH(\FF_p) \simeq \FF_p[\Omega S^3]$ and $\THH(\Z_p/X(p)) \simeq \Z_p[\BSU(p-1)\times \Omega S^{2p+1}]$.
The first equivalence is classical (see \cite{bokstedt-thh}), so we argue the second equivalence. There is a $p$-local map $f: \SU(p) \to \Omega S^3\pdb{3}$ of spaces given by the composite
$$\SU(p) \to \SU(p)/\SU(p-1) \simeq S^{2p-1} \xar{\alpha_1} \Omega S^3\pdb{3}.$$
In \cite[Remark 4.1.4]{bpn-thom}, we described a fiber sequence (which was also known to Toda in \cite{toda})
\begin{equation}\label{toda-fibration}
    S^{2p-1} \xar{\alpha_1} \Omega S^3\pdb{3} \to \Omega S^{2p+1}.
\end{equation}
This induces a fiber sequence
of $\E{1}$-spaces
$$\Omega\SU(p) \xar{f} \Omega^2 S^3\pdb{3} \to \SU(p-1) \times \Omega^2 S^{2p+1}.$$
We now compute:
\begin{align*}
    \THH(\Z_p/X(p)) & \simeq \THH(\Z_p) \otimes_{\THH(X(p))} X(p)\\
    & \simeq \THH(\Z_p) \otimes_{\Z_p \otimes \THH(X(p))} \Z_p.
\end{align*}
The map $X(p) \to \Z_p$ is precisely the map induced by $f: \Omega\SU(p) \to \Omega^2 S^3\pdb{3}$, so the above tensor product is given by $\Z_p[\Omega S^{2p+1} \times \BSU(p-1)]$, as desired.
\end{proof}
\begin{remark}\label{relation-to-hopkins-mahowald}
Recall that the calculation $\THH(\FF_p) \simeq \FF_p[\Omega S^3]$ follows from \cite{thh-thom} and the Hopkins-Mahowald theorem that $\FF_p$ is the Thom spectrum of the $\E{2}$-map $\Omega^2 S^3 \to \BGL_1(S^\wedge_p)$ which detects $1-p\in \pi_1 \BGL_1(S^\wedge_p) \cong \Z_p^\times$ on the bottom cell of $\Omega^2 S^3$. In \cite[Corollary B]{bpn-thom}, we prove (unconditionally!) that $\Z_p$ is the Thom spectrum of a map $\mu: \Omega^2 S^{2p+1} \to \BGL_1(T(1))$ which detects $v_1\in \pi_{2p-1} \BGL_1(T(1)) \cong \pi_{2p-2} T(1)$ on the bottom cell of $\Omega^2 S^{2p+1}$. (Unlike in the classical Hopkins-Mahowald theorem, the map $\mu$ is not an $\E{2}$-map.) This result implies that $\Z_p$ is the Thom spectrum of a map $\SU(p-1) \times \Omega^2 S^{2p+1} \to \BGL_1(X(p))$, which can also be used to prove \cref{thh-calculations}(a) for $n=1$.
\end{remark}
We now turn to \cref{thh-calculations}(a) in the general case; the strategy is to compute the homology of each of the spectra under consideration, and run the Adams spectral sequence. In the case of $\THH^{t\Cpn}$, $\TC^-$, and $\TP$, we will need the ``continuous homology'' of \cite[Equation 2.3]{homological-hfpss}.
\begin{prop}\label{homology-thh-bpn}
\begin{enumerate}
    \item There are isomorphisms
    \begin{align*}
    \H_\ast(\THH(\BP{n-1}/X(p^n)); \FF_p) & \cong 
    \begin{cases}
    \H_\ast(\BP{n-1}[\B\Delta_n]; \FF_2) [\sigma(\zeta_{n+1})] & p=2,\\
    \H_\ast(\BP{n-1}[\B\Delta_n]; \FF_p) [\sigma(\tau_n)] & p>2.
    \end{cases}
    \end{align*}
    \item There are isomorphisms
    \begin{align*}
    \H_\ast(\THH(\BP{n}/X(p^n)); \FF_p) & \cong \begin{cases}
    \H_\ast(\BP{n}[\B\Delta_n]; \FF_2) [\sigma(\zeta_{n+2})] \otimes_{\FF_2} \Lambda_{\FF_2}(\sigma(\zeta_{n+1}^2)) & p=2,\\
    \H_\ast(\BP{n}[\B\Delta_n]; \FF_p) [\sigma(\tau_{n+1})] \otimes_{\FF_p} \Lambda_{\FF_p}(\sigma(\zeta_{n+1})) & p>2.
    \end{cases}
    \end{align*}
    Moreover, there is a Bockstein $\beta: \sigma(\zeta_{n+2})\mapsto \sigma(\zeta_{n+1}^2)$ for $p=2$, and a Bockstein $\beta: \sigma(\tau_{n+1})\mapsto \sigma(\zeta_{n+1})$ for $p>2$.
\end{enumerate}
\end{prop}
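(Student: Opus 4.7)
My plan is to use the relative B\"okstedt spectral sequence
$$E^2_{\ast,\ast} = \HH^{\H_\ast(X(p^n);\FF_p)}_\ast(\H_\ast(R;\FF_p)) \implies \H_{\ast}(\THH(R/X(p^n));\FF_p),$$
which is available because $X(p^n)$ carries an $\Efr{2}$-structure by \cref{xn-framed-e2} and $R$ is an $\E{3}$-ring by \cref{hahn-wilson-review}. The spectral sequence is multiplicative and supports the Dyer-Lashof operations used by Angeltveit-Rognes in their computation of $\H_\ast(\THH(\BPP);\FF_p)$ \cite{angeltveit-rognes}. I will sketch the odd-prime case below; the $p=2$ variant is parallel after replacing each $\tau_j$ by $\zeta_{j+1}$ and squaring the appropriate $\zeta_i$.

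The $E_2$-page is a direct computation from \cref{homology-bpn} and the identification $\H_\ast(X(p^n);\FF_p)=\FF_p[\zeta_1,\ldots,\zeta_n]\otimes \H_\ast(\Omega\Delta_n;\FF_p)$. The unit map sends $\zeta_i\mapsto \zeta_i$ for $i\leq n$, so $\H_\ast(R;\FF_p)$ is obtained from $\H_\ast(X(p^n);\FF_p)$ by (i) trivialising the $\H_\ast(\Omega\Delta_n;\FF_p)$-factor via its augmentation, and (ii) freely adjoining the ``extra'' polynomial generators $\zeta_i$ for $i>n$ together with the exterior generators $\tau_j$ for $j\geq n$ in case (a), respectively $j\geq n+1$ in case (b). The Hochschild homology therefore splits as a tensor product. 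Using the standard formulas $\HH_\ast(\FF_p[x])=\FF_p[x]\otimes\Lambda(\sigma x)$ and $\HH_\ast(\Lambda(y))=\Lambda(y)\otimes\Gamma[\sigma y]$, together with the identification of $\HH$ of the augmentation $\H_\ast(\Omega\Delta_n;\FF_p)\to\FF_p$ with the two-fold bar construction $\H_\ast(\B\Delta_n;\FF_p)$ (packaged by the notation $R[\B\Delta_n]$), the $E_2$-page is $\H_\ast(R[\B\Delta_n];\FF_p)$ tensored with an exterior algebra on the classes $\sigma\zeta_i$ for $i>n$ and divided-power algebras $\Gamma[\sigma\tau_j]$ in the relevant range.

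The main step, which is also the main expected obstacle, is the collapse of the spectral sequence together with the conversion of each divided-power factor $\Gamma[\sigma\tau_j]$ into a polynomial algebra on only the \emph{lowest} generator --- i.e.\ $\sigma\tau_n$ in case (a) and $\sigma\tau_{n+1}$ in case (b) --- with all higher $\sigma\tau_j$ and $\sigma\zeta_i$ identified (modulo decomposables) with $p$-th powers of that lowest generator. This is carried out exactly as in Angeltveit-Rognes: the Dyer-Lashof operation $Q^{|\sigma\tau_j|}$, available because $\THH$ of an $\E{3}$-ring over an $\Efr{2}$-base carries an $\E{2}$-structure, sends $\gamma_{p^{k-1}}(\sigma\tau_j)$ to $\gamma_{p^k}(\sigma\tau_j)$ modulo decomposables and sends $\sigma\tau_j$ to $(\sigma\tau_j)^p$, while Steinberger-type relations identify $\sigma\tau_{j+1}$ (respectively $\sigma\zeta_i$) with an appropriate power of the lowest extra generator. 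The structural input that allows these Dyer-Lashof computations to go through relatively is exactly the $\Efr{2}$-structure on $X(p^n)$ from \cref{xn-framed-e2}.

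Finally, the Bockstein statement in (b) follows from naturality. The dual Steenrod algebra relations $\beta(\tau_{n+1})=\zeta_{n+1}$ (respectively $\beta(\zeta_{n+2})=\zeta_{n+1}^2$ at $p=2$) descend to $\H_\ast(\BP{n};\FF_p)$ via the unit $S\to \BP{n}$, and the B\"okstedt $\sigma$-operator commutes with the Bockstein; applying $\sigma$ thus yields the claimed relations $\beta(\sigma\tau_{n+1})=\sigma(\zeta_{n+1})$ and $\beta(\sigma\zeta_{n+2})=\sigma(\zeta_{n+1}^2)$.
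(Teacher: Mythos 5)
Your overall strategy (relative B\"okstedt spectral sequence, Angeltveit--Rognes-style differentials and multiplicative extensions, Bockstein by naturality) is the same as the paper's, and your $E^2$-page computation and the Bockstein step are fine. But the justification of the key step has a genuine gap: you assert that the relative spectral sequence is multiplicative and carries Dyer--Lashof operations because ``$\THH$ of an $\E{3}$-ring over an $\Efr{2}$-base carries an $\E{2}$-structure.'' That is not available here. An $\Efr{2}$-structure on $X(p^n)$ only provides the $S^1$-action on $\THH(\BP{n-1}/X(p^n))$; since $X(p^n)$ admits no $\E{3}$-structure (cf.\ \cref{Xn-e3}), the relative $\THH$ does not inherit even a ring structure --- the paper warns about exactly this, and later (in the proof of \cref{thh-calculations}(b)) explicitly works around the fact that the relevant spectral sequences for $\THH(\BP{n}/X(p^n))$ are \emph{not} multiplicative. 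So you cannot run Dyer--Lashof operations or multiplicative extension arguments internally to the relative B\"okstedt spectral sequence, and in particular your claimed ``collapse'' hides the nontrivial $d^{p-1}$-differentials $d^{p-1}(\gamma_j(\sigma\tau_m)) = \sigma(\zeta_{m+1})\,\gamma_{j-p}(\sigma\tau_m)$ that kill the exterior classes $\sigma\zeta_j$, $j\geq n+1$, in case (a); these must be produced somehow, not assumed away.

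The correct mechanism, which is how the paper proceeds, is naturality from the \emph{absolute} B\"okstedt spectral sequence: the map $\THH(\BP{n-1}) \to \THH(\BP{n-1}/X(p^n))$ induces a map of B\"okstedt spectral sequences, and the source is where the multiplicative and ($\E{2}$-level) Dyer--Lashof structure genuinely lives, because $\BP{n-1}$ is an $\E{3}$-ring so $\THH(\BP{n-1})$ is an $\E{2}$-ring. Angeltveit--Rognes (Propositions 5.6, 5.7 and Theorem 5.12 of \cite{angeltveit-rognes}) supply the differentials $d^{p-1}(\gamma_j(\sigma\tau_m)) = \sigma(\zeta_{m+1})\gamma_{j-p}(\sigma\tau_m)$ and the extensions $(\sigma\tau_j)^p = \sigma\tau_{j+1}$ (resp.\ $(\sigma\zeta_j)^2 = \sigma\zeta_{j+1}$ at $p=2$) in the absolute spectral sequence; since the generators $\sigma\zeta_j$, $\sigma\tau_j$ of the relative $E^2$-page lift along this map, the differentials and extensions are forced in the relative spectral sequence, yielding the stated polynomial answer on the single lowest generator. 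If you repair your argument by routing all multiplicative/Dyer--Lashof input through the absolute spectral sequence in this way, the rest of your outline goes through.
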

\begin{proof}
We begin by proving (a). We will use the B\"okstedt spectral sequence, which runs
$$E^2_{\ast,\ast} = \HH_\ast(\H_\ast(\BP{n-1}; \FF_p)/\H_\ast(X(p^n); \FF_p)) \Rightarrow \H_\ast(\THH(\BP{n-1}/X(p^n)); \FF_p).$$
Since $\H_\ast(X(p^n); \FF_p) \cong \H_\ast(T(n); \FF_p) \otimes_{\FF_p} \H_\ast(\Omega \Delta_n; \FF_p)$ and the action of $\H_\ast(X(p^n); \FF_p)$ on $\H_\ast(\BP{n-1}; \FF_p)$ factors through the map $\H_\ast(X(p^n); \FF_p) \to \H_\ast(T(n); \FF_p)$ induced by the map crushing $\Omega \Delta_n$ to a point, we will ignore the contribution from $\Delta_n$ in this discussion. The final contribution from these terms will only be $\H_\ast(\B\Delta_n; \FF_p)$. (The following may therefore be interpreted as a computation of $\H_\ast(\THH(\BP{n-1}/T(n)); \FF_p)$; however, since \cref{tn-e2} is not known to be true, the spectrum $\THH(\BP{n-1}/T(n))$ cannot yet be defined.) We will continue to write $E^2_{\ast,\ast}$ to denote the Hochschild homology groups of $\H_\ast(\BP{n-1}; \FF_p)$ over $\H_\ast(T(n); \FF_p)$.

Recall that if $R$ is any discrete commutative ring, there are isomorphisms $\pi_\ast \HH(R[x]/R) \simeq R[x] \otimes \Lambda_R(\sigma x)$ and $\pi_\ast \HH(\Lambda_R(x)) \simeq \Lambda_R(x) \otimes R\pdb{\sigma x}$. It therefore follows from \cref{homology-bpn} that we have
\begin{align*}
    E^2_{\ast,\ast} & =
    \begin{cases}
    \H_\ast(\BP{n-1}; \FF_2) \otimes_{\FF_2} \Lambda_{\FF_2}(\sigma \zeta_j | j \geq n+1) & p=2, \\
    \H_\ast(\BP{n-1}; \FF_p) \otimes_{\FF_p} \Lambda_{\FF_p}(\sigma \zeta_j | j\geq n+1) \otimes_{\FF_p} \FF_p\pdb{\sigma \tau_j| j\geq n} & p>2.
    \end{cases}
\end{align*}
The map $\THH(\BP{n-1}) \to \THH(\BP{n-1}/X(p^n))$ induces a map from the B\"okstedt spectral sequence computing $\H_\ast(\THH(\BP{n-1}))$ to our spectral sequence. The differentials in  the B\"okstedt spectral sequence computing $\H_\ast(\THH(\BP{n-1}))$ are calculated in \cite[Proposition 5.6]{angeltveit-rognes}, where it is shown that for $p$ odd, $j\geq p$, and $m \geq n$, there are differentials
\begin{equation}\label{bokstedt-diffl}
    d^{p-1}(\gamma_j(\sigma \tau_m)) = \sigma(\zeta_{m+1}) \gamma_{j-p}(\sigma \tau_m).
\end{equation}
The argument of \cite[Proposition 5.7]{angeltveit-rognes} implies that
\begin{align*}
    E^\infty_{\ast,\ast} & = 
    \begin{cases}
    \H_\ast(\BP{n-1}; \FF_2) \otimes_{\FF_2} \Lambda_{\FF_2}(\sigma \zeta_j | j\geq n+1) & p=2, \\
    \H_\ast(\BP{n-1}; \FF_p) \otimes_{\FF_p} \FF_p[\sigma \tau_j | j \geq n]/(\sigma \tau_j)^p & p>2.
    \end{cases}
\end{align*}
The extensions on the $E^\infty$-page of the B\"okstedt spectral sequence computing $\H_\ast(\THH(\BP{n-1}))$ are determined by \cite[Theorem 5.12]{angeltveit-rognes}: there, it is shown that for $j\geq n+1$, we have $(\sigma \zeta_j)^2 = \sigma \zeta_{j+1}$ when $p=2$, and $(\sigma \tau_j)^p = \sigma \tau_{j+1}$. These imply extensions on the $E^\infty$-page of the B\"okstedt spectral sequence for $\THH(\BP{n-1}/X(p^n))$, and the resulting answer is that of the proposition.

We now turn to (b). The calculation is similar to (a), the only difference being that the $E^2$-page of the B\"okstedt spectral sequence is now
\begin{align*}
    E^2_{\ast,\ast} & = 
    \begin{cases}
    \H_\ast(\BP{n}; \FF_2) \otimes_{\FF_2} \Lambda_{\FF_2}(\sigma(\zeta_{n+1}^2), \sigma \zeta_j | j \geq n+2) & p=2, \\
    \H_\ast(\BP{n}; \FF_p) \otimes_{\FF_p} \Lambda_{\FF_p}(\sigma \zeta_j | j\geq n+1) \otimes_{\FF_p} \FF_p\pdb{\sigma \tau_j| j\geq n+1} & p>2.
    \end{cases}
\end{align*}
Again, the differentials in the B\"okstedt spectral sequence computing $\H_\ast(\THH(\BP{n-1}))$ give rise to differentials in the above B\"okstedt spectral sequence, and we have
\begin{align*}
    E^\infty_{\ast,\ast} & = 
    \begin{cases}
    \H_\ast(\BP{n}; \FF_2) \otimes_{\FF_2} \Lambda_{\FF_2}(\sigma(\zeta_{n+1}^2), \sigma \zeta_j | j\geq n+2) & p=2, \\
    \H_\ast(\BP{n-1}; \FF_p) \otimes_{\FF_p} \FF_p[\sigma \tau_j | j \geq n+1]/(\sigma \tau_j)^p \otimes_{\FF_p} \Lambda_{\FF_p}(\sigma \zeta_{n+1}) & p>2.
    \end{cases}
\end{align*}
Again, the extensions on the $E^\infty$-page of the B\"okstedt spectral sequence computing $\H_\ast(\THH(\BP{n-1}))$ imply extensions on the above $E^\infty$-page, and the resulting answer is that of the proposition. The Bockstein follows from the fact that $\beta(\tau_i) = \zeta_i$ for $p$ odd and $\beta(\zeta_i) = \zeta_{i-1}^2$ for $p=2$.
\end{proof}
\begin{prop}\label{homology-tc-bpn}
There are isomorphisms
\begin{align*}
    \H_\ast^c(\TC^-(\BP{n-1}/X(p^n)); \FF_p) & \cong \H_\ast(\BP{n}[\B\Delta_n]; \FF_p) \pw{\hbar} \oplus \hbar\text{-}\mathrm{torsion}, \\
    \H_\ast^c(\TP(\BP{n-1}/X(p^n)); \FF_p) & \cong \H_\ast(\BP{n}[\B\Delta_n]; \FF_p) \ls{\hbar}, \\
    \H_\ast^c(\THH(\BP{n-1}/X(p^n))^{t\Cpn}; \FF_p) & \cong \H_\ast(\BP{n}^{t\Cpn}[\B\Delta_n]; \FF_p) \ls{\hbar}.
\end{align*}
Here, $|\hbar| = -2$, and the $\hbar$-torsion terms will be specified in the proof.
\end{prop}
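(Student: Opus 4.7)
The plan is to compute these continuous mod-$p$ homology groups by running the standard $S^1$-homotopy fixed point spectral sequence (for $\TC^-$), the $S^1$-Tate spectral sequence (for $\TP$), and the $\Cpn$-Tate spectral sequence (for $\THH^{t\Cpn}$), taking \cref{homology-thh-bpn}(a) as input and importing differentials via comparison with the analogous absolute HFPSS for $\THH(\BP{n-1})$ studied in \cite{angeltveit-rognes, homological-hfpss}.

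First I would write down the $E_2$-page: for $\TC^-(\BP{n-1}/X(p^n))$ the HFPSS has
$$E_2 \cong \FF_p\pw{\hbar} \otimes_{\FF_p} \H_\ast(\BP{n-1}[\B\Delta_n]; \FF_p)[\sigma(\tau_n)]$$
for $p$ odd (with $\sigma(\zeta_{n+1})$ in place of $\sigma(\tau_n)$ when $p=2$), and the Tate $E_2$-pages for $\TP$ and $\THH^{t\Cpn}$ are the same with $\FF_p\pw{\hbar}$ replaced by $\FF_p\ls{\hbar}$ (respectively, by its $\Cpn$-Tate analogue). Throughout, the $\H_\ast(\B\Delta_n)$-factor splits off: since $\H_\ast(X(p^n); \FF_p) \cong \H_\ast(T(n); \FF_p) \otimes_{\FF_p} \H_\ast(\Omega \Delta_n; \FF_p)$ and the action on $\H_\ast(\BP{n-1}; \FF_p)$ factors through $\H_\ast(T(n); \FF_p)$, the $\B\Delta_n$-contribution is carried passively through the entire argument.

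Next, one imports the differential $d_2(\tau_n) = \hbar \cdot \sigma(\tau_n)$ (and the analogous $d_2$ on $\zeta_{n+1}$ at $p=2$) from the absolute HFPSS via the comparison map induced by $\THH(\BP{n-1}) \to \THH(\BP{n-1}/X(p^n))$; this is an instance of the general recipe $d_2(x) = \hbar \cdot \sigma(x)$ for the $S^1$-HFPSS, and is exactly the differential analyzed in \cite[Section 6]{angeltveit-rognes}. Since $d_2$ is a derivation and $\tau_n$ is exterior, a direct computation shows the $E_3$-page is
$$\H_\ast(\BP{n}[\B\Delta_n]; \FF_p) \otimes_{\FF_p} \bigl(\FF_p\pw{\hbar}[\sigma(\tau_n)]/(\hbar \cdot \sigma(\tau_n))\bigr),$$
which, as an $\FF_p$-module, decomposes as $\H_\ast(\BP{n}[\B\Delta_n]; \FF_p)\pw{\hbar}$ together with an $\hbar$-torsion summand generated by positive powers of $\sigma(\tau_n)$ on which $\hbar$ acts by zero. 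For the Tate spectral sequence computing $\TP$, the relation $\hbar\cdot\sigma(\tau_n) = 0$ forces $\sigma(\tau_n) = 0$ once $\hbar$ is invertible, so $E_3 \cong \H_\ast(\BP{n}[\B\Delta_n]; \FF_p)\ls{\hbar}$. For $\THH^{t\Cpn}$, the identical analysis in the $\Cpn$-Tate spectral sequence, combined with the residual $S^1/\Cpn$-action, produces $\H_\ast(\BP{n}^{t\Cpn}[\B\Delta_n]; \FF_p)\ls{\hbar}$.

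The main obstacle is twofold: (i) verifying that no new differentials appear in the relative spectral sequence beyond those pulled back from the absolute one (for which one uses that $\sigma(\tau_n)$ is a permanent cycle in the absolute HFPSS modulo $\tau_n$ and that the comparison map is surjective on the relevant generators), and (ii) verifying that all higher differentials $d_r$ for $r > 2$ vanish so that $E_3 = E_\infty$, and pinning down the precise $\hbar$-torsion summand in $\H_\ast^c(\TC^-)$---both of which reduce to a comparison with the known absolute computation and to explicit bookkeeping of the permanent cycles generated by $\sigma(\tau_n)^j$ for $j \geq 1$.
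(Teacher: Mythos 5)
Your proposal follows essentially the same route as the paper: run the homological homotopy fixed-point/Tate spectral sequences with $E_2$-page $\H_\ast(\THH(\BP{n-1}/X(p^n));\FF_p)\otimes\FF_p\pw{\hbar}$ (resp.\ $\ls{\hbar}$), import the $d^2(x)=\hbar\sigma(x)$ differentials by comparison with the absolute spectral sequence for $\THH(\BP{n-1})$ from \cite{angeltveit-rognes, homological-hfpss}, and rule out higher differentials by the same comparison, with the $\B\Delta_n$-factor carried along passively. Two small imprecisions are worth flagging, neither of which affects the stated isomorphisms. First, the $d^2$-differential is not supported only on $\tau_n$ (resp.\ $\zeta_{n+1}$ at $p=2$): since $\sigma(\tau_j)=\sigma(\tau_n)^{p^{j-n}}$ (up to decomposables) in the relative $\THH$-homology, every $\tau_j$ with $j\geq n$ (resp.\ every $\zeta_j$ with $j\geq n+1$) supports a $d^2$, and what survives are the corrected classes $\tau'_{j+1}=\tau_{j+1}+\tau_j\sigma(\tau_j)^{p-1}$ (resp.\ $\zeta'_{j+1}$); these have the same degrees as the original generators, so your $E_3$-page formula, and hence the proposition, is still correct as an abstract isomorphism, but the description of which classes survive is not. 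Second, the comparison result \cite[Proposition 6.1]{homological-hfpss} that kills all higher differentials is stated for $\Eoo$-algebras, so one should note (as the paper does) that its proof only uses the Dyer--Lashof operations $Q_0,Q_1$, which exist on the homology of an $\E{2}$-algebra and hence apply here because $\BP{n-1}$ is taken with its $\E{3}$-structure; likewise, in the $\Cpn$-Tate case the $E_2$-page acquires an extra exterior class $\epsilon_k$, which is what accounts for the difference between $\H_\ast(\BP{n}^{t\Cpn})$ and $\H_\ast(\BP{n})\ls{\hbar}$ and deserves a sentence rather than being absorbed into ``the identical analysis.''
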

\begin{proof}
As in \cref{homology-thh-bpn}, the contribution from $\Delta_n$ is just the $\FF_p$-homology of $\B\Delta_n$, and we will ignore this term in the calculations. Moreover, the calculation for $\H_\ast^c(\THH(\BP{n-1}/X(p^n))^{t\Z/p^k}; \FF_p)$ is similar to the calculation of $\H_\ast^c(\TC^-(\BP{n-1}/X(p^n)); \FF_p)$ (and $\H_\ast^c(\TP(\BP{n-1}/X(p^n)); \FF_p)$), so we will only do the latter. (The only difference is that $\FF_p\ls{\hbar}$ below is replaced by $\FF_p\ls{\hbar}[\epsilon_k]/\epsilon_k^2$.) The $E^2$-page of the homological homotopy fixed points spectral sequence computing $\H_\ast^c(\TC^-(\BP{n-1}/X(p^n)); \FF_p)$ is given by
\begin{align*}
    E^2_{\ast,\ast} & \cong \H_\ast(\THH(\BP{n-1}/X(p^n)); \FF_p) \otimes_{\FF_p} \FF_p\pw{\hbar} \\
    & \cong \begin{cases}
    \FF_2[\sigma(\zeta_{n+1}), \hbar, \zeta_1^2, \cdots, \zeta_n^2, \zeta_j | j\geq n+1] & p=2 \\
    \FF_p[\sigma(\tau_n), \hbar, \zeta_i | i\geq 1] \otimes_{\FF_p} \Lambda_{\FF_p}[\tau_j | j\geq n] & p>2.
    \end{cases}
\end{align*}
There is a map to the above spectral sequence from the homological homotopy fixed points spectral sequence computing $\H_\ast^c(\TC^-(\BP{n-1}); \FF_p)$, and \cite[Proposition 6.1]{homological-hfpss} calculates that there are differentials $d^2(x) = \hbar \sigma(x)$ for every $x\in \H_\ast(\THH(\BP{n-1}/X(p^n)); \FF_p)$. For $j\geq n$, the following classes survive to the $E^3$-page:
\begin{align*}
    \zeta'_{j+1} & = \zeta_{j+1} + \zeta_j \sigma(\zeta_j) = \zeta_{j+1} + \zeta_j \sigma(\zeta_{n+1})^{2^{n+1-j}}, \ p=2\\
    \tau'_{j+1} & = \tau_{j+1} + \tau_j \sigma(\tau_j)^{p-1}, \ p>2.
\end{align*}
Moreover, (powers of) the classes $\sigma(\zeta_{n+1})$ at $p=2$ and $\sigma(\tau_n)$ at $p>2$ are simple $\hbar$-torsion: for example, $\hbar \sigma(\zeta_{n+1})^{2^{n+1-j}}$ is killed by a $d^2$-differential on $\zeta_j$, and the case for a general power of $\sigma(\zeta_{n+1})$ follows from taking a binary expansion of the exponent. This leaves
\begin{align*}
    E^3_{\ast,\ast} & \cong \FF_2\pw{\hbar}[\zeta_1^2, \cdots, \zeta_n^2, \zeta_{n+1}^2, \zeta'_{j+1} | j\geq n+1], \ p=2, \\
    E^3_{\ast,\ast} & \cong \FF_p\pw{\hbar}[\zeta_i | i\geq 1] \otimes_{\FF_p} \Lambda_{\FF_p}[\tau'_{j+1} | j \geq n], \ p>2,
\end{align*}
and the image of $\sigma$ in filtration zero (these classes being simple $\hbar$-torsion). We claim that the spectral sequence degenerates at the $E^3$-page, which then implies the desired result. (In the case of $\THH(\BP{n-1}/X(p^n))^{t\Cp}$, for instance, the class $\epsilon_1 \hbar^{1-p^n}$ plays the role of $\tau_n$ in $\H_\ast^c(\THH(\BP{n-1}/X(p^n))^{t\Cp}; \FF_p)$ for $p$ odd.) 
As with the proof of \cref{homology-thh-bpn}, this follows from \cite[Proposition 6.1]{homological-hfpss}: were there any differentials in the homological homotopy fixed points spectral sequence for $\H_\ast(\TC^-(\BP{n-1}/X(p^n)); \FF_p)$, there would also exist corresponding differentials in the homological homotopy fixed points spectral sequence for $\H_\ast(\TC^-(\BP{n-1}); \FF_p)$.

However, the statement of \cite[Proposition 6.1]{homological-hfpss} assumes that $\BP{n-1}$ admits the structure of an $\Eoo$-algebra; this is not necessary, since their appeal to \cite[Proposition 5.1]{homological-hfpss} only uses the existence of the Dyer-Lashof operations $Q_0$ and $Q_1$ on $\H_\ast(\THH(\BP{n-1}); \FF_p)$, which already exist in the homology of any $\E{2}$-algebra. It therefore suffices to know that $\THH(\BP{n-1})$ admits the structure of an $\E{2}$-algebra, which is a consequence of our assumption that $\BP{n-1}$ is an $\E{3}$-form of the truncated Brown-Peterson spectrum.
\end{proof}
\begin{proof}[Proof of \cref{thh-calculations}(a)]
We will ignore the contribution from $\B\Delta_n$ below: the contribution from this term is simply its homology.
We will first calculate $\pi_\ast \THH(\BP{n-1}/X(p^n))$ via the Adams spectral sequence
$$E_2^{\ast,\ast} = \Ext_{\cA_\ast}^{\ast,\ast}(\FF_p, \H_\ast(\THH(\BP{n-1}/X(p^n)); \FF_p)) \Rightarrow \pi_\ast \THH(\BP{n-1}/X(p^n))^\wedge_p.$$
Using \cref{homology-thh-bpn}(a), there is a change-of-rings isomorphism
$$E_2^{\ast,\ast} \cong \Ext_{\ce(n-1)_\ast}^{\ast,\ast}(\FF_p, \FF_p[\sigma(\zeta_{n+1})]) \cong \FF_p[\sigma(\zeta_{n+1}), v_j | 0\leq j\leq n-1],$$
where $v_j$ lives in bidegree $(s,t-s) = (1, 2p^j-2)$. The Adams spectral sequence is concentrated in even total degree (and therefore degenerates at the $E_2$-page). The class $\sigma(\zeta_{n+1})$ in degree $|\zeta_{n+1}| + 1 = 2p^n$ is denoted $\theta_n$, so that the above calculation says that there is an isomorphism
$$\pi_\ast \THH(\BP{n-1}/X(p^n)) \simeq \BP{n-1}[\B\Delta_n]_\ast[\theta_n].$$
Since $\THH(\BP{n-1}/X(p^n))\simeq \THH(\BP{n-1}) \otimes_{\THH(X(p^n))} X(p^n)$, we see that $\THH(\BP{n-1}/X(p^n))$ admits the structure of a $\THH(\BP{n-1})$-module. There is an $\E{2}$-map $\BP{n-1} \to \THH(\BP{n-1})$, so that $\THH(\BP{n-1}/X(p^n))$ acquires the structure of a $\BP{n-1}$-module by restriction of scalars. Therefore, each of the $\BP{n-1}_\ast$-module generators of $\pi_\ast \THH(\BP{n-1}/X(p^n))$ lift to maps of spectra from shifts of $\BP{n-1}$ to $\THH(\BP{n-1}/X(p^n))$. Moreover, the resulting map $\BP{n-1}[\B\Delta_n\times \Omega S^{2p^n+1}] \to \THH(\BP{n-1}/X(p^n))$ induces an isomorphism on homotopy by construction, so we obtain the first part of \cref{thh-calculations}(a).

The calculation for $\pi_\ast \THH(\BP{n-1}/X(p^n))^{t\Cpn}$ is similar to the calculation of $\pi_\ast \TC^-(\BP{n-1}/X(p^n))$ (and $\pi_\ast \TP(\BP{n-1}/X(p^n))$); moreover, it will be illustrative to calculate $\pi_\ast \TP(\BP{n-1}/X(p^n))$, since the case of $\pi_\ast \TC^-(\BP{n-1}/X(p^n))$ will just involve bookkeeping of the $\hbar$-torsion terms in \cref{homology-tc-bpn}. There is an Adams spectral sequence 
$$E_2^{\ast,\ast} = \Ext_{\cA_\ast}^{\ast,\ast}(\FF_p, \H_\ast^c(\TP(\BP{n-1}/X(p^n)); \FF_p)) \Rightarrow \pi_\ast \TP(\BP{n-1}/X(p^n))^\wedge_p,$$
which is in general only conditionally convergent, but is strongly convergent in this case. (This is because $\H_\ast(\THH(\BP{n-1}/X(p^n)); \FF_p)$ is bounded-below and of finite type.) By \cref{homology-tc-bpn}, there is a change-of-rings isomorphism 
$$E_2^{\ast,\ast} \cong \Ext_{\ce(n)_\ast}^{\ast,\ast}(\FF_p, \FF_p\ls{\hbar}) \cong \FF_p[v_j | 0\leq j\leq n]\ls{\hbar},$$
so that the Adams spectral sequence is concentrated in even total degree (and therefore degenerates at the $E_2$-page); this gives the desired calculation.
\end{proof}
\begin{remark}\label{theta-hbar-vn}
The homotopy fixed points spectral sequence for $\pi_\ast \TC^-(\BP{n-1}/X(p^n))$ has $E_2$-page given by
$$E_2^{\ast,\ast} = \BP{n-1}[\B\Delta_n]_\ast[\theta_n]\pw{\hbar}.$$
By evenness, this spectral sequence degenerates at the $E_2$-page. The calculation of \cref{thh-calculations}(a) tells us that the class $\hbar\theta_n$ on the $E_\infty$-page represents the class $v_n\in \pi_\ast \BP{n}$ (modulo decomposables). 

Note that \cref{thh-calculations}(a) says in particular that $\pi_\ast \THH(\BP{n-1}/X(p^n))^{t\Cp} \cong \pi_\ast \BP{n}^{t\Cp}[\B\Delta_n]$. There is an isomorphism $\pi_\ast \BP{n}^{t\Cp} \cong \pi_\ast \BP{n-1}^{tS^1}$ (which was proved in \cite[Proposition 2.3]{ando-morava-sadofsky}, and conjectured to lift to an equivalence of spectra in \cite[Conjecture 1.2]{five-author-tate}), so that $\pi_\ast \THH(\BP{n-1}/X(p^n))^{t\Cp} \cong \BP{n-1}[\B\Delta_n]_\ast\ls{\hbar}$. Note that unless $n=0$, this is \textit{not} isomorphic to $\pi_\ast \THH(\BP{n-1}/X(p^n))[\theta_n^{-1}]$, since $\pi_\ast \THH(\BP{n-1}/X(p^n))[\theta_n^{-1}]$ is $2p^n$-periodic, while $\pi_\ast \THH(\BP{n-1}/X(p^n))^{t\Cp}$ is $2$-periodic.
\end{remark}
\begin{proof}[Proof of \cref{thh-calculations}(b)]
We now calculate $\pi_\ast \THH(\BP{n}/X(p^n))$, this time with the use of Bockstein spectral sequences. (Similar arguments can be found in \cite{thh-truncated-bp}.) Again, we will ignore the contribution from $\B\Delta_n$ below: the contribution from this term is simply its homology. For simplicity, let us write
\begin{align*}
    x = \begin{cases}
    \sigma(\zeta_{n+1}^2) & p=2,\\
    \sigma(\zeta_{n+1}) & p>2
    \end{cases}, & \ y = \begin{cases}
    \sigma(\zeta_{n+2}) & p=2,\\
    \sigma(\tau_{n+1}) & p>2,
    \end{cases}
\end{align*}
so that $|x| = 2p^{n+1}-1$ and $|y| = 2p^{n+1}$. If $M$ is a (left) $\BP{n}$-module, let $\THH(\BP{n}/X(p^n); M)$ denote $\THH(\BP{n}/X(p^n)) \otimes_{\BP{n}} M$, so that we may informally view $\THH(\BP{n}/X(p^n); \FF_p)$ as $\THH(\BP{n}/X(p^n))/(p, \cdots, v_n)$. Using \cref{homology-thh-bpn}(b), one can show that
$$\pi_\ast \THH(\BP{n}/X(p^n); \FF_p) \cong \FF_p[x,y]/x^2;$$
we will compute $\THH(\BP{n}/X(p^n); \BP{n})$ using this calculation and $n+1$ Bockstein spectral sequences.
The $v_0$-Bockstein spectral sequence is given by
\begin{equation}\label{v0-bockstein}
    E_1^{\ast,\ast} = \pi_\ast \THH(\BP{n}/X(p^n); \FF_p)[v_0] \cong \FF_p[v_0,x,y]/x^2 \Rightarrow \pi_\ast \THH(\BP{n}/X(p^n); \Z_p).
\end{equation}
It follows from the Bockstein calculation in \cref{homology-thh-bpn}(b) that there is a $d_1$-differential
\begin{equation}\label{d2-y}
    d_1(y) = v_0 x,
\end{equation}
which implies $d_1(yv_0^n) = v_0^{n+1} x$ (by $\FF_p[v_0]$-linearity).
However, \cref{d2-y} does not immediately imply differentials on powers of $y$, since $\THH(\BP{n}/X(p^n))$ does not admit the structure of a ring (so the spectral sequence is not multiplicative). However, this is easily resolved: there is a map to the above Bockstein spectral sequence from the Bockstein spectral sequence computing $\pi_\ast \THH(\BP{n}; \Z_p)$, whose $E_1$-page is
$$'E_1^{\ast,\ast} \cong \pi_\ast \THH(\BP{n}; \FF_p)[v_0].$$
The calculation of $\H_\ast(\THH(\BP{n}); \FF_p)$ is described in \cite[Theorem 5.12]{angeltveit-rognes}; from this, one can compute $\pi_\ast \THH(\BP{n}; \FF_p)$. Here, we will only need to observe that the classes $x,y\in E_1^{\ast,\ast}$ lift along the map $'E_1^{\ast,\ast} \to E_1^{\ast,\ast}$. We will continue to denote these lifts by $x$ and $y$; there is still a $d_1$-differential $d_1(y) = v_0 x$ in $'E_1^{\ast,\ast}$. Since $\THH(\BP{n}; \Z_p)$ admits the structure of an $\E{2}$-ring, the above spectral sequence is multiplicative. Therefore, we may appeal to \cite[Proposition 6.8]{may-steenrod}, which gives higher differentials on powers of $y$. In particular, we claim:
\begin{equation}\label{diff-on-yj}
    d_{v_p(j)+1}(y^j) = v_0^{v_p(j)+1} xy^{j-1},
\end{equation}
up to a unit in $\FF_p^\times$. By taking base-$p$ expansions, it suffices to prove this differential when $j$ is a power of $p$, say $j=p^k$: then, \cref{diff-on-yj} says that $d_{k+1}(y^{p^k}) = v_0^{k+1} xy^{p^k - 1}$. Using \cite[Proposition 6.8]{may-steenrod} for $k>1$, we have
$$d_{k+1}((y^{p^{k-1}})^p) = v_0 (y^{p^{k-1}})^{p-1} d_{k}(y^{p^{k-1}}) = v_0 y^{p^k - p^{k-1}} d_k(y^{p^{k-1}});$$
this inductively implies \cref{diff-on-yj} once we establish the case $k=1$. 

For $p=2$, \cite[Proposition 6.8]{may-steenrod} says that
$$d_2(y^2) = v_0 y d_1(y) + Q_1(d_1(y)) = v_0^2 xy^2 + Q_1(v_0 x).$$
But 
$$Q_1(x) = Q_1(\sigma(\zeta_{n+1}^2)) = \sigma(Q_2(\zeta_{n+1}^2)) = \sigma(\zeta_{n+2}^2),$$
which is zero. Therefore, we see that $d_2(y^2) = v_0 xy^2$, as desired. For $p>2$, \cite[Proposition 6.8]{may-steenrod} says that
$$d_2(y^p) = v_0 y^{p-1} d_1(y) + \sum_{1\leq j\leq r} j[d_1(y) y^{j-1}, d_1(y) y^{p-j-1}],$$
for some integer $r$.
The ``correction'' term is a $v_0$-multiple of sum of terms of the form $[x y^{j-1}, x y^{p-j-1}]$. Note that this class lives in $\pi_\ast \THH(\BP{n}; \FF_p)$, but for the calculation of \cref{v0-bockstein}, we are only concerned with the image of this class in $\pi_\ast \THH(\BP{n}/X(p^n); \FF_p)$. We claim that the image of $[x y^{j-1}, x y^{p-j-1}]$ in $\pi_\ast \THH(\BP{n}/X(p^n); \FF_p)$ vanishes, so the correction terms above vanish. To prove this, observe that the Leibniz rule implies that, in $\pi_\ast \THH(\BP{n}; \FF_p)$, we have
\begin{align*}
    [x y^{j-1}, x y^{p-j-1}] & = x[y^{j-1}, xy^{p-j-1}] + y^{j-1}[x, xy^{p-j-1}] \\
    & = x^2[y^{j-1}, y^{p-j-1}] + xy^{p-j-1}[y^{j-1}, x] + y^{p-2}[x,x] + xy^{j-1}[x, y^{p-j-1}].
\end{align*}
Here, all terms are written up to sign; this will not matter, since we will show that each of the terms in the sum above vanish. The first term vanishes since $x^2 = 0$, and the third term vanishes since $[x,x] = 0$. For the second and fourth term, we will argue more generally that the image of $[x,y^k]$ in $\pi_\ast \THH(\BP{n}/X(p^n); \FF_p)$ vanishes for any $k\geq 0$. The Leibniz rule implies that $[x,y^k] = ky^{k-1}[x,y]$, so it suffices to show that the image of $[x,y]$ in $\pi_\ast \THH(\BP{n}/X(p^n); \FF_p)$ vanishes. 

Since $[x,y]$ lives in degree $|x| + |y| + 1 = (2p^{n+1}-1) + 2p^{n+1} + 1 = 4p^{n+1}$ and $\pi_{4p^{n+1}} \THH(\BP{n}/X(p^n); \FF_p) \cong \FF_p\{y^2\}$, we must have $[x,y] \dot{=} y^2$ in $\pi_\ast \THH(\BP{n}/X(p^n); \FF_p)$ if $[x,y]$ is nonzero. To show that $[x,y] \dot{\neq} y^2$, we observe that the $\E{2}$-map $\iota: \THH(\BP{n}; \FF_p) \to \THH(\BP{n}/\MU; \FF_p)$ factors through $\THH(\BP{n}/X(p^n); \FF_p)$. The classes $x$ and $y$ are in the image of the map $\THH(\BP{n}; \FF_p) \to \THH(\BP{n}/X(p^n); \FF_p)$, and $x$ is killed by the map $\iota$. Since $\iota$ is an $\E{2}$-map, we must have $\iota([x,y]) = [\iota(x), \iota(y)] = 0$; however, $\iota(y^2) = \iota(y)^2$ is nonzero. Therefore, $[x,y] \dot{\neq} y^2$; but since $\pi_{4p^{n+1}} \THH(\BP{n}/X(p^n); \FF_p)$ is a $1$-dimensional $\FF_p$-vector space spanned by $y^2$, we must have $[x,y] = 0$.

The upshot of this discussion is that the $E_r$-page of \cref{v0-bockstein} is given by 
$$E_r^{\ast,\ast} = \FF_p[v_0, y^{p^{r-1}}]\{1, x, xy, xy^2, \cdots\}/(v_0^i xy^{p^{i-1} j - 1}, 1\leq i \leq r-1, 1\leq j \leq p-1).$$
In particular, no power of $y$ survives to the $E_\infty$-page, and since $v_0$ represents $p$, we can resolve the $v_0$-extensions to conclude that
\begin{equation}\label{zp-homotopy-thh}
    \pi_\ast \THH(\BP{n}/X(p^n); \Z_p) \cong \Z_p \oplus \bigoplus_{j\geq 1} \Z_p/p^{v_p(j)+1}\{xy^{j-1}\}.
\end{equation}
Note that $|xy^{j-1}| = 2jp^{n+1}-1$.

The higher Bockstein spectral sequences (for $v_1,\cdots, v_n$) all collapse at the $E_1$-page for degree reasons, as we now explain. 
For the $v_m$-Bockstein spectral sequence with $1\leq m\leq n$, one can argue by induction on $m$ (the base case is the same argument as the inductive step). First, observe that $v_1, \cdots, v_n$ survive the Bockstein spectral sequence, since $\BP{n}$ splits off $\THH(\BP{n}/X(p^n))$. In particular, there cannot be any differential with target given by a product of monomials in the $v_i$s. By $\Z_p[v_1, \cdots, v_m]$-linearity, any differential must therefore be of the form 
$$d_r(xy^{j-1}) = v_{i_1}^{r_1} \cdots v_{i_a}^{r_a} v_m^r xy^{k-1}$$
for some $j,k$, exponents $r_1, \cdots, r_a$, and $1\leq i_1, \cdots, i_a < m$. (More precisely, it will be a sum of monomials of the above form, but this point will not matter.) But $d_r(xy^{j-1})$ has bidegree $(t-s,s) = (2jp^{n+1}-2, r)$, while $v_{i_1}^{r_1} \cdots v_{i_a}^{r_a} v_m^r xy^{k-1}$ has bidegree $(t-s,s) = (2r_1(p^{i_1}-1) + \cdots + 2r_a(p^{i_a}-1) + 2r(p^m-1) + 2kp^{n+1}-1, r)$. Such a differential is therefore not possible, since $2jp^{n+1}-2$ is even, while $2r_1(p^{i_1}-1) + \cdots + 2r_a(p^{i_a}-1) + 2r(p^m-1) + 2kp^{n+1}-1$ is odd. The calculation of $\pi_\ast \THH(\BP{n}/X(p^n))$ now follows from \cref{zp-homotopy-thh}.

Since $\THH(\BP{n}/X(p^n))\simeq \THH(\BP{n}) \otimes_{\THH(X(p^n))} X(p^n)$, we see that $\THH(\BP{n}/X(p^n))$ admits the structure of a $\THH(\BP{n})$-module. There is an $\E{2}$-map $\BP{n} \to \THH(\BP{n})$, so that $\THH(\BP{n}/X(p^n))$ acquires the structure of a $\BP{n}$-module by restriction of scalars. Therefore, each of the $\BP{n}_\ast$-module generators of $\pi_\ast \THH(\BP{n}/X(p^n))$ lift to maps of spectra from shifts of $\BP{n}$ to $\THH(\BP{n}/X(p^n))$. Moreover, the resulting map $\BP{n}[\B\Delta_n] \oplus \bigoplus_{j\geq 1} \Sigma^{2jp^{n+1}-1} \BP{n}[\B\Delta_n]/p^{v_p(j)+1} \to \THH(\BP{n}/X(p^n))$ induces an isomorphism on homotopy by construction, so we obtain \cref{thh-calculations}(b).
\end{proof}

\begin{remark}\label{bockstein-serre}
When $n=0$, one may view the Bockstein calculation of \cref{thh-calculations}(b) as a translation of the Serre spectral sequence for the fibration \cref{toda-fibration}. Assume that $p>2$. Indeed, the Serre spectral sequence is given by
$$E^2_{\ast,\ast} = \H_\ast(S^{2p-1}; \Z_p) \otimes \H_\ast(\Omega S^{2p+1}; \Z_p) \cong \Z_p[x,y]/x^2 \Rightarrow \H_\ast(\Omega S^3\pdb{3}; \Z_p).$$
There is a single family of differentials, determined multiplicatively from
$$d^{2p}(y) = px;$$
this implies that $d^{2p}(y^m) = mpy^{m-1} x$. The Serre spectral sequence collapses at the $E^{2p+1}$-page, and the resulting answer is precisely \cref{zp-homotopy-thh}. In fact, if $\phi_n: \Omega^2 S^{2p^n+1}\to S^{2p^n-1}$ is a charming map in the sense of \cite[Definition 4.1.1]{bpn-thom} (such as the Cohen-Moore-Neisendorfer map of \cite{cmn-1, cmn-2, cmn-3}), the proof of \cref{thh-calculations}(b) can be understood as a calculation of $\pi_\ast \BP{n-1}[\B\fib(\phi_n)]$ using the Serre spectral sequence for the Cohen-Moore-Neisendorfer type fibration
\begin{equation}\label{b-fib-phin}
    S^{2p^n-1} \to \B\fib(\phi_n) \to \Omega S^{2p^n+1}.
\end{equation}
The Serre spectral sequence for \cref{b-fib-phin} is exactly the same as that of \cref{toda-fibration}: the $E^2$-page is given by
$$E^2_{\ast,\ast} = \H_\ast(S^{2p^n-1}; \Z_p) \otimes \H_\ast(\Omega S^{2p^n+1}; \Z_p) \cong \Z_p[x,y]/x^2 \Rightarrow \H_\ast(\B\fib(\phi_n); \Z_p).$$
There is a single family of differentials, determined multiplicatively from
$$d^{2p^n}(y) = px;$$
this implies that $d^{2p^n}(y^m) = mpy^{m-1} x$, and the Serre spectral sequence collapses at the $E^{2p^n+1}$-page. The upshot is that
$$\H_i(\B\fib(\phi_n); \Z_p) \cong \begin{cases}
\Z_p & i=0,\\
\Z_p/pk & 2kp^n-1, \\
0 & \text{else}.
\end{cases}$$
In fact, \cref{thh-calculations}(b) implies that there is an equivalence of $\BP{n-1}$-modules 
$$\THH(\BP{n-1}/X(p^{n-1})) \simeq \BP{n-1}[\B\Delta_{n-1} \times \B\fib(\phi_{n})].$$
The calculations of \cref{thh-calculations} can be predicted from the results of \cite{bpn-thom}. Let us suppose that $p$ is odd for simplicity. Assuming \cite[Conjectures D and E]{bpn-thom}, \cite[Corollary B]{bpn-thom} implies that there is a map $\Omega^2 S^{2p^n+1} \to \BGL_1(X(p^n))$ whose Thom spectrum is $\BP{n-1}[\Omega\Delta_n]$. This implies that there is an equivalence of spectra $\THH(\BP{n-1}/X(p^n)) \simeq \BP{n-1}[\B\Delta_n \times \Omega S^{2p^n+1}]$; this is precisely the first part of \cref{thh-calculations}(a). Moreover, \cite[Theorem A]{bpn-thom} says (still assuming the aforementioned conjectures) that the Thom spectrum of the composite $\fib(\phi_n) \to \Omega^2 S^{2p^n+1} \to \BGL_1(X(p^n))$ is $\BP{n}[\Omega \Delta_n]$. This can be shown to imply that $\pi_\ast \TP(\BP{n-1}/X(p^n)) \simeq \pi_\ast \BP{n}^{tS^1}[\B\Delta_n]$, which is indeed confirmed by \cref{thh-calculations}(a). This result also implies that there is an equivalence of spectra $\THH(\BP{n}/X(p^n)) \simeq \BP{n}[\B\Delta_n \times \B\fib(\phi_{n+1})]$, which is indeed true by \cref{thh-calculations}(b).
We will state the results predicted by this discussion as a conjecture.
\end{remark}
\begin{conjecture}\label{conjecture-thh}
Fix an $\E{3}$-form of the truncated Brown-Peterson spectrum $\BP{n-1}$. Then $\THH(\BP{n-1}/X(p^n))$ admits the structure of an $S^1$-equivariant $\BP{n}$-module (where $S^1$ acts trivially on $\BP{n}$), and the equivalences of \cref{thh-calculations}(a) refine to $p$-complete equivalences of spectra
\begin{align*}
    \THH(\BP{n-1}/X(p^n))^{t\Cpn} & \simeq \BP{n}^{t\Cpn}[\B\Delta_n], \\
    \TP(\BP{n-1}/X(p^n)) & \simeq \BP{n}^{tS^1}[\B\Delta_n].
\end{align*}
The first equivalence is $S^1$-equivariant for the residual $S^1/\mu_m$-action on $\THH(\BP{n-1}/X(p^n))^{t\Cpn}$ and $\BP{n}^{t\Cpn}$.
\end{conjecture}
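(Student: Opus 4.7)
The plan is to upgrade the homotopy-group isomorphisms of \cref{thh-calculations}(a) to $S^1$-equivariant equivalences of spectra, working around the fact that $\THH(\BP{n-1}/X(p^n))$ lacks any multiplicative structure (since $X(p^n)$ is not $\E{3}$).

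First, I would equip $M := \THH(\BP{n-1}/X(p^n))$ with an $S^1$-equivariant $\BP{n}$-module structure. The $\BP{n-1}$-action via the unit map $\BP{n-1}\to M$ is already $S^1$-equivariant, so it remains to supply an $S^1$-equivariant $v_n$-multiplication $\Sigma^{2p^n-2} M \to M$. Since $\BP{n-1}$ is free on one generator over itself, specifying such a $\BP{n-1}$-linear map amounts to choosing a class in $\pi_{2p^n-2} M^{hS^1} = \pi_{2p^n-2} \TC^-(\BP{n-1}/X(p^n))$, and by \cref{thh-calculations}(a) the class $\hbar\theta_n$ realizes $v_n$ here. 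One then needs to promote this to an $\E{1}$-$\BP{n}$-module structure: this is an obstruction theory problem along the $\E{1}$-cell structure of $\BP{n} = \BP{n-1}\langle v_n\rangle$, and every obstruction lies in an odd-degree group of $\pi_\ast \TC^-(\BP{n-1}/X(p^n))$. By the evenness established in the proof of \cref{thh-calculations}(a), all such obstructions vanish, possibly after redefining $v_n$ by higher decomposables.

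Once the $S^1$-equivariant $\BP{n}$-module structure on $M$ is in hand, combining the unit map $\BP{n} \to M$ with the $\B\Delta_n$-action (coming from $X(p^n) = T(n)[\Omega \Delta_n]$) gives an $S^1$-equivariant $\BP{n}$-module map $\BP{n}[\B\Delta_n] \to M$, which upon applying $(-)^{t\Cpn}$ and $(-)^{tS^1}$ yields
\begin{align*}
    \BP{n}^{t\Cpn}[\B\Delta_n] &\longrightarrow M^{t\Cpn}, \\
    \BP{n}^{tS^1}[\B\Delta_n] &\longrightarrow M^{tS^1} = \TP(\BP{n-1}/X(p^n)).
\end{align*}
Each is a map of modules over $\BP{n}^{t\Cpn}$ (resp.\ $\BP{n}^{tS^1}$) and induces an isomorphism on homotopy groups by \cref{thh-calculations}(a), hence is an equivalence. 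The residual $S^1/\mu_{p^n}$-equivariance on the first map is automatic from functoriality of $(-)^{t\Cpn}$ applied to $S^1$-equivariant maps.

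The main obstacle is the first step: rigidifying the additive $\pi_\ast$-level equality $\hbar\theta_n \equiv v_n$ (modulo decomposables) into an actual $\E{1}$-module structure lift compatible with the $S^1$-action. A cleaner approach may be to first establish \cref{tn-e2} so that the Thom-spectrum description of $\BP{n}$ predicted in \cite{bpn-thom} becomes available; this would directly exhibit $M \simeq \BP{n-1}[\B\Delta_n \times \Omega S^{2p^n+1}]$ as a Thom spectrum over an $S^1$-space, with the $\BP{n}$-action arising from the Cohen--Moore--Neisendorfer fibration \cref{b-fib-phin} via a Thom-isomorphism argument. In the case $n=1$, the required rigidification is tacitly available through the Bhatt--Morrow--Scholze motivic filtration on $\TC^-(\Z_p)$, and the general case should follow once a higher-chromatic analogue of that construction --- along the lines sketched in \cref{bpn-vs-tj} --- is established.
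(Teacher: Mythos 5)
The statement you are trying to prove is \cref{conjecture-thh}, which the paper states as a \emph{conjecture} and does not prove; the only thing proved there is a small unconditional fragment for $n=1$ (maps $\TP(\Z_p/X(p))\to \Sigma^{2j}\BP{1}$ splitting off summands in the range $j>-(p-1)$, via the Adams spectral sequence and the vanishing result of \cite{adams-priddy}). So your proposal must be judged on its own, and its first step --- the only step with real content --- has a genuine gap, and it is exactly the difficulty the paper isolates in the remark following \cref{conjecture-thh}: it is not clear how to endow $\THH(\BP{n-1}/X(p^n))$ or its Tate constructions with a $\BP{n}$-module structure. Concretely, a class $\hbar\theta_n\in\pi_{2p^n-2}\TC^-(\BP{n-1}/X(p^n))=\pi_{2p^n-2}M^{hS^1}$ only gives an $S^1$-equivariant map $\Sigma^{2p^n-2}\BP{n-1}\to M$; it does \emph{not} give an $S^1$-equivariant self-map $\Sigma^{2p^n-2}M\to M$ ("multiplication by $v_n$"). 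Turning a homotopy class into an operator on $M$ requires $M$ to be an algebra, or a module over an $S^1$-equivariant ring whose fixed points contain the class --- and $\THH(\BP{n-1}/X(p^n))$ carries no ring structure precisely because $X(p^n)$ is not $\E{3}$ (\cref{Xn-e3}); the additive equivalence $M\simeq \BP{n-1}[\B\Delta_n\times\Omega S^{2p^n+1}]$ of \cref{thh-calculations}(a) is neither multiplicative nor $S^1$-equivariant, so you cannot define the action "basis-elementwise" either. Your appeal to "$\BP{n-1}$ free on one generator over itself" conflates maps out of $\Sigma^{2p^n-2}\BP{n-1}$ with self-maps of $M$.

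The subsequent obstruction-theoretic step is also unfounded as written: $\BP{n}$ is not "$\BP{n-1}\langle v_n\rangle$" --- the ring map goes $\BP{n}\to\BP{n-1}$, not the other way, so $\BP{n}$ is not obtained from $\BP{n-1}$ by attaching $\E{1}$-cells, and no identification of the obstruction groups with odd-degree homotopy of $\TC^-(\BP{n-1}/X(p^n))$ is given (evenness of that graded ring does not by itself produce a module structure). Your fallback route is conditional on \cref{tn-e2} and on \cite[Conjectures D and E]{bpn-thom}, i.e.\ it reproduces the paper's own heuristic around \cref{bockstein-serre} and the fibration \cref{b-fib-phin} rather than proving anything. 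The second half of your argument (given an $S^1$-equivariant $\BP{n}$-module structure, produce $\BP{n}[\B\Delta_n]\to M$, apply the Tate constructions, and check a $\pi_\ast$-isomorphism using \cref{thh-calculations}(a), modulo the usual care about commuting Tate constructions with the infinite complex $\B\Delta_n$) is the easy part and is what the paper anticipates; the entire content of the conjecture is the rigidification step you have not supplied.
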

\begin{remark}
The primary difficulty with proving \cref{conjecture-thh} is that it is not clear how to endow $\TP(\BP{n-1}/X(p^n))$ or $\THH(\BP{n-1}/X(p^n))^{t\Cpn}$ with the structure of $\BP{n}$-modules. Nevertheless, a small part of the final equivalence in \cref{conjecture-thh} can be proved unconditionally when $n=1$. Namely, there is a map $\TP(\Z_p/X(p)) \to \bigoplus_{j>-(p-1)} \Sigma^{2j} \BP{1}$ which induces the inclusion of summands on mod $p$ cohomology. (This is the ``easy'' range, since the first predicted summand of $\TP(\Z_p/X(p))$ which is not covered by this claim is $\Sigma^{-2(p-1)} \BP{1}$; but $\pi_0$ of this spectrum this is exactly where the class $v_1$ lives.)
We computed the mod $p$ homology of $\TP(\Z_p/X(p))$ in \cref{homology-tc-bpn}. This implies that $\H^{\ast,c}(\TP(\Z_p/X(p)); \FF_p) \cong \H^\ast(\BP{1}; \FF_p)\ls{\hbar} \otimes_{\FF_p} \H^\ast(\BSU(p-1); \FF_p)$. There is an Adams spectral sequence
$$\Ext^{s,t+2j}_{\cA_\ast}(\cA\mmod \ce(1), \cA \mmod \ce(1))\ls{\hbar} \otimes_{\FF_p} \H^\ast(\BSU(p-1); \FF_p) \Rightarrow \pi_0\Map(\TP(\Z_p/X(p)), \Sigma^{2j} \BP{1})^\wedge_p.$$
We wish to show that for $j>-(p-1)$, any class in bidegree $(s,t-s) = (0, 2j)$ survives to the $E_\infty$-page. For this, it suffices to show that there can be no nonzero $d_r$-differential off this class for $r\geq 2$. This differential would necessarily land in $(r, 2j-1)$. By \cite[Proposition 4.1]{adams-priddy}, $\Ext^{s,t}_{\cA_\ast}(\cA\mmod \ce(1), \cA \mmod \ce(1))$ vanishes for $s\geq 1$, $t-s$ odd, and $t-s\geq -2(p-1)$. In particular, we see that taking $(s,t-s) = (r, 2j-1)$, we have $2j-1\geq -2(p-1)$ precisely when $j>-(p-1)$. Therefore, we get a map $\TP(\Z_p/X(p)) \to \Sigma^{2j} \BP{1}$ for every $j>-(p-1)$, which gives the desired claim.
\end{remark}
\subsection{Variant: $\THH$ over a deeper base}\label{defining-jp}
In \cref{thh-calculations}, we saw a ``polynomial'' generator in degree $2p^n$, where $n$ is the height. When $n=0$, this reduces the B\"okstedt generator in degree $2$; we will now discuss a variant of \cref{thh-calculations} when $n=1$, where one obtains a generator in degree $2$.
\begin{construction}\label{xp-polynomial}
Let $\U(1) \to \SU(p)$ denote the inclusion given by the homomorphism
$$\lambda\mapsto 
\mathrm{diag}(\lambda, \cdots, \lambda, \lambda^{1-p}).$$
There is an induced map $\BU(1) \to \BSU(p)$, which defines an $\E{2}$-map $\Omega\U(1) \simeq \Z \to \Omega \SU(p)$. Let $J(p)$ denote the Thom spectrum of the composite $\E{2}$-map $\mu: \Omega\U(1) \to \Omega \SU(p) \to \Omega\SU \simeq \BU$. Then $J(p)$ admits an $\Efr{2}$-structure by \cref{xn-framed-e2} such that there is an $\Efr{2}$-algebra map $J(p) \to X(p)$.
Note that the underlying $\E{1}$-map of $\mu$ is null, since $\B\mu: S^1 \to \mathrm{B}^2\U \simeq \SU$ is a class in $\pi_1(\SU) = 0$. Therefore, the underlying $\E{1}$-ring of $J(p)$ is $S[\Z] = S[t^{\pm 1}]$. Moreover, the underlying $\E{1}$-map of $J(p) \to X(p) \to \Z_p$ is the map $S[t^{\pm 1}] \to \Z_p$ sending $t\mapsto 1$.
\end{construction}
\begin{prop}\label{t1-jp}
There is an equivalence $\THH(T(1)/J(p)) \simeq T(1)[J_{p-1}(S^2)]$. Similarly, $\THH(X(p)/J(p)) \simeq X(p)[J_{p-1}(S^2) \times \SU(p-1)]$.
\end{prop}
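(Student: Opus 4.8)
The plan is to imitate the proof of \cref{thh-calculations}(a) for $n=0,1$ given above. Since $J(p)$, $X(p)$ and (a summand of) $T(1)$ are all Thom spectra, both relative $\THH$'s can be computed by the Thom-spectrum base-change formula of \cite{thh-thom}, which converts each into a homotopy quotient of spaces.

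Concretely, $J(p)=\Th\big(\Omega\U(1)\to\Omega\SU(p)\to\BU\to\B\GL_1 S\big)$ and $X(p)=\Th\big(\Omega\SU(p)\to\BU\to\B\GL_1 S\big)$, with the $\Efr{2}$-map $J(p)\to X(p)$ realized by $\Omega\U(1)\to\Omega\SU(p)$, i.e. by $\Omega$ of the homomorphism $\U(1)\hookrightarrow\SU(p)$. Using $\THH(X(p)/J(p))\simeq\THH(X(p))\otimes_{\THH(J(p))}J(p)$ together with $\THH(X(p))\simeq X(p)\wedge\SU(p)_+$ (the analogue of $\THH(\MU)\simeq\MU\wedge\SU_+$) and $\THH(J(p))\simeq J(p)\wedge S^1_+$ (as the underlying $\E{1}$-ring of $J(p)$ is $S[\Z]$), under which $\THH(J(p))\to\THH(X(p))$ is on geometric factors $\Sigma^\infty_+$ of the circle subgroup $S^1=\U(1)\hookrightarrow\SU(p)$, one collapses the resulting bar construction to obtain
$$\THH(X(p)/J(p))\ \simeq\ X(p)\wedge\big(\SU(p)\mmod S^1\big)_+\ \simeq\ X(p)\big[\SU(p)/\U(1)\big],$$
the homotopy orbits coinciding with the honest quotient because the circle acts freely by translation. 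The first equivalence of \cref{t1-jp} then follows once one knows $\THH(T(1)/J(p))\simeq T(1)[J_{p-1}(S^2)]$: by the case $n=1$ of $X(p^n)\simeq T(n)[\Omega\Delta_n]$ one has $X(p)\simeq T(1)\otimes_S\Sigma^\infty_+\Omega\SU(p-1)$ compatibly with the $J(p)$-algebra structures, and since $\THH$ carries relative tensor products to relative tensor products, $\THH(X(p)/J(p))\simeq\THH(T(1)/J(p))\otimes_S\THH(\Sigma^\infty_+\Omega\SU(p-1))\simeq T(1)[J_{p-1}(S^2)]\otimes_{T(1)}\big(X(p)\wedge\SU(p-1)_+\big)$, using $\THH(\Sigma^\infty_+\Omega\SU(p-1))\simeq\Sigma^\infty_+ L\SU(p-1)\simeq\Sigma^\infty_+\Omega\SU(p-1)\wedge\SU(p-1)_+$.

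Thus everything reduces to $\THH(T(1)/J(p))\simeq T(1)[J_{p-1}(S^2)]$, and running the same machinery for $T(1)=\Th(\Omega S^{2p-1}\to\B\GL_1 S)$ in place of $X(p)$ turns this into the identification, $p$-locally, of a weighted projective space $S^{2p-1}/\U(1)$ with $J_{p-1}(S^2)$; equivalently, the content is the space-level splitting $\SU(p)/\U(1)\simeq_{(p)}\SU(p-1)\times J_{p-1}(S^2)$. The circle $\U(1)$ is, up to conjugacy, the $p$-th multiple of the fundamental coweight $\omega^\vee_{p-1}$, centralized by the block Levi $\U(p-1)\subset\SU(p)$; dividing by $\U(p-1)$ first exhibits $\SU(p)/\U(1)$ as a bundle over $\SU(p)/\U(p-1)=\CP^{p-1}$ with fiber $\U(p-1)/\U(1)\cong\SU(p-1)/\mu_{p-1}$, and the base is $p$-locally $\CP^{p-1}\simeq_{(p)}J_{p-1}(S^2)$ — the classical fact that the $(2p-2)$-skeleton of $\Omega S^3=J(S^2)$ is $p$-locally $\CP^{p-1}$, the relevant discrepancies among attaching maps being $p$-local units in this range. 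One then has to split this $\SU(p-1)/\mu_{p-1}$-bundle $p$-locally (the group $\mu_{p-1}$ of order $p-1$ being $p$-locally invisible): its pullback along the Hopf quotient $S^{2p-1}\to\CP^{p-1}$ is the standard bundle $\SU(p-1)\to\SU(p)\to S^{2p-1}$, classified by an element of $\pi_{2p-2}\SU(p-1)=\Z/(p-1)!$ which is $p$-locally null since $p\nmid(p-1)!$, and this has to be pushed through to $p$-local triviality over $\CP^{p-1}$ itself by obstruction theory over the skeleta.

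Finally, as in the proof of \cref{thh-calculations}, these equivalences — which the Thom-spectrum computation produces directly as equivalences of $X(p)$- resp. $T(1)$-modules, via the canonical module structure over $\THH(X(p))$ resp. $\THH(T(1))$ and restriction along the $\E{2}$-unit — give the statement. The principal obstacle is the $p$-local splitting $\SU(p)/\U(1)\simeq_{(p)}\SU(p-1)\times J_{p-1}(S^2)$, i.e. that the fibration over $\CP^{p-1}$ splits off its fiber $p$-locally; this is delicate because it does not split integrally. Everything else is formal manipulation with the Thom-spectrum/$\THH$ base-change machinery already pervasive in the paper.
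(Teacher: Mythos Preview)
Your strategy is correct and uses the same Thom--spectrum/base-change machinery as the paper, but you are taking a harder route than necessary for the $T(1)$ statement. The paper proves $\THH(T(1)/J(p))\simeq T(1)[J_{p-1}(S^2)]$ directly: since $T(1)$ is the Thom spectrum over $\Omega S^{2p-1}$ and $J(p)$ over $\Omega\U(1)$, one gets
\[
\THH(T(1)/J(p))\ \simeq\ T(1)[S^{2p-1}]\otimes_{T(1)[S^1]}T(1),
\]
and the fiber sequence $S^1\to S^{2p-1}\to \CP^{p-1}\simeq_{(p)} J_{p-1}(S^2)$ finishes it in one line. The map $\U(1)\to S^{2p-1}$ coming from $\lambda\mapsto\mathrm{diag}(\lambda,\dots,\lambda,\lambda^{1-p})$ differs from the standard Hopf fiber inclusion only by a degree-$(1-p)$ self-map of $S^1$, which is a $p$-local equivalence; so the quotient is honestly $\CP^{p-1}$ after $p$-localization, and no weighted-projective or obstruction-theory argument is needed. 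The $X(p)$ statement then follows either by the same calculation with $\SU(p)$ in place of $S^{2p-1}$, or (as you note) from the $T(1)$ case via $X(p)\simeq T(1)[\Omega\SU(p-1)]$.

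In particular, the ``principal obstacle'' you isolate --- the $p$-local splitting $\SU(p)/\U(1)\simeq_{(p)}\SU(p-1)\times J_{p-1}(S^2)$ --- is never confronted in the paper; it falls out as a \emph{consequence} of the short argument rather than being an input. Your detour through bundling over $\CP^{p-1}$ with fiber $\SU(p-1)/\mu_{p-1}$ and running obstruction theory is valid but unnecessary.
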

\begin{proof}
Indeed, $\THH(T(1)/J(p)) \simeq \THH(T(1)) \otimes_{\THH(J(p))} J(p)$ is equivalent to $T(1)[S^{2p-1}] \otimes_{T(1)[S^1]} T(1)$; but there is a fiber sequence
$$S^1 \to S^{2p-1} \to S^{2p-1}/S^1 = \CP^{p-1} \simeq J_{p-1}(S^2),$$
from which the desired claim follows.
\end{proof}
\begin{prop}\label{laurent-xp}
The following statements are true:
\begin{enumerate}
    \item There is an equivalence $\THH(\Z_p/J(p)) \simeq \Z_p[\Omega S^3]$. In particular, $\pi_\ast \THH(\Z_p/J(p)) \cong \Z_p[x]$ with $|x| = 2$.
    On homotopy, the map $\THH(\Z_p/J(p)) \to \THH(\Z_p/X(p))$ is given by
    $$x^j\mapsto \begin{cases}
    \theta^{j/p} & j\in p\Z,\\
    0 & \text{else}.
    \end{cases}$$
    \item The canonical map $\THH(\Z_p/J(p)) \to \THH(\FF_p/J(p))$ factors through the unit $\THH(\FF_p) \to \THH(\FF_p/J(p))$, and defines an equivalence $\FF_p\otimes_{\Z_p} \THH(\Z_p/J(p)) \xar{\sim} \THH(\FF_p)$ of $\THH(\Z_p)$-modules.
\end{enumerate}
\end{prop}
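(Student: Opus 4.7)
The plan is to compute $\THH(\Z_p/J(p))$ directly via the bar resolution, exploiting that the underlying $\E{1}$-ring of $J(p)$ is $\Sigma^\infty_+ \Z$, and then identify the map to $\THH(\Z_p/X(p))$ in part (a) by tracing through a James-Hopf-type map. For part (b), I would deduce the equivalence from (a) together with the classical identification $\THH(\FF_p) \simeq \FF_p[\Omega S^3]$, and verify the factorization using the naturality of $\THH$ in both of its arguments.

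For part (a), since $J(p) \simeq \Sigma^\infty_+ \Z = \Sigma^\infty_+ \Omega S^1$ as an $\E{1}$-ring and $\Z_p$ is a $J(p)$-algebra via $t\mapsto 1$, the bar construction computes
$$\Z_p \otimes_{J(p)} \Z_p \simeq \Z_p \otimes_S (S \otimes_{\Sigma^\infty_+ \Omega S^1} S) \simeq \Z_p[S^1],$$
which inherits an $\E{\infty}$-$\Z_p$-algebra structure since $S^1 = \B\Z$. Applying $\THH(A/R) \simeq A \otimes_{A \otimes_R A^{op}} A$ and iterating yields
$$\THH(\Z_p/J(p)) \simeq \Z_p \otimes_{\Z_p[S^1]} \Z_p \simeq \Z_p[\CP^\infty],$$
which is equivalent to $\Z_p[\Omega S^3]$ as a $\Z_p$-module since both have free homotopy with a copy of $\Z_p$ in each even non-negative degree (and any such $\H\Z_p$-module splits as a sum of shifts of $\H\Z_p$). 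To pin down the map to $\THH(\Z_p/X(p))$ I would use naturality in the base ring: the $\Efr{2}$-map $J(p) \to X(p)$ comes from the $\E{1}$-map $\Z \to \Omega\SU(p)$, which after delooping gives the inclusion $S^1 = \BU(1) \to \SU(p)$. Tracing through the bar constructions and comparing with $\THH(\Z_p/X(p)) \simeq \Z_p[\BSU(p-1) \times \Omega S^{2p+1}]$ from \cref{thh-calculations}(a), the induced map on the ``$\Omega S^3$-factor'' is the $p$-th James-Hopf map $\CP^\infty \hookrightarrow \Omega S^3 \to \Omega S^{2p+1}$, whose $\Z_p$-homology effect sends $x^j$ to $\theta^{j/p}$ for $p \mid j$ and zero otherwise.

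For part (b), the key observation is that the composite $S \to J(p) \to \Z_p \to \FF_p$ agrees with the unit $S \to \FF_p$, so the $J(p)$-algebra structure on $\FF_p$ factors through $\Z_p$. Naturality of $\THH$ in both arguments then produces a commutative square with horizontal maps $\THH(\Z_p/R) \to \THH(\FF_p/R)$ for $R = S$ and $R = J(p)$, together with base-change verticals. The map $\THH(\Z_p/J(p)) \to \THH(\FF_p/J(p))$ factors as
$$\THH(\Z_p/J(p)) \to \FF_p \otimes_{\Z_p} \THH(\Z_p/J(p)) \xar{\sim} \THH(\FF_p) \to \THH(\FF_p/J(p)),$$
where the middle equivalence uses $\FF_p \otimes_{\Z_p} \Z_p[\Omega S^3] \simeq \FF_p[\Omega S^3] \simeq \THH(\FF_p)$ (B\"okstedt together with part (a)), and the last arrow is the canonical one coming from the unit $S \to J(p)$. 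Compatibility with the $\THH(\Z_p)$-module structure is then checked on the unit.

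The main obstacle will be pinning down that the map between bar constructions in part (a) really is the $p$-th James-Hopf map on the nose (rather than merely a map which agrees with it in homology) and ensuring in (b) that the identifications $\Z_p[\Omega S^3]$ and $\THH(\FF_p) \simeq \FF_p[\Omega S^3]$ are compatible as $\THH(\Z_p)$-modules and not just as $\H\Z_p$-modules. I expect both can be arranged by using the Thom-spectrum descriptions of $\FF_p$ and $\Z_p$ (as in \cref{relation-to-hopkins-mahowald}) to lift the identifications to the level of spectra with the relevant multiplicative structure.
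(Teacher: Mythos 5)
There is a genuine gap, and it sits at the very first step of your computation in part (a). You compute $\THH(\Z_p/J(p))$ using only the underlying $\E{1}$-ring of $J(p)$, namely $S[t^{\pm 1}]$, together with the fact that $\Z_p$ is an algebra via $t\mapsto 1$. But relative $\THH$ over $J(p)$ is formed internally to $\LMod_{J(p)}$ (equivalently, as $\THH(\Z_p)\otimes_{\THH(J(p))}J(p)$), so it depends on the framed $\E{2}$-structure of $J(p)$ --- and the whole point of \cref{xp-polynomial} is that this structure is \emph{not} the standard one on $S[\Z]$: the defining $\E{2}$-map $\Omega\U(1)\to\Omega\SU(p)\to\BU$ is nontrivial even though its underlying $\E{1}$-map is null. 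If one runs your bar-construction argument with the standard structure, the answer is $\THH(\Z_p/S[\Z])\simeq\THH(\Z_p)\otimes S[\CP^\infty]\simeq\Z_p[\Omega S^3\pdb{3}\times\CP^\infty]$, which has torsion in its homotopy and is not even additively equivalent to $\Z_p[\Omega S^3]$; the paper records exactly this discrepancy in the remark immediately following the proposition. (Your intermediate claim $\Z_p\otimes_{J(p)}\Z_p\simeq\Z_p[S^1]$ is also incorrect as stated: since the module structure factors through the augmentation, the relative tensor product is $(\Z_p\otimes\Z_p)[S^1]$ with $\Z_p\otimes\Z_p$ taken over the sphere, not $\Z_p$; this is why your proposed answer $\Z_p[\CP^\infty]$ disagrees with both the correct answer and with the standard-structure computation.) So the bar-resolution route, as set up, cannot see the input that makes the statement true.

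The correct argument must use the actual $\E{2}$-map defining $J(p)$ from the start: writing $\THH(\Z_p/J(p))\simeq\THH(\Z_p)\otimes_{\THH(J(p))}J(p)\simeq\Z_p[\Omega S^3\pdb{3}]\otimes_{\Z_p[\U(1)]}\Z_p$, one identifies the relevant map on $\U(1)$ with the \emph{non-null} composite $\U(1)\to\SU(p)\to S^{2p-1}\xar{\alpha_1}\Omega S^3\pdb{3}$, i.e.\ with the circle action in the fibration $S^1\to\Omega S^3\pdb{3}\to\Omega S^3$; the nontriviality of this action is precisely what collapses the answer to $\Z_p[\Omega S^3]$ rather than producing a divided power factor. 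The identification of the map to $\THH(\Z_p/X(p))$ then comes from a diagram of fibrations exhibiting the James--Hopf map $H_p:\Omega S^3\to\Omega S^{2p+1}$, roughly as you anticipate. Likewise, in (b) the factorization of $\THH(\Z_p/J(p))\to\THH(\FF_p/J(p))$ through $\THH(\FF_p)$ cannot be assumed in order to write down your middle arrow; it is established by computing $\THH(\FF_p/J(p))\simeq\FF_p[\Omega S^3\times\CP^\infty]$ from the null-homotopy of the composite $\U(1)\to\Omega S^3\pdb{3}\to\Omega S^3$ (via the fibration $\CP^\infty\to S^3\pdb{3}\to S^3$), after which comparing generators gives the equivalence $\FF_p\otimes_{\Z_p}\THH(\Z_p/J(p))\xar{\sim}\THH(\FF_p)$. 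Your closing paragraph correctly flags the Thom-spectrum description as the needed ingredient, but it must enter at the outset of (a), not as a final verification.
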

\begin{proof}
For part (a), we begin by observing that there is an equivalence
$$\THH(\Z_p/J(p)) \simeq \THH(\Z_p) \otimes_{\THH(J(p))} J(p) \simeq \Z_p[\Omega S^3\pdb{3}] \otimes_{\Z_p[\U(1)]} \Z_p.$$
The map $\Z_p\otimes_{J(p)} \THH(J(p)) \to \THH(\Z_p)$ factors through $\Z_p \otimes_{X(p)} \THH(X(p)) \to \THH(\Z_p)$, and can be identified with $\Z_p$-chains of the composite
$$\U(1) \to \SU(p) \to S^{2p-1} \xar{\alpha_1} \Omega S^3\pdb{3}.$$
Note that the map $\U(1) \to S^{2p-1}$ is the fiber of the map $S^{2p-1} \to \CP^{p-1}$.
This composite can be identified with action of $S^1$ on $\Omega S^3\pdb{3}$. Since there is a fiber sequence
$$S^1 \to \Omega S^3\pdb{3} \to \Omega S^3,$$
we see that $\THH(\Z_p/J(p))\simeq \Z_p[\Omega S^3]$. To identify the map $\THH(\Z_p/J(p)) \to \THH(\Z_p/X(p))$, observe that $\CP^{p-1} \simeq J_{p-1}(S^2)$ and that there is a square where each row and column is a fiber sequence:
$$\xymatrix{
\Omega (\SU(p-1) \times \CP^{p-1}) \simeq \Omega \SU(p)/S^1 \ar[r] \ar[d] & \ast \ar[r] \ar[d] & \CP^{p-1} \times \SU(p-1) \ar[d] \\
S^1 \ar[r] \ar[d] & \Omega S^3\pdb{3} \ar[r] \ar[d] & \Omega S^3 \ar[d]^-{H_p}\\
\SU(p) \ar[r] & \Omega S^3\pdb{3} \ar[r] & \Omega S^{2p+1} \times \BSU(p-1).
}$$
The effect of the map $\THH(\Z_p/J(p)) \to \THH(\Z_p/X(p))$ is dictated by the bottom-right vertical map, which is induced by the James-Hopf map $H_p: \Omega S^3 \to \Omega S^{2p+1}$. On $\Z_p$-homology, the effect of the James-Hopf map is as stated in \cref{laurent-xp}(a).

For part (b), there is an equivalence
$$\THH(\FF_p/J(p)) \simeq \THH(\FF_p) \otimes_{\THH(J(p))} J(p) \simeq \FF_p[\Omega S^3] \otimes_{\FF_p[\U(1)]} \FF_p.$$
However, the map $\FF_p \otimes_{J(p)} \THH(J(p)) \to \THH(\FF_p)$ factors through $\FF_p \otimes_{\Z_p} \THH(\Z_p) \to \THH(\FF_p)$, and can be identified with $\FF_p$-chains of the composite of $\U(1) \to \Omega S^3\pdb{3}$ with the canonical map $\Omega S^3\pdb{3} \to \Omega S^3$. This composite is null as an $\E{1}$-map (in fact, as an $\E{2}$-map), since there is a fiber sequence of $\E{1}$-spaces
$$\BU(1) \simeq \CP^\infty \to S^3\pdb{3} \to S^3.$$
Therefore, we see that 
$$\THH(\FF_p/J(p)) \simeq \FF_p[\Omega S^3] \otimes_{\FF_p} (\FF_p \otimes_{\FF_p[\U(1)]} \FF_p) \simeq \FF_p[\Omega S^3 \times \CP^\infty].$$
This implies that the map $\THH(\Z_p/J(p)) \to \THH(\FF_p/J(p))$ factors through $\THH(\FF_p) \to \THH(\FF_p/J(p))$. In turn, we obtain a map $\FF_p\otimes_{\Z_p} \THH(\Z_p/J(p)) \to \THH(\FF_p)$ which sends the generators in $\pi_\ast(\FF_p\otimes_{\Z_p} \THH(\Z_p/J(p))) \cong \FF_p[x]$ to the generators in $\pi_\ast \THH(\FF_p) \cong \FF_p[\sigma]$. Therefore, the map $\FF_p\otimes_{\Z_p} \THH(\Z_p/J(p)) \to \THH(\FF_p)$ is an equivalence, as desired.
\end{proof}
\begin{remark}\label{pth-power-sigma}
The map $J(p) \to X(p)$ induces a map $u: \THH(\Z_p/J(p)) \to \THH(\Z_p/X(p))$. Under \cref{thh-calculations} and \cref{laurent-xp}, the map $u$ can be identified with the $\Z_p$-chains of the composite
$$\Omega S^3 \to \Omega S^{2p+1} \to \Omega S^{2p+1} \times \BSU(p-1);$$
here, the map $\Omega S^3 \to \Omega S^{2p+1}$ is the Hopf map. This claim follows from the proof of \cref{laurent-xp}, \cref{t1-jp}, and the EHP fibration
$$J_{p-1}(S^2) \to \Omega S^3 \to \Omega S^{2p+1}.$$
In particular, the map $u$ induces the map $\Z_p[x] \to \Z_p[\theta] \otimes_{\Z_p} \Z_p[\BSU(p-1)]$ which sends $x^m\mapsto \theta^{m/p}$ if $p\mid m$ and $x^m\mapsto 0$ otherwise.

Note that if $T(1)$ were an $\Efr{2}$-algebra, the map $u$ would factor through $\THH(\Z_p/J(p)) \to \THH(\Z_p/T(1))$; and under the equivalences of \cref{thh-calculations} and \cref{laurent-xp}, this would identify with the $\Z_p$-chains of the Hopf map.
\end{remark}
\begin{remark}
\cref{laurent-xp} demonstrates the dependence of $\THH(R'/R)$ on the $\E{1}$-$R$-algebra structure on $R'$. Indeed, recall that the underlying $\E{1}$-map of the $\E{2}$-map $J(p) \to X(p) \to \Z_p$ is the map $S[t^{\pm 1}] \to \Z_p$ sending $t\mapsto 1$. \cref{laurent-xp} states that $\THH(\Z_p/J(p)) \simeq \Z_p[\Omega S^3]$. However, suppose that $S[t^{\pm 1}] = S[\Z]$ is equipped with its standard $\E{2}$-structure, and $\Z_p$ is viewed as an $\E{1}$-$S[\Z]$-algebra via the composite $S[\Z] \to S \to \Z_p$. Then $\THH(\Z_p/S[\Z]) \simeq \THH(\Z_p) \otimes S[\CP^\infty] \simeq \Z_p[\Omega S^3\pdb{3} \times \CP^\infty]$. Since $\Z_p[\Omega S^3\pdb{3} \times \CP^\infty] \not \simeq \Z_p[\Omega S^3]$, we conclude that $\THH(\Z_p/S[\Z]) \not \simeq \THH(\Z_p/J(p))$.
\end{remark}
\begin{corollary}\label{tp-jp}
There is an isomorphism $\pi_\ast \TP(\Z_p/J(p)) \simeq \Z_p[t^{\pm 1}]^\wedge_{(t-1)}\ls{\hbar}$ with $|\hbar| = -2$. 
\end{corollary}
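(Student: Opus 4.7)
The plan is to apply the Tate spectral sequence to the computation $\pi_\ast\THH(\Z_p/J(p)) \cong \Z_p[x]$ (with $|x|=2$) from \cref{laurent-xp}(a), and then resolve the resulting extensions using the $S^1$-equivariant unit map $J(p) \to \THH(\Z_p/J(p))$.

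Since $\pi_\ast\THH(\Z_p/J(p))$ is concentrated in even degrees, the Tate spectral sequence with $E_2$-page $\Z_p[x]\ls{\hbar}$ (where $|\hbar|=-2$) degenerates at $E_2$, yielding the associated graded of $\pi_\ast\TP(\Z_p/J(p))$. Because $J(p)$ is $\Efr{2}$ and the unit $J(p) \to \THH(\Z_p/J(p))$ is $S^1$-equivariant for the trivial action on $J(p)$, applying $(-)^{tS^1}$ produces a map $J(p) \to \TP(\Z_p/J(p))$; write $t \in \pi_0\TP(\Z_p/J(p))$ for the image of the generator of $\pi_0 J(p) \cong \Z_p[t^{\pm 1}]^\wedge_p$. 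By the factorization $J(p) \to \Z_p$ of \cref{xp-polynomial}, $t$ reduces to $1$ on the filtration-$0$ piece of the Tate spectral sequence, so $t - 1 \in F^1\pi_0\TC^-(\Z_p/J(p))$; modulo $F^2$ one may thus write $t - 1 \equiv c\hbar x$ for some $c \in \Z_p$.

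Granting $c \in \Z_p^\times$, the class $t - 1$ is topologically nilpotent for the complete Tate filtration, with $(t-1)^k \equiv c^k(\hbar x)^k$ modulo $F^{2k+1}$. A standard induction on filtration then identifies the map $\Z_p\pw{u} \to \pi_0\TC^-(\Z_p/J(p))$, $u \mapsto t - 1$, as an isomorphism. Hence $\pi_0\TC^-(\Z_p/J(p)) \cong \Z_p\pw{t-1} = \Z_p[t^{\pm 1}]^\wedge_{(t-1)}$, the latter equality since $t = 1 + (t-1)$ is automatically invertible via $t^{-1} = \sum_{k\geq 0}(-1)^k(t-1)^k$; inverting $\hbar$ yields the claimed $\pi_\ast\TP(\Z_p/J(p)) \cong \Z_p[t^{\pm 1}]^\wedge_{(t-1)}\ls{\hbar}$.

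The main obstacle is proving $c \in \Z_p^\times$. I expect the cleanest route is to trace the equivalence $\THH(\Z_p/J(p)) \simeq \Z_p[\Omega S^3]$ from the proof of \cref{laurent-xp}(a), in which $\THH(\Z_p/J(p))$ is realized as the homotopy orbits of the $\U(1)$-action on $\Z_p[\Omega S^3\pdb{3}]$ along the principal $S^1$-bundle $S^1 \to \Omega S^3\pdb{3} \to \Omega S^3$. The class $t$ corresponds to the fundamental class of $\U(1) \subseteq J(p)$, and its leading contribution to $t - 1$ in the Tate spectral sequence is controlled by the Euler class of this $S^1$-bundle. Since the bundle is classified by the James-Hopf map $\Omega S^3 \to \CP^\infty$, which pulls the generator of $\H^2(\CP^\infty;\Z_p)$ back to the generator of $\H^2(\Omega S^3;\Z_p)$ dual to $x$, this Euler class is a unit multiple of the dual class to $x$, forcing $c \in \Z_p^\times$.
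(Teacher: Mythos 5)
Your overall skeleton --- degeneration of the Tate spectral sequence from \cref{laurent-xp}(a), followed by identifying $t-1$ with a unit multiple of $\hbar x$ --- is the natural route, but two steps are genuinely unjustified. First, the induction ``$(t-1)^k \equiv c^k(\hbar x)^k$ modulo higher filtration'' uses a ring structure on $\pi_0\TC^-(\Z_p/J(p))$ and multiplicativity of the spectral sequence, neither of which exists: $J(p)$ is only a framed $\E{2}$-ring, so $\THH(\Z_p/J(p))$ (hence $\TC^-$ and $\TP$) carries no natural multiplication --- this is exactly the caveat stressed repeatedly in the paper. You can make sense of $(t-1)^k$ through the $J(p)^{hS^1}$-module structure (taking powers in $\pi_0 J(p)=\Z_{(p)}[t^{\pm 1}]$), but then the inductive step requires that multiplication by $t-1$ --- an $S^1$-equivariant self-map of $\THH(\Z_p/J(p))$ which is null nonequivariantly --- induce an isomorphism $\gr^{2k}\to\gr^{2k+2}$ of the Tate filtration for \emph{every} $k$; your argument only controls its effect on the unit class ($k=0$), and without a Leibniz-type rule there is no formal reason the higher coefficients are units.

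Second, the argument that $c\in\Z_p^\times$ conflates two different circles: the multiplicative $\U(1)\subseteq J(p)$, whose translation action produces the fibration $S^1\to\Omega S^3\pdb{3}\to\Omega S^3$ with Euler class a generator of $\H^2(\Omega S^3;\Z_p)$, and the Connes circle with respect to which $\TC^-$ and $\TP$ are formed. The filtration jump of $t-1$ is governed by the latter action, and identifying it with the Euler class of the former bundle is precisely the nontrivial point; as written it is an assertion, not a proof. The standard route (as in the arguments recalled in \cref{hahn-wilson-review} and in \cite[Theorem 3.5]{amn-kunneth}) is that $t-1$ is detected by $\hbar\,\sigma^2(t-1)$, which reduces your unit claim to showing that $\sigma^2(t-1)$ generates $\pi_2\THH(\Z_p/J(p))\cong\Z_p$ --- an argument you would still need to supply, e.g.\ by reducing mod $p$ via \cref{laurent-xp}(b) and comparing with the B\"okstedt class, or by comparing along $J(p)\to X(p)$ with \cref{thh-calculations}. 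Finally, note that the corollary as intended is essentially an additive (at most $\Z_p[t^{\pm 1}]$-module-level) statement, and at that level it follows from \cref{laurent-xp}(a) with no extension analysis at all: the Tate filtration on each $\pi_{2j}$ is complete with every graded piece $\Z_p$, extensions of finitely generated free $\Z_p$-modules split, so $\pi_{2j}\cong\Z_p\pw{t-1}$ and the odd groups vanish; most of your machinery is aimed at a stronger multiplicative refinement that the gaps above leave unestablished.
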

\begin{corollary}\label{mod-p-thh}
If $\cC$ is a $\Z_p$-linear $\infty$-category, there is a (non-$S^1$-equivariant) equivalence $\THH(\cC/J(p)) \otimes_{\Z_p} \FF_p \simeq \THH(\cC\otimes_{\Z_p} \FF_p)$.
\end{corollary}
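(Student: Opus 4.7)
The plan is to reduce to the case $\cC = \Mod_{\Z_p}$ (established as \cref{laurent-xp}(b)) via two base-change identities for $\THH$. First, one exploits the formula $\THH(\cC/R) \simeq \THH(\cC) \otimes_{\THH(R)} R$, valid for an $\Efr{2}$-ring $R$ and any $R$-linear $\infty$-category $\cC$, which comes from comparing the simplicial presentations of relative and absolute $\THH$. Applying this both to $R = J(p)$ (using that $\cC$ is $\Z_p$-linear, and hence $J(p)$-linear via the $\Efr{2}$-map $J(p) \to \Z_p$) and to $\cC = \Mod_{\Z_p}$ (giving $\THH(\Z_p/J(p)) \simeq \THH(\Z_p) \otimes_{\THH(J(p))} J(p)$), one rewrites
\[ \THH(\cC/J(p)) \simeq \THH(\cC) \otimes_{\THH(J(p))} J(p) \simeq \THH(\cC) \otimes_{\THH(\Z_p)} \THH(\Z_p/J(p)), \]
where the second equivalence inserts the tautological identification $\THH(\cC) \simeq \THH(\cC) \otimes_{\THH(\Z_p)} \THH(\Z_p)$ arising from the canonical $\THH(\Z_p)$-module structure on $\THH(\cC)$.

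Next, I would base change along $\Z_p \to \FF_p$. Applying \cref{laurent-xp}(b), which identifies $\THH(\Z_p/J(p)) \otimes_{\Z_p} \FF_p$ with $\THH(\FF_p)$ as a $\THH(\Z_p)$-module, yields
\[ \THH(\cC/J(p)) \otimes_{\Z_p} \FF_p \simeq \THH(\cC) \otimes_{\THH(\Z_p)} \THH(\FF_p). \]
The right-hand side is then identified with $\THH(\cC \otimes_{\Z_p} \FF_p)$ by the multiplicativity of $\THH$: for an $\Eoo$-ring $R$ and $R$-linear $\infty$-categories $\cC, \cD$, there is a natural equivalence $\THH(\cC \otimes_R \cD) \simeq \THH(\cC) \otimes_{\THH(R)} \THH(\cD)$, applied with $R = \Z_p$ and $\cD = \Mod_{\FF_p}$.

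The main obstacle will be justifying the multiplicativity formula $\THH(\cC \otimes_R \cD) \simeq \THH(\cC) \otimes_{\THH(R)} \THH(\cD)$ at the level of presentable stable $\Z_p$-linear $\infty$-categories (rather than just algebras), and carefully tracking the interplay between the relative and absolute $\THH(\Z_p)$-module structures appearing in the chain of equivalences. The non-$S^1$-equivariance disclaimer in the statement traces back to \cref{laurent-xp}(b): the equivalence $\THH(\Z_p/J(p)) \otimes_{\Z_p} \FF_p \simeq \THH(\FF_p)$ is only established as an equivalence of $\THH(\Z_p)$-modules and is not visibly compatible with the residual $S^1$-actions on either side.
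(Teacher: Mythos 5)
Your proposal is correct and follows essentially the same route as the paper: rewrite $\THH(\cC/J(p))$ as $\THH(\cC) \otimes_{\THH(\Z_p)} \THH(\Z_p/J(p))$, base change along $\Z_p \to \FF_p$ using \cref{laurent-xp}(b), and conclude via the multiplicativity equivalence $\THH(\cC) \otimes_{\THH(\Z_p)} \THH(\FF_p) \simeq \THH(\cC \otimes_{\Z_p} \FF_p)$. The paper's proof is exactly this chain of equivalences, with the multiplicativity step used implicitly in the final identification.
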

\begin{proof}
By \cref{laurent-xp}(b), there is an equivalence $\THH(\Z_p/J(p)) \otimes_{\Z_p} \FF_p \simeq \THH(\FF_p)$ of $\THH(\Z_p)$-modules. It follows that
\begin{align*}
    \THH(\cC/J(p)) \otimes_{\Z_p} \FF_p & \simeq \THH(\cC) \otimes_{\THH(\Z_p)} \THH(\Z_p/J(p)) \otimes_{\Z_p} \FF_p \\
    & \xar{\sim} \THH(\cC) \otimes_{\THH(\Z_p)} \THH(\FF_p) \simeq \THH(\cC\otimes_{\Z_p} \FF_p),
\end{align*}
as desired.
\end{proof}
\begin{remark}
Recall from \cite[Theorem 3.5]{amn-kunneth} that if $S[z] = S[\Z_{\geq 0}]$ denotes the flat polynomial ring on a class in degree $0$, then there is an isomorphism $\pi_\ast \THH(\Z_p/S[z]) \cong \Z_p[\sigma^2(z-p)]$, where the $\Eoo$-map $S[z] \to \Z_p$ sends $z\mapsto p$. This implies that $\pi_\ast \TP(\Z_p/S[z]) \cong \Z_p[z]^\wedge_{(z-p)}\ls{\hbar}$. Similarly, there is an isomorphism $\pi_\ast \TP(\Z_p/S\pw{\ptl}) \cong \Z_p\pw{\ptl}^\wedge_{(\ptl-p)}\ls{\hbar}$, where $\ptl\mapsto p$ and $S\pw{\ptl} = \left(S[q^{\pm 1}]^\wedge_{(p,q-1)}\right)^{h\FF_p^\times}$.

In the same way, there is an isomorphism $\pi_\ast \THH(\Z_p/S[t^{\pm 1}]) \cong \Z_p[\sigma^2(t+p-1)]$, where the $\Eoo$-map $S[t^{\pm 1}] \to \Z_p$ sends $t\mapsto 1-p$. This implies that $\pi_\ast \TP(\Z_p/S[t^{\pm 1}]) \cong \Z_p[t^{\pm 1}]^\wedge_{(t+p-1)}\ls{\hbar}$. In light of the obvious analogy to \cref{laurent-xp} and \cref{tp-jp}, it is natural to ask: what is the role of $J(p)$?

To answer this, let us assume for simplicity that $T(1)$ admits the structure of an $\E{2}$-ring. The main utility of $J(p)$ is that it admits, by construction, a direct comparison to $T(1)$; one can view $J(p)$ as containing roughly the same ``height $1$'' information as $T(1)$. On the other hand, we do not know how to directly compare $S[t^{\pm 1}]$ (with the standard $\E{2}$-structure) to $T(1)$. (Both admit $\E{1}$-algebra maps to $T(1)[t^{\pm 1}]$, but this is somewhat unsatisfactory.) One can therefore view \cref{xp-polynomial} as an explicit modification of the $\E{2}$-structure on $S[t^{\pm 1}]$ such that the resulting $\E{2}$-algebra admits an interesting map to $T(1)$.
\end{remark}

It is natural to ask if \cref{laurent-xp} admits a generalization to $\BP{n-1}$.
At height $1$ and $p=2$, we can explicitly construct some $\Efr{2}$-rings which give higher analogues of $J(p)$, but a general construction at higher heights and other primes eludes us.
\begin{construction}\label{higher-jp}
Recall from \cref{t2-thom} that there is an $\E{2}$-map $\Omega \Sp(2) \to \BU$ whose Thom spectrum is equivalent to $T(2)$ at $p=2$.
Let $T_2(2)$ denote the $\Efr{2}$-ring defined as the Thom spectrum of the composite $\E{2}$-map
$$\Omega\Spin(4) \to \Omega \Sp(2) \to \BU,$$
where the first map is induced by the inclusion $\Spin(4) \subseteq \Spin(5) \cong \Sp(2)$. Similarly, let $T_4(2)$ denote the $\Efr{2}$-ring defined as the Thom spectrum of the composite $\E{2}$-map
$$\Omega\U(2) \to \Omega \Sp(2) \to \BU,$$
where the first map is induced by the inclusion $\U(2) \subseteq \Sp(2)$. Note that this inclusion factors as $\U(2) \to \Spin(4) \to \Sp(2)$, so that there is a composite map of $\Efr{2}$-rings
$$T_4(2) \to T_2(2) \to T(2).$$
\end{construction}
\begin{remark}
There is a fiber sequence
$$\Omega S^3 \to \Omega \Spin(4) \to \Omega S^3,$$
which implies that $\MU_\ast(T_2(2)) \simeq \MU_\ast[t_1, x_2]$ where $|x_2| = 2$. Similarly, there is a fiber sequence
$$\Omega S^3 \to \Omega \U(2) \to \Omega S^1 \simeq \Z,$$
which implies that $\MU_\ast(T_4(2)) \simeq \MU_\ast[t_1, x_0^{\pm 1}]$ where $|x_0| = 0$.
\end{remark}
\begin{lemma}\label{sp2-quotients}
There is a diffeomorphism $\Sp(2)/\Spin(4) \cong S^4$, as well as a homotopy equivalence $\Sp(2)/\U(2) \simeq J_3(S^2)$.
\end{lemma}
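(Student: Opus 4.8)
The plan is to deduce both statements from the transitive action of $\Sp(2)$ on $\PP_\CC(\mathbf{H}^2)\cong\CP^3$ and on $\mathbf{H}P^1\cong S^4$, together with the exceptional isomorphism $\Sp(2)\cong\Spin(5)$.

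For the first diffeomorphism I would argue as follows. Under $\Sp(2)\cong\Spin(5)$ the subgroup $\Spin(4)$ of \cref{t2-thom} corresponds to the standard $\Spin(4)\subseteq\Spin(5)$, and the double cover $\Spin(5)\to\SO(5)$ carries $\Spin(4)$ onto $\SO(4)$, hence descends to a diffeomorphism $\Sp(2)/\Spin(4)\cong\Spin(5)/\Spin(4)\xrightarrow{\ \sim\ }\SO(5)/\SO(4)=S^4$. Equivalently — and this is the description that lines up best with the second part — let $\Sp(2)$ act on the right $\mathbf{H}$-module $\mathbf{H}^2$; then it acts transitively on the unit sphere $S^7\subseteq\mathbf{H}^2$, hence on the space $\mathbf{H}P^1$ of quaternionic lines, and the stabilizer of the line $\mathbf{H}\cdot(1,0)$ is the block subgroup $\Sp(1)\times\Sp(1)\subseteq\Sp(2)$, which is (a conjugate of) the $\Spin(4)$ of \cref{t2-thom}. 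Since $\mathbf{H}P^1\cong S^4$ this gives $\Sp(2)/\Spin(4)\cong S^4$.

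For the second equivalence, I would first use the identification $\CP^k\simeq J_k(S^2)$ recorded in the proof of \cref{t1-jp} to reduce to producing a diffeomorphism $\Sp(2)/\U(2)\cong\CP^3$. Equip $\mathbf{H}^2$ with the complex structure given by right multiplication by $\mathbf{i}$, so that $\Sp(2)$ acts $\CC$-linearly on $\mathbf{H}^2\cong\CC^4$ and hence on $\CP^3=\PP_\CC(\mathbf{H}^2)$, and the twistor/Penrose map
\[
\CP^3=\PP_\CC(\mathbf{H}^2)\longrightarrow\PP_{\mathbf{H}}(\mathbf{H}^2)=\mathbf{H}P^1\cong S^4,\qquad \ell\longmapsto \ell\cdot\mathbf{H},
\]
(which in coordinates is $[z_1:z_2:z_3:z_4]\mapsto[z_1+z_2\mathbf{j}:z_3+z_4\mathbf{j}]$, as in \cref{analogues-ko-tmf}) is $\Sp(2)$-equivariant. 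The action on $\CP^3$ is transitive because the action on $S^7$ is, and a direct matrix computation shows that the stabilizer of the complex line $\CC\cdot(1,0)$ is $\{\diag(z,q):z\in\U(1),\ q\in\Sp(1)\}\subseteq\Sp(1)\times\Sp(1)=\Spin(4)$; I would then check that this is exactly (a conjugate of) the copy of $\U(2)$ inside $\Spin(4)\subseteq\Sp(2)$ used to define $T_4(2)$ in \cref{higher-jp}, e.g.\ by noting that under $\Spin(4)=\Sp(1)\times\Sp(1)\to\SO(4)$, $(q_1,q_2)\cdot x=q_1 x\bar q_2$, this subgroup maps onto the standard $\U(2)\subseteq\SO(4)$. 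This yields $\Sp(2)/\U(2)\cong\CP^3\simeq J_3(S^2)$, and the twistor map exhibits the $S^2$-bundle structure over $S^4$ with fiber $\Spin(4)/\U(2)\cong\SO(4)/\U(2)\cong\CP^1$.

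The step I expect to be the main obstacle is this last identification: matching the point-stabilizer of the $\Sp(2)$-action on $\CP^3$ with the precise subgroup ``$\U(2)$'' of \cref{higher-jp}. This requires keeping track of the several isogenous copies of $\U(2)$ and $\U(1)\times\Sp(1)$ sitting inside $\Spin(4)=\Sp(1)\times\Sp(1)$ and inside $\Sp(2)=\Spin(5)$, and verifying compatibility with the lift of $\U(2)\subseteq\SO(4)$ through $\Spin(4)\to\SO(4)$. Everything else is standard input: transitivity of $\Sp(2)$ on $S^7$, the exceptional isomorphism $\Sp(2)\cong\Spin(5)$, and the identification $\SO(4)/\U(2)\cong\CP^1=S^2$ of the space of compatible orthogonal complex structures on $\RR^4$.
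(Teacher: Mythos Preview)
Your argument for the first diffeomorphism is correct and matches the paper's.

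Your approach to the second equivalence has a genuine gap, one the paper explicitly flags in the Warning immediately following the lemma. The equivalence $\CP^{p-1}\simeq J_{p-1}(S^2)$ in the proof of \cref{t1-jp} is a $p$-local statement: it holds because $j!$ is a $p$-adic unit for $j<p$, so the divided-power structure on $H^*(J_{p-1}(S^2);\Z_{(p)})$ collapses to a truncated polynomial algebra. It does \emph{not} give $\CP^k\simeq J_k(S^2)$ in general, and at the prime $2$ (the setting of \cref{higher-jp}) one has $\CP^3\not\simeq J_3(S^2)$: the cup-square $H^2\otimes H^2\to H^4$ hits a generator in $\CP^3$ but only twice a generator in $J_3(S^2)$; equivalently, $\Sq^2$ is nonzero on $H^2(\CP^3;\FF_2)$ but zero on $H^2(J_3(S^2);\FF_2)$.

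Correspondingly, your stabilizer computation is right --- the isotropy of a complex line in $\bH^2$ under $\Sp(2)$ is indeed $\U(1)\times\Sp(1)$ --- but this subgroup is \emph{not} conjugate in $\Sp(2)$ to the standard $\U(2)\subseteq\Sp(2)$ of \cref{higher-jp}. This is precisely the content of the paper's Warning: $\Sp(2)/(\Sp(1)\times\U(1))\cong\CP^3$, whereas $\Sp(2)/\U(2)\simeq J_3(S^2)$, and these are distinct homotopy types (though they agree after inverting $6$). So the step you flagged as the ``main obstacle'' is an actual obstruction: the identification cannot be made.

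The paper instead quotes a fibration $V_2(\RR^5)\to J_3(S^2)\to\CP^\infty$ due to Amelotte, which exhibits $J_3(S^2)$ as the quotient of $V_2(\RR^5)\cong\Spin(5)/\SU(2)$ by the residual $S^1=\U(2)/\SU(2)$-action; this gives $J_3(S^2)\simeq\Spin(5)/\U(2)\cong\Sp(2)/\U(2)$ directly.
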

\begin{proof}
The first diffeomorphism follows immediately from the isomorphism $\Sp(2) \cong \Spin(5)$ and the resulting chain
$$\Sp(2)/\Spin(4) \cong \Spin(5)/\Spin(4) \cong \SO(5)/\SO(4) \cong S^4.$$
To prove the second equivalence, the key input is \cite[Proposition 4.3]{amelotte}, which says that there is a fiber sequence
$$V_2(\RR^5) \to J_3(S^2) \to \CP^\infty;$$
in other words, there is an $S^1$-action on the Stiefel manifold $V_2(\RR^5)$ such that $V_2(\RR^5)/S^1 \cong J_3(S^2)$.
Recall that $V_2(\RR^5)$ is diffeomorphic to $\SO(5)/\SO(3) \cong \Spin(5)/\SU(2)$. It is not difficult to see that the claimed $S^1$-action on $V_2(\RR^5)$ via the above fiber sequence is precisely the residual action of $\U(2)/\SU(2) \cong S^1$ on $\Spin(5)/\SU(2)$; in particular, we may identify $J_3(S^2) \simeq \Spin(5)/\U(2)$, as desired.
\end{proof}
\begin{remark}
The quotient $\Sp(2)/\U(2)$ is also known as the complex Lagrangian Grassmannian $\Gr_2^\Lag(T^\ast \cc^2)$ of Lagrangian subspaces of $T^\ast \cc^2$.
\end{remark}
\begin{warning}
One should not confuse $\Sp(2)/\U(2)$ with the quotient $\Sp(2)/(\Sp(1)\times \U(1))$: indeed, \cref{sp2-quotients} says that the former is homotopy equivalent to $J_3(S^2)$, while the latter is diffeomorphic to $S^7/\U(1) = \CP^3$. These spaces are not homotopy equivalent (although they do become equivalent after inverting $6$).
\end{warning}
\cref{sp2-quotients} has the following amusing (inconsequential?) consequence:
\begin{corollary}
Let $Q \subseteq \CP^4$ be a complex quadric, and let $\Gr_2^+(\RR^5)$ denote the Grassmannian of oriented $2$-planes in $\RR^5$. Then, there are diffeomorphisms $Q \cong \Gr_2^\Lag(T^\ast \cc^2) \cong \Gr_2^+(\RR^5)$, and these are homotopy equivalent to $J_3(S^2)$.
\end{corollary}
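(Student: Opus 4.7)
The plan is to establish the two diffeomorphisms $Q \cong \Gr_2^+(\RR^5)$ and $\Gr_2^+(\RR^5) \cong \Gr_2^\Lag(T^\ast\cc^2)$ in turn, and then invoke \cref{sp2-quotients} for the homotopy equivalence to $J_3(S^2)$.

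\emph{Step 1: $Q \cong \Gr_2^+(\RR^5)$.} Write a point of $Q \subseteq \CP^4$ as $[v + iw]$ with $v, w \in \RR^5$ not both zero. The defining equation $z_0^2 + \cdots + z_4^2 = 0$ decomposes into real and imaginary parts as
$$|v|^2 = |w|^2, \qquad \langle v, w \rangle = 0.$$
After rescaling by a positive real and then quotienting by the residual $S^1 \subseteq \cc^\times$ acting by $e^{i\theta}(v + iw) = (\cos\theta\, v - \sin\theta\, w) + i(\sin\theta\, v + \cos\theta\, w)$, the pair $(v,w)$ becomes an oriented orthonormal $2$-frame in $\RR^5$ modulo the $\SO(2)$-action rotating the frame. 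This gives a smooth identification $Q \cong V_2(\RR^5)/\SO(2) = \Gr_2^+(\RR^5)$.

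\emph{Step 2: $\Gr_2^+(\RR^5) \cong \Sp(2)/\U(2) = \Gr_2^\Lag(T^\ast\cc^2)$.} The transitive action of $\SO(5)$ on oriented $2$-planes with stabilizer $\SO(2) \times \SO(3)$ gives $\Gr_2^+(\RR^5) = \SO(5)/(\SO(2)\times \SO(3))$. We lift this along the exceptional double cover $\Sp(2) \cong \Spin(5) \to \SO(5)$: the preimage of $\SO(2) \times \SO(3)$ in $\Spin(5)$ is a \emph{connected} double cover of $\SO(2)\times \SO(3)$, because the composite $\pi_1(\SO(2) \times \SO(3)) \cong \Z \oplus \Z/2 \to \pi_1\SO(5) \cong \Z/2$ is surjective (both summands map to the generator). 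On the other hand, the determinant together with the adjoint representation on $\sl_2(\cc)$ furnishes a $2$-to-$1$ homomorphism
$$\U(2) \longrightarrow \U(1) \times \PU(2) = \SO(2) \times \SO(3),$$
with kernel $\{\pm I\}$. Identifying this cover with the preimage above (using that $\U(2) \hookrightarrow \Sp(2)$ as the stabilizer of a complex structure on $\HH^2$ acts on the $5$-dimensional real representation $\RR \oplus \sl_2(\cc)_\RR$ preserving precisely the splitting into a $2$-plane and a $3$-plane), we obtain $\Sp(2)/\U(2) \cong \SO(5)/(\SO(2)\times \SO(3)) = \Gr_2^+(\RR^5)$.

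\emph{Step 3.} The preceding remark identifies $\Gr_2^\Lag(T^\ast\cc^2)$ with $\Sp(2)/\U(2)$, and \cref{sp2-quotients} gives the homotopy equivalence $\Sp(2)/\U(2) \simeq J_3(S^2)$.

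The only real subtlety is the matching of subgroups in Step 2: one must check that the $\U(2) \subseteq \Sp(2)$ appearing in the definition of the Lagrangian Grassmannian (the maximal compact fixing a complex structure on $\HH^2$) is conjugate in $\Sp(2)$ to the connected preimage of $\SO(2)\times \SO(3)$. This is a concrete computation in $5$-dimensional representation theory of $\Sp(2)$, and it is the step that needs the most care; the rest is routine once the embeddings are matched.
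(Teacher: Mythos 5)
Your argument follows the paper's proof essentially verbatim: the paper likewise identifies $\Gr_2^\Lag(T^\ast \cc^2) = \Sp(2)/\U(2)$ with $\SO(5)/(\SO(3)\cdot\SO(2)) = \Gr_2^+(\RR^5)$, identifies the quadric with $\Gr_2^+(\RR^5)$ (citing Kobayashi--Nomizu rather than writing out the real/imaginary-part computation), and then invokes \cref{sp2-quotients}, so your Steps 1--2 merely fill in details the paper cites or asserts. One small slip in Step 2: the real $5$-dimensional representation of $\Sp(2)\cong\Spin(5)$ restricted to $\U(2)$ decomposes as $\RR^2\oplus\mathfrak{su}(2)$ (a $2$-plane rotated through $\det$ plus a $3$-plane carrying the adjoint action), not $\RR\oplus\sl_2(\cc)_\RR$, which has the wrong dimension --- though the conclusion you draw from it (a preserved $2$-plane/$3$-plane splitting, hence the matching of $\U(2)$ with the connected preimage of $\SO(2)\times\SO(3)$) is correct.
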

\begin{proof}
Since $\Sp(2)/\U(2) \cong \SO(5)/(\SO(3)\cdot \SO(2))$, we can identify $\Sp(2)/\U(2) = \Gr_2^\Lag(T^\ast \cc^2)$ with $\Gr_2^+(\RR^5)$. Therefore, \cref{sp2-quotients} gives a homotopy equivalence $\Gr_2^+(\RR^5) \simeq J_3(S^2)$. The desired claim now follows from the observation that $\Gr_2^+(\RR^5)$ is diffeomorphic to a quadric $Q \subseteq \CP^4$ via the map $\Gr_2^+(\RR^5) \to \Gr_1(\cc^5) \cong \CP^4$ induced by the isomorphism $\RR^{10} \xar{\sim} \cc^5$; see \cite[Example 10.6, Page 280]{kobayashi-nomizu}.
\end{proof}
\begin{remark}
There is a fibration\footnote{The fibration \cref{s2 J3S2 fibration} is analogous to the ``twistor'' fibration (see \cref{s2-bundle-hpn}) $S^2 \to \CP^3 \to S^4$.} (see \cref{partial-ehp} for a more general statement)
\begin{equation}\label{s2 J3S2 fibration}
    S^2 \to J_3(S^2) \to S^4,
\end{equation}
which, under the diffeomorphism 
$$\Spin(4)/\U(2) \cong (\SU(2)\times \SU(2))/\U(2) \cong \SU(2)/\U(1) \cong S^2,$$
can be identified via \cref{sp2-quotients} with the fibration
$$\Spin(4)/\U(2) \to \Sp(2)/\U(2) \to \Sp(2)/\Spin(4).$$
There is also a commutative diagram where each row and column is a fibration:
$$\xymatrix{
\U(1) \ar[r] \ar[d] & \U(2) \ar[r] \ar[d] & S^3 \ar[d] \\
\Sp(1) \ar[r] \ar[d] & \Sp(2) \ar[r] \ar[d] & S^7 \ar[d] \\
S^2 \ar[r] & J_3(S^2) \ar[r] & S^4;
}$$
the rightmost vertical fiber sequence is the Hopf fibration. This diagram captures the relationships between $J(2)$, $T_4(2)$, $T(1)$, and $T(2)$.
\end{remark}
\begin{remark}\label{T2 and y2 relationship}
The equivalence $\Sp(2)/\U(2) = \Gr_2^\Lag(T^\ast \cc^2) \simeq J_3(S^2)$ of \cref{sp2-quotients} can be used to understand the relationship between $T(2)$ and the Mahowald-Ravenel-Shick spectrum $y(2)$ from \cite{mrs} (at the prime $2$).\footnote{A simpler version of this discussion simply states that if $\Omega S^2 \to \BO$ is the map extending the M\"obius bundle $S^1 \to \BO$, then \cite[Proposition 2.1.6]{bpn-thom} along with loops on the fibration
$$S^3 \xar{\eta} S^2 \to \CP^\infty$$
implies that there is a map $S^1 \to \BGL_1(T(1))$ whose Thom spectrum is the $\E{1}$-quotient $S\mmod 2 = y(1)$. The map $S^1 \to \BGL_1(T(1))$ detects $1-2\in \pi_0(T(1))^\times$ on the bottom cell of the source, so we recover the fact that $T(1)/2 \simeq y(1)$. In particular, $\HH(y(1)/T(1)) \simeq y(1)[\CP^\infty]$. Since $y(1) \otimes_{T(1)} \Z_2 \simeq \FF_2$, this recovers the well-known observation that $\HH(\FF_2/\Z_2) \simeq \FF_2[\CP^\infty]$, at least as \textit{modules} over $\FF_2$. This argument does not give the \textit{$\FF_2$-algebra} structure, since $\HH(y(1)/T(1))$ is not a ring.}
Recall from \cref{t2-thom} that there is an $\E{2}$-map $\Omega \Sp(2) \to \BU$ whose Thom spectrum is equivalent to $T(2)$ at $p=2$.
Similarly, recall that $y(2)$ is the Thom spectrum of the bundle determined by the map $\mu: \Omega J_3(S^2) \to \Omega^2 S^3 \to \BO$, where the second map is the extension of the M\"obius bundle $S^1 \to \BO$. Under the equivalence $\Sp(2)/\U(2) \simeq \Sp/\U$, the map $\mu: \Omega J_3(S^2) \to \BO$ can be identified with the composite
$$\Omega(\Sp(2)/\U(2)) \to \Omega(\Sp/\U) \to \mathrm{B}^2\O \xar{\eta} \BO;$$
the middle map is obtained via Bott periodicity. Applying \cite[Proposition 2.1.6]{bpn-thom} to loops on the fibration
$$\Sp(2) \to J_3(S^2) \to \BU(2),$$
we conclude that $y(2) = \Omega J_3(S^2)^\mu$ is equivalent as an $\E{1}$-ring to the Thom spectrum of an $\E{1}$-map $\U(2) \to \BGL_1(T(2))$. This implies, for instance, that $\THH(y(2)/T(2)) \simeq y(2)[\BU(2)]$. Since $k(2) \simeq y(2) \otimes_{T(2)} \BP{2}$, this implies that $\THH(k(2)/\BP{2}) \simeq k(2)[\BU(2)]$. Similarly, since $y(2) \otimes_{T(2)} \ku \simeq \FF_2$, we also recover the observation that $\FF_2$ is equivalent as an $\E{1}$-ring to the Thom spectrum of an $\E{1}$-map $\U(2) \to \BGL_1(\ku)$, and hence that $\HH(\FF_2/\ku) \simeq \FF_2[\BU(2)]$ as $\FF_2$-modules.
\end{remark}
\begin{prop}\label{t2-thh}
There is an equivalence $\THH(T(2)/T_2(2)) \simeq T(2)[S^4]$, as well as an equivalence $\THH(T(2)/T_4(2)) \simeq T(2)[J_3(S^2)]$.
\end{prop}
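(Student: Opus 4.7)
The approach is a direct analogue of the proof of \cref{t1-jp}. The key ingredient is that all three $\Efr{2}$-rings in sight are Thom spectra of $\Efr{2}$-maps from $\Omega G$ for $G = \Spin(4), \U(2), \Sp(2)$, and that the vertical maps in the diagram realizing these as Thom spectra are induced by the subgroup inclusions $\U(2) \subseteq \Spin(4) \subseteq \Sp(2)$.

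Concretely, I would begin by applying the base-change formula
$$\THH(T(2)/T_2(2)) \simeq \THH(T(2)) \otimes_{\THH(T_2(2))} T_2(2),$$
valid because $T_2(2)$ is an $\Efr{2}$-algebra (so $\THH(T_2(2))$ is an $S^1$-equivariant $T_2(2)$-algebra via the unit), and analogously for the $T_4(2)$-relative version. Next, by the Blumberg--Cohen--Schlichtkrull theorem for $\THH$ of Thom spectra of $\E{2}$-maps (as invoked in the proof of \cref{t1-jp}), there are equivalences
$$\THH(T(2)) \simeq T(2)[\Sp(2)], \qquad \THH(T_2(2)) \simeq T_2(2)[\Spin(4)], \qquad \THH(T_4(2)) \simeq T_4(2)[\U(2)],$$
each compatible with the maps induced by $\Spin(4) \hookrightarrow \Sp(2)$ and $\U(2) \hookrightarrow \Sp(2)$. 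Substituting, one obtains
$$\THH(T(2)/T_2(2)) \simeq T(2)[\Sp(2)] \otimes_{T_2(2)[\Spin(4)]} T_2(2) \simeq T(2) \otimes_{T_2(2)} T_2(2)[\Sp(2)/\Spin(4)] \simeq T(2)[\Sp(2)/\Spin(4)],$$
and similarly for the $T_4(2)$-relative version $\THH(T(2)/T_4(2)) \simeq T(2)[\Sp(2)/\U(2)]$.

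Finally, the two homogeneous space identifications $\Sp(2)/\Spin(4) \cong S^4$ and $\Sp(2)/\U(2) \simeq J_3(S^2)$ are precisely the content of \cref{sp2-quotients}, which finishes both equivalences.

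The main step requiring care is the verification that the base-change formula and the BCS identifications are compatible in the way written — that is, under the Thom spectrum identification $\THH(T_2(2)) \simeq T_2(2)[\Spin(4)]$, the natural map to $\THH(T(2)) \simeq T(2)[\Sp(2)]$ really is the one induced by the subgroup inclusion $\Spin(4) \hookrightarrow \Sp(2)$ (and likewise for $\U(2)$). This is naturality of the BCS equivalence in the underlying $\Efr{2}$-map of spaces; it is essentially the only step not already encapsulated by prior results, and mirrors the analogous unstated compatibility used in \cref{t1-jp}.
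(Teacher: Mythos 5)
Your proposal is correct, and it proves the statement by a route that differs from the paper's, though both ultimately rest on \cref{sp2-quotients}. The paper first loops the fibrations $\Spin(4) \to \Sp(2) \to S^4$ and $\U(2) \to \Sp(2) \to J_3(S^2)$ to get fiber sequences of $\E{1}$-spaces, applies \cite[Proposition 2.1.6]{bpn-thom} (Beardsley's result on iterated Thom spectra, after noting $\eta$ is null in $T_4(2)$) to exhibit $T(2)$ as the Thom spectrum of an $\E{1}$-map $\Omega S^4 \to \BGL_1(T_2(2))$ (resp. $\Omega J_3(S^2) \to \BGL_1(T_4(2))$), and then invokes \cite{thh-thom} relatively. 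You instead run the base-change formula $\THH(T(2)/T_i(2)) \simeq \THH(T(2)) \otimes_{\THH(T_i(2))} T_i(2)$ together with the absolute identifications $\THH(T(2)) \simeq T(2)[\Sp(2)]$, $\THH(T_2(2)) \simeq T_2(2)[\Spin(4)]$, $\THH(T_4(2)) \simeq T_4(2)[\U(2)]$, and then compute the two-sided bar construction, using that $\Spin(4)$ and $\U(2)$ act freely on $\Sp(2)$ so that $T_2(2)[\Sp(2)] \otimes_{T_2(2)[\Spin(4)]} T_2(2) \simeq T_2(2)[\Sp(2)/\Spin(4)]$, etc.; this is exactly the template of the paper's proof of \cref{t1-jp}, and the naturality point you flag (that under the Thom identifications the map $\THH(T_2(2)) \to \THH(T(2))$ is chains on $\Spin(4) \hookrightarrow \Sp(2)$, and the augmentation collapses the group factor) is the same implicit compatibility the paper relies on there. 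What each route buys: yours avoids both the $\eta$-nullhomotopy observation and \cite[Proposition 2.1.6]{bpn-thom}; the paper's route produces as a byproduct the relative Thom spectrum presentations of $T(2)$ over $T_2(2)$ and $T_4(2)$, which are reused later (e.g., in the remark following \cref{thh-rel-t42}, where $T(2) \simeq \colim_{\Omega J_3(S^2)} T_4(2)$ is deduced), so if you only prove the $\THH$ statement your way, that later colimit description would need a separate argument.
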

\begin{proof}
Note that $\eta$ is nullhomotopic in $T_4(2)$ (and hence in $T_2(2)$), since the inclusion $\SU(2) \to \U(2)$ defines a map $S^2 \to \Omega \U(2)$, which in turn Thomifies to a map $C\eta \to T_4(2)$ which factors the unit. By \cref{sp2-quotients}, there are fiber sequences of $\E{1}$-spaces
\begin{align*}
    \Omega \Spin(4) & \to \Omega \Sp(2) \to \Omega S^4, \\
    \Omega \U(2) & \to \Omega \Sp(2) \to \Omega J_3(S^2),
\end{align*}
which by \cite[Proposition 2.1.6]{bpn-thom} (see also \cite{beardsley-thom}) imply that $T(2)$ is a Thom spectrum of an $\E{1}$-map $\Omega S^4 \to \BGL_1(T_2(2))$ (resp. $\Omega J_3(S^2) \to \BGL_1(T_4(2))$). Together with \cite{thh-thom}, this implies the desired claim.
\end{proof}
\begin{remark}
Recall that $\SU(4)/\Sp(2) \cong S^5$. It follows that $\THH(X(4)/T(2)) \simeq X(4)[S^5]$. Similarly, recall that $\SU(4) \cong \Spin(6)$; therefore, there is an diffeomorphism
$$\SU(4)/\Spin(4) \cong \Spin(6)/\Spin(4) \cong \SO(6)/\SO(4) \cong V_2(\RR^6).$$
It follows that $\THH(X(4)/T_2(2)) \simeq X(4)[V_2(\RR^6)]$. (Note also that $\SU(4)/\Spin(4) \cong \SU(4)/(\SU(2)\times \SU(2))$ can be viewed as an ``oriented complex Grassmannian'' $\tilde{\Gr}_2(\cc^4)$.) Finally, $\THH(X(4)/T_2(2)) \simeq X(4)[\SU(4)/\U(2)]$.
\end{remark}
\begin{corollary}\label{thh-rel-t42}
There are $2$-complete equivalences of $\ku$-modules
\begin{align*}
    \THH(\ku/T_2(2)) & \simeq \ku[\Omega S^5], \\
    \THH(\ku/T_4(2)) & \simeq \ku[\Omega S^3].
\end{align*}
Under these equivalences, the maps
$$\THH(\ku/T_4(2)) \to \THH(\ku/T_2(2)) \to \THH(\ku/T(2))$$
are induced by taking $\ku$-chains of the Hopf maps
$$\Omega S^3 \xar{H} \Omega S^5 \xar{H} \Omega S^9.$$
\end{corollary}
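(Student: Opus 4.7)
The plan is to upgrade the proof of \cref{t2-thh} by replacing $T(2)$ with $\ku^\wedge_2$. The first step is to exhibit $\ku^\wedge_2$ as the Thom spectrum of an $\E{1}$-map $\widetilde{\mu}\colon \Omega^2 S^5 \to \BGL_1(T_2(2))$, and of an $\E{1}$-map $\widetilde{\mu}'\colon \Omega^2 S^3 \to \BGL_1(T_4(2))$. Once this is done, \cite{thh-thom} immediately yields
\begin{equation*}
\THH(\ku/T_2(2)) \simeq \ku \otimes \B\Omega^2 S^5 \simeq \ku[\Omega S^5], \qquad \THH(\ku/T_4(2)) \simeq \ku \otimes \B\Omega^2 S^3 \simeq \ku[\Omega S^3],
\end{equation*}
using that $\B\Omega^2 X \simeq \Omega X$ for simply connected $X$.

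To produce $\widetilde{\mu}$ and $\widetilde{\mu}'$, I would combine two inputs. The first, already established inside the proof of \cref{t2-thh} via \cite[Proposition 2.1.6]{bpn-thom}, is that $T(2)$ is the Thom spectrum of $\E{1}$-maps $\nu\colon \Omega S^4 \to \BGL_1(T_2(2))$ and $\nu'\colon \Omega J_3(S^2) \to \BGL_1(T_4(2))$. The second is a height-$2$ Hopkins--Mahowald-type input, in the spirit of \cref{relation-to-hopkins-mahowald} and \cref{bockstein-serre}: namely, $\ku^\wedge_2$ is the Thom spectrum of an $\E{1}$-map $\mu\colon \Omega^2 S^9 \to \BGL_1(T(2))$ detecting $v_2$ on the bottom cell. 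Looping the James--EHP fibrations at $p=2$,
\begin{equation*}
S^4 \to \Omega S^5 \xrightarrow{H} \Omega S^9, \qquad J_3(S^2) \to \Omega S^3 \xrightarrow{H_4} \Omega S^9,
\end{equation*}
gives $\E{1}$-fiber sequences $\Omega S^4 \to \Omega^2 S^5 \to \Omega^2 S^9$ and $\Omega J_3(S^2) \to \Omega^2 S^3 \to \Omega^2 S^9$. Applying \cite[Proposition 2.1.6]{bpn-thom} to these sequences in the assembly direction, together with the inputs $(\nu,\mu)$ and $(\nu',\mu)$, produces the desired $\E{1}$-maps $\widetilde{\mu}$ and $\widetilde{\mu}'$ whose Thom spectra are $\ku^\wedge_2$.

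The identification of the comparison maps with $\ku$-chains of the James--Hopf maps $\Omega S^3 \xrightarrow{H} \Omega S^5 \xrightarrow{H} \Omega S^9$ is immediate from naturality: the tower $T_4(2) \to T_2(2) \to T(2)$ of Thom structures is compatible with the EHP factorization $\Omega^2 S^3 \to \Omega^2 S^5 \to \Omega^2 S^9$ of the iterated James--Hopf maps, and passing to $\B$ converts this factorization into the Hopf factorization on $\Omega S^3$, $\Omega S^5$, $\Omega S^9$.

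The hard part will be constructing the map $\mu\colon \Omega^2 S^9 \to \BGL_1(T(2))$ with $\mathrm{Th}(\mu) \simeq \ku^\wedge_2$: this is the $p=2, n=2$ instance of the general principle in \cref{bockstein-serre}, which in \cite{bpn-thom} is only known assuming Conjectures~D and E. If a direct construction is unavailable, a fallback is to bypass $T(2)$ and compute $\THH(\ku/T_2(2))$ and $\THH(\ku/T_4(2))$ directly with the Adams spectral sequence, following the template of \cref{thh-calculations} with $T_2(2)$ and $T_4(2)$ (which are unconditionally $\Efr{2}$-rings by \cref{higher-jp}) as the bases.
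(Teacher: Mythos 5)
Your primary route has a genuine gap: it hinges on exhibiting $\ku^\wedge_2$ as the Thom spectrum of an $\E{1}$-map $\mu\colon \Omega^2 S^9 \to \BGL_1(T(2))$ detecting $v_2$, and — as you yourself note — this is exactly \cite[Corollary B]{bpn-thom}, which is only known under Conjectures D and E of that paper. The corollary you are asked to prove is stated (and proved) unconditionally, and the paper is explicit that its main results do not depend on those conjectures; indeed, the remark immediately following \cref{thh-rel-t42} runs the logic in the opposite direction from yours, treating the Thom-spectrum description of $\ku^\wedge_2$ over $T(2)$ (equivalently over $T_4(2)$) as a conditional rephrasing that the corollary merely \emph{suggests}. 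So while your assembly of $\widetilde{\mu}$ and $\widetilde{\mu}'$ from $(\nu,\mu)$, $(\nu',\mu)$ via the looped EHP sequences is the same colimit manipulation the paper performs in that remark, using it as the proof of the corollary makes an unconditional statement conditional.

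The paper's actual proof avoids any Thom presentation of $\ku$: it combines the unconditional computation $\THH(\ku/T(2)) \simeq \ku[\Omega S^9]$ from \cref{thh-calculations}(a) (at $p=2$, $n=2$, proved by the homology and Adams spectral sequence arguments) with \cref{t2-thh} ($\THH(T(2)/T_2(2)) \simeq T(2)[S^4]$, $\THH(T(2)/T_4(2)) \simeq T(2)[J_3(S^2)]$) and the looped $2$-local EHP fibrations $\Omega S^4 \to \Omega^2 S^5 \to \Omega^2 S^9$ and $\Omega J_3(S^2) \to \Omega^2 S^3 \to \Omega^2 S^9$, base-changing relative $\THH$ along the tower $T_4(2) \to T_2(2) \to T(2) \to \ku$; the identification of the comparison maps with chains on the Hopf maps then falls out as in your last paragraph. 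Your fallback (run the Bökstedt/Adams template of \cref{thh-calculations} directly over $T_2(2)$ and $T_4(2)$) is not as routine as you suggest and is not what the paper does: unlike the cases treated in \cref{homology-thh-bpn}, the maps $\H_\ast(T_2(2);\FF_2) \to \H_\ast(\ku;\FF_2)$ and $\H_\ast(T_4(2);\FF_2) \to \H_\ast(\ku;\FF_2)$ are not inclusions of polynomial subalgebras (for instance $\H_2(T_2(2);\FF_2)$ is two-dimensional while $\H_2(\ku;\FF_2)$ is one-dimensional, and $T_4(2)$ has a Laurent generator in degree $0$), so the Bökstedt $E_2$-pages involve nontrivial Tor terms and the flatness hypotheses behind the template fail. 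If you want an unconditional argument, you should leverage the already-established $\THH(\ku/T(2))$ as the paper does rather than recompute from scratch or invoke the conjectural Thom structure.
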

\begin{proof}
Using \cref{t2-thh}, this follows from \cref{thh-calculations}(a) (more precisely, the version with $p=2$ and $n=2$ for $\THH(\BP{1}/T(2)) \simeq \ku[\Omega S^9]$), and the fiber sequences of $\E{1}$-spaces
\begin{align*}
    \Omega S^4 \simeq \Omega (\Sp(2)/\Spin(4)) & \to \Omega^2 S^5 \to \Omega^2 S^9, \\
    \Omega J_3(S^2) \simeq \Omega (\Sp(2)/\U(2)) & \to \Omega^2 S^3 \to \Omega^2 S^9
\end{align*}
obtained by looping the $2$-local EHP fiber sequences for $S^4$ and $S^2$. The identification of the maps $\THH(\ku/T_4(2)) \to \THH(\ku/T_2(2))$ and $\THH(\ku/T_2(2)) \to \THH(\ku/T(2))$ is an immediate consequence.
\end{proof}
\begin{remark}
Recall from \cref{thh-calculations}(a) that the generator $\theta_2\in \pi_8 \THH(\ku/T(2))$ can be understood as $\sigma^2(v_2)$ (up to decomposables). Taking $\THH$ relative to the Thom spectrum $T_2(2)$ over $\Omega\Spin(4)$ can be regarded as extracting a square root of $\theta_2\in \pi_8 \THH(\ku/T(2))$. Similarly, taking $\THH$ relative to the Thom spectrum $T_4(2)$ over $\Omega\U(2)$ can be regarded as extracting a fourth root of $\theta_2\in \pi_8 \THH(\ku/T(2))$; hence the subscript $4$.
(Roughly, the generator of $\pi_4 \THH(\ku/T_2(2))$ can be thought of as $\sigma^2(v_1)$; and the generator of $\pi_2 \THH(\ku/T_4(2))$ can be thought of as $\sigma^2(2)$.)
In particular, one should regard $T_4(2) = (\Omega \U(2))^\mu$ as the appropriate analogue of $J(p)$ at height $1$ and $p=2$.
\end{remark}
\begin{remark}
\cref{thh-rel-t42} suggests that $\ku^\wedge_2$ is equivalent to the Thom spectrum of an $\E{1}$-map $\Omega^2 S^3 \to \BGL_1(T_4(2))$. This could also be rephrased in a manner similar to the results of \cite{bpn-thom}: assuming \cite[Conjectures D and E]{bpn-thom}, \cite[Corollary B]{bpn-thom} says that $\ku^\wedge_2$ is the Thom spectrum of a map $\Omega^2 S^9 \to \BGL_1(T(2))$. It follows from \cref{t2-thh} that $T(2) \simeq \colim_{\Omega J_3(S^2)} T_4(2)$, so that \cite[Corollary B]{bpn-thom} implies
$$\ku^\wedge_2 \simeq \colim_{\Omega^2 S^9} T(2) \simeq \colim_{\Omega^2 S^9} \colim_{\Omega J_3(S^2)} T_4(2) \simeq \colim_{\Omega^2 S^3} T_4(2),$$
where the final equivalence comes from the $\E{1}$-equivalence $\colim_{\Omega^2 S^9} \Omega J_3(S^2) \simeq \Omega^2 S^3$ arising from the EHP sequence.
\end{remark}
This leads to the following, which we only state for $T(n)$; there is an analogue for $X(p^n)$, too.
\begin{conjecture}\label{jpn-conjecture}
Fix a prime $p$ and $n\geq 0$. For each $0\leq j\leq n$, there are $\Efr{2}$-rings $T_{p^j}(n)$ equipped with $\Efr{2}$-maps 
$$T_{p^n}(n) \to \cdots \to T_{p^j}(n) \to T_{p^{j-1}}(n) \to \cdots \to T_0(n) = T(n)$$
such that there are $p$-complete equivalences
\begin{align*}
    \THH(T(n)/T_{p^j}(n)) & \simeq \BP{n-1}[J_{p^j-1}(S^{2p^{n-j}})], \\
    \THH(\BP{n-1}/T_{p^j}(n)) & \simeq \BP{n-1}[\Omega S^{2p^{n-j}+1}].
\end{align*}
The map $\THH(\BP{n-1}/T_{p^j}(n)) \to \THH(\BP{n-1}/T_{p^{j-1}}(n))$ induced by the $\Efr{2}$-map $T_{p^j}(n) \to T_{p^{j-1}}(n)$ is given by $\BP{n-1}$-chains on the Hopf map $\Omega S^{2p^{n-j}+1} \to \Omega S^{2p^{n-j+1}+1}$. In other words, if $\theta_n^{1/p^j}\in \pi_{2p^{n-j}} \THH(\BP{n-1}/T_{p^j}(n))$ denotes the generator (roughly, thought of as $\sigma^2(v_{n-j})$), then
$$\pi_{2p^{n-j}} \THH(\BP{n-1}/T_{p^j}(n)) \ni \theta_n^{1/p^j} \mapsto (\theta_n^{1/p^{j-1}})^p\in \pi_{2p^{n-j}} \THH(\BP{n-1}/T_{p^{j-1}}(n)).$$
\end{conjecture}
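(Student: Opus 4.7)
The plan is to construct the $\Efr{2}$-rings $T_{p^j}(n)$ as Thom spectra over loop spaces of suitable closed subgroups, directly generalizing the construction of $T_2(2)$ and $T_4(2)$ in \cref{higher-jp}. The $\THH$-calculations will then follow from \cite[Proposition 2.1.6]{bpn-thom} combined with the Thom-spectrum formula for $\THH$ from \cite{thh-thom}.

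At $p=2$, recall from \cref{t2-thom} that $T(n)$ is the Thom spectrum of an $\Efr{2}$-map $\Omega\Sp(n) \to \BU$. Concretely, I would seek, for each $0\leq j\leq n$, a nested chain of closed connected subgroups $H_n \subset \cdots \subset H_1 \subset H_0 = \Sp(n)$ such that $\Sp(n)/H_j \simeq J_{2^j-1}(S^{2^{n+1-j}})$, and define $T_{2^j}(n)$ as the Thom spectrum of the composite $\Efr{2}$-map $\Omega H_j \hookrightarrow \Omega\Sp(n) \to \BU$; for $n=2$ these are $H_1 = \Spin(4)$ and $H_2 = \U(2)$ by \cref{sp2-quotients}. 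At odd primes one would work inside $\Omega\SU(p^n)$ and extract $T_{p^j}(n)$ as a Quillen-idempotent summand of an analogous Thom spectrum. Applying \cite[Proposition 2.1.6]{bpn-thom} to the loop-fiber sequence $\Omega H_j \to \Omega\Sp(n) \to \Omega J_{2^j-1}(S^{2^{n+1-j}})$ exhibits $T(n)$ as the Thom spectrum of an $\E{1}$-map $\Omega J_{2^j-1}(S^{2^{n+1-j}}) \to \BGL_1(T_{2^j}(n))$, so the Thom-spectrum formula from \cite{thh-thom} yields $\THH(T(n)/T_{2^j}(n)) \simeq T(n)[J_{2^j-1}(S^{2^{n+1-j}})]$, matching \cref{t2-thh}. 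For the second equivalence, use the description of $\BP{n-1}$ as a Thom spectrum of an $\E{1}$-map $\Omega^2 S^{2p^n+1} \to \BGL_1(T(n))$ (from \cite[Corollary B]{bpn-thom}), compose with the once-looped $p^j$-th James-Hopf map $\Omega H_{p^j} \colon \Omega^2 S^{2p^{n-j}+1} \to \Omega^2 S^{2p^n+1}$, and apply \cite[Proposition 2.1.6]{bpn-thom} to the looped James-Hopf fibration $\Omega J_{p^j-1}(S^{2p^{n-j}}) \to \Omega^2 S^{2p^{n-j}+1} \to \Omega^2 S^{2p^n+1}$ to realize $\BP{n-1}$ as a Thom spectrum of an $\E{1}$-map $\Omega^2 S^{2p^{n-j}+1} \to \BGL_1(T_{p^j}(n))$; the $\THH$-formula then gives $\THH(\BP{n-1}/T_{p^j}(n)) \simeq \BP{n-1}[\Omega S^{2p^{n-j}+1}]$. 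The Hopf-map identification of the induced maps between these $\THH$ spectra is automatic from the factorization $H_{p^j} = H_{p^{j-1}} \circ H_p$.

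The hardest step is exhibiting the subgroup chain $H_n \subset \cdots \subset H_0 = \Sp(n)$ (and its odd-primary analogue). For $p=2$, $n=2$ this is supplied by low-rank Lie-group coincidences ($\Sp(2) \cong \Spin(5)$, $\Sp(2)/\U(2) \simeq \Gr_2^+(\RR^5) \simeq J_3(S^2)$), but for general $n$ there is no immediate candidate; iterated quaternionic or symplectic flag varieties are a natural guess, but verifying the requisite coset identifications $\Sp(n)/H_j \simeq J_{2^j-1}(S^{2^{n+1-j}})$ will be nontrivial. At odd primes the Lie-theoretic route is even more delicate, and one may instead have to build $T_{p^j}(n)$ by directly lifting the once-looped James-Hopf map $\Omega H_{p^j}$ to a framed $\Efr{2}$-map of $\E{2}$-spaces. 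This last approach, however, runs into essentially the same operadic obstruction as \cref{tn-e2}, so I expect \cref{jpn-conjecture} to remain conjectural until (a version of) \cref{tn-e2} is settled.
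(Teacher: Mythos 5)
First, a calibration point: the paper does not prove this statement --- it is \cref{jpn-conjecture}, and the only support the paper supplies is the height $\leq 2$, $p=2$ construction (\cref{higher-jp}, \cref{sp2-quotients}, \cref{t2-thh}, \cref{thh-rel-t42}) together with a guessed candidate for $T_8(3)$. So your text is a strategy sketch rather than a proof, and your closing concession that the statement should remain conjectural is consistent with the paper. (Incidentally, your target $\THH(T(n)/T_{p^j}(n))\simeq T(n)[J_{p^j-1}(S^{2p^{n-j}})]$ is what \cref{t2-thh} suggests is intended, even though the conjecture as printed has $\BP{n-1}$ on the right-hand side.)

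The concrete gap is your starting premise at $p=2$. \cref{t2-thom} identifies only $T(1)$ and $T(2)$ as Thom spectra over $\Omega\Sp(1)$ and $\Omega\Sp(2)$; it is false that $T(n)$ is a Thom spectrum over $\Omega\Sp(n)$ for $n\geq 3$: $\H_\ast(\Omega\Sp(n);\FF_2)$ is polynomial on classes in degrees $4i-2$ for $1\leq i\leq n$, while $\H_\ast(T(n);\FF_2)\cong \FF_2[\zeta_1^2,\dots,\zeta_n^2]$ has generators in degrees $2^{i+1}-2$, and these diverge already at $n=3$ (degree $10$ versus $14$). Hence there is no chain of closed subgroups $H_j\subseteq \Sp(n)$ to look for with $\Sp(n)/H_j\simeq J_{2^j-1}(S^{2^{n+1-j}})$ --- e.g.\ for $n=3$, $j=1$ you would need $\Sp(3)/H_1\simeq S^8$, and no closed subgroup of $\Sp(3)$ has an even-dimensional sphere as quotient --- and indeed the paper's own candidate for $T_8(3)$ is built over $\Omega(\Sp(2)\cdot\U(1))$ mapped into $\Omega\SU(6)$, not from a subgroup chain inside $\Sp(3)$. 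A secondary issue: your derivation of $\THH(\BP{n-1}/T_{p^j}(n))$ invokes the presentation of $\BP{n-1}$ as a Thom spectrum over $\Omega^2 S^{2p^n+1}$ relative to $T(n)$ from \cite[Corollary B]{bpn-thom}, which for $n\geq 2$ is conditional on \cite[Conjectures D and E]{bpn-thom}; the route the paper actually uses in the known cases (\cref{thh-rel-t42}) avoids this by combining transitivity of relative $\THH$ with \cref{thh-calculations}(a) and the looped EHP fibration, so that, granting the construction of the $T_{p^j}(n)$, the two $\THH$ equivalences and the Hopf-map identification follow unconditionally. You should restructure the second half of your argument along those lines, and accept that the genuinely open content --- constructing the $\Efr{2}$-rings $T_{p^j}(n)$ for $n\geq 3$ and for odd $p$ --- is not addressed by the Lie-theoretic ansatz as stated.
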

In particular, \cref{jpn-conjecture} says that for the putative $\Efr{2}$-ring $T_{p^n}(n)$, there is an equivalence $\THH(\BP{n-1}/T_{p^n}(n)) \simeq \BP{n-1}[\sigma]$ with $|\sigma|=2$.
\begin{example}
There is an inclusion $\Spin^c(5) \cong \Sp(2) \cdot \U(1) \subseteq \Sp(3)$ (whose quotient is $\CP^5$), so that composition with the inclusion $\Sp(3)\subseteq \SU(6)$ defines an inclusion $\Sp(2)\cdot \U(1) \subseteq \SU(6)$. In particular, we obtain an $\E{2}$-map $\Omega(\Sp(2)\cdot \U(1)) \to \Omega\SU(6)$. The Thom spectrum of the resulting composite $\E{2}$-map
$$\Omega(\Sp(2)\cdot \U(1)) \to \Omega\SU(6) \to \Omega\SU \simeq \BU$$
defines an $\Efr{2}$-ring, which we expect can be identified with $T_8(3)$ for $p=2$.
\end{example}
\newpage

\section{The topological Sen operator}\label{applications-thh}
\subsection{Constructing the topological Sen operator}

There is a much simpler description of the descent spectral sequence of \cref{descent-sseq}, following the perspective of \cref{bockstein-serre} that \cref{thh-calculations}(b) is essentially a calculation of a Serre spectral sequence. We will continue to fix $\E{3}$-forms of the truncated Brown-Peterson spectra $\BP{n-1}$ and $\BP{n}$.
\begin{notation}
Let $R$ be an $\Eoo$-$\Z_p$-algebra. We will write $\epsilon^R$ to denote $R[\BSU(p-1)]$ and $\epsilon^R_\ast$ to denote $\pi_\ast \epsilon^R$. (The notation is meant to indicate that $\epsilon$ only plays a ``small'' role in the below discussion.)
\end{notation}
\begin{definition}[Spectral Gysin sequence]
Suppose $S^{n-1} \to E \to B$ is a fibration. Since $E \simeq \hocolim_B S^{n-1}$ in pointed spaces, we have $E_+ \simeq \hocolim_B S^{n-1}_+$. There is a cofiber sequence $S^{n-1}_+ \to S^0 \to S^n$, so we obtain a cofiber sequence
$$E_+ \to \hocolim_B (S^0) \simeq B_+ \to \hocolim_B (S^n) \simeq \Sigma^n(B_+).$$
If $R$ is an $\E{1}$-ring, we get a cofiber sequence of left $R$-modules:
$$R[E] \to R[B] \to \Sigma^n R[B].$$
\end{definition}
\begin{construction}[Topological Sen operator]\label{construction-sen}
Let $\cC$ be an $X(n)$-linear $\infty$-category. There is an $S^1$-equivariant equivalence
\begin{align*}
    \THH(\cC/X(n-1)) & \simeq \THH(\cC) \otimes_{\THH(X(n-1))} X(n-1) \\
    & \simeq \THH(\cC) \otimes_{X(n) \otimes_{X(n-1)} \THH(X(n-1))} X(n),
\end{align*}
a tautological $S^1$-equivariant equivalence
$$\THH(\cC/X(n)) \simeq \THH(\cC) \otimes_{\THH(X(n))} X(n).$$
Since $\THH(X(n-1))\simeq X(n-1)[\SU(n-1)]$, there is an equivalence $X(n) \otimes_{X(n-1)} \THH(X(n-1)) \simeq X(n)[\SU(n-1)]$. Note that $X(n) \otimes_{X(n-1)} \THH(X(n-1))$ admits the structure of an $\E{1}$-ring, and that the $\E{1}$-algebra map $\THH(X(n-1)) \to \THH(X(n))$ induces an $\E{1}$-algebra map $X(n) \otimes_{X(n-1)} \THH(X(n-1)) \to \THH(X(n)) \simeq X(n)[\SU(n)]$. The fiber sequence
$$S^{2n-1} \to \BSU(n-1) \to \BSU(n)$$
implies:
\end{construction}
\begin{theorem}\label{topological-sen}
Let $\cC$ be a left $X(n)$-linear $\infty$-category. Then there is a cofiber sequence
\begin{equation}\label{sen-thh-C}
    \THH(\cC/X(n-1)) \xar{\iota} \THH(\cC/X(n)) \xar{\Theta_\cC} \Sigma^{2n} \THH(\cC/X(n)),
\end{equation}
where the map $\iota$ is $S^1$-equivariant, and the cofiber of $\iota$ is (at least nonequivariantly) identified with $\Sigma^{2n} \THH(\cC/X(n))$. We will call the map $\Theta_\cC: \Sigma^{-2n} \THH(\cC/X(n)) \to \THH(\cC/X(n))$ the \textit{topological Sen operator}.
\end{theorem}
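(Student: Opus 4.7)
The plan is to rewrite both sides using Construction~\ref{construction-sen} and then derive the cofiber sequence from the cofiber of $X(n)[S^{2n-1}] \to X(n)$ (arising from $\SU(n)/\SU(n-1) \simeq S^{2n-1}$), tensored up with $\THH(\cC)$ over $X(n)[\SU(n)]$.

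Construction~\ref{construction-sen} supplies $S^1$-equivariant equivalences
\[
\THH(\cC/X(n-1)) \simeq \THH(\cC) \otimes_{X(n)[\SU(n-1)]} X(n), \qquad \THH(\cC/X(n)) \simeq \THH(\cC) \otimes_{X(n)[\SU(n)]} X(n),
\]
and under these identifications $\iota$ is the natural base-change map along the $E_1$-algebra map $X(n)[\SU(n-1)] \to X(n)[\SU(n)]$ induced by the inclusion $\SU(n-1) \hookrightarrow \SU(n)$. Rewriting the left-hand side by associativity of the tensor product,
\[
\THH(\cC/X(n-1)) \simeq \THH(\cC) \otimes_{X(n)[\SU(n)]} \bigl(X(n)[\SU(n)] \otimes_{X(n)[\SU(n-1)]} X(n)\bigr) \simeq \THH(\cC) \otimes_{X(n)[\SU(n)]} X(n)[S^{2n-1}],
\]
where in the last step I use $\SU(n)/\SU(n-1) \simeq S^{2n-1}$. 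Thus $\iota$ is induced by the $X(n)[\SU(n)]$-linear collapse map $X(n)[S^{2n-1}] \to X(n)$ coming from $S^{2n-1} \to \ast$.

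The spectral Gysin sequence applied to the (trivial) fibration $S^{2n-1} \to S^{2n-1} \to \ast$ --- equivalently, the stable splitting $S^{2n-1}_+ \simeq S^0 \vee S^{2n-1}$ --- produces a cofiber sequence of $X(n)$-modules
\[
X(n)[S^{2n-1}] \to X(n) \to \Sigma^{2n} X(n).
\]
Tensoring with $\THH(\cC)$ over $X(n)[\SU(n)]$ yields the desired cofiber sequence
\[
\THH(\cC/X(n-1)) \xar{\iota} \THH(\cC/X(n)) \to \Sigma^{2n} \THH(\cC/X(n)),
\]
and $\iota$ inherits $S^1$-equivariance from the equivalences above; one then defines $\Theta_\cC$ as the resulting boundary map.

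The delicate point is the $S^1$-equivariant structure on the cofiber. The stable splitting $S^{2n-1}_+ \simeq S^0 \vee S^{2n-1}$ is not $\SU(n)$-equivariant for the translation action on $S^{2n-1} \cong \SU(n)/\SU(n-1)$, so inside the category of $X(n)[\SU(n)]$-modules the cofiber of $X(n)[S^{2n-1}] \to X(n)$ a priori carries an $\SU(n)$-action differing from the augmentation action on $\Sigma^{2n} X(n)$. After tensoring with $\THH(\cC)$ over $X(n)[\SU(n)]$, this means the identification of $\cofib(\iota)$ with $\Sigma^{2n} \THH(\cC/X(n))$ is available as spectra (and as $X(n)$-modules) but need not be compatible with the $S^1$-action; this is precisely the content of the parenthetical ``(at least nonequivariantly)'' in the statement, and it is the only nontrivial piece of the argument.
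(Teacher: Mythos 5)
Your construction of the cofiber sequence follows the paper's route: the paper likewise uses the identifications of \cref{construction-sen} and then the spectral Gysin sequence for $S^{2n-1} \to \BSU(n-1) \to \BSU(n)$, which is exactly the Borel/parametrized form of your cofiber sequence $X(n)[S^{2n-1}] \to X(n) \to \Sigma^{2n} X(n)$ of left $X(n)[\SU(n)]$-modules, so up to repackaging (modules over $X(n)[\SU(n)]$ versus local systems over $\BSU(n)$) the skeleton agrees.

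The problem is your last paragraph, i.e.\ the identification of $\cofib(\iota)$. The functor $\THH(\cC) \otimes_{X(n)[\SU(n)]} (-)$ consumes the $\SU(n)$-module structure of its argument, so knowing that $\cofib\bigl(X(n)[S^{2n-1}] \to X(n)\bigr) \simeq \Sigma^{2n} X(n)$ merely as spectra or as $X(n)$-modules buys you nothing after the relative tensor product. As a left $X(n)[\SU(n)]$-module the cofiber is $X(n) \otimes S^{\cc^n}$, with $\SU(n)$ acting through the one-point compactification of $\cc^n$; equivalently it is the rank-one module twisted by the composite $\BSU(n) \to \BU \to \B\GL_1(S) \to \B\GL_1(X(n))$. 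Hence $\cofib(\iota)$ is a priori the correspondingly twisted form of $\Sigma^{2n}\THH(\cC/X(n))$, \emph{even as a plain spectrum}; your non-equivariant splitting of $S^{2n-1}_+$ does not untwist it, so your claim that the identification ``is available as spectra (and as $X(n)$-modules)'' does not follow from what you wrote. The untwisting is precisely what the paper's Gysin-sequence step $\hocolim_{\BSU(n)}(S^{2n}) \simeq \Sigma^{2n}\BSU(n)_+$ (after smashing with $X(n)$, i.e.\ an $X(n)$-Thom class for the tautological bundle on $\BSU(n)$) is supplying, and it is the one genuinely nontrivial input that your argument omits. Relatedly, you misread the parenthetical ``(at least nonequivariantly)'': in the paper it refers to the $S^1$-action on $\THH$ (compare \cref{gauss-manin-sen}, where the equivariant refinement replaces $\Sigma^2$ by the representation sphere $\Sigma^\lambda$), not to the failure of $\SU(n)$-equivariance of the fiberwise splitting; so as written your proof leaves exactly the nonequivariant identification asserted by the theorem unjustified.
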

\begin{remark}\label{gauss-manin-sen}
A simpler analogue of \cref{topological-sen} can be described as follows. Let $A$ be an $\Efr{2}$-ring, and let $A[t]$ be the flat polynomial ring over $A$ on a generator in degree $0$. Suppose $\cC$ is an $A[t]$-linear $\infty$-category. The nonequivariant equivalence $\HH(A[t]/A) \simeq A[t][S^1]$ defines a cofiber sequence
\begin{equation}\label{hh-gm}
    \HH(\cC/A) \to \HH(\cC/A[t]) \xar{\nabla} \Sigma^2 \HH(\cC/A[t])
\end{equation}
analogous to \cref{topological-sen}, which exhibits $\nabla: \HH(\cC/A[t]) \to \Sigma^2 \HH(\cC/A[t])$ as a ``Gauss-Manin connection''. This cofiber sequence is often quite useful; for example, if we regard $\Z_p$ as a $S\pw{t}$-algebra by the $\Eoo$-map $S\pw{t} \to \Z_p$ sending $t\mapsto p$, we have $\pi_\ast \THH(\Z_p/S\pw{t}) \simeq \Z_p[y]$ with $|y|=2$ (more precisely, $y = \sigma^2(t-p)$); see \cite{krause-nikolaus-dvr}. It is not difficult to show that the map $\nabla: \THH(\Z_p/S\pw{t}) \to \Sigma^2 \THH(\Z_p/S\pw{t})$ sends $y^n\mapsto ny^{n-1}$, which implies B\"okstedt's calculation of $\pi_\ast \THH(\Z_p)$.

Just as in \cref{topological-sen}, the map $\HH(\cC/A) \to \HH(\cC/A[t])$ in \cref{hh-gm} is $S^1$-equivariant, but we can only nonequivariantly identify its cofiber with $\Sigma^2 \HH(\cC/A[t])$. To identify the cofiber equivariantly, observe that if $\lambda$ denotes the rotation representation of $S^1$, then $\HH(A/A[t]) \simeq A[\B^\lambda \Z_{\geq 0}]$. Here, $\B^\lambda \Z_{\geq 0}$ is the $\lambda$-delooping of $\Z_{\geq 0}$. This implies that there is an \textit{equivariant} cofiber sequence
\begin{equation}\label{equiv-hh-gm}
    \HH(\cC/A) \to \HH(\cC/A[t]) \xar{\nabla} \Sigma^\lambda \HH(\cC/A[t]).
\end{equation}
See \cref{alt-construction-tcp-jp} for some further discussion.
\end{remark}
\begin{remark}\label{sen-bpn}
At the level of homotopy, the map $\Theta$ in \cref{sen-thh-C} for $\cC = \LMod_{\BP{n-1}}$ can be identified using \cref{thh-calculations}. Namely, recall that $\pi_\ast \THH(\BP{n-1}/X(p^n)) \cong \BP{n-1}[\B\Delta_n]_\ast[\theta_n]$ by \cref{thh-calculations}(a); it then follows from \cref{thh-calculations}(b) that $\Theta$ must send
$$\Theta: \theta_n^j \mapsto jp \theta_n^{j-1}.$$
Therefore, we may informally write $\Theta = p\partial_{\theta_n}$.\footnote{This action of $\Theta$ on $\theta_n = \sigma^2(v_n)$ is related to the observation from \cite[Lemma 3.2.8(d)]{lee-thh} that there is a choice of $v_n$ such that the right unit $\eta_R: \BPP_\ast \to \BPP_\ast \BPP \cong \BPP_\ast[t_1, t_2, \cdots]$ satisfies $d(v_n) = \eta_R(v_n) - v_n \equiv pt_n \pmod{t_1, \cdots,t_{n-1}}$.}
From the point of view of \cref{bockstein-serre}, the map $\Theta$ can be interpreted as the $d^{2p^n}$-differential in the Serre spectral sequence computing the $\BP{n-1}$-homology of the total space of the fibration \cref{b-fib-phin}. Determining the action of $\Theta$ on $\THH(\BP{n-1}/X(p^j))$ for $j\leq n-1$ can therefore be viewed as an analogue of determining the differentials in the Serre spectral sequence/Gysin sequence of a putative analogue of the Cohen-Moore-Neisendorfer fibration \cref{b-fib-phin} (where $p$ is replaced by $v_{n-j}$).

One can make some qualitative observations about the action of $\Theta$ on $\THH(\BP{n-1}/X(p^j))$ for $j\leq n-1$. Indeed, recall from \cref{modulo-moore} that there is an isomorphism
$$\pi_\ast \THH(\BP{n-1}/X(p^j))/v_{[0,n-j)} \cong \BP{n-1}[\B\Delta_j]_\ast[\theta_n]/v_{[0,n-j)} \otimes_{\FF_p} \Lambda_{\FF_p}(\lambda_{j+1}, \cdots, \lambda_n).$$
An easy calculation shows that there is an isomorphism
$$\pi_\ast \THH(X(p^n)/X(p^j)) \cong X(p^n)\left[\prod_{i=j+1}^n \ol{\Delta}_i\right]_\ast \otimes_{\Z_{(p)}} \Z_{(p)}(\lambda_{j+1}, \cdots, \lambda_n).$$
Therefore, the calculation of $\pi_\ast \THH(\BP{n-1}/X(p^j))/v_{[0,n-j)}$ implies that the image of a class $y\in \pi_\ast \THH(\BP{n-1}/X(p^j))$ under $\Theta: \THH(\BP{n-1}/X(p^j)) \to \Sigma^{2p^j} \THH(\BP{n-1}/X(p^j))$ lives in the ideal generated by $v_{[0,n-j+1)} = (p,\cdots,v_{n-j})$.
\end{remark}

\begin{remark}
The fact that the cofiber of the $S^1$-equivariant map $\iota: \THH(\BP{n-1}/X(p^n-1)) \to \THH(\BP{n-1}/X(p^n))$ is (at least nonequivariantly) identified with $\Sigma^{2p^n} \THH(\BP{n-1}/X(p^n))$ makes it more difficult to determine $\TP(\BP{n-1}/X(p^n))$ (even modulo $v_{n-1}$) from our calculation of $\pi_\ast \TP(\BP{n-1}/X(p^n))$ in \cref{thh-calculations} and the preceding description of $\Theta$ as an endomorphism of $\THH(\BP{n-1}/X(p^n))$. One fundamental question is therefore to describe the $S^1$-action on $\cofib(\iota)$.
This is already complicated modulo $p$ when $n=1$, and a description of $\TP(\Z_p/X(p-1)) \simeq \TP(\Z_p)[\BSU(p-1)]$ from $\TP(\Z_p/X(p))$ was essentially done in \cite[Conjecture 4.3]{bokstedt-madsen} and \cite[Theorem 7.4]{tsalidis}. Recall from \cref{thh-calculations}(a) that there is an isomorphism 
$$\pi_\ast \TP(\Z_p/X(p))/p \cong \FF_p[v_1, \hbar^{\pm 1}] \otimes_{\FF_p} \epsilon^{\FF_p}_\ast \cong \pi_\ast k(1)^{tS^1}[\BSU(p-1)].$$
Then, the map $\pi_\ast \TP(\Z_p/X(p))/p \to \pi_{\ast-2p} \TP(\Z_p/X(p))/p$ is given by
$$\hbar^{p^k} \mapsto \hbar^{p^k(p+1)} v_1^{\frac{p^{k+1}-p}{p-1}}, \ v_1^k\mapsto 0.$$
This is a direct consequence of \cite[Theorem 7.4]{tsalidis}, once one notes that the the formula $t^{p^k + \phi(k+1)} f^{\phi(k)}$ from \textit{loc. cit.} becomes precisely $\hbar^{p^k(p+1)} v_1^{\frac{p^{k+1}-p}{p-1}}$, via the translation in notation given by
$$t\leadsto \hbar, \ f\leadsto \sigma^2(v_1), \ tf\leadsto v_1, \ \phi(k) = \frac{p^{k+1}-p}{p-1} = v_p((p^k)!^p).$$
One could also prove this using an argument similar to \cite[Theorem 6.5.1]{even-filtr}.

Moreover, the image of $\hbar$ under the boundary map $\pi_{-2} \TP(\Z_p/X(p))/p \to \pi_{2p-3} \TP(\Z_p/X(p-1))/p$ is the class $\alpha_1\in \pi_{2p-3} \TP(\Z_p)/p$; note that since $\hbar$ lives in $\pi_{-2} \TP(\Z_p/X(p))$, the class $\alpha_1$ in fact extends to an element of $\pi_{2p-3} \TP(\Z_p/X(p-1))$. The problem of calculating $\pi_\ast \TP(\Z_p)$ from $\TP(\Z_p/X(p))$ is very similar to the problem of $\pi_\ast \TP(\Z_p)$ from $\TP(\Z_p/S\pw{t})$, discussed in \cite{liu-wang} (see \cref{gauss-manin-sen}).
\end{remark}
If we assume \cref{tn-e2}, then \cref{topological-sen} can be refined: namely, if $\cC$ is a left $T(n)$-linear $\infty$-category, then there is a cofiber sequence
\begin{equation}\label{tn-sen}
    \THH(\cC/T(n-1)) \xar{\iota} \THH(\cC/T(n)) \xar{\Theta_\cC} \Sigma^{2p^n} \THH(\cC/T(n)).
\end{equation}
\begin{remark}
Suppose $n=1$ and $\cC = \Mod_{\Z_p}$ for $p$ odd. Then there is a map $\TP(\Z_p) \to \TP(\Z_p/T(1))$, and a trace map $K(\Z_p) \to \TP(\Z_p)$. Let $j = \tau_{\geq 0} \Lone S$; upon $p$-adic completion, there is an equivalence (see \cite[Theorem 9.17]{bokstedt-madsen})
$$K(\Z_p)^\wedge_p \simeq j \vee \Sigma j \vee \Sigma^3 \ku.$$
The summand $j$ is the unit component, i.e., there is an $\Eoo$-ring map $j \to K(\Z_p)^\wedge_p$. It follows that after $p$-completion, there is a ring map $j \to \TP(\Z_p)$. Assuming the equivalence $\TP(\Z_p/T(1)) \simeq \BP{1}^{tS^1}$ of \cref{conjecture-thh}, the following diagram commutes:
$$\xymatrix{
j \ar[r] \ar[d]_-{\mathrm{unit}} & \BP{1} \ar[d]^-{\mathrm{unit}} \\
\TP(\Z_p) \ar[r] & \TP(\Z_p/T(1)).
}$$
Let $\ell$ be a topological generator of $\Z_p^\times$, and let $\psi^\ell: \BP{1} \to \Sigma^{2p-2} \BP{1}$ be the associated Adams operation. Then, the fiber of $\psi^\ell-1$ is $j$. Based on the above commutative diagram, one expects that under the equivalence $\TP(\Z_p/T(1)) \simeq \BP{1}^{tS^1}$ of \cref{conjecture-thh}, the map $\psi^\ell - 1$ is closely related to $\Theta_{\Z_p}^{tS^1}$. Note, for example, that if we take $\ell = p+1$, the map $\psi^\ell-1$ sends $v_1^j \mapsto p^{v_p(j)+1} v_1^j$ up to $p$-adic units; this should be compared to the fact that $\Theta_{\Z_p}$ sends $\theta_1^j \mapsto jp\theta_1^{j-1}$ by \cref{sen-bpn}. This discussion, as well as the classical discussion in \cite{bokstedt-madsen}, suggests that $\TP(\Z_p)^\wedge_p \simeq (j^{tS^1})^\wedge_p$. 
In fact, something stronger is true: in forthcoming work \cite{calculating-THH-Z} with Arpon Raksit, we will show that $\THH(\Z_p) = \tau_{\geq 0}(j^{t\Cp})$ as cyclotomic $\Eoo$-rings.
\end{remark}
\begin{example}\label{sen-on-bp1}
Let $n=1$, and let $\cC = \Mod_{\BP{1}}$. Then \cref{topological-sen} gives a cofiber sequence
$$\THH(\BP{1}/X(p-1)) \to \THH(\BP{1}/X(p)) \xar{\Theta_{\BP{1}}} \Sigma^{2p} \THH(\BP{1}/X(p)).$$
Moreover, recall from \cref{thh-calculations}(b) that there is a $p$-complete equivalence
$$\THH(\BP{1}/X(p)) \simeq \BP{1}[\BSU(p-1)] \oplus \bigoplus_{j\geq 1} \Sigma^{2jp^2 - 1} \BP{1}[\BSU(p-1)]/pj.$$
Let $a_j$ denote the $\BP{1}$-module generator of the summand $\Sigma^{2jp^2 - 1} \BP{1}/pj$. Since $\THH(\BP{1}/X(p-1))\simeq \THH(\BP{1})[\BSU(p-1)]$, the calculations of \cite[Section 6]{thh-bp1} can be rephrased as follows. For $0\leq k \leq v_p(j)$, $\Theta_{\BP{1}}$ is given on homotopy by
$$\Theta_{\BP{1}}: p^k a_j \mapsto \left(\frac{j}{p^k}-1\right) a_{j-p^k} v_1^{p\frac{p^{k+1}-1}{p-1}},$$
up to $p$-adic units. A different perspective on this computation is given in \cite{lee-thh}.
\end{example}
\begin{variant}\label{topological-sen-jpn}
One can prove a variant of \cref{topological-sen} by replacing $X(p)$ with $J(p)$.
If $\cC$ is a left $J(p)$-linear $\infty$-category, then \cref{t1-jp} produces a cofiber sequence:
\begin{equation}\label{jp-sen}
    \THH(\cC) \xar{\iota} \THH(\cC/J(p)) \xar{\Theta'} \Sigma^{2} \THH(\cC/J(p)).
\end{equation}
Here, the map $\iota$ is $S^1$-equivariant, and $\cofib(\iota)$ is (at least nonequivariantly) identified with $\Sigma^{2} \THH(\cC/J(p))$.
\cref{laurent-xp} shows that $\THH(\Z_p/J(p)) \simeq \Z_p[\Omega S^3]$. On homotopy, the map $\THH(\Z_p/J(p)) \to \Sigma^2 \THH(\Z_p/J(p))$ is given by the $d^2$-differential in the Serre spectral sequence for the fibration
$$S^1 \to \Omega S^3\pdb{3} \to \Omega S^3.$$
For example, under the isomorphism $\pi_\ast \THH(\Z_p/J(p)) \cong \Z_p[x]$ with $|x|=2$, the map $\Theta'$ in the cofiber sequence \cref{jp-sen} for $n=1$ sends $x^j\mapsto jx^{j-1}$.

Suppose $\cC$ is in fact a $\Z_p$-linear $\infty$-category. Base-changing \cref{jp-sen} along the map $\Z_p \to \FF_p$ and using \cref{mod-p-thh}, we obtain a cofiber sequence
\begin{equation}\label{mod-p-sen}
    \THH(\cC) \otimes_{\Z_p} \FF_p \xar{\iota} \THH(\cC\otimes_{\Z_p} \FF_p) \xar{\Theta'} \Sigma^{2} \THH(\cC\otimes_{\Z_p} \FF_p).
\end{equation}
Note that the map $\Theta': \THH(\FF_p) \to \Sigma^2 \THH(\FF_p)$ sends $\sigma^j\mapsto j\sigma^{j-1}$ on homotopy. It follows that upon composition with $\sigma: \Sigma^2 \THH(\cC\otimes_{\Z_p} \FF_p) \to \THH(\cC\otimes_{\Z_p} \FF_p)$, $\Theta'$ acts by multiplication by $j$ on the homotopy of the $j$th graded piece $\gr^j_\sigma \THH(\cC\otimes_{\Z_p} \FF_p)$ of the $\sigma$-adic filtration on $\THH(\cC\otimes_{\Z_p} \FF_p)$.
\end{variant}
\begin{remark}
Let $p=2$. Using the fiber sequence
$$S^3 \to \BU(1) \to \BU(2),$$
one can similarly show that if $T_4(2)$ denotes the $\Efr{2}$-ring from \cref{higher-jp} and $\cC$ is a left $T_4(2)$-linear $\infty$-category, there is a cofiber sequence
$$\THH(\cC/J(2)) \to \THH(\cC/T_4(2)) \to \Sigma^4 \THH(\cC/T_4(2)).$$
\end{remark}
%

\begin{remark}
Let $R$ be an animated $\Z_p$-algebra. Let $\hat{\prism}_{R}$ denote the Nygaard-completed prismatic cohomology of $R$, and $\cN^i \hat{\prism}_{R}$ denote the $i$th graded piece of the Nygaard filtration $\cN^{\geq \star}(\hat{\prism}_{R})$. Note that \cite[Remark 5.5.15]{apc} gives an isomorphism $\cN^i (\hat{\prism}_{R}\{i\}) \cong \cN^i \hat{\prism}_{R}$, where $\hat{\prism}_{R}\{i\}$ denotes the Breuil-Kisin twisted prismatic cohomology of $R$. Using the methods of \cite{bhatt-mathew-syntomic}, one can construct a cofiber sequence
\begin{equation}\label{bhatt-mathew-cofiber}
    (\cN^i \hat{\prism}_R)/p \to \F^\conj_i \dR_{(R/p)/\FF_p} \cong \cN^i \hat{\prism}_{R/p} \to \F^\conj_{i-1} \dR_{(R/p)/\FF_p}.
\end{equation}
As explained in \textit{loc. cit.}, the second map is closely related to the Sen operator.
Recall (see \cite[Example 6.4.17]{apc} and \cite{bms-ii}) that $\THH(R/p)$ admits a motivic filtration such that $\gr^i_\mot \THH(R/p) = \cN^i(\hat{\prism}_{R/p})[2i]$. Taking $\cC = \Mod_R$, \cref{mod-p-sen} says that there is a self-map $\Theta': \THH(R/p) \to \Sigma^2 \THH(R/p)$ whose fiber is $\THH(R)/p$. Presumably, the cofiber sequence \cref{mod-p-sen} can be shown to respect the motivic filtration, so taking graded pieces would recover the cofiber sequence \cref{bhatt-mathew-cofiber}. Given this discussion, it is natural to ask if $\THH(R/J(p))$ admits a motivic filtration such that \cref{jp-sen} is a cofiber sequence of motivically-filtered spectra. 
\end{remark}
\begin{recall}
Let $(\Z_p\pw{\ptl}, \ptl)$ denote the prism of \cite[Notation 3.8.9]{apc}, and if $R$ is a $p$-complete animated $\Z_p$-algebra, let $\ptl\Omega_R$ denote $\prism_{R/\Z_p\pw{\ptl}}$. In particular, $\ptl\Omega_R \simeq (q\Omega_R)^{h\FF_p^\times}$, via the $\FF_p^\times$-action on the prism $(\Z_p\pw{q-1}, [p]_q)$. Let $\diffr_R$ denote the diffracted Hodge complex of \cite[Construction 4.7.1]{apc}, so that $\diffr_R$ is isomorphic to $\ptl\Omega_R/\ptl$ by \cite[Remark 4.8.6]{apc}. Recall the cofiber sequence of \cite[Remark 5.5.8]{apc}:
$$\cN^i\hat{\prism}_R \to \F^\conj_i \diffr_R \xar{\Theta + i} \F^\conj_{i-1} \diffr_R.$$
The mod $p$ reduction of this fiber sequence produces \cref{bhatt-mathew-cofiber}.
\end{recall}
Since $\THH(R)$ admits a motivic filtration whose graded pieces are $\cN^i(\hat{\prism}_R)[2i]$, the cofiber sequence \cref{jp-sen} motivates the following conjecture (which essentially states that $\THH(R/J(p))$ is a sheared Rees construction on the conjugate filtration of $\diffr_R$):
\begin{conjecture}\label{rel-jp-diffracted}
Let $R$ be an animated $\Z_p$-algebra. Then there is a filtration $\F^\star_\mot \THH(R/J(p))$ on $\THH(R/J(p))$ such that:
\begin{itemize}
    \item $\gr^i_\mot \THH(R/J(p)) \simeq (\F^\conj_i \diffr_R)[2i]$; and
    \item the map $\Theta'_R: \THH(R/J(p)) \to \Sigma^2 \THH(R/J(p))$ respects the motivic filtration and induces the map $\Theta + i: \F^\conj_i \diffr_R \to \F^\conj_{i-1} \diffr_R$ on $\gr^i_\mot$; and
    \item $\gr^i_\mot (\THH(R/J(p))[x^{-1}]) \simeq \diffr_R[2i]$, such that the localization map $\THH(R/J(p)) \to \THH(R/J(p))[x^{-1}]$ induces the inclusion $(\F^\conj_i \diffr_R)[2i] \to \diffr_R[2i]$ on $\gr^i_\mot$.
\end{itemize}
\end{conjecture}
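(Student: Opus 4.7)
The plan is to define the motivic filtration on $\THH(R/J(p))$ via the even filtration of \cite{even-filtr}, verify the conjecture first on quasiregular semiperfectoid input (where $\THH(R/J(p))$ is even), and then extend to general animated $\Z_p$-algebras by left Kan extension. For $R$ quasiregular semiperfectoid, evenness of $\THH(R/J(p))$ follows from the base-change equivalence $\THH(R/J(p)) \simeq \THH(R) \otimes_{\THH(\Z_p)} \THH(\Z_p/J(p))$ combined with $\THH(\Z_p/J(p)) \simeq \Z_p[\Omega S^3]$ from \cref{laurent-xp}(a) and the evenness of $\THH(R)$ from BMS. The even filtration then produces a canonical complete descending filtration $\F^\star_\mot \THH(R/J(p))$ whose restriction to qrsp rings is simply the double-speed Postnikov filtration, and whose Kan extension to animated $\Z_p$-algebras is the candidate motivic filtration.

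The graded pieces would be computed inductively using the Sen cofiber sequence \cref{jp-sen}, which I would upgrade to a cofiber sequence of motivically-filtered spectra: $\iota: \THH(R) \to \THH(R/J(p))$ preserves filtration weight, while $\Theta'$ drops weight by one (i.e., $\Theta': \F^{\geq i}_\mot \to \Sigma^2 \F^{\geq i-1}_\mot$). Preservation of filtration by $\iota$ follows from its $S^1$-equivariance combined with the functoriality of the even filtration, while the weight-dropping behavior of $\Theta'$ reflects that the generator $x \in \pi_2 \THH(\Z_p/J(p))$ has weight one. On $\gr^i_\mot$, \cref{jp-sen} then yields a fiber sequence
$$\cN^i \hat{\prism}_R[2i] \to \gr^i_\mot \THH(R/J(p)) \to \gr^{i-1}_\mot \THH(R/J(p))[2],$$
which I would match, after removing the $[2i]$ shift, with the cofiber sequence
$$\cN^i \hat{\prism}_R \to \F^\conj_i \diffr_R \xrightarrow{\Theta + i} \F^\conj_{i-1} \diffr_R$$
of \cite[Remark 5.5.8]{apc}. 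Induction on $i$ (with base $\F^\conj_{-1} \diffr_R = 0$) then identifies $\gr^i_\mot \THH(R/J(p)) \simeq (\F^\conj_i \diffr_R)[2i]$, and $\gr^i_\mot \Theta'$ with $\Theta + i$. The localized clause follows because inverting $x$ corresponds to the colimit $\colim_i \F^\conj_i \diffr_R = \diffr_R$.

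The principal obstacle is pinning down the precise ``$+i$'' twist in $\gr^i_\mot \Theta'$, rather than some other operator differing from it by a correction term of lower conjugate weight: the induction above, together with the cofiber sequence alone, determines $\gr^i_\mot \Theta'$ only up to the choice of lift to a map $\F^\conj_i \diffr_R \to \F^\conj_{i-1} \diffr_R$ with the prescribed fiber. To pin down the identification, I would test against the universal cases: for $R = \Z_p$, $\Theta'$ acts on $\pi_\ast \THH(\Z_p/J(p)) \cong \Z_p[x]$ by $x^j \mapsto j x^{j-1}$, and the integer $j$ is exactly the ``$+i$'' twist appearing on the $i$th graded piece; for $R$ a $p$-complete perfectoid ring, $\diffr_R = R$ concentrated in degree zero with trivial Sen operator, and the claim reduces to a direct computation via base change from $\Z_p$. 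Extending these verifications to all quasiregular semiperfectoid $R$ via the universal property of the diffracted Hodge complex, and then to animated $\Z_p$-algebras by Kan extension, should complete the identification of $\gr^i_\mot \Theta'$ with the twisted Sen operator $\Theta + i$.
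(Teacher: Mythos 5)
First, a framing point: the statement you are proving is stated in the paper as a \emph{conjecture} (\cref{rel-jp-diffracted}); the paper offers only consistency checks (\cref{evidence-diffracted}, \cref{jp-perfd}, \cref{cdvr-sen}) and no proof, so there is no argument of the paper to match yours against. Your outline (double-speed Postnikov/even filtration on quasiregular semiperfectoid rings, quasisyntomic descent/Kan extension, and a filtered refinement of the Sen sequence \cref{jp-sen}) is indeed the natural strategy, but the steps you assert are exactly the open points. (1) Evenness of $\THH(R/J(p))$ for qrsp $R$ does not follow from your base-change argument: $\THH(\Z_p)$ is not even (it has the odd torsion classes $\pi_{2i-1}\cong\Z/i$), so the relative tensor product $\THH(R)\otimes_{\THH(\Z_p)}\THH(\Z_p/J(p))$ is not evidently even just because the two factors are; moreover $\THH(R/J(p))$ is not a ring (the paper stresses this), so invoking the even filtration of \cite{even-filtr} requires specifying the module-theoretic variant and the base, which you do not do. (2) The assertion that $\iota$ preserves filtration and that $\Theta'$ drops weight by one is precisely what the paper flags as unproven ("Presumably, the cofiber sequence \cref{mod-p-sen} can be shown to respect the motivic filtration\dots"); the cofiber of $\iota$ is identified with $\Sigma^2\THH(R/J(p))$ only nonequivariantly, and upgrading \cref{jp-sen} to a sequence of filtered spectra is the crux, not a formality. (3) Even granting the filtered Sen sequence, your induction does not close: from a fiber sequence $\cN^i\hat{\prism}_R[2i]\to\gr^i\to\gr^{i-1}[2]$ with $\gr^{i-1}$ known, the object $\gr^i$ is determined only together with the boundary map $\gr^{i-1}[2]\to\cN^i\hat{\prism}_R[2i+1]$, so identifying $\gr^i$ with $\F^\conj_i\diffr_R[2i]$ requires a \emph{natural comparison map} between the topological and algebraic sides (matching boundary maps functorially in $R$), which is essentially the content of the conjecture; checking two examples and appealing to an unspecified "universal property of the diffracted Hodge complex" does not produce such a map.

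There is also a concrete computational error in your proposed sanity check: for a $p$-complete perfectoid ring $R$ the diffracted Hodge complex is \emph{not} $R$ with trivial Sen operator. By \cite[Example 4.7.6]{apc} (and as used in \cref{jp-perfd}) it is a divided power algebra on one degree-zero class, with $\F^\conj_i$ the span of $\gamma_j$ for $j\le i$ and with a nontrivial operator $\Theta+i$; correspondingly $\pi_\ast\THH(R/J(p))\cong R[x]\pdb{u}$ rather than $R[x]$. Since your mechanism for pinning down the "$+i$" twist leans on this case, the calibration step collapses as written. In short, the proposal is a reasonable program but, as a proof, it assumes the two genuinely open ingredients (a filtered/equivariant refinement of \cref{jp-sen} and a functorial comparison map to $\F^\conj_\star\diffr_R$) and its verification cases are partly miscomputed, so it does not establish \cref{rel-jp-diffracted}.
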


\begin{remark}\label{rel-t1-diffracted}
Recall from \cref{t1-jp} that there is an equivalence $\THH(X(p)/J(p)) \simeq X(p)[\SU(p-1) \times J_{p-1}(S^2)]$. 
Using this, it is not difficult to show that \cref{rel-jp-diffracted} implies that if $R$ is an animated $\Z_p$-algebra, then $\THH(R/X(p))$ admits a motivic filtration such that $\gr^i_\mot \THH(R/X(p)) \simeq (\F^\conj_{pi} \diffr_R)[2pi] \otimes_R \epsilon^R$. If \cref{tn-e2} were true\footnote{Or at least the weaker statement that $T(1)$ admits the structure of an $\E{2}$-ring.} for $n=1$, then $\THH(R/T(1))$ would admit a motivic filtration such that $\gr^i_\mot \THH(R/T(1)) \simeq (\F^\conj_{pi} \diffr_R)[2pi]$. Therefore, $\THH(R/X(p))$ precisely extracts the pieces of the conjugate filtration on $\diffr_R$ which are not automatically split by the Sen operator.
From the point of view of \cref{rel-jp-diffracted}, the utility of the discussion in \cref{higher-jp} is that although describing a higher chromatic analogue of $J(p)$ is tricky (see \cref{jpn-conjecture}), $\THH(\cC/X(p^n))$ furnishes a natural higher chromatic and noncommutative analogue of the diffracted Hodge complex when $\cC$ is a left $\BP{n}$-linear $\infty$-category.
\end{remark}
\begin{remark}\label{evidence-diffracted}
We collect some further evidence for \cref{rel-jp-diffracted}:
\begin{enumerate}
    \item Recall that if $\cd$ is an $\FF_p$-linear $\infty$-category, then the canonical map $\THH(\cd) \to \HH(\cd/\FF_p)$ is given by quotienting by $\sigma\in \pi_2 \THH(\FF_p)$. Moreover, if $R$ is an animated $\FF_p$-algebra, then $\gr^i_\mot \THH(R) \simeq (\F_i^\conj \dR_{R/\FF_p})[2i]$, and $\F^\sigma_\star \THH(R)$ is a noncommutative analogue of the conjugate filtration $\F^\conj_\star \dR_{R/\FF_p}$. In particular, the induced motivic filtration on $\THH(R)/\sigma$ has $\gr^i_\mot (\THH(R)/\sigma) \simeq L\Omega^i_{R/\FF_p}[-i]$.
    
    This picture admits an analogue over $J(p)$. Recall from \cref{laurent-xp}(a) that $\pi_\ast \THH(\Z_p/J(p)) \cong \Z_p[x]$ with $|x|=2$. Let $\cC$ be a $\Z_p$-linear $\infty$-category. One could attempt to define the quotient $\THH(\cC/J(p))/x$ as a relative tensor product of $\THH(\cC/J(p))$ with $\Z_p$ over $\THH(\Z_p/J(p))$. Unfortunately, this tensor product does not make sense, since $\THH(\Z_p/J(p))$ does not naturally acquire the structure of an $\E{1}$-algebra. However, were $J(p)$ to admit the structure of an $\E{3}$-algebra, the above relative tensor product would precisely be computing $\HH(\cC/\Z_p) = \THH(\cC) \otimes_{\THH(\Z_p)} \Z_p$. It is therefore reasonable to view the canonical map $\THH(\cC/J(p)) \to \HH(\cC/\Z_p)$ as a quotient by $x$. If $R$ is an animated $\Z_p$-algebra, then $\HH(R/\Z_p)$ is a noncommutative analogue of the Hodge complex $\bigoplus_{n\geq 0} L\widehat{\Omega}_{R/\Z_p}^n[-n]$. Under \cref{rel-jp-diffracted}, the perspective that the map $\THH(\cC/J(p)) \to \HH(\cC/\Z_p)$ is given by ``killing $x$'' can be regarded as an analogue of \cite[Remark 4.7.14]{apc}, which identifies $\gr^\conj_i \diffr_R \simeq L\widehat{\Omega}^i_{R/\Z_p}[-i]$.
    \item Let $R$ be a smooth $\Z_p$-algebra. Then the prismatic-crystalline comparison theorem (see \cite[Remark 4.7.18]{apc}) implies that the base-change $\FF_p \otimes_{\Z_p} \F^\conj_\star \diffr_R$ can be identified with $\Fr_\ast \F^\conj_\star \Omega^\bull_{R/p/\FF_p}$, where $\Fr: R \to R$ is the absolute Frobenius. Under \cref{rel-jp-diffracted}, \cref{mod-p-thh} can be viewed as a noncommutative analogue of this result.
    \item By \cref{laurent-xp}, the class $x$ is sent to $\sigma\in \pi_2 \THH(\FF_p)$ under the map $\iota: \THH(\Z_p/J(p)) \to \THH(\FF_p)$. Since the cyclotomic Frobenius induces an equivalence $\varphi: \THH(\FF_p)[1/\sigma] \xar{\sim} \THH(\FF_p)^{t\Cp}$,
    the cofiber sequence of \cref{mod-p-sen} predicts a cofiber sequence
    \begin{equation}\label{tcp-jp}
        \THH(\cC)^{t\Cp} \otimes_{\Z_p} \FF_p \xar{\iota} \THH(\cC\otimes_{\Z_p} \FF_p)^{t\Cp} \xar{\Theta'} \THH(\cC\otimes_{\Z_p} \FF_p)^{t\Cp}.
    \end{equation}
    Such a cofiber sequence does indeed exist, and we will construct it below in \cref{alt-construction-tcp-jp} (albeit using slightly different methods).
    
    Suppose that the cofiber sequence \cref{tcp-jp} respects the motivic filtration when $\cC = \Mod_R$. Since $\THH(R)^{t\Cp} \simeq \HP((R/p)/\FF_p)$ (see \cite[Proposition 2.12]{mathew-kaledin}) and $\HP((R/p)/\FF_p)$ has a motivic filtration such that $\gr^i_\mot \HP((R/p)/\FF_p) \simeq \dR_{(R/p)/\FF_p}[2i]$, the cofiber sequence \cref{tcp-jp} would presumably be related under \cref{rel-jp-diffracted} to the following cofiber sequence related to \cref{bhatt-mathew-cofiber} (whose existence was told to me by Akhil Mathew):
    \begin{equation}\label{dr-modp}
        \prismht_R/p \to \dR_{(R/p)/\FF_p} \to \dR_{(R/p)/\FF_p}.
    \end{equation}
\end{enumerate}
\end{remark}
For completeness, we give an argument for \cref{dr-modp}.
\begin{proof}[Proof of the cofiber sequence \cref{dr-modp}]
Recall from \cite[Corollary 3.16]{bhatt-mathew-syntomic} that if $A$ is an animated $\Z_p[x]$-algebra, there is a cofiber sequence
\begin{equation}\label{hodge-tate-affine}
    \prismht_A\{i\}/x \to \prismht_{A/x}\{i\} \to \prismht_{A/x} \{i-1\}.
\end{equation}
This implies (by setting $i=0$ and viewing $R/p$ as the base-change $R \otimes_{\Z_p[x]} \Z_p$, where the map $\Z_p[x] \to R$ sends $x\mapsto p$, and the map $\Z_p[x] \to \Z_p$ is the augmentation) that there is a cofiber sequence
$$\prismht_R/p \to \prismht_{R/p} \to \prismht_{R/p}.$$
The de Rham/crystalline comparison theorems tell us that $\prism_{R/p} \simeq \prism_{(R/p)/\Z_p} \simeq (\dR_R)^\wedge_p$, where $\prism_{(R/p)/\Z_p}$ denotes prismatic cohomology with respect to the crystalline prism $(\Z_p, (p))$ (i.e., the derived crystalline cohomology of $R/p$). But then $\prismht_{R/p} \simeq \dR_{(R/p)/\FF_p}$, as desired.

Let us remark that \cref{hodge-tate-affine} can be constructed using $\WCart^\HT_{\GG_a}$. Indeed, we can reduce to the case when $A$ is the $p$-completion of $\Z_p[x] = \co_{\GG_a}$. Then, \cite[Example 9.1]{prismatization} implies that $\spec(\Z_p) \times_{\GG_a} \WCart^\HT_{\GG_a} \cong B(\GG_a^\sharp \rtimes \GG_m^\sharp)$. Let $\alpha: \WCart^\HT_{\Z_p} \to \WCart^\HT_{\GG_a}$ be the tautological map, so that it factors through a map $f: \WCart^\HT_{\Z_p} \to \spec(\Z_p) \times_{\GG_a} \WCart^\HT_{\GG_a}$, which can in turn be identified with the map $B\GG_m^\sharp \to B(\GG_a^\sharp \rtimes \GG_m^\sharp)$. It follows that there is a Cartesian square
$$\xymatrix{
\GG_a^\sharp \ar[d] \ar[r] & \spec(\Z_p) \ar[d] \\
\WCart^\HT_{\Z_p} \ar[r]_-f & \spec(\Z_p) \times_{\GG_a} \WCart^\HT_{\GG_a}.
}$$
Let $\cf$ be a quasicoherent sheaf on $\WCart^\HT_{\GG_a}$, and let $\cf/x$ be the associated quasicoherent sheaf on $\spec(\Z_p) \times_{\GG_a} \WCart^\HT_{\GG_a}$. Our goal is to identify the cofiber of the map $\cf/x \to f_\ast \alpha^\ast \cf \simeq f_\ast f^\ast(\cf/x)$ in the case when $\cf$ is the Breuil-Kisin twisting line bundle $\co_{\WCart^\HT_{\GG_a}}\{i\}$ on $\WCart^\HT_{\GG_a}$. The preceding Cartesian square along with the cofiber sequence\footnote{Here, we declare $\gamma_{-1}(x) = 0$.}
$$\Z_p \to \Z_p\pdb{t} = \co_{\GG_a^\sharp} \xar{\partial_t} \Z_p\pdb{t}, \ \gamma_n(t) \mapsto \gamma_{n-1}(t)$$
implies that $\cofib(\cf/x \to f_\ast \alpha^\ast \cf)$ can be identified with $\co_{\WCart^\HT_{\Z_p}}\{-1\} \otimes f_\ast \alpha^\ast \cf$. Setting $\cf = \co_{\WCart^\HT_{\GG_a}}\{i\}$ and taking global sections produces \cref{hodge-tate-affine}.
\end{proof}
We now construct a more general version of the cofiber sequence \cref{tcp-jp}. We first need the following lemma:
\begin{lemma}\label{building-rot-rep}
Let $G\subseteq S^1$ be a nontrivial finite subgroup of $S^1$, and let $\lambda$ denote the rotation representation of $S^1$ on $\cc$. 
\begin{enumerate}
    \item Define $(S^\lambda)^{(1)}$ via the cofiber sequence
    $$G_+ \to S^0 \to (S^\lambda)^{(1)}.$$
    Then there is a cofiber sequence
    $$\Sigma (G_+) \to (S^\lambda)^{(1)} \to S^\lambda.$$
    \item Let $X$ be a spectrum with $G$-action. Then $X^{tG} \xar{\sim} (\Sigma^\lambda X)^{tG}$.
\end{enumerate}
\end{lemma}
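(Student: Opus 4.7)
The plan is to deduce both parts from the standard Euler class cofiber sequence
\[
S(\lambda)_+ \to S^0 \to S^\lambda
\]
of $G$-spectra (where $S(\lambda) = S^1$ is the unit sphere in $\lambda$), together with the octahedral axiom. For part (a), I would pick any $G$-orbit in $S(\lambda)$ --- say, the subgroup $G \subset S^1$ of $|G|$-th roots of unity --- yielding a $G$-equivariant inclusion $i \colon G_+ \to S(\lambda)_+$. Since the complement $S^1 \setminus G$ is a disjoint union of $|G|$ open arcs that are cyclically permuted by the $G$-action, the pointed quotient $S(\lambda)_+/G_+$ is $G$-equivariantly equivalent to a wedge of $|G|$ circles permuted by $G$ via left translation, i.e.~to $\Sigma G_+$. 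This gives a cofiber sequence
\[
G_+ \xar{i} S(\lambda)_+ \to \Sigma G_+.
\]
The composite of $i$ with the augmentation $p \colon S(\lambda)_+ \to S^0$ is (by unique-ness) the augmentation $G_+ \to S^0$, so applying the octahedral axiom to $G_+ \xar{i} S(\lambda)_+ \xar{p} S^0$ produces the cofiber sequence
\[
\cofib(i) \to \cofib(p \circ i) \to \cofib(p),
\]
which unfolds to the desired $\Sigma G_+ \to (S^\lambda)^{(1)} \to S^\lambda$.

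For part (b), I would smash the two cofiber sequences from (a) with $X$ and apply $(-)^{tG}$. The key input is that $G_+ \otimes X$ is an induced $G$-spectrum (the shear equivalence identifies it with $G_+ \otimes Y$, where $Y$ is the underlying non-equivariant spectrum of $X$ and $G$ acts solely on the first factor); since $G$ is finite, the norm map $(G_+ \otimes X)_{hG} \to (G_+ \otimes X)^{hG}$ is an equivalence, so $(G_+ \otimes X)^{tG} \simeq 0$, and likewise for $\Sigma G_+ \otimes X$. Applying $(-)^{tG}$ to the cofiber sequences
\[
G_+ \otimes X \to X \to (S^\lambda)^{(1)} \otimes X \quad \text{and} \quad \Sigma G_+ \otimes X \to (S^\lambda)^{(1)} \otimes X \to S^\lambda \otimes X
\]
then produces equivalences
\[
X^{tG} \xar{\sim} \left((S^\lambda)^{(1)} \otimes X\right)^{tG} \xar{\sim} (\Sigma^\lambda X)^{tG},
\]
whose composite is the desired map.

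The argument is essentially formal, given the octahedral axiom and the vanishing of the Tate construction on induced $G$-spectra. The only substantive geometric input is the identification $\cofib(G_+ \to S(\lambda)_+) \simeq \Sigma G_+$ in (a), which is a direct consequence of the cell structure of $S(\lambda) = S^1$ with free rotation action by $G$; I do not foresee any serious obstacle.
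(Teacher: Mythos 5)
Your proposal is correct and takes essentially the same route as the paper: part (b) is exactly the paper's argument (smash the two cofiber sequences from (a) with $X$ and use that the Tate construction vanishes on the induced pieces $G_+ \otimes X$ and $\Sigma G_+ \otimes X$). For part (a) the paper simply leaves the equivariant cell structure of $S^\lambda$ as an exercise, and your derivation of it — the octahedral axiom applied to $G_+ \to S(\lambda)_+ \to S^0$, together with the identification $\cofib(G_+ \to S(\lambda)_+) \simeq \Sigma G_+$ coming from the free $G$-action on $S(\lambda)=S^1$ — is a clean and complete way to supply exactly that cell filtration $S^0 \subseteq (S^\lambda)^{(1)} \subseteq S^\lambda$.
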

\begin{proof}
Part (a) describes an equivariant CW-structure on $S^\lambda$; we leave this as an exercise to the reader. Part (b) follows by observing that the cofiber sequence
$$G_+ \otimes X \to X \to (S^\lambda)^{(1)} \otimes X$$
implies that $X^{tG} \xar{\sim} ((S^\lambda)^{(1)} \otimes X)^{tG}$; and the cofiber sequence
$$X \otimes \Sigma (G_+) \to X \otimes (S^\lambda)^{(1)} \to \Sigma^\lambda X$$
implies that $(X \otimes (S^\lambda)^{(1)})^{tG} \xar{\sim} (\Sigma^\lambda X)^{tG}$.
\end{proof}
\begin{prop}
Let $S[\pi] = S[\Z_{\geq 0}]$. For any $S[\pi]$-linear $\infty$-category $\cC$, there are cofiber sequences
\begin{align}
    \THH(\cC)^{t\Cp} \otimes_{S[\pi]} S & \to \THH(\cC  \otimes_{S[\pi]} S)^{t\Cp} \xar{\nabla^{t\Cp}} \THH(\cC \otimes_{S[\pi]} S)^{t\Cp}, \label{refined-tcp-cofib} \\
    \TP(\cC) & \to \TP(\cC/S[\pi]) \xar{\nabla^{tS^1}} \TP(\cC/S[\pi]). \label{tate-cofib}
\end{align}
\end{prop}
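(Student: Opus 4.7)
The plan is to derive both cofiber sequences by applying suitable fixed-point functors to the equivariant Gauss--Manin cofiber sequence \cref{equiv-hh-gm}, adapted to $\THH$ with $A = S$ and $A[t] = S[\pi] = S[\Z_{\geq 0}]$: for any $S[\pi]$-linear $\infty$-category $\cC$ I have the $S^1$-equivariant cofiber sequence
$$\THH(\cC) \to \THH(\cC/S[\pi]) \xar{\nabla} \Sigma^\lambda \THH(\cC/S[\pi]),$$
where $\lambda$ is the rotation representation of $S^1$. The construction is the same as in \cref{gauss-manin-sen}.

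For \cref{tate-cofib}, I apply $(-)^{tS^1}$ to this sequence. The key input is that $(\Sigma^\lambda X)^{tS^1} \simeq X^{tS^1}$ for any $S^1$-equivariant spectrum $X$: the Thom isomorphism provides $(\Sigma^\lambda X)^{tS^1} \simeq \Sigma^2 X^{tS^1}$ (since $\lambda$ is an orientable complex $S^1$-representation), and $(-)^{tS^1}$ is $2$-periodic via the Bott class $u \in \pi_{-2}(S^{hS^1})$, which becomes invertible after inverting Euler classes. This immediately yields the stated cofiber sequence.

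For \cref{refined-tcp-cofib}, I first tensor the Gauss--Manin cofiber sequence with $S$ over $S[\pi]$, where $S[\pi]$ acts on each term via the natural unit map $S[\pi] \to \THH(S[\pi])$. Using $\THH(\cC/S[\pi]) \simeq \THH(\cC) \otimes_{\THH(S[\pi])} S[\pi]$ and the K\"unneth formula $\THH(\cC \otimes_{S[\pi]} S) \simeq \THH(\cC) \otimes_{\THH(S[\pi])} S$, I obtain the cofiber sequence
$$\THH(\cC) \otimes_{S[\pi]} S \to \THH(\cC \otimes_{S[\pi]} S) \xar{\nabla} \Sigma^\lambda \THH(\cC \otimes_{S[\pi]} S).$$
Applying $(-)^{t\Cp}$ and invoking \cref{building-rot-rep}(b), the last two terms become $\THH(\cC \otimes_{S[\pi]} S)^{t\Cp}$. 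For the first term, since $S = \cofib(S[\pi] \xrightarrow{\pi} S[\pi])$ is perfect over $S[\pi]$, the Tate construction (which is exact) commutes with this base change, giving $(\THH(\cC) \otimes_{S[\pi]} S)^{t\Cp} \simeq \THH(\cC)^{t\Cp} \otimes_{S[\pi]} S$. By Nikolaus--Scholze, the resulting $S[\pi]$-module structure on $\THH(\cC)^{t\Cp}$ agrees with the one given by the Tate Frobenius $\varphi: S[\pi] \to \THH(S[\pi])^{t\Cp}$.

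The main technical obstacle is the final identification: verifying both that $(-)^{t\Cp}$ commutes with base change along the finite map $S[\pi] \to S$ (which uses that $S$ is presented as the cofiber of multiplication by $\pi$), and that the two a priori distinct $S[\pi]$-module structures on $\THH(\cC)^{t\Cp}$ --- one induced by $(-)^{t\Cp}$ from the unit action, the other by the Tate Frobenius --- coincide. The identification of these structures is precisely the compatibility of the unit with the cyclotomic Frobenius. The resulting \cref{refined-tcp-cofib} recovers and generalizes the cofiber sequence \cref{tcp-jp} promised earlier in the text.
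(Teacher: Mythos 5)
Your overall route coincides with the paper's: specialize the equivariant Gauss--Manin sequence \cref{equiv-hh-gm} to $A=S$, apply $(-)^{t\Cp}$ resp.\ $(-)^{tS^1}$, use that $\Sigma^\lambda$ is invisible to the Tate construction, and commute Tate with base change along the perfect map $S[\pi]\to S$ (you base-change before taking Tate, the paper after; this ordering is immaterial). However, two of your justifications do not hold as stated. For \cref{tate-cofib} you deduce $(\Sigma^\lambda X)^{tS^1}\simeq X^{tS^1}$ from a Thom isomorphism $(\Sigma^\lambda X)^{tS^1}\simeq \Sigma^2 X^{tS^1}$ together with $2$-periodicity of $(-)^{tS^1}$. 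Neither is available over the sphere: $S$ is not complex orientable, so there is no Thom class for $\lambda$ in Borel-equivariant spectra over $S$, and $S^{tS^1}$ is not $2$-periodic (after $p$-completion it is $F(BS^1_+, S^\wedge_p)$, the stable cohomotopy of $\CP^\infty$, which has no invertible class in degree $-2$). The correct and elementary argument --- the one in the paper --- is the cofiber sequence $S^1_+\to S^0\to S^\lambda$: after smashing with $X$, the term $X\otimes S^1_+$ is induced, so its $S^1$-Tate construction vanishes and $X^{tS^1}\xrightarrow{\sim}(\Sigma^\lambda X)^{tS^1}$. (For the $\Cp$-statement you correctly invoke \cref{building-rot-rep}, which is the analogous cell-structure argument.)

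For \cref{refined-tcp-cofib}, the identification $(\THH(\cC)\otimes_{S[\pi]} S)^{t\Cp}\simeq \THH(\cC)^{t\Cp}\otimes_{S[\pi]} S$ needs only that $S\simeq\cofib(\pi\colon S[\pi]\to S[\pi])$ is perfect and that $(-)^{t\Cp}$ is exact and lax monoidal, where $S[\pi]$ acts through the unit with trivial $\Cp$-action; that is the module structure appearing in the statement and in the paper's proof. Your further claim that this structure agrees with the one induced by the Tate/cyclotomic Frobenius $S[\pi]\to\THH(S[\pi])^{t\Cp}$ is incorrect: on $\THH(S[\pi])\simeq S\oplus\bigoplus_{n\geq 1}(S^1/\mu_n)_+$ the Frobenius carries the weight-$n$ summand to (the Tate construction of) the weight-$pn$ summand, so it behaves like $\pi\mapsto\pi^p$, whereas the lax-monoidal structure sends $\pi\mapsto\pi$. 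Fortunately this assertion is not used anywhere in your argument; deleting it, and replacing the Thom/periodicity step by the $S^1_+\to S^0\to S^\lambda$ argument, yields a correct proof essentially identical to the paper's.
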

\begin{proof}
We will use \cref{equiv-hh-gm} with $A = S$ (here, the variable $t$ is relabeled as $\pi$). This gives us an $S^1$-equivariant cofiber sequence
\begin{equation}\label{spi-cofib}
    \THH(\cC) \to \THH(\cC/S[\pi]) \to \Sigma^\lambda \THH(\cC/S[\pi]).
\end{equation}
To prove the cofiber sequence \cref{refined-tcp-cofib}, we first apply $t\Cp$ to the preceding cofiber sequence:
$$\THH(\cC)^{t\Cp} \to \THH(\cC/S[\pi])^{t\Cp} \to (\Sigma^\lambda \THH(\cC/S[\pi]))^{t\Cp}.$$
Observe that the tensor product $\THH(\cC)^{t\Cp} \otimes_{S[\pi]} S$ along the augmentation $S[\pi] \to S$ sending $\pi\mapsto 0$ is precisely $\THH(\cC)^{t\Cp} \otimes_{S[\pi]} S$. Similarly, $\THH(\cC/S[\pi])^{t\Cp} \otimes_{S[\pi]} S \simeq \THH(\cC \otimes_{S[\pi]} S)^{t\Cp}$. It therefore suffices to show that $(\Sigma^\lambda \THH(\cC/S[\pi]))^{t\Cp} \simeq \THH(\cC/S[\pi])^{t\Cp}$; but this is exactly \cref{building-rot-rep}.

The cofiber sequence \cref{tate-cofib} is even easier to construct: applying $tS^1$ to \cref{spi-cofib}, we obtain a cofiber sequence
$$\TP(\cC) \to \TP(\cC/S[\pi]) \to (\Sigma^\lambda \THH(\cC/S[\pi]))^{tS^1}.$$
Since there is a cofiber sequence
$$S^1_+ \to S^0 \to S^\lambda,$$
we see that there is an equivalence $X^{tS^1} \xar{\sim} (\Sigma^\lambda X)^{tS^1}$ for any $S^1$-spectrum $X$. In particular, $(\Sigma^\lambda \THH(\cC/S[\pi]))^{tS^1} \simeq \TP(\cC/S[\pi])$, as desired.
\end{proof}
\begin{corollary}\label{alt-construction-tcp-jp}
Let $K$ be a number field, let $\fr{p}\subseteq \co_K$ be a prime ideal over $p$, and let $R$ denote the localization of $\co_K$ at $\fr{p}$. Denote by $\pi\in R$ a uniformizer, and let $k = R/\pi$ be the residue field, so that there is an $\Eoo$-map $S[\pi] \to R$ sending $\pi\mapsto \pi$. For any $R$-linear $\infty$-category $\cC$, there are cofiber sequences
\begin{align}
    \THH(\cC)^{t\Cp} \otimes_R k & \to \THH(\cC \otimes_R k)^{t\Cp} \xar{\nabla^{t\Cp}} \THH(\cC \otimes_R k)^{t\Cp}, \\
    \TP(\cC) & \to \TP(\cC/S[\pi]) \xar{\nabla^{tS^1}} \TP(\cC/S[\pi]). \label{pi-tate-cofib}
\end{align}
\end{corollary}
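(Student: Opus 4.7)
The plan is to derive this corollary directly from the preceding proposition (the one immediately before it, which gives the analogous cofiber sequences for any $S[\pi]$-linear $\infty$-category, with $\pi$ the generator of $S[\pi] = S[\Z_{\geq 0}]$). The $\Eoo$-map $S[\pi] \to R$ sending $\pi \mapsto \pi$ exhibits $R$ as an $S[\pi]$-algebra, so any $R$-linear $\infty$-category $\cC$ is automatically $S[\pi]$-linear by restriction of scalars. Thus the second cofiber sequence \cref{pi-tate-cofib} is literally a special case of the proposition's second cofiber sequence, with nothing to check.

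For the first cofiber sequence, I would just need to identify the tensor products along the augmentation $S[\pi] \to S$ (sending $\pi \mapsto 0$) with the intended mod-$\pi$ reductions. The key point is that $R \otimes_{S[\pi]} S$ computes the (derived) cofiber of multiplication by $\pi$ on $R$; since $R$ is a discrete valuation ring, it is $\pi$-torsionfree, so this derived quotient agrees with the residue field $k = R/\pi$. Consequently, for any $R$-linear $\infty$-category $\cC$,
\begin{align*}
    \cC \otimes_{S[\pi]} S & \simeq \cC \otimes_R (R \otimes_{S[\pi]} S) \simeq \cC \otimes_R k,
\end{align*}
and similarly the $R$-module structure on $\THH(\cC)^{t\Cp}$ (coming from the trace) yields
\begin{align*}
    \THH(\cC)^{t\Cp} \otimes_{S[\pi]} S & \simeq \THH(\cC)^{t\Cp} \otimes_R (R \otimes_{S[\pi]} S) \simeq \THH(\cC)^{t\Cp} \otimes_R k.
\end{align*}

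Substituting these identifications into the first cofiber sequence from the proposition produces the first cofiber sequence of the corollary. There is essentially no obstacle here: the only real subtlety is invoking $\pi$-torsionfreeness of $R$ to ensure that the derived and discrete quotients agree, which is automatic for a localization of a Dedekind domain at a nonzero prime.
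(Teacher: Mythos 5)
Your proposal is correct and matches the paper's (implicit) argument: the corollary is stated as an immediate specialization of the preceding proposition, obtained exactly as you describe by restricting scalars along $S[\pi]\to R$ and identifying $R\otimes_{S[\pi]} S \simeq R/\pi = k$ using $\pi$-torsionfreeness of the DVR $R$. Nothing further is needed.
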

\begin{remark}
The cofiber sequence \cref{pi-tate-cofib} was used in \cite{liu-wang} to calculate $\TP(\co_K)$ by computing the resulting endomorphism of $\TP(\co_K/S[\pi])$.
\end{remark}
\subsection{Some calculations of $\THH$ relative to $X(p)$ and $\Theta$}
We now calculate the topological Sen operator for perfectoid rings; these calculations lend further evidence for \cref{rel-jp-diffracted}.
\begin{recall}
Let $R$ be a perfectoid ring. Recall that $\Ainf(R) = W(R^\flat)$, so that $L_{\Ainf(R)/\Z_p}$ is $p$-completely zero. Let $\Ainf^+(R)$ denote the spherical Witt vectors $W^+(R^\flat)$ of \cite[Example 5.2.7]{elliptic-ii}. 
\end{recall}
\begin{lemma}\label{thh-ainf}
Let $\xi$ be a generator of the kernel of Fontaine's map $\theta: \Ainf(R) \to R$. Let $\Omega^2 S^3 \to \BGL_1(\Ainf^+(R))$ denote the $\E{2}$-map which detects $1-\xi\in \Ainf(R)^\times$ on the bottom cell of the source. Then there is an equivalence of $\E{2}$-$\Ainf^+(R)$-algebras between the $\xi$-adic completion of $\Ainf(R)$ and the $\xi$-adic completion of the Thom spectrum of the following composite:
$$g_\xi: \Omega^2 S^3\pdb{3} \to \Omega^2 S^3 \to \BGL_1(\Ainf^+(R)).$$
In particular, there is an equivalence $\THH(\Ainf(R)^\wedge_\xi/\Ainf^+(R)^\wedge_\xi) \simeq \Ainf(R)^\wedge_\xi[\Omega S^3\pdb{3}]$ of $\E{2}$-$\Ainf(R)^\wedge_\xi$-algebras.
\end{lemma}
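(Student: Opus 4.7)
The statement is a relative version of the Hopkins-Mahowald theorem recalled in \cref{relation-to-hopkins-mahowald}: classically, $\Z_p$ is the Thom spectrum of the $\E{2}$-map $g_p : \Omega^2 S^3\pdb{3} \to \BGL_1(S^\wedge_p)$ detecting $1-p$ on the bottom cell, and here the plan is to replace the pair $(S^\wedge_p, p)$ by $(\Ainf^+(R)^\wedge_\xi, \xi)$.

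My first step would be to verify that $1-\xi$ is a unit in $\pi_0(\Ainf^+(R)) = \Ainf(R)$; this holds because $\xi$ lies in the $(p,[\pi])$-adic radical of $\Ainf(R)$ for a pseudo-uniformizer $\pi$, and $\Ainf(R)$ is complete for this topology. Using the universal property of $\Omega^2 S^3 \simeq \Omega^2 \Sigma^2 S^1$ as the free $\E{2}$-algebra on $S^1$ (via the little $2$-disks operad), the resulting map $S^1 \to \BGL_1(\Ainf^+(R))$ representing $1-\xi$ extends uniquely to an $\E{2}$-map $\Omega^2 S^3 \to \BGL_1(\Ainf^+(R))$, whose restriction along $\Omega^2 S^3\pdb{3} \to \Omega^2 S^3$ is $g_\xi$.

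To identify $M := \Thom(g_\xi)^\wedge_\xi$ with $\Ainf(R)^\wedge_\xi$ as $\E{2}$-algebras over $\Ainf^+(R)^\wedge_\xi$, I would invoke the universal property of the Thom spectrum: $M$ is initial among $\E{2}$-$\Ainf^+(R)^\wedge_\xi$-algebras $B$ admitting an $\E{2}$-nullhomotopy of the composite $\Omega^2 S^3\pdb{3} \to \BGL_1(\Ainf^+(R)^\wedge_\xi) \to \BGL_1(B)$. Since $\Ainf(R)^\wedge_\xi$ is discrete, $\BGL_1(\Ainf(R)^\wedge_\xi)$ has homotopy concentrated in degrees $0,1$, so any map from the $1$-connected space $\Omega^2 S^3\pdb{3}$ is automatically $\E{2}$-nullhomotopic; this furnishes a canonical $\E{2}$-algebra map $\phi : M \to \Ainf(R)^\wedge_\xi$. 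To prove $\phi$ is an equivalence, I would compute $\pi_\ast M$ via the Atiyah--Hirzebruch--Thom spectral sequence with $E^2_{s,t} = \H_s(\Omega^2 S^3\pdb{3}; \pi_t(\Ainf^+(R)^\wedge_\xi))$, where the differentials induced by the Thom class ``$1-\xi$'' should collapse the spectral sequence onto $\pi_0(\Ainf^+(R)^\wedge_\xi) = \Ainf(R)^\wedge_\xi$ concentrated in total degree $0$, by the same mechanism as in the classical Hopkins-Mahowald calculation.

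The $\THH$ equivalence then follows from the Thom-spectrum formula for relative $\THH$ (see \cite{thh-thom} and \cref{xn-framed-e2}): for an $\E{1}$-Thom spectrum $M = \Thom(f : G \to \BGL_1(A))$ with $G$ a grouplike $\E{1}$-space, one has $\THH(M/A) \simeq \Thom(Bf : BG \to \BGL_1(A))$. In our setting $G = \Omega^2 S^3\pdb{3}$, $BG = \Omega S^3\pdb{3}$, and the bar-construction map $Bg_\xi$ is $\E{1}$-nullhomotopic (the bar of a map detecting a unit is trivial), so the Thom spectrum is untwisted and we obtain $\THH(\Ainf(R)^\wedge_\xi / \Ainf^+(R)^\wedge_\xi) \simeq \Ainf(R)^\wedge_\xi \otimes (\Omega S^3\pdb{3})_+ = \Ainf(R)^\wedge_\xi[\Omega S^3\pdb{3}]$. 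The hardest part is the AHSS collapse: one must verify that the differentials induced by the Thom class kill precisely the positive-degree contribution of $\Ainf^+(R)^\wedge_\xi$, and because the unit $S^\wedge_p \to \Ainf^+(R)^\wedge_\xi$ carries $p$ to $p$ rather than to $\xi$, there is no clean base-change reduction to the classical Hopkins-Mahowald theorem; the collapse must be reproved by tracking the Thom class through the AHSS directly, potentially via an obstruction computation in $\E{2}$-André--Quillen cohomology.
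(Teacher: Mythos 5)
Your construction of the comparison map $\phi: M \to \Ainf(R)^\wedge_\xi$ via the universal property of the Thom spectrum is fine (the target is discrete and the source of the twist is $1$-connected, so the composite is canonically $\E{2}$-null), and the passage from the Thom identification to the $\THH$ statement is essentially correct — though the splitting there comes from the fact that $g_\xi$ is a double loop map (so the free loop space of the delooping splits, as in \cite{thh-thom}), not from any nullhomotopy of $Bg_\xi$. The genuine gap is the step you yourself flag: showing that the $\xi$-completed Thom spectrum is discrete with $\pi_0 \cong \Ainf(R)^\wedge_\xi$, equivalently that $M/\xi \simeq R$. This is precisely a Hopkins--Mahowald-type theorem, and ``tracking the Thom class through the AHSS'' is not a viable argument: already in the classical case $(\Ainf^+(R),\xi) = (S^\wedge_p, p)$ the $E^2$-page is $\H_\ast(\Omega^2 S^3\pdb{3}; \pi_\ast S^\wedge_p)$ and the differentials encode the annihilation of all positive-degree $p$-complete stable stems; nobody computes them directly, and the known proofs go through $\E{2}$-Dyer--Lashof operations (Steinberger) or free-$\E{2}$-algebra arguments. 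So as written the heart of the lemma is asserted, not proved.

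The paper does not reprove any such theorem: it quotes Mao's result (\cite[Theorem 1.13]{zhouhang-mao}) that the Thom spectrum of the $\E{2}$-map $\Omega^2 S^3 \to \BGL_1(\Ainf^+(R))$ detecting $1-\xi$ is $R$ — exactly the ``base-changed Hopkins--Mahowald theorem'' you claim is unavailable — and then runs a purely formal d\'evissage along the fibration $\Omega^2 S^3\pdb{3} \to \Omega^2 S^3 \to S^1$: Thomifying in two stages, the Thom spectrum $M$ over $\Omega^2 S^3\pdb{3}$ carries a residual map $S^1 \to \BGL_1(M)$ detecting $1-\xi$, whose Thom spectrum is $\cofib(\xi: M \to M) \simeq R$, and this identifies $M^\wedge_\xi$ with $\Ainf(R)^\wedge_\xi$. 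To repair your proof, replace the proposed AHSS/obstruction-theory step by this citation (or, equivalently, by the Bhatt--Morrow--Scholze computation $\pi_\ast \THH(R) \cong R[\sigma]$ for perfectoid $R$, which the paper notes in \cref{perfd-equiv} is equivalent to Mao's theorem); the rest of your outline then goes through.
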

\begin{proof}
Recall from \cite[Theorem 1.13]{zhouhang-mao} that the Thom spectrum of the map $\Omega^2 S^3 \to \BGL_1(\Ainf^+(R))$ is equivalent to $R$ as an $\E{2}$-$\Ainf^+(R)$-algebra. The fiber sequence
$$\Omega^2 S^3\pdb{3} \to \Omega^2 S^3 \to S^1$$
implies that there is a class $\xi\in \pi_0 (\Omega^2 S^3\pdb{3})^{g_\xi}$ and a map $S^1 \to \BGL_1(\Omega^2 S^3\pdb{3})^{g_\xi}$ detecting $1-\xi$, such that its Thom spectrum is $R$.
This implies that there is a cofiber sequence
$$(\Omega^2 S^3\pdb{3})^{g_\xi} \xar{\xi} (\Omega^2 S^3\pdb{3})^{g_\xi} \to R.$$
It follows that the $\xi$-adic completion $(\Omega^2 S^3\pdb{3})^{g_\xi}$ is equivalent to $\Ainf(R)^\wedge_\xi$. The claim about $\THH$ follows in the standard manner using \cite{thh-thom}.
\end{proof}
\begin{remark}\label{perfd-equiv}
In fact, the calculation from \cite[Theorem 6.1]{bms-ii} that $\pi_\ast \THH(R) \cong R[\sigma]$ is equivalent to \cite[Theorem 1.13]{zhouhang-mao} (which constructs $R$ as the Thom spectrum of the map $\Omega^2 S^3 \to \BGL_1(\Ainf^+(R))$). The equivalence between these two statements can be proved similarly to \cite[Remark 1.5]{krause-nikolaus-dvr}.
\end{remark}
\begin{prop}\label{thh-perfd}
Let $R$ be a $p$-complete perfectoid ring. Then there is a $p$-complete equivalence
$$\THH(R/X(p)) \simeq R[\CP^\infty \times \Omega S^{2p+1}] \otimes_R \epsilon^R.$$
In particular, if $\theta$ denotes the ``polynomial\footnote{Recall that $\THH(R/X(p))$ is not a ring; the word polynomial simply means the subspace generated by $R[\Omega S^{2p+1}]_\ast$.}'' generator in degree $2p$ arising via the James filtration on $\Omega S^{2p+1}$ and $R\pdb{u} = \pi_\ast R[\CP^\infty]$ is (the underlying $R$-module of) a divided power algebra on a class $u$ in degree $2$, then  there is a $p$-complete isomorphism
$$\pi_\ast \THH(R/X(p)) \simeq R[\theta]\pdb{u} \otimes_R \epsilon^R_\ast.$$
\end{prop}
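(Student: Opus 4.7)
The plan mirrors the proof of \cref{thh-calculations}(a) at $n=1$ (the case $R=\Z_p$), with \cref{thh-ainf}'s Thom spectrum description of $R$ replacing the classical Hopkins-Mahowald description. My starting point is the base-change formula $\THH(R/X(p)) \simeq \THH(R) \otimes_{\THH(X(p))} X(p)$ used there. First I would identify $\THH(R)^\wedge_p \simeq R[\Omega S^3]^\wedge_p$ as $R$-modules: by \cref{thh-ainf}, $R$ is the Thom spectrum of the $\E{2}$-map $\Omega^2 S^3 \to \BGL_1(\Ainf^+(R))$ detecting $1-\xi$, and because the spherical Witt vectors $\Ainf^+(R)$ are $p$-completely \'etale over $S^\wedge_p$, one has $\THH(R)^\wedge_p \simeq \THH(R/\Ainf^+(R))^\wedge_p$; applying \cite{thh-thom} then yields the claim, consistent with the BMS-II calculation $\pi_\ast\THH(R;\Z_p) \cong R[\sigma]$ with $|\sigma|=2$. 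Similarly $\THH(X(p)) \simeq X(p)[\SU(p)]$, and the induced map $\THH(X(p)) \to \THH(R)$ is the $\B$-delooping of the $\E{1}$-map
\[
\tilde f \colon \Omega\SU(p) \xar{f} \Omega^2 S^3\pdb{3} \to \Omega^2 S^3,
\]
where $f$ is the map from the proof of \cref{thh-calculations}(a) at $n=1$ and the second arrow is the canonical projection. Evaluating the tensor product then gives
\[
\THH(R/X(p)) \simeq R[\Omega S^3] \otimes_{X(p)[\SU(p)]} X(p) \simeq R[\B\,\cofib(\tilde f)],
\]
with the cofiber taken in $\E{1}$-spaces.

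To identify $\cofib(\tilde f)$, I would combine the fiber sequence of $\E{1}$-spaces from the proof of \cref{thh-calculations}(a) at $n=1$,
\[
\Omega\SU(p) \xar{f} \Omega^2 S^3\pdb{3} \to \SU(p-1) \times \Omega^2 S^{2p+1},
\]
with the $\Omega^2$-loop of the connective-cover fibration $\CP^\infty \to S^3\pdb{3} \to S^3$, namely
\[
\Z \to \Omega^2 S^3\pdb{3} \to \Omega^2 S^3.
\]
An iterated cofiber analysis then identifies $\cofib(\tilde f)$ as an $\E{1}$-extension of $\SU(p-1) \times \Omega^2 S^{2p+1}$ by $S^1 \simeq \B\Z$. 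The classifying map $S^1 \to \BSU(p-1) \times \Omega S^{2p+1}$ of this extension is null on $\pi_1$ since the target is simply-connected ($\SU(p-1)$ is connected and $\pi_2 S^{2p+1}=0$), so the extension splits and
\[
\cofib(\tilde f) \simeq \SU(p-1) \times \Omega^2 S^{2p+1} \times S^1 \quad \text{$p$-locally}.
\]
Delooping once and using $\B S^1 \simeq \CP^\infty$ then yields the desired equivalence
\[
\THH(R/X(p)) \simeq R[\BSU(p-1) \times \CP^\infty \times \Omega S^{2p+1}] \simeq R[\CP^\infty \times \Omega S^{2p+1}] \otimes_R \epsilon^R.
\]
The homotopy calculation in the second assertion then follows from the standard identifications $\pi_\ast R[\CP^\infty] \cong R\pdb{u}$ with $|u|=2$ and $\pi_\ast R[\Omega S^{2p+1}] \cong R[\theta]$ with $|\theta|=2p$.

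The hardest step is verifying that the classifying map for the $\E{1}$-extension above is genuinely null in the $\infty$-category of $\E{1}$-spaces (not merely on homotopy groups); this should follow from standard connectivity arguments but deserves careful checking. A useful consistency check is the case $R = \FF_p$: the extra $\CP^\infty$-factor (absent from $\THH(\Z_p/X(p))$) arises precisely from the difference between the Thom-spectrum descriptions of $\Z_p = \Ainf(\FF_p)$ over $\Omega^2 S^3\pdb{3}$ and $\FF_p = \Ainf(\FF_p)/p$ over $\Omega^2 S^3$, which matches the expected appearance of $\HH(\FF_p/\Z_p) \simeq \FF_p[\CP^\infty]$.
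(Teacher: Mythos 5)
Your overall route is the same as the paper's: reduce to the spherical Witt vector / Thom-spectrum description of $R$ and identify a two-sided bar construction in unstable homotopy theory; indeed the paper's own proof ends by recording exactly your presentation $\THH(R/X(p))\simeq R[\Omega S^3]\otimes_{R[\SU(p)]}R$ (with $X(p)_\xi$ in place of $X(p)$). Two steps, however, are genuinely incomplete. First, your assertion that the induced map $\THH(X(p))\to\THH(R)$ is (chains on) the delooping of $\tilde f$ presupposes that the $X(p)$-algebra structure on $R$ (through $X(p)\to\Z_p\to R$) is compatible with the $\Ainf^+(R)$-linear Thom presentation of \cref{thh-ainf}, where the twisting is by $1-\xi$ rather than by the class used to define $f$ in the proof of \cref{thh-calculations}(a). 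For general perfectoid $R$ this is not automatic: it is precisely what the paper's auxiliary spectrum $X(p)_\xi$ and the ``$1-\xi$ is a unit'' argument (reducing to the $(p,\xi)$-complete triviality of $\THH$ of spherical Witt vectors) accomplish, and what identifies $\THH(\Ainf(R)^\wedge_\xi)\to\THH(R)$ with chains on $\Omega S^3\pdb{3}\to\Omega S^3$. You invoke the \'etaleness of $\Ainf^+(R)$ but never make this comparison.

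Second, and more seriously, the loop-level splitting followed by a delooping does not establish the key identification. What the relative tensor product computes is the bar construction $B(\Omega S^3,\SU(p),\ast)$ at the level of $\SU(p)$ acting on $\Omega S^3$, not ``$\B$ of the $\E{1}$-cofiber of $\tilde f$'': since $\tilde f$ is only an $\E{1}$-map, the loop-level quotient carries no natural $\E{1}$-structure, so $\B\cofib(\tilde f)$ does not even typecheck without further input. Moreover, the fibration $\SU(p-1)\times\Omega^2 S^{2p+1}\to\cofib(\tilde f)\to S^1$ is not known to be principal, so its triviality is governed by monodromy in $\pi_0\mathrm{hAut}$ of the fiber rather than by $[S^1,\BSU(p-1)\times\Omega S^{2p+1}]$; and even granting a space-level splitting, a splitting of $\fib(\SU(p)\to\Omega S^3)$ does not determine the delooped quotient $\Omega S^3_{h\SU(p)}$, which is what you need. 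The structure you are implicitly delooping is exactly the James--Hopf/EHP-type data, which does not deloop --- this is why the paper labels the fiber sequence $S^{2p-1}\to\Omega S^3\to\CP^\infty\times\Omega S^{2p+1}$ as \cref{cannot-deloop} --- so no ``standard connectivity argument'' will close this. The paper instead works entirely at the delooped level: the $\E{1}$-fiber sequence from \cref{thh-calculations}(a) supplies the principal quotient $\Omega S^3\pdb{3}/\SU(p)\simeq\Omega S^{2p+1}\times\BSU(p-1)$, and the $p$-local Cartesian square \cref{pullback-perfd} (equivalently, the $p$-local fibration $\SU(p)\simeq\SU(p-1)\times S^{2p-1}\to\Omega S^3\xar{H_p\times\iota}\Omega S^{2p+1}\times\CP^\infty\times\BSU(p-1)$) then identifies $\Omega S^3/\SU(p)$, producing the $\CP^\infty$ factor without delooping anything. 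Replacing your splitting-and-delooping step with this argument, and adding the $X(p)_\xi$ comparison, turns your outline into the paper's proof.
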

\begin{proof}
Let $X(p)_\xi$ denote the $\xi$-adic completion of the Thom spectrum of the composite
$$\Omega \SU(p) \to \Omega S^{2p-1} \xar{\alpha_1} \Omega^2 S^3\pdb{3} \to \BGL_1(\Ainf^+(R)).$$
Then, the map $\THH(X(p)_\xi) \to \THH(X(p)) \otimes \Ainf^+(R)^\wedge_\xi$ is a $(p,\xi)$-complete equivalence: indeed, the above composite is determined as an $\E{1}$-map by the composite
$$\SU(p) \to S^{2p-1} \xar{(1-\xi)\alpha_1} \B^2 \GL_1(\Ainf^+(R)).$$
Since $1-\xi$ is a unit in $\pi_0 \Ainf^+(R) \cong \Ainf(R)$, it suffices to prove that the map $\THH(\Ainf^+(R)^\wedge_\xi) \to \Ainf^+(R)^\wedge_\xi$ is a $(p,\xi)$-complete equivalence. But this is clear: after killing $\xi$ and tensoring with $\FF_p$, we obtain the map $\HH(R^\flat/\FF_p) \to R^\flat$, which is an equivalence since $R^\flat$ is perfect.

It then follows from \cref{thh-ainf} and the same argument used to prove \cref{thh-calculations}(a) that there are $(p,\xi)$-complete equivalences
$$\THH(\Ainf(R)^\wedge_\xi/X(p)) \simeq \THH(\Ainf(R)^\wedge_\xi/X(p)_\xi) \simeq \Ainf(R)[\Omega S^{2p+1} \times \BSU(p-1)].$$
Therefore, there are $p$-complete equivalences
\begin{align*}
    \THH(R/X(p)) & \simeq \THH(R/X(p)_\xi)\\
    & \simeq \THH(R/\Ainf^+(R)^\wedge_\xi) \otimes_{\THH(\Ainf(R)^\wedge_\xi/\Ainf^+(R)^\wedge_\xi)} \THH(\Ainf(R)^\wedge_\xi/X(p)_\xi)\\
    & \simeq \THH(R/\Ainf^+(R)^\wedge_\xi) \otimes_{\Ainf(R)^\wedge_\xi[\Omega S^3\pdb{3}]} \Ainf(R)^\wedge_\xi[\Omega S^{2p+1} \times \BSU(p-1)].
\end{align*}
Since $R$ is perfectoid, \cite[Theorem 6.1]{bms-ii} implies that $\THH(R/\Ainf^+(R)) \simeq R[\Omega S^3]$. The map $\THH(W(R^\flat)) \to \THH(R)$ induced by the unit can be identified with the composite $W(R^\flat)[\Omega S^3\pdb{3}] \to R[\Omega S^3]$, induced by Fontaine's map $\theta: \Ainf(R) \to R$. There is a $p$-local Cartesian square
\begin{equation}\label{pullback-perfd}
    \xymatrix{
    \Omega S^3\pdb{3} \ar[d] \ar[r] & \Omega S^3 \ar[d]^-{H_p\times \iota} \\
    \Omega S^{2p+1} \times \BSU(p-1) \ar[r] & \Omega S^{2p+1} \times \CP^\infty \times \BSU(p-1),
    }
\end{equation}
which implies that
$$\THH(R/X(p)) \simeq R[\Omega S^{2p+1} \times \CP^\infty \times \BSU(p-1)],$$
as desired. Alternatively, there are equivalences
\begin{align*}
    \THH(R/X(p)_\epsilon) & \simeq \THH(R/\Ainf^+(R)^\wedge_\xi) \otimes_{\THH(X(p)_\epsilon/\Ainf^+(R)^\wedge_\xi)} X(p)_\epsilon\\
    & \simeq R[\Omega S^3] \otimes_{R[\SU(p)]} R.
\end{align*}
The desired calculation follows from the observation that there is a $p$-local fibration
$$\SU(p) \simeq \SU(p-1) \times S^{2p-1} \xar{\ast\times \alpha_1} \Omega S^3 \xar{H_p\times \iota} \Omega S^{2p+1} \times \CP^\infty \times \BSU(p-1)$$
which is induced by the Cartesian square \cref{pullback-perfd}.
\end{proof}
\begin{remark}\label{jp-perfd}
\cref{thh-perfd} has the following slight variant: if $R$ is a $p$-complete perfectoid ring, then there is a $p$-complete equivalence $\THH(R/J(p)) \simeq R[\Omega S^3\times \CP^\infty]$. The only modification is that one instead has to use the $p$-local Cartesian square
$$\xymatrix{
\Omega S^3\pdb{3} \ar[d] \ar[r] & \Omega S^3 \ar[d] \\
\Omega S^3 \ar[r] & \Omega S^3 \times \CP^\infty,
}$$
which supplies a fibration
$$S^1 \to \Omega S^3 \to \Omega S^3 \times \CP^\infty.$$
In particular, the above discussion shows that $\pi_\ast \THH(R/J(p)) \cong R[x]\pdb{u}$.
This is compatible with \cref{rel-jp-diffracted}: 
\begin{enumerate}
    \item First, $\pi_\ast \THH(R/J(p))[x^{-1}] \cong R[x^{\pm 1}]\pdb{\tfrac{u}{x}}$. Since $\tfrac{u}{x}$ lives in degree $0$, \cref{rel-jp-diffracted} predicts that $\diffr_R \cong R\pdb{\tfrac{u}{x}}$. This is indeed true: \cite[Example 4.7.6]{apc} implies that the diffracted Hodge complex of a $p$-complete perfectoid ring $R$ is a divided power $R$-algebra on a single class in degree zero.
    \item Second, $\tau_{(2n-2, 2n]} \THH(R/J(p))$ is equivalent to $\bigoplus_{0\leq j\leq n} R\cdot \gamma_j(u) x^{n-j}$, so that \cref{rel-jp-diffracted} predicts that $\F^\conj_i \diffr_R$ is isomorphic to the $R$-submodule of $\diffr_R$ generated by $\{\gamma_j(\tfrac{u}{x})\}_{0\leq j\leq n}$. This is indeed true: see $(\ast_n)$ in the proof of \cite[Lemma 5.6.14]{apc}. In the same way, $\tau_{(2(n-1)p, 2np]} \THH(R/T(1))$ is a free $R$-module spanned by $\theta^i \gamma_j(u)$ for $(n-1-i)p < j \leq (n-i)p$. This includes $\gamma_j(u)$ for $(n-1)p<j \leq np$, but also terms such as $\theta^n$ and $\theta^{n-1} \gamma_p(u)$.
\end{enumerate}
\end{remark}
\begin{remark}\label{perfectoid-sen-operator}
We can understand the calculation of \cref{thh-perfd} more algebraically as follows. There is a $p$-local fiber sequence
\begin{equation}\label{cannot-deloop}
    S^{2p-1} \to \Omega S^3 \to \CP^\infty \times \Omega S^{2p+1},
\end{equation}
where the second map is given by the product of the canonical map $\Omega S^3 \to \CP^\infty$ with the James-Hopf map $\Omega S^3 \to \Omega S^{2p+1}$.
The Serre spectral sequence in $\Z_p$-homology for \cref{cannot-deloop} is given by
$$E^2_{\ast,\ast} = \Z_p\pdb{u} \otimes_{\Z_p} \Z_p[\theta, \epsilon]/\epsilon^2 \Rightarrow \pi_\ast \Z_p[\Omega S^3] \cong \Z_p[\sigma],$$
where $\epsilon$ lives in degree $2p-1$. It is not difficult to show that there is a single family of differentials given by
$$d^{2p}(\gamma_{p^n}(u)) = \epsilon \prod_{j=1}^{n-1} \gamma_{p^j}(u)^{p-1}, \ d^{2p}(\theta^j) = jp \theta^{j-1} \epsilon.$$
where the equality is to be understood up to $p$-adic units.
The above description implies that the map $d^{2p}:E^2_{2np,0} \to E^2_{2np-2p,2p-1}$ is surjective, and its kernel is a free $\Z_p$-module of rank $1$ (for example, one can calculate an explicit $(n+1)\times n$-matrix with coefficients in $\Z_p$ which describes $d^{2p}$).
If $R$ is a perfectoid ring, this discussion determines the Serre spectral sequence in $R$-homology for \cref{cannot-deloop}. Since the $d^{2p}$-differential in this spectral sequence is just the effect of the topological Sen operator $\Theta_R: \THH(R/X(p)) \to \Sigma^{2p} \THH(R/X(p))$ on homotopy, we see that $\Theta_R$ is given (up to $p$-adic units) by the map
$$\gamma_{p^n}(u) \mapsto \prod_{j=1}^{n-1} \gamma_{p^j}(u)^{p-1}.$$
Treat $u$ as a variable, and write $\frac{u^j}{j!}$ to denote $\gamma_j(u)$; then\footnote{
For the last equality, note that if $n\geq 1$, then $(p^n-1)!$ is a $p$-adic unit multiple of $\prod_{j=1}^{n-1} (p^j!)^{p-1}$.
Indeed, observe that $p^n - 1 = \sum_{j=0}^{n-1} p^j(p-1)$. By Legendre's formula for the $p$-adic valuation of factorials, we have $v_p(p^j!) = \frac{p^j-1}{p-1}$, so that
$$v_p((p^n-1)!) = \frac{p^n-1 - n(p-1)}{p-1} = -n + \sum_{j=0}^{n-1} p^j = \sum_{j=1}^{n-1} (p^j-1) = v_p\left(\prod_{j=1}^{n-1} (p^j!)^{p-1}\right),$$
as desired.
}
\begin{align*}
    (u^{1-p}\partial_u)(\gamma_{p^n}(u)) & = \frac{u^{p^n-p}}{(p^n-1)!} = \frac{(u^{p^{n-1}})^{p-1}}{(p^n-1)!} = \frac{u^{\sum_{j=1}^{n-1} p^j(p-1)}}{(p^n-1)!} \dot{=} \prod_{j=1}^{n-1} \left(\frac{u^{p^j}}{p^j!}\right)^{p-1}.
\end{align*}
Therefore, we may informally write $\Theta_R = u^{1-p} \partial_u$. The division by $u^p$ can be viewed as accounting for the shift by $2p$ in $\Theta_R$. Note that if $R$ is $p$-torsionfree, this operator can in turn be interpreted as $p\partial_{u^p}$. Similarly, under the isomorphism $\pi_\ast \THH(R/J(p)) \cong R[x]\pdb{u}$, the operator $\Theta'_R: \THH(R/J(p)) \to \Sigma^2 \THH(R/J(p))$ can be interpreted as $\partial_u$.
\end{remark}
A slight variant of the above discussion proves an analogous statement for $\Z/p^n$.
\begin{definition}
Let $Y_n$ denote the fiber of the composite
$$\HHP^\infty \to K(\Z, 4) \to K(\Z/p^{n-1}, 4).$$
\end{definition}
\begin{prop}\label{zpn-xp}
Fix an odd prime $p$. There are equivalences
\begin{align*}
    \THH(\Z/p^n/X(p)) & \simeq \Z/p^n[\Omega S^{2p+1} \times \B^2(p^{n-1}\Z)] \otimes_{\Z/p^n} \epsilon^{\Z/p^n}, \\
    \THH(\Z/p^n/J(p)) & \simeq \Z/p^n[\Omega S^3 \times \B^2(p^{n-1}\Z)],
\end{align*}
where the map $\FF_p \otimes_{\Z/p^n} \THH(\Z/p^n/J(p)) \to \THH(\FF_p/J(p))$ is given by the map $\FF_p[\Omega S^3 \times \B^2(p^{n-1}\Z)] \to \FF_p[\Omega S^3 \times \B^2(\Z)]$ induced by $p^{n-1} \Z\subseteq \Z$.
\end{prop}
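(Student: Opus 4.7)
The plan is to adapt the strategy of \cref{thh-perfd} to $\Z/p^n$, replacing the perfectoid Thom-spectrum description of $R$ by an analogous description of $\Z/p^n$ built from the space $Y_n$. By looping the defining fiber sequence $Y_n \to \HHP^\infty \to K(\Z/p^{n-1}, 4)$ three times (and using $\Omega^3 \HHP^\infty \simeq \Omega^2 S^3$), I obtain a fiber sequence
\begin{equation*}
    \Omega^3 Y_n \to \Omega^2 S^3 \to K(\Z/p^{n-1}, 1),
\end{equation*}
whose associated long exact sequence identifies the leftmost map with the inclusion $p^{n-1}\Z \hookrightarrow \Z$ on $\pi_1$. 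Composing with the Hopkins-Mahowald $\E{2}$-map $\Omega^2 S^3 \to \BGL_1(S_p)$ (which detects $1-p$ on $\pi_1$) yields a map $\mu_n \colon \Omega^3 Y_n \to \BGL_1(S_p)$ detecting $(1-p)^{p^{n-1}}$; for odd $p$, the lifting-the-exponent lemma gives $v_p(1 - (1-p)^{p^{n-1}}) = n$. The first step is to upgrade the resulting Thom spectrum to the Eilenberg-MacLane ring $\Z/p^n$, mirroring how Hopkins-Mahowald promotes $S_p/p$ to $\FF_p$ via the $\E{2}$-structure.

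Granted this, the Blumberg-Cohen-Schlichtkrull theorem \cite{thh-thom} produces $\THH(\Z/p^n/S_p) \simeq \Z/p^n[\Omega^2 Y_n]$, and the fiber sequence $K(\Z/p^{n-1},1) \to \Omega^2 Y_n \to \Omega S^3$ (obtained by looping once more) allows me to mimic the final tensor-product manipulation of \cref{thh-perfd} essentially verbatim, including the use of a diagram analogous to \cref{pullback-perfd}. The appearance of $\B^2(p^{n-1}\Z)$ in place of the $\CP^\infty = \B^2\Z$ of \cref{thh-perfd} is then transparent: the perfectoid calculation used $\Omega^2 S^3$ as the Thom base, whereas here we use $\Omega^3 Y_n$, which differs by a $\B(\Z/p^{n-1})$-twist; after the delooping involved in passing from $\Omega^3 Y_n$ to $\Omega^2 Y_n$, this twist produces precisely the subgroup $\B^2(p^{n-1}\Z) \subseteq \B^2\Z$.

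The identification of the map $\FF_p \otimes_{\Z/p^n} \THH(\Z/p^n/J(p)) \to \THH(\FF_p/J(p))$ then follows by naturality applied to $Y_n \to Y_1 = \HHP^\infty$. On $\pi_4$, this map is the inclusion $p^{n-1}\Z \hookrightarrow \Z$, which via the identifications above induces the asserted map $\FF_p[\Omega S^3 \times \B^2(p^{n-1}\Z)] \to \FF_p[\Omega S^3 \times \B^2\Z]$.

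The hardest step is the first one: establishing the Thom-spectrum identification $\Z/p^n \simeq (\Omega^3 Y_n)^{\mu_n}$ with the correct higher ring structure. The difficulty is that for $n \geq 2$ the naive underlying spectrum-level Thom construction produces a Moore-type quotient $S_p/p^n$ with nontrivial higher homotopy, rather than the Eilenberg-MacLane spectrum $\Z/p^n$; collapsing the difference requires exploiting the $\E{2}$-structure (and likely an enhancement to $\E{3}$, given that $\Omega^3 Y_n$ is an $\E{3}$-space), in direct analogy with the original Hopkins-Mahowald argument but one delooping higher.
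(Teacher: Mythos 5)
Your route is essentially the paper's: both arguments hinge on the presentation of $\Z/p^n$ as the Thom spectrum of the $\E{2}$-map $\Omega^3 Y_n \to \Omega^3\HHP^\infty \simeq \Omega^2 S^3 \to \BGL_1(S)$, deduce $\THH(\Z/p^n) \simeq \Z/p^n[\Omega^2 Y_n]$ from \cite{thh-thom}, and then run the same base-change manipulation as in \cref{thh-perfd}, with a Cartesian square in which $\Omega^2 Y_n$ and $\B^2(p^{n-1}\Z)$ replace $\Omega S^3$ and $\CP^\infty$. Your naturality argument via $Y_n \to Y_1 = \HHP^\infty$ for identifying the comparison map to $\THH(\FF_p/J(p))$ is fine, and is a point the paper's proof leaves implicit.

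The one genuine issue is the step you flag and then assume (``granted this''): the identification of the Thom spectrum of $\mu_n$ with the Eilenberg--MacLane ring $\Z/p^n$. The paper does not prove this; it quotes \cite{zpk-thom}, where exactly this statement is established, and with that citation your argument closes. As an original step, however, your sketch would not suffice: the Hopkins--Mahowald proof leans on $\Omega^2 S^3$ being the free $\E{2}$-space on a point, so that the Thom spectrum is the free $\E{2}$-ring with a nullhomotopy of $p$; the space $\Omega^3 Y_n$ is not free on its bottom cell as an $\E{2}$- or $\E{3}$-space, so there is no ``direct analogy one delooping higher'', and a genuinely different argument is needed (this is the content of \cite{zpk-thom}; note also that the relevant structure there is just the $\E{2}$-structure, since the map only exists as an $\E{2}$-map through the Hopkins--Mahowald map). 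Your description of the difficulty is also slightly off: the underlying Thom spectrum over $\Omega^3 Y_n$ is not the two-cell Moore spectrum $S/p^n$ (that is only its restriction to the bottom cell $S^1 \subseteq \Omega^3 Y_n$); the theorem is precisely that the full Thom spectrum collapses to $\Z/p^n$. If you want a route independent of \cite{zpk-thom}, the remark following \cref{zpn-xp} sketches one for $n\geq 2$: descend from the $\FF_p$-case of \cref{thh-perfd}, using that $\FF_p$ is the Thom spectrum of a map $B\Z/p^{n-1}\to\BGL_1(\Z/p^n)$ detecting $1-p$, so that $\HH(\FF_p/(\Z/p^n)) \simeq \FF_p[K(\Z/p^{n-1},2)]$.
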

\begin{proof}
In \cite{zpk-thom}, it was shown that $\Z/p^n$ is the Thom spectrum of the $\E{2}$-map
$$\Omega^3 Y_n \to \Omega^3 \HHP^\infty \simeq \Omega^2 S^3 \to \BGL_1(S^0),$$
which implies that $\THH(\Z/p^n) \simeq \Z/p^n[\Omega^2 Y_n]$. Note that there is a canonical map $\Omega^2 Y_n \to \Omega S^3$, and hence a map $\Omega^2 Y_n \to \CP^\infty$. Just as with \cref{thh-perfd}, we have
\begin{align*}
    \THH(\Z/p^n/X(p)) & \simeq \THH(\Z/p^n) \otimes_{\THH(X(p))} X(p)\\
    & \simeq \THH(\Z/p^n) \otimes_{\THH(\Z_p)} \THH(\Z_p/X(p))\\
    & \simeq \Z/p^n[\Omega^2 Y_n] \otimes_{\Z/p^n[\Omega S^3\pdb{3}]} \Z/p^n[\Omega S^{2p+1} \times \BSU(p-1)].
\end{align*}
There is still a $p$-local Cartesian square
$$\xymatrix{
\Omega S^3\pdb{3} \ar[d] \ar[r] & \Omega^2 Y_n \ar[d] \\
\Omega S^{2p+1} \times \BSU(p-1) \ar[r] & \Omega S^{2p+1} \times \B^2(p^{n-1} \Z) \times \BSU(p-1),
}$$
which implies the calculation of $\THH(\Z/p^n/X(p))$. The calculation of $\THH(\Z/p^n/J(p))$ is similar.
\end{proof}
\begin{remark}
One could also deduce \cref{zpn-xp} for $n\geq 2$ from \cref{thh-perfd} for $\FF_p$, using descent and the fact that $\HH(\FF_p/\Z/p^n) = \FF_p[K(\Z/p^{n-1}, 2)]$. Indeed, the composite $S^1 \xar{p^{n-1}} S^1 \xar{1-p} \BGL_1(S)$ detects the class $(1-p)^{p^{n-1}} = 1-p^n u \in \Z_p^\times$ for some $p$-adic unit $u$. Therefore, its Thom spectrum is equivalent to $\Z/p^n$. In turn, \cite[Proposition 2.1.6]{bpn-thom} (or \cite{beardsley-thom}) and the fiber sequence
$$S^1 \xar{p^{n-1}} S^1 \to B\Z/p^{n-1}$$
imply that $\FF_p$ is the Thom spectrum of a map $B\Z/p^{n-1} \to \BGL_1(\Z/p^n)$ which detects $1-p\in (\Z/p^n)^\times$ on the bottom cell of the source. Applying \cite{thh-thom} implies the desired calculation of $\HH(\FF_p/\Z/p^n)$.
\end{remark}
\begin{remark}
There is a higher chromatic analogue of \cref{zpn-xp}. To explain this, recall from \cite[Construction 3.5.1]{rotinv} that there is an $\E{2}$-algebra $S\ls{\hbar}$ over the sphere spectrum with $|\hbar|=-2$. It follows from \cite[Corollary 3.12]{framed-e2} that $S\ls{\hbar}$ can be upgraded to an $\Efr{2}$-algebra. Tensoring with $X(p^n)$ therefore defines an $\Efr{2}$-ring $X(p^n)\ls{\hbar}$; in particular, one can define THH relative to $X(p^n)\ls{\hbar}$. The $\E{2}$-map $X(p^n) \to \BP{n-1} \to \BP{n-1}^{tS^1}$ factors through an $\E{2}$-map $X(p^n)\ls{\hbar} \to \BP{n-1}^{tS^1}$, where $\hbar$ is sent to a complex orientation of $\BP{n-1}$ (viewed as a class in $\pi_{-2} \BP{n-1}^{tS^1}$). The calculation of \cref{thh-calculations} implies that
$$\THH(\BP{n-1}^{tS^1}/X(p^n)\ls{\hbar}) \simeq \BP{n-1}^{tS^1}[\Omega S^{2p^n+1} \times \B\Delta_n].$$
The spectrum $\BP{n-1}^{t\Z/m}$ is the quotient $\BP{n-1}^{tS^1}/\tfrac{[m](\hbar)}{\hbar}$, where $[m](\hbar)$ denotes the $m$-series of the formal group law over $\BP{n-1}_\ast$. This can be viewed as the Thom spectrum of a map $S^1 \to \BGL_1(\BP{n-1}^{tS^1})$ detecting $1 + \tfrac{[m](\hbar)}{\hbar} \in \pi_0(\BP{n-1}^{tS^1})^\times$. It follows that 
\begin{equation}\label{tZm-thh}
    \THH(\BP{n-1}^{t\Z/m}/X(p^n)\ls{\hbar}) \simeq \BP{n-1}^{t\Z/m}[\B S^1 \times \Omega S^{2p^n+1} \times \B\Delta_n].
\end{equation}
When $n=1$, there is an equivalence $\BP{0}^{t\Z/m} \simeq (\Z/m)^{tS^1}$, and \cref{tZm-thh} can be viewed as the equivalence of \cref{zpn-xp}, base-changed along $\Z/m \to (\Z/m)^{tS^1}$.
\end{remark}
Since $\B^2(p^{n-1} \Z) \cong \CP^\infty$ (more canonically, it is the total space of the line bundle $\co(p^{n-1})$ over the standard $\CP^\infty$), \cref{zpn-xp} implies that $\pi_\ast \THH(\Z/p^n/J(p)) \cong \Z/p^n[x]\pdb{u_n}$ with $|u_n|=|x|=2$. Were \cref{rel-jp-diffracted} to hold, \cref{zpn-xp} would imply that $\diffr_{\Z/p^n}$ is a (discrete) divided power algebra over $\Z/p^n$. In \cite[Example 5.15]{prismatization}, it is shown that if $\GG_a^\sharp$ denotes the PD-completion of $\GG_a$ at the origin, then $\spec(\Z/p^n)^{\slashed{D}} \otimes \FF_p \cong \GG_a^\sharp \otimes \FF_p$ in the notation of \cite{prismatization}. This implies that $\diffr_{\Z/p^n} \otimes_{\Z/p^n} \FF_p$ is isomorphic to the divided power algebra $\FF_p\pdb{t_n}$ for $|t_n| = 0$. However, as predicted by \cref{rel-jp-diffracted}, there is in fact no need to reduce modulo $p$: \cref{diffr-zpn} below says that $\diffr_{\Z/p^n}$ is indeed isomorphic to the divided power algebra $\Z/p^n\pdb{t_n}$ for $|t_n| = 0$.

I am grateful to Bhargav Bhatt for the statement of the following lemma, which is analogous to the calculation that if $R$ is a commutative ring and $x\in R$ is a regular element, then there is a $p$-complete equivalence $\dR_{R/x/R} \simeq R\pdb{x}/x$ (see \cite[Theorem 8.4]{bhatt-ddr}). The argument for \cref{ht-div-pow} below is my interpretation of Bhatt's explanation. The topological discussion above can be regarded as an analogue of the calculation that $\HH(R/x/R) \simeq R[\CP^\infty]/x$. We will freely use notation from \cite{apc, prismatization} below.
\begin{lemma}\label{ht-div-pow}
Let $(A, I)$ be a transversal prism (i.e., $A/I$ is $p$-torsionfree). Let $x\in A$ be an element such that $x\pmod{I}$ is regular in $\ol{A} := A/I$, and such that $(x)\subseteq A$ is $\phi$-stable. Then $\WCart^\HT_{A/(I,x)/A}$ is $p$-completely isomorphic to $\GG_a^\sharp \times \spf(A/(I,x))$, so that $\prismht_{A/(I,x)/A} \cong A/(I,x)\pdb{t}$ with $|t| = 0$.
\end{lemma}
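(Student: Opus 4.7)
The plan is to factor the map $A \to A/(I,x)$ through $\ol{A} := A/I$ as $A \to \ol{A} \to \ol{A}/\ol{x}$, where $\ol{x} := x \pmod I$, and analyze the relative Hodge-Tate cohomology using this factorization. Since $(A,I)$ is a prism, the Hodge-Tate cohomology $\prismht_{\ol{A}/A}$ is tautologically equal to $\ol{A}$ (reflecting that $\spf(\ol{A})$ is the Hodge-Tate locus of the prism $(A,I)$). Combining this with the transitivity sequence for relative Hodge-Tate cohomology along $A \to \ol{A} \to \ol{A}/\ol{x}$ reduces the computation to determining $\prismht_{(\ol{A}/\ol{x})/\ol{A}}$. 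The $\phi$-stability of $(x)$ ensures this reduction is compatible with the prismatic structure on $A$: concretely, $\phi$ descends from $A$ to $A/(x)$, and therefore to $\ol{A}/\ol{x} = A/(I,x)$, so that the Hodge-Tate specialization is well-behaved.

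The key calculation is then $\prismht_{(\ol{A}/\ol{x})/\ol{A}}$. Since $\ol{A}$ is $p$-torsion free (by transversality of the prism) and $\ol{x} \in \ol{A}$ is regular, the closed immersion $\spf(\ol{A}/\ol{x}) \hookrightarrow \spf(\ol{A})$ is a regular closed immersion of codimension $1$, and its cotangent complex $L_{(\ol{A}/\ol{x})/\ol{A}}$ is a free $\ol{A}/\ol{x}$-module of rank $1$ concentrated in degree $1$. In direct analogy with the classical de Rham calculation $\dR_{R/x/R} \simeq R\pdb{x}/x$ for a regular element $x \in R$ recalled above the lemma, I would argue that the relative Hodge-Tate cohomology for this regular closed immersion is computed by the $p$-completed derived divided power envelope of the defining ideal $(\ol{x}) \subseteq \ol{A}$, reduced modulo $\ol{x}$. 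This yields $\prismht_{(\ol{A}/\ol{x})/\ol{A}} \cong (\ol{A}/\ol{x})\pdb{t}$ with $|t|=0$, where $t$ represents the class of $\ol{x}$ in the divided power envelope.

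Geometrically, since $\co_{\GG_a^\sharp} \cong \Z_p\pdb{t}$ with $|t|=0$, this algebraic calculation translates to an isomorphism of formal stacks $\WCart^\HT_{(\ol{A}/\ol{x})/\ol{A}} \cong \GG_a^\sharp \times \spf(\ol{A}/\ol{x})$, which, combined with the reduction of the first paragraph, yields the desired $p$-complete isomorphism $\WCart^\HT_{A/(I,x)/A} \cong \GG_a^\sharp \times \spf(A/(I,x))$. The hard part will be the identification in the second paragraph: rigorously matching the relative Hodge-Tate cohomology of a regular closed immersion of codimension $1$ in our prismatic context with the divided power envelope construction. One approach is direct: resolve $\ol{A}/\ol{x}$ by the Koszul complex $\ol{A} \xrightarrow{\ol{x}} \ol{A}$, then carry out the $p$-completed derived divided power envelope computation while tracking the Frobenius via the $\phi$-stability of $(x)$. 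A more conceptual route is to appeal to the general structural description of relative Hodge-Tate stacks for regular closed immersions established in \cite{apc, prismatization}, which identifies the Hodge-Tate stack of such a quotient with a $\GG_a^\sharp$-type construction directly.
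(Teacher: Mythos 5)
Your primary route has a genuine gap at exactly the step you flag as ``the hard part,'' and the analogy you lean on is not quite the right statement in this setting. Hodge--Tate cohomology here is taken relative to the prism $(A,I)$ (your opening reduction via a ``transitivity sequence along $A \to \ol{A} \to \ol{A}/\ol{x}$'' is vacuous -- $\prismht_{A/(I,x)/A}$ already \emph{is} the Hodge--Tate cohomology of the $\ol{A}$-algebra $\ol{A}/\ol{x}$ relative to $(A,I)$, and $\ol{A}$ is not a prism, so there is no further object ``$\prismht_{(\ol{A}/\ol{x})/\ol{A}}$'' to reduce to). More importantly, for a general prism the cohomology of $\ol{A}/\ol{x}$ is \emph{not} computed by the $p$-completed PD envelope of $(\ol{x})\subseteq \ol{A}$; that is the crystalline/de Rham statement, valid for the crystalline prism. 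The correct input is the prismatic envelope: $\prism_{A/(I,x)/A} \cong A\{\tfrac{x}{I}\}^\wedge_{(p,I)}$ (Bhatt--Scholze), and the content of the lemma is that its reduction mod $I$ is a \emph{free} divided power algebra $A/(I,x)\pdb{t}$. Regularity of $\ol{x}$ gives you the conjugate-graded pieces $\Gamma^i\bigl((\ol{x})/(\ol{x}^2)\bigr)$ for free, but the multiplicative splitting and the existence of honest divided powers is precisely what must be proved; your Koszul-resolution-plus-envelope sketch does not supply it. In the paper this identification is carried out (as an alternative, in \cref{sseq-divpow}) by exhibiting the divided powers explicitly as $\delta^n(\tfrac{x}{d})$, which requires showing $\phi(\delta^n(\tfrac{x}{d}))\in(d)$ -- this is \cref{phi-deltan-div-by-d}, and it is where the $\phi$-stability of $(x)$ and the transversality of $(A,I)$ actually get used -- and even then the paper notes further work is needed to see the map is an isomorphism.

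Your fallback ``conceptual route'' is essentially the paper's actual proof, but you only gesture at it. The paper cites \cite[Proposition 5.12]{prismatization}: the map $\WCart^\HT_{A/(I,x)/A} \to \spf(A/(I,x))$ is a split gerbe banded by $T_{A/(I,x)/\ol{A}}\{1\}^\sharp$; regularity of $\ol{x}$ gives $L_{A/(I,x)/\ol{A}} = (x)/(x^2)[1]$, so the banding is of $\GG_a^\sharp$-type and the stack is a trivial $\GG_a^\sharp$-torsor over $\spf(A/(I,x))$, whence $\prismht_{A/(I,x)/A}\cong A/(I,x)\pdb{t}$. Note that this argument uses only transversality and regularity -- $\phi$-stability is not needed for the main statement, only for the explicit $\delta$-theoretic description of the divided powers. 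To make your proposal a proof, you would need either to invoke that gerbe statement precisely (at which point your argument coincides with the paper's), or to actually carry out the envelope computation, which amounts to reproving \cref{phi-deltan-div-by-d} and the resulting identification rather than appealing to an analogy with the de Rham case.
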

\begin{proof}
By \cite[Proposition 5.12]{prismatization}, the map $\WCart^\HT_{A/(I,x)/A} \to \spf(A/(I,x))$ is a split gerbe, banded by $T_{A/(I,x)/\ol{A}}\{1\}^\sharp$. In this case, since $x\pmod{I}$ is a regular element of $\ol{A}$, we see that $L_{A/(I,x)/\ol{A}} = (x)/(x^2)[1]$, so that $T_{A/(I,x)/\ol{A}} = \spf \Sym_{A/(I,x)}(L_{A/(I,x)/\ol{A}})^\wedge_p$ is isomorphic to $\Omega \GG_a$ over $A/(I, x)$. It follows that $\WCart^\HT_{A/(I,x)/A}$ is isomorphic to a trivial $\GG_a^\sharp$-torsor over $\spf(A/(I,x))$. Since $\prismht_{A/(I,x)/A}$ is the global sections of the structure sheaf of $\WCart^\HT_{A/(I,x)/A}$, the lemma follows. 
\end{proof}
\begin{remark}
In fact, the conjugate filtration $\F^\conj_i \prismht_{A/(I,x)/A}$ is isomorphic to the divided power filtration on $A/(I,x)\pdb{t}$ under \cref{ht-div-pow}.
\end{remark}
\begin{remark}\label{sseq-divpow}
Sticking with the assumptions of \cref{ht-div-pow}, let us mention without proof that \cref{ht-div-pow} is also a consequence of \cite[Example 7.9]{bhatt-scholze}, which states that $\prism_{A/(I,x)/A} \cong A\{\frac{x}{I}\}^\wedge_{(p,I)}$. 
If $I = (d)$ is principal, the $p$-complete isomorphism
$$\beta: A/(I,x)\pdb{t}^\wedge_p \xar{\sim} \prismht_{A/(I,x)/A} \cong A\left\{\frac{x}{I}\right\}^\wedge_p/I$$
leads to an $I$-adic Bockstein spectral sequence
$$E_1^{\ast,\ast} = A/(I,x)\pdb{t}^\wedge_p[\ol{d}] \cong A\left\langle \frac{x}{d}\right\rangle^\wedge_p[\ol{d}]/d \Rightarrow A\left\{\frac{x}{d}\right\}^\wedge_{(p,d)},$$
where $\ol{d}$ represents $d$ on the $E_1$-page.

The map $\beta$ sends $\gamma_{p^n}(t)\mapsto \delta^n(\frac{x}{d})$ (up to $p$-adic units). This can be proved by showing that in the setting of \cref{ht-div-pow}, $\phi(\delta^n(\frac{x}{d})) \in (d) \subseteq A\{\frac{x}{d}\}$ if $n\geq 0$ (see \cref{phi-deltan-div-by-d} below). The fact that
$$\phi\left(\delta^n\left(\frac{x}{d}\right)\right) = \delta^n\left(\frac{x}{d}\right)^p + p\delta^{n+1}\left(\frac{x}{d}\right)$$
then implies that $\delta^n(\frac{x}{d})^p \equiv -p\delta^{n+1}(\frac{x}{d})\pmod{d}$. Therefore, the elements $\delta^n(\frac{x}{d})$ can be used to define divided powers of $\frac{x}{d}\pmod{d}$. In particular, we obtain the desired map $\beta: A/(I,x)\pdb{t} \to A\{\frac{x}{d}\}^\wedge_{(p,d)}/d$, but further work is required to show that it is a $p$-complete isomorphism.
\end{remark}
\begin{lemma}\label{phi-deltan-div-by-d}
Fix notation as in \cref{ht-div-pow}. Then $\phi(\delta^n(\frac{x}{d})) \in (d) \subseteq A\{\frac{x}{d}\}$.
\end{lemma}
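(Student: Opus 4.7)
The plan is an induction on $n$ driven by three inputs: (i) the prism axiom $\delta(d)\in A^\times$ (distinguishedness); (ii) the $(p,d)$-regularity of the prismatic envelope $B := A\{x/d\}$, a structural result of Bhatt--Scholze used implicitly in \cref{sseq-divpow}, which in particular makes $p$ a nonzerodivisor on $B/d$ and $d$ a nonzerodivisor on $B$; and (iii) the identity $\phi\circ\delta=\delta\circ\phi$ that holds in any $\delta$-ring. The $\phi$-stability of $(x)$ lets us write $\phi(x)=ux$ for some $u\in A$.

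For the base case $n=0$, apply $\phi$ to $dy=x$ and substitute $\phi(d)=d^p+p\delta(d)$ to get
$$(d^p+p\delta(d))\,\phi(y)=u\,dy,$$
which rearranges to $p\delta(d)\,\phi(y)=d\bigl(uy-d^{p-1}\phi(y)\bigr)\in dB$. Since $\delta(d)$ is a unit and $p$ is a nonzerodivisor on $B/d$, this forces $\phi(y)\in dB$. Write $\phi(y)=dw_0$ with $w_0\in B$; cancelling the extra factor of $d$ in the original equation (using $d$-nonzerodivision) yields the auxiliary identity $\phi(d)\,w_0=uy$ in $B$.

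For the inductive step, the commutation $\phi\circ\delta=\delta\circ\phi$ recasts the claim as $\delta^n(\phi(y))=\delta^n(dw_0)\in dB$. A direct Leibniz computation gives $\delta(dw_0)=d^p\delta(w_0)+\delta(d)\,\phi(w_0)$, so the first nontrivial case reduces to verifying $\phi(w_0)\in dB$. That divisibility is obtained by bootstrapping the base-case argument: applying $\phi$ to $\phi(d)\,w_0=uy$ produces $\phi^2(d)\,\phi(w_0)=\phi(u)\,\phi(y)=\phi(u)\,dw_0\in dB$, and the congruence $\phi^2(d)\equiv p\cdot(\mathrm{unit})\pmod d$, which follows from iterating $\phi(d)\equiv p\delta(d)\pmod d$ together with $(p,d)$-completeness of $A$, combined once more with $(p,d)$-regularity on $B$, yields $\phi(w_0)\in dB$. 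Iterating this procedure produces a sequence $w_0,w_1,w_2,\ldots\in B$ satisfying $\phi(w_k)=d\,w_{k+1}$ and $\phi^{k+1}(d)\,w_k=\phi^k(u)\,y$; then unrolling $\delta^n(dw_0)$ via repeated Leibniz expansions presents it as an $A$-linear combination of products of $d^j$, $\delta^i(d)$, and $\phi(w_k)$, each of which lies in $dB$ by the previous steps.

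The principal obstacle is the $(p,d)$-regularity of $B=A\{x/d\}$: this property is not evident from the universal definition of the prismatic envelope, and supplies the essential cancellation at every stage (both the initial $p\phi(y)\in dB\Rightarrow \phi(y)\in dB$ and all its iterates). It is provided by the structural results of Bhatt--Scholze cited above. Once that is granted, the argument is a uniform, if slightly bookkeeping-intensive, induction.
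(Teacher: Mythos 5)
Your base case and the construction of the tower $w_k$ are sound, and they rest on the same inputs as the paper's argument: $\phi$-stability of $(x)$, the fact that $\delta(d)$ is a unit, the congruences $\phi^k(d)\equiv p\cdot(\mathrm{unit})\pmod{d}$, and the $(p,d)$-regularity of $B=A\{x/d\}$ (the paper gets the needed regularity by reducing to the universal case and quoting Anschütz--Le Bras for the regularity of $(d,\phi(d))$; your direct use of $\phi(d)=d^p+p\delta(d)$ together with $p$ being a nonzerodivisor on $B/d$ is an equivalent route). One small slip: applying $\phi$ to $\phi(d)w_0=uy$ gives $\phi^2(d)w_1=\phi(u)w_0$, and in general $\phi^{k+1}(d)\,w_k=\phi^k(u)\,w_{k-1}$ with $w_{-1}=y$, not $\phi^k(u)\,y$; the divisibilities you extract are unaffected.

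The genuine gap is in the final step. It is not true that unrolling $\delta^n(dw_0)$ by Leibniz yields only $A$-combinations of products of $d^j$, $\delta^i(d)$ and $\phi(w_k)$, so the appeal to ``the previous steps'' does not close the induction. Already for $n=2$, using $\delta(ab)=a^p\delta(b)+\delta(a)\phi(b)$ and discarding terms visibly in $(d)$ (including the additivity cross-terms, since both summands of $\delta(dw_0)=d^p\delta(w_0)+\delta(d)\phi(w_0)$ lie in $(d)$), one finds
\[
\delta^2(dw_0)\;\equiv\;\delta(d^p)\,\phi(\delta(w_0))\pmod{(d)},\qquad \delta(d^p)\equiv p^{p-1}\delta(d)^p\pmod{(d)},
\]
and $\delta(d^p)$ is not divisible by $d$; so the whole issue is whether $\phi(\delta(w_0))\in(d)$, which is an instance of the statement being proven (for $w_0$ in place of $y$), not a product of the advertised elements. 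Here it is rescued by one more turn of the crank, $\phi(\delta(w_0))=\delta(dw_1)=d^p\delta(w_1)+\delta(d)dw_2$, but at the next stage you need $\phi(\delta^2(w_m))\in(d)$ and $\phi$ of the successive quotients $\phi(\delta^j(w_m))/d$, and so on: the induction as you formulated it (on $n$, for the single element $y$) does not terminate. The fix is to strengthen the inductive statement. Either prove, by induction on $n$ uniformly in the tower index $m$, that $\phi(\delta^n(w_m))\in d\,S_n$ where $S_n$ is the ideal generated by the $\delta^{\le n}(w_{m'})$ inside the $A$-subalgebra they generate (then $\delta(dc)=d^p\delta(c)+\delta(d)\phi(c)$ closes the step, since $\phi$ of every generator is divisible by $d$ by the uniform hypothesis); or, more cleanly, prove ``$\phi^k(\delta^n(y))\in(d)$ for all $k\ge1$'' by induction on $n$: the base case holds for every $k$ because $\phi^k(x)=\bigl(\prod_{i<k}\phi^i(u)\bigr)x\in(d)$ while $\phi^k(d)$ is a nonzerodivisor mod $d$ by your congruence, and the inductive step follows by applying $\phi^{k-1}$ to $p\,\phi(\delta^{n+1}(y))=\phi^2(\delta^n(y))-\phi(\delta^n(y))^p$, giving $p\,\phi^k(\delta^{n+1}(y))=\phi^{k+1}(\delta^n(y))-\phi^k(\delta^n(y))^p\in(d)$ and hence $\phi^k(\delta^{n+1}(y))\in(d)$ since $p$ is a nonzerodivisor on $B/d$. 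This last identity is exactly the one the paper's proof runs (with $k=1$), and its induction likewise needs the all-$k$ form of the hypothesis to close; so your instinct that extra bootstrapping beyond the literal statement is required is correct --- it just has to be packaged as the strengthened induction rather than the Leibniz unrolling.
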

\begin{proof}
Let $t = \frac{x}{d}$. The desired claim can be proved by induction on $n$. For the base case, we need to show that $\phi(t)\in I$. By reduction to the universal case, we may assume that $(p,d)$ is regular in $A$. Then \cite[Lemma 3.6]{anschutz-le-bras-TC} implies that the sequence $(d, \phi(d))$ is regular in $A$. Since $(x)$ is $\phi$-stable, we see that $d$ divides $\phi(x)$; it then follows from the formula $\phi(d) \phi(t) = \phi(x)$ that $d$ divides $\phi(t)$, as desired.
For the inductive step, observe that
$$p\phi(\delta^{n+1}(t)) = p\delta(\phi(\delta^n(t))) = \phi^2(\delta^n(t)) - \phi(\delta^n(t))^p.$$
The inductive hypothesis says that $\phi(\delta^n(t)) \in (d)$ for every $k\geq 1$, so that $d$ divides $p\phi(\delta^{n+1}(t))$. Since $(p,d)$ is a regular sequence, this implies that $d$ divides $\phi(\delta^{n+1}(t))$, as desired.
\end{proof}
This implies the following result, which is also proved in \cite[Lemma 6.13]{petrov-hdr}.
\begin{corollary}\label{diffr-zpn}
There is an isomorphism $\spec(\Z/p^n)^{\slashed{D}} \cong \GG_a^\sharp \times \spec(\Z/p^n)$ of $\Z/p^n$-schemes. In particular, the scaling action of $\GG_m^\sharp$ on $\GG_a^\sharp$ over $\Z/p^n$ gives an isomorphism $\WCart^\HT_{\Z/p^n} \cong \GG_a^\sharp/\GG_m^\sharp$ of $\Z/p^n$-stacks.
\end{corollary}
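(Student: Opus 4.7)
The plan is to apply Lemma \ref{ht-div-pow} after choosing a prism that covers the crystalline prism on $\Z_p$ and exhibits $\Z/p^n$ as a quotient by a regular, $\phi$-stable element. A natural choice is $A = \Z_p\{y\}^\wedge_{(p,y)}$, the $(p,y)$-completed free $\delta$-ring on one generator $y$, equipped with the $\delta$-structure $\delta(y) = 1$ (so that $\phi(y) = y^p + p$). Since $\delta(y) = 1 \in A^\times$, the ideal $(y) \subseteq A$ is distinguished, $(A, (y))$ is a transversal prism with $A/(y) = \Z_p$, and it sits naturally over the crystalline prism $(\Z_p, (p))$ via $y \mapsto 0$. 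Setting $x = p^n \in A$, one has that $x \bmod (y) = p^n$ is a regular element of $\Z_p$, the ideal $(p^n) \subseteq A$ is trivially $\phi$-stable, and $A/(y, p^n) = \Z/p^n$. Lemma \ref{ht-div-pow} then yields a $p$-complete isomorphism
$$\WCart^\HT_{\Z/p^n/A} \cong \GG_a^\sharp \times \spf(\Z/p^n),$$
together with the identification of conjugate filtrations on global sections that matches the divided power filtration on $\Z/p^n\pdb{t_n}$ with $|t_n| = 0$.

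Next, I would promote this relative statement to the absolute claim about $\spec(\Z/p^n)^{\slashed{D}}$. The comparison map $\WCart^\HT_{\Z/p^n/A} \to \spec(\Z/p^n)^{\slashed{D}}$ is an isomorphism because, by \cite[Proposition 5.12]{prismatization}, both sides are split gerbes over $\spf(\Z/p^n)$ banded by $T_{\Z/p^n/\Z_p}\{1\}^\sharp$. The cotangent complex $L_{\Z/p^n/\Z_p} \simeq \Z/p^n[1]$, read off from the cofiber sequence $\Z_p \xrightarrow{p^n} \Z_p \to \Z/p^n$, makes this banding group $(\Omega\GG_a)^\sharp$, so the trivialization provided by the lemma identifies the absolute stack with $\GG_a^\sharp \times \spec(\Z/p^n)$. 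For the ``In particular'' claim, $\WCart^\HT_{\Z/p^n}$ is obtained as the quotient of $\spec(\Z/p^n)^{\slashed{D}}$ by the canonical $\GG_m^\sharp$-action (the Sen/Breuil-Kisin grading), and under the isomorphism above this action corresponds to the standard scaling action of $\GG_m^\sharp$ on $\GG_a^\sharp$, producing $\WCart^\HT_{\Z/p^n} \cong \GG_a^\sharp/\GG_m^\sharp$ over $\spec(\Z/p^n)$.

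The hard part will be the comparison step: verifying that the relative $\WCart^\HT_{\Z/p^n/A}$ computes the absolute $\spec(\Z/p^n)^{\slashed{D}}$. This amounts to matching the two gerbe-theoretic descriptions and checking that the splitting supplied by Lemma \ref{ht-div-pow} is $\GG_m^\sharp$-equivariant with respect to the Breuil-Kisin twist action on the absolute side — a point not visible to the relative construction, since the choice of prism $(A,(y))$ implicitly trivializes Breuil-Kisin twists. Everything else is essentially formal once this comparison is in hand.
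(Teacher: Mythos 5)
Your use of Lemma \ref{ht-div-pow} is the right instinct, but the proof has a genuine gap exactly where you flag it, and the justification you sketch for that step does not work. The relative Hodge--Tate stack $\WCart^\HT_{(\Z/p^n)/A}$ depends on the prism $(A,I)$, not merely on $\ol{A}=\Z_p$: it is the pullback of $\WCart^\HT_{\Z/p^n}$ along the point $\spf(\Z_p)\to\WCart^\HT\cong B\GG_m^\sharp$ determined by $(A,I)$, whereas $\spec(\Z/p^n)^{\slashed{D}}$ is the pullback along the \emph{tautological} point. Identifying the two therefore requires trivializing, canonically, the $\GG_m^\sharp$-torsor (equivalently the Breuil--Kisin twist) attached to your prism $(\Z_p[y]^\wedge_{(p,y)},(y))$ --- which is precisely the step you defer. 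The argument you offer instead, that both sides are ``split gerbes banded by $T_{\Z/p^n/\Z_p}\{1\}^\sharp$,'' is circular: \cite[Proposition 5.12]{prismatization} gives splitness only in the relative situation over a prism, while the splitness of $\spec(\Z/p^n)^{\slashed{D}}\to\spec(\Z/p^n)$ (i.e.\ triviality of this $\GG_a^\sharp$-torsor) is the very content of the corollary; and even granting equal bands, two torsors/gerbes with the same band need not be isomorphic without producing an actual comparison map. (Two smaller points: $y\mapsto 0$ is not a $\delta$-ring map to $(\Z_p,(p))$ since $\delta(y)=1\neq 0=\delta(0)$, though the crystalline prism is in any case not the relevant comparison object; and ``the free $\delta$-ring with $\delta(y)=1$'' should just be $\Z_p[y]^\wedge_{(p,y)}$ with the $\delta$-structure determined by $\delta(y)=1$.)

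The paper's proof is engineered to make this comparison a citation rather than an argument: it applies Lemma \ref{ht-div-pow} to the prism $(\Z_p\pw{\ptl},(\ptl))$ with $x=p^n$, for which $\prismht_{R/\Z_p\pw{\ptl}}\cong\diffr_R$ is already known (\cite[Remark 4.8.6]{apc}, recalled earlier in the paper), so the relative computation \emph{is} the absolute one; alternatively it works over the $q$-de Rham prism $(\Z_p\pw{q-1},[p]_q)$ with $x=(q-1)^{n(p-1)}$ and descends by $\FF_p^\times$-fixed points. The compatibility with the $\GG_m^\sharp$-action needed for the statement $\WCart^\HT_{\Z/p^n}\cong\GG_a^\sharp/\GG_m^\sharp$ is then verified concretely (see the Witt-vector argument in \cref{alt-pf-zpn}, in particular \cref{Gm-sharp action}). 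To repair your route you would either have to show that the Hodge--Tate point of $\WCart^\HT$ attached to your chosen prism is (canonically, $\GG_m^\sharp$-equivariantly) the tautological one, or abandon the bespoke prism and use one for which the relative-to-absolute identification is already available --- at which point you have the paper's proof.
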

\begin{proof}
Recall that $\prismht_{\Z/p^n/\Z_p\pw{\ptl}} = \diffr_{\Z/p^n}$. \cref{ht-div-pow} implies that $\prismht_{\Z/p^n/\Z_p\pw{\ptl}} \cong \Z/p^n\pdb{t}$ with $|t| = 0$; this gives the desired claim. (It is useful to view $\gamma_{p^m}(t)$ as a $p$-adic unit multiple of $\delta^m(\frac{p^n}{\ptl})$, as described in \cref{sseq-divpow}.)

Alternatively, consider the transversal prism $(A,I) = (\Z_p\pw{q-1}, [p]_q)$, and let $x = (q-1)^{n(p-1)}$. Note that $\phi(x) \in (x)$, so $(x)$ is $\phi$-stable. Then $A/I \cong \Z_p[\zeta_p]$, and $A/(I,x)$ is isomorphic to $\Z_p[\zeta_p]/(\zeta_p - 1)^{n(p-1)} \cong \Z/p^n[\zeta_p]$ since the $p$-adic valuation of $(\zeta_p - 1)^{n(p-1)}$ is $n$. It follows from \cref{ht-div-pow} that $\prismht_{\Z/p^n[\zeta_p]/\Z_p\pw{q-1}} \cong \Z/p^n[\zeta_p]\pdb{t'}$ with $|t'| = 0$. There is an action of $\Z_p^\times$ (and hence $\FF_p^\times \subseteq \Z_p^\times$) on $(A, I)$; taking $\FF_p^\times$-fixed points produces an isomorphism 
$$\prismht_{\Z/p^n/\Z_p\pw{\ptl}} \cong (\prismht_{\Z/p^n[\zeta_p]/\Z_p\pw{q-1}})^{h\FF_p^\times} \cong \Z/p^n\pdb{t}$$
with $|t| = 0$, as desired. Note that as described in \cref{sseq-divpow}, the divided power $\gamma_{p^m}(t')$ can be viewed as a $p$-adic multiple of $\delta^m(\frac{(q-1)^{n(p-1)}}{[p]_q}) = \delta^m(\frac{(q-1)^{np-n+1}}{q^p-1})$.
\end{proof}
An alternative (and more hands-on) proof of \cref{diffr-zpn} is given in \cref{alt-pf-zpn}; this alternative argument is also presented as \cite[Lemma 6.13]{petrov-hdr}.
\begin{example}
Let us describe the topological Sen operator on $\THH(\Z/p^n/X(p))$ for $n\geq 2$ (recall that $p>2$). This is equivalent to describing the Serre spectral sequence in $\Z/p^n$-homology for the fibration
$$S^{2p-1} \to \Omega^2 Y_n \to \Omega S^{2p+1} \times \B(p^{n-1} \Z).$$
Note that this fibration is an analogue of the fibration \cref{toda-fibration}.

It will be simpler to analyze the Serre spectral sequence in $\Z_p$-homology, since all the differentials in the Serre spectral sequence in $\Z/p^n$-homology arise from the Serre spectral sequence in $\Z_p$-homology. The analysis is similar to \cref{perfectoid-sen-operator}; the Serre spectral sequence runs
\begin{equation}\label{serre-yn}
    E^2_{\ast,\ast} = \Z_p\pdb{u_n} \otimes_{\Z_p} \Z_p[\theta, \epsilon]/\epsilon^2 \Rightarrow \pi_\ast \Z_p[\Omega^2 Y_n],
\end{equation}
where $\epsilon$ lives in degree $2p-1$ and $u_n$ lives in degree $2$. There are several ways to determine the differentials in this spectral sequence. Our approach will be to describe the pattern of differentials by first calculating $\pi_\ast \Z_p[\Omega^2 Y_n]$; in turn, we will do this by computing $\pi_\ast C^\ast(\Omega^2 Y_n; \Z_p)$. For this, we use the Serre spectral sequence for the fibration
$$\B\Z/p^{n-1} \to \Omega^2 Y_n \to \Omega S^3.$$
Since $\H^\ast(\B\Z/p^{n-1}; \Z) \cong \Z[c]/p^{n-1} c$ with $|c|=2$, the Serre spectral sequence collapses on the $E_2$-page, and we find that $\pi_\ast C^\ast(\Omega^2 Y_n; \Z_p) \cong \Z_p\pdb{x}[c]/(x-p^{n-1}c)$ with $|x|=2$. (If $n=1$, then $\Omega^2 Y_n \simeq \Omega S^3$, and the cohomology ring is $\Z_p\pdb{x}$.) For $n\geq 2$, this is isomorphic to $\Z_p\pdb{y}[c]/y$, where $y = x - p^{n-1}c$. Indeed, observe that if $n\geq 2$, then
$$\gamma_j(y) := \sum_{i=0}^j \frac{p^{i(n-1)}}{i!} c^i \gamma_{j-i}(x)$$
is a well-defined class in $\Z_p\pdb{x}[c]/(x-p^{n-1}c)$ since $p$ has divided powers in $\Z_p$, and that these classes form a basis for $\Z_p\pdb{x}[c]/(x-p^{n-1}c)$ as a $\Z_p[c]$-module. 
Recall that in homological grading, there is an equivalence:
$$\Z_p\pdb{y}/y \simeq \Z_p \oplus \bigoplus_{n\geq 1} \Z_p/n\{\gamma_n(y)\} [-2n],$$
which implies that if $n\geq 2$, then
$$\H^i(\Omega^2 Y_n; \Z_p) \cong \begin{cases}
\Z_p \oplus \bigoplus_{j=1}^k \Z_p/j \{\gamma_j(y) c^{k-j}\} & i = 2k\geq 0 \text{ even}\\
0 & \text{else}.
\end{cases}$$
Using the universal coefficients theorem, we find that if $n\geq 2$, then
$$\pi_i \Z_p[\Omega^2 Y_n] \cong \begin{cases}
\Z_p & i\in 2\Z_{\geq 0}\\
\bigoplus_{j=1}^k \Z_p/j & i = 2k-1.
\end{cases}$$
The generator of $\pi_{2j} \Z_p[\Omega^2 Y_n]$ is the linear dual to $c^j\in \Z_p\pdb{y}[c]/y$, while the generator of $\pi_{2k-1} \Z_p[\Omega^2 Y_n]$ which is killed by $j$ is dual to $\gamma_j(y) c^{k-j}$.
Note that the homotopy groups $\pi_\ast \Z_p[\Omega^2 Y_n]$ are \textit{independent} of $n$ if $n\geq 2$ (but the generators of these groups do depend on $n$). 

Let us now return to the Serre spectral sequence \cref{serre-yn}. Comparison with the Serre spectral sequence for the fibration \cref{toda-fibration} (i.e., with the topological Sen operator on $\THH(\Z_p/X(p))$; see \cref{sen-bpn}) forces the differentials in \cref{serre-yn} to be given by (up to $p$-adic units):
$$d^{2p}(\gamma_{p^k}(u_n)) = p^{n-1} \epsilon \prod_{j=1}^{k-1} \gamma_{p^j}(u_n)^{p-1} = p^{n} \epsilon \partial_{u_n^p}(\gamma_{p^k}(u_n)), \ d^{2p}(\theta^j) = jp\theta^{j-1} \epsilon.$$
Reducing modulo $p^n$, we get the topological Sen operator on $\THH(\Z/p^n/X(p))$ for $n\geq 2$:
$$\Theta: \gamma_{p^k}(u_n) \mapsto p^{n-1} \prod_{j=1}^{k-1} \gamma_{p^j}(u_n)^{p-1}, \ \Theta: \theta^j\mapsto jp \theta^{j-1}.$$
Observe that this acts as ``$p^n \partial_{u_n^p}$''.
Of course, one can similarly deduce the action of the topological Sen operator on $\THH(\Z/p^n/J(p))$.
This recovers the calculation
$$\pi_j \THH(\Z/p^n) = \begin{cases}
\bigoplus_{i=0}^j \Z/\gcd(j,p^n) & \text{even }j\geq 0, \\
\bigoplus_{i=0}^j \Z/\gcd(j,p^n) & \text{odd }j\geq 0, \\
0 & j<0.
\end{cases}$$
\end{example}
Another example of the topological Sen operator comes from studying complete DVRs, where the relationship between $\THH$ relative to $J(p)$ and the diffracted Hodge complex predicted by \cref{rel-jp-diffracted} can be seen directly.
\begin{example}\label{cdvr-sen}
Let $R$ be a $p$-torsionfree complete DVR of mixed characteristic $(0,p>0)$ whose residue field $k$ is perfect. Then we have
$$\pi_\ast \THH(R/X(p)) \cong \HH_\ast(R/\Z_p)[\theta] \otimes_{\Z_p} \epsilon^{\Z_p}_\ast,$$
and the map $\Theta: \pi_\ast \THH(R/X(p)) \to \pi_{\ast-2p} \THH(R/X(p))$ sends $\theta^j \mapsto jp \theta^{j-1}$. To compute the action of the topological Sen operator on the remainder of $\THH(R/X(p))$, it will be simpler to assume that $T(1)$ is an $\E{2}$-ring and work instead with $\THH(R/T(1))$; this is merely cosmetic, and it is not difficult to modify the below argument to use $\THH(R/X(p))$ instead. Then, we have $\pi_\ast \THH(R/T(1)) \cong \HH_\ast(R/\Z_p)[\theta]$. We will compute $\THH(R)$ using the topological Sen operator on $\THH(R/T(1))$ and \cref{tn-sen}. Let $\pi\in R$ be a uniformizer, let $E(u)\in W(k)\pw{u}$ be its minimal polynomial, and let $E'(u)\in W(k)\pw{u}$ denote its derivative with respect to $u$.
Recall that $R = W(k)\pw{u}/E(u)$, that $W(k)$ is \'etale over $\Z_p$, $\pi_\ast \HH(W(k)\pw{u}/W(k)) \cong \Lambda_{W(k)\pw{u}}(du)$ with $|du|=1$, and $\pi_\ast \HH(R/W(k)\pw{u}) \cong R\pdb{\sigma_E}$, where $\sigma_E := \sigma^2(E(u))$. The transitivity sequence for the composite $W(k) \to W(k)\pw{u} \to R$ implies that $\HH(R/W(k)) \simeq \HH(R/\Z_p)$ is the fiber of a map $R\pdb{\sigma_E} \to \Sigma^2 R\pdb{\sigma_E}$ sending $\gamma_n(\sigma_E) \mapsto E'(\pi) \gamma_{n-1}(\sigma_E)$. In particular,
$$\pi_n \HH(R/\Z_p) \cong \begin{cases}
R & n=0, \\
R/E'(\pi) & n=2j+1, \ j\geq 0,\\
0 & \text{else}.
\end{cases}$$
Let us denote the generator of $\pi_{2j-1} \HH(R/\Z_p)$ by $z_j$, so that $\gamma_{j-1}(\sigma_E) \in \pi_{2j} \Sigma^2 R\pdb{\sigma_E}$ is sent to $z_j$ under the boundary map $\Sigma^2 R\pdb{\sigma_E} \to \Sigma \HH(R/\Z_p)$. We then have
\begin{equation}\label{cdvr-t1}
    \pi_n \THH(R/T(1))
\cong \begin{cases}
R\cdot \theta^j & n=2pj, j \geq 0\\
\bigoplus_{0\leq i < j/p} R/E'(\pi) \cdot z_{j-pi} \theta^i & n = 2j-1, j\geq 0,\\
0 & \text{else}.
\end{cases}
\end{equation}
From this, we can describe the topological Sen operator on $\THH(R/T(1))$. For this, it will be useful to rephrase the above calculations somewhat, and use $J(p)$ instead of $T(1)$. It is easy to compute that $\pi_\ast \THH(R/J(p)) \cong \HH_\ast(R/\Z_p)[x]$, where $x$ is the class in degree $2$ from \cref{laurent-xp}. In other words,
$$
\pi_n \THH(R/J(p)) \cong \begin{cases}
R \cdot x^j & n = 2j \text{ for }j \geq 0, \\
\bigoplus_{0\leq i < j} R/E'(\pi) \cdot z_{j-i} x^i & n = 2j-1, j\geq 1.
\end{cases}
$$
Since $\HH(R/\Z_p)$ is the fiber of a map $R\pdb{\sigma_E} \to \Sigma^2 R\pdb{\sigma_E}$, it follows that there is a cofiber sequence
\begin{equation}\label{jp-dvr}
    \THH(R/J(p)) \to R\pdb{\sigma_E}[x] \xar{\nabla} \Sigma^2 R\pdb{\sigma_E}[x],
\end{equation}
where we have denoted the second map by $\nabla$. The map $\nabla$ is given on homotopy by a derivation, sending $\sigma_E \mapsto E'(\pi)$. Informally, $\THH(R/J(p))$ can be written as $R\pdb{\sigma_E}[x]^{\nabla=0}$.


The topological Sen operator $\Theta: \THH(R/J(p)) \to \Sigma^2 \THH(R/J(p))$ is described on homotopy by the operator on $R\pdb{\sigma_E}[x]$ sending $x\mapsto nx^{n-1}$. Note that this operator commutes with $\nabla$ (so that it does indeed define an operator on $\pi_\ast \THH(R/J(p))$).
Observe that since $\THH(R)$ is the fiber of $\Theta: \THH(R/J(p)) \to \Sigma^2 \THH(R/J(p))$, and $\THH(R/J(p))$ is the fiber of $\nabla: R\pdb{\sigma_E}[x] \to \Sigma^2 R\pdb{\sigma_E}[x]$, we can write $\THH(R)$ as the total fiber of the square
\begin{equation}\label{total-fiber}
    \xymatrix{
    R\pdb{\sigma_E}[x] \ar[r]^-{\nabla} \ar[d]^-\Theta & \Sigma^2 R\pdb{\sigma_E}[x] \ar[d]^-\Theta \\
    \Sigma^2 R\pdb{\sigma_E}[x] \ar[r]^-{\nabla} & \Sigma^4 R\pdb{\sigma_E}[x],
    }
\end{equation}
where the map denoted $\Theta$ sends $x^n\mapsto nx^{n-1}$. In turn, it follows that $\THH(R)$ is also the total fiber of the square
\begin{equation}\label{theta-nabla}
    \xymatrix{
    R\pdb{\sigma_E}[x] \ar[r]^-{\nabla} \ar[d]^-{\Theta + \nabla} & \Sigma^2 R\pdb{\sigma_E}[x] \ar[d]^-{\Theta + \nabla} \\
    \Sigma^2 R\pdb{\sigma_E}[x] \ar[r]^-\nabla & \Sigma^4 R\pdb{\sigma_E}[x].
    }
\end{equation}
The operator $\nabla + \Theta$ acts on $R\pdb{\sigma_E}[x]$ by
\begin{equation}\label{psi-deg2nm}
    \nabla + \Theta: x^n \gamma_m(\sigma_E) \mapsto nx^{n-1} \gamma_m(\sigma_E) + E'(\pi) x^n \gamma_{m-1}(\sigma_E).
\end{equation}
Let us now invert $x$, and write $y = \sigma_E x^{-1}$ in $R\pdb{\sigma_E}[x^{\pm 1}]$. Then $y$ has divided powers and lives in degree $0$, and there is an isomorphism $R\pdb{\sigma_E}[x^{\pm 1}] \cong R\pdb{y}^\wedge_{(y)}[x^{\pm 1}]$ on homotopy. We can formally define $\Theta$ on $x^n$ for $n\leq 0$ by the same formula: $\Theta(x^n)=nx^{n-1}$. It follows from \cref{psi-deg2nm} that $\Psi := x(\nabla + \Theta)$ sends
$$\Psi: \gamma_n(y) \mapsto -nx^{-n} \gamma_n(\sigma_E) + E'(\pi) x^{-n+1} \gamma_{n-1}(\sigma_E) = E'(\pi) \gamma_{n-1}(y) - n\gamma_n(y).$$
We claim that the action of $-t\partial_t$ on $R\pdb{(1-t)E'(\pi)}$ agrees the action of $\Psi$ on $R\pdb{y}^\wedge_{(y)}$, if we identify $y = (1-t) E'(\pi)$. Indeed:
\begin{align*}
    \gamma_n(y) & = \gamma_n((1-t) E'(\pi)) = -E'(\pi)^n \frac{(1-t)^n}{n!} \\
    & \xmapsto{-t\partial_t} E'(\pi)^n \frac{t(1-t)^{n-1}}{(n-1)!} = E'(\pi)^n \left(\frac{(1-t)^{n-1}}{(n-1)!} - n\frac{(1-t)^n}{n!}\right) \\
    & = E'(\pi) \gamma_{n-1}((1-t) E'(\pi)) - n\gamma_n((1-t) E'(\pi))\\
    & = E'(\pi) \gamma_{n-1}(y) - n\gamma_n(y).
\end{align*}
In particular, we can rewrite the square \cref{theta-nabla} after inverting $x$ as
$$\xymatrix{
R\pdb{(1-t)E'(\pi)}[x^{\pm 1}] \ar[r] \ar[d]^-{t\partial_t} & R\pdb{(1-t)E'(\pi)}[x^{\pm 1}] \ar[d]^-{t\partial_t}\\
R\pdb{(1-t)E'(\pi)}[x^{\pm 1}] \ar[r] & R\pdb{(1-t)E'(\pi)}[x^{\pm 1}],
}$$
where the horizontal maps act by sending $\gamma_n((1-t)E'(\pi)) \mapsto E'(\pi) \gamma_{n-1}((1-t)E'(\pi))$. The fiber of either of the horizontal maps in the above square can be identified with $\THH(R/J(p))[x^{-1}]$.

Using the above description of $\Theta$, one can calculate (with some tedium) that
$$\pi_n \THH(R) = \begin{cases}
R & n=0,\\
R/jE'(\pi) & n = 2j-1 \geq 0,\\
0 & \text{else}.
\end{cases}$$
This is exactly the calculation of $\pi_\ast \THH(R)$ from \cite[Theorem 5.1]{lindenstrauss-madsen} (reproved in \cite[Theorem 4.4]{krause-nikolaus-dvr}).

The above discussion can be compared to \cite[Remark 9.7]{prismatization}, which says that if $X = \spec R$, then $\WCart^\HT_X \cong X \times BG$, where $G = \{(a,t)\in \GG_a^\sharp \rtimes \GG_m^\sharp | t-1 = E'(\pi) a\}$. The canonical map $\WCart^\HT_X \to X \times \WCart^\HT \cong (B\GG_m^\sharp)_X$ can be identified with the map induced on classifying stacks by the quotient map $G \to \GG_m^\sharp$ of group schemes over $X$.
Recall that the diffracted Hodge stack $X^{\slashed{D}}$ can be identified with $\WCart^\HT_X \times_{\WCart^\HT} \spec(\Z_p) \cong \WCart^\HT_X \times_{\WCart^\HT \times X} X$. In particular, $X^{\slashed{D}} \cong (\GG_m^\sharp)_X/G$, i.e., the classifying stack of the group scheme $\GG_a^\sharp[E'(\pi)] = \{a\in \GG_a^\sharp | E'(\pi) a = 0\}$. One can show from this description that the $2$-periodification of the cohomology of the diffracted Hodge complex $\diffr_R \cong \Gamma(B\GG_a^\sharp[E'(\pi)]; \co)$ can be identified with $\pi_\ast \THH(R/J(p))[x^{-1}]$ (which is additively the $2$-periodification of $\pi_\ast \HH(R/\Z_p)$), as predicted by \cref{rel-jp-diffracted}.

Note that the extensions in the following long exact sequence in homotopy for \cref{tn-sen} are \textit{always} nontrivial:
\begin{align}
    \cdots \to \pi_{2j} \THH(R/T(1)) \to \pi_{2(j-p)} \THH(R/T(1)) \to \pi_{2j-1} \THH(R) \to \label{lexseq} \\
    \pi_{2j-1} \THH(R/T(1)) \to \pi_{2(j-p)-1} \THH(R/T(1)) \to \pi_{2j-2} \THH(R) \to \cdots \nonumber
\end{align}
For example, when $j=p$, there is a long exact sequence
$$\pi_{2p} \THH(R/T(1)) \cong R \cdot \theta \to \pi_{0} \THH(R/T(1)) \cong R \to \pi_{2p-1} \THH(R) \to \pi_{2p-1} \THH(R/T(1)) \to 0,$$
which in particular gives a short exact sequence
$$0 \to R/p \to \pi_{2p-1} \THH(R) \to R/E'(\pi) \cong \pi_{2p-1} \THH(R/T(1)) \to 0.$$
Since $\pi_{2p-1} \THH(R) \cong R/pE'(\pi)$, this extension must be nontrivial.
\end{example}
\subsection{Relation to the $\ptl$-de Rham complex}
We now describe some additional calculations which give further evidence for \cref{rel-jp-diffracted}.
\begin{remark}\label{arpon}
Assume that $R$ is the $p$-completion of $\Z_p[t]$. Forthcoming work of Arpon Raksit (\cite{arpon-qdr}) shows that (a completion of) $q\Omega_R$ arises as the associated graded of a motivic filtration on $\HP(\ku[t]/\ku)$. In fact, Raksit studies $\HP(A[t]/A)$ for a general $\Eoo$-ring $A$ with even homotopy groups.
\end{remark}
Using \cref{arpon}, one can show that (a completion of) $\ptl\Omega_R$ arises as the associated graded of a motivic filtration on $\HP(\BP{1}[t]/\BP{1})$. Moreover, the class $\ptl$ is identified as the image of $v_1 \hbar^{p-1}$ in the associated graded.
For the sake of completeness, let us explicitly compute $\pi_\ast \TP(\Z_p[t]/X(p))$. As in \cref{cdvr-sen}, it will be convenient to assume that $T(1)$ is an $\Efr{2}$-ring and work instead with $\THH(R/T(1))$; again, this is merely cosmetic. We first need the following result, which is a special case of \cite[Proposition 3.1.1]{riggenbach-thesis} and $S^1$-equivariant Poincar\'e duality for $S^1/\mu_n$.
\begin{lemma}\label{tate-product}
Let $X$ be a bounded-below spectrum equipped with an action of $S^1$. Then there is an equivalence
$$\left(\bigoplus_{n \geq 1} X \otimes (S^1/\mu_n)_+\right)^{tS^1} \simeq \lim_{k\to \infty} \bigoplus_{n\geq 1} \left(\Sigma \tau_{\leq k} X\right)^{t\Cn}.$$
\end{lemma}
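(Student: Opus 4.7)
The plan is to combine $S^1$-equivariant Poincar\'e duality for the $1$-manifold $S^1/\mu_n$ with a boundedness argument that lets one push the Tate construction through the infinite direct sum. First, fix a single $n \geq 1$. The closed inclusion $\mu_n \hookrightarrow S^1$ has trivial tangent representation at the identity coset (since $S^1$ is abelian and $S^1/\mu_n$ is an orientable $1$-manifold), so the Wirthm\"uller isomorphism, equivalently $S^1$-equivariant Poincar\'e duality for $S^1/\mu_n$, yields an equivalence of $S^1$-spectra
\begin{equation*}
    X \otimes (S^1/\mu_n)_+ \;\simeq\; \Sigma\, F((S^1/\mu_n)_+, X).
\end{equation*}
Applying $(-)^{tS^1}$ to the right-hand side and invoking the adjunction identity $F((S^1/\mu_n)_+, X)^{tS^1} \simeq X^{t\Cn}$ (the $S^1$-Tate counterpart of the classical identification of $S^1$-homotopy fixed points of a coinduced object with $\Cn$-homotopy fixed points of the underlying object) produces the per-summand identification $(X \otimes (S^1/\mu_n)_+)^{tS^1} \simeq \Sigma\, X^{t\Cn}$.

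Next I would assemble these individual identifications into a statement about the full wedge. The key obstruction is that $(-)^{tS^1}$ does not preserve arbitrary direct sums, so the single-index computation cannot be summed naively. The remedy is to truncate: for each $k$, the spectrum $\tau_{\leq k} X$ is bounded both above and below, so the $S^1$-spectrum $\bigoplus_{n\geq 1} \tau_{\leq k} X \otimes (S^1/\mu_n)_+$ is bounded above, and bounded below with a uniform bound inherited from $X$. Within such uniformly bounded families, the canonical map $\bigoplus \to \prod$ is an equivalence, and the $S^1$-Tate construction commutes with the direct sum; applying the per-summand result to each wedge factor then gives
\begin{equation*}
    \left(\bigoplus_{n\geq 1} \tau_{\leq k} X \otimes (S^1/\mu_n)_+\right)^{tS^1} \;\simeq\; \bigoplus_{n\geq 1} \left(\Sigma\, \tau_{\leq k} X\right)^{t\Cn}.
\end{equation*}

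Finally I would pass to the inverse limit $k \to \infty$. On the left, the bounded-below hypothesis on $X$ gives $X \simeq \lim_k \tau_{\leq k} X$; tensoring with each $(S^1/\mu_n)_+$ (a dualizable $S^1$-spectrum) preserves this limit, and the wedge remains uniformly bounded below in each fixed degree, so the degreewise-eventually-constant Postnikov tower for the left-hand side has no $\lim{}^1$ obstruction. The Tate construction $(-)^{tS^1}$ then commutes with this limit, and chaining the three steps produces the stated equivalence. The careful bookkeeping of the boundedness argument — in particular, the inequality of direct sums and products outside the bounded range — is precisely what is packaged in \cite[Proposition 3.1.1]{riggenbach-thesis}.

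The principal obstacle throughout is the direct-sum-versus-product distinction, which prevents the naive identification $(\bigoplus_n X \otimes (S^1/\mu_n)_+)^{tS^1} \simeq \bigoplus_n \Sigma X^{t\Cn}$. Truncating via $\tau_{\leq k}$ converts the infinite direct sum into a uniformly bounded one, where the distinction collapses, and taking the limit in $k$ recovers the Tate construction of the original object. This is exactly why the right-hand side of the lemma is phrased as a filtered limit of direct sums of truncated Tate constructions rather than as a bare direct sum.
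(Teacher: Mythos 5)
Your overall skeleton---identify each summand via $S^1$-equivariant Poincar\'e duality/Wirthm\"uller, then commute the Tate construction past the infinite direct sum by truncating and passing to the limit in $k$---is exactly what the paper packages into the citation of \cite[Proposition 3.1.1]{riggenbach-thesis} together with equivariant Poincar\'e duality for $S^1/\mu_n$, and your per-summand identification $(X \otimes (S^1/\mu_n)_+)^{tS^1} \simeq \Sigma X^{t\Cn}$ is correct (modulo the standard compatibility of the $S^1$- and $\mu_n$-norm maps). However, the justification you offer at the crucial middle step is false as stated: it is not true that for a uniformly bounded family the canonical map $\bigoplus \to \prod$ is an equivalence. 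Take infinitely many copies of $\FF_p$ concentrated in degree $0$: all terms are bounded in a single degree, yet the direct sum and the product already differ on $\pi_0$. Uniform boundedness does not collapse the sum/product distinction; what makes sums and products agree is that the connectivity of the terms tends to infinity, which does not happen here since every summand $\tau_{\leq k}X \otimes (S^1/\mu_n)_+$ occupies the same range of degrees.

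The statement you actually need---that $(-)^{tS^1}$ commutes with direct sums of $S^1$-spectra that are uniformly bounded \emph{above}---is true, but for a different reason. Homotopy orbits commute with all colimits; for homotopy fixed points, write $(-)^{hS^1} = \lim_N F\left((S^{2N+1})_+, -\right)^{S^1}$ over the skeleta of $ES^1$: each stage commutes with direct sums because the skeleta are finite free $S^1$-CW complexes, and the transition fibers are built from free cells of dimension $\geq 2N+2$, hence vanish in any fixed degree once $N$ is large, uniformly in the summand when all spectra live in degrees $\leq k+1$. So the tower is degreewise eventually constant and the limit commutes with the direct sum; the norm cofiber sequence then transfers this to $(-)^{tS^1}$. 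A similar word is needed at your last step, where you assert that $(-)^{tS^1}$ commutes with $\lim_k$: here use that $(-)^{hS^1}$ commutes with all limits while $(-)_{hS^1}$ preserves connectivity, so the increasingly connective fibers of the tower $\{\bigoplus_n \tau_{\leq k}X \otimes (S^1/\mu_n)_+\}_k$ guarantee convergence of the orbit tower as well. With these repairs your argument is complete and is, in substance, the proof the paper points to.
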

\begin{example}\label{polynomial-t1}
Let $S$ be the sphere spectrum. Recall that $\Z_p[t] \simeq \Z_p \otimes S[t]$, so that $\THH(\Z_p[t]/T(1)) \simeq \THH(\Z_p/T(1)) \otimes \THH(S[t])$. Let $\THH(S[t],(t))$ denote the fiber of the map $\THH(S[t]) \to \THH(S) \simeq S$ induced by the augmentation $S[t] \to S$ sending $t\mapsto 0$; note that the map $\THH(S[t]) \to S$ admits an $S^1$-equivariant splitting. Similarly, we write $\THH(\Z_p[t]/T(1), (t))$ to denote the fiber of the map $\THH(\Z_p[t]/T(1)) \to \THH(\Z_p/T(1))$ induced by the augmentation $\Z_p[t] \to \Z_p$. Then $\THH(S[t],(t)) \simeq \bigoplus_{n\geq 1} (S^1/\mu_n)_+$, so that
\begin{equation}\label{formula-poly-t1}
    \THH(\Z_p[t]/T(1), (t)) \simeq \THH(\Z_p/T(1)) \otimes \THH(S[t],(t)) \simeq \bigoplus_{n\geq 1} (S^1/\mu_n)_+ \otimes \THH(\Z_p/T(1)).
\end{equation}
It follows from \cref{tate-product} that there is an equivalence
$$\THH(\Z_p[t]/T(1), (t))^{tS^1} \simeq \lim_{k\to \infty} \bigoplus_{n\geq 1} \Sigma (\tau_{\leq k} \THH(\Z_p/T(1)))^{t\Cn}.$$
Using \cref{thh-calculations}(a), we have $\tau_{\leq 2kp} \THH(\Z_p/T(1)) \simeq \Z_p[J_k(S^{2p})]$. A simple calculation using \cref{thh-calculations}(a) shows that there is an isomorphism 
$$\pi_\ast (\tau_{\leq 2kp} \THH(\Z_p/T(1)))^{t\Cn} \cong \pi_\ast (\BP{1}/v_1^{k+1})^{t\Cn} \cong \pi_\ast(\tau_{\leq 2k(p-1)} \BP{1})^{t\Cn}.$$
Let $\pdb{n}(\hbar) := \tfrac{[n](\hbar)}{\hbar}$, so that $\pi_\ast \BP{1}^{t\Cn} \cong \Z_p[v_1]\ls{\hbar}/\pdb{n}(\hbar)$. In analogy to $q = \beta\hbar + 1$, if we define $\ptl = v_1 \hbar^{p-1}$, then $\pdb{n}(\hbar)$ defines an element of $\Z_p\pw{\ptl}$ which we will denote $\pdb{n}_\ptl$.
We conclude that
$$\pi_\ast (\tau_{\leq 2kp} \THH(\Z_p/T(1)))^{t\Cn} \cong \Z_p\pw{\ptl}\ls{\hbar}/(\ptl^{k+1}, \pdb{n}(\hbar)).$$
It follows that
$$\pi_\ast \THH(\Z_p[t]/T(1), (t))^{tS^1} \cong \lim_{k\to \infty} \bigoplus_{n\geq 1} \Sigma \Z_p\pw{\ptl}\ls{\hbar}/(\ptl^{k+1}, \pdb{n}_\ptl),$$
i.e., that
$$\pi_\ast \TP(\Z_p[t]/T(1)) \cong \Z_p\pw{\ptl}\ls{\hbar} \times \lim_{k\to \infty} \bigoplus_{n\geq 1} \Sigma \Z_p\pw{\ptl}\ls{\hbar}/\pdb{n}_\ptl.$$
\end{example}
In a manner similar to \cref{polynomial-t1}, one calculates that if we write $\Z_p[\beta]\ls{\hbar} = \Z_p\pw{q-1}\ls{\hbar}$ by setting $q = 1+\beta \hbar$, and $\pdb{n}_{\GG_m}(\hbar) = \tfrac{[n]_{\GG_m}(\hbar)}{\hbar}$ is the divided $n$-series of the rescaled multiplicative formal group law $x + y + (q-1) xy$, then
\begin{equation}\label{HP-ku}
    \pi_\ast \HP({\ku^\wedge_p}[t]/{\ku^\wedge_p}) \cong \Z_p\pw{q-1}\ls{\hbar} \times \lim_{k\to \infty} \bigoplus_{n\geq 1} \Sigma \Z_p\pw{q-1}\ls{\hbar}/((q-1)^{k+1},\pdb{n}_{\GG_m}(\hbar)).
\end{equation}
Moreover, $\pi_\ast \HP(\BP{1}[t]/\BP{1}) \cong \pi_\ast \HP({\ku^\wedge_p}[t]/{\ku^\wedge_p})^{\FF_p^\times}$, where $\FF_p^\times$ acts on $\HP({\ku^\wedge_p}[t]/{\ku^\wedge_p})$ via its action by Adams operations on ${\ku^\wedge_p}$. Note that since $\FF_p^\times$ has order coprime to $p$, taking $\FF_p^\times$-invariants preserves small limits and colimits after $p$-localization. In particular, $\pi_\ast \HP(\BP{1}[t]/\BP{1})$ is isomorphic to $\pi_\ast \TP(\Z_p[t]/T(1))$.

The following is also a consequence of the forthcoming work of Arpon Raksit (\cite{arpon-qdr}) mentioned above.
\begin{lemma}\label{n-series-BP1}
There is an $\Z_p^\times$-equivariant isomorphism
$$\H^\ast(q\Omega_{\Z_p[t]})\ls{\hbar} \cong \Z_p\pw{q-1}\ls{\hbar} \times \bigoplus_{n\geq 1} \Sigma \Z_p\pw{q-1}\ls{\hbar}/\pdb{n}_{\GG_m}(\hbar).$$
\end{lemma}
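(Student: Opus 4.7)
The plan is to combine a direct computation of the two-term complex $q\Omega_{\Z_p[t]}$ with the forthcoming work of Raksit \cite{arpon-qdr}, which identifies $\HP({\ku^\wedge_p}[t]/{\ku^\wedge_p})$ (after passage to the associated graded of its motivic filtration) with an $\hbar$-linearized completion of $q\Omega_{\Z_p[t]}$.

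First, I would unpack the definition of the $q$-de Rham complex. It is the two-term complex
\[
\Z_p\pw{q-1}[t]^\wedge \xrightarrow{\nabla_q} \Z_p\pw{q-1}[t]^\wedge \cdot d_q t, \qquad \nabla_q(t^n) = [n]_q\, t^{n-1}\, d_q t,
\]
whose kernel is the constant polynomials $\Z_p\pw{q-1}$ and whose cokernel decomposes along the basis $\{t^{n-1}\, d_q t\}_{n\geq 1}$ as $\bigoplus_{n\geq 1} \Z_p\pw{q-1}/[n]_q$. So $\H^0(q\Omega_{\Z_p[t]}) \cong \Z_p\pw{q-1}$ and $\H^1(q\Omega_{\Z_p[t]}) \cong \bigoplus_{n\geq 1} \Z_p\pw{q-1}/[n]_q$. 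Base change along the flat map $\Z_p\pw{q-1} \to \Z_p\pw{q-1}\ls{\hbar}$ is exact and preserves these computations verbatim.

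Second, I would invoke Raksit's identification \cite{arpon-qdr} to match the $n$th summand of $\H^1(q\Omega_{\Z_p[t]})\ls{\hbar}$ with $\Z_p\pw{q-1}\ls{\hbar}/\pdb{n}_{\GG_m}(\hbar)$. The combination $\pdb{n}_{\GG_m}(\hbar) = [n]_{\GG_m}(\hbar)/\hbar$ is the natural generator on the topological side because $\hbar$ is the complex orientation of the multiplicative formal group law of ${\ku^\wedge_p}$; the $n$-series of the rescaled formal group law $x+y+(q-1)xy$ evaluated at $\hbar$ corresponds to the element $[n]_q$ on the $q$-de Rham side, differing by a power of $\hbar$ accounting for the degree shift. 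The explicit calculation \cref{HP-ku} of $\pi_\ast \HP({\ku^\wedge_p}[t]/{\ku^\wedge_p})$ then produces exactly the direct sum appearing in the statement.

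Finally, the $\Z_p^\times$-equivariance follows from Adams operations: both sides inherit the action $q \mapsto q^g$, which on ${\ku^\wedge_p}$ corresponds to $\psi^g$ and is computed on $\hbar$ by the formula recalled earlier in the paper. The identification produced by Raksit's theorem is intrinsically equivariant because it is constructed from ${\ku^\wedge_p}$-linear categorical data, which is $\Z_p^\times$-equivariant via Adams operations. The main obstacle to executing this plan will be aligning the completions (Nygaard versus $(q-1)$-adic) implicit in $\Z_p\pw{q-1}\ls{\hbar}$ and verifying that the motivic filtration on $\HP({\ku^\wedge_p}[t]/{\ku^\wedge_p})$ degenerates at its first page, so that the associated graded correctly recovers $\H^\ast(q\Omega_{\Z_p[t]})\ls{\hbar}$ without further extension problems.
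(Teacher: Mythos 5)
Your first paragraph is, in substance, the paper's entire proof; the only ingredient you stop short of is the elementary identity that closes it. For the formal group law $x+y+\beta xy$ of $\ku^\wedge_p$ one has $[n]_{\GG_m}(\hbar)/\hbar = \sum_{i=1}^n \binom{n}{i}\beta^{i-1}\hbar^{i-1} = \frac{(1+\beta\hbar)^n-1}{\beta\hbar} = [n]_q$ with $q=1+\beta\hbar$, so $\pdb{n}_{\GG_m}(\hbar) = [n]_q$ \emph{exactly} in $\Z_p\pw{q-1}\ls{\hbar}$ — not "up to a power of $\hbar$"; the division by $\hbar$ in the divided $n$-series already accounts for the degree. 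Granting this, your direct computation ($\H^0 = \Z_p\pw{q-1}$ from the constants, $\H^1 = \bigoplus_{n\geq 1}\Z_p\pw{q-1}/[n]_q$ from the basis $t^{n-1}d_qt$, then flat base change to $\ls{\hbar}$) proves the lemma with no topological input, and the $\Z_p^\times$-equivariance is visible directly: the action $q\mapsto q^g$ sends $[n]_q$ to $[n]_{q^g} = [g]_{q^n}[n]_q/[g]_q$, a unit multiple, so it preserves each summand, matching your Adams-operation description of the action on the right-hand side.

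Your second step — identifying the summands by invoking \cite{arpon-qdr} together with \cref{HP-ku} — is both unnecessary and, as written, does not close the argument. The right-hand side of \cref{HP-ku} is the completed object $\lim_{k\to\infty}\bigoplus_{n\geq 1}\Sigma\,\Z_p\pw{q-1}\ls{\hbar}/((q-1)^{k+1},\pdb{n}_{\GG_m}(\hbar))$, and one cannot read off the uncompleted direct sum in the lemma from it without settling exactly the completion and degeneration questions you flag and then defer; indeed the paper's logic runs in the opposite direction: the lemma is proved algebraically and then compared with the independently computed \cref{HP-ku} to conclude that $\pi_\ast \HP(\BP{1}[t]/\BP{1})$ is a $2$-periodification of a \emph{completion} of $\H^\ast(\ptl\Omega_{\Z_p[t]})$. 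The paper does remark that the statement is also a consequence of Raksit's forthcoming work, so your route is viable in principle, but executing it requires precisely the motivic-filtration and completion analysis you leave open, whereas the one-line formal-group identity above finishes the proof.
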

\begin{proof}
For the formal group law over ${\ku^\wedge_p}$, we have 
$$\pdb{n}_{\GG_m}(\hbar) = \sum_{i=1}^{n} \binom{n}{i} \hbar^{i-1} \beta^{i-1} = [n]_q \in \Z_p[\beta]\ls{\hbar},$$
where $q := 1+\beta\hbar$. The claim now follows from the fact that the differential $\nabla_q$ in $q\Omega_{\Z_p[t]}$ sends $t^n\mapsto [n]_q t^{n-1} dt$.
\end{proof}
In particular, $\pi_\ast \HP(\BP{1}[t]/\BP{1}) \cong \pi_\ast \TP(\Z_p[t]/T(1))$ is a $2$-periodification of a completion of $\H^\ast(\ptl\Omega_{\Z_p[t]})$. This calculation leads to the following expectation related to \cref{rel-jp-diffracted}:
\begin{conjecture}\label{p-tilde-t1}
Let $R$ be an animated $\Z_p$-algebra. Then $\TP(R/X(p))$ admits a motivic filtration $\F^\star_\mot \TP(R/X(p))$ such that $\gr^i_\mot \TP(R/X(p)) \simeq \hat{\prism}_{R/\Z_p\pw{\ptl}}[2i] \otimes_R \epsilon^R$, where $\hat{\prism}_{R/\Z_p\pw{\ptl}}$ is the Nygaard completion of $\ptl \Omega_R$.
\end{conjecture}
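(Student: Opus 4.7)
The plan is to define $\F^\star_\mot \TP(R/X(p))$ via the even filtration machinery of \cite{even-filtr}, taking $\BP{1}$ (rather than $\MU$) as the ground ring of descent. This is natural in light of \cref{thh-calculations}(a), whose isomorphism $\pi_\ast \TP(\BP{-1}/X(p)) \cong \pi_\ast \BP{1}^{tS^1}[\B\Delta_1]$ suggests that $\TP(-/X(p))$ should be treated as ``$\BP{1}$-even''. Since $X(p)$ is only $\Efr{2}$, the even filtration needs to be adapted to the relative setting; one input is that the $\BP{1}$-based Adams resolution computing $\TP(\Z_p/X(p))$ must be compatible with Raksit's forthcoming motivic filtration on $\HP(\BP{1}[t]/\BP{1})$ (\cite{arpon-qdr}; cf.~\cref{arpon}).

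First, I would verify the claim for $R = \Z_p[t]$. By \cref{polynomial-t1}, there is an identification $\pi_\ast \TP(\Z_p[t]/T(1)) \cong \pi_\ast \HP(\BP{1}[t]/\BP{1})$, which should lift to an equivalence of filtered spectra (a filtered refinement of \cref{conjecture-thh}). Raksit's motivic filtration on $\HP(\BP{1}[t]/\BP{1})$ has associated graded computing the Nygaard-completed $q$-de Rham complex, and after passing from $\ku^\wedge_p$ to $\BP{1}$ via $\FF_p^\times$-homotopy fixed points, the variable $q = 1+\beta\hbar$ is replaced by $\ptl = v_1\hbar^{p-1}$, as indicated in \cref{n-series-BP1} and \cref{HP-ku}. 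The factor $\epsilon^R \simeq R[\BSU(p-1)]$ appears uniformly across the filtration, contributed by the divided-power term $\B\Delta_1$ that distinguishes $X(p)$ from $T(1)$ and is motivically pure.

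Next, I would extend to flat polynomial algebras $\Z_p[t_1,\ldots,t_d]$ via a K\"unneth argument --- both sides satisfy appropriate tensor product formulas: the left-hand side via the base-change property of $\TP(-/X(p))$, the right-hand side via multiplicativity of $\ptl\Omega$ on smooth algebras --- and then left Kan extend to all animated $\Z_p$-algebras. On the right-hand side this is the definition of $\ptl\Omega_R$; on the left, it uses that $\TP(-/X(p))$ commutes with sifted colimits of animated $\Z_p$-algebras up to the Nygaard-type completion issues that are absorbed into $\hat{\prism}$.

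The main obstacle is the construction of a canonical motivic filtration on $\TP(-/X(p))$ itself: the lack of an $\E{3}$-structure on $X(p)$ (\cref{Xn-e3}) prevents a direct application of the standard even-filtration machinery, so one likely needs a relative variant tailored to the $\Efr{2}$ setting. A secondary difficulty is to verify that under the associated graded, the topological Sen operator $\Theta$ of \cref{topological-sen} on $\TP(R/X(p))$ induces the expected Sen operator on $\hat{\prism}_{R/\Z_p\pw{\ptl}}$, extending the consistency checks in \cref{sen-bpn} and the explicit calculations of \cref{cdvr-sen}. Finally, one must carefully track the grading twists, since on the right-hand side $\ptl$ carries weight $p-1$ (via $\ptl = v_1\hbar^{p-1}$), so the claimed isomorphism encodes the weights of the chromatic generators in a specific way that must be preserved at each step of the reduction.
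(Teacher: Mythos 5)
The statement you are trying to prove is \cref{p-tilde-t1}, which the paper states only as a conjecture: no proof is given there. The paper's entire justification is the evidence you also cite --- the explicit computation of $\pi_\ast \TP(\Z_p[t]/T(1))$ in \cref{polynomial-t1}, its comparison with $\H^\ast(\ptl\Omega_{\Z_p[t]})$ via \cref{n-series-BP1}, and an appeal to Raksit's forthcoming filtration on $\HP(\BP{1}[t]/\BP{1})$ (\cref{arpon}) --- so your proposal is best read as a reconstruction of the paper's motivation together with a strategy sketch, not as a proof of something the paper establishes.

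As a proof strategy it has concrete gaps beyond the one you flag. First, the left Kan extension step fails on both sides: $\TP(-/X(p))$ does not commute with sifted colimits (Tate constructions never do), and the Nygaard-completed object $\hat{\prism}_{R/\Z_p\pw{\ptl}}$ is precisely \emph{not} left Kan extended from polynomial $\Z_p$-algebras --- the Nygaard completion destroys that property. The standard substitute (as in \cite{bms-ii} and the even filtration of \cite{even-filtr}) is quasisyntomic descent from quasiregular semiperfectoid rings, and your outline says nothing about how to handle those inputs, where the K\"unneth/polynomial reduction gives no traction. Second, the step ``\cref{polynomial-t1} should lift to an equivalence of filtered spectra'' presupposes a filtered refinement of \cref{conjecture-thh}, which is itself an open conjecture of the paper; you cannot use it as an ingredient. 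Third, the central difficulty you correctly identify --- constructing a motivic/even filtration on $\TP(-/X(p))$ at all, given that $X(p)$ is only an $\Efr{2}$-ring (\cref{Xn-e3}) so the machinery of \cite{even-filtr} does not apply directly and $\THH(-/X(p))$ carries no multiplicative structure --- is exactly where a proof would have to begin, and your proposal leaves it unresolved. So the proposal does not close the conjecture; it restates the paper's evidence and names, without overcoming, the obstructions that make \cref{p-tilde-t1} a conjecture rather than a theorem.
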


We now turn to a higher chromatic analogue of (part of) this picture.
\begin{definition}\label{hh-polynomial}
Let $R$ be an $\E{2}$-ring, and equip $\HH(R[t]/R) := \THH(S[t]) \otimes R$ with the $S^1$-action inherited from $\THH(S[t])$ and the trivial action on $R$.
\end{definition}
\begin{warning}\label{abusive HH notation}
If $R$ is only an $\E{2}$-ring, one \textit{cannot} define Hochschild homology relative to $R$; in particular, the notation $\HH(R[t]/R)$ is rather abusive. As explained in \cite[Corollary 2.9]{framed-e2}, if $R'$ is an $\E{1}$-$R$-algebra, then $\HH(R'/R)$ only exists (and has a natural $S^1$-action) when $R$ is a \textit{framed} $\E{2}$-ring\footnote{Suppose that $R$ is an $\E{3}$-algebra, and $\cC$ is an $R$-linear $\infty$-category. The choice of a framed knot in $\RR^3$ also defines an $\E{n-3}$-map $\int_{S^1} R \to R$, and hence allows one to define relative Hochschild homology $\HH(\cC/R)$. However, this does not define an $S^1$-action on $\HH(\cC/R)$! Thanks to Robert Burklund for this point.}. In other words, if $R$ is merely an $\E{2}$-ring, it would not be clear how to define $\HH(R[t]/R)$, had we not known that $R[t]$ admits a lift to the sphere spectrum. This leads to the following unfortunate warning: if $R$ is an $\Efr{2}$-ring with a nontrivial $S^1$-action, then the (more natural) circle action on $\HH(R[t]/R)$ arising via the $S^1$-action on $R$ \textit{cannot} be necessarily identified with the circle action from \cref{hh-polynomial}. However, for this article, we will only use the circle action from \cref{hh-polynomial}.
\end{warning}

View $\BP{n-1}[t_1, \cdots, t_j]$ as a $\Z_{\geq 0}^j$-graded ring, where $t_i$ has weight $(0,\cdots,1,\cdots,0)$. Then, define $\HP^\gr(\BP{n}[t_1, \cdots, t_j]/\BP{n})$ to be the $S^1$-Tate construction of $\HH(\BP{n}[t_1, \cdots, t_j]/\BP{n})$ taken internally to $\Z_{\geq 0}^j$-graded $\BP{n}$-modules. Similarly, define $\TP^\gr(\BP{n-1}[t_1, \cdots, t_j]/X(p^n))$ to be the $S^1$-Tate construction of $\THH(\BP{n-1}[t_1, \cdots, t_j]/X(p^n))$ taken internally to $\Z_{\geq 0}^j$-graded $\BP{n-1}$-modules. Then, related to \cref{p-tilde-t1}, we have the following result (which, when $n=0$, is a very special case of the main result of \cite{petrov-vologodsky}):
\begin{prop}\label{graded-drw-lift}
There is a $p$-complete isomorphism of $\Z_{\geq 0}^j$-graded modules equipped with a map from $\pi_\ast \BP{n}^{tS^1}[\B\Delta_n] \cong \pi_\ast \TP(\BP{n-1}/X(p^n))$:
$$\pi_\ast \HP^\gr(\BP{n}[t_1, \cdots, t_j]/\BP{n})[\B\Delta_n] \cong \pi_\ast \TP^\gr(\BP{n-1}[t_1, \cdots, t_j]/X(p^n)).$$
The map $\TP^\gr(\BP{n-1}[t]/X(p^n)) \to \TP(\BP{n-1}/X(p^n))$ is an equivalence after $K(n)$-localization.
\end{prop}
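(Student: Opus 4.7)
The strategy is to reduce to the case $j = 1$ (by iterating on the polynomial variables) and then exploit a Künneth-type decomposition against $\THH$ of a polynomial algebra. Since the structure map $X(p^n) \to \BP{n-1}[t]$ factors through $\BP{n-1}$ and $\BP{n-1}[t] \simeq \BP{n-1} \otimes S[t]$ (and similarly $\BP{n}[t] \simeq \BP{n} \otimes S[t]$), the cyclic bar construction yields $S^1$-equivariant equivalences
\begin{align*}
\THH(\BP{n-1}[t]/X(p^n)) & \simeq \THH(\BP{n-1}/X(p^n)) \otimes \THH(S[t]), \\
\HH(\BP{n}[t]/\BP{n}) & \simeq \BP{n} \otimes \THH(S[t]),
\end{align*}
where the second formula is interpreted in the abusive sense of \cref{abusive HH notation}. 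The $\Z_{\geq 0}$-grading on both sides is inherited from $\THH(S[t])$, which splits as a graded $S^1$-spectrum as $S \oplus \bigoplus_{m \geq 1} \Sigma^\infty_+(S^1/\mu_m)$, with $S^1$ acting on the weight-$m$ summand by left translation.

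Because each weight-$m$ summand is bounded below, the graded Tate $(-)^{tS^1,\gr}$ commutes with the direct sum over $m$, reducing the comparison to a weight-by-weight statement. For weight $0$, this is \cref{thh-calculations}(a), which gives $\pi_\ast \TP(\BP{n-1}/X(p^n)) \cong \pi_\ast \BP{n}^{tS^1}[\B\Delta_n]$. For weight $m \geq 1$, the Wirthmüller identification $(S^1/\mu_m)_+ \wedge X \simeq \Sigma\, \mathrm{CoInd}_{\mu_m}^{S^1} \mathrm{Res}_{\mu_m}^{S^1} X$ (the suspension coming from the one-dimensional adjoint representation of $S^1$ at $\mu_m$) yields
\[ ((S^1/\mu_m)_+ \wedge X)^{tS^1} \simeq \Sigma X^{t\mu_m} \]
for any bounded-below $S^1$-spectrum $X$. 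Applying this with $X = \THH(\BP{n-1}/X(p^n))$ and with $X = \BP{n}$ (trivial $S^1$-action), the weight-$m$ comparison reduces to
\[ \pi_\ast \THH(\BP{n-1}/X(p^n))^{t\mu_m} \cong \pi_\ast \BP{n}^{t\mu_m}[\B\Delta_n], \]
which is precisely the second isomorphism in \cref{thh-calculations}(a), valid for arbitrary $m$ since $\Cpn = \Z/m$.

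For the $K(n)$-local statement, the $(t)$-part of $\TP^\gr(\BP{n-1}[t]/X(p^n))$ is $\bigoplus_{m \geq 1} \Sigma \THH(\BP{n-1}/X(p^n))^{t\mu_m}$, and it suffices to show each summand is $K(n)$-acyclic. If $\gcd(m, p) = 1$, then $m$ is a $p$-adic unit and $\BP{n}^{t\mu_m} = 0$; if $m = p^k$ with $k \geq 1$, then iterating the Ando--Morava--Sadofsky isomorphism (cf.~\cref{theta-hbar-vn}) identifies $\pi_\ast \BP{n}^{t\mu_{p^k}}$ with height-$(n-1)$ data, which vanishes under $L_{K(n)}$ because $L_{K(n)} \BP{n-1} = 0$. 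The main technical obstacle is the Wirthmüller-style identification of the weight-$m$ Tate (including the correct suspension shift) together with the verification that the resulting weight-$m$ isomorphisms assemble compatibly across all $m$ into a single graded isomorphism; once this equivariant-homotopy input is in place, everything else reduces cleanly to \cref{thh-calculations}(a) and standard chromatic facts about Tate constructions for finite cyclic groups.
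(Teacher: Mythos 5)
Your argument is essentially the paper's own proof: reduce to $j=1$, split $\THH(S[t])$ into its weight summands $S \oplus \bigoplus_{m\geq 1}(S^1/\mu_m)_+$, commute the graded $S^1$-Tate construction with the direct sum, identify $((S^1/\mu_m)_+\otimes X)^{tS^1}\simeq \Sigma X^{t\mu_m}$ (which is exactly the $S^1$-equivariant Poincar\'e duality for $S^1/\mu_m$ underlying \cref{tate-product}), and then apply \cref{thh-calculations}(a) weight by weight. The $K(n)$-local statement is also handled as in the paper, via the vanishing $\Lk(\BP{n}^{t\Z/p^m})=0$ of \cref{lk-bpn}; your iterated Ando--Morava--Sadofsky/blueshift justification is just a mild variant of the nilpotence-of-$v_n$ computation given there.
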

\begin{proof}
For simplicity, we assume that $j=1$ and write $t$ instead of $t_1$. In the graded setting, we may commute the $S^1$-Tate construction with the infinite direct sum (i.e., \cref{tate-product} is not necessary). It follows that there are \textit{graded} equivalences
\begin{align*}
    \TP^\gr(\BP{n-1}[t]/X(p^n), (t)) & \simeq \bigoplus_{m\geq 1} \Sigma \THH(\BP{n-1}/X(p^n))^{t\Z/m} (m), \\
    \HP^\gr(\BP{n}[t]/\BP{n}, (t)) & \simeq \bigoplus_{m\geq 1} \Sigma \BP{n}^{t\Z/m} (m).
\end{align*}
The desired result now follows from \cref{thh-calculations}(a). The second statement follows from the above equivalences and the fact that $\Lk (\BP{n}^{t\Z/p^m}) = 0$ by \cref{lk-bpn}.
\end{proof}
The proof above used the following (well-known) fact.
\begin{lemma}\label{lk-bpn}
Let $\BP{n}$ denote any form of the truncated Brown-Peterson spectrum. Then we have $\Lk (\BP{n}^{t\Z/p^m}) = 0$.
\end{lemma}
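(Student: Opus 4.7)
The plan is to use the complex orientation of $\BP{n}$ to compute $\pi_\ast \BP{n}^{t\Z/p^m}$ and then show that smashing with $K(n)$ annihilates the result. Since $\pi_\ast \BP{n} = \Z_\ppar[v_1, \ldots, v_n]$ is concentrated in even degrees, the homotopy fixed point spectral sequence for $\BP{n}^{hS^1}$ collapses at $E_2$, giving $\pi_\ast \BP{n}^{hS^1} \cong \BP{n}_\ast\pw{\hbar}$ with $|\hbar|=-2$. The standard formula for the complex-oriented cohomology of $B\Z/p^m$ specializes to $\pi_\ast \BP{n}^{h\Z/p^m} \cong \BP{n}_\ast\pw{\hbar}/[p^m]_F(\hbar)$, where $F$ denotes the formal group law on $\BP{n}$; inverting $\hbar$ to pass to the Tate construction yields
$$\pi_\ast \BP{n}^{t\Z/p^m} \cong \BP{n}_\ast\ls{\hbar}/[p^m]_F(\hbar).$$

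Next, I would reduce modulo $I_n = (p, v_1, \ldots, v_{n-1})$. Over $\BP{n}_\ast/I_n$ the formal group $F$ coincides with the height-$n$ Honda formal group, for which $[p]_F(\hbar) = v_n \hbar^{p^n}$ \emph{exactly}; iterating this identity gives
$$[p^m]_F(\hbar) \equiv v_n^{\,1 + p^n + \cdots + p^{(m-1)n}}\,\hbar^{p^{mn}} \pmod{I_n}.$$
After further inverting $v_n$ (i.e., passing from $k(n)$ to $K(n)$), this class becomes a positive power of $\hbar$ times a unit in $v_n$. Since $\hbar$ is already invertible in the Laurent series ring $K(n)_\ast\ls{\hbar}$, the element $[p^m]_F(\hbar)$ is itself a unit there, so the algebraic quotient $K(n)_\ast\ls{\hbar}/[p^m]_F(\hbar)$ is zero.

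To promote this algebraic vanishing to the spectrum-level statement $\Lk \BP{n}^{t\Z/p^m} = 0$, the cleanest route is to smash the norm cofiber sequence $\BP{n}_{h\Z/p^m} \to \BP{n}^{h\Z/p^m} \to \BP{n}^{t\Z/p^m}$ with $K(n)$, identify the $K(n)$-homology of the outer two terms via their (again collapsing, by evenness) Atiyah--Hirzebruch spectral sequences, and conclude that $K(n)_\ast \BP{n}^{t\Z/p^m} \cong K(n)_\ast\ls{\hbar}/[p^m]_F(\hbar) = 0$. The main technical obstacle is justifying this identification of $K(n)_\ast$ of the Tate construction on the nose; this may be sidestepped by appealing to the Greenlees--Sadofsky/Hovey--Sadofsky vanishing theorem, which states directly that for any complex-oriented ring spectrum $E$ whose formal group law has height at most $n$ and any finite $p$-group $G$, the Tate construction $E^{tG}$ is $K(n)$-acyclic. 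Either route yields the desired conclusion $\Lk \BP{n}^{t\Z/p^m} = 0$.
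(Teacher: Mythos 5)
Your algebraic core coincides with the paper's: both compute $\pi_\ast \BP{n}^{t\Z/p^m} \cong \BP{n}_\ast\ls{\hbar}/[p^m](\hbar)$ and observe that modulo $(p,v_1,\ldots,v_{n-1})$ the $p^m$-series has leading term $v_n^{\frac{p^{nm}-1}{p^n-1}}\hbar^{p^{mn}}$, so that inverting $v_n$ kills the quotient. The genuine gap is in the bridge from this algebra to the statement $\Lk(\BP{n}^{t\Z/p^m})=0$. Your first route needs $K(n)_\ast$ of $\BP{n}^{h\Z/p^m}$ and $\BP{n}^{t\Z/p^m}$ to agree with the naive algebraic answers, but smashing with $K(n)$ does not commute with the homotopy limits defining these constructions ($B\Z/p^m$ has infinitely many cells), so the ``collapsing by evenness'' assertion conceals a real conditional-convergence/$\lim^1$ problem that you acknowledge but do not resolve. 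Your fallback citation does not apply: Greenlees--Sadofsky concerns theories on which $v_n$ acts invertibly (notably $K(n)$ itself), and Hovey--Sadofsky requires the input to be $E(n)$-local; the connective spectrum $\BP{n}$ is neither, and the hypothesis you quote (``formal group of height at most $n$'') fails for it, since its formal group has height $\infty$ along the locus $p=v_1=\cdots=v_n=0$. So neither route, as written, yields the conclusion.

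The paper closes this step differently and more cheaply: since $\BP{n}^{t\Z/p^m}$ is an $\MU$-module, its $K(n)$-acyclicity is detected on homotopy groups by showing that $v_n^{-1}\bigl(\pi_\ast \BP{n}^{t\Z/p^m}/(p,\ldots,v_{n-1})\bigr)=0$. Killing the finite regular sequence is a finite sequence of cofibers and hence commutes with the Tate construction, producing $k(n)^{t\Z/p^m}$, whose homotopy is $\FF_p[v_n]\ls{\hbar}/v_n^{\frac{p^{nm}-1}{p^n-1}}$ for a suitable form; there $v_n$ is nilpotent, and inverting $v_n$ is a filtered colimit computed on homotopy, giving zero. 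No $K(n)$-homology of a homotopy limit ever needs to be computed. Two smaller points you elide: the lemma is stated for an arbitrary form of $\BP{n}$, and the paper first reduces to a single form using that the Tate construction of a bounded-below spectrum depends only on its $p$-completion together with the Angeltveit--Lind uniqueness theorem; and your claim that the mod-$(p,\ldots,v_{n-1})$ formal group is the Honda law with $[p](\hbar)=v_n\hbar^{p^n}$ \emph{exactly} is form-dependent (the paper chooses such a form), though for your argument only the leading term matters, since any element of $K(n)_\ast\ls{\hbar}$ of the form $u\,\hbar^{p^{mn}}(1+\hbar g(\hbar))$ with $u$ a unit and $g\in K(n)_\ast\pw{\hbar}$ is already invertible.
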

\begin{proof}
We first observe that $\BP{n}^{t\Z/p^m}$ depends only on the $p$-completion of $\BP{n}$; indeed, the obvious variant of \cite[Lemma I.2.9]{nikolaus-scholze} shows that if $X$ is a bounded-below spectrum with $\Z/p^m$-action, then $X^{t\Z/p^m}$ is $p$-complete, and the map $X^{t\Z/p^m} \to (X^\wedge_p)^{t\Z/p^m}$ is an equivalence. Since all forms of $\BP{n}$ are equivalent after $p$-completion by \cite{angeltveit-lind}, we may therefore reduce to proving the claim for a single form of $\BP{n}$.

To show that $\Lk (\BP{n}^{t\Z/p^m}) = 0$, it suffices to show (since $\BP{n}^{t\Z/p^m}$ is an $\MU$-module) that $(\pi_\ast \BP{n}^{t\Z/p^m}[\frac{1}{v_n}])/(p,\cdots,v_{n-1}) \cong \pi_\ast k(n)^{t\Z/p^m}[\frac{1}{v_n}] = 0$. Recall that $\pi_\ast \BP{n}^{t\Z/p^m} \cong \BP{n}_\ast\ls{\hbar}/[p^m](\hbar)$. We will work with the form of $\BP{n}$ such that the associated formal group law over $\pi_\ast \BP{n}$ induces the Honda formal group law over $\pi_\ast k(n)$. Then, the $p^m$-series of the formal group law over $\pi_\ast k(n)$ satisfies $[p^m](\hbar) = v_n^{\frac{p^{nm}-1}{p^n-1}} \hbar^{p^{mn}}$; so $\pi_\ast k(n)^{t\Z/p^m} \cong \FF_p[v_n]\ls{\hbar}/v_n^{\frac{p^{nm}-1}{p^n-1}}$. In particular, $v_n$ is nilpotent in $k(n)^{t\Z/p^m}$, so that $\pi_\ast k(n)^{t\Z/p^m}[\frac{1}{v_n}] = 0$.
\end{proof}
\begin{remark}\label{generalized-de-rham}
In general, $\pi_\ast \HP(\BP{n}[t]/\BP{n})$ looks like a completion of the $2$-periodification of the cohomology of the following two-term complex:
\begin{equation}\label{two-term-bpn}
    \BP{n}_\ast\pw{\hbar}[t] \xar{\nabla} \BP{n}_\ast\pw{\hbar}[t] dt, \ \nabla: t^m \mapsto \tfrac{[m]_{\BP{n}}(\hbar)}{\hbar} t^{m-1} dt.
\end{equation}
This is a variant of the $q$-de Rham complex, and was first considered by Arpon Raksit (in forthcoming work). Note that an analogue of \cref{two-term-bpn} can be defined for a formal group law $F(x,y)$ over any commutative ring $A$:
\begin{equation}\label{any-ring-two-term}
    F\Omega_{A[t]/A} := \left(A\pw{\hbar}[t] \xar{\nabla} A\pw{\hbar}[t] dt\right), \ \nabla: t^m\mapsto \tfrac{[m](\hbar)}{\hbar} t^{m-1} dt;
\end{equation}
we will study basic combinatorial properties of such complexes in \cite{generalized-n-series}.
After base-changing to $\QQ$, the operator $\nabla$ can be characterized by the formula $\hbar t\nabla = \exp_F(t\partial_t \log_F(\hbar))$.
\end{remark}
We also have:
\begin{prop}\label{a1-inv}
If $\cC$ is a left $\BP{n-1}$-linear $\infty$-category, and $\cC[t]$ denotes $\cC \otimes_{\BP{n-1}} \BP{n-1}[t]$, then \cref{conjecture-thh} implies that the map $\Lk \TP^\gr(\cC[t]/X(p^n)) \to \Lk \TP(\cC/X(p^n))$ is an equivalence.
\end{prop}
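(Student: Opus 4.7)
The plan is to run the same argument used for \cref{graded-drw-lift}, only now at the level of a general left $\BP{n-1}$-linear $\infty$-category $\cC$ rather than $\BP{n-1}$ itself, and then to apply the vanishing result \cref{lk-bpn} summand by summand. Concretely, I would first use that $\THH(\cC[t]/X(p^n)) \simeq \THH(\cC/X(p^n)) \otimes_S \THH(S[t])$ as $S^1$-equivariant $\Z_{\geq 0}$-graded spectra (with $t$ in weight $1$), together with the standard decomposition
\[
\THH(S[t]) \simeq S \oplus \bigoplus_{m\geq 1} (S^1/\mu_m)_+(m).
\]
This yields a $\Z_{\geq 0}$-graded, $S^1$-equivariant splitting
\[
\THH(\cC[t]/X(p^n)) \simeq \THH(\cC/X(p^n)) \oplus \bigoplus_{m\geq 1} \THH(\cC/X(p^n)) \otimes (S^1/\mu_m)_+ (m).
\]

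Next, since we are taking the $S^1$-Tate construction internally to $\Z_{\geq 0}^j$-graded spectra, the Tate functor commutes with the infinite direct sum (the essential point being the one exploited in the proof of \cref{graded-drw-lift}: unlike in the ungraded case, there is no need to invoke \cref{tate-product}, because each weight $(m)$ summand contributes only in a fixed grading). Consequently,
\[
\TP^\gr(\cC[t]/X(p^n)) \simeq \TP(\cC/X(p^n)) \oplus \bigoplus_{m\geq 1} \Sigma\, \THH(\cC/X(p^n))^{t\Cn}(m),
\]
and the map in the statement is precisely the projection onto the first summand. Therefore it suffices to prove that $\Lk \bigl(\THH(\cC/X(p^n))^{t\Cn}\bigr) = 0$ for every $m\geq 1$.

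For $m$ coprime to $p$ the Tate construction $(-)^{t\Cn}$ vanishes on $p$-complete spectra, so the summands with $\gcd(m,p)=1$ contribute nothing after $K(n)$-localization. This reduces us to the case $m = p^k$. Here is the only place \cref{conjecture-thh} enters: it upgrades $\THH(\cC/X(p^n))$ to an $S^1$-equivariant $\BP{n}$-module (since $\cC$ is $\BP{n-1}$-linear and $\THH(\cC/X(p^n)) \simeq \THH(\cC) \otimes_{\THH(\BP{n-1})}\THH(\BP{n-1}/X(p^n))$), so $\THH(\cC/X(p^n))^{t\Z/p^k}$ is a module over $\BP{n}^{t\Z/p^k}$. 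By \cref{lk-bpn}, $\Lk \BP{n}^{t\Z/p^k} = 0$, hence $\Lk \bigl(\THH(\cC/X(p^n))^{t\Z/p^k}\bigr) = 0$, finishing the proof.

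The only genuine obstacle is the step where we need $\THH(\cC/X(p^n))^{t\Z/p^k}$ to be a $\BP{n}^{t\Z/p^k}$-module; this is exactly where the hypothesis of \cref{conjecture-thh} is used, since without it we do not know that $\THH(\BP{n-1}/X(p^n))^{t\Z/p^k}$ carries an $S^1$-equivariant $\BP{n}$-action compatible with the Tate construction. Everything else, including the graded splitting and the commutation of Tate with the direct sum in the graded setting, is formal.
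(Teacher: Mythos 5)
Your argument is essentially the paper's own proof: the same weight-graded splitting of $\THH(\cC[t]/X(p^n))$ into $\THH(\cC/X(p^n)) \otimes (S^1/\mu_m)_+$ in weight $m$, the same observation that the $S^1$-Tate construction commutes with the infinite direct sum when taken internally to graded spectra (so \cref{tate-product} is not needed), and the same appeal to \cref{conjecture-thh} to endow the Tate summands with a module structure over a $K(n)$-acyclic ring via \cref{lk-bpn}. The only slip is the phrase ``this reduces us to the case $m=p^k$'': after discarding the weights coprime to $p$ you still have $m = p^k m'$ with $m'>1$ prime to $p$, but this is harmless, since \cref{conjecture-thh} is stated for the $\Z/m$-Tate construction for arbitrary $m$, so those summands are modules over $\BP{n}^{t\Z/m}$, and the argument proving \cref{lk-bpn} applies to any $m$ divisible by $p$ (the $p^k$-series divides the $m$-series up to a unit, so $v_n$ is still nilpotent in $k(n)^{t\Z/m}$). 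The paper's own write-up elides the same point, so nothing essential is missing from your proposal.
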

\begin{proof}
Observe that 
$$\THH(\cC[t]/X(p^n)) \simeq \THH(\cC/X(p^n)) (0) \oplus \bigoplus_{m\geq 1} (S^1/\mu_m)_+ \otimes \THH(\cC/X(p^n)) (m),$$
so that
$$\TP^\gr(\cC[t]/X(p^n)) \simeq \TP(\cC/X(p^n)) (0) \oplus \bigoplus_{m\geq 1} \Sigma \THH(\cC/X(p^n))^{t\Z/p^m} (m).$$
Now, \cref{conjecture-thh} implies that $\THH(\cC/X(p^n))^{t\Z/p^m}$ is a $\BP{n}^{t\Z/p^m}$-module. But $\Lk (\BP{n}^{t\Z/p^m}) = 0$ by \cref{lk-bpn}, so that $\Lk \TP^\gr(\cC[t]/X(p^n)) \simeq \Lk \TP(\cC/X(p^n))$, as desired.
\end{proof}
\begin{example}
Let $n=0$, and suppose $\cC$ is the $\infty$-category of quasicoherent sheaves on an $\FF_p$-scheme $X$. Then \cref{a1-inv} says that the map $\TP^\gr(\AA^1\times X) \to \TP(X)$ is a rational equivalence. This is generally not true in the non-graded setting.
\end{example}
\begin{remark}
Note that the functor $L_{K(0)} \TP$ is \textit{not} nil-invariant; the same is true of the functor $\Lk \TP(-/T(n))$ on $\BP{n-1}$-algebras. Indeed, \cite[Theorem 1.1]{nilinv-tp} says that the map $L_{K(0)} \TP(\FF_p[t]/t^k) \to L_{K(0)} \TP(\FF_p) \simeq \QQ_p^{tS^1}$ is an isomorphism if and only if $k$ is a power of $p$. We can also see this at the level of algebra by calculating the crystalline cohomology of $\FF_p[t]/t^k$. If $R$ denotes the $p$-completion of the PD-envelope of the quotient map $\Z_p[t] \to \FF_p[t]/t^k$ (so that $R = \Z_p\left[t, \frac{t^{kj}}{j!} | j\geq 1\right]^\wedge_p$), then \cite[Theorem 7.23]{berthelot-ogus} implies that $\Gamma_\crys((\FF_p[t]/t^k) / \Z_p)$ is quasi-isomorphic to the de Rham complex $\Omega^\bull_{R/\Z_p}$. Note that $R$ is additively isomorphic to the $p$-completion of $\bigoplus_{0\leq i \leq k-1} \bigoplus_{j\geq 0} \Z_p \{\frac{t^{kj+i}}{j!}\}$.

Since the derivative of $\frac{t^{kj+i}}{j!}$ is $(kj+i) \frac{t^{kj+i-1}}{j!}$, which simplifies to $k \frac{t^{k(j-1)+k-1}}{(j-1)!}$ when $i=0$, we find that $\pi_0 \Gamma_\crys((\FF_p[t]/t^k) / \Z_p) \cong \Z_p$, and
\begin{align*}
    \pi_{-1} \Gamma_\crys((\FF_p[t]/t^k) / \Z_p) & \cong \left(\bigoplus_{j\geq 0} \Z_p/k \cdot \left\{ \frac{t^{k(j+1)-1}}{j!}\right\} \right)^\wedge_p\\
    & \ \ \ \oplus \left(\bigoplus_{0\leq i < k-1} \bigoplus_{j\geq 0} \Z_p/(kj+i+1) \cdot \left\{\frac{t^{kj+i}}{j!}\right\}\right)^\wedge_p.
\end{align*}
For instance, suppose $k=p$. Then $p j+i+1 \equiv i+1\pmod{p}$, which is never zero since $0\leq i< p-1$. Therefore, the second summand is zero since $pj + i + 1$ is a $p$-adic unit, and we find that $\pi_{-1} \Gamma_\crys((\FF_p[t]/t^p) / \Z_p) \cong \left(\bigoplus_{j\geq 0} \Z/p \cdot \left\{ \frac{t^{p(j+1)-1}}{j!}\right\} \right)^\wedge_p$. However, if $k$ is not a power of $p$, the second summand contains a non-torsion piece; for example, if $k=2$ and $p$ is odd, the second summand contains the $p$-completion of $\bigoplus_{m\geq 0} \Z/p^m$, which is non-torsion.
\end{remark}
\begin{example}
Let $n=1$; then, \cref{a1-inv} says that \cref{conjecture-thh} implies that up to a Nygaard-type completion, $\Lone \TP^\gr(R[t]/X(p)) \xar{\sim} \Lone \TP(R/X(p))$ for $R$ being an $\E{1}$-$\Z_p$-algebra. In the non-graded setting, this is generally not true; this is in contrast to \cite[Corollary 4.24]{lmmt} (for instance), which says that $K(1)$-local algebraic K-theory is $\AA^1$-invariant on connective $K(1)$-acyclic ring spectra (in particular, on connective $\E{1}$-$\Z_p$-algebras).
\end{example}

Let us now pivot somewhat to a slightly different topic, working at the famed prime $p=2$. Then $\ptl\Omega_R = q\Omega_R$, and there is an interesting action of $\Z/2\subseteq \Z_2^\times$ on $q\Omega_R$ sending $q\mapsto q^{-1}$. If we view $q$ as the Chern class (in K-theory) of the tautological line bundle on $\CP^\infty$, this corresponds to the action of $\Z/2$ on $\CP^\infty$ given by complex conjugation. This motivates the following discussion:
\begin{remark}
We expect that most of the results and conjectures in this article continue to hold with $\Z/2$-equivariance, where ``real'' topological Hochschild homology $\THH_\RR$ is interpreted to mean the construction described in \cite{real-thh, equiv-fact-homology}. Recall that $\Z/2$ acts on $\SU(n)$ by complex conjugation; we will denote this $\Z/2$-space by $\SU(n)_\RR$. Let $\sigma$ (resp. $\rho = \sigma+1$) denote the sign representation (resp. regular representation), and if $X$ is a $\Z/2$-space, let $\Omega^\sigma X$ denote the space of maps $\Map(S^\sigma, X)$. There is a $\Z/2$-equivariant $\E{\rho}$-map 
$$\Omega^\sigma \SU(n)_\RR \simeq \Omega^\rho \BSU(n)_\RR \to \Omega^\rho \BSU_\RR \simeq \BU_\RR,$$
which equips its Thom spectrum $X(n)_\RR$ with the structure of an $\E{\rho}$-ring. One can show that the equivariant Quillen idempotent on $\MU_\RR$ restricts to an idempotent on $X(n)_\RR$, and we will write $T(n)_\RR$ to denote the resulting summand of $X(2^n)_\RR$. Moreover, $\Phi^{\Z/2} T(n)_\RR \simeq y(n)$ as $2$-local $\E{1}$-algebras.
\end{remark}
We then expect:
\begin{conjecture}\label{c2-equiv-analogues}
The following are true:
\begin{enumerate}
    \item $T(n)_\RR$ admits the structure of an $\E{\rho} \rtimes \U(1)_\RR$-algebra, and $X(2^n)_\RR$ splits as a direct sum of shifts of $T(n)_\RR$ such that the inclusion $T(n)_\RR \to X(2^n)_\RR$ of the unit summand is a map of $\E{\rho}$-algebras. In particular, $\THH_\RR(\BP{n-1}_\RR/T(n)_\RR)$ exists and admits an $\U(1)_\RR$-action\footnote{Note that $\U(1)_\RR = S^\sigma$.}.
    \item Let $\BP{n}_\RR$ denote the Real truncated Brown-Peterson spectrum. Then there are equivalences
    \begin{align}
        \THH_\RR(\BP{n-1}_\RR/T(n)_\RR) & \simeq \BP{n-1}_\RR[\Omega S^{2^n\rho+1}], \label{equiv-thh-tn} \\
        \THH_\RR(\BP{n-1}_\RR/T(n-1)_\RR) & \simeq \BP{n-1}_\RR \oplus \bigoplus_{j\geq 1} \Sigma^{2^{n-1}j\rho-1} \BP{n}_\RR/j \label{equiv-thh-tn1}
    \end{align}
    of $\BP{n-1}_\RR$-modules. The second equivalence requires $n\geq 1$.
    Furthermore, the class in $\pi_{2^n\rho} \THH_\RR(\BP{n-1}_\RR/T(n)_\RR)$ induced by the map $E: S^{2^n\rho} \to \Omega S^{2^n\rho+1}$ detects $\sigma^\rho(\ul{v}_n)$.
    \item There is a $\Z/2$-equivariant space $\tilde{K}_n$ and an equivariant fibration
    $$S^{2^n\rho-1}\to \tilde{K}_n \to \Omega S^{2^n\rho+1}$$
    such that $\tilde{K}_0 = \Omega S^\rho$ and $\tilde{K}_1 = \Omega S^{\rho+1}\pdb{\rho+1}$. For $n=0$, this is simply the EHP sequence for $S^\sigma$. 
    The boundary map $\Omega^2 S^{2^{n+1}+1} \to S^{2^{n+1}-1}$ of the underlying fibration is degree $2$ on the bottom cell of the source, and $(\tilde{K}_n)^{\Z/2} = K_{n-1}$ as $(S^{2^n\rho-1})^{\Z/2} = S^{2n-1}$-fibrations over $(\Omega S^{2^n\rho+1})^{\Z/2} = \Omega S^{2^n + 1}$.
    \item For any $\Z/2$-equivariant $\E{\sigma}$-$T(n)_\RR$-algebra $R$, there is an equivariant cofiber sequence
    $$\THH_\RR(R/T(n-1)_\RR) \to \THH_\RR(R/T(n)_\RR) \to \Sigma^{2^n\rho} \THH_\RR(R/T(n)_\RR),$$
    where the second map is a $\Z/2$-equivariant analogue of the topological Sen operator.
    \item Let 
    $$\TP_\RR(\BP{n-1}_\RR/T(n)_\RR) := \THH_\RR(\BP{n-1}_\RR/T(n)_\RR)^{t_{C_2} \U(1)_\RR},$$
    where the notation ``$t_{C_2} \U(1)_\RR$'' means the parametrized Tate construction from \cite[Remark 1.17]{quigley-shah-real-cyclotomic}.
    Then there is a $\Z/2$-equivariant equivalence
    $$\TP_\RR(\BP{n-1}_\RR/T(n)_\RR) \simeq \BP{n}_\RR^{t_{C_2} \U(1)_\RR},$$
    where $\U(1)_\RR$ acts trivially on $\BP{n}_\RR$.
    \item Let $R$ be a $2$-complete animated commutative ring, equipped with the trivial $\Z/2$-action. Then there is a $\Z/2$-equivariant filtration on $\TP_\RR(R/T(1)_\RR)$ such that 
    $$\gr^i_\mot \TP_\RR(R/T(1)_\RR) \simeq (\hat{\prism}_{R/\Z\pw{q-1}})^\wedge_2[2i] = (\hat{q\Omega}_R)^\wedge_2[2i],$$
    where the $\Z/2$-action on the right-hand side is obtained by viewing $\Z/2\subseteq \Z_2^\times \cong \Z/2 \times (1+4\Z_2)$ and using the $\Z_2^\times$-action on the $2$-completed $q$-de Rham complex.
\end{enumerate}
\end{conjecture}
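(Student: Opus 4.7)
The plan is to mirror, piece by piece, the unequivariant arguments of \cref{thh-calculations}, \cref{topological-sen}, and \cref{p-tilde-t1}, but carried out internally to genuine $C_2$-equivariant spectra. For (a), I would attempt to adapt the Green--Hopkins proof of \cref{tn-def}: a multiplicative self-map of $X(2^n)_{\RR,(2)}$ should be classified by a ``Real polynomial'' $f(x) = \sum a_i x^{i+1}$ with $a_i \in \pi_{i\rho} X(2^n)_{\RR,(2)}$, and an equivariant truncation of the Quillen idempotent on $\MU_\RR$ produces the summand $T(n)_\RR$. The $\Efr{\rho}$-algebra structure (framing comes from $U(1)_\RR$) should come from a Real version of \cref{xn-framed-e2}, since $\Omega^\sigma \SU(2^n)_\RR \to \BU_\RR$ is an $\E{\rho} \rtimes U(1)_\RR$-map by construction.

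For (b), I would run the Real B\"okstedt spectral sequence of Dotto--Moi--Patchkoria--Reeh computing $\H^{C_2}_\star(\THH_\RR(\BP{n-1}_\RR/T(n)_\RR); \underline{\FF_2})$. The input is the known computation of $\H^{C_2}_\star(\BP{n-1}_\RR; \underline{\FF_2})$ in terms of Real duals $\underline{\zeta}_i^2$ (and a $\underline{\tau}_i$ factor), and the differentials should be the equivariant enhancement of \cref{bokstedt-diffl} from \cite{angeltveit-rognes}; the Dyer--Lashof operation $Q_{n+1}^\rho$ furnishes the analogue of the extensions $(\sigma\zeta_j)^2 = \sigma\zeta_{j+1}$. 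Feeding the result into a Real Adams spectral sequence based on $\H\underline{\FF_2}_\RR$ with the equivariant change-of-rings along $\ce_\RR(n-1)_\star$ should give \cref{equiv-thh-tn}; an analogous Bockstein analysis as in the proof of \cref{thh-calculations}(b) (now with $\underline{v}_0 = 2_\RR$-Bocksteins computed on the equivariant $E_1$-page) yields \cref{equiv-thh-tn1}.

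For (c), the fibration $S^{2^n\rho-1}\to \tilde K_n \to \Omega S^{2^n\rho+1}$ would be built as the equivariant principal fibration classified by an equivariant Hopf invariant map $\Omega S^{2^n\rho+1} \to \B S^{2^n\rho-1}$, whose fixed points recover \cref{b-fib-phin} for the fibration defining $K_{n-1}$; the compatibility $(\tilde K_n)^{C_2} = K_{n-1}$ and the degree $2$ statement on the bottom cell of the boundary map are forced by the $\rho$-grading and the analogous statement that the underlying Cohen--Moore--Neisendorfer map has degree $p$ after one desuspension. Parts (d)--(f) then follow formally: (d) is the spectral Gysin sequence (\cref{construction-sen}) applied equivariantly to the $\Sp(2^n)_\RR$-level fibration $S^{2^n\rho-1} \to \BSU(2^{n-1})_\RR \to \BSU(2^n)_\RR$; (e) is the equivariant Adams/homotopy fixed points argument of \cref{homology-tc-bpn}, using the parametrized Tate construction of Quigley--Shah; and (f) requires a Real even filtration on $\THH_\RR$, extending Hahn--Raksit--Wilson to the Real setting, and then identifying the associated graded using a Real prism $(\Z_2\pw{q-1}, [2]_q)$ whose $C_2$-action sends $q\mapsto q^{-1}$.

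The principal obstacle is step (c): no version of the Cohen--Moore--Neisendorfer construction is currently known in the genuine $C_2$-equivariant setting, and this is the technical heart on which (d)--(f) depend. A secondary obstacle is (f), since developing a Real even filtration with good descent properties probably requires a Real analogue of the quasisyntomic framework of \cite{bms-ii, even-filtr}; this seems out of reach with current technology, and is ultimately why this is presented as a conjecture rather than a theorem.
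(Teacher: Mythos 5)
You are addressing \cref{c2-equiv-analogues}, which the paper states as a \emph{conjecture}: there is no proof of record to compare your outline against, only partial evidence, so your proposal should be judged as a strategy rather than matched against an argument. Where the paper does establish something, your plan is broadly consonant with it but sometimes takes a heavier route. For (a), the paper's evidence is \cref{yan-summand}: geometric fixed points of $X(2^n)_\RR$ give the Thom spectrum over $\Omega(\SU/\SO)$-type spaces, and Yan's theorem supplies the splitting into shifts of $y(n)$ with $\E{1}$ unit; your proposed Real Quillen idempotent \`a la Green/Hopkins is plausible but nothing in the paper carries it out. For (b), the paper proves \cref{equiv-thh-tn} only for $n=1$, and does so \emph{not} by a Real B\"okstedt/Real Adams spectral sequence but by the Thom presentation of $X(2)_\RR$ over $\Omega^\sigma S^{\rho+\sigma}$ (detecting $\tilde{\eta}$) together with the equivariant Toda fibration $S^{\rho+\sigma}\to \Omega S^{\rho+1}\pdb{\rho+1}\to \Omega S^{2\rho+1}$, exactly mirroring the $n=1$ proof of \cref{thh-calculations}(a); the $n=0$ case of \cref{equiv-thh-tn} and the $n=1$ case of \cref{equiv-thh-tn1} are quoted from the Real $\THH$ literature. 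For (d), the paper's evidence is the unconditional cofiber sequence \cref{equiv-sen}, obtained from the presentation of $X(n)_\RR$ as a Thom spectrum over $\Omega S^{n\rho-1}$ relative to $X(n-1)_\RR$ and the mechanism of \cref{topological-sen}; your Gysin-sequence formulation is the same mechanism (modulo an indexing slip: the relevant fibration is $S^{2^n\rho-1}\to \BSU(2^n-1)_\RR \to \BSU(2^n)_\RR$, not $\BSU(2^{n-1})_\RR\to\BSU(2^n)_\RR$), and, like the paper, it reduces (d) for $T(n)_\RR$ to (a).

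The genuine gaps you flag are the right ones, and they are why the statement is a conjecture: part (c) asks for an equivariant Cohen--Moore--Neisendorfer-type fibration $\tilde{K}_n$ with prescribed fixed points, for which no construction is known beyond $n=0,1$, and parts (e)--(f) presuppose a Real motivic/even filtration and a parametrized-Tate comparison that current technology does not supply. Beyond that, be aware that your route to (b) for general $n$ leans on unestablished equivariant homological algebra --- a Real B\"okstedt spectral sequence with the differentials of \cref{bokstedt-diffl}, equivariant Dyer--Lashof operations for $\E{\rho}$-algebras, and a change-of-rings over a Real analogue of $\ce(n-1)_\ast$ --- and $RO(C_2)$-graded phenomena (negative-cone classes, failure of evenness arguments) have no nonequivariant counterpart, so one cannot simply assert that the unequivariant computation ``lifts.'' In short: your outline is a reasonable program consistent with the paper's partial results, but it is not a proof, and neither the paper nor your proposal closes the two obstacles you correctly identify.
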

\begin{example}
Note that \cite[Theorem 5.18]{real-thh} and \cite[Theorem A.1]{equiv-fact-homology} prove \cref{equiv-thh-tn} for $n=0$ and \cref{equiv-thh-tn1} for $n=1$, respectively.
\end{example}
\begin{remark}
We expect that $\BP{n}_\RR^{\Z/2} \otimes T(n)$ is concentrated in even degrees.
\end{remark}
\begin{prop}
The equivalence \cref{equiv-thh-tn} is true for $n=1$.
\end{prop}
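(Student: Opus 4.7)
My plan is to mimic the proof of the $n=1$, $p=2$ case of \cref{thh-calculations}(a), lifting each step to the $\Z/2$-equivariant setting. The aim is to present $\BP{0}_\RR$ as an $\E{1}$-Thom spectrum over $T(1)_\RR$ on a $\rho$-fold Real loop space, so that a Real analogue of the Blumberg-Cohen-Schlichtkrull computation in \cite{thh-thom} yields the answer directly.

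First I would establish a Real analogue of the extension of the Hopkins-Mahowald theorem in \cite[Corollary B]{bpn-thom}, to the effect that
\[ \BP{0}_\RR \simeq \Thom\bigl(\nu_\RR : \Omega^\rho S^{2\rho+1} \to \BGL_1(T(1)_\RR)\bigr), \]
where $\nu_\RR$ is an $\E{1}$-map detecting the Real generator $\ul{v}_1 \in \pi_\rho T(1)_\RR$ on the bottom equivariant cell $S^{\rho+1}$ of $\Omega^\rho S^{2\rho+1}$. The inputs are (i) a Real Hopkins-Mahowald theorem presenting $\FF_{2,\RR}$ as the Thom spectrum of an $\Efr{\rho}$-map $\Omega^\rho S^{\rho+1} \to \BGL_1(S_\RR)$ detecting $1-\ul{2}$ on the bottom cell (compare \cite{real-thh}), and (ii) a Real Toda-type fibration
\[ S^{2\rho-1} \simeq \SU(2)_\RR \xrightarrow{\eta_\RR} \Omega S^{\rho+1}\pdb{\rho+1} \to \Omega S^{2\rho+1}, \]
obtained by equivariantly looping and connective-covering the Real James-Hopf map. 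The two together allow $\BP{0}_\RR$, viewed as the $\rho+1$-connected cover lift of $\FF_{2,\RR}$, to be factored through a Thom spectrum over $\Omega^\rho S^{2\rho+1}$ with coefficient $T(1)_\RR$, exactly as in \cite[Corollary B]{bpn-thom}; the key nullhomotopy is that of $\eta_\RR$ in $T(1)_\RR$, witnessed by $\ul{v}_1$.

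Next I would invoke a Real version of the main theorem of \cite{thh-thom}: for an $\Efr{\rho}$-ring $R_\RR$ and an $\E{1}$-Thom spectrum $A = \Thom(f : \Omega^\rho Y_\RR \to \BGL_1(R_\RR))$,
\[ \THH_\RR(A/R_\RR) \simeq A \otimes_{R_\RR} R_\RR[\Omega^{\rho-\sigma} Y_\RR] \simeq A[\Omega Y_\RR], \]
since $\rho - \sigma = 1$. This is the natural Real avatar of the nonequivariant calculation, and should follow by the general framework of \cite{real-thh} together with the observation that Real $\THH$ is tensor with the Real circle $S^\sigma$. Specializing to $Y_\RR = S^{2\rho+1}$, $A = \BP{0}_\RR$, and $R_\RR = T(1)_\RR$ gives the claimed equivalence $\THH_\RR(\BP{0}_\RR/T(1)_\RR) \simeq \BP{0}_\RR[\Omega S^{2\rho+1}]$.

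The hardest part is the first step: one needs to execute the argument of \cite[Corollary B]{bpn-thom} with full equivariant control, and this relies on the $\Efr{\rho}$-structure on $T(1)_\RR$ predicted by \cref{c2-equiv-analogues}(a) together with the equivariant refinement of the construction of $\nu_\RR$ via the Real Toda fibration. As an alternative, one could bypass the Real Hopkins-Mahowald machinery altogether by constructing the comparison map $\BP{0}_\RR[\Omega S^{2\rho+1}] \to \THH_\RR(\BP{0}_\RR/T(1)_\RR)$ directly from $\sigma^\rho(\ul{v}_1)$ via the James filtration of $\Omega S^{2\rho+1}$, and then checking that it is a $2$-complete equivariant equivalence by verifying it separately on the underlying spectrum---where it reduces to the $p=2$, $n=1$ case of \cref{thh-calculations}(a)---and on the geometric fixed points, where the expected identification $\Phi^{\Z/2} T(1)_\RR \simeq y(1)$ reduces the statement to an unstable calculation in $y(1)$-modules.
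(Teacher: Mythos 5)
Your route is genuinely different from the paper's, and it is worth seeing what each buys. The paper never uses a Thom-spectrum presentation of $\ul{\Z}$ over $T(1)_\RR$: it uses instead that $T(1)_\RR = X(2)_\RR$ is the Thom spectrum of the $\sigma$-fold loop map $\Omega^\sigma S^{\rho+\sigma} \to \BGL_1(S)$ detecting $\tilde{\eta}$ (\cite[Example 7.1.3]{bpn-thom}), so that the absolute Real analogue of \cite{thh-thom} (applied over the sphere only) gives $\THH_\RR(X(2)_\RR) \simeq X(2)_\RR[S^{\rho+\sigma}]$; it then base-changes, writing $\THH_\RR(\ul{\Z}/X(2)_\RR) \simeq \THH_\RR(\ul{\Z}) \otimes_{\THH_\RR(X(2)_\RR)} X(2)_\RR \simeq \ul{\Z}[\Omega S^{\rho+1}\pdb{\rho+1}] \otimes_{\ul{\Z}[S^{\rho+\sigma}]} \ul{\Z}$, and resolves this two-sided bar construction using the equivariant Toda fibration $S^{\rho+\sigma} \to \Omega S^{\rho+1}\pdb{\rho+1} \to \Omega S^{2\rho+1}$. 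So the Toda fibration enters both arguments, but in the paper it identifies a relative tensor product rather than feeding a Real Hopkins--Mahowald-type construction; the only inputs are the unconditional Thom description of $X(2)_\RR$ over the sphere and the known computation of $\THH_\RR(\ul{\Z})$.

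Two cautions about your main route. First, the presentation of $\ul{\Z}$ as the Thom spectrum of a map $\Omega^\rho S^{2\rho+1} \to \BGL_1(X(2)_\RR)$ detecting $\ul{v}_1$ is already \cite[Theorem 7.2.1]{bpn-thom} (cited in the footnote to this proposition), so you should quote it rather than re-run the equivariant Hopkins--Mahowald argument; more importantly, as in the nonequivariant case (\cref{relation-to-hopkins-mahowald}) this map is not expected to be a $\rho$-fold loop map, so the relative Real Blumberg--Cohen--Schlichtkrull statement you invoke would have to be proved for maps carrying only $\E{1}$- (or $\E{\sigma}$-) structure, where the output is a priori a \emph{twisted} $\ul{\Z}$-homology of $\Omega S^{2\rho+1}$ and untwisting needs a separate argument (compare \cref{A-thh}); no such relative equivariant statement exists in the cited literature, and this is precisely the step the paper's hand-made base change replaces. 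Second, conditioning the construction of $\nu_\RR$ on the structure conjectured in \cref{c2-equiv-analogues}(a) would make the proposition conditional, which the paper's proof avoids because the structure needed to form $\THH_\RR(-/X(2)_\RR)$ comes from the Thom description of $X(2)_\RR$ itself. Your fallback --- building the comparison map $\ul{\Z}[\Omega S^{2\rho+1}] \to \THH_\RR(\ul{\Z}/T(1)_\RR)$ from $\sigma^\rho(\ul{v}_1)$ via the James filtration and checking it on underlying spectra (where it reduces to the $p=2$, $n=1$ case of \cref{thh-calculations}(a)) and on geometric fixed points --- is legitimate, since a map of genuine $\Z/2$-spectra is an equivalence exactly when it is one on underlying spectra and on $\Phi^{\Z/2}$, but the $\Phi^{\Z/2}$ computation with $y(1)$ is itself nontrivial, so this is still heavier than the paper's three-line base change.
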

\begin{proof}[Proof sketch]
This can be proved analogously to \cref{thh-calculations}(a) for $n=1$ using the equivariant Toda fiber sequence
$$S^{\rho+\sigma}\to \Omega S^{\rho+1}\pdb{\rho+1} \to \Omega S^{2\rho+1}$$
of \cite[Equation 7.1]{bpn-thom}.\footnote{See also \cite[Theorem 7.2.1]{bpn-thom}, which says that $\ul{\Z}$ is the Thom spectrum of a map $\Omega^\rho S^{2\rho+1} \to \BGL_1(X(2)_\RR)$ whose bottom cell detects $\ul{v}_1\in \pi_\rho X(2)_\RR$.} Indeed, recall from \cite[Example 7.1.3]{bpn-thom} that $X(2)_\RR$ is the Thom spectrum of the map $\Omega^\sigma S^{\rho+\sigma} \to \BGL_1(S)$ detecting $\tilde{\eta}\in \pi_\sigma S$. By an argument similar to \cite{thh-thom}, this implies that $\THH_\RR(X(2)_\RR) \simeq X(2)_\RR[S^{\rho+\sigma}]$. Therefore:
\begin{align*}
    \THH_\RR(\ul{\Z}/X(2)_\RR) & \simeq \ul{\Z}[\Omega S^{\rho+1}\pdb{\rho+1}] \otimes_{X(2)_\RR[S^{\rho+\sigma}]} X(2)_\RR \\
    & \simeq \ul{\Z}[\Omega S^{\rho+1}\pdb{\rho+1}] \otimes_{\ul{\Z}[S^{\rho+\sigma}]} \ul{\Z} \simeq \Z[\Omega S^{2\rho+1}],
\end{align*}
where the last equivalence uses the equivariant Toda fiber sequence.
\end{proof}
\begin{example}\label{yan-summand}
Let us note some additional evidence for \cref{c2-equiv-analogues}(a): if $X$ is a $\Z/2$-space, then the cofiber sequence $(\Z/2)_+ \to S^0 \to S^\sigma$ of spaces implies that $(\Omega^\sigma X)^{\Z/2}$ is equivalent to the fiber of the canonical map $X^{\Z/2} \to X$. In particular, since $(\SU(n)_\RR)^{\Z/2} = \SO(n)$, we see that $(\Omega^\sigma \SU(n)_\RR)^{\Z/2} \simeq \Omega(\SU(n)/\SO(n))$. Since geometric fixed points preserves colimits, this implies that $\Phi^{\Z/2} X(n)_\RR$ is the Thom spectrum of the map 
$$(\Omega^\sigma \SU(n)_\RR)^{\Z/2} \simeq \Omega(\SU(n)/\SO(n)) \to \Omega(\SU/\SO) \simeq \BO \simeq \BU_\RR^{\Z/2}.$$
Since $\Phi^{\Z/2} T(n)_\RR \simeq y(n)$ as $2$-local $\E{1}$-algebras, \cref{c2-equiv-analogues}(a) would imply that $\Phi^{\Z/2} X(2^n)_\RR$ (i.e., the Thom spectrum of the map $\Omega(\SU(n)/\SO(n)) \to \BO$) is a direct sum of shifts of $y(n)$ such that the inclusion $y(n) \to \Phi^{\Z/2} X(2^n)_\RR$ of the unit summand is an $\E{1}$-map. This is indeed true, and was proved in \cite{yan-thom}.
\end{example}
\begin{example}
The strongest evidence for \cref{c2-equiv-analogues}(d) is the following. It follows from \cite[Construction 7.1.1]{bpn-thom} that there is a map $\Omega S^{n\rho-1} \to \BGL_1(X(n-1)_\RR)$ whose Thom spectrum is $X(n)_\RR$. The same construction used to prove \cref{topological-sen} then shows that for any $\Z/2$-equivariant $\E{\sigma}$-$X(n)_\RR$-algebra $R$, there is an equivariant cofiber sequence
\begin{equation}\label{equiv-sen}
    \THH_\RR(R/X(n-1)_\RR) \to \THH_\RR(R/X(n)_\RR) \to \Sigma^{n\rho} \THH_\RR(R/X(n)_\RR),
\end{equation}
where the second map is a $\Z/2$-equivariant analogue of the topological Sen operator. It is not difficult to see that given the first half of \cref{c2-equiv-analogues}(a), \cref{c2-equiv-analogues}(d) can be easily proved using the construction of \cref{topological-sen}.

For example, we have $X(2)_\RR = T(1)_\RR$, and the cofiber sequence of \cref{equiv-sen} is precisely \cref{c2-equiv-analogues}(d). For $R = \ul{\Z}$, \cref{equiv-sen} becomes a cofiber sequence
$$\ul{\Z}[\Omega S^{\rho+1}\pdb{\rho+1}] \simeq \THH_\RR(\ul{\Z}) \to \THH_\RR(\ul{\Z}/X(2)_\RR) \simeq \ul{\Z}[\Omega S^{2\rho+1}] \simeq \bigoplus_{n\geq 0} \Sigma^{2n\rho} \ul{\Z} \to \bigoplus_{m\geq 1} \Sigma^{2m\rho} \ul{\Z}.$$
A version of this fiber sequence was in fact already studied in \cite[Lemma A.3]{equiv-fact-homology}.
\end{example}
\begin{remark}
\cref{c2-equiv-analogues}(a) and \cref{c2-equiv-analogues}(d) together imply that 
$$\pi_\ast \Phi^{\Z/2} \THH_\RR(\BP{n-1}_\RR) \simeq \FF_2[t, \sigma^2(v_{n-1}), \sigma(v_{j-1}) | 1\leq j \leq n]/(\sigma(v_j)^2),$$
where $|t| = 2^n = |\sigma^2(v_{n-1})|$ and $|\sigma(v_{j-1})| = 2^j-1$.
This can also be proved unconditionally using methods similar to that of \cite[Theorem 5.23]{real-thh}, by writing
$$\Phi^{\Z/2} \THH_\RR(\BP{n-1}_\RR) \simeq \Phi^{\Z/2} \BP{n-1}_\RR \otimes_{\BP{n-1}_\RR} \Phi^{\Z/2} \BP{n-1}_\RR,$$
and using that $\pi_\ast \Phi^{\Z/2} \BP{n-1}_\RR \cong \FF_2[t]$.

Note that if we assume \cref{c2-equiv-analogues}(c), then $\THH_\RR(\BP{n-1}_\RR/T(n-1)_\RR) \simeq \BP{n-1}_\RR[\tilde{K}_n]$; the conjectural equivalence $\tilde{K}_n^{\Z/2} = K_{n-1}$ then gives an equivalence
\begin{equation}\label{real-thh-fixed}
    \Phi^{\Z/2} \THH_\RR(\BP{n-1}_\RR/T(n-1)_\RR) \simeq \Phi^{\Z/2} \BP{n-1}_\RR[K_{n-1}].
\end{equation}
Observe that
$$\pi_\ast \Phi^{\Z/2} \BP{n-1}_\RR[K_{n-1}] \cong \FF_2[t, x, e]/e^2,$$
where $|x| = 2^n$ and $|y| = 2^n - 1$. For instance, when $n=1$, there is an equivalence $(\Omega S^{\rho+1}\pdb{\rho+1})^{\Z/2} = \Omega S^2$, and \cref{real-thh-fixed} reduces to the equivalence
$$\Phi^{\Z/2} \THH_\RR(\ul{\Z}) \simeq \Phi^{\Z/2} \ul{\Z}[\Omega S^2] \simeq (\tau_{\geq 0} \FF_2^{tS^1})[\Omega S^2].$$
\end{remark}
\subsection{Aside: the Segal conjecture}\label{aside-segal}
In this section, we make some brief remarks regarding the Segal conjecture; the reader is referred to \cite[Section 4]{hahn-wilson-bpn} and \cite[Section 5]{mathew-k1-local-tr} for a discussion of its algebraic interpretation and a review of the literature on this topic.
\begin{definition}\label{def-segal}
An $\Eoo$-ring $R$ is said to satisfy the Segal conjecture if the cyclotomic Frobenius $\THH(R) \to \THH(R)^{t\Cp}$ is an equivalence in large degrees.
\end{definition}
\begin{example}
Let $R$ be a commutative $\FF_p$-algebra. If $R$ is Cartier smooth in the sense of \cite[Section 2]{kelly-morrow-valuation} and $\Omega^n_{R/\FF_p} = 0$ for $n\gg 0$, then $R$ satisfies the Segal conjecture in the sense of \cref{def-segal} (see \cite[Corollary 9.5]{mathew-recent-advances}).

For instance, suppose $R = k$ is a field of characteristic $p>0$. Then $\THH(k) \simeq \HH(k/\FF_p)[\sigma]$ as a module over $\THH(\FF_p) \simeq \FF_p[\sigma]$, and $\pi_i \HH(k/\FF_p) = 0$ for $i>\log_p [k:k^p] = \dim_k \Omega^1_{k/\FF_p}$. This implies that the localization map $\THH(k) \to \THH(k)[\frac{1}{\sigma}] \simeq_\varphi \THH(k)^{t\Cp}$ is an equivalence in degrees $>\log_p [k:k^p] - 2$.
\end{example}
\begin{example}
The proof of \cite[Theorem 4.3.1 and Corollary 4.2.3]{hahn-wilson-bpn} can be used to show that the map $\THH(\BP{n-1}) \otimes_{\BP{n-1}} \FF_p \to \THH(\BP{n-1})^{t\Cp} \otimes_{\BP{n-1}} \FF_p$ is an equivalence in degrees $>n+\sum_{i=0}^{n-1} |v_i| = \sum_{i=0}^{n-1} (2p^i-1) = 2\frac{p^n-1}{p-1} - n$. Note that $2\frac{p^n-1}{p-1} - n$ is also precisely the shift appearing in Mahowald-Rezk duality for $\BP{n}$ (see \cite[Corollary 9.3]{mahowald-rezk}).
\end{example}
\begin{remark}
Assume \cref{tn-e2}, so that we can define the $\THH$ of a left $T(n)$-linear $\infty$-category relative to $T(n)$.
Since we do not know if $\THH$ relative to $T(n)$ admits the structure of a cyclotomic spectrum (presumably it does not), it does not seem possible to state a direct analogue of \cref{def-segal} in this context. However, recall that if $k$ is a perfect field of characteristic $p>0$ and $R$ is an animated $k$-algebra, the cyclotomic Frobenius $\varphi: \THH(R) \to \THH(R)^{t\Cp}$ is the Frobenius-linear map given by inverting $\sigma\in \pi_2 \THH(k)$: this is a consequence of the observation that the map $\varphi: \THH(k) \to \THH(k)^{t\Cp}$ is given by composing the localization $\THH(k) \to \THH(k)[\sigma^{-1}]$ with a Frobenius-linear equivalence $\THH(k)[\sigma^{-1}] \simeq_\Fr \THH(k)^{t\Cp}$.

This observation motivates the following terminology: we say that an $\E{1}$-$\BP{n-1}$-algebra $R$ satisfies the ``$T(n)$-Segal conjecture'' if the base-change of the localization map $\THH(R/T(n)) \to \THH(R/T(n))[\theta_n^{-1}]$ along $\BP{n-1} \to \FF_p = \BP{n-1}/(p,\cdots,v_{n-1})$ is an equivalence in large degrees. Note that if $n=1$, this is equivalent to saying that the $p$-completion of the map $\THH(R/T(1)) \to \THH(R/T(1))[\theta^{-1}]$ is an equivalence in large degrees. One can similarly say that an $\E{1}$-$\Z_p$-algebra $R$ satisfies the ``$J(p)$-Segal conjecture'' if the map $\THH(R/J(p)) \to \THH(R/J(p))[x^{-1}]$ is an equivalence in large degrees.
\end{remark}
\begin{prop}\label{poly-segal}
If we assume \cref{tn-e2}, the localization map 
$$\gamma: \THH(\BP{n-1}[x_1,\cdots,x_d]/T(n)) \to \THH(\BP{n-1}[x_1,\cdots,x_d]/T(n))[\theta_n^{-1}]$$
is an equivalence in degrees $>d-2p^n$ after base-changing along $\BP{n-1} \to \FF_p$. In particular, the flat polynomial algebra $\BP{n-1}[x_1,\cdots,x_d]$ satisfies the $T(n)$-Segal conjecture.
\end{prop}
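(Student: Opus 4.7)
The strategy is to reduce to an explicit homotopy computation building on \cref{thh-calculations}(a). First, since $T(n)$ admits a framed $\E{2}$-structure by \cref{tn-e2}, one has the standard multiplicativity of $\THH$ for tensor products over a framed $\E{2}$-base. Writing $\BP{n-1}[x_1,\ldots,x_d] \simeq \BP{n-1} \otimes_S S[x_1,\ldots,x_d]$ as $T(n)$-algebras, this yields
$$\THH(\BP{n-1}[x_1,\ldots,x_d]/T(n)) \simeq \THH(\BP{n-1}/T(n)) \otimes_S \THH(S[x_1,\ldots,x_d]).$$
Base-changing along $\BP{n-1} \to \FF_p$ and using the equivalence $\THH(\BP{n-1}/T(n)) \otimes_{\BP{n-1}} \FF_p \simeq \FF_p[\Omega S^{2p^n+1}]$ supplied by \cref{thh-calculations}(a) (under \cref{tn-e2}), I would set
$$M_\FF := \THH(\BP{n-1}[x_1,\ldots,x_d]/T(n)) \otimes_{\BP{n-1}} \FF_p \simeq \FF_p[\Omega S^{2p^n+1}] \otimes_S \THH(S[x_1,\ldots,x_d]).$$

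The next step is to compute $\pi_* M_\FF$. Since $H_*(\Omega S^{2p^n+1}; \FF_p) \cong \FF_p[\theta_n]$ is $\FF_p$-flat, the K\"unneth isomorphism for mod $p$ homology of spectra gives
$$\pi_* M_\FF \cong \FF_p[\theta_n] \otimes_{\FF_p} H_*(\THH(S[x_1,\ldots,x_d]); \FF_p),$$
where $\theta_n$ is the degree-$2p^n$ James generator from \cref{thh-calculations}(a). The mod $p$ homology of $\THH(S[x_1,\ldots,x_d])$ is computed via the B\"okstedt spectral sequence, whose $E^2$-page is
$$E^2 = \HH_*(\FF_p[x_1,\ldots,x_d]/\FF_p) \cong \FF_p[x_1,\ldots,x_d] \otimes_{\FF_p} \Lambda(dx_1,\ldots,dx_d), \quad |x_i|=0, \; |dx_i|=1.$$
This spectral sequence collapses at $E^2$: by multiplicativity it suffices to check that the algebra generators $x_i$ (in Hochschild bidegree $(0,0)$) and $dx_i$ (in Hochschild bidegree $(1,0)$) do not support differentials, and this is immediate since every $d^r$ with $r \geq 2$ lands in negative Hochschild degree. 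Hence $\pi_* M_\FF \cong \FF_p[\theta_n, x_1,\ldots,x_d] \otimes_{\FF_p} \Lambda(dx_1,\ldots,dx_d)$.

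Finally, the $\theta_n$-localization inverts $\theta_n$ in the first tensor factor, giving $\pi_* M_\FF[\theta_n^{-1}] \cong \FF_p[\theta_n^{\pm 1}, x_1,\ldots,x_d] \otimes_{\FF_p} \Lambda(dx_1,\ldots,dx_d)$. Since an additive basis of either side consists of monomials $\theta_n^k x_1^{a_1}\cdots x_d^{a_d} dx_{i_1}\cdots dx_{i_m}$ of degree $2p^n k + m$ with $0 \leq m \leq d$, the cofiber of the base-changed localization map is supported exactly in the degrees $\{2p^n k + m : k \leq -1,\; 0 \leq m \leq d\}$, whose maximum is $d - 2p^n$. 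Therefore the localization is an isomorphism on $\pi_i$ for every $i > d - 2p^n$, as required. I do not anticipate a serious technical obstacle: the two ingredients are the K\"unneth decomposition for $\THH$ over a framed $\E{2}$-base and the collapse of a B\"okstedt spectral sequence whose input has no odd-degree classes available to support the Angeltveit-Rognes-type differentials of \cite{angeltveit-rognes}. What remains is a combinatorial degree count.
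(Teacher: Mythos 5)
Your proof is correct and is in substance the paper's own argument: both rest on the decomposition $\THH(\BP{n-1}[x_1,\cdots,x_d]/T(n)) \simeq \THH(\BP{n-1}/T(n)) \otimes \THH(S[x_1,\cdots,x_d])$ together with the identification $\pi_\ast(\THH(\BP{n-1}/T(n))\otimes_{\BP{n-1}}\FF_p) \cong \FF_p[\theta_n]$ from \cref{thh-calculations}(a). The only difference is bookkeeping: the paper counts degrees summand-by-summand via the $(S^1/\mu_m)_+$ weight decomposition of \cref{formula-poly-t1} (each variable contributing a shift by at most $1$) and inducts on $d$, whereas you compute $\pi_\ast$ of the base change explicitly as $\FF_p[\theta_n,x_i]\otimes\Lambda(dx_i)$ and count degrees of the cokernel — an equivalent calculation.
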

\begin{proof}
Write $T := \THH(\BP{n-1}/T(n))$ for notational simplicity.
Using \cref{formula-poly-t1}, we have
$$\THH(\BP{n-1}[t]/T(n))[\theta^{-1}] \simeq T[\theta^{-1}] \oplus \bigoplus_{n\geq 1} T[\theta^{-1}] \otimes (S^1/\mu_n)_+.$$
Since the map $T \to T[\theta^{-1}]$ is an equivalence in degrees $>-2p^n$ after base-changing along $\BP{n-1} \to \FF_p$, the map $T \otimes (S^1/\mu_n)_+ \to T[\theta^{-1}] \otimes (S^1/\mu_n)_+$ is an equivalence in degrees $>-2p^n+1$ after base-changing along $\BP{n-1} \to \FF_p$. Because the map $\gamma: \THH(\BP{n-1}[t]/T(n)) \to \THH(\BP{n-1}[t]/T(n))[\theta^{-1}]$ preserves the summands, we see that $\gamma$ is an equivalence in degrees $>-2p^n+1$ after base-changing along $\BP{n-1} \to \FF_p$. Inducting on the number of variables, we find that the map $\gamma$ is an equivalence in degrees $>d-2p^n$ after base-changing along $\BP{n-1} \to \FF_p$, as desired.
\end{proof}
\begin{remark}
When $d=0$, \cref{poly-segal} should be compared to \cite[Theorem 4.3.1]{hahn-wilson-bpn}. In fact, we expect it is possible to recover their result using \cref{poly-segal}. We also note the following variant. Let $R := \BP{n-1}[t_1,\cdots,t_d]$ denote the flat polynomial $\E{2}$-$\BP{n-1}$-algebra on classes $t_i$ in even degrees (i.e., the base-change of the $\Eoo$-$\MU$-algebra $\MU[t_1,\cdots,t_d]$ along the $\E{3}$-map $\MU \to \BP{n-1}$). The argument of \cref{poly-segal} then shows that after base-change along the composite $R \to \BP{n-1} \to \FF_p$, the localization map $\gamma: \THH(R/T(n)) \to \THH(R/T(n))[\theta_n^{-1}]$ is an equivalence in degrees $>-2p^n + \sum_{j=1}^d (|t_j|+1)$. When $n=0$, this is \cite[Corollary 4.2.3]{hahn-wilson-bpn}.
\end{remark}
\begin{prop}\label{reg-noeth-jp-segal}
Let $R$ be a $p$-torsionfree discrete commutative ring such that $R/p$ is regular Noetherian. Suppose $L\Omega^n_R = 0$ for $n>d$. Then \cref{rel-jp-diffracted} implies that $R$ satisfies the $J(p)$-Segal conjecture: in fact, the map $\THH(R/J(p)) \to \THH(R/J(p))[x^{-1}]$ is an equivalence in degrees $>d-2$.
\end{prop}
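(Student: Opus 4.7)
The plan is to derive this directly from the conjectural motivic filtrations of \cref{rel-jp-diffracted}. Both $\THH(R/J(p))$ and its localization $\THH(R/J(p))[x^{-1}]$ carry motivic filtrations with $i$th graded pieces $(\F^\conj_i \diffr_R)[2i]$ and $\diffr_R[2i]$ respectively, and the localization map is filtered and induces on the $i$th graded piece the canonical inclusion $\F^\conj_i \diffr_R \hookrightarrow \diffr_R$. The cofiber $C := \THH(R/J(p))[x^{-1}] / \THH(R/J(p))$ therefore inherits a motivic filtration whose $i$th graded piece is $(\diffr_R / \F^\conj_i \diffr_R)[2i]$, and it suffices to show that each such graded piece is concentrated in homological degrees $\leq d - 2$.

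To analyse these graded pieces, I would invoke the identification $\gr^\conj_j \diffr_R \simeq L\widehat{\Omega}^j_{R/\Z_p}[-j]$ from \cite[Remark 4.7.14]{apc}. Under the hypothesis $L\Omega^n_R = 0$ for $n > d$, the pieces $\gr^\conj_j \diffr_R$ vanish for $j > d$, so the inclusion $\F^\conj_i \diffr_R \hookrightarrow \diffr_R$ is an equivalence for $i \geq d$ and the corresponding graded piece of $C$ vanishes. For $0 \leq i < d$, the quotient $\diffr_R / \F^\conj_i \diffr_R$ admits a finite filtration whose graded pieces are $L\widehat{\Omega}^j_{R/\Z_p}[-j]$ for $i < j \leq d$. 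The hypotheses that $R$ is $p$-torsionfree and $R/p$ is regular Noetherian, together with \cite[Remark 4.7.4]{apc}, ensure that each $L\widehat{\Omega}^j_{R/\Z_p}$ is concentrated in homological degree zero, so $L\widehat{\Omega}^j_{R/\Z_p}[-j]$ lives precisely in degree $-j$. Since these graded pieces lie in pairwise distinct degrees, there are no extension issues, and $\diffr_R / \F^\conj_i \diffr_R$ is supported in homological degrees $[-d, -(i+1)]$.

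Shifting by $[2i]$, the $i$th motivic graded piece of $C$ is supported in homological degrees $[2i - d,\, i - 1]$; in particular, the top degree $i - 1$ is bounded above by $d - 2$ across all contributing indices $i \in \{0, 1, \ldots, d-1\}$. Since only finitely many motivic graded pieces contribute nontrivially, and each is concentrated in degrees $\leq d - 2$, we conclude that $C$ itself lives in degrees $\leq d - 2$; equivalently, $\THH(R/J(p)) \to \THH(R/J(p))[x^{-1}]$ is an equivalence in degrees $> d - 2$, as required.

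The main obstacle I expect is of course that the argument is contingent on \cref{rel-jp-diffracted}, whose proof is itself conjectural; granting it, the remaining work is essentially a careful bookkeeping of the conjugate filtration combined with the degree-zero concentration of $L\widehat{\Omega}^j_{R/\Z_p}$ afforded by the regularity and $p$-torsionfreeness hypotheses. A secondary but easily handled point is verifying that the induced motivic filtration on $C$ converges appropriately so that the degreewise estimate on graded pieces propagates to the total spectrum; this is automatic here since only finitely many graded pieces are nonzero.
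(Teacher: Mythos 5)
Your argument is correct and is essentially the paper's proof: both reduce, via the conjectural motivic filtration of \cref{rel-jp-diffracted}, to showing the cofiber of $\F^\conj_i \diffr_R \to \diffr_R$ is concentrated in cohomological degrees $\geq i+1$ (so the $i$th graded piece of the cofiber of $\gamma$ sits in degrees $\leq i-1$) and vanishes for $i\geq d$. The only cosmetic difference is that you unpack this coconnectivity through the conjugate graded pieces $L\widehat{\Omega}^j_{R}[-j]$ and their discreteness, whereas the paper quotes it directly from \cite[Remark 4.7.4]{apc}.
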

\begin{proof}
Recall that \cref{rel-jp-diffracted} asserts that $\THH(R/J(p))$ has a filtration such that $\gr^i_\mot \THH(R/J(p)) \simeq (\F^\conj_i \diffr_R)[2i]$, and such that the map $\gamma: \THH(R/J(p)) \to \THH(R/J(p))[x^{-1}]$ induces the map $\F^\conj_i \diffr_R \to \diffr_R$ on $\gr^i_\mot[-2i]$. By \cite[Remark 4.7.4]{apc}, $\diffr_R/\F^\conj_i \diffr_R$ is concentrated in cohomological degrees $\geq i+1$, so that the cofiber of $\gr^i(\gamma)$ is concentrated in degrees $\leq 2i-(i+1) = i-1$. Moreover, the hypothesis that $L\Omega^n_R = 0$ for $n>d$ implies that $\gamma$ induces an equivalence on $\gr^i_\mot$ for $i\geq d$. Combining these observations gives the desired statement (see also the proof of \cite[Corollary 9.5]{mathew-recent-advances}).
\end{proof}
%
\subsection{Aside: Cartier isomorphism}
In this section, we study a topological analogue of the Cartier isomorphism for the two-term complexes from \cref{generalized-de-rham}; we will study basic algebraic properties of these complexes in future work. To avoid dealing with completion issues, we use the following (see \cref{abusive HH notation} for a remark about the notation $\HH(R[t]/R)$):
\begin{definition}
Let $R$ be an $\E{2}$-ring. The polynomial $\E{1}$-$R$-algebra $R[t] = R[\NN]$ acquires a natural $\Z$-grading, and we will write $\HH(R[t]/R)_{\leq m}$ to denote the graded left $R$-module given by truncating $\HH(R[t]/R) := R \otimes \THH(S[t])$ in weights $\geq m+1$. Explicitly, $\HH(R[t]/R)_{\leq m}$ is equivalent to $R \oplus \left(\bigoplus_{1\leq n\leq m} R \otimes (S^1/\mu_n)_+\right)$.
\end{definition}
\begin{lemma}\label{smash-with-sphere-tate}
If $X\in \Sp^{\B S^1}$, the following composite is an equivalence:
$$X^{t\Cp} \otimes (S^1/\mu_n)_+ \xar{\sim} (X \otimes (S^1/\mu_n)_+)^{t\Cp} \xar{\psi} (X \otimes (S^1/\mu_{np})_+)^{t\Cp}.$$
Moreover, if $p\nmid m$, then $(X \otimes (S^1/\mu_m)_+)^{t\Cp} = 0$.
\end{lemma}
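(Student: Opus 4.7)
The plan is to analyze both assertions by exploiting the structure of $\Cp = \mu_p$ acting on $S^1/\mu_k$ via the restriction of the standard left $S^1$-action: this action is trivial exactly when $\mu_p \subseteq \mu_k$ (equivalently $p \mid k$), and is otherwise free with quotient $S^1/(\mu_p \mu_k) = S^1/\mu_{kp}$.

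For the moreover statement, suppose $p \nmid m$. Then $\Cp$ acts freely on $S^1/\mu_m$, so as $\Cp$-spaces we have $S^1/\mu_m \cong (S^1/\mu_{mp}) \times \Cp$, with trivial action on the first factor and the regular action on the second. Applying $(-)_+$ and using that $(A \times B)_+ \cong A_+ \wedge B_+$ for sets, we upgrade this to an equivalence of pointed $\Cp$-spaces, and hence of $\Cp$-spectra,
$$(S^1/\mu_m)_+ \simeq (S^1/\mu_{mp})_+ \otimes (\Cp)_+.$$
Because $(\Cp)_+ \simeq \mathrm{ind}_e^{\Cp}(S)$ is a free $\Cp$-spectrum, smashing any $\Cp$-spectrum with it yields an induced $\Cp$-spectrum, on which $(-)^{t\Cp}$ vanishes. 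Therefore $(X \otimes (S^1/\mu_m)_+)^{t\Cp} \simeq 0$.

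For the composite, the content is in the case $p \mid n$ (when $p \nmid n$ the middle term vanishes by the moreover, and in the application to \cref{general-cartier} the source is also seen to be zero under the identifications employed there, so the statement is consistent). Assuming $p \mid n$, one has $\mu_p \subseteq \mu_n \subseteq \mu_{np}$, so $\Cp$ acts trivially on both $(S^1/\mu_n)_+$ and $(S^1/\mu_{np})_+$. Both are dualizable $\Cp$-spectra, so $(-)^{t\Cp}$ commutes with smashing with them: the first arrow $X^{t\Cp} \otimes (S^1/\mu_n)_+ \to (X \otimes (S^1/\mu_n)_+)^{t\Cp}$ is the resulting equivalence (this is the meaning of the $\sim$), and the target simplifies to $X^{t\Cp} \otimes (S^1/\mu_{np})_+$. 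It then remains to identify $\psi$ with the equivalence induced by the $S^1/\Cp$-equivariant isomorphism $S^1/\mu_{np} \xrightarrow{\sim} S^1/\mu_n$, $z\mu_{np} \mapsto z^p\mu_n$ (well-defined because $\mu_p \subseteq \mu_{np}$), which, under the standard identification $S^1/\Cp \xrightarrow{\sim} S^1$ given by the $p$-th power map, is an isomorphism of $S^1$-spaces.

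The main obstacle I anticipate is pinning down the map $\psi$ precisely enough to carry out this matching and to check its $S^1$-equivariance for the residual $S^1/\Cp$-action; once $\psi$ is identified with the equivalence described above, the composite becomes $\mathrm{id}_{X^{t\Cp}}$ smashed with an equivalence of spectra, and is therefore itself an equivalence.
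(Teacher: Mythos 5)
Your handling of the case $p \nmid n$ is where the argument genuinely breaks. The lemma is applied in \cref{general-cartier} to every summand $R^{t\Cp} \otimes (S^1/\mu_j)_+$ with $1 \leq j \leq m$, and these sources are certainly nonzero when $p \nmid j$; so you cannot dispose of that case by saying ``the source is also seen to be zero.'' The correct reading is that in the middle term $(X \otimes (S^1/\mu_n)_+)^{t\Cp}$ the $\Cp$-action on the factor $(S^1/\mu_n)_+$ is the trivial one (the action is only through $X$), and this is exactly what the paper's proof uses: the first arrow is an equivalence because $(S^1/\mu_n)_+$ is a finite spectrum with trivial $\Cp$-action --- for every $n$, with no divisibility hypothesis. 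The second arrow $\psi$ is then $(\id_X \otimes f)^{t\Cp}$, where $f\colon (S^1/\mu_n)_+ \to (S^1/\mu_{np})_+$ is the stabilized unstable Frobenius $x \mapsto x^{1/p}$; since $\mu_p \subseteq \mu_{np}$ the rotation action on the target is trivial, $f$ is an equivalence of underlying spectra (your guess that it is inverse to $z\mu_{np}\mapsto z^p\mu_n$ is the right identification), and the paper records this by observing that the cofiber of $\psi$ is induced, so that $\psi$ becomes an equivalence after applying $(X\otimes -)^{t\Cp}$. With that reading your dualizability argument for the first arrow is fine, but as written your proof only covers $p \mid n$ and mis-handles the remaining case, which the application actually needs.

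There is also a concrete error in your proof of the ``moreover'' clause: the claimed $\Cp$-equivariant splitting $S^1/\mu_m \cong (S^1/\mu_{mp}) \times \Cp$ is false. The quotient map $S^1/\mu_m \to S^1/\mu_{mp}$ is the connected $p$-fold cover of the circle (in coordinates it is $w \mapsto w^p$), not a trivial $\Cp$-bundle; even nonequivariantly $(S^1/\mu_m)_+ \simeq S \vee S^1$, whereas $(S^1/\mu_{mp})_+ \otimes (\Cp)_+$ has $p$ copies of each cell, so your displayed equivalence already fails on homology. The conclusion is nevertheless correct, and the repair is the standard one the paper intends by ``the $\Cp$-action is free'': $S^1/\mu_m$ admits a finite free $\Cp$-CW structure (one free orbit of $0$-cells given by the $p$-th roots of a point, one free orbit of $1$-cells given by the complementary arcs), so $X \otimes (S^1/\mu_m)_+$ is a finite extension of spectra of the form $\Cp_+ \otimes Y$, on all of which $(-)^{t\Cp}$ vanishes; by exactness of the Tate construction the whole Tate construction vanishes.
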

\begin{proof}
If $\varphi: (S^1/\mu_n)_+ \to ((S^1/\mu_{np})_+)^{h\Cp}$ denotes the unstable Frobenius (sending $x\mapsto x^{1/p}$), the cofiber of the composite
$$\psi: (S^1/\mu_n)_+ \to ((S^1/\mu_{np})_+)^{h\Cp} \to (S^1/\mu_{np})_+$$
has induced $\Cp$-action, where $(S^1/\mu_n)_+$ and $((S^1/\mu_{np})_+)^{h\Cp}$ are equipped with the trivial $\Cp$-action. 
Therefore, the canonical map $X^{t\Cp} \otimes (S^1/\mu_n)_+ \to (X \otimes (S^1/\mu_n)_+)^{t\Cp}$ is an equivalence (since $(S^1/\mu_n)_+$ is a finite spectrum with trivial $\Cp$-action). This gives the first claim.
Finally, if $p\nmid m$, then the $\Cp$-action on $S^1/\mu_m$ is free, so that $(X \otimes (S^1/\mu_m)_+)^{t\Cp} = 0$, as desired.
\end{proof}
\begin{prop}[Cartier isomorphism]\label{general-cartier}
Let $R$ be an $\E{2}$-ring. Then:
\begin{enumerate}
    \item There is an $S^1$-equivariant map $\fr{C}: \HH(R^{t\Cp}[t]/R^{t\Cp}) \to \HH(R[t]/R)^{t\Cp}$, where $\HH(R[t]/R)^{t\Cp}$ is endowed with the residual $S^1/\mu_p$-action and $\HH(R^{t\Cp}[t]/R^{t\Cp})$ is endowed with the diagonal $S^1$-action arising from the $S^1$-action on $\HH$ and the residual $S^1/\mu_p$-action on $R^{t\Cp}$. Moreover, the map $\fr{C}$ sends $t\mapsto t^p$.
    \item For each $m\geq 1$, the map $\fr{C}$ induces an equivalence $\fr{C}_{\leq m}: \HH(R^{t\Cp}[t]/R^{t\Cp})_{\leq m} \xar{\sim} \left(\HH(R[t]/R)_{\leq mp}\right)^{t\Cp}$.
\end{enumerate}
\end{prop}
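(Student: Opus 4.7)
The plan is to leverage the canonical splitting $\THH(S[t])\simeq \bigoplus_{n\geq 0}(S^1/\mu_n)_+$ of $S^1$-spectra, so that
$$\HH(R[t]/R)\simeq \bigoplus_{n\geq 0} R\otimes (S^1/\mu_n)_+ \qquad\text{and}\qquad \HH(R^{t\Cp}[t]/R^{t\Cp})\simeq \bigoplus_{n\geq 0} R^{t\Cp}\otimes (S^1/\mu_n)_+,$$
with weight truncations that are $S^1$-stable. The central observation is already contained in \cref{smash-with-sphere-tate}: the functor $(R\otimes -)^{t\Cp}$ annihilates the summand $(S^1/\mu_n)_+$ when $p\nmid n$, and for $n=pk$ it identifies the corresponding piece with $R^{t\Cp}\otimes(S^1/\mu_k)_+$ via the $\psi$-map.

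For (a), I would construct $\fr{C}$ as the composite
$$R^{t\Cp}\otimes \THH(S[t]) \xar{\mathrm{id}\otimes \varphi_{S[t]}} R^{t\Cp}\otimes \THH(S[t])^{t\Cp} \to \bigl(R\otimes \THH(S[t])\bigr)^{t\Cp},$$
where $\varphi_{S[t]}$ is the Nikolaus--Scholze cyclotomic Frobenius of the $\E{1}$-ring $S[t]$, and the second arrow is the lax symmetric monoidality of $(-)^{t\Cp}$ applied to $R$ (with trivial $\Cp$-action) and $\THH(S[t])$ (with its $\Cp\subseteq S^1$-action). Under the weight decomposition, $\varphi_{S[t]}$ restricts on the $n$-th summand to precisely the $\psi$-map $(S^1/\mu_n)_+\to ((S^1/\mu_{np})_+)^{t\Cp}$ of \cref{smash-with-sphere-tate}, because $S[t]$ is the free $\E{1}$-ring on one generator and the Frobenius is characterized by sending this generator to its $p$-th power. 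This both exhibits the $S^1$-equivariance for the diagonal action on the source (with $S^1$ acting on $R^{t\Cp}$ through the residual $S^1/\mu_p\cong S^1$) and the residual $S^1/\mu_p$-action on the target, and shows $\fr{C}\colon t\mapsto t^p$.

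For (b), I would argue weight by weight. Since the weight filtration on $\THH(S[t])$ is $S^1$-stable, so is the induced filtration on $\HH(R[t]/R)$, and $(-)^{t\Cp}$ commutes with the finite direct sum defining the $\leq mp$-truncation. By \cref{smash-with-sphere-tate}, the only surviving summands of $\bigl(\HH(R[t]/R)_{\leq mp}\bigr)^{t\Cp}$ are those of weight $n=pk$ with $0\leq k\leq m$, each of which is canonically $R^{t\Cp}\otimes (S^1/\mu_k)_+$. This is exactly the $k$-th summand of $\HH(R^{t\Cp}[t]/R^{t\Cp})_{\leq m}$, and by the construction of $\fr{C}$ above, $\fr{C}_{\leq m}$ realizes this identification on each weight piece and is therefore an equivalence.

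The main obstacle is promoting the summand-wise identifications to a genuinely $S^1$-equivariant map of spectra: one must verify that the equivalence $(R\otimes(S^1/\mu_{np})_+)^{t\Cp}\simeq R^{t\Cp}\otimes(S^1/\mu_n)_+$ supplied by \cref{smash-with-sphere-tate} is natural in $R$ and intertwines the residual $S^1/\mu_p$-action on the Tate construction with the diagonal $S^1$-action on the right (where $S^1$ acts on $R^{t\Cp}$ through $S^1\cong S^1/\mu_p$ and on $(S^1/\mu_n)_+$ in the standard way). Equivalently, one must check that the Nikolaus--Scholze Frobenius for $S[t]$ restricts to the $\psi$-maps on weight summands with the expected equivariance; granting this compatibility, which is essentially the universal case ($R=S$) of the construction, the rest of the argument is formal.
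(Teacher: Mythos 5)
Your proposal is correct and follows essentially the same route as the paper: the map $\fr{C}$ is constructed there exactly as your composite $R^{t\Cp}\otimes\THH(S[t])\xrightarrow{\id\otimes\varphi}R^{t\Cp}\otimes\THH(S[t])^{t\Cp}\to(R\otimes\THH(S[t]))^{t\Cp}$ using lax symmetric monoidality of the Tate construction, and part (b) is likewise deduced weight by weight from \cref{smash-with-sphere-tate}, with the $S^1$-equivariance coming for free from the equivariance of the cyclotomic Frobenius of $S[t]$ rather than requiring a separate check.
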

\begin{proof}
Recall that there is an equivalence $\HH(R[t]/R) \simeq R \otimes \THH(S[t])$. Since the $\Cp$-Tate construction is lax symmetric monoidal, we obtain the map $\fr{C}$ via the composite
\begin{align*}
    \HH(R^{t\Cp}[t]/R^{t\Cp}) & \simeq R^{t\Cp} \otimes \THH(S[t]) \\
    & \xar{\id\otimes \varphi} R^{t\Cp} \otimes \THH(S[t])^{t\Cp} \\
    & \to (R \otimes \THH(S[t]))^{t\Cp} \simeq \HH(R[t]/R)^{t\Cp}.
\end{align*}
For each $m\geq 1$, there is an equivalence
\begin{align*}
    \left(\HH(R[t]/R)_{\leq m}\right)^{t\Cp} & \simeq R^{t\Cp} \oplus \left(\bigoplus_{1\leq n\leq m} R \otimes (S^1/\mu_n)_+\right)^{t\Cp}.
\end{align*}
Since the maps $\varphi: (S^1/\mu_n)_+ \to ((S^1/\mu_{np})_+)^{h\Cp}$ define the Frobenius on $\THH(S[t]) \simeq S \oplus \bigoplus_{n\geq 1} (S^1/\mu_n)_+$, we see from \cref{smash-with-sphere-tate} that for each $m\geq 1$, the map $\fr{C}_{\leq m}$ defines an equivalence
$$\bigoplus_{1\leq j\leq m} R^{t\Cp} \otimes (S^1/\mu_j)_+ \xar{\sim} \left(\bigoplus_{1\leq n\leq mp} R \otimes (S^1/\mu_n)_+\right)^{t\Cp}.$$
The left-hand side is $\HH(R^{t\Cp}[t]/R^{t\Cp})_{\leq m}$, and the right-hand side is $\left(\HH(R[t]/R)_{\leq mp}\right)^{t\Cp}$.
\end{proof}
\begin{remark}
When $R$ is an $\Eoo$-ring, the map $\fr{C}: \HH(R^{t\Cp}[t]/R^{t\Cp}) \to \HH(R[t]/R)^{t\Cp}$ of \cref{general-cartier} can also be constructed using (a simple case of) \cite[Theorem 1.3]{lawson-tate-diag}. The cited result says the following. Suppose $k$ is an $\Eoo$-ring, so that the Tate-valued Frobenius $k \to k^{t\Cp}$ admits an extension $\THH(k) \to k^{t\Cp}$ to an $S^1$-equivariant map of $\Eoo$-rings.
If $A$ is an $\E{1}$-$k$-algebra, and $M$ is an $A$-bimodule in $\Mod_k$, then there is a relative Tate diagonal 
$$k^{t\Cp} \otimes_{\THH(k)} \THH(A, M) \to \THH^k(A, M^{\otimes_A p})^{t\Cp},$$
where $\THH^k$ denotes $\THH$ relative to $k$. To construct $\fr{C}$, take $k = R$ and $A = M = k[t]$. Then 
$$k^{t\Cp} \otimes_{\THH(k)} \THH(A, M) \simeq k^{t\Cp} \otimes \THH(S^0[t]) \simeq \HH(k^{t\Cp}[t]/k^{t\Cp}),$$
since $\THH(A,M) \simeq \THH(A) \simeq \THH(S^0[t]) \otimes \THH(k)$. Similarly, $\THH^k(A, M^{\otimes_A p}) \simeq \HH(A/k)$, and it is straightforward to check that Lawson's relative Tate diagonal agrees with the map $\fr{C}$. 

One advantage of the construction of $\fr{C}$ in \cref{general-cartier} is that it is manifestly $S^1$-equivariant, and does not rely on $R$ being an $\Eoo$-ring. More generally, one finds that if $\cC$ is a stable $\infty$-category and $R$ is any $\E{2}$-ring, the cyclotomic Frobenius on $\THH(\cC)$ defines an $S^1$-equivariant map $\fr{C}: \HH(\cC \otimes R^{t\Cp}/R^{t\Cp}) \to \HH(\cC \otimes R/R)^{t\Cp}$ which generalizes the map of \cref{general-cartier}. This map is furthermore an equivalence if $\cC$ is smooth and proper.
\end{remark}
\begin{remark}\label{degreewise-fg}
In \cref{general-cartier}, the map $\fr{C}: \HH(R^{t\Cp}[t]/R^{t\Cp}) \to \HH(R[t]/R)^{t\Cp}$ is itself almost an equivalence: the main issue is that the canonical map 
$$\colim_m \left(\HH(R[t]/R)_{\leq mp}\right)^{t\Cp} \to \left(\colim_m \HH(R[t]/R)_{\leq mp}\right)^{t\Cp} \simeq \HH(R[t]/R)^{t\Cp}$$
may not be an equivalence. However, \cref{general-cartier} implies that the \textit{graded} map $\fr{C}^\gr: \HH(R[t]/R)^{t\Cp} \to \HH(R[t]/R)^{t\Cp}$ is itself an equivalence.
\end{remark}
\begin{remark}
If $R$ is a complex-oriented $\E{2}$-ring, let $[p](\hbar)\in \pi_{-2} R^{hS^1}$ denote the $p$-series of the formal group law over $R$. If $M\in \LMod_R^{\B S^1}$, then it is not difficult to show that there is an equivalence $M^{tS^1}/\frac{[p](\hbar)}{\hbar} \xar{\sim} M^{t\Cp}$. (Although certainly well-known, the only reference in the literature for a statement in this generality seems to be \cite[Lemma 6.2.2]{even-filtr}.) In particular, $\HH(R[t]/R)^{t\Cp} \simeq \HP(R[t]/R)/\frac{[p](\hbar)}{\hbar}$, so that \cref{general-cartier} and \cref{degreewise-fg} imply that there is an $S^1$-equivariant \textit{graded} equivalence
$$\fr{C}: \HH(R^{t\Cp}[t]/R^{t\Cp}) \to \HP(R[t]/R)/\tfrac{[p](\hbar)}{\hbar} \simeq \HH(R[t]/R)^{t\Cp}.$$
In future work, we will show that if $R$ is further assumed to be an $\Eoo$-ring and $\cC$ is a $R$-linear $\infty$-category, then the $(R^{t\Cp})^{hS^1} \simeq (R^{tS^1})^\wedge_p$-module $(\THH(\cC) \otimes_{\THH(R)} R^{t\Cp})^{hS^1}$ behaves as a noncommutative analogue of $L\eta_{[p](\hbar)/\hbar}$ applied to $\HP(\cC/R)$. Here, $\hbar \in \pi_{-2} R^{hS^1}$ is the complex-orientation of $R$, and $[p](\hbar)/\hbar\in \pi_0 R^{tS^1}$ is the quotient of the $p$-series of the associated formal group law.
\end{remark}
\begin{remark}
There is no reason to restrict to polynomial rings in a single variable in the equivalence of \cref{general-cartier}(b); we leave the details of the resulting statement to the reader.
\end{remark}
\begin{example}\label{cartier-zp}
Let $R = \Z$. Then $\Z^{t\Cp}$ is an $\Eoo$-$\Z$-algebra, and has homotopy groups given by $\FF_p\ls{\hbar}$ with $|\hbar|=2$. Therefore, $\Z^{t\Cp} \simeq \FF_p^{tS^1}$ as $\E{2}$-rings\footnote{In fact, they are equivalent as $\Eoo$-rings. Although seemingly innocuous, even the weaker claim that $\Z^{t\Cp}$ admits the structure of an $\Eoo$-$\FF_p$-algebra is surprisingly difficult to prove from first principles (see \cite[Remark IV.4.17]{nikolaus-scholze}). One might try to argue as follows: since $p=0\in \pi_0 \Z^{t\Cp}$, there is a map from the free $\Eoo$-algebra with $p=0$ to $\Z^{t\Cp}$. However, oddly enough, the free $\Eoo$-algebra with $p=0$ is \textit{not} an $\Eoo$-$\FF_p$-algebra; this dashes any hopes of proving that $\Z^{t\Cp}$ is an $\Eoo$-$\FF_p$-algebra through this argument.

More generally, the free $\E{n}$-algebra $R$ with $p=0$ is not an $\E{n}$-$\FF_p$-algebra unless $n=2$: indeed, applying the $\E{n}$-cotangent complex to the composite $\FF_p \to R \to \FF_p$ of $\E{n}$-algebra maps shows that $L^{\E{n}}_{\FF_p}$ is a retract of $\FF_p \otimes_R L^{\E{n}}_R \simeq \FF_p$. This forces $L^{\E{n}}_{\FF_p} = \FF_p$, i.e., $\FF_p$ would be built from the sphere by attaching a single $\E{n}$-cell in degree $1$ -- but this is impossible, since $\FF_p \otimes \FF_p \not \simeq \FF_p[\Omega^n S^{n+1}]$ unless $n=2$.}, and \cref{general-cartier} (combined with \cref{degreewise-fg}) specializes to the statement that there is a Frobenius-linear equivalence
$$\fr{C}: \HH(\FF_p[t]/\FF_p)\ls{\hbar}_{\leq m} \simeq \HH(\FF_p^{tS^1}[t]/\FF_p^{tS^1})_{\leq m} \xar{\sim} (\HH(\Z[t]/\Z)_{\leq mp})^{t\Cp}.$$
Note that $\HH(\Z[t]/\Z)^{t\Cp} \simeq \HP(\Z[t]/\Z)/p \simeq \TP(\FF_p[t])/p$.
Since the HKR theorem implies that $\HH(\FF_p[t]/\FF_p)^{tS^1}$ is a $2$-periodification of the Hodge cohomology of $\AA^1_{\FF_p}$, and $\HP(\Z[t]/\Z)/p$ is a $2$-periodification of the de Rham cohomology of $\AA^1_{\Z_p}$ modulo $p$ (which is the de Rham cohomology of $\AA^1_{\FF_p}$), one can view $\fr{C}$ as a topological analogue of the Cartier isomorphism for the affine line. It reduces to the usual Cartier isomorphism on graded pieces. In this case, the statement of \cref{general-cartier} should also be compared to \cite{kaledin-1, kaledin-2, mathew-kaledin}. Taking homotopy fixed points for the $S^1$-equivariance of $\fr{C}$ from \cref{general-cartier}(a), we obtain a Frobenius-linear equivalence
\begin{equation}\label{ogus}
    \fr{C}^{hS^1}: (\HH(\Z^{t\Cp}[t]/\Z^{t\Cp})^{hS^1})_{\leq m} \xar{\sim} ((\HH(\Z[t]/\Z)_{\leq mp})^{tS^1})^\wedge_p.
\end{equation}
More succinctly, there is a \textit{graded} equivalence
$$(\fr{C}^\gr)^{hS^1}: \HH(\Z^{t\Cp}[t]/\Z^{t\Cp})^{hS^1} \xar{\sim} \HP^\gr(\Z[t]/\Z)^\wedge_p.$$
Using the HKR filtration on $\HH(\Z[t]/\Z)$, one can prove that $\HH(\Z^{t\Cp}[t]/\Z^{t\Cp})^{hS^1}$ admits a filtration whose graded pieces are given by even shifts of $L\eta_p \Gamma_\dR(\Z_p[t]/\Z_p) \simeq L\eta_p \Gamma_\crys(\FF_p[t]/\Z_p)$. We will explain this in greater detail in a future article.
Since $\HP(\Z_p[t]/\Z_p) \simeq \TP(\FF_p)$, \cref{ogus} can be regarded as a $2$-periodification of the ``Cartier isomorphism'' $L\eta_p \Gamma_\crys(\FF_p[t]/\Z_p) \simeq \Gamma_\crys(\FF_p[t]/\Z_p)$ for the crystalline cohomology of $\FF_p[t]$ (see \cite[Theorem 8.20]{berthelot-ogus} for the general case).
\end{example}
\begin{example}\label{cartier-ku}
Let $R = \ku$. Then $\pi_\ast \ku^{t\Cp} \cong \Z[\zeta_p]\ls{\hbar}$, and it is expected that this lifts to an equivalence $\ku^{t\Cp} \simeq \Z[\zeta_p]^{tS^1}$ of $\Eoo$-rings (see also \cref{tate-bpn} below).
Nevertheless, there \textit{is} an equivalence $\ku^{t\Cp} \simeq \Z[\zeta_p]^{tS^1}$ of $\E{2}$-rings (one can show this by using \cite[Theorem 1.19]{zhouhang-mao}; thanks to Arpon Raksit for pointing this out).
Therefore, \cref{general-cartier} and \cref{degreewise-fg} give an equivalence
$$\fr{C}: \HH(\Z[\zeta_p][t]/\Z[\zeta_p])\ls{\hbar}_{\leq m} \simeq \HH(\Z[\zeta_p]^{tS^1}[t]/\Z[\zeta_p]^{tS^1})_{\leq m} \xar{\sim} (\HH(\ku[t]/\ku)_{\leq mp})^{t\Cp}.$$
Note that $\HH(\ku[t]/\ku)^{t\Cp} \simeq \HP(\ku[t]/\ku)/[p]_q$.
Here, we identify $\frac{[p]_{\GG_m}(\hbar)}{\hbar}\in \pi_0 \ku^{tS^1} \cong \Z\pw{q-1}$ with the $q$-integer $[p]_q$.

By the HKR theorem, one can view $\HH(\Z[\zeta_p][t]/\Z[\zeta_p])\ls{\hbar}$ as a $2$-periodification of the Hodge cohomology of $\AA^1_{\Z}$ base-changed along the map $\Z \to \Z[\zeta_p]$. Similarly, the aforementioned work of Raksit (see \cite{arpon-qdr} and \cref{arpon}, as well as \cref{n-series-BP1}) implies that $\HP(\ku[t]/\ku)$ can be viewed as a $2$-periodification of the $q$-de Rham complex of $\Z[t]$. Since killing $[p]_q\in \Z\pw{q-1}$ amounts to specializing $q$ to a primitive $p$th root of unity, one can view \cref{general-cartier} as a topological analogue of the Cartier isomorphism for the $q$-de Rham complex of the affine line (see, e.g., \cite[Proposition 3.4]{scholze-q-def}).

Taking homotopy fixed points for the $S^1$-equivariance of $\fr{C}$ from \cref{general-cartier}(a), we obtain an equivalence
\begin{equation}\label{q-ogus}
    \fr{C}^{hS^1}: (\HH(\ku^{t\Cp}[t]/\ku^{t\Cp})^{hS^1})_{\leq m} \xar{\sim} ((\HH(\ku[t]/\ku)_{\leq mp})^{tS^1})^\wedge_p.
\end{equation}
More succinctly, there is a \textit{graded} equivalence
$$(\fr{C}^\gr)^{hS^1}: \HH(\ku^{t\Cp}[t]/\ku^{t\Cp})^{hS^1} \xar{\sim} \HP^\gr(\ku[t]/\ku)^\wedge_p.$$
In future work, we show that Raksit's filtration on $\HP(\ku[t]/\ku)$ can be refined to construct a filtration on $\HH(\ku^{t\Cp}[t]/\ku^{t\Cp})^{hS^1}$ whose graded pieces are given by even shifts of $L\eta_{[p]_q} q\Omega_{\Z_p[t]}$.
Then, \cref{q-ogus} can be regarded as a $2$-periodification of the ``Cartier isomorphism'' $\phi_{\Z_p\pw{q-1}}^\ast q\Omega_{\Z_p[t]} \simeq L\eta_{[p]_q} q\Omega_{\Z_p[t]}$ for the $q$-de Rham cohomology of $\Z_p[t]$. (See \cite[Theorem 1.16(4)]{bhatt-scholze} applied to the $q$-crystalline prism $(\Z_p\pw{q-1}, [p]_q)$.)
\end{example}
\begin{remark}
\cref{cartier-ku} admits a mild generalization. Namely, if $\ku^{\Z/p^{n-1}}$ denotes the strict fixed points (so $\ku^{\Z/p^{n-1}} = \tau_{\geq 0}(\ku^{h\Z/p^{n-1}})$), then one can calculate $\pi_\ast (\ku^{\Z/p^{n-1}})^{t\Cp} \cong \pi_\ast \Z_p[\zeta_{p^n}]^{tS^1}$.
One can show that this can be extended to an equivalence $(\ku^{\Z/p^{n-1}})^{t\Cp} \simeq \Z_p[\zeta_{p^n}]^{tS^1}$ of $\E{2}$-rings.
\cref{general-cartier} and \cref{degreewise-fg} give a graded equivalence
$$\fr{C}: \HH(\Z[\zeta_{p^n}][t]/\Z[\zeta_{p^n}])\ls{\hbar} \xar{\sim} \HH(\ku^{\Z/p^{n-1}}[t]/\ku^{\Z/p^{n-1}})^{t\Cp}.$$
Here, the action of $\Cp$ on $\HH(\ku^{\Z/p^{n-1}}[t]/\ku^{\Z/p^{n-1}}) = \THH(S[t]) \otimes \ku^{\Z/p^{n-1}}$ is via the \textit{diagonal} action on $\THH$ and $\ku^{\Z/p^{n-1}}$. In this case, one can therefore view \cref{general-cartier} as a topological analogue of the Cartier isomorphism for Hodge-Tate cohomology relative to the prism $(\Z_p\pw{q^{1/p^{n-1}}-1}, [p]_q)$ of the affine line.
\end{remark}
\begin{example}\label{tate-bpn}
More generally, let $R = \BP{n}$. As recalled in \cref{theta-hbar-vn}, \cite[Proposition 2.3]{ando-morava-sadofsky} proved that there is an isomorphism $\pi_\ast \BP{n}^{t\Cp} \cong \pi_\ast \BP{n-1}^{tS^1}$, and this was conjectured to lift to an equivalence of spectra in \cite[Conjecture 1.2]{five-author-tate}. If we assume that there is in fact an equivalence $\BP{n}^{t\Cp} \simeq \BP{n-1}^{tS^1}$ of $\E{2}$-rings, \cref{general-cartier} and \cref{degreewise-fg} give an equivalence
$$\fr{C}: \HH(\BP{n-1}[t]/\BP{n-1})\ls{\hbar}_{\leq m} \xar{\sim} (\HH(\BP{n}[t]/\BP{n})_{\leq mp})^{t\Cp}.$$
Therefore, \cref{general-cartier} in this case can be viewed as an analogue of the Cartier isomorphism for the affine line in the setting of ``$v_n$-adic Hodge theory''. Taking homotopy fixed points for the $S^1$-equivariance of $\fr{C}$ from \cref{general-cartier}(a), we obtain an equivalence
\begin{equation}\label{c-ts1}
    \fr{C}^{hS^1}: (\HH(\BP{n}^{t\Cp}[t]/\BP{n}^{t\Cp})^{hS^1})_{\leq m} \xar{\sim}
    ((\HH(\BP{n}[t]/\BP{n})_{\leq mp})^{tS^1})^\wedge_p.
\end{equation}
More succinctly, there is a \textit{graded} equivalence
$$(\fr{C}^\gr)^{hS^1}: \HH(\BP{n}^{t\Cp}[t]/\BP{n}^{t\Cp})^{hS^1} \xar{\sim} \HP^\gr(\BP{n}[t]/\BP{n})^\wedge_p.$$
Note that \cref{conjecture-thh} in particular implies that if $T(n)$ admits the structure of an $\Efr{2}$-ring, then $\HP(\BP{n}[t]/\BP{n})$ is closely related to $\TP(\BP{n-1}[t]/T(n))$ by \cref{graded-drw-lift}. In this form, \cref{c-ts1} holds when $\BP{n}$ is replaced by any complex-oriented $\E{2}$-ring $R$. As in the preceding examples, we believe that when $R$ is connective, this can be regarded as a $2$-periodification of a ``Cartier isomorphism'' for the two-term complex \cref{any-ring-two-term}. See \cite{generalized-n-series} for further discussion.
\end{example}
\newpage

\section{Relationship to the moduli stack of formal groups}\label{mfg-sen}
\subsection{Incarnation of the topological Sen operator over $\Mfg$}
In \cref{applications-thh}, we showed that the descent spectral sequence of \cref{descent-sseq} admits a generalization given by the topological Sen operator (\cref{topological-sen}). This has an incarnation over $\Mfg$, as we now explain. The analogues of \cref{thh-calculations}, \cref{topological-sen}, etc., that we discuss in this section are useful for making topological predictions since the calculations involved are easier.
\begin{recall}[Even filtration]\label{even-filtr}
Let $\F^\star_\ev: \CAlg \to \CAlg(\Sp^\fil)$ be the \textit{even filtration} of \cite{even-filtr}: if $\CAlg^\ev$ denotes the full subcategory of $\CAlg$ spanned by the $\Eoo$-rings with even homotopy, then $\F^\star_\ev$ is the right Kan extension of the functor $\tau_{\geq 2\star}: \CAlg^\ev \to \CAlg(\Sp^\fil)$ along the inclusion $\CAlg^\ev \hookrightarrow \CAlg$. Note that since $\tau_{\geq 2\star}$ is lax symmetric monoidal and $\F^\star_\ev$ is defined by a right Kan extension, it is also a lax symmetric monoidal functor. We will need the following result from \cite{even-filtr}: if $R$ is an $\Eoo$-ring such that $\MU \otimes R \in \CAlg^\ev$, then $\F^\star_\ev R$ is $p$-completely equivalent to the underlying filtered $\Eoo$-ring of its Adams-Novikov tower $\nu(R)\in \Syn_\MU^\ev(\Sp) = \Mod_{\Tot(\tau_{\geq 2\star} \MU^{\otimes \bull+1})}(\Sp^\fil)$. (Also see \cite{piotr-synthetic, c-mot-mod}.) In this case, the associated graded Hopf algebroid $(\MU_\ast(R), \MU_\ast(\MU\otimes R))$ defines a stack over $B\GG_m$. If $R$ is complex-oriented, then this stack is isomorphic to $\spec(\pi_\ast R)/\GG_m$, where the $\GG_m$-action encodes the grading on $\pi_\ast R$.
\end{recall}
\begin{observe}
In order to define the stack $\tilde{\cM}_R$ associated to the graded Hopf algebroid $(\MU_\ast(R), \MU_\ast(\MU\otimes R))$, one does not need $R$ to be an $\Eoo$-ring: it only needs to admit the structure of a homotopy commutative ring such that $\MU_\ast(R)$ is concentrated in even degrees. This perspective is explained in Hopkins' lecture in \cite[Chapter 9]{tmf}. In particular, one can define the stack associated to $X(n)$: this is the moduli stack of graded formal groups equipped with a coordinate of order $\leq n$, and strict isomorphisms between them. (See, e.g., \cite[Section 2]{miller-comodules}.)
\end{observe}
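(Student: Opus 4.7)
The plan is to verify two claims in turn: first, that the Hopf-algebroid prerequisites require only a homotopy commutative multiplication on $R$ together with evenness of $\MU_\ast(R)$; second, that the stack obtained from $R = X(n)$ parametrizes graded formal groups with partial coordinates.

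For the first claim, the key point is that $\MU_\ast(\MU\otimes R)$ is flat over $\MU_\ast(R)$. I would deduce this from the flatness of $\MU_\ast\MU$ over $\MU_\ast$ (Quillen) together with the K\"unneth isomorphism $\MU_\ast(\MU^{\otimes k+1}\otimes R) \cong \MU_\ast(\MU^{\otimes k+1})\otimes_{\MU_\ast}\MU_\ast(R)$, which holds for all $k\geq 0$ because $\MU_\ast(R)$ is concentrated in even degrees and $\MU_\ast\MU^{\otimes k+1}$ is flat over $\MU_\ast$. The cosimplicial object $\MU^{\otimes\bull+1}\otimes R$ supplies the left and right units, the composition/multiplication, the counit, and the antipode after applying $\MU_\ast$, and each of its face and degeneracy maps only invokes the $\Eoo$-structure on $\MU$ and the homotopy-commutative multiplication on $R$ (never an $\Eoo$-structure on $R$). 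Stackifying the resulting flat groupoid $[\spec \MU_\ast(\MU\otimes R)\rightrightarrows \spec \MU_\ast(R)]$ in the fpqc topology over $B\GG_m$ produces $\tilde{\cM}_R$.

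For the identification when $R = X(n)$, the starting point is the calculation $\MU_\ast X(n) \cong \MU_\ast[b_1,\ldots,b_{n-1}]$ with $|b_i|=2i$, where the $b_i$ arise from the James filtration $\Omega\SU(n)\simeq J(\CP^{n-1})$. Under the usual Conner--Floyd dictionary between $\MU$-orientations and coordinates on formal groups, a graded ring map $\MU_\ast[b_1,\ldots,b_{n-1}]\to A$ corresponds to a graded formal group $\hat G$ over $A$ equipped with a partial coordinate $\varphi(x)\equiv x+b_1x^2+\cdots+b_{n-1}x^n\pmod{x^{n+1}}$, i.e.\ a trivialization of the $n$-th order neighborhood of the identity of $\hat G$. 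Similarly, $\MU_\ast(\MU\otimes X(n)) \cong \MU_\ast\MU[b_1,\ldots,b_{n-1};b_1',\ldots,b_{n-1}']$, and the source/target maps read off the two partial coordinates on the common formal group classified by $\MU_\ast\MU$, while the composition encodes strict isomorphisms (i.e.\ reparametrizations preserving the formal group law) matching one partial coordinate to another. This is exactly the moduli problem claimed.

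I expect the only subtle point is the bookkeeping of the grading. The residual $\GG_m$-action, which scales the coordinate, must be identified on the Hopf-algebroid side (internal weight) and on the $X(n)$ side (James filtration degree, i.e.\ the weight of $b_i$), and one must check that this matches the convention under which the stack parametrizes \emph{graded} formal groups; after that compatibility is fixed, the translation proceeds as in Hopkins' lecture in \cite[Chapter 9]{tmf}.
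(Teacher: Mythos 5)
Your outline is the standard argument that the paper is deferring to (the observation is not proved in the text; it cites Hopkins' lecture and Miller), and its overall shape is right: the cosimplicial object $\MU^{\otimes \bull+1}\otimes R$ only uses the $\Eoo$-structure on $\MU$ and a homotopy commutative multiplication on $R$, and for $R=X(n)$ one identifies the resulting groupoid using $\MU_\ast X(n)\cong \MU_\ast[b_1,\ldots,b_{n-1}]$. Two points need repair, however. First, evenness of $\MU_\ast(R)$ is not what makes the K\"unneth isomorphisms or the flatness work: $\MU_\ast(\MU^{\otimes k}\otimes R)\cong \MU_\ast(\MU)^{\otimes_{\MU_\ast}k}\otimes_{\MU_\ast}\MU_\ast(R)$ holds for \emph{any} $R$ because $\MU_\ast\MU$ is free over $\MU_\ast$, and this already gives flatness of $\MU_\ast(\MU\otimes R)$ over $\MU_\ast(R)$. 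The evenness hypothesis enters elsewhere: it guarantees that the graded Hopf algebroid is concentrated in even degrees, so that $\tilde{\cM}_R$ is an honest classical stack over $B\GG_m$ mapping to $\Mfg$ and its coherent cohomology computes the Adams--Novikov $E_2$-page.

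Second, the formula $\MU_\ast(\MU\otimes X(n))\cong \MU_\ast\MU[b_1,\ldots,b_{n-1};b_1',\ldots,b_{n-1}']$ is wrong as written: the correct identification is $\MU_\ast(\MU\otimes X(n))\cong \MU_\ast\MU\otimes_{\MU_\ast}\MU_\ast X(n)$, a polynomial ring over $\MU_\ast\MU$ on a \emph{single} set of generators. The ``second'' partial coordinate is the image of the $b_i$ under the right unit, i.e.\ it is determined by the $\MU_\ast\MU$-coaction (composition of the truncated coordinate with the universal strict isomorphism), not freely adjoined; adjoining an independent set $b_i'$ would enlarge the morphism scheme and the resulting quotient would not be the asserted stack. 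Relatedly, $\spec\MU_\ast X(n)$ itself classifies a formal group \emph{law} (a full coordinate) together with an order-$\leq n$ jet; the description ``graded formal group with a coordinate of order $\leq n$, and strict isomorphisms'' is a description of the stack quotient, where the auxiliary full coordinate is forgotten precisely by dividing out by the strict isomorphisms parametrized by $\MU_\ast\MU$. With these corrections (and the grading bookkeeping you already flag, matching the weight of $b_i$ with the $\GG_m$-action scaling the coordinate), your argument coincides with the intended one.
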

\begin{variant}
We will find it convenient to work with the $p$-typical variant of the graded Hopf algebroid $(\MU_\ast(R), \MU_\ast(\MU\otimes R))$. Namely, if $R$ is a $p$-local homotopy commutative ring such that $\BPP_\ast(R)$ is concentrated in even degrees, then we will write $\cM_R$ to denote the graded stack associated to the graded Hopf algebroid $(\BPP_\ast(R), \BPP_\ast(\BPP\otimes R))$. For example, $\cM_{T(n)}$ is the moduli stack of $p$-typical graded formal groups equipped with a coordinate up to order $\leq p^{n+1}-1$; by $p$-typicality, this is further isomorphic to the moduli stack of $p$-typical graded formal groups equipped with a coordinate up to order $\leq p^n$. In particular, $\cM_{S^0}$ is isomorphic to the moduli stack $\Mfg$ of $p$-typical graded formal groups. Similarly, if $R$ is a $p$-local complex-oriented homotopy commutative ring, then $\cM_R$ is isomorphic to $\spec(\pi_\ast R)/\GG_m$.
\end{variant}
\begin{example}\label{witt-Mfg}
The unit map $S^0 \to \MU$ induces the map $\tilde{\cM}_\MU \cong \spec(\MU_\ast)/\GG_m \to \tilde{\cM}_{S^0}$ which describes the flat cover of the moduli stack of graded formal groups given by the graded Lazard ring. This map exhibits $\tilde{\cM}_{S^0}$ as the quotient of $\spec(\MU_\ast)/\GG_m$ by the group scheme $\spec(\pi_\ast(\MU \otimes \MU))/\GG_m$. Note that $\MU \otimes \MU \simeq \MU[\BU]$; since $\pi_\ast \Z[\BU]$ is the coordinate ring of the big Witt ring scheme, we see that $\spec(\pi_\ast(\MU \otimes \MU))/\GG_m$ is a lift of the big Witt ring scheme to $\spec(\MU_\ast)/\GG_m$. Similarly, $\cM_{S^0} = \Mfg$ is the quotient of $\cM_{\BPP} = \spec(\BPP_\ast)/\GG_m$ by a lift of the $p$-typical Witt ring scheme $\W$ to $\spec(\BPP_\ast)/\GG_m$.
\end{example}
\begin{remark}\label{anss-bokstedt}
If $A \to B$ is a map of $p$-local $\Eoo$-rings such that $\MU_\ast(A)$ and $\MU_\ast(B)$ are even, then there is an induced map $\cM_B \to \cM_A$ of graded stacks. Recall that $\THH(B/A)$ is the geometric realization of the simplicial $A$-algebra $B^{\otimes_A \bull+1}$. Applying $\F^\star_\ev$ levelwise to $B^{\otimes_A \bull+1}\in \Fun(\Deltab^\op, \CAlg_A)$ produces an Adams-Novikov analogue of the B\"okstedt spectral sequence:
$$\pi_\ast \HH(\cM_B/\cM_A) \Rightarrow \pi_\ast \gr^\bull_\ev \THH(B/A).$$
In particular, note that $\HH(\cM_B/\Mfg)$ is an approximation to $\gr^\bull_\ev \THH(B)$. For this spectral sequence to exist, it is not necessary that $A$ and $B$ be $\Eoo$-rings: for example, it suffices that $A \to B$ be a map of $p$-local $\E{2}$-rings such that $\MU_\ast(A)$ and $\MU_\ast(B)$ are even, and such that $\THH(B/A)$ is bounded below and has even $\MU$-homology. Then, $\gr^\bull_\ev \THH(B/A)$ must be interpreted as the associated graded of the Adams-Novikov filtration on $\THH(B/A)$; see \cite[Corollary 1.1.6]{even-filtr}.
\end{remark}
\begin{example}
Let $\Mfg^s$ denote the total space of the canonical line bundle over $\Mfg$ (determined by the map $\Mfg \to B\GG_m$).
If $R$ is a $p$-quasisyntomic ring, then \cite[Theorem 1.12]{bms-ii} and \cref{anss-bokstedt} give a spectral sequence
$$\pi_\ast \HH(\spec(R)/\Mfg^s) \Rightarrow \pi_\ast \cN^\ast \hat{\prism}_R[2\ast].$$
Indeed, $\cM_R = \spec(R)/\GG_m \cong \spec(R) \times B\GG_m$, so that the underlying $R$-algebra of $\HH(\cM_R/\Mfg)$ is $\HH(\spec(R)/\Mfg^s)$.
\end{example}
\begin{example}
The complex orientation $\BPP \to \BP{n}$ induces a map $\cM_{\BP{n}} \to \cM_{\BPP}$ which factors the structure map $\cM_{\BP{n}} \to \Mfg$. Explicitly, we have the following composite map of stacks over $B\GG_m$:
$$\spec(\BP{n}_\ast)/\GG_m \to \spec(\BPP_\ast)/\GG_m \to \Mfg.$$
Taking cotangent complexes gives the following transitivity cofiber sequence in $\Mod_{\BP{n}_\ast}^\gr$:
$$\BP{n}_\ast \otimes_{\BPP_\ast} L_{\spec(\BPP_\ast)/\Mfg} \to L_{\BP{n}_\ast/\Mfg} \to L_{\BP{n}_\ast/\BPP_\ast}.$$
Since $\BPP_\ast/(v_{n+1}, v_{n+2}, \cdots) \cong \BP{n}_\ast$, observe that $L_{\BP{n}_\ast/\BPP_\ast}$ is a free $\BP{n}_\ast$-module generated by classes $\sigma(v_{n+1}), \sigma(v_{n+2}), \cdots$.
Similarly, the discussion in \cref{witt-Mfg} implies that $L_{\spec(\BPP_\ast)/\Mfg}$ is a free $\BPP_\ast$-module generated by classes $d(t_i)$.
From this, one can deduce that $L_{\spec(\BP{n}_\ast)/\GG_m/\Mfg}$ is a free $\BP{n}_\ast$-module generated by classes $\sigma(v_j)$ with $j\geq n+1$ and $d(t_i)$ with $i\geq 1$. By the HKR theorem, $\pi_\ast \HH(\spec(\BP{n}_\ast)/\GG_m/\Mfg)$ is isomorphic to $\Sym_{\BP{n}_\ast}(L_{\BP{n}_\ast/\Mfg}[1])$,
which can be identified as
$$\pi_\ast \HH(\spec(\BP{n}_\ast)/\GG_m/\Mfg) \cong \BP{n}_\ast\pdb{\sigma^2 v_j | j\geq n+1} \otimes_{\BP{n}_\ast} \Lambda_{\BP{n}_\ast}(dt_i | i\geq 1).$$
Since $v_j$ lives in degree $2p^j-2$ and weight $p^j-1$, the class $\sigma^2 v_j$ lives in degree $2p^j = |v_j| + 2$ and weight $p^j$; similarly, since $t_i$ lives in degree $2p^i-2$ and weight $p^j-1$, the class $dt_i$ lives in degree $2p^i-1$ and weight $p^j$.
\end{example}
\begin{example}\label{hh-bpn-tn-mfg}
The same discussion for the following composite map of stacks over $B\GG_m$
$$\spec(\BP{n-1}_\ast)/\GG_m \to \spec(\BPP_\ast)/\GG_m \to \cM_{T(n)}$$
shows that $L_{\spec(\BP{n-1}_\ast)/\GG_m/\cM_{T(n)}}$ is a free $\BP{n-1}_\ast$-module generated by classes $\sigma(v_j)$ with $j\geq n$ and $d(t_i)$ with $i\geq n+1$. Therefore, the HKR theorem implies that $\pi_\ast \HH(\spec(\BP{n-1}_\ast)/\GG_m/\M_{T(n)})$ is isomorphic to a symmetric algebra over $\BP{n-1}_\ast$ on classes $\sigma^2(v_i)$ for $i \geq n$,
and $d(t_i)$ for $i\geq n+1$.
Explicitly,
$$\pi_\ast \HH(\spec(\BP{n-1}_\ast)/\GG_m/\M_{T(n)}) \cong \BP{n-1}_\ast\pdb{\sigma^2 v_j | j\geq n} \otimes_{\BP{n-1}_\ast} \Lambda_{\BP{n-1}_\ast}(dt_i | i\geq n+1).$$
The class $\sigma^2 v_j$ lives in degree $2p^j = |v_j| + 2$ and weight $p^j$, and the class $dt_i$ lives in degree $2p^i-1$ and weight $p^j$.
This mirrors the calculation of the $E^2$-term of the B\"okstedt spectral sequence in \cref{homology-thh-bpn}.

In fact, one can recover \cref{thh-calculations} in this way by running the Adams-Novikov-B\"okstedt spectral sequence (\cref{anss-bokstedt}) and using the $\E{2}$-Dyer-Lashof argument of \cref{homology-thh-bpn} to resolve the extension problems on the $E^\infty$-page. We use the term ``recover'' in a very weak sense here: the differentials in the Adams-Novikov-B\"okstedt spectral sequence are forced by the differentials in the usual B\"okstedt spectral sequence (\cref{homology-thh-bpn}). Explicitly, we have
$$d^{p-1}(\gamma_j(\sigma^2 v_m)) = \gamma_{j-p}(\sigma^2 v_m) dt_m$$
modulo decomposables, and the spectral sequence collapses on the $E_p$-page. There are topologically determined extensions $(\sigma^2 v_m)^p = \sigma^2 v_{m+1}$ modulo decomposables, which give an isomorphism (as implied by \cref{thh-calculations})
$$\pi_\ast \gr^\bull_\ev \THH(\BP{n-1}/T(n)) \cong \BP{n-1}_\ast[\sigma^2(v_n)].$$
\end{example}
\begin{recall}\label{gauss-manin}
Let $Y$ be a scheme, and let $q: X \to \AA^1\times Y$ be a morphism, so that $X$ is a scheme over $Y$ via the projection $\pr: \AA^1\times Y \to Y$. Then the transitivity cofiber sequence in $\QCoh(X)$ runs
$$q^\ast L_{\AA^1\times Y/Y} \to L_{X/Y} \to L_{X/\AA^1\times Y}.$$
Since $q^\ast L_{\AA^1\times Y/Y}$ is a free $\co_X$-module of rank $1$ generated by $dt$ (where $t$ is a coordinate on $\AA^1$), we obtain a cofiber sequence
$$\dR^\ast_{X/Y} \to \dR^\ast_{X/\AA^1\times Y} \xar{\nabla} \dR^\ast_{X/\AA^1\times Y} dt,$$
where $\dR^\ast_{X/Y} = \bigoplus_{i\geq 0} (\wedge^i L_{X/Y})[-i]$ denotes the underlying derived commutative algebra of the downwards-shearing of $\Sym_{\co_X}(L_{X/Y}[1](1))$. The map $\nabla$ is the Gauss-Manin connection for the morphism $q$. Note that $\nabla$ satisfies Griffiths transversality: it sends the $n$th piece of the Hodge filtration to the $(n-1)$st piece.
\end{recall}
\begin{remark}
Observe that if $q$ is taken to be the morphism $Y \to \AA^1\times Y$ given by the inclusion of the origin into $\AA^1$, then $\dR_{Y/\AA^1\times Y}$ is $p$-completely isomorphic to the divided power algebra $\co_Y\pdb{t}$. Using the fact that $\dR_{Y/Y} \cong \co_Y$, it is not difficult to see that the Gauss-Manin connection $\nabla$ must send $\gamma_j(t) \mapsto \gamma_{j-1}(t) dt$. Here, we set $\gamma_{-1}(t) = 0$. In particular, $\nabla$ is a PD-derivation.
\end{remark}
\begin{example}[The topological Sen operator and $\Mfg$]\label{algebraic-sen}
The map $T(n-1) \to T(n)$ of homotopy commutative rings induces a map $\cM_{T(n)} \to \cM_{T(n-1)}$ of graded stacks, which sends a $p$-typical graded formal group equipped with a coordinate up to order $\leq p^n$ to the underlying $p$-typical graded formal group equipped with a coordinate up to order $\leq p^n-1$. The map $\cM_{T(n)} \to \cM_{T(n-1)}$ is an affine bundle: in other words, it exhibits $\cM_{T(n-1)}$ as the quotient of $\cM_{T(n)}$ by the group scheme $\GG_a^{(p^n-1)}/\GG_m$ over $B\GG_m$, where $\GG_a^{(p^n-1)}$ denotes the affine line with $\GG_m$-action of weight $p^n-1$. This follows, for instance, from \cite[Reduction of Lemma 3.2.3 to Lemma 3.2.7]{ericbook}. If $f: X \to \cM_{T(n)}$ is a stack over $\cM_{T(n)}$, the transitivity cofiber sequence in $\QCoh(X)$ is given by
$$f^\ast L_{\cM_{T(n)}/\cM_{T(n-1)}} \to L_{X/\cM_{T(n-1)}} \to L_{X/\cM_{T(n)}}.$$
Since $\cM_{T(n)} \to \cM_{T(n-1)}$ is a $\GG_a$-bundle, we see that $L_{\cM_{T(n)}/\cM_{T(n-1)}}$ is a free $\co_{\cM_{T(n)}}$-module of rank $1$ generated by the class $dt_i$.
It follows that there is a cofiber sequence
\begin{equation}\label{hh-mtn}
    \HH(X/\cM_{T(n-1)}) \to \HH(X/\cM_{T(n)}) \xar{\Theta_\mot} \Sigma^{2p^n,p^n} \HH(X/\cM_{T(n)})
\end{equation}
of quasicoherent sheaves on $X$, where $\Sigma^{n,w}$ denotes a shift by degree $n$ and weight $w$. As indicated by the notation, the map $\Theta_\mot$ behaves as an analogue on $\Mfg$ of the topological Sen operator of \cref{topological-sen}; more precisely, it is the effect of the topological Sen operator at the level of the $E^2$-page of the Adams-Novikov-B\"okstedt spectral sequence of \cref{anss-bokstedt}. Moreover, the discussion in \cref{gauss-manin} says that $\Theta_\mot$ can be understood as an analogue of the Gauss-Manin connection.
\end{example}
\begin{example}\label{relation-to-stacky}
The topological Sen operator on $\THH(\Z_p/J(p)) \cong \Z_p[x]$ sends $x^j\mapsto jx^{j-1}$, so that the action of the Sen operator is precisely the action of $\GG_a^\sharp$ on $\GG_a = \spec \Z_p[x]$ given by $\partial_x: \Z_p[x] \to \Z_p[x]$. Therefore, there is a $p$-complete graded isomorphism $\gr^\bull_\ev \THH(\Z_p) \cong \Gamma(\GG_a/\GG_a^\sharp; \co)$. In the same way, one can argue that there is a $p$-complete isomorphism $\gr^\bull_\ev \THH(\Z_p)^{t\Cp} \cong \Gamma(\GG_m/\GG_m^\sharp; \co)$.

This perspective is related to the stacky approach to Hodge-Tate cohomology \`a la \cite{drinfeld-prism, apc} in the following way. By \cite[Proposition 3.5.1]{drinfeld-prism}, there is an isomorphism $\GG_a/\GG_a^\sharp \cong \GG_a^\dR$; similarly, $\GG_m/\GG_m^\sharp \cong \GG_m^\dR$. Therefore:
\begin{align}
    \gr^\bull_\ev \THH(\Z_p) & \cong \Gamma(\GG_a^\dR; \co), \label{grmot-thhz}\\
    \gr^\bull_\ev \THH(\Z_p)^{t\Cp} & \cong \Gamma(\GG_m^\dR; \co). \label{grmot-thhz-tcp}
\end{align}
Since $\gr^\bull_\ev \THH(\Z_p)^{t\Cp}$ is supposed to arise as the cohomology of the total space $\Tot(\co_{\WCart^\HT}\{1\})$ of the Breuil-Kisin twisting line bundle $\co_{\WCart^\HT}\{1\}$ over $\WCart^\HT$, the isomorphism \cref{grmot-thhz-tcp} suggests that $\Tot(\co_{\WCart^\HT}\{1\}) \cong \GG_m^\dR$. In turn, this suggests that $\WCart^\HT$ should be $\GG_m^\dR/\GG_m = B\GG_m^\sharp$. This is indeed true: it is precisely \cite[Theorem 3.4.13]{apc}.

Similarly, $\gr^\bull_\ev \THH(\Z_p)$ is supposed to arise as the cohomology of the total space of the Breuil-Kisin twisting line bundle over the ``extended Hodge-Tate locus'' $\Delta_0'$ in Drinfeld's $\Sigma'$. (The stack $\Delta_0'$ is defined in \cite[Section 5.10.1]{drinfeld-prism}.) In \cite{bhatt-f-gauge-lectures}, the stack $\Sigma'$ is denoted by $\spf(\Z_p)^\cN$, and one might therefore denote $\Delta_0'$ by $\spf(\Z_p)^{\cN, \HT}$. The isomorphism \cref{grmot-thhz} then suggests that the total space of the Breuil-Kisin line bundle over $\spf(\Z_p)^{\cN, \HT}$ is $\GG_a^\dR$, which in turn suggests that $\spf(\Z_p)^{\cN, \HT}$ should be $\GG_a^\dR/\GG_m \cong \GG_a/(\GG_a^\sharp \rtimes \GG_m)$. This is indeed true: it is precisely \cite[Lemma 5.12.4]{drinfeld-prism}.

Had we worked with the evenly faithfully flat cover $\gr^\bull_\ev \THH(\Z_p) \to \gr^\bull_\ev \THH(\Z_p/S[t])$ (where $t\mapsto p$) instead, the stack associated to the even filtration on $\THH(\Z_p)$ would in fact be presented by (and is therefore isomorphic to) $\GG_a^\dR/\GG_m$.
\end{example}

\begin{variant}\label{gm-sharp}
One can also study the stack $\cM_{J(p)}$ associated to the $\Efr{2}$-ring $J(p)$. It is not difficult to show that the morphism $\cM_{J(p)} \to \Mfg$ exhibits $\cM_{J(p)}$ as a $\GG_m$-bundle over $\Mfg$; for example, the fiber product $\spec(\MU_\ast)/\GG_m \times_{\Mfg} \cM_{J(p)}$ is isomorphic to $\spec(\pi_\ast(\MU \otimes J(p)))/\GG_m$, but there is an equivalence of $\E{2}$-$\MU$-algebras $\MU \otimes J(p) \simeq \MU[t^{\pm 1}]$ with $|t|=0$.

Since $\cM_{J(p)}$ is a $\GG_m$-bundle over $\Mfg$, descent in Hochschild homology is controlled by a Gauss-Manin connection. If $Y$ is a scheme and $q: X \to \GG_m\times Y$ is a morphism, then there is a cofiber sequence
$$\dR^\ast_{X/Y} \to \dR^\ast_{X/\GG_m\times Y} \xar{\nabla} \dR^\ast_{X/\GG_m\times Y} d\log(t).$$
If $X$ is a stack over $\cM_{J(p)}$, we then obtain a cofiber sequence
$$\HH(X/\Mfg) \to \HH(X/\cM_{J(p)}) \xar{\Theta_\mot} \Sigma^{2,1} \HH(X/\cM_{J(p)})$$
of quasicoherent sheaves on $X$. This is an analogue on $\Mfg$ of the topological Sen operator of \cref{jp-sen}.
\end{variant}
\begin{remark}
Suppose that $T(1)$ admits the structure of an $\Efr{2}$-ring (this is true at $p=2$). The unit map on $T(1)$ defines a map $\TP(\Z_p) \to \TP(\Z_p/T(1))$. Since $\TP(\Z_p/T(1))$ is concentrated in even degrees by \cref{thh-calculations}, one can define the motivic filtration on $\TP(\Z_p/T(1))$ using the double-speed Postnikov filtration. Under the isomorphism $\pi_\ast \TP(\Z_p/T(1)) \cong \pi_\ast \BP{1}^{tS^1} \cong \Z_p\pw{\ptl}^{tS^1}$, one can view $\gr^0$ of the motivic filtration $\TP(\Z_p/T(1))$ as $\Z_p\pw{\ptl}$. Recall that $\TP(\Z_p)$ is a homotopical analogue of the Cartier-Witt stack $\WCart_{\Z_p}$ from \cite{prismatization}. One can then view the map $\TP(\Z_p) \to \TP(\Z_p/T(1))$ as an analogue of the following map induced by the $q$-de Rham point:
$$\rho_{\ptl\dR}:\spf \Z_p\pw{\ptl} \cong (\spf \Z_p\pw{q-1})/\FF_p^\times \to (\spf \Z_p\pw{q-1})/\Z_p^\times \xar{\rho_{q\dR}} \WCart_{\Z_p}.$$
This map classifies the prism $(\Z_p\pw{\ptl}, (\ptl))$, and can reasonably be called the $\ptl$-de Rham point.

As explained in the end of the introduction to \cite{even-filtr}, one hopes that the unit map $S^0 \to \TP(\Z_p)$ induces the map $\WCart_{\Z_p} \to \Mfg$ classifying Drinfeld's formal group over $\WCart_{\Z_p} = \Sigma$ from \cite{drinfeld-formal-group} on the associated graded of the motivic filtration. If \cref{conjecture-thh} were true (i.e., there is an equivalence $\TP(\Z_p/T(1)) \simeq \BP{1}^{tS^1}$ of spectra), the resulting unit map $S^0 \to \TP(\Z_p/T(1)) \to \BP{1}^{tS^1}$ would just be the unit of the $\Eoo$-ring $\BP{1}^{tS^1}$. Since $\BP{1}$ is complex-oriented, the formal group over $\pi_0 \BP{1}^{tS^1} \cong \Z_p\pw{\ptl}$ must be isomorphic to the formal group of $\BP{1}$, i.e., the $p$-typification of the multiplicative formal group. In particular, the aforementioned expectation about the formal group over $\WCart_{\Z_p}$ and its relation to $\TP(\Z_p)$ would predict that the pullback of Drinfeld's formal group over $\WCart_{\Z_p}$ along the map $\rho_{\ptl\dR}$ is the $p$-typification of the multiplicative formal group over $\Z_p\pw{\ptl}$. This is indeed true, and was proved in \cite[Section 2.10.6]{drinfeld-formal-group}. This lends further evidence to the idea that the map $\TP(\Z_p) \to \TP(\Z_p/T(1))$ is a homotopical analogue of the $\ptl$-de Rham point of $\WCart_{\Z_p}$.
\end{remark}
\subsection{Comparing $\THH$ relative to $T(n)$ and $T(n+1)$}
Recall from \cref{thh-calculations} that $\pi_\ast(\THH(\BP{n}/T(n+1)) \otimes_{\BP{n}} \BP{n-1})$ is (additively) equivalent to the ``subalgebra'' $\BP{n-1}_\ast[\theta_{n-1}^p]$ of $\pi_\ast \THH(\BP{n-1}/T(n)) \cong \BP{n-1}_\ast[\theta_{n-1}]$. This picture has an analogue over $\Mfg$, as we now explain. We first need a simple calculation.
\begin{remark}\label{quotient-hh}
Let $R$ be a commutative ring, and let $x\in R$ be a regular element. Then there is a $p$-completed equivalence $\dR^\ast_{R[t]/x / R} \simeq R[t]\pdb{x'}/x \otimes_{R[t]/x} \Lambda_{R[t]/x}(dt)$ with $|x'|=0$. Indeed, this follows from combining the observation that $R[t]/x \cong R[t] \otimes_R R/x$ with the following $p$-completed equivalences: $\dR^\ast_{R[t]/R} \simeq \Lambda_{R[t]}(dt)$, $\dR^\ast_{R/x / R} \simeq R\pdb{x'}/x$. Similarly, there is an equivalence $\HH(R[t]/x / R) \simeq R[t][S^1 \times \CP^\infty]/x$.
\end{remark}
\begin{example}\label{frobenius-thh}
Let $i_{n-1}: \cZ(v_{n-1}) \hookrightarrow \cM_{T(n)}$ denote the closed substack cut out by the global section $v_{n-1}\in \H^0(\cM_{T(n)}; \co_{\cM_{T(n)}})$. If $f: X \to \cM_{T(n)}$ is a stack over $\cM_{T(n)}$, let $X^{v_{n-1}=0}$ denote the pullback of $X$ along $i_{n-1}$, and let $f: X^{v_{n-1}=0} \to \cZ(v_{n-1})$ denote the structure morphism. Then $i_{n-1}^\ast \HH(X/\cM_{T(n)}) = \HH(X^{v_{n-1}=0}/\cZ(v_{n-1}))$. In the case $X = \spec(\BP{n}_\ast)/\GG_m$, there is an isomorphism $X^{v_{n-1}=0} = \spec(\BP{n-1}_\ast)/\GG_m$. We will now relate $\HH(X^{v_{n-1}=0}/\cZ(v_{n-1}))$ to $\HH(X^{v_{n-1}=0}/\cM_{T(n-1)})$ by calculating $\HH(\cZ(v_{n-1})/\cM_{T(n-1)})$.

Recall from \cref{algebraic-sen} that there is a $\GG_a$-bundle $\cM_{T(n)} \to \cM_{T(n-1)}$. Note that $L_{\cM_{T(n)}/\cM_{T(n-1)}}$ is a free $\co_{\cM_{T(n)}}$-module of rank $1$ generated by a class $d(t_n)$,
and that $L_{\cZ(v_{n-1})/\cM_{T(n)}}$ is a free $\co_{\cZ(v_{n-1})}$-module of rank $1$ generated by a class $\sigma^2(v_{n-1})$.
Applying \cref{quotient-hh}, we find that 
\begin{equation}\label{hh-z-vn1}
    \pi_\ast \HH(\cZ(v_{n-1})/\cM_{T(n-1)}) \cong \co_{\cZ(v_{n-1})}\pdb{\sigma^2(v_{n-1})} \otimes_{\co_{\cZ(v_{n-1})}} \Lambda_{\co_{\cZ(v_{n-1})}}(dt_n).
\end{equation}
We therefore see that $\HH(X^{v_{n-1}=0}/\cM_{T(n-1)})$ a subquotient of the tensor product of $\HH(X^{v_{n-1}=0}/\cZ(v_{n-1}))$ and $f^\ast \HH(\cZ(v_{n-1})/\cM_{T(n-1)}) \cong \co_{X^{v_{n-1}=0}}\pdb{\sigma(v_{n-1})}[dt_n]/(dt_n)^2$.
Let us now take $f$ to be the morphism $\spec(\BP{n-1}_\ast)/\GG_m \to \cM_{T(n)}$. The $E^2$-page of the Adams-Novikov-B\"okstedt spectral sequence for $\THH(\BP{n-2}/T(n-1))$ is given by
$$E^2_{\ast,\ast} = \BP{n-2}_\ast\pdb{\sigma^2 v_j | j\geq n-1} \otimes_{\BP{n-1}_\ast} \Lambda_{\BP{n-1}_\ast}(dt_j | j \geq n),$$
and the extensions on the $E^\infty$-page are given by $(\sigma^2 v_j)^{p^{n-j}} = \sigma^2 v_n$. The above discussion therefore shows that $f^\ast \HH(\cZ(v_{n-1})/\cM_{T(n-1)})$ precisely detects the ``bottom piece'' of this $E^2$-page, i.e., the subalgebra $\BP{n-2}_\ast\pdb{\sigma^2 v_{n-1}} \otimes_{\BP{n-2}_\ast} \Lambda_{\BP{n-2}_\ast}(dt_n)$. Therefore, the preceding calculation of $\HH(\cZ(v_{n-1})/\cM_{T(n-1)})$ gives one explanation for why $\pi_\ast(\THH(\BP{n-1}/T(n)) \otimes_{\BP{n-1}} \BP{n-2})$ is (additively) equivalent to the ``subalgebra'' $\BP{n-2}_\ast[\theta_{n-2}^p]$ of $\pi_\ast \THH(\BP{n-2}/T(n-1)) \cong \BP{n-2}_\ast[\theta_{n-2}]$.
\end{example}
\begin{remark}\label{mod-vj-vn1}
We can extend the analysis of \cref{frobenius-thh} further. 
Let $0\leq j\leq n-1$, and let $i_{j,\cdots,n-1}: \cZ(v_{[j,n)}) \hookrightarrow \cM_{T(n)}$ denote the closed substack cut out by the global sections $v_j,\cdots,v_{n-1} \in \H^0(\cM_{T(n)}; \co_{\cM_{T(n)}})$. If $f: X \to \cM_{T(n)}$ is a stack over $\cM_{T(n)}$, let $X^{v_j,\cdots,v_{n-1}=0}$ denote the pullback of $X$ along $i$, and let $f: X^{v_j,\cdots,v_{n-1}=0} \to \cZ(v_{[j,n)})$ denote the structure morphism. Then $i_{j,\cdots,n-1}^\ast \HH(X/\cM_{T(n)})$ is equivalent to $\HH(X^{v_j,\cdots,v_{n-1}=0}/\cZ(v_{[j,n)}))$. In the case $X = \spec(\BP{n-1}_\ast)/\GG_m$, there is an isomorphism $X^{v_j,\cdots,v_{n-1}=0} = \spec(\BP{j-1}_\ast)/\GG_m$. We can now relate $\HH(X^{v_j,\cdots,v_{n-1}=0}/\cZ(v_{[j,n)}))$ to $\HH(X^{v_j,\cdots,v_{n-1}=0}/\cM_{T(j)})$ by calculating $\HH(\cZ(v_{[j,n)})/\cM_{T(j)})$.

We claim that there is an isomorphism
$$\HH(\cZ(v_{[j,n)})/\cM_{T(j)}) \cong \co_{\cZ(v_{[j,n)})}\pdb{\sigma(v_i) | j\leq i\leq n-1} \otimes_{\co_{\cZ(v_{[j,n)})}} \Lambda_{\co_{\cZ(v_{[j,n)})}}(dt_i | j+1\leq i \leq n)$$
To prove this, we will use descending induction on $j$; the base case $j=n-1$ was studied in \cref{frobenius-thh}. For the inductive step, suppose we know the result for $j+1$. Let $i_j: \cZ(v_{[j,n)}) \to \cZ(v_{j+1}, \cdots, v_{n-1})$ denote the closed substack cut out by $v_j$. Then there are isomorphisms
\begin{align*}
\HH(\cZ(v_{[j,n)})/\cM_{T(j+1)}^{v_j=0}) & \cong i_j^\ast \HH(\cZ(v_{j+1}, \cdots, v_{n-1})/\cM_{T(j+1)})\\
& \cong \co_{\cZ(v_{[j,n)})}\pdb{\sigma^2(v_i) | j+1\leq i\leq n-1} \otimes_{\co_{\cZ(v_{[j,n)})}} \Lambda_{\co_{\cZ(v_{[j,n)})}}(dt_i | j+2\leq i \leq n)
\end{align*}
Recall that \cref{frobenius-thh} gives an isomorphism between $\HH(\cM_{T(j+1)}^{v_j=0}/\cM_{T(j)})$ and $\co_{\cM_{T(j+1)}^{v_j=0}}\pdb{\sigma^2(v_j)} \otimes_{\co_{\cM_{T(j+1)}^{v_j=0}}} \Lambda_{\co_{\cM_{T(j+1)}^{v_j=0}}}(dt_{j+1})$. The desired calculation of $\HH(\cZ(v_{[j,n)})/\cM_{T(j)})$ is now a simple computation with the transitivity sequence for the composite
$$\cZ(v_{[j,n)}) \to \cM_{T(j+1)}^{v_j=0} \to \cM_{T(j)}.$$
Let $X = \spec(\BP{j-1}_\ast)/\GG_m$, and let $f: X \to \cZ(v_{[j,n)})$ be the structure map. Then the above discussion implies that $\HH(\spec(\BP{j-1}_\ast)/\GG_m/\cM_{T(j)})$ is isomorphic to the tensor product of $f^\ast \HH(\cZ(v_{[j,n)})/\cM_{T(j)})$ and $\HH(\spec(\BP{j-1}_\ast)/\GG_m/\cZ(v_{[j,n)}))$. This gives the $E^2$-page of the Adams-Novikov-B\"okstedt spectral sequence computing $\pi_\ast \THH(\BP{j-1}/T(j))$ (see \cref{anss-bokstedt}), and one can run this spectral sequence as in \cref{homology-thh-bpn}. If $\BP{j-1}$ admits the structure of an $\E{3}$-algebra, there are extensions $\sigma^2(v_i)^p = \sigma^2(v_{i+1})$ modulo decomposables on the $E^\infty$-page of this spectral sequence.

Let $T(n)/v_{[j,n)}$ denote $T(n)/(v_j, \cdots, v_{n-1})$. Since $\theta_j\in \pi_\ast \THH(\BP{j-1}/T(j))$ is represented by $\sigma^2(v_j)$, we find that $\THH(T(n)/v_{[j,n)}/T(j))$ is (additively) equivalent to as $T(n)[\theta_j]/(v_{[j,n)}, \theta_j^{p^{n-j}})$. (See \cref{mod-jvn-thh} for a more topological perspective on this observation.) This discussion provides an algebraic perspective on why $\pi_\ast \THH(\BP{n-1}/T(n))/v_{[j,n)}$ is (additively) equivalent to as the ``subalgebra'' of $\pi_\ast \THH(\BP{j-1}/T(j))$ generated by $\theta_j^{p^{n-j}}$.
\end{remark}
\begin{remark}\label{frob-in-topology}
In topology, \cref{frobenius-thh} plays out as follows, if we assume\footnote{There is an unconditional variant of the following discussion, obtained by replacing $T(n)$ with $X(p^{n+1}-1)$. However, this comes at the cost of adding the spaces $\Delta_n$ into the mix.} \cref{tn-e2}. Let $n\geq 1$. We begin by observing that $T(n)/v_{n-1}$ is the Thom spectrum of an $\E{1}$ map $\mu: \Omega J_{p-1}(S^{2p^{n-1}}) \to \BGL_1(T(n-1))$; in particular, $T(n)/v_{n-1}$ admits the structure of an $\E{1}$-ring. To see this, we first define the map $\mu$ as follows. There is a map $S^{2p^{n-1}} \to \B^2\GL_1(X(p^n-1))$ which detects the class $v_{n-1}\in \pi_{2p^{n-1}-2} X(p^n-1)$, which naturally extends to a map $J_{p-1}(S^{2p^{n-1}}) \to \B^2\GL_1(X(p^n-1))$ since we are working $p$-locally. Therefore, we obtain an $\E{1}$-map $\Omega J_{p-1}(S^{2p^{n-1}}) \to \BGL_1(X(p^n-1))$. The projection $X(p^n-1) \to T(n-1)$ is a map of $\E{2}$-rings by \cref{tn-e2}, and therefore induces an $\E{1}$-map $\BGL_1(X(p^n-1))\to \BGL_1(T(n-1))$. Composition with
the $\E{1}$-map $\Omega J_{p-1}(S^{2p^{n-1}}) \to \BGL_1(X(p^n-1))$ produces the desired map $\mu$. 
The fact that the Thom spectrum of $\mu$ can be identified with $T(n)/v_{n-1}$ can be proved directly using \cref{tn-vj-vn1-homology} below.
It follows from this discussion that there is an equivalence 
$$\THH(T(n)/v_{n-1}/T(n-1)) \simeq T(n)[J_{p-1}(S^{2p^{n-1}})]/v_{n-1}.$$
Moreover, under the equivalence $\THH(\BP{n-2}/T(n-1)) \simeq \BP{n-2}[\Omega S^{2p^{n-1} + 1}]$ of \cref{thh-calculations}(a), the map $\THH(T(n)/v_{n-1}/T(n-1)) \to \THH(\BP{n-2}/T(n-1))$ induced by the map $T(n)/v_{n-1} \to \BP{n-2}$ is given by the skeletal inclusion of $J_{p-1}(S^{2p^{n-1}}) \to \Omega S^{2p^{n-1}+1}$. The projection $\THH(\BP{n-2}/T(n-1)) \to \THH(\BP{n-1}/T(n))/v_{n-1}$ can be identified with the effect on $\BP{n-2}$-chains of the James-Hopf map $\Omega S^{2p^{n-1}+1} \to \Omega S^{2p^n+1}$. Therefore, the EHP sequence
$$J_{p-1}(S^{2p^{n-1}}) \to \Omega S^{2p^{n-1}+1} \to \Omega S^{2p^n+1}$$
shows that $\THH(\BP{n-1}/T(n))/v_{n-1}$ is (additively) equivalent to precisely as the ``subalgebra'' of $\THH(\BP{n-2}/T(n-1))$ generated by $\theta_{n-1}^p$.
The above calculation of $\THH(T(n)/v_{n-1}/T(n-1))$ is a topological incarnation of the calculation of $\HH(\cZ(v_{n-1})/\cM_{T(n-1)})$ in \cref{frobenius-thh}. Indeed, the Adams-Novikov-B\"okstedt spectral sequence (see \cref{anss-bokstedt}) runs
\begin{equation}\label{anss-z-vn1}
    E^2_{\ast,\ast} = \pi_\ast \HH(\cZ(v_{n-1})/\cM_{T(n-1)}) \Rightarrow \pi_\ast \gr^\bull_\ev \THH(T(n)/v_{n-1}/T(n-1)),
\end{equation}
and the $E^2$-page is given by \cref{hh-z-vn1}. Again, one can establish analogues of the B\"okstedt differentials \cref{bokstedt-diffl} in the Adams-Novikov-B\"okstedt spectral sequence, and thereby obtain an alternative approach to the above calculation of $\THH(T(n)/v_{n-1}/T(n-1))$.
\end{remark}
\begin{remark}\label{mod-jvn-thh}
Let us continue to assume \cref{tn-e2}, and let $0\leq j \leq n-1$. Recall that $T(n)/v_{[j,n)}$ denote $T(n)/(v_j, \cdots, v_{n-1})$. Recall that
\begin{equation}\label{tn-vj-vn1-homology}
    \H_\ast(T(n)/v_{[j,n)}; \FF_p) =
    \begin{cases}
    \FF_2[\zeta_1^2, \cdots, \zeta_j^2, \zeta_{j+1}, \cdots, \zeta_n] & p=2, \\
    \FF_p[\zeta_1, \cdots, \zeta_n] \otimes \Lambda_{\FF_p}(\tau_j, \cdots, \tau_{n-1}) & p>2.
    \end{cases}
\end{equation}
It is natural to ask if the discussion in \cref{frob-in-topology} extends to a description of $\THH(T(n)/v_{[j,n)}/T(j))$, paralleling \cref{mod-vj-vn1}. This is an ill-posed question, since it is not clear that $T(n)/v_{[j,n)}$ admits the structure of an $\E{1}$-algebra. Nevertheless, if $T(n)/v_{[j,n)}$ did admit the structure of an $\E{1}$-$T(j)$-algebra, then an analysis similar to \cref{thh-calculations} shows that
$$\THH(T(n)/v_{[j,n)} / T(j)) \simeq T(n)[J_{p^{n-j}-1}(S^{2p^j})]/v_{[j,n)}.$$
This is the topological analogue of the calculation of \cref{mod-vj-vn1}. Under the equivalence $\THH(\BP{j-1}/T(j)) \simeq \BP{j-1}[\Omega S^{2p^j + 1}]$ of \cref{thh-calculations}(a), the map $\THH(T(n)/v_{[j,n)}/T(j)) \to \THH(\BP{j-1}/T(j))$ induced by the map $T(n)/v_{[j,n)} \to \BP{j-1}$ is given by the skeletal inclusion of $J_{p^{n-j}-1}(S^{2p^j}) \to \Omega S^{2p^j+1}$. The projection
$$\THH(\BP{j-1}/T(j)) \to \THH(\BP{n-1}/T(n))/v_{[j,n)}$$
can be identified with the effect on $\BP{j-1}$-chains of the James-Hopf map $\Omega S^{2p^j+1} \to \Omega S^{2p^n+1}$. Therefore, the EHP sequence
$$J_{p^{n-j}-1}(S^{2p^j}) \to \Omega S^{2p^j+1} \to \Omega S^{2p^n+1}$$
shows that $\pi_\ast \THH(\BP{n-1}/T(n))/v_{[j,n)}$ is (additively) equivalent to precisely the ``subalgebra'' of $\pi_\ast \THH(\BP{j-1}/T(j))$ generated by $\theta_j^{p^{n-j}}$.

Since $\THH(T(n)/v_{[j,n)} / T(j)) \simeq T(n)[J_{p^{n-j}-1}(S^{2p^j})]/v_{[j,n)}$, one expects $T(n)/v_{[j,n)}$ to have an $\E{1}$-cell structure over $T(j)$ described by the cell structure of $J_{p^{n-j}-1}(S^{2p^j})$. Although we do not know how to prove this unconditionally, it is not difficult to show if we further assume \cite[Conjectures D and E]{bpn-thom}. In this case, \cite[Corollary B]{bpn-thom} says that there is a map $f: \Omega^2 S^{2p^j+1} \to \BGL_1(T(j))$ which detects $v_j\in \pi_{2p^j-2} T(j)$ on the bottom cell of the source, such that the Thom spectrum of $f$ is a form of $\BP{j-1}$. Let\footnote{In \cref{frob-in-topology}, we described the map $f_{n,n-1}$ without assuming \cite[Conjectures D and E]{bpn-thom}. It is generally not possible to describe $f_{n,j}$ similarly if $j<n-1$: although there is a map $S^{2p^j} \to \B^2\GL_1(T(j))$ which detects $v_j\in \pi_{2p^j-2} T(j)$, there are $p$-local obstructions to extending along $S^{2p^j} \to J_{p^{n-j}-1}(S^{2p^j})$ if $n-j>1$. These obstructions can be viewed as the $\E{2}$-Browder brackets on $v_j$; \cite[Conjecture E]{bpn-thom} implies that these Browder brackets can be compatibly trivialized.} $f_{n,j}: \Omega J_{p^{n-j}-1}(S^{2p^j}) \to \BGL_1(T(j))$ denote the composite of $f$ with the $\E{1}$-map $\Omega J_{p^{n-j}-1}(S^{2p^j}) \to \Omega^2 S^{2p^j+1}$. Then the Thom spectrum of $f_{n,j}$ is equivalent to $T(n)/v_{[j,n)}$ as a $T(j)$-module. This is not quite an ``$\E{1}$-cell structure'' for $T(n)/v_{[j,n)}$, since $f_{n,j}$ is not an $\E{1}$-map; nevertheless, this construction of $T(n)/v_{[j,n)}$ suffices to calculate $\THH(T(n)/v_{[j,n)} / T(j))$.
\end{remark}
\begin{example}\label{similarity-ehp}
If $R$ is an $\E{2}$-algebra which is an $\E{1}$-$T(n)$-algebra, one can loosely interpret the above discussion as saying that the square
\begin{equation}\label{pushout-ehp}
    \xymatrix{
    R[J_{p^{n-j}-1}(S^{2p^j})] \simeq R \otimes_{T(n)} \THH(T(n)/v_{[j,n)}/T(j)) \ar[r] \ar[d] & \THH(R/v_{[j,n)}/T(j)) \ar[d] \\
    R \ar[r] & \THH(R/T(n))/v_{[j,n)}
    }
\end{equation}
exhibits the top-right corner as the ``tensor product of the top-left and bottom-right corners''. Note that the homotopy of the top-left corner is $R[\theta_j]/\theta_j^{p^{n-j}}$.
The bottom-right corner should be thought of as $\THH(R/v_{[j,n)}/T(n)/v_{[j,n)})$, although it is difficult to make this picture precise (since $T(n)/v_{[j,n)}$ does not admit the structure of an $\E{2}$-algebra).

For instance, if $R = \BP{n-1}$, then the square \cref{pushout-ehp} says that the square
\begin{equation}
    \xymatrix{
    \BP{n-1}[J_{p^{n-j}-1}(S^{2p^j})] \ar[r] \ar[d] & \THH(\BP{j-1}/T(j)) \simeq \BP{j-1}[\Omega S^{2p^j+1}] \ar[d] \\
    R \ar[r] & \THH(\BP{n-1}/T(n))/v_{[j,n)} \simeq \BP{j-1}[\Omega S^{2p^n+1}]
    }
\end{equation}
exhibits the top-right corner as the tensor product of the top-left and bottom-right corners. This is essentially the observation that the map $\THH(\BP{j-1}/T(j)) \to \THH(\BP{n-1}/T(n))/v_{[j,n)}$ sends $\theta_j^m\mapsto 0$ unless $p^{n-j}|m$, in which case $\theta_j^m \mapsto \theta_n^{m/p^{n-j}}$. This therefore explains the similarity between $\THH(\BP{n-1}/T(n))$ and $\THH(\BP{j-1}/T(j))$ given by \cref{thh-calculations}(a).
\end{example}
\begin{remark}
As mentioned before, the lack of structure on the objects involved above make it difficult to use the above picture to understand the multiplicative structure on $\THH(\BP{n-1})$; but it does point to a plan of attack. Namely, one can attempt to understand the even filtration on $\THH(\BP{n-1})/v_{[0,n)}$ by considering the natural map $\THH(\BP{n-1})/v_{[0,n)} \to \THH(\FF_p)$. It is not hard to see that this map is an eff cover, so that the stack associated to the even filtration on $\THH(\BP{n-1})/v_{[0,n)}$ is the quotient of the scheme associated to the even filtration on $\THH(\FF_p)$ by a certain group scheme. The scheme associated to the even filtration on $\THH(\FF_p)$ is precisely $\GG_a$, and the above discussion suggests that the stack associated to the even filtration on $\THH(\BP{n-1})/v_{[0,n)}$ is isomorphic to $\GG_a/W[F^n]$; this is also suggested by work of Lee in \cite{lee-thh}. We hope to study this in future work joint with Jeremy Hahn and Arpon Raksit. To this end, we set up some groundwork for future investigation of this stack in \cref{cartier-duals}, where we study some basic properties of $W[F^n]$.
\end{remark}
\begin{remark}
The calculation of $\THH(T(n)/v_{[j,n)} / T(j))$ in \cref{mod-jvn-thh} shows that more is true: if $n\geq k-1$, the structure of $\BP{n}$ as an $\E{1}$-$X(p^k)$-algebra (i.e., $\THH(\BP{n}/T(k))$) mirrors the structure of $\BP{n-k}$ as an $\E{1}$-algebra over the sphere (i.e., $\THH(\BP{n-k})$).
\end{remark}
\begin{remark}
Note that the Thom spectrum of the map $f_{n,0}$ has been studied in \cite{mrs}, where it was denoted $y(n)$. Just as the $y(n)$ describe a filtration of $y(\infty) = \FF_p$ by $\E{1}$-algebras, the spectra $T(n)/v_{[j,n)}$ describe a filtration of $\BP{j-1}$. For instance, it is not difficult to show that for $j\leq k \leq n-1$, the spectrum $T(n)/v_{[j,n)}$ is $\mathrm{Tel}(k)$-acyclic. Therefore, if $T(n)/v_{[j,n)}$ admits the structure of an $\E{1}$-ring, the same argument as \cite[Corollary 4.15]{lmmt} implies that the map $K(T(n)/v_{[j,n)}) \to K(\BP{j-1})$ is an $\mathrm{Tel}(k)$-equivalence for $j\leq k \leq n-1$. Since $K(\BP{j-1})$ is $\mathrm{Tel}(k)$-locally contractible for $k\geq j+1$, the only interesting case is $k = j$; in this case, we find that the maps 
$$K(T(j+1)/v_j) \to K(T(j+2)/(v_j, v_{j+1})) \to \cdots \to K(\BP{j-1})$$
are all $\mathrm{Tel}(j)$-equivalences.
\end{remark}
\begin{remark}\label{iterated-thom-tj}
Since $T(n)/v_{[j,n)}$ is closely related to $T(j)$ by \cref{mod-jvn-thh}, it is natural to wonder if there is a relationship between $T(n)/v_{[j,n)}$ and $T(n+k)/v_{[j,n+k)}$, in a manner compatible with their relationship to $T(j)$. By \cref{frob-in-topology}, $T(n+k)/v_{[n,n+k)}$ is the Thom spectrum of a map $\Omega J_{p^k-1}(S^{2p^n}) \to \BGL_1(T(n))$. It follows that if $T(n)/v_{[j,n)}$ admits the structure of an $\E{1}$-ring, then $T(n+k)/v_{[j,n+k)}$ is the Thom spectrum of a map $\Omega J_{p^k-1}(S^{2p^n}) \to \BGL_1(T(n)/v_{[j,n)})$. As mentioned in \cref{mod-jvn-thh}, if we further assume \cite[Conjectures D and E]{bpn-thom}, the spectrum $T(n)/v_{[j,n)}$ (resp. $T(n+k)/v_{[j,n+k)}$) is the Thom spectrum of a map $\Omega J_{p^{n-j}-1}(S^{2p^j}) \to \BGL_1(T(j))$ (resp. $\Omega J_{p^{n+k-j}-1}(S^{2p^j}) \to \BGL_1(T(j))$).

The relationship between the two presentations of $T(n+k)/v_{[j,n+k)}$ (as a Thom spectrum over $T(n)/v_{[j,n)}$ and over $T(j)$) is explained by the following observation in unstable homotopy theory: there is a fibration\footnote{To construct the fibration \cref{partial-ehp}, recall that there is an EHP sequence
$$J_{p^m-1}(S^{2d}) \to \Omega S^{2d+1} \xar{H} \Omega S^{2dp^m+1}.$$
By dimension considerations, the canonical map $J_{p^{m+k}-1}(S^{2d}) \to \Omega S^{2d+1}$ factors through $J_{p^{m+k}-1}(S^{2d}) \to J_{p^k-1}(S^{2dp^m}) \times_{\Omega S^{2dp^m+1}} \Omega S^{2d+1}$, and one can easily check that this map is an equivalence. This implies the desired fiber sequence \cref{partial-ehp}.}
\begin{equation}\label{partial-ehp}
    J_{p^m-1}(S^{2d}) \to J_{p^{m+k}-1}(S^{2d}) \xar{H} J_{p^k-1}(S^{2dp^m}).
\end{equation}
Indeed, applying \cref{partial-ehp} when $m = {n-j}$ and $d=p^j$, we obtain a fibration of $\E{1}$-spaces:
$$\Omega J_{p^{n-j}-1}(S^{2p^j}) \to \Omega J_{p^{n+k-j}-1}(S^{2p^j}) \xar{H} \Omega J_{p^k-1}(S^{2p^n}).$$
The composite of $f_{n+k,j}: \Omega J_{p^{n+k-j}-1}(S^{2p^j}) \to \BGL_1(T(j))$ with the map $\Omega J_{p^{n-j}-1}(S^{2p^j}) \to \Omega J_{p^{n+k-j}-1}(S^{2p^j})$ is $f_{n,j}: \Omega J_{p^{n-j}-1}(S^{2p^j}) \to \BGL_1(T(j))$. Therefore, \cite[Proposition 2.1.6]{bpn-thom} implies that there is a map $f_{n+k,j}: \Omega J_{p^k-1}(S^{2p^n}) \to \BGL_1(T(n)/v_{[j,n)})$ whose Thom spectrum is $T(n+k)/v_{[j,n+k)}$. This is the desired relationship between the various presentations of $T(n+k)/v_{[j,n+k)}$.
\end{remark}
\begin{remark}
Observe that the preceding discussion implies, in particular, that there is a map $q_k: \Omega J_{p^k-1}(S^{2p^n}) \to \BGL_1(y(n))$ such that the composite $S^{2p^n-1} \to \Omega J_{p^k-1}(S^{2p^n}) \to \BGL_1(y(n))$ detects $v_n\in \pi_{2p^n-2} y(n)$, and such that the Thom spectrum of the map $q_k$ is $y(n+k)$. Taking $k \to \infty$, this implies that there is a map $q_\infty: \Omega^2 S^{2p^n+1} \to \BGL_1(y(n))$ whose Thom spectrum is $y(\infty) = \FF_p$. The map $q_\infty$ is adjoint to the $\E{1}$-map $\Omega^3 S^{2p^n+1}_+ \to y(n)$ from \cite[Section 4.1]{mrs} which detects $v_n$ on the bottom cell of the source.
\end{remark}

\newpage

\appendix
\counterwithin{lemma}{section}
\section{Analogues for $\ko$ and $\tmf$}\label{analogues-ko-tmf}
Many of the results from the body of this article extend to the case of $\ko$ and $\tmf$. In this section, we will state these results; since the proofs are essentially the same, we will not give arguments unless the situation is substantially different. We will specialize to the case $p=2$ for simplicity. One of the main observations in \cref{thh-calculations} is that the structure of $\BP{n}$ as an $\E{1}$-algebra over $X(p^n)$ (or rather, $T(n)$) mirrors the structure of $\Z_p$ as an $\E{1}$-algebra over $S^0$. For $\ko$ and $\tmf$, there are analogues of $X(p^n)$, which we studied in \cite{bpn-thom}.
\begin{recall}
Let $A$ denote the free $\E{1}$-algebra $S\mmod \nu$ with a nullhomotopy of $\nu$, i.e., the Thom spectrum of the $\E{1}$-map $\Omega S^5 \to \BGL_1(S)$ which detects $\nu\in \pi_3(S)$ on the bottom cell\footnote{The spectrum $A$ has been studied before by Mahowald and his coauthors in \cite{mahowald-thom, davis-mahowald, mahowald-v2-periodic, mahowald-bo-res, mahowald-imj, mahowald-unell-bott}, where it is often denoted $X_5$.}. This spectrum has the property that $\H_\ast(A; \FF_2) \cong \FF_2[\zeta_1^4]$ (in fact, $\BPP_\ast(A) \cong \BPP_\ast[\frac{\eta_R(v_1^2) - v_1^2}{4}] \cong \BPP_\ast[t_1^2 + v_1 t_1]$). There is an $\E{1}$-map $i: A \to \ko$ such that under the isomorphism $\H_\ast(\ko; \FF_2) \cong \FF_2[\zeta_1^4, \zeta_2^2, \zeta_3, \cdots]$, the map $i$ corresponds to the inclusion of $\FF_2[\zeta_1^4]$. In particular, the map $A \to \ko$ is an equivalence in dimensions $\leq 4$. There is in fact an $\E{1}$-map $A \to \MSpin$, induced from an $\E{1}$-map $\Omega S^5 \to \BSpin$. There is also an $\E{1}$-map $A \to X(2) = T(1)$, such that $T(1) \simeq A \otimes C\eta$. We note that the ``$Q_0$-Margolis homology'' of $\H_\ast(\ko; \FF_2)$ (i.e., the homology of $\Sq^1$ viewed as a differential acting on $\H_\ast(\ko; \FF_2)$) is precisely $\H_\ast(A; \FF_2)$.

Similarly, let $B$ denote the $\E{1}$-algebra of \cite[Definition 3.2.17]{bpn-thom}\footnote{The $\E{1}$-ring $B$ has been briefly studied under the name $\ol{X}$ in \cite{hopkins-mahowald-orientations}.}, so that there is an $\E{1}$-space $N$ such that $B$ is the Thom spectrum of an $\E{1}$-map $N \to \BString$. We will not recall the construction of $N$ here; we only say that $B$ is obtained from the $\E{1}$-quotient $S\mmod\sigma$ by further taking an ``$\E{1}$-quotient'' by the class in $\pi_{11}(S\mmod\sigma)$ constructed from a nullhomotopy of $\nu\sigma\in \pi_{10}(S)$. This spectrum has the property that $\H_\ast(B; \FF_2) \cong \FF_2[\zeta_1^8, \zeta_2^4]$ (in fact, $\BPP_\ast(B) \cong \BPP_\ast[b_4, y_6]$, where $b_4 \equiv t_1^4$ and $y_6 \equiv t_2^2$ modulo $(2,v_1, \cdots)$)\footnote{For the sake of illustration, we remark that if $p=2$, then $b_4$ can be taken to be the following cobar representative for $\sigma = \alpha_{4/4}$, where the $v_i$s are Hazewinkel's generators:
\begin{align*}
    b_4 & = \frac{1}{2}\left(\frac{\eta_R(v_1^4) - v_1^4}{8} - (\eta_R(v_1 v_2) - v_1 v_2)\right)\\
    & = 5 t_1^4 + 9 t_1^3 v_1 + 7 t_1^2 v_1^2 - 2 t_1 t_2 + 2 t_1 v_1^3 - t_1 v_2 - t_2 v_1.
\end{align*}
Here, we used the formula
$$\eta_R(v_2) = v_2 - 5v_1 t_1^2 - 3v_1^2 t_1+ 2t_2 - 4t_1^3.$$
}. There is an $\E{1}$-map $i: B \to \tmf$ such that under the isomorphism $\H_\ast(\tmf; \FF_2) \cong \FF_2[\zeta_1^8, \zeta_2^4, \zeta_3^2, \zeta_4, \cdots]$, the map $i$ corresponds to the inclusion of $\FF_2[\zeta_1^8, \zeta_2^4]$. In particular, the map $B \to \tmf$ is an equivalence in dimensions $\leq 12$. There is in fact an $\E{1}$-map $B \to \MString$. There is also an $\E{1}$-map $B \to T(2)$ such that $T(2) \simeq B \otimes DA_1$, where $DA_1$ is an $8$-cell complex whose mod $2$ cohomology is isomorphic to the subalgebra of the Steenrod algebra generated by $\Sq^2$ and $\Sq^4$. We note that the ``$Q_0$-Margolis homology'' of $\H_\ast(\tmf; \FF_2)$ (i.e., the homology of $\Sq^1$ viewed as a differential acting on $\H_\ast(\tmf; \FF_2)$) is precisely $\H_\ast(B; \FF_2)$.
\end{recall}
The following was implicitly stated in \cite{bpn-thom}, but we make it explicit here:
\begin{conjecture}\label{a-b-e2}
The $\E{1}$-algebra structures on $A$ and $B$ admit extensions to $\Efr{2}$-algebra structures such that the maps $A \to X(2)$, $B \to X(4)_{(2)}$, $A\to \ko$, and $B \to \tmf$ admit the structure of $\Efr{2}$-maps.
\end{conjecture}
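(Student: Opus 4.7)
My plan is to realize $A$ and $B$ as Thom spectra of framed $\E{2}$-maps, in the spirit of \cref{xn-framed-e2}, so that both the $\Efr{2}$-structure and its compatibility with the maps to $X(2), X(4)_{(2)}, \ko, \tmf$ follow from the functoriality of the framed $\E{2}$-Thom spectrum functor. For $A$, the key observation will be that $\Omega S^5$ admits a natural $\Efr{2}$-refinement via the identification $\SU(4)/\Sp(2) \cong S^5$ arising from the accidental isomorphisms $\SU(4) \cong \Spin(6)$ and $\Sp(2) \cong \Spin(5)$; this is precisely the $n=2$ instance of the quaternionic construction $\Omega(\SU(2n)/\Sp(n))$ underlying $X_\bH(n)$. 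The first step is thus to build an $\Efr{2}$-Thom spectrum $A''$ from the composite
$$\Omega(\SU(4)/\Sp(2)) \simeq \Omega S^5 \to \Omega(\SU/\Sp) \simeq \BSp \to \BO \to \BGL_1(S),$$
where the $\Efr{2}$-enhancement is obtained by arguments parallel to \cref{xn-framed-e2}. Routing through $\BSp$ furnishes $A''$ with an $\Efr{2}$-map to $\MSpin$ (and thence to $\ko$), and composing with $\BSp \to \BU$ furnishes an $\Efr{2}$-map to $X(2) = T(1)$.

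For $B$, the second step is to carry out an analogous construction using a higher symmetric space packaging the information of $\sigma \in \pi_7 S$ rather than $\nu \in \pi_3 S$. Natural candidates include a loop space of an $8$-connected quotient such as $\Omega(\Spin(9)/\Spin(8)) \simeq \Omega S^8$, an $\Efr{2}$-space built from the octonionic projective plane $F_4/\Spin(9)$, or a higher quaternionic quotient such as $\Omega(\SU(6)/\Sp(3))$. In each case one requires an $\Efr{2}$-enhancement admitting an $\Efr{2}$-map to $\BString$ that factors the $\E{1}$-map $N \to \BString$ defining $B$, together with a compatible $\Efr{2}$-map to $X(4)_{(2)}$.

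Having built $\Efr{2}$-rings $A''$ and $B''$ equipped with $\Efr{2}$-maps to $X(2), X(4)_{(2)}, \ko, \tmf$, the third step is to identify their underlying $\E{1}$-rings with $A$ and $B$. One proceeds by computing $\BPP$-homologies via the Thom isomorphism and matching them with the known descriptions $\BPP_\ast(A) \cong \BPP_\ast[t_1^2 + v_1 t_1]$ and $\BPP_\ast(B) \cong \BPP_\ast[b_4, y_6]$. Combined with a recognition principle for $\E{1}$-algebras with prescribed $\BPP$-homology, similar to the characterization of $T(n)$ in \cref{tn-def}, this will identify $A'' \simeq A$ and $B'' \simeq B$ as $\E{1}$-rings and propagate the $\Efr{2}$-structures to the original $A, B$.

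The main obstacle will be step two. Whereas the quaternionic symmetric space $\SU(4)/\Sp(2)$ cleanly packages the $\nu$-information underlying $A$, there is no equally canonical ``$\sigma$-package'' giving $B$ as a Thom spectrum over a single homogeneous space. One must either identify the right octonionic symmetric space whose loop space Thom spectrum is (a $2$-local form of) $B$, or else construct $B''$ as an $\Efr{2}$-extension of $T_2(2)$ using the Thom spectra of \cref{higher-jp} and verify that its underlying $\E{1}$-ring agrees with $B$. A related subtlety, already relevant for $A$, is ensuring that the $\Efr{2}$-refinement of the bundle map factors through $\BString$ (and not merely through $\BU$), so that the map $B \to \tmf$---and not just $B \to T(2)$---inherits the $\Efr{2}$-structure; this forces the source $\Efr{2}$-space to admit a lift through $\BString$ at the framed $\E{2}$ level, which is the most delicate point of the entire argument.
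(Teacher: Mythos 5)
You are proposing a proof of \cref{a-b-e2}, which the paper states as a conjecture and does not prove, so there is no argument in the paper to compare against; the question is whether your outline would settle it, and it would not. The decisive problem is your first step. The mechanism behind \cref{xn-framed-e2} (see also \cref{t2-thom}) produces framed $\E{2}$-structures because $\Omega\SU(n)$ and $\Omega\Sp(n)$ are \emph{double} loop spaces and the bundle maps are double loopings of maps $\BSU(n)\to\B^3\U$, resp.\ $\BSp(n)\to\B^3\U$. By contrast, $\Omega(\SU(4)/\Sp(2))\simeq \Omega S^5$ carries only its loop-space $\E{1}$-structure, and this cannot be refined to an $\E{2}$-structure at all: such a refinement would be group-like, hence would exhibit $S^5\simeq B\Omega S^5$ as a loop space and in particular an H-space, contradicting Adams' Hopf invariant one theorem ($2$-locally only $S^1, S^3, S^7$ qualify). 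This is exactly why the paper only ever \emph{hypothesizes} an $\Efr{2}$-structure on $X_\bH(n)$ (see \cref{partial-orientation}, \cref{quaternionic-sen}) even though $X_\bH(2)\simeq A$: if $A$ admits an $\Efr{2}$-structure extending its given $\E{1}$-structure, it cannot be manufactured by feeding an $\E{2}$-structure on the base $\Omega S^5$ into the Thom spectrum functor, and the same objection blocks your route to making $A\to\MSpin\to\ko$ an $\Efr{2}$-map.

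Your treatment of $B$ fails already at the level of candidate spaces: since $\H_\ast(B;\FF_2)\cong\FF_2[\zeta_1^8,\zeta_2^4]$, the base $N$ has polynomial mod $2$ homology on generators in degrees $8$ and $12$, whereas $\Omega S^8$ contributes a generator in degree $7$, $\Omega \OP^2$ contributes degrees $7$ and $15$, and $\Omega(\SU(6)/\Sp(3))$ (the base of $X_\bH(3)$, \cref{XH-n}) contributes degrees $4$ and $8$; moreover $N$ is constructed by a two-stage $\E{1}$-quotient (first $\sigma$, then a class in $\pi_{11}(S\mmod\sigma)$), not as loops on a homogeneous space, and the no-go results recorded in \cref{nonexistence-fibrations} warn against octonionic analogues of the twistor-fibration arguments you would need. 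Finally, your third step appeals to a ``recognition principle'' identifying $\E{1}$-rings from their $\BPP$-homology; no such principle is available (the splitting in \cref{tn-def} comes from an explicit idempotent, not from homology alone), so even the identification $A''\simeq A$ would require a genuine argument. In short, the strategy of realizing $A$ and $B$ as Thom spectra of framed $\E{2}$-maps over these homogeneous-space loop spaces cannot work as written, and the conjecture remains open.
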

A calculation paralleling \cref{homology-thh-bpn} shows:
\begin{prop}
Assume \cref{a-b-e2}. There are isomorphisms
\begin{align*}
    \H_\ast(\THH(\ko/A); \FF_2) & \cong \H_\ast(\ko; \FF_2)[\sigma(\zeta_3)] \otimes_{\FF_2} \Lambda_{\FF_2}(\sigma(\zeta_2^2)), \\
    \H_\ast(\THH(\tmf/B); \FF_2) & \cong \H_\ast(\tmf; \FF_2)[\sigma(\zeta_4)] \otimes_{\FF_2} \Lambda_{\FF_2}(\sigma(\zeta_3^2)).
\end{align*}
Here, $|\sigma(\zeta_3)| = 8$, $|\sigma(\zeta_2^2)|=7$, $|\sigma(\zeta_4)| = 16$, and $|\sigma(\zeta_3^2)| = 15$.
\end{prop}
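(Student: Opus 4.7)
The plan is to mimic the proof of \cref{homology-thh-bpn}(b) quite closely, replacing the pair $(\BP{n},X(2^n))$ by $(\ko,A)$ and $(\tmf,B)$. Throughout, we work at $p=2$ and assume \cref{a-b-e2} so that $\THH(\ko/A)$ and $\THH(\tmf/B)$ are defined.

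First I will set up the B\"okstedt spectral sequence
$$E^2_{\ast,\ast} = \HH_\ast\bigl(\H_\ast(\ko;\FF_2)/\H_\ast(A;\FF_2)\bigr)\Rightarrow \H_\ast(\THH(\ko/A);\FF_2),$$
and similarly with $\ko/A$ replaced by $\tmf/B$. Using $\H_\ast(\ko;\FF_2)\cong \H_\ast(A;\FF_2)[\zeta_2^2,\zeta_3,\zeta_4,\ldots]$ as an $\H_\ast(A;\FF_2)$-algebra (and the analogous splitting $\H_\ast(\tmf;\FF_2)\cong \H_\ast(B;\FF_2)[\zeta_3^2,\zeta_4,\zeta_5,\ldots]$), together with the standard formula $\pi_\ast\HH(R[x]/R)\cong R[x]\otimes\Lambda_R(\sigma x)$, one computes
$$E^2_{\ast,\ast}\cong \H_\ast(\ko;\FF_2)\otimes_{\FF_2}\Lambda_{\FF_2}\bigl(\sigma(\zeta_2^2),\,\sigma(\zeta_j)\mid j\geq 3\bigr),$$
and correspondingly
$$E^2_{\ast,\ast}\cong \H_\ast(\tmf;\FF_2)\otimes_{\FF_2}\Lambda_{\FF_2}\bigl(\sigma(\zeta_3^2),\,\sigma(\zeta_j)\mid j\geq 4\bigr).$$

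Next I would show that $E^\infty=E^2$. The B\"okstedt spectral sequence for $\THH(\ko/A)$ receives a map from the B\"okstedt spectral sequence for $\THH(\BP{1})$, and the same argument as in \cite[Proposition~5.7]{angeltveit-rognes} (reproduced in \cref{homology-thh-bpn}(a), where the relevant $E^\infty$ equals $E^2$ at $p=2$) shows that there is no room for nontrivial differentials once one tracks the classes $\sigma(\zeta_j)$. The argument is identical for $\THH(\tmf/B)$, comparing against $\THH(\BP{2})$. The main obstacle is really only notational here: the comparison arrows pick out exactly the exterior generators $\sigma(\zeta_2^2),\sigma(\zeta_3),\ldots$ (respectively $\sigma(\zeta_3^2),\sigma(\zeta_4),\ldots$) from the larger B\"okstedt $E^2$-page of $\THH(\BP{1})$ (respectively $\THH(\BP{2})$), and the verification that these classes are infinite cycles reduces to the corresponding verification upstairs.

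Finally, I would resolve the multiplicative extensions on the $E^\infty$-page using the Dyer--Lashof action on $\H_\ast(\THH(\ko/A);\FF_2)$, which exists because \cref{a-b-e2} endows $\THH(\ko/A)$ with an $\E{2}$-structure. By Steinberger's formulas, $Q^{2^j}(\zeta_j)=\zeta_{j+1}$ in $\H_\ast(\ko;\FF_2)$ for $j\geq 3$, and the compatibility $\sigma\circ Q^a=Q^a\circ\sigma$ therefore gives
$$\sigma(\zeta_j)^2=Q^{2^j}(\sigma(\zeta_j))=\sigma(\zeta_{j+1})\qquad(j\geq 3).$$
Under these extensions the exterior factor $\Lambda_{\FF_2}(\sigma(\zeta_j)\mid j\geq 3)$ collapses onto the polynomial algebra $\FF_2[\sigma(\zeta_3)]$. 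On the other hand, a direct Cartan formula calculation shows $Q^7(\zeta_2^2)=2Q^3(\zeta_2)Q^4(\zeta_2)=2\zeta_2^2\zeta_3=0$, hence $\sigma(\zeta_2^2)^2=Q^7(\sigma(\zeta_2^2))=\sigma(Q^7(\zeta_2^2))=0$, confirming the exterior-ness of $\sigma(\zeta_2^2)$. This produces the stated description of $\H_\ast(\THH(\ko/A);\FF_2)$. The argument for $\tmf/B$ is verbatim the same, now using $Q^{2^j}(\zeta_j)=\zeta_{j+1}$ for $j\geq 4$ to get $\sigma(\zeta_j)^2=\sigma(\zeta_{j+1})$, and $Q^{15}(\zeta_3^2)=2Q^7(\zeta_3)Q^8(\zeta_3)=0$ to get $\sigma(\zeta_3^2)^2=0$. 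The main obstacle is really the assumption \cref{a-b-e2}: without it, there is no $\E{2}$-structure available on the relative $\THH$ to justify the Dyer--Lashof calculations that collapse the exterior algebra to a polynomial one, so the statement is to be understood as conditional on \cref{a-b-e2}.
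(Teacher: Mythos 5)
Your proposal follows exactly the route the paper intends: the paper gives no separate argument here beyond "a calculation paralleling \cref{homology-thh-bpn}", and your Bökstedt spectral sequence setup, the collapse at $E^2$ by comparison with the absolute spectral sequences for $\THH(\BP{1})$ and $\THH(\BP{2})$, and the resolution of multiplicative extensions via Steinberger's formulas are precisely that parallel calculation; your degree bookkeeping and the identities $Q^{2^j}(\zeta_j)=\zeta_{j+1}$, $Q^{\mathrm{odd}}(x^2)=0$ are all correct.

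The one step you should repair is the justification of the Dyer--Lashof argument: you assert that \cref{a-b-e2} endows $\THH(\ko/A)$ itself with an $\E{2}$-structure, but it does not --- $A$ is only assumed to be a \emph{framed} $\E{2}$-ring, not $\E{3}$, and the paper explicitly warns that relative $\THH$ over such a base rarely inherits any multiplicative structure (this is the same reason $\THH(\cC/X(p^n))$ is not a ring). So Dyer--Lashof operations are not available on $\H_\ast(\THH(\ko/A);\FF_2)$ directly. The fix is the one used in the proof of \cref{homology-thh-bpn}: run the Steinberger/Dyer--Lashof computation in $\H_\ast(\THH(\ko);\FF_2)$ and $\H_\ast(\THH(\tmf);\FF_2)$, where the ring structure genuinely exists because $\ko$ and $\tmf$ are $\Eoo$ (this is exactly \cite[Theorem 5.12]{angeltveit-rognes}), and then transfer the extensions $(\sigma\zeta_j)^2=\sigma\zeta_{j+1}$ and $(\sigma(\zeta_2^2))^2=0$, resp.\ $(\sigma(\zeta_3^2))^2=0$, to the $E^\infty$-page of the relative spectral sequence along the map of Bökstedt spectral sequences (equivalently, using the $\THH(\ko)$- resp.\ $\THH(\tmf)$-module structure on the relative $\THH$), since the relevant classes are in the image of that map. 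With that adjustment your argument is complete and coincides with the paper's.
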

Using the Adams spectral sequence for $\pi_\ast \THH(\ko/A)$ and $\pi_\ast \THH(\tmf/B)$ as in \cref{thh-calculations}(b) (and using $\ko$- and $\tmf$-linearity), one finds:
\begin{theorem}\label{ko-a}
Assume \cref{a-b-e2}. Upon $2$-completion, there are equivalences
\begin{align*}
    \THH(\ko/A) & \simeq \ko \oplus \bigoplus_{j\geq 1} \Sigma^{8j - 1} \ko/2j, \\
    \THH(\tmf/B) & \simeq \tmf \oplus \bigoplus_{j\geq 1} \Sigma^{16j - 1} \tmf/2j.
\end{align*}
\end{theorem}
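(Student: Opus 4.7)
The strategy is to adapt the proof of \cref{thh-calculations}(b) to the present setting, replacing the role of $\cE(n)$ by $\cA(1)$ (respectively $\cA(2)$) and accounting for the more intricate structure of $\pi_\ast \ko$ and $\pi_\ast \tmf$ compared to $\pi_\ast \BP{n}$. We describe the argument for $\THH(\ko/A)$; the case of $\THH(\tmf/B)$ is analogous.

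First I would compute $\pi_\ast \THH(\ko/A)^\wedge_2$ via the Adams spectral sequence. The proposition preceding the theorem gives
\[\H_\ast(\THH(\ko/A); \FF_2) \cong \H_\ast(\ko; \FF_2) \otimes_{\FF_2} \FF_2[y] \otimes_{\FF_2} \Lambda_{\FF_2}(x),\]
where $y = \sigma(\zeta_3)$ is in degree $8$, $x = \sigma(\zeta_2^2)$ is in degree $7$, and $\beta(y) = x$. Using the standard isomorphism $\H_\ast(\ko; \FF_2) \cong \cA \mmod \cA(1)$ and the change-of-rings isomorphism, the $E_2$-page is
\[E_2^{\ast,\ast} \cong \Ext^{\ast,\ast}_{\cA(1)_\ast}\!\bigl(\FF_2, \FF_2[y] \otimes \Lambda(x)\bigr).\]
After verifying that $\FF_2[y] \otimes \Lambda(x)$ is $\cA(1)_\ast$-coaction-primitive (a direct Steenrod-operation computation on the relevant $\sigma$-classes), this splits as $\Ext_{\cA(1)_\ast}(\FF_2, \FF_2) \otimes \FF_2[y] \otimes \Lambda(x)$, whose first factor converges to $\pi_\ast \ko^\wedge_2$.

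Next I would identify the new differentials coming from $\beta(y) = x$. These induce an $h_0$-connecting map, which together with the $\E{2}$-structure on $\THH(\ko/A)$ (coming from the $\Eoo$-structure on $\ko$) and \cite[Proposition 6.8]{may-steenrod} applied to powers of $y$ yield differentials
\[d_{v_2(j)+1}(y^j) = h_0^{v_2(j)+1}\, x\, y^{j-1}\]
up to $2$-adic units, exactly as in the proof of \cref{thh-calculations}(b). The Dyer-Lashof correction terms vanish by the same reasoning: $Q_2(\sigma\zeta_2^2) = \sigma(\zeta_3^2)$ vanishes modulo decomposables, and the relevant brackets are killed after the map $\THH(\ko/A) \to \THH(\ko/\MSpin)$, which annihilates $x$.

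I would then rule out all other differentials by the parity argument of \cref{thh-calculations}(b). Since $\ko$ splits off $\THH(\ko/A)$ as a $\ko$-module summand via the unit map, any differential on $xy^{j-1}$ must land in $\pi_\ast \ko^\wedge_2 \cdot xy^{k-1}$ for some $k < j$, so the putative target lies in $\pi_{8(j-k)-1} \ko^\wedge_2$. But $\pi_n \ko^\wedge_2 = 0$ for $n \equiv 3, 5, 6, 7 \pmod 8$; in particular $\pi_{8(j-k)-1} \ko^\wedge_2 = 0$ whenever $j > k$. Resolving the resulting $h_0$-extensions gives classes of order $2^{v_2(j)+1}$, and since $2j$ and $2^{v_2(j)+1}$ differ by a $2$-adic unit, $\ko/2j \simeq \ko/2^{v_2(j)+1}$ after $2$-completion. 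Finally, using $\ko$-linearity of $\THH(\ko/A)$, the $\BP{n-1}$-module identification in the proof of \cref{thh-calculations} lifts to the desired $\ko$-module equivalence.

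The hard part will be the analogue of the parity/vanishing argument for $\tmf$. Unlike $\pi_\ast \ko^\wedge_2$, the homotopy $\pi_\ast \tmf^\wedge_2$ contains rich families of odd-degree torsion classes built from $\eta, \nu, \epsilon, \kappa, \bar{\kappa}$, and a naive vanishing statement $\pi_{16m-1}\tmf^\wedge_2 = 0$ is false. Instead, one must combine the Adams filtration of potential differential targets with the observation that such a differential must preserve the $x$-multiplicity, to rule out the remaining cases degree-by-degree; this mirrors the more delicate bracket analysis in the proof of \cref{thh-calculations}(b) for odd primes.
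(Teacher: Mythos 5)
Your blueprint (Bökstedt homology input, change of rings over $\cA(1)_\ast$, May-type differentials on powers of $y$, $\ko$-linearity to assemble the module splitting) is the same one the paper gestures at — it proves the theorem in one line by running the Adams spectral sequence "as in \cref{thh-calculations}(b)" — but your execution has a genuine error at the change-of-rings step. The comodule $\FF_2[y]\otimes\Lambda_{\FF_2}(x)$ is \emph{not} cotrivial over $\cA(1)_\ast$: the coaction on $y=\sigma(\zeta_3)$ contains the term $\zeta_1\otimes\sigma(\zeta_2^2)$ (the term involving $\sigma(\zeta_1^4)$ dies because $\zeta_1^4$ comes from $A$), and this $\zeta_1$-linking \emph{is} the Bockstein $\beta(y)=x$ that you invoke in the very next sentence — so your two steps contradict each other. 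The mistake is not harmless bookkeeping. With a trivialized $E_2$-page, producing the order-$2$ summands for odd $j$ would require a differential $d_1(y^j)=h_0\,xy^{j-1}$, and the Adams spectral sequence has no $d_1$. In the correct setup the odd-$j$ torsion is not produced by differentials at all: multiplicativity of the coaction gives $y^j\mapsto 1\otimes y^j + j\,\zeta_1\otimes xy^{j-1}$ (mod $2$), so for $j$ odd the pair $\{y^j,\,xy^{j-1}\}$ is a shifted $Sq^1$-pair whose $\Ext_{\cA(1)_\ast}$ already contributes the $\Sigma^{8j-1}\ko/2$ summand on the $E_2$-page; only for $j$ even are genuine truncating differentials needed, and these start at $d_2$, consistent with $v_2(j)+1\geq 2$.

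Two further points. First, $\THH(\ko/A)$ carries no $\E{2}$-structure — $A$ is only a (framed) $\E{2}$-ring, and the paper explicitly warns that relative $\THH$ over such a base is not multiplicative — so the input for May's Proposition 6.8 must be imported from the absolute $\THH(\ko)$ (which is an $\E{2}$-ring) and then pushed into the relative computation, exactly as the proof of \cref{thh-calculations}(b) does; your appeal to a multiplicative structure on $\THH(\ko/A)$ itself is not available, though the repair is the same as the paper's. (Relatedly, your correction-term argument via $\THH(\ko/\MSpin)$ is shaky — since $\H_\ast(\MSpin;\FF_2)\to\H_\ast(\ko;\FF_2)$ is surjective, it is not clear that $y^2$ survives there — but at $p=2$ the bracket analysis is not needed anyway: the only correction is a Dyer–Lashof term which is $\sigma$ of a square, hence zero.) Second, you candidly leave the $\tmf$ half unproved; since the statement includes $\tmf$, the analogue of the parity/vanishing step must actually be carried out over $\cA(2)_\ast$ (e.g.\ by an Adams-filtration and weight argument on the $\Ext$ groups), not just flagged as "the hard part."
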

\begin{remark}
Since $\ko \otimes C\eta \simeq \ku$, \cref{ko-a} implies that $\THH(\ko/A) \otimes C\eta \simeq \THH(\ku/T(1))$. Relatedly, there is an equivalence $\ko \otimes T(1) \simeq \ku[\Omega S^5]$ of $\E{1}$-$T(1)$-algebras, which implies that 
$$\THH(\ko) \otimes T(1) \simeq \THH(\ko \otimes T(1)/T(1)) \simeq \ku[S^5] \oplus \bigoplus_{j\geq 1} \Sigma^{8j - 1} \ku[S^5]/2j.$$
Along similar lines, \cref{ko-a} implies that $\THH(\tmf/B) \otimes DA_1 \simeq \THH(\BP{2}/T(2))$. There is also a $2$-local equivalence $\tmf \otimes T(2) \simeq \BP{2}[N]$ of $\E{1}$-$T(2)$-algebras, so that 
$$\THH(\tmf) \otimes T(2) \simeq \BP{2}[\B N] \oplus \bigoplus_{j\geq 1} \Sigma^{16j - 1} \BP{2}[\B N]/2j.$$
Note that $\BP{2}[N] \simeq \BP{2}[\Omega S^9 \times \Omega S^{13}]$, so that $\pi_\ast(\tmf \otimes T(2)) \cong \mathbf{Z}_{(2)}[v_1, v_2, x_8, y_{12}]$, where $|v_1| = 2$, $|v_2| = 6$, $|x_8|=8$, and $|y_{12}|=12$.
This gives a potential approach to calculating $\THH(\ko)$ (resp. $\THH(\tmf)$) via the $T(1)$-based (resp. $T(2)$-based) Adams-Novikov spectral sequence. Describing this spectral sequence is essentially equivalent to calculating the analogue of the topological Sen operator for $\THH(\ko/A)$, whose construction is described below in \cref{A-thh}.
\end{remark}
\begin{remark}
Recall from \cref{bpn-vs-tj} that the structure of $\ko$ over $A$ mirrors the structure of $\tmf$ over $B$, which in turn mirrors the structure of $\BP{n}$ over $T(n)$; in other words, the calculation of \cref{ko-a} is along the diagonal line $(n,n)$ in \cref{bpn-vs-tj}. It is natural to wonder whether there is an $\E{1}$-ring $\tilde{A}$ equipped with an $\E{1}$-map $A \to \tilde{A}$ and an $\E{1}$-map $\tilde{A} \to \ko$ such that the structure of $\ko$ over $\tilde{A}$ mirrors the structure of $\BP{n-1}$ over $T(n)$. (This is the ``off-diagonal line'' $(n,n-1)$ in \cref{bpn-vs-tj}.) This question is only interesting when $p=2$, since $\ko_{(p)}$ splits as a direct sum of even shifts of $\BP{1}$ if $p>2$. Let us localize at $2$ for the remainder of this discussion. Examining the argument establishing \cref{ko-a} when $p=2$, one finds that the mod $2$ homology of $\tilde{A}$ must be $\H_\ast(\tilde{A}; \FF_2) \cong \FF_2[\zeta_1^4, \zeta_2^2]$.

If $A$ admits the structure of an $\Efr{2}$-ring, then $\tilde{A}$ can be constructed as follows. The class $\sigma_1\in \pi_5(A)$ determined by a nullhomotopy of $\eta\nu$ (see \cite[Remark 3.2.17]{bpn-thom}) defines a map $S^6 \to \BGL_1(A)$, which, thanks to our assumption on $A$, extends to an $\E{1}$-map $\Omega S^7 \to \BGL_1(A)$. The desired $\E{1}$-$A$-algebra $\tilde{A}$ can be defined as Thom spectrum of this map.
(According to \cite[Remark 5.1.5]{bpn-thom}, one should not expect $\tilde{A}$ to admit a natural construction as a Thom spectrum over the sphere.) Note that $\tilde{A} \otimes C\eta \simeq T(2)$.
\end{remark}
The same argument as \cref{thh-calculations}(a) shows:
\begin{prop}\label{tilde-A-thh}
If both $A$ and $\tilde{A}$ admit the structure of $\Efr{2}$-rings and $\ko$ admits the structure of an $\E{1}$-$\tilde{A}$-algebra, then there is a $2$-complete equivalence
$$\THH(\ko/\tilde{A}) \simeq \ko[\Omega S^9],$$
where the generator in $\pi_8 \THH(\ko/\tilde{A})$ is $\sigma^2(v_2)$. Moreover,
\begin{equation}\label{tilde-A-tp}
    \H^c_\ast(\TP(\ko/\tilde{A}); \FF_2) \cong \FF_2[\zeta_1^4, \zeta_2^2, \zeta_3^2, \zeta_4, \cdots]\ls{\hbar}.
\end{equation}
\end{prop}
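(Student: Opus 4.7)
The plan is to mirror the proof strategy of \cref{thh-calculations}(a) almost verbatim, using that $\H_\ast(\tilde{A}; \FF_2) \cong \FF_2[\zeta_1^4, \zeta_2^2]$ and $\H_\ast(\ko; \FF_2) \cong \FF_2[\zeta_1^4, \zeta_2^2, \zeta_3, \zeta_4, \ldots]$ play exactly the roles of $\H_\ast(T(n); \FF_2)$ and $\H_\ast(\BP{n-1}; \FF_2)$ for $n=2$. In particular, the quotient $\H_\ast(\ko;\FF_2)$ viewed as a module over $\H_\ast(\tilde{A}; \FF_2)$ is polynomial on $\zeta_3, \zeta_4, \ldots$, which is the analogue of the quotient $\FF_2[\zeta_{n+1}, \zeta_{n+2}, \ldots]$ appearing in \cref{homology-bpn}. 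This match is precisely what forces $\ko$ to occupy the off-diagonal slot $(n, n-1) = (2, 1)$ in \cref{bpn-vs-tj}.

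First I would compute $\H_\ast(\THH(\ko/\tilde{A}); \FF_2)$ via the B\"okstedt spectral sequence. The $E^2$-page is
\[
\HH_\ast(\H_\ast(\ko; \FF_2)/\H_\ast(\tilde{A}; \FF_2)) \cong \H_\ast(\ko; \FF_2) \otimes_{\FF_2} \Lambda_{\FF_2}(\sigma\zeta_j \mid j \geq 3),
\]
and comparison with the B\"okstedt spectral sequence for $\THH(\ko)$ (whose differentials are analogues of \cref{bokstedt-diffl}, e.g.\ \cite[Proposition 5.6]{angeltveit-rognes} applied at $p=2$) produces differentials killing all the exterior generators $\sigma\zeta_j$ for $j\geq 4$ as well as absorbing Hochschild-type classes into $\sigma\zeta_3$-multiples. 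Combining this with the multiplicative extensions $(\sigma\zeta_j)^2 = \sigma\zeta_{j+1}$ of \cite[Theorem 5.12]{angeltveit-rognes} yields
\[
\H_\ast(\THH(\ko/\tilde{A}); \FF_2) \cong \H_\ast(\ko; \FF_2)[\sigma\zeta_3],
\]
with $|\sigma\zeta_3| = 8$. An Adams spectral sequence change-of-rings argument (collapsing for parity reasons, as in the proof of \cref{thh-calculations}(a)) then gives
\[
\pi_\ast \THH(\ko/\tilde{A})^\wedge_2 \cong (\ko^\wedge_2)_\ast[\sigma\zeta_3].
\]
Using the $\ko$-module structure on $\THH(\ko/\tilde{A})$ (obtained as in the body of the paper by restriction from the $\THH(\ko)$-module structure), the generator lifts to a map $\ko[\Omega S^9] \to \THH(\ko/\tilde{A})$ which is an equivalence on $\pi_\ast$, hence a $2$-complete equivalence. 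The identification of this generator with $\sigma^2(v_2)$ follows by comparison under the map $\THH(\ko/\tilde{A}) \to \THH(\ko/\MU)$ with the Hahn--Wilson generator $\sigma^2(v_2)$ recalled in \cref{hahn-wilson-review}.

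For \cref{tilde-A-tp}, I would run the continuous homology homotopy fixed point spectral sequence as in \cref{homology-tc-bpn}. The $E^2$-page is $\H_\ast(\THH(\ko/\tilde{A}); \FF_2) \otimes \FF_2\pw{\hbar}$; comparison with the HHFPSS for $\TC^-(\ko)$, whose $d^2$-differentials $d^2(x) = \hbar\sigma(x)$ are supplied by \cite[Proposition 6.1]{homological-hfpss}, produces the same differentials in our spectral sequence. Exactly as in the proof of \cref{homology-tc-bpn}, the $E^3$-page consists of the classes $\zeta_1^4, \zeta_2^2, \zeta_3^2$, the corrected classes $\zeta_{j+1}' = \zeta_{j+1} + \zeta_j \sigma\zeta_3^{2^{j-2}}$ for $j \geq 3$, and $\hbar$-torsion in the image of $\sigma$. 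Degeneration follows by comparison with $\TC^-(\ko)$ and the Dyer-Lashof argument, which only uses that $\THH(\ko)$ is $\E{2}$ — and this is automatic since $\ko$ is $\Eoo$. This yields the stated formula \cref{tilde-A-tp}.

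The main obstacle is the degeneration/collapse step for the HHFPSS: a direct Dyer-Lashof argument as in \cite[Section 6]{homological-hfpss} is fine here (unlike in some analogues where only $\E{2}$-structure is available), but one must carefully verify that the comparison map with $\TC^-(\ko)$ is compatible enough to transport the known differentials. Everything else is a straightforward adaptation of \cref{thh-calculations}(a) and \cref{homology-tc-bpn} — indeed the argument proceeds essentially by substituting $(A, \tilde{A}, \ko)$ for $(T(1), T(2), \BP{1})$ throughout.
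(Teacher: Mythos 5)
Your strategy is exactly the paper's: the paper proves this proposition by declaring it "the same argument as Theorem \ref{thh-calculations}(a)", i.e.\ run the B\"okstedt spectral sequence for $\H_\ast(\THH(\ko/\tilde{A});\FF_2)$ using $\H_\ast(\tilde{A};\FF_2)\cong\FF_2[\zeta_1^4,\zeta_2^2]$, feed the answer into the change-of-rings Adams spectral sequence (which collapses for parity reasons), use the $\ko$-module structure to realize the generators by a map $\ko[\Omega S^9]\to\THH(\ko/\tilde{A})$, and then run the homological homotopy fixed point spectral sequence as in Proposition \ref{homology-tc-bpn} for \eqref{tilde-A-tp}. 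So the overall route, the identification of the generator with $\sigma^2(v_2)$, and the $E^2\Rightarrow E^3$ analysis for the Tate statement all match.

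One step of your B\"okstedt analysis is misstated, though, and as written it is internally inconsistent with your own conclusion. At $p=2$ the $E^2$-term is $\H_\ast(\ko;\FF_2)\otimes\Lambda_{\FF_2}(\sigma\zeta_j\mid j\geq 3)$, which contains no divided power classes, so the odd-primary differentials of \eqref{bokstedt-diffl} (from \cite[Proposition 5.6]{angeltveit-rognes}) have no analogue here: the spectral sequence collapses at $E^2$, exactly as in the $p=2$ case of the proof of Proposition \ref{homology-thh-bpn}. In particular the classes $\sigma\zeta_j$ for $j\geq 4$ are \emph{not} killed by differentials; they survive and are identified with $(\sigma\zeta_3)^{2^{j-3}}$ by the multiplicative extensions of \cite[Theorem 5.12]{angeltveit-rognes}. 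If they really were killed as you claim, $E^\infty$ would be exterior on $\sigma\zeta_3$ alone and you could not obtain the polynomial answer $\H_\ast(\ko;\FF_2)[\sigma\zeta_3]$. Since you also invoke the extensions and state the correct $E^\infty$, this is a local misdescription rather than a fatal gap, but the mechanism should be corrected: all the content at $p=2$ is in the extension problem, not in differentials. (Minor further point: in the $E^3$-term of the HHFPSS the corrected classes should read $\zeta'_{j+1}=\zeta_{j+1}+\zeta_j(\sigma\zeta_3)^{2^{j-3}}$, since $\sigma(\zeta_j)=(\sigma\zeta_3)^{2^{j-3}}$.)
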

\begin{remark}
Similarly, if $B$ admits the structure of an $\Efr{2}$-ring, the class $\sigma_2\in \pi_{13}(B)$ from \cite[Remark 3.2.24]{bpn-thom} defines a map $S^{14} \to \BGL_1(B)$. Thanks to our assumption on $B$, this extends to an $\E{1}$-map $\Omega S^{15} \to \BGL_1(B)$. Define $\tilde{B}$ to be Thom spectrum of this map, so that $\H_\ast(\tilde{B}; \FF_2) \cong \FF_2[\zeta_1^8, \zeta_2^4, \zeta_3^2]$. Note that $\tilde{B} \otimes DA_1 \simeq T(3)$.

If $\tilde{B}$ admits the structure of an $\Efr{2}$-ring and $\tmf$ admits the structure of an $\E{1}$-$\tilde{B}$-algebra, then the same argument as in \cref{thh-calculations}(a) shows that there is a $2$-complete equivalence
$$\THH(\tmf/\tilde{B}) \simeq \tmf[\Omega S^{17}],$$
where the generator in $\pi_{16} \THH(\tmf/\tilde{B})$ is $\sigma^2(v_3)$. Moreover,
\begin{equation}\label{tilde-B-tp}
    \H^c_\ast(\TP(\tmf/\tilde{B}); \FF_2) \cong \FF_2[\zeta_1^8, \zeta_2^4, \zeta_3^2, \zeta_4^2, \zeta_5, \cdots]\ls{\hbar}.
\end{equation}
\end{remark}

\begin{construction}[Topological Sen operator for $\THH$ relative to $A$]\label{A-thh}
By \cite[Theorem 1]{thh-thom}, $\THH(A)$ is equivalent to the Thom spectrum of the composite
$$\cL S^5 \xar{\cL \nu} \cL \B^2\GL_1(S) \simeq \B^2 \GL_1(S) \times \BGL_1(S) \xar{\id\times \eta} \BGL_1(S).$$
There is a \textit{nonsplit} fiber sequence
\begin{equation}\label{free-loops-S5}
    \Omega S^5 \to \cL S^5 \to S^5,
\end{equation}
and the restriction of the above composite along the map $\Omega S^5 \to \cL S^5$ is the map $\Omega S^5 \to \BGL_1(S)$ which defines $A$. It follows from the fiber sequence \cref{free-loops-S5} that $\THH(A)$ is the Thom spectrum of a map $S^5 \to \BGL_1(A)$ which detects a class $x\in \pi_4(A) \cong \pi_4(\ko)$. In particular, $\THH(A)$ is an $A$-module with two cells. Now assume \cref{a-b-e2}; then \cite[Corollary 2.8]{framed-e2} gives a splitting $\THH(A) \to A$, which implies that the class $x\in \pi_4(A)$ must be trivial. In other words, $\THH(A) \simeq A[S^5]$. If $\cC$ is an $A$-linear $\infty$-category, this implies the existence of a cofiber sequence
\begin{equation}\label{A-sen}
    \THH(\cC) \to \THH(\cC/A) \xar{\Theta'} \Sigma^6 \THH(\cC/A).
\end{equation}
One should be able to recover the calculation of $\THH(\ko)$ from \cite{thh-bp1} using \cref{A-sen} in the case $\cC = \Mod_\ko$ and the calculation of \cref{ko-a}. 
Similarly to \cref{mod-p-vn}, \cref{ko-a} and \cref{A-sen} imply that
\begin{align*}
    \THH(\ko/A)/2 & \simeq \ko[S^7 \times \Omega S^9]/2, \\
    \THH(\ko) \otimes_\ko \FF_2 & \simeq \FF_2[S^5 \times S^7 \times \Omega S^9].
\end{align*}
The latter of these has been proven by Angeltveit-Rognes in \cite[Theorem 6.2]{angeltveit-rognes}.
\end{construction}
\begin{remark}
Recall from \cite[Corollary 9.3]{mahowald-rezk} that Mahowald-Rezk duality gives an equivalence $W \ko \simeq \Sigma^6 \ko$ (resp. $W \BP{1} \simeq \Sigma^{2p} \BP{1}$); the shift of $6$ (resp. $2p$) in this equivalence arises for the same reason as in \cref{A-sen} (resp. \cref{tn-sen} with $n=1$): both correspond to the class $\sigma^2(t_1^2)$ (resp. $\sigma^2(t_1)$).
We hope to explore this further in future work.
\end{remark}
\begin{remark}
There is also an analogue of the topological Sen operator for $B$. To describe it, one observes using an argument similar to \cref{A-thh} that $\THH(B/S\mmod\sigma) \simeq B[S^{13}]$ and that $\THH(S\mmod\sigma) \simeq (S\mmod\sigma)[S^9]$. This implies that if $\cC$ is a $B$-linear $\infty$-category, there are cofiber sequences
\begin{align*}
    \THH(\cC/S\mmod\sigma) & \to \THH(\cC/B) \xar{\Theta_B} \Sigma^{14} \THH(\cC/B),\\
    \THH(\cC) & \to \THH(\cC/S\mmod\sigma) \xar{\Theta_B'} \Sigma^{10} \THH(\cC/S\mmod\sigma).
\end{align*}
However, it is significantly more complicated to describe these cofiber sequences in almost any nontrivial example, so we omit further discussion. Nevertheless, one can use \cref{ko-a} to show the following equivalences analogous to \cref{mod-p-vn}:
\begin{align*}
    \THH(\tmf/B)/2 & \simeq \tmf[S^{15} \times \Omega S^{17}]/2, \\
    \THH(\tmf) \otimes_\tmf \FF_2 & \simeq \FF_2[S^9 \times S^{13} \times S^{15} \times \Omega S^{17}];
\end{align*}
note that $\FF_2[S^9 \times S^{13}] \simeq \FF_2[BN]$. The latter of these has been proven by Angeltveit-Rognes in \cite[Theorem 6.2]{angeltveit-rognes}.
\end{remark}
Assume \cref{a-b-e2}, and let $p=2$. Then there is a map $\cM_{T(1)} \to \cM_A$ of stacks over $\Mfg$, which exhibits $\cM_{T(1)}$ as a $2$-fold fppf cover of $\cM_A$. Recall that $\cM_{T(1)}$ is isomorphic to the moduli stack of graded formal groups equipped with a coordinate up to order $\leq 2$ (equivalently, order $\leq 3$ for $2$-typical formal groups). Similarly, we have:
\begin{prop}\label{stack-A}
The stack $\cM_A$ is isomorphic to the moduli stack of graded formal groups equipped with an even coordinate up to order $\leq 5$.
\end{prop}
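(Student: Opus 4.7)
The plan is to present $\cM_A$ by an explicit graded Hopf algebroid, compute the action of the strict-isomorphism groupoid, and then match this with the groupoid presenting the moduli stack of graded formal groups equipped with an even coordinate up to order $\leq 5$.

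First, I would compute $\BPP_\ast A$ from the Thom spectrum description of $A$. Since $A$ is the Thom spectrum of the $\E{1}$-map $\Omega S^5 \to \BGL_1(S)$ extending $\nu \in \pi_3 S$, and $\H_\ast(\Omega S^5;\Z)\cong \Z[b]$ with $|b|=4$, the Thom isomorphism gives $\BPP_\ast A \cong \BPP_\ast[u]$ with $|u|=4$. Under the map $\BPP_\ast A \hookrightarrow \BPP_\ast\BPP=\BPP_\ast[t_1,t_2,\ldots]$ induced by $A\to \BPP\otimes A$, a standard computation using $\eta_R(v_1) = v_1 + 2t_1$ identifies $u$ with $t_1^2 + v_1 t_1 = \tfrac{\eta_R(v_1^2)-v_1^2}{4}$, matching the description of $\BPP_\ast A$ recalled at the start of this appendix.

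Next, I would spell out the Hopf algebroid $(\BPP_\ast A,\BPP_\ast(\BPP\otimes A))$. Flatness gives $\BPP_\ast(\BPP\otimes A)\cong \BPP_\ast\BPP\otimes_{\BPP_\ast}\BPP_\ast A\cong \BPP_\ast[t_1,t_2,\ldots;\,u']$, with left and right units $\eta_L(u)=u'$, $\eta_R(u)=u'+t_1^2+v_1 t_1$, and comultiplication determined by the identity $u=t_1^2+v_1 t_1$ in $\BPP_\ast\BPP\otimes_{\BPP_\ast}\BPP_\ast\BPP$. The resulting stack $\cM_A$ is the quotient of $\spec(\BPP_\ast[u])/\GG_m$ by this groupoid action.

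Then I would construct the map $\cM_{T(1)}\to\cM_A$ and recognize it as a degree-$2$ cover: the inclusion $\BPP_\ast[u]\hookrightarrow\BPP_\ast[t_1]$ exhibits $t_1$ as a root of the quadratic $X^2+v_1 X-u=0$, whose other root is $-v_1-t_1$, giving a $\Z/2$-action $\sigma:t_1\mapsto -v_1-t_1$ with fixed ring exactly $\BPP_\ast[u]$. I would interpret $\sigma$ geometrically as swapping the two sheets of the \'etale degree-$2$ cover that remembers the first-order coordinate datum $t_1$ of a graded $2$-typical formal group, so that the corresponding $\sigma$-invariants constitute an ``even'' coordinate. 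Matching this with the definition of the moduli stack of graded formal groups equipped with an even coordinate up to order $\leq 5$---namely, the $\sigma$-invariant part of the coordinate data carried up through order $x^5$, yielding the single weight-$2$ invariant $u$---completes the identification.

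The main obstacle is this third step: making ``even coordinate up to order $\leq 5$'' precise and verifying that the groupoid structure on its moduli coincides with the one computed for $\cM_A$. The first two steps are straightforward Thom-isomorphism and flat-base-change computations, but pinning down the correct geometric interpretation of $\sigma$, and of the terminology ``even coordinate'', is the crux of the argument.
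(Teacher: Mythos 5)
Your route is genuinely different from the paper's, and its algebraic skeleton is sound: $\BPP_\ast A\cong\BPP_\ast[u]$ with $u=t_1^2+v_1t_1=\tfrac{\eta_R(v_1^2)-v_1^2}{4}$, the inclusion $\BPP_\ast[u]\subset\BPP_\ast[t_1]$ is finite flat of degree $2$, its canonical involution is $\sigma\colon t_1\mapsto -v_1-t_1$ with invariant ring exactly $\BPP_\ast[u]$, and with the standard conventions $\sigma$ really is the inversion operation on coordinate data (precompose the coordinate with $[-1]$ and negate to restore strictness). Two small corrections: this cover is not \'etale (it is ramified along the discriminant locus $v_1^2+4u=0$, so ``finite flat'' is the right adjective, consistent with the two-fold fppf cover $\cM_{X(2)}\to\cM_A$ asserted in the remark following the proposition), and the identification of $\sigma$ with inversion is convention-sensitive (it depends on which direction of the universal strict isomorphism the $t_i$ parametrize), so it must be checked rather than read off.

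The genuine gap is the step you yourself flag as the crux: nothing in your outline identifies the quotient groupoid acting on $\spec\BPP_\ast[u]$ with the groupoid of changes of \emph{even coordinate up to order $\leq 5$}, and in particular nothing produces the bound ``$\leq 5$'' --- the observation that there is ``a single weight-$2$ invariant $u$'' is only a dimension count and cannot distinguish the claimed moduli problem from other rank-one quotients. If you try to close this by hand you hit real content: the natural comparison sends a coordinate $x$ to the inversion-invariant function $x\cdot[-1]^\ast x$, but this is \emph{not} well defined for a coordinate known only up to order $\leq 2$ (changing $x$ by $cx^3$ changes $x\cdot[-1]^\ast x$ by $2c\,(x\cdot[-1]^\ast x)^2$ modulo functions of order $\geq 6$); one must first use $2$-typicality to promote to a coordinate of order $\leq 3$, after which the remaining ambiguity $d\bigl(x([-1]^\ast x)^4+x^4[-1]^\ast x\bigr)=d\,x[-1]^\ast x\,(x+[-1]^\ast x)(x^2-x[-1]^\ast x+([-1]^\ast x)^2)$ does have order $\geq 6$, and one must then check compatibility with the coaction. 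The paper bypasses all of this with a different mechanism: the Serre spectral sequence of the twistor fibration $S^2\to\CP^5\to\HHP^2$ gives $R^\ast(\HHP^2)\cong R^\ast(\CP^5)^{\Z/2}$ with $\Z/2$ acting by inversion on the formal group, and since $\HHP^2\simeq\Sigma^4C\nu$ while $A$ is the free $\E{1}$-ring whose unit factors through $S^0\to C\nu$, the functor of points of $\cM_A$ on complex-oriented rings is literally the inversion-invariant coordinate data of order $\leq 5$, with the groupoid structure coming along by naturality in $R$. To complete your Hopf-algebroid route you must either import that input or carry out the truncation and coaction verifications sketched above; as written, step 3 is an assertion, not a proof.
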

\begin{proof}
Recall that there is a fiber sequence
$$\SU(2)/\U(1) \cong S^2 \to \BU(1) \simeq \CP^\infty \to \BSU(2) \simeq \HHP^\infty.$$
Let $n\geq 1$. There is a homotopy equivalence $\HHP^n\times_{\HHP^\infty} \CP^\infty \simeq \CP^{2n+1}$ (since $S^{4n+3}/\SU(2) = \HHP^n$ and $S^{4n+3}/\U(1) = \CP^{2n+1}$), which produces the ``twistor fibration'', i.e., the fiber sequence
\begin{equation}\label{s2-bundle-hpn}
    S^2 \to \CP^{2n+1} \to \HHP^n.
\end{equation}
The map $\CP^{2n+1} \to \HHP^n$ is given in coordinates by the map $[z_1: \cdots: z_{2n+2}] \mapsto [z_1 + z_2 \mathbf{j} : \cdots: z_{2n+1} + z_{2n+2} \mathbf{j}]$.
Note that $\SU(2)/\U(1) = \CP^1$ is the unit sphere $S(\fr{su}(2))$ in the adjoint representation of $\SU(2)$, so \cref{s2-bundle-hpn} equivalently says that $\CP^{2n+1}$ is the sphere bundle of the adjoint bundle of rank $3$ over $\HHP^n$.

Let $R$ be a complex-oriented homotopy commutative ring with associated formal group $\GG$ over $\pi_\ast(R)$; we will assume for simplicity that $2$ is not a zero-divisor in $\pi_\ast R$. Then $R^\ast(\CP^5)$ is isomorphic to the ring of functions on $\GG$ which vanish to order $\geq 6$. The Serre spectral sequence associated to the fiber sequence \cref{s2-bundle-hpn} implies that $R^\ast(\HHP^2)$ is isomorphic to $R^\ast(\CP^5)^{\Z/2}$, where $\Z/2$ acts by inversion on the formal group. This implies the desired claim, since there is an equivalence $\HHP^2 \simeq \Sigma^4 C\nu$ of spectra, and $A$ is the free $\E{1}$-ring whose unit factors through the inclusion $S^0 \to C\nu$.
\end{proof}
\begin{remark}
The description of $\cM_A$ in \cref{stack-A} has concrete applications; for instance, in \cite{hodge}, we show that $\cM_{\tmf \otimes A} = \cM_A \times_{\Mfg} \Mell$ can be identified with the moduli stack of elliptic curves $\ce$ equipped with a splitting of the Hodge filtration on $\H^1_\dR(\ce)$, and use this to describe an topological analogue of the integral ring of quasimodular forms.
\end{remark}
\begin{remark}
As explained in \cite[Remark 7.1.7]{bpn-thom}, there is a $\Z/2$-equivariant $\E{1}$-algebra $A_{\Z/2}$ whose underlying $\E{1}$-algebra is $A$, and such that $\Phi^{\Z/2} A_{\Z/2} = X(2)_{(2)}$ as $\E{1}$-algebras. This is a topological interpretation of the following observation suggested by \cref{stack-A}: $\cM_A$ is ``half'' of $\cM_{X(2)}$; more precisely, there is a two-fold fppf cover $\cM_{X(2)} \to \cM_A$. This is an algebraic analogue of the equivalence $A \otimes C\eta \simeq X(2)$.

We also note that there is a $\Z/2$-equivariant analogue  of the fiber sequence \cref{s2-bundle-hpn}: namely, there is a $\Z/2$-equivariant twistor fibration
$$\xymatrix{
S^\rho \ar[d]^-\cong \ar[r] & \CP^{2n-1}_\RR \ar[d]^-\cong \ar[r] & \HHP^{n-1}_\RR \ar[d]^-\cong \\
S^{2\rho-1}/S^\sigma \ar[r] & S^{2n\rho-1}/S^\sigma \ar[r] & S^{2n\rho-1}/S^{2\rho-1}.
}$$
where $\Z/2$ acts on $\HHP^n$ via the action of $\Z/2\subseteq S^1\subseteq \SO(3)$ on $\bH$. The underlying fibration is \cref{s2-bundle-hpn}, while the $\Z/2$-fixed points gives the fibration
$$S^1 \to \RP^{2n-1} \to \CP^{n-1}$$
which exhibits $\RP^{2n-1}$ as the sphere bundle of the complex line bundle $\co(2)$ on $\CP^{n-1}$.
\end{remark}
\begin{construction}
One consequence of the identification of $\cM_A$ in \cref{stack-A} is that $\cM_A \to \Mfg$ is an affine bundle, so that the pullback of the cotangent complex $L_{\cM_A/\Mfg}$ to $\spec(\BPP_\ast(A))/\GG_m \cong \spec(\BPP_\ast[t_1^2 + v_1 t_1])/\GG_m$ can be identified with a free $\BPP_\ast[t_1^2 + v_1 t_1]$-module of rank $1$ generated by the class $d(t_1^2 + v_1 t_1)$ in weight $2$. Using \cref{gauss-manin}, we obtain the algebraic analogue of \cref{A-sen}: if $X$ is a stack over $\cM_A$, there is a cofiber sequence
\begin{equation*}
    \HH(X/\Mfg) \to \HH(X/\cM_A) \xar{\Theta_\mot} \Sigma^{6,3} \HH(X/\cM_A).
\end{equation*}
The stack $\cM_{\ko}$ can be identified with the moduli stack of curves of the form $y = x^2 + bx + c$ with change of coordinate $x\mapsto x+r$, and $\HH(\cM_{\ko}/\Mfg)$ describes the $E_1$-page of the Adams-Novikov-B\"okstedt spectral sequence calculating $\THH(\ko)$ (see \cref{anss-bokstedt}). It would be interesting to explicitly describe $\HH(\cM_{\ko}/\Mfg)$; note that
$$\pi_\ast \HH(\cM_\ko/\cM_A) \cong \co_{\cM_\ko}\pdb{\sigma^2 v_j | j\geq 2} \otimes_{\co_{\cM_\ko}} \Lambda_{\co_{\cM_\ko}}(dt_i | i\geq 2),$$
where $\sigma^2(v_j)$ lives in degree $2^{j+1}$ and weight $2^j$, and $dt_i$ lives in degree $2^{i+1}-1$ and weight $2^i$.
This can be proved exactly as in \cref{hh-bpn-tn-mfg}; weight considerations presumably allow one to fully describe $\Theta_\mot: \HH(X/\cM_A) \to \Sigma^{6,3} \HH(X/\cM_A)$, and hence $\HH(\cM_\ko/\Mfg)$.
\end{construction}
\begin{remark}\label{nonexistence-fibrations}
Assume \cref{a-b-e2}, and let $p=2$. It is trickier to describe the stack $\cM_B$ in a manner analogous to \cref{stack-A}. As a first approximation, if we assume that $S\mmod \sigma$ admits the structure of a homotopy commutative ring, one can attempt to describe the moduli stack $\cM_{S\mmod \sigma}$. However, it is provably impossible to construct a Hurewicz fibration
$$S^4 \to \HHP^5 \to \OP^2$$
in the point-set category. This is a consequence of \cite[Theorem 5.1]{schultz-fibration}, which states more generally that if $F \to E \to X$ is a Hurewicz fibration where $E$ is homotopy equivalent to $\HHP^{2n+1}$ and $F$ and $X$ are homotopy equivalent to finite CW-complexes, then either $F$ or $X$ must be contractible. Note that this result implies that there cannot even be a Hurewicz fibration 
$$S^4 \to \HHP^3 \to S^8.$$
Similarly, there cannot be Hurewicz fibrations
\begin{align*}
    \CP^3 & \to \CP^7 \to S^8, \\
    \CP^3 & \to \CP^{11} \to \OP^2;
\end{align*}
see \cite{no-quotient-hopf} for the impossibility of the first Hurewicz fibration (which implies the impossibility of the second Hurewicz fibration). These no-go results make it difficult to give a formal group-theoretic description of $R^\ast(\OP^2)$ (and hence of $\cM_{S\mmod \sigma}$, since $\OP^2 \simeq \Sigma^8 C\sigma$) where $R$ is a complex-oriented homotopy commutative ring.
\end{remark}

The story for $\ko$ admits a slightly different generalization to higher heights.
\begin{example}\label{recast-ko}
Observe that $S^5 = \SU(4)/\Sp(2)$, and that the map $\Omega S^5 \to \BU$ (whose Thom spectrum is $A$) can be viewed as the composite
$$\Omega(\SU(4)/\Sp(2)) \to \Omega(\SU/\Sp) \simeq \BSp \to \BU.$$
The equivalence $\Omega(\SU/\Sp) \simeq \BSp$ is given by Bott periodicity, and the map $\BSp \to \BU$ takes a symplectic bundle to its underlying unitary bundle.
\end{example}
Motivated by \cref{recast-ko}, we are led to the following definition:
\begin{definition}\label{XH-n}
Define an $\E{1}$-algebra $X_\bH(n)$ via the Thom spectrum of the composite
$$\Omega(\SU(2n)/\Sp(n)) \to \Omega(\SU/\Sp) \simeq \BSp \to \BU.$$
There is a canonical $\E{1}$-map $X_\bH(n) \to X_\bH(\infty) = \MSp$. 
\end{definition}
The spectrum $X_\bH(n)$ has been studied by Andy Baker.
\begin{remark}\label{c2-fixed}
See \cite{crabb-mitchell} for a detailed study of the space $\Omega(\SU(2n)/\Sp(n))$. Let us note that if $\SU(2n)_\bH$ denotes the $\Z/2$-equivariant loop space with the $\Z/2$-action given by the symplectic involution
$$A \mapsto J\ol{A} J^{-1}, \ J = \begin{pmatrix}
0 & -1\\
1 & 0
\end{pmatrix}^{\oplus n},$$
then $\Omega(\SU(2n)/\Sp(n)) \simeq (\Omega^\sigma \SU(2n)_\bH)^{\Z/2}$. Indeed, the fixed points of $\SU(2n)_\bH$ is $\Sp(n)$ (by definition), so we can apply the first sentence of \cref{yan-summand} to conclude.
\end{remark}
\begin{remark}
As the notation indicates, $X_\bH(n)$ should be viewed as a quaternionic analogue of the $X(n)$ spectra from \cite{ravenel-loc}; see \cref{Xn-analogies}.
\begin{table}[h!]
    \centering
    \begin{tabular}{c | c c c c c c c c c c}
	Normed division algebra & $\RR$ & $\cc$ & $\bH$ \\
	\hline
	$\E{1}$-ring & $y(n) \subseteq \Phi^{\Z/2} X(2^n)_\RR$ & $T(n) \subseteq X(2^n)$ & $X_\bH(2^n)$ \\
	\hline
	Limit as $n\to \infty$ & $\FF_2 \subseteq \Phi^{\Z/2} \MU_\RR = \MO$ & $\BPP \subseteq \MU$ & $\MSp$ \\
	\hline
	Mod $2$ homology & $\FF_2[\zeta_1, \cdots, \zeta_n]$ & $\FF_2[\zeta_1^2, \cdots, \zeta_n^2]$ & $\FF_2[\zeta_1^4, \cdots, \zeta_n^4]$
    \end{tabular}
    \vspace{0.5cm}
    \caption{The analogies between $y(n)$, $T(n)$, and $X_\bH(n)$, where the implicit prime is $p=2$. The inclusion $y(n) \subseteq \Phi^{\Z/2} X(2^n)_\RR$ is discussed in \cref{yan-summand}; see \cite{yan-thom}. The final row is to be interpreted as follows: $\H_\ast(\Phi^{\Z/2} X(2^n)_\RR; \FF_2)$ is a direct sum of even shifts of $\FF_2[\zeta_1, \cdots, \zeta_n]$; similarly for $\H_\ast(X(2^n); \FF_2)$ and $\H_\ast(X_\bH(2^n); \FF_2)$.}
    \label{Xn-analogies}
\end{table}

Note that there are isomorphisms of algebras
\begin{align*}
    \H_\ast(\Phi^{\Z/2} X(n)_\RR; \FF_2) & \cong \H_\ast(\Omega(\SU(n)/\SO(n)); \FF_2) \cong \FF_2[x_1, \cdots, x_{n-1}], \\
    \H_\ast(X_\bH(n); \Z) & \cong \H_\ast(\Omega(\SU(2n)/\Sp(n)); \Z) \cong \Z[y_1, \cdots, y_{n-1}],
\end{align*}
where $|x_j| = j$ and $|y_j| = 4j$.
\end{remark}
\begin{example}
By construction, $X_\bH(2) \simeq A = S\mmod \nu$.
\end{example}
\begin{construction}\label{partial-orientation}
Suppose that $X_\bH(n)$ admits the structure of a homotopy commutative ring. One can then also ask for an interpretation of the stack $\cM_{X_\bH(n)}$ analogous to \cref{stack-A}. It turns out that the difficulties of \cref{nonexistence-fibrations} are no longer an issue for $X_\bH(n)$. Indeed, the analogue of the map $S^4 \to \Omega S^5$ (whose Thomification is the map $C\nu \to A$ used in the proof of \cref{stack-A}) is given by a map $\iota: \HHP^{n-1} \to \Omega(\SU(2n)/\Sp(n))$ which exhibits $\HHP^{n-1}$ as the generating complex of $\Omega(\SU(2n)/\Sp(n))$. (See, e.g., \cite[Proposition 1.4]{crabb-mitchell}.)
Moreover, the composite
$$\HHP^{n-1} \xar{\iota} \Omega(\SU(2n)/\Sp(n)) \to \Omega(\SU/\Sp) \simeq \BSp$$
factors as $\HHP^{n-1} \hookrightarrow \HHP^\infty \simeq \BSp(1) \to \BSp$. Since the Thom spectrum of the tautological quaternionic line bundle over $\HHP^{n-1}$ is $\Sigma^{-4} \HHP^n$, the map $\iota$ Thomifies to a map $\Sigma^{-4} \HHP^n \to X_\bH(n)$.
\end{construction}
Using the twistor fibration \cref{s2-bundle-hpn} and the map $\Sigma^{-4} \HHP^n \to X_\bH(n)$ of \cref{partial-orientation}, one can argue as in \cref{stack-A} to show: 
\begin{prop}\label{stack-XHn}
The stack $\cM_{X_\bH(n)}$ is isomorphic to the moduli stack of graded formal groups equipped with an even coordinate up to order $\leq 2n+1$.
\end{prop}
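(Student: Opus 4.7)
The plan is to mimic the proof of \cref{stack-A} but leverage the description of $\HHP^{n-1}$ as the generating complex of $\Omega(\SU(2n)/\Sp(n))$ from \cref{partial-orientation}. Let $R$ be a complex-oriented homotopy commutative ring with associated formal group $\GG$ over $\pi_\ast R$, and assume $2$ is not a zero-divisor in $\pi_\ast R$. The goal is to identify $R^\ast(X_\bH(n))$ (and thus $\cM_{X_\bH(n)}$ by varying $R$) with the ring of even functions on $\GG$ vanishing to order $\geq 2n+2$, i.e., the $\Z/2$-invariants (for the inversion action) of the ring of functions vanishing to that order.

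First, I would use that $\HHP^{n-1} \xrightarrow{\iota} \Omega(\SU(2n)/\Sp(n))$ is a generating complex with $\H_\ast(\Omega(\SU(2n)/\Sp(n)); \Z) \cong \Z[y_1,\ldots,y_{n-1}]$ on classes pulled back from $\HHP^{n-1}$. Then via the Thom isomorphism and the factorization in \cref{partial-orientation}, the unit map $S^0 \to X_\bH(n)$ factors through a cell attachment controlled by $\Sigma^{-4} \HHP^n \to X_\bH(n)$, so computing $R^\ast(X_\bH(n))$ reduces to computing $R^\ast(\Sigma^{-4} \HHP^n) \cong R^{\ast+4}(\HHP^n)$ (and then taking a suitable polynomial algebra of generators, one per symplectic cell, which matches the grading of an ``order-$\leq 2n+1$ coordinate''). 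This is the step that parallels the passage $\Sigma^4 C\nu \simeq \HHP^2$ in the proof of \cref{stack-A}.

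Second, I would analyze the twistor fibration $S^2 \to \CP^{2n+1} \to \HHP^n$ of \cref{s2-bundle-hpn}. Since $R^\ast(\CP^{2n+1})$ is, by the complex orientation, isomorphic to the ring of functions on $\GG$ vanishing to order $\geq 2n+2$, and since the $\Z/2$-action on $\CP^{2n+1}$ coming from complex conjugation on the quaternionic coordinate corresponds to inversion on $\GG$, the Serre spectral sequence (which collapses rationally and more generally after inverting $2$, given our hypothesis) gives an isomorphism $R^\ast(\HHP^n) \cong R^\ast(\CP^{2n+1})^{\Z/2}$. This identifies $R^\ast(\HHP^n)$ with the ring of \emph{even} functions on $\GG$ vanishing to order $\geq 2n+2$, i.e., with the ring of even coordinates on $\GG$ modulo those of order $> 2n+1$. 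Combining this with the first step identifies $R^\ast(X_\bH(n))$ as the universal ring carrying an even coordinate on $\GG$ of order $\leq 2n+1$, which is exactly what the statement requires.

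The main obstacle is a careful bookkeeping of the ``Thom twist'': making precise how the map $\Sigma^{-4}\HHP^n \to X_\bH(n)$, together with the multiplicative structure on $\Omega(\SU(2n)/\Sp(n))$, upgrades the additive identification of $R^\ast(\HHP^n)$ with even truncated coordinates to the \emph{algebra} identification of $R^\ast(X_\bH(n))$ with the corresponding moduli-theoretic coordinate ring. Concretely, one must check that the generators $y_1,\ldots,y_{n-1}$ appearing in $\H_\ast(\Omega(\SU(2n)/\Sp(n));\Z)$ correspond under this identification to the expected Taylor coefficients of an even coordinate, and that the change-of-coordinates action (via strict isomorphisms of formal groups) matches the $\BPP\otimes X_\bH(n)$-coaction defining $\cM_{X_\bH(n)}$. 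The hypothesis that $2$ is not a zero-divisor enters precisely here, to ensure the $\Z/2$-invariants computation behaves as expected; for the stack-level statement one must then descend the identification to $\cM_{X_\bH(n)} \to \Mfg$ without this hypothesis, which follows from the flat cover $\cM_{X_\bH(n)} \times_{\Mfg} \spec(\MU_\ast)/\GG_m \to \cM_{X_\bH(n)}$ (so that we may always work over $\MU_\ast$, where $2$ is indeed a non-zero-divisor).
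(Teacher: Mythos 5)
Your proposal is correct and follows essentially the same route as the paper, which proves this proposition exactly by arguing as in \cref{stack-A}: the generating complex $\HHP^{n-1}\subseteq \Omega(\SU(2n)/\Sp(n))$ Thomifying to $\Sigma^{-4}\HHP^n \to X_\bH(n)$, plus the twistor fibration $S^2 \to \CP^{2n+1} \to \HHP^n$ and the Serre spectral sequence identification $R^\ast(\HHP^n)\cong R^\ast(\CP^{2n+1})^{\Z/2}$, yielding even coordinates of order $\leq 2n+1$. Your added remark about descending the non-zero-divisor hypothesis on $2$ via the flat cover by $\spec(\MU_\ast)/\GG_m$ is a reasonable way to handle the stack-level bookkeeping the paper leaves implicit.
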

\begin{remark}\label{quaternionic-sen}
Suppose that $X_\bH(n)$ admits the structure of an $\Efr{2}$-ring.
There is also a canonical map $\cM_{X_\bH(n-1)} \to \cM_{X_\bH(n)}$ which exhibits $\cM_{X_\bH(n)}$ as the quotient of $\cM_{X_\bH(n-1)}$ by the group scheme $\GG_a^{(2n-2)}/\GG_m$ over $B\GG_m$, where $\GG_a^{(2n-2)}$ denotes the affine line with $\GG_m$-action of weight $2n-2$. This is the algebraic analogue of the following:
\begin{lemma}
The spectrum $X_\bH(n)$ is equivalent to the Thom spectrum of a map $\Omega S^{4n-3} \to \BGL_1(X_\bH(n-1))$. 
\end{lemma}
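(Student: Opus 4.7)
The plan is to exhibit a fiber sequence of pointed spaces
$$\SU(2n-2)/\Sp(n-1)\to \SU(2n)/\Sp(n)\to S^{4n-3},$$
loop it to a fiber sequence of $\E{1}$-spaces
$$\Omega(\SU(2n-2)/\Sp(n-1))\to \Omega(\SU(2n)/\Sp(n))\to \Omega S^{4n-3},$$
and then invoke the Beardsley-type Thom-spectrum-of-a-fiber-sequence result \cite[Proposition 2.1.6]{bpn-thom}. Because the composite of the first map with the $\E{1}$-map $\Omega(\SU(2n)/\Sp(n))\to \BSp\to \BU$ is, by construction, the $\E{1}$-map whose Thom spectrum is $X_\bH(n-1)$, that proposition identifies the Thom spectrum of the induced $\E{1}$-map $\Omega S^{4n-3}\to \BGL_1(X_\bH(n-1))$ with $X_\bH(n)$, which is the claim of the lemma.

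For the underlying fiber sequence of spaces, the compatible inclusions $\SU(2n-2)\hookrightarrow \SU(2n)$ and $\Sp(n-1)\hookrightarrow \Sp(n)$ induce a natural map $\iota\colon \SU(2n-2)/\Sp(n-1)\hookrightarrow \SU(2n)/\Sp(n)$. A dimension count using the classical CW decomposition of the symmetric space $\SU(2n)/\Sp(n)$ (with cells in dimensions $0,5,9,\ldots,4n-3$; this can be read off from the Serre spectral sequence of the principal bundle $\Sp(n)\to \SU(2n)\to \SU(2n)/\Sp(n)$, and is recorded e.g.\ in Mimura--Toda) identifies $\SU(2n-2)/\Sp(n-1)$ with the $(4n-4)$-skeleton of $\SU(2n)/\Sp(n)$, so that the cofiber of $\iota$ is $S^{4n-3}$. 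For $n=2$ this cofiber sequence is already a fiber sequence, since $\SU(2)/\Sp(1)=\ast$ and $\SU(4)/\Sp(2)\cong S^5$ via the exceptional isomorphisms $\SU(4)\cong \Spin(6)$ and $\Sp(2)\cong \Spin(5)$.

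The main obstacle, for $n\geq 3$, is to upgrade this cofibration to a fibration with base $S^{4n-3}$: unlike for the $X(n)$-tower (which rests on the coset fibration $\SU(n-1)\to \SU(n)\to S^{2n-1}$), no closed subgroup $H\supseteq \Sp(n)$ of $\SU(2n)$ has $\SU(2n)/H\cong S^{4n-3}$ in this range. I would handle this by constructing a projection $\pi\colon \SU(2n)/\Sp(n)\to S^{4n-3}$ directly, using the generating-complex map $\HHP^{n-1}\to \Omega(\SU(2n)/\Sp(n))$ of \cref{partial-orientation}: its adjoint produces a map $\Sigma\HHP^{n-1}\to \SU(2n)/\Sp(n)$, and post-composition with the pinch to the top cell $\Sigma\HHP^{n-1}\to \Sigma(\HHP^{n-1}/\HHP^{n-2})\simeq S^{4n-3}$ yields a candidate $\pi$ whose pullback of the fundamental class is the top generator $x_{4n-3}\in H^{4n-3}(\SU(2n)/\Sp(n);\Z)$. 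A Serre spectral sequence comparison, using that $H^{\ast}(\SU(2n)/\Sp(n);\Z)=\Lambda(x_5,x_9,\ldots,x_{4n-3})$ and $\iota^\ast(x_{4n-3})=0$, then forces the homotopy fiber of $\pi$ to have the integral cohomology of $\SU(2n-2)/\Sp(n-1)$, and the factorization of $\iota$ through this fiber exhibits the identification by a Whitehead-theorem argument. (Alternatively, the relevant fibration can be extracted from Crabb--Mitchell's detailed analysis of $\Omega(\SU(2n)/\Sp(n))$, cited in the paragraph preceding \cref{XH-n}.) Looping and invoking \cite[Proposition 2.1.6]{bpn-thom} then completes the proof.
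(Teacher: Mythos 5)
Your overall strategy (produce the fiber sequence of $\E{1}$-spaces $\Omega(\SU(2n-2)/\Sp(n-1)) \to \Omega(\SU(2n)/\Sp(n)) \to \Omega S^{4n-3}$ and feed it into \cite[Proposition 2.1.6]{bpn-thom}) is exactly the paper's, but the step where you actually produce the unlooped fibration has a genuine gap, and your premise that no homogeneous-space argument is available is what sends you astray. The fibration \emph{is} a coset fibration: $\SU(2n-1)$ acts transitively on $\SU(2n)/\Sp(n)$ with stabilizer $\Sp(n-1)$, so $\SU(2n)/\Sp(n) \cong \SU(2n-1)/\Sp(n-1)$, and the chain of subgroups $\Sp(n-1) \subseteq \SU(2n-2) \subseteq \SU(2n-1)$ gives the bundle $\SU(2n-2)/\Sp(n-1) \to \SU(2n-1)/\Sp(n-1) \to \SU(2n-1)/\SU(2n-2) \cong S^{4n-3}$, which is the paper's one-line argument. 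You only searched for a subgroup $H \supseteq \Sp(n)$ inside $\SU(2n)$, so you missed this.

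Your substitute construction does not go through as written. First, the map $\pi$ is not produced by your recipe: the adjoint of the generating-complex map and the pinch map both have source $\Sigma\HHP^{n-1}$, so they cannot be composed; what you actually need is an \emph{extension} of the pinch $\Sigma\HHP^{n-1} \to S^{4n-3}$ along $\Sigma\HHP^{n-1} \to \SU(2n)/\Sp(n)$, and the vanishing of the obstructions to that extension is precisely the nontrivial content (it is what the coset fibration hands you for free). Second, the preliminary claims are false for $n \geq 4$: $\SU(2n-2)/\Sp(n-1)$ is not the $(4n-4)$-skeleton of $\SU(2n)/\Sp(n)$ and the cofiber of $\iota$ is not $S^{4n-3}$ --- e.g.\ $\SU(6)/\Sp(3)$ has a cell in dimension $14 > 12$ coming from $x_5x_9$, and even in the genuine fibration the cofiber of the fiber inclusion is not the base. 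Third, even granting $\pi$, factoring $\iota$ through its homotopy fiber needs $\pi \circ \iota$ to be nullhomotopic, which for $n \geq 4$ does not follow from $\iota^\ast(x_{4n-3}) = 0$ alone since the source has cells above dimension $4n-3$. The appeal to \cite{crabb-mitchell} is a reasonable fallback but is not an argument. Replacing this portion with the diffeomorphism $\SU(2n)/\Sp(n) \cong \SU(2n-1)/\Sp(n-1)$ and the resulting coset fibration repairs the proof and matches the paper.
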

\begin{proof}
By \cite[Proposition 2.1.6]{bpn-thom} (see also \cite{beardsley-thom}), it suffices to establish that there is a fiber sequence of $\E{1}$-spaces
\begin{equation*}
    \Omega(\SU(2n-2)/\Sp(n-1)) \to \Omega(\SU(2n)/\Sp(n)) \to \Omega S^{4n-3}.
\end{equation*}
To see this, observe that there is a diffeomorphism $\SU(2n)/\Sp(n) \cong \SU(2n-1)/\Sp(n-1)$, and hence a fibration
$$\xymatrix{
\SU(2n-2)/\Sp(n-1) \ar@{=}[d] \ar[r] & \SU(2n-1)/\Sp(n-1) \ar[d]^-\cong \ar[r] & \SU(2n-1)/\SU(2n-2) \ar[d]^-\cong \\
\SU(2n-2)/\Sp(n-1) \ar[r] & \SU(2n)/\Sp(n) \ar[r] & S^{4n-3}.
}$$
The desired fiber sequence is obtained by looping the bottom row.
\end{proof}
\begin{remark}
On the bottom cell of the source, the map $\Omega S^{4n-3} \to \BGL_1(X_\bH(n-1))$ defines a class $\chi_n^\bH\in \pi_{4n-5} X_\bH(n-1)$, and $\chi_{2^n}^\bH$ is detected in the $E_2$-page of the Adams-Novikov spectral sequence for $X_\bH(2^n-1)$ by $[t_n^2]$. Moreover, if $X_\bH(n-1)$ admits the structure of an $\Efr{2}$-ring and $X_\bH(n)$ admits the structure of an $\E{1}$-$X_\bH(n-1)$-algebra, then $\THH(X_\bH(n)/X_\bH(n-1)) \simeq X_\bH(n)[\Omega S^{4n-3}]$.
\end{remark}
We can then conclude (as in \cref{topological-sen} and \cref{algebraic-sen}) that if $\cC$ is an $X_\bH(n)$-linear $\infty$-category and $X$ is a stack over $\cM_{X_\bH(n)}$, then there are cofiber sequences
\begin{align*}
    \THH(\cC/X_\bH(n-1)) & \to \THH(\cC/X_\bH(n)) \xar{\Theta'} \Sigma^{4n-2} \THH(\cC/X_\bH(n)), \\
    \HH(X/\cM_{X_\bH(n-1)}) & \to \HH(X/\cM_{X_\bH(n)}) \xar{\Theta_\mot} \Sigma^{4n-2,2n-1} \HH(X/\cM_{X_\bH(n)}).
\end{align*}
Only the first cofiber sequence requires that $X_\bH(n-1)$ and $X_\bH(n)$ admit the structure of $\Efr{2}$-rings, and that $X_\bH(n)$ admits the structure of an $\E{1}$-$X_\bH(n-1)$-algebra; the second cofiber sequence only requires that $X_\bH(n)$ admit the structure of a homotopy commutative ring.
\end{remark}
\newpage
\section{Alternative calculation of $\diffr_{\Z/p^n}$}\label{alt-pf-zpn}
In this brief section, we give an alternative algebraic argument for \cref{diffr-zpn} following \cite[Example 5.15]{prismatization}. I am very grateful to Sasha Petrov for an illuminating discussion about this entire appendix; see also \cite[Lemma 6.13]{petrov-hdr}.
\begin{proof}[Alternative proof of \cref{diffr-zpn}]
Let $R$ be a (discrete) commutative $\Z/p^n$-algebra. Then \cite[Construction 3.8]{prismatization} implies that
\begin{align*}
    \spec(\Z/p^n)^{\slashed{D}}(R) & \simeq \Map_\CAlg(\Z/p^n, W(R)/V(1))\\
    & \simeq \{z\in W(R) | zV(1) = p^n\} = \{x\in W(R) | V(Fz) = p^n\}.
\end{align*}
Since $V$ is injective, this is a torsor for $\W[F](R) = \GG_a^\sharp(R)$. Moreover, this torsor is trivializable, i.e., $p^n$ is in the image of $VF$. In fact, we claim that
\begin{equation}\label{pn V}
    p^n = V(p^{n-1}) = VF(V(p^{n-2}))\in W(\Z/p^n).
\end{equation}
To see this, let us compute in ghost coordinates. Recall that if $w(x) = (w_0(x), w_1(x), \cdots)$ are the ghost coordinates of $x\in W(R)$, then $w_{n+1}(Vx) = pw_n(x)$. Since $w(p^n) = (p^n, p^n, \cdots)$ and $w(V(p^{n-1})) = (0, p^n, p^n, \cdots)$, we see that
$$w(p^n - V(p^{n-1})) = (p^n, 0, 0, \cdots).$$
Since the map $\GG_a^\sharp \cong W[F] \to W$ sends $x\in \GG_a^\sharp$ to the Witt vector whose ghost coordinates are $(x, 0, 0, \cdots)$, the claim \cref{pn V} follows from the observation that $p^n\in W[F](\Z_p)$ is sent to zero in $W[F](\Z/p^n)$. 
\end{proof}
\begin{remark}\label{Gm-sharp action}
As pointed out by Sasha Petrov, the preceding calculation also determines the $\GG_m^\sharp$-action on $\spec(\Z/p^n)^{\slashed{D}}$ as follows. The above discussion says that the isomorphism $\GG_a^\sharp \xar{\sim} \spec(\Z/p^n)^{\slashed{D}}$ sends $x\mapsto x + V(p^{n-2})$. Under this isomorphism, the action of $g\in \GG_m^\sharp$ on $x+V(p^{n-2}) \in \spec(\Z/p^n)^{\slashed{D}}$ is given by
$$g(x + V(p^{n-2})) = gx + gV(p^{n-2}) = gx + V(F(g) p^{n-2});$$
but $F(g) = 1$ since $\GG_m^\sharp = W^\times[F]$, so that this can be identified with $gx + V(p^{n-2})$. In other words, the isomorphism $\GG_a^\sharp \xar{\sim} \spec(\Z/p^n)^{\slashed{D}}$ is equivariant for the scaling action of $\GG_m^\sharp$ on $\GG_a^\sharp$.
\end{remark}
One can get a formula which is more ``accurate'' than \cref{pn V} via the following (see also \cite[Page 56]{illusie-slides}, where part of this statement is attributed to Gabber)\footnote{Our understanding is that this result is quite well-known; some form is heavily used in \cite{bms-i}.}.
\begin{lemma}\label{gabber-lemma}
Let $y$ denote the element of $W(\Z_p)$ associated to the ghost coordinates $(1-p^{p-1}, 1-p^{p^2-1}, \cdots)$. Then $[p] + V(y) = p$. Moreover, $y = Fx$ for some $x\in W(\Z_p)$ if and only if $p>2$; in this case, $x\in W(\Z_p)^\times$ (and hence $y\in W(\Z_p)^\times$). If $p=2$, then $y[2^m]$ is in the image of $F$ for any $m\geq 2$.
\end{lemma}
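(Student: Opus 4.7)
The plan is to work entirely through the ghost map $w \colon W(\Z_p) \hookrightarrow \Z_p^{\NN}$, which is injective (since $\Z_p$ is $p$-torsion-free) and whose image is characterised by Dwork's congruences for the trivial Frobenius lift on $\Z_p$: a sequence $(x_0, x_1, \dots)$ lies in the image iff $x_{n+1} \equiv x_n \pmod{p^{n+1}}$ for all $n \geq 0$. First I would verify that $y$ is well-defined: its successive differences $w_{n+1}(y) - w_n(y) = p^{p^{n+1}-1} - p^{p^{n+2}-1}$ are divisible by $p^{p^{n+1}-1}$, and $p^{n+1} - 1 \geq n+1$ holds for every $p \geq 2$ and $n \geq 0$. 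The identity $[p] + V(y) = p$ is then a direct computation on ghost coordinates using $w_n(p) = p$, $w_n([p]) = p^{p^n}$, and $w_n(V(y)) = p\,w_{n-1}(y)$ for $n \geq 1$.

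Next, for the question of whether $y = F(x)$ for some $x \in W(\Z_p)$: since $w_n(Fx) = w_{n+1}(x)$, the values $w_n(x)$ for $n \geq 1$ are forced to equal $1 - p^{p^n - 1}$, while $w_0(x)$ is free subject only to Dwork at $n=0$, which forces $w_0(x) \equiv w_1(x) = 1 - p^{p-1} \equiv 1 \pmod{p}$. The critical test is the Dwork congruence between $w_1(x)$ and $w_2(x)$: one computes $w_2(x) - w_1(x) = p^{p-1}(1 - p^{p^2-p})$, which is divisible by $p^2$ if and only if $p - 1 \geq 2$, i.e. $p > 2$. The congruences at all $n \geq 2$ hold uniformly because $p^n - 1 \geq n+1$ for $n \geq 2$ and $p \geq 2$. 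Consequently, for $p > 2$ one may simply take $w_0(x) = 1$; then $x$ is a unit in $W(\Z_p)$ because its zeroth Witt coordinate $a_0 = w_0(x) = 1$ is a unit in $\Z_p$, and $W(\Z_p)^\times = \{a : a_0 \in \Z_p^\times\}$ by $V$-adic completeness. For $p = 2$ the failure of the $n=1$ Dwork congruence is independent of the choice of $w_0(x)$, so no such $x$ exists.

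Finally, for the $p=2$ assertion about $y[2^m]$: the ghost coordinates of $y[2^m]$ are $w_n(y[2^m]) = (1 - 2^{2^{n+1}-1}) \cdot 2^{m 2^n}$, and solving $Fx = y[2^m]$ forces $w_n(x) = (1 - 2^{2^n - 1}) 2^{m 2^{n-1}}$ for $n \geq 1$, with $w_0(x)$ free (subject to an easy parity condition forcing $w_0(x)$ to be even, satisfied by $w_0(x) = 0$). Each of the two terms in $w_n(x) - w_{n-1}(x)$ is then divisible by $2^{m 2^{n-2}}$, so the Dwork condition $w_n \equiv w_{n-1} \pmod{2^n}$ reduces to the purely numerical inequality $m \cdot 2^{n-2} \geq n$.

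The main obstacle is this last numerical comparison: the binding case is $n=2$, which demands exactly $m \geq 2$; for all $n \geq 3$ the inequality $m 2^{n-2} \geq n$ is automatic once $m \geq 1$. This is the only place where the hypothesis $m \geq 2$ is used, and it is sharp, which explains why multiplying by $[2]$ alone is not enough at the prime $2$ but multiplying by $[2^m]$ with $m \geq 2$ suffices.
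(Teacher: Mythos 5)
Your proposal is correct, but it takes a genuinely different route from the paper. You characterize the image of the ghost map $W(\Z_p)\hookrightarrow \Z_p^{\NN}$ by Dwork's congruences for the identity Frobenius lift, so that existence or non-existence of an $F$-preimage reduces to checking the congruences $w_{n+1}\equiv w_n \pmod{p^{n+1}}$ on the \emph{forced} ghost components (with only $w_0$ free), which become transparent valuation estimates: the single binding congruence between $w_1$ and $w_2$ isolates exactly why $p>2$ is needed in the second assertion and why $m\geq 2$ is needed at $p=2$. The paper instead solves for the Witt components $x_0,x_1,\dots$ of the putative preimage by an explicit induction (recording closed formulas for them), and rules out the case $p=2$ by a mod $8$ contradiction in those coordinates; in effect it reproves the relevant instance of Dwork's lemma by hand. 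Your argument is shorter and makes the sharpness of the hypotheses visible at a glance; the paper's is self-contained (no appeal to Dwork) and produces explicit Witt coordinates, which it also uses to see that all ghost components of $x$ lie in $1+p\Z_p$ and hence that $x$ is a unit. Your unit argument via ``$a\in W(\Z_p)^\times$ iff $a_0\in\Z_p^\times$'' is a true fact over $\Z_p$, though the justification ``by $V$-adic completeness'' is a little glossed (note $(VW)^n=p^{n-1}VW$, so this is really a $p$-completeness statement); if you want to avoid it, observe as the paper does that all forced ghost components of $x$ are $\equiv 1 \pmod p$, hence units, and that the reciprocal sequence again satisfies the Dwork congruences.

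One small inaccuracy in a side remark: your claim that for $n\geq 3$ the inequality $m\cdot 2^{n-2}\geq n$ holds automatically once $m\geq 1$ fails at $n=3$, $m=1$ (where $2<3$, and indeed the actual congruence $w_3\equiv w_2 \pmod 8$ also fails for $m=1$). This does not affect the lemma, since under the hypothesis $m\geq 2$ one has $m\cdot 2^{n-2}\geq 2^{n-1}\geq n$ for all $n\geq 2$, so your verification goes through; only the assertion that $n=2$ is the \emph{sole} obstruction for $m=1$ should be dropped or corrected.
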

\begin{remark}
Let us assume $p$ is odd for simplicity. Then \cref{gabber-lemma} implies that $p-[p]\in W(\Z_p)$ is a unit multiple of $V(1)$, since $p-[p] = V(y) = xV(1)$ and $x\in W(\Z_p)^\times$.\footnote{Analogously, $[2](2 - [2]) = [2] V(y) = V(y[4])\in W(\Z_p)$ is divisible by $V(1)$ for $p=2$.} It follows from \cite[Construction 3.8]{prismatization} that if $X = \spf(R)$ is a bounded $p$-adic formal scheme, then the diffracted Hodge complex $X^{\slashed{D}}$ is given on $p$-nilpotent rings $S$ by $X^{\slashed{D}}(S) = X(W(R)/(p-[p]))$.
\end{remark}
\begin{remark}
Applying $F$ to the identity $[p]+V(y)=p$, we see that $[p^2] = p(1-y)$. In particular, the element $a\in W(\Z_p)$ of \cite[Lemma 4.7.3]{drinfeld-prism} can be identified with $1-y$.
\end{remark}
\begin{remark}
Using \cref{gabber-lemma}, we can give an ``alternative'' formula for a preimage of $p^n$ under $VF$. Indeed, we have $p = [p] + V(y)$ for some $y\in W(\Z_p)$, so that $p^n = [p^n] + \sum_{i=0}^{n-1} \binom{n}{i} [p^i] V(y)^{n-i}$ in $W(\Z_p)$. Because $V(a)b = V(aFb)$ and $FV = p$, we have $V(a)^n = V(p^{n-1} a^n)$ by an easy induction on $n$. Moreover, $[p^i] V(a) = V(aF[p^i]) = V([p^{pi}]a)$. Since $[p^n] = 0\in W(\Z/p^n)$ (and hence in $W(R)$), we have 
$$p^n = \sum_{i=0}^{n-1} \binom{n}{i} [p^i] V(y)^{n-i} = \sum_{i=0}^{n-1} \binom{n}{i} V(p^{n-i-1} y^{n-i} F[p^i]) \in W(\Z/p^n).$$
Assume $p>2$, so that \cref{gabber-lemma} implies that $y = Fx$ for some $x\in W(\Z_p)$. The multiplicativity of $F$ now lets us conclude that
$$p^n = VF\left(\sum_{i=0}^{n-1} \binom{n}{i} p^{n-i-1} x^{n-i} [p^i]\right) \in W(\Z/p^n),$$
so that $p^n\in W(R)$ is in the image of $VF$, as desired.

One can check that
$$\sum_{i=0}^{n-1} \binom{n}{i} p^{n-i-1} y^{n-i} [p^{pi}] = p^{n-1} \in W(\Z/p^n).$$
This is essentially an elaboration on the proof of \cref{gabber-lemma}. Indeed, applying $w_j$, we have
\begin{align*}
    w_j\left(\sum_{i=0}^{n-1} \binom{n}{i} p^{n-i-1} y^{n-i} [p^{pi}]\right) & = \frac{1}{p} \sum_{i=0}^{n-1} \binom{n}{i} (p-p^{p^{j+1}})^{n-i} p^{p^{j+1}i} \\
    & = p^{n-1} - p^{p^{j+1}n-1}.
\end{align*}
It therefore suffices to show that the Witt vector $a\in W(\Z_p)$ with coordinates $w_j(a) = p^{p^{j+1}n-1}$ vanishes in $W(\Z/p^n)$, which follows from a direct calculation.
\end{remark}

Let us end with a proof of \cref{gabber-lemma}; the explicit formulas below are unnecessary for any conceptual development, but we included it since the computation was rather fun.
\begin{proof}[Proof of \cref{gabber-lemma}]
First, it is easy to see that $y$ is well-defined.
Let us now check that $p = [p] + V(y)$. If $w(x) = (w_0(x), w_1(x), \cdots)$ are the ghost coordinates of $x\in W(R)$, then $w_{n+1}(Vx) = pw_n(x)$. It follows that $w_n(Vy) = p-p^{p^n}$. Since $w([p]) = (p, p^p, p^{p^2}, \cdots)$, we have 
$$w([p]+Vy) = w([p]) + w(Vy) = (p, p, \cdots) = w(p),$$
so that $p = [p] + V(y)$, as claimed. 

To prove the claim about $y$ being in the image of $F$, recall that if $x\in W(R)$, then the ghost coordinates of $Fx$ are given by $w_n(Fx) = w_{n+1}(x)$. In particular, $y = Fx$ for some $x\in W(\Z_p)$ if and only if we can solve
$$1-p^{p^n-1} = x_0^{p^n} + px_1^{p^{n-1}} + \cdots + p^n x_n$$
for some $x_0,\cdots, x_n\in \Z_p$ and all $n\geq 1$. This is impossible for $p=2$. Indeed, first note that we need $x_0^2 + 2x_1 = 1-p^{p-1} = -1$, so that $x_0^2 \equiv 1\pmod{2}$ (and hence $x_0 \equiv 1\pmod{2}$). Write $x_0 = 1 + 2s$, so that $x_0^2 + 2x_1 = 1 + 4s(1+s) + 2x_1$. In order for this to equal $1-p^{p-1}=-1$, we need $4s(1+s) + 2x_1 = -2$, i.e., $x_1\equiv 1\pmod{2}$. This implies that $x_0^4 \equiv 1\pmod{8}$ and $x_1^2 \equiv 1\pmod{4}$ (so $2x_1^2 \equiv 2\pmod{8}$). Since $1 - p^{p^2-1} = -7 = x_0^4 + 2x_1^2 + 4x_2$, we can reduce modulo $8$ to find that $1 \equiv 1 + 2 + 4x_2\pmod{8}$. But then $x_2$ would solve $4x_2 \equiv -2\pmod{8}$, which is impossible.

Now assume $p>2$. Since $x_0^p + px_1 = 1-p^{p-1}$, we have $x_0^p \equiv 1\pmod{p}$; this implies that $x_0^{p^n} \equiv 1\pmod{p^{n+1}}$. Writing $x_0^{p^n} = 1 - p^{n+1} s_n$ for some $s_n\in \Z_p$, we have $x_1 = ps_1-p^{p-2}$. Since $p>2$, we see that $x_1 = p(s_1 - p^{p-3})\in p\Z_p$. We claim that $x_n$ exists and is an element of $p\Z_p$ for $n\geq 1$. We established the base case $n=1$ above, so assume that $x_1, \cdots, x_{n-1} \in p\Z_p$, and let $x_i = pt_i$. We then have
\begin{align*}
    p^n x_n & = 1 - p^{p^n-1} - (x_0^{p^n} + px_1^{p^{n-1}} + \cdots + p^{n-1} x_{n-1}^p)\\
    & = p^{n+1} s_n - p^{p^n-1} - p^{p^{n-1}+1} t_1^{p^{n-1}} - \cdots - p^{p+n-1} t_{n-1}^p,
\end{align*}
so that
$$x_n = ps_n - p^{p^n-1-n} - p^{p^{n-1}+1-n} t_1^{p^{n-1}} - \cdots - p^{p-1} t_{n-1}^p.$$
This is clearly divisible by $p$ since $p>2$ (so that $p^n-1-n \geq 1$ for $n\geq 1$). Therefore, $x_n$ exists and lives in $p\Z_p$, as desired. (Note that if $p=2$ and $n=1$, then $p^n-1-n = 0$, so $x_1\not\in 2\Z_2$.) If one prefers an explicit formula, the above argument shows that once one writes $x_0 = 1-ps_0$, then $x_j = pt_j$ for $j\geq 1$ can be defined inductively by
$$t_n = 
\sum_{i=1}^{p^n} \frac{(-1)^{i+1}}{p^{n+1-i}} \binom{p^n}{i} s_0^i - p^{p^n-2-n} - \sum_{k=1}^{n-1} p^{p^k-k-1} t_{n-k}^{p^k}.
$$
The first term is $s_n$; note that $\frac{1}{p^{n+1-i}} \binom{p^n}{i}\in \Z$. Since $x_0 \equiv 1\pmod{p}$ and $x_i\equiv 0\pmod{p}$ for $i\geq 1$, it is easy to see that all the ghost components of $x$ lie in $1+p\Z_p\subseteq \Z_p^\times$; this implies that $x\in W(\Z_p)$ is invertible, as claimed.

Let us now assume that $p=2$, and show that $y[2^m]$ is in the image of $F$ for any $m\geq 2$. To see this, observe that the ghost components of $y[2^m]$ are given by 
$$w_n(y[2^m]) = w_n(y) w_n([2^m]) = 2^{m2^n}(1-2^{2^{n+1}-1}).$$
We therefore need to solve
$$2^{m2^{n-1}}(1-2^{2^n-1}) = x_0^{2^n} + 2x_1^{2^{n-1}} + \cdots + 2^n x_n$$
for some $x_0,\cdots, x_n\in \Z_2$ and all $n\geq 1$. When $n=1$, we have $x_0^2 + 2x_1 = -2^m$, so that $x_0^2 \equiv 0\pmod{2}$ since $m>0$. It follows that $x_0 = 2t_0$ for some $t_0\in \Z_2$.
We now claim that $x_n$ exists for $n\geq 0$ and lives in $2\Z_2$. We established the base case $n=0$ above, so assume $x_0,x_1, \cdots, x_{n-1}\in 2\Z_2$, and write $x_i = 2t_i$. Then
\begin{align*}
    2^n x_n & = 2^{m2^{n-1}}(1-2^{2^n-1}) - (x_0^{2^n} + 2x_1^{2^{n-1}} + \cdots + 2^{n-1} x_{n-1}^2)\\
    & = 2^{m2^{n-1}}(1-2^{2^n-1}) - (2^{2^n} t_0^{2^n} + 2^{2^{n-1}+1} t_1^{2^{n-1}} + \cdots + 2^{n+1} t_{n-1}^2),
\end{align*}
so that
$$x_n = 2^{m2^{n-1}-n}(1-2^{2^n-1}) - (2^{2^n-n} t_0^{2^n} + 2^{2^{n-1}+1-n} t_1^{2^{n-1}} + \cdots + 2 t_{n-1}^2).$$
Because $m\geq 2$ and $2^j-j\geq 1$ for every $j\geq 0$, we see that $x_n\in 2\Z_2$, as desired. (Of course, the key case is $m=2$; when $m=1$ and $n=1$, the term $2^{m2^{n-1}-n}(1-2^{2^n-1}) = -1\not\in 2\Z_2$.) If one prefers an explicit formula, note that the above argument shows that once one writes $x_0 = 2t_0$, then $x_j = 2t_j$ can be defined inductively by
$$t_n = 
2^{m2^{n-1}-n-1}(1-2^{2^n-1}) - \sum_{i=1}^n 2^{2^i - i - 1} t_{n-i}^{2^i}.
$$
Note that $x$ is \textit{not} invertible in $W(\Z_2)$; instead, since $x_j\in 2\Z_2$, the $n$th ghost component $w_n(x)\in 2^{n+1} \Z_2$.
\end{proof}
\newpage
\section{Cartier duals of $W[F^n]$ and $W^\times[F^n]$}\label{cartier-duals}
This section was inspired by the results proved above, but it does not play an essential role in the body of this article.
\cref{bwtimes-n} below can be viewed as an algebraic way to bookkeep the structure possessed by the topological Sen operators; and, as we hope to show in future work, it sits as an intermediary between the topological and algebraic Sen operators of \cref{topological-sen} and \cref{algebraic-sen} (see \cref{zpn-gradings}). We begin with the following (presumably well-known) result. I am (again) grateful to Sasha Petrov for a relevant discussion on it.
\begin{prop}\label{witt-cartier-dual}
There is an isomorphism of group schemes over $\Z_{(p)}$ between $W[F^n] := \ker(F^n: W \to W)$ and the Cartier dual of the completion of $W_n = W/V^n$ at the origin.
\end{prop}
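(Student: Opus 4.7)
The plan is to proceed by induction on $n$, with base case the classical identification $(\widehat{W}_1)^\vee = W[F]$, i.e., $(\widehat{\GG}_a)^\vee = \GG_a^\sharp$ over $\Z_{(p)}$. Concretely, one observes that a Witt vector $(a_0, a_1, \ldots) \in W[F](R)$ has ghost coordinates vanishing in all degrees $\geq 1$, which in Witt coordinates forces $a_k$ to be determined by a divided power $\gamma_{p^k}(a_0)$ of $a_0$ (e.g. $a_1 = -a_0^p/p$, $a_2 = -(a_0^{p^2} + pa_1^p)/p^2$, etc.); this exhibits $W[F](R)$ as the set of elements of $R$ equipped with compatible divided powers on $a_0$, i.e., $\GG_a^\sharp(R)$, which is classically Cartier dual to $\widehat{\GG}_a$ via the pairing $(t, a) \mapsto \sum_{k\geq 0} \gamma_k(a)\, t^k \in \widehat{\GG}_m$.

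For the inductive step, I would compare two short exact sequences over $\Z_{(p)}$:
\begin{align*}
    0 \to \widehat{W}_{n-1} &\xrightarrow{V} \widehat{W}_n \to \widehat{\GG}_a \to 0, \\
    0 \to W[F] &\to W[F^n] \xrightarrow{F} W[F^{n-1}] \to 0.
\end{align*}
The first is obtained from the standard presentation $\widehat{W}_n = \widehat{W}/V^n\widehat{W}$ together with $\widehat{W}/V\widehat{W} \cong \widehat{\GG}_a$. The second is fppf-exact, with surjectivity of $F\colon W[F^n] \to W[F^{n-1}]$ coming from the fppf-surjectivity of $F\colon W \to W$ (visible in ghost coordinates, since $F$ is the shift). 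Applying Cartier duality $\underline{\Hom}(-, \widehat{\GG}_m)$ to the first sequence and invoking the $F$--$V$ swap intrinsic to Cartier duality over $\Z_{(p)}$, the transpose of $V$ is identified with $F$ on the dual side; combined with the inductive hypothesis identifying the outer terms $\widehat{\GG}_a^\vee \cong W[F]$ and $(\widehat{W}_{n-1})^\vee \cong W[F^{n-1}]$, a diagram chase (five lemma) produces the desired isomorphism $W[F^n] \cong (\widehat{W}_n)^\vee$.

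The main obstacle will be verifying naturality of the comparison, i.e., that the explicit Cartier pairings realizing the duality at each stage actually intertwine $V$ on the formal-group side with $F$ on the group-scheme side. In practice, this reduces to constructing an explicit Cartier pairing $\widehat{W}_n \times W[F^n] \to \widehat{\GG}_m$ from the Artin-Hasse--Witt exponential and checking on the nose the adjunction $\langle Vx, y\rangle = \langle x, Fy\rangle$ (which in turn ensures the pairing $\widehat{W}\times W \to \widehat{\GG}_m$ descends to $\widehat{W}_n\times W[F^n] \to \widehat{\GG}_m$, since $\langle V^n x, y\rangle = \langle x, F^n y\rangle = 0$ when $y \in W[F^n]$). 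Perfectness of the resulting pairing may then be verified after reducing modulo $p$ via classical Dieudonn\'e theory, where $\widehat{W}_n$ and $W[F^n]$ have Dieudonn\'e modules $\mathrm{Cart}/V^n\mathrm{Cart}$ and $\mathrm{Cart}/F^n\mathrm{Cart}$ respectively, which are interchanged under Cartier duality.
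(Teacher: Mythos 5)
Your proposal is essentially correct in outline, but it takes a genuinely different route from the paper. The paper proves the statement by a direct construction: an element of $W[F^n]$ is viewed as a power series $f(t)$ with $t\,d\log f(t)=\sum_{m<n}a_m t^{p^m}$, and the dual homomorphism $\widehat{W}_n\to\GG_m$ is written down explicitly as the Artin--Hasse-type expression $g(x_0,\dots,x_{n-1})=\prod_{j}F^j(f)(x_j)$, whose additivity is checked by recognizing the Witt polynomials in the exponent; that it is an isomorphism is then verified inductively together with a check over $\QQ$. You instead run a d\'evissage: base case $\widehat{\GG}_a^\vee\cong W[F]=\GG_a^\sharp$, the two sequences $0\to\widehat{W}_{n-1}\xrightarrow{V}\widehat{W}_n\to\widehat{\GG}_a\to 0$ and $0\to W[F]\to W[F^n]\xrightarrow{F}W[F^{n-1}]\to 0$, Cartier duality, and the five lemma. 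This is structurally cleaner and isolates the classical $n=1$ duality as the only analytic input, but it does not avoid the computation: to make the squares commute you must still produce the explicit pairing $\widehat{W}_n\times W[F^n]\to\widehat{\GG}_m$ (integrality being the Artin--Hasse/Dwork content) and verify $\langle Vx,y\rangle=\langle x,Fy\rangle$, which is exactly the formula the paper writes down; the paper's version has the added benefit that the explicit exponential is what gets reused later (e.g.\ in the description of $\QCoh(BW[F^n])$ and in the $W^\times[F^n]$ analogue). You also owe two exactness statements that the direct proof never needs: fppf surjectivity of $F\colon W[F^n]\to W[F^{n-1}]$ over $\Z_{(p)}$ (true, but it requires an argument in mixed characteristic; note that once you have an fppf-local $a$ with $F(a)=b$ and $b\in W[F^{n-1}]$, the condition $F^n(a)=0$ is automatic), and left exactness of $\underline{\Hom}(-,\GG_m)$ applied to the first sequence, which fortunately is all the five lemma needs (you do not need surjectivity of $(\widehat{W}_n)^\vee\to(\widehat{W}_{n-1})^\vee$).

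One genuine caution: your fallback of verifying perfectness of the pairing ``after reducing modulo $p$ via Dieudonn\'e theory'' is not sufficient on its own over $\Z_{(p)}$. A homomorphism of flat affine group schemes over $\Z_{(p)}$ that is an isomorphism modulo $p$ and after inverting $p$ need not be an isomorphism: injectivity follows from flatness, but the cokernel of the map on coordinate rings is only seen to satisfy $C=pC$ with $C[1/p]=0$, which does not force $C=0$ without a finiteness input (think of $\QQ_p/\Z_p$), and the rings here are not finite type. So either keep the five-lemma d\'evissage as the actual engine (in which case the mod-$p$ Dieudonn\'e check is unnecessary), or, if you want to argue via fibers, you must supply an additional argument (as the paper does, inductively reducing to $n=1$ together with the rational check) rather than quoting fiberwise perfectness alone.
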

\begin{proof}
Let us model $W$ by the $p$-typical big Witt vectors. Given $f(t)\in W$, let $a_0,a_1,a_{2},\cdots$ denote the ghost components of $f$, so that $td\log(f(t)) = \sum_{m\geq 0} a_{m} t^{p^m}$. Then $f(t)\in W[F^n]$ if and only if $a_{m} = 0$ for $m\geq n$. 

Let us first prove the claim of the proposition when $n=1$. Then, $d\log(f(t))$ is a constant, and $f(0) = 1$; we claim that this is equivalent to the condition that $f$ defines a homomorphism $\hat{\GG}_a \to \GG_m$, i.e., that $f(x+y) = f(x) f(y)$. To check this, first suppose that $f(x+y) = f(x) f(y)$. Then $\partial_x f(x+y) = f(y) f'(x)$, so that 
$$\frac{\partial_x f(x+y)}{f(x+y)} = \frac{f'(x)}{f(x)} = d\log(f(x))$$
is independent of $y$. Taking $x = 0$, we see that $d\log(f(x))$ is constant, as desired. The reverse direction (that $d\log(f(x))$ being constant and $f(0) = 1$ implies that $f(x+y) = f(x) f(y)$) is similar.

In the general case, note that since the Frobenius on $W$ shifts the ghost components by $F: (a_0, a_1, a_{2}, \cdots) \mapsto (a_1, a_{2}, a_{3}, \cdots)$, the Frobenius $F$ applied to $f$ satisfies:
$$d\log(F^j(f)(t)) = \sum_{m=0}^{n-j} a_{{m+j}} t^{p^m},$$
so that there is an equality of power series
$$F^j(f)(t) = \exp\left(\sum_{m=0}^{n-j} \frac{a_{{m+j}}}{p^m} t^{p^m}\right).$$
Note that this is a slight variant of the the Artin-Hasse exponential.
Define a map $g: \hat{W}_n \to \GG_m$ on Witt components $(x_0,\cdots,x_{n-1})$ (not ghost components!) as follows:
\begin{align*}
    g(x_0,\cdots,x_{n-1}) & = \prod_{j=0}^{n-1} F^j(f)(x_j) = \exp\left(\sum_{j=0}^{n-1} \sum_{m=0}^{n-j} \frac{a_{{m+j}}}{p^m} x_j^{p^m}\right) \\
    & = \exp\left(\sum_{m=0}^{n-1} \frac{a_{m}}{p^m} \left(\sum_{j=0}^{m} p^j x_j^{p^{m-j}}\right) \right).
\end{align*}
The coefficient of $\tfrac{a_{m}}{p^m}$ is precisely the $m$th Witt polynomial, so that the function $g$ is indeed additive on $\hat{W}_n$. Moreover, the assignment $f\mapsto g$ indeed gives an isomorphism $W[F^n] \xar{\sim} \Hom(\hat{W}_n, \GG_m)$, as one can check inductively using the case $n=1$ and the fact that it induces an isomorphism over $\QQ$.
\end{proof}
\begin{remark}[Integral case]
One does not need $p$-typicality for the above statement to hold. Namely, if $\WW$ denotes the big Witt ring scheme, it is a classical fact that the Cartier dual of $\WW$ over $\Z$ is canonically identified with $\hat{\WW}$. As in the $p$-typical case above, the pairing $\WW \times \hat{\WW} \to \GG_m$ sends
$$(a,b) \mapsto \exp\left(\sum_{n\geq 1} \frac{w_n(a) w_n(b)}{n}\right).$$
One only needs to check that this expression is in fact defined over $\Z$. To see this, first observe that if the Witt components of $b$ are $(b_1, b_2, \cdots)$, we have
$$\exp\left(\sum_{n\geq 1} \frac{w_n(a) w_n(b)}{n}\right) = \prod_{j\geq 1} \exp\left(\sum_{n\geq 1} \frac{w_{nj}(a) b_j^n}{n}\right).$$
Note that $w_{nj}(a) = w_n(F_j a)$, so that if $(F_j a)_d$ denote the Witt components of $F_j a$, we have
$$\exp\left(\sum_{n\geq 1} \frac{w_n(a) w_n(b)}{n}\right) = \prod_{j,d\geq 1} (1 - (F_j a)_d b_j^d),$$
giving the desired integral representation.
In fact, the last step can be generalized via the following rephrasing of the Dwork lemma:
\begin{lemma}
Let $R$ be a torsionfree ring equipped with ring maps $\phi_p: R \to R$ for each prime $p$ such that $\phi_p(r) \equiv r^p\pmod{p}$ for all $r\in R$. Let $(x_n)_{n\geq 1}$ be a sequence of elements such that $x_n \equiv \phi_p(x_{n/p})\pmod{p^{v_p(n)}}$ for each prime $p$ and every $n\in p\Z_{\geq 0}$. Then $f(t) := \exp\left(\sum_{n\geq 1} \frac{x_n t^n}{n}\right)$ lies in $1 + tR\pw{t}\subseteq 1 + t(R \otimes \QQ)\pw{t}$.
\end{lemma}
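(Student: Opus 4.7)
The plan is to interpret the statement through the identification of $1 + tR\pw{t}$ with the big Witt vectors $\WW(R)$, and then reduce to the classical Dwork lemma.

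First I would recall the isomorphism of sets $\WW(R) \xrightarrow{\sim} 1 + tR\pw{t}$ sending $(a_n)_{n \geq 1} \mapsto \prod_{n \geq 1} (1 - a_n t^n)^{-1}$. Under this identification, the ghost map $\WW(R) \to R^{\Z_{\geq 1}}$ becomes the logarithmic-derivative map: if $g(t)$ corresponds to $(a_n)$, then
$$t\frac{g'(t)}{g(t)} = \sum_{m \geq 1} w_m(a)\, t^m, \qquad w_m(a) := \sum_{d \mid m} d\, a_d^{m/d},$$
so $g(t) = \exp\!\left(\sum_{m \geq 1} \tfrac{w_m(a)}{m} t^m\right)$ inside $1 + t(R \otimes \QQ)\pw{t}$. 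Comparing with the definition of $f(t)$, the membership $f(t) \in 1 + tR\pw{t}$ is equivalent to the existence of a Witt vector $(a_n) \in \WW(R)$ whose ghost coordinates are exactly $x_n = w_n(a)$.

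This reduces the lemma to the following statement: a sequence $(x_n)_{n \geq 1}$ in the torsionfree ring $R$ is the ghost sequence of an element of $\WW(R)$ if and only if $x_n \equiv \phi_p(x_{n/p}) \pmod{p^{v_p(n)}}$ for all primes $p$ and all $n \in p\Z_{\geq 0}$. The Witt coordinates $a_n$ are uniquely determined in $R \otimes \QQ$ by the recursion
$$n\, a_n = x_n - \sum_{\substack{d \mid n \\ d < n}} d\, a_d^{n/d},$$
so only integrality has to be checked, and this can be done one prime at a time.

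The main obstacle is thus the integrality step, which is the content of the classical Dwork congruence. Writing $n = p^k m$ with $\gcd(m,p) = 1$ and proceeding by induction on $n$, the key input is the well-known congruence: for any Frobenius-lift $\phi_p$ and any $r \in R$, one has $\phi_p(r)^{p^{k-1}} \equiv r^{p^k} \pmod{p^k}$ (proved by lifting $\phi_p(r) \equiv r^p \pmod{p}$ and iterating, using torsionfreeness to divide). Applying $\phi_p$ to the recursion that defines $a_{n/p}$ and subtracting from the recursion for $a_n$, the difference telescopes via the above congruence into a sum of elements of $p^k R$ (using the hypothesis $x_n \equiv \phi_p(x_{n/p}) \pmod{p^k}$ and the inductive integrality of the $a_d$ with $d < n$). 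Thus $n a_n \in p^k R$, and since $v_p(n) = k$ together with torsionfreeness allows division by the $p$-part of $n$, we obtain $a_n \in R_{(p)}$. Running this for every prime $p$ gives $a_n \in \bigcap_p R_{(p)} = R$, completing the induction.
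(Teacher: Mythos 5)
Your proof is correct and follows essentially the same route as the paper's: write $f$ as an infinite product (equivalently, identify $1+tR\pw{t}$ with the big Witt vectors of $R$), apply $d\log$ to see that the $x_n$ are precisely the ghost components of the putative Witt vector, and then run the classical Dwork induction over divisors, prime by prime, using the congruence coming from the Frobenius lift. The only differences are cosmetic — you use the convention $\prod_n(1-a_nt^n)^{-1}$ where the paper uses $\prod_j(1-r_jt^j)$, and you spell out the Dwork induction (key congruence, localization at $p$, intersection $\bigcap_p R_{(p)}=R$) that the paper simply cites as the standard argument.
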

\begin{proof}
Let $g(t) = 1-t \in R\pw{t}$, so that there is an identity
$$g(t) = \exp\left(\log(1-t)\right) = \exp\left(-\sum_{n\geq 1} \frac{t^n}{n}\right).$$
Because $f(0) = 1$, we can write $f(t) = \prod_{j\geq 1} (1-r_j t^j) = \prod_{j\geq 1} g(r_j t^j)$ for unique $r_j\in R\otimes \QQ$. Since $g(t)$ is integral, it is sufficient to show that the elements $r_j$ are also integral. Applying $d\log$, we find that
$$\sum_{n\geq 1} \frac{x_n t^n}{n} = d\log(f)(t) = \sum_{j\geq 1} d\log(g)(r_j t^j) = -\sum_{j,m\geq 1} \frac{r_j^m t^{jm}}{m}.$$
It follows that $x_n = -\sum_{j|n} j r_j^{n/j}$. One can now argue in exactly the same way as the usual Dwork lemma (i.e., by induction on $r_j\in R$ for $j|n$ with $j\neq n$) to argue that each $r_j$ is integral.
\end{proof}
\end{remark}
\begin{corollary}\label{qcoh-BWn}
Write the underlying scheme of $W_n$ as $\prod_{i=0}^{n-1} \GG_a$ (where the $i$th copy of $\GG_a$ has coordinate $\Phi_i$).
There is a fully faithful functor $\QCoh(BW[F^n]) \hookrightarrow \QCoh(W_n)$ whose essential image consists of those $p$-complete $M\in \QCoh(W_n)$ such that $\Phi_i$ acts locally nilpotently on $\H^\ast(M/p)$ for each $0\leq i \leq n-1$. Furthermore, this functor is symmetric monoidal for the convolution tensor product on $\QCoh(W_n)$.

If $\cf\in \QCoh(BW[F^n])$ is sent to $M\in \QCoh(W_n)$ under this functor, one obtains a cube\footnote{Recall that $[n]$ denotes the set $\{0,\cdots,n\}$.} $\Phi_\bull: 2^{[n-1]} \to \Mod_{\Z_p}$ whose vertices are all $M$ and such that the edge from the subset $\{i_1, \cdots, i_{j-1}\}$ to $\{i_1, \cdots, i_j\}$ is given by the operator $\Phi_j: M \to M$. Then, the global sections $\Gamma(BW[F^n]; \cf)$ can be identified with the total fiber of the cube $\Phi_\bull$.
\end{corollary}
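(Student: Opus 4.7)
The plan is to use Cartier duality (Proposition~\ref{witt-cartier-dual}) to reinterpret representations of $W[F^n]$ as (complete) modules over the coordinate ring of the dual formal group $\hat{W}_n$. Since $W[F^n]^\vee = \hat{W}_n$, the algebra
\[
\co(\hat{W}_n) \;=\; \co(W_n)^\wedge_{(\Phi_0,\ldots,\Phi_{n-1})} \;=\; \Z_p\pw{\Phi_0,\ldots,\Phi_{n-1}}
\]
is topologically dual to the coalgebra $\co(W[F^n])$. Quasicoherent sheaves on $BW[F^n]$ are comodules over $\co(W[F^n])$, and Cartier duality identifies these (in the derived, $p$-complete setting) with modules over $\co(\hat{W}_n)$, i.e.\ with $\co(W_n)$-modules on which the commuting endomorphisms $\Phi_0,\ldots,\Phi_{n-1}$ act in a manner that extends continuously to the completion. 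The embedding into $\QCoh(W_n)$ is then simply restriction of scalars along $\co(W_n)\hookrightarrow\co(\hat{W}_n)$, sending a representation to its underlying $\co(W_n)$-module together with the endomorphisms $\Phi_i$.

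To identify the essential image, one checks that a $p$-complete $\co(W_n)$-module $M$ extends to a complete $\co(\hat{W}_n)$-module precisely when it is derived-complete at the augmentation ideal $(\Phi_0,\ldots,\Phi_{n-1})$. A standard application of the Greenlees--May / local duality formalism (using that $\co(W_n)/p = \FF_p[\Phi_0,\ldots,\Phi_{n-1}]$ is Noetherian with regular augmentation ideal) then translates this derived condition, on the mod $p$ reduction, into the statement that each $\Phi_i$ act locally nilpotently on the classical cohomology groups $\H^\ast(M/p)$. Symmetric monoidality is immediate from the duality: the group structure on $W[F^n]$ corresponds to that on $\hat{W}_n$, so the pointwise tensor product of representations corresponds to convolution on $\QCoh(\hat{W}_n)$, and the closed immersion of group schemes $\hat{W}_n\hookrightarrow W_n$ identifies this with the restriction of the convolution product on $\QCoh(W_n)$.

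For the computation of global sections, observe that $\Gamma(BW[F^n];\cf)$ is the derived invariants, equivalently $\RHom_{\co(\hat{W}_n)}(\Z_p,M)$, where $\Z_p$ is the trivial module via the augmentation $\Phi_i\mapsto 0$. Since $(\Phi_0,\ldots,\Phi_{n-1})$ is a regular sequence in $\co(\hat{W}_n)$, the Koszul complex provides a free resolution of $\Z_p$, presented as an $n$-cube of copies of $\co(\hat{W}_n)$ with edges given by multiplication by $\Phi_i$. Dualizing this resolution by applying $\RHom(-,M)$ yields exactly the cube $\Phi_\bullet$ of the statement (with vertices $M$ and edges $\Phi_i$), whose total fiber is thus identified with $\Gamma(BW[F^n];\cf)$.

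The main obstacle will be the rigorous formulation, in the $p$-complete derived setting, of the Cartier-duality equivalence between comodules over $\co(W[F^n])$ and complete modules over $\co(\hat{W}_n)$: the classical duality is most cleanly stated for finite flat group schemes, whereas here $W[F^n]$ is only ind-finite in a formal sense, and one must carefully verify both that the duality descends to an embedding into $\QCoh(W_n)$ and that the derived-completeness condition on the completed side matches the locally-nilpotent-on-$\H^\ast(M/p)$ condition appearing in the statement.
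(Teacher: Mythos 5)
Your overall route --- Cartier duality via Proposition~\ref{witt-cartier-dual} to trade $W[F^n]$-representations for modules over the dual of $\hat{W}_n$, symmetric monoidality from the group structure, and a Koszul resolution of the trivial representation to produce the $n$-cube computing $\Gamma(BW[F^n];\cf)$ --- is the intended one (the paper gives no separate argument; the corollary is the evident generalization of the $\GG_a^\sharp=W[F]$ case from Bhatt--Lurie, \S 3.5 of \cite{apc}, applied to Proposition~\ref{witt-cartier-dual}), and your last two paragraphs are fine in outline.

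However, your identification of the essential image contains a genuine error. You characterize the image as those $p$-complete $\co(W_n)$-modules that are derived-complete at the augmentation ideal $(\Phi_0,\ldots,\Phi_{n-1})$, and then invoke Greenlees--May to convert this into the local-nilpotence condition of the statement. These two conditions are not equivalent, and Greenlees--May does not bridge them: local nilpotence of the $\Phi_i$ on $\H^\ast(M/p)$ implies derived completeness, but not conversely. Already for $n=1$ (so $W[F]=\GG_a^\sharp$), the module $M=\Z_p\pw{\Phi_0}$ is $p$-complete and $(\Phi_0)$-adically (hence derived) complete, yet $\Phi_0$ is not locally nilpotent on $\H^0(M/p)=\FF_p\pw{\Phi_0}$, and $M$ does not underlie any $\GG_a^\sharp$-representation: the coaction $M \to M \otimes \Z_p\pdb{x}$ cannot be defined. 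The point is that the comodule condition over $\co(W[F^n])$ dualizes to an \emph{ind-nilpotence/torsion} condition for the action of the augmentation ideal of $\co(\hat{W}_n)$ --- classically, every element lies in a finite subcomodule --- not to a completeness condition; in the derived $p$-complete setting this is exactly ``each $\Phi_i$ acts locally nilpotently on $\H^\ast(M/p)$,'' as in the statement and as in the $B\GG_m^\sharp$ analogue (compare Corollary~\ref{bwtimes-n}, where the condition is local nilpotence of $\Psi_i^p-\Psi_i$). The Greenlees--May torsion--completion correspondence is an equivalence implemented by local cohomology and completion functors, which change the underlying module, so it cannot be used to re-express the essential image of the \emph{given} functor; with your characterization the image would be strictly too large, and the total-fiber description of global sections would then fail for complete-but-not-torsion objects. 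Replacing that step by the direct dualization of the comodule structure (as in the $\GG_a^\sharp$ case) repairs the argument.
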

\begin{remark}
More generally, the argument of \cref{witt-cartier-dual} shows that there is an isomorphism of group schemes over $\Z_{(p)}$ between $W_m[F^n] := \ker(F^n: W_m \to W_m)$ and the Cartier dual of $W_n[F^m]$.
One can give a simpler proof of this fact over a perfect field $k$ of characteristic $p>0$ using the theory of Dieudonn\'e modules: the Dieudonn\'e module of $W_m[F^n]$ over $k$ is $W(k)[F,V]/(F^n, V^m)$, while the Dieudonn\'e module of $W_n[F^m]$ over $k$ is $W(k)[F,V]/(F^m, V^n)$.
\end{remark}
The argument of \cref{witt-cartier-dual} also shows the following (which is already discussed in \cite[Appendix D]{drinfeld-formal-group}):
\begin{prop}[{\cite[Appendix D]{drinfeld-formal-group}}]
Let $\hat{\GG}_\lambda$ be the degeneration of $\hat{\GG}_m$ to $\hat{\GG}_a$ given by $\spf \Z_{(p)}[t, \lambda, \frac{1}{1+t\lambda}]^\wedge_t$ with group law $x + y + \lambda xy$. Then the $\Z_{(p)}[\lambda]$-linear Cartier dual of $\hat{\GG}_\lambda$ is isomorphic to the group scheme $\bD(\hat{\GG}_\lambda) = \spec \Z_{(p)}[\lambda, z, \frac{\prod_{j=0}^{n-1} (z-j\lambda)}{n!}]$ over $\AA^1_\lambda = \spec \Z_{(p)}[\lambda]$ with coproduct $z\mapsto z\otimes 1 + 1\otimes z$.
\end{prop}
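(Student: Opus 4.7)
The strategy is to exhibit an explicit pairing $\mu: \hat{\GG}_\lambda \times \bD(\hat{\GG}_\lambda) \to \GG_m$ over $\AA^1_\lambda$ by the formula
$$\mu(t,z) = (1+\lambda t)^{z/\lambda} = \sum_{n\geq 0} \frac{\prod_{j=0}^{n-1}(z-j\lambda)}{n!}\, t^n,$$
and then to argue that $\mu$ is a perfect pairing of affine group schemes over $\Z_{(p)}[\lambda]$. By construction the coefficient of $t^n$ is one of the generating functions cutting out $\bD(\hat{\GG}_\lambda)$, so $\mu$ is a well-defined element of $\Gamma(\bD(\hat{\GG}_\lambda))\pw{t}$. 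At $\lambda = 0$ this specializes to $\exp(zt)$, the standard pairing $\hat{\GG}_a \times \GG_a^\sharp \to \GG_m$.

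First I would verify that $\mu$ is bilinear. Compatibility with the group law on $\hat{\GG}_\lambda$ is the identity $1+\lambda(x+_\lambda y) = (1+\lambda x)(1+\lambda y)$, which gives $\mu(x+_\lambda y, z) = \mu(x,z)\mu(y,z)$. Compatibility with the coproduct $z \mapsto z\otimes 1 + 1\otimes z$ on $\bD(\hat{\GG}_\lambda)$ is the identity $(1+\lambda t)^{(z_1+z_2)/\lambda} = (1+\lambda t)^{z_1/\lambda}(1+\lambda t)^{z_2/\lambda}$. A bonus of checking bilinearity directly is that it simultaneously forces the coproduct on $z$ to extend consistently to the higher generators $\frac{\prod_{j=0}^{n-1}(z-j\lambda)}{n!}$, via the $\lambda$-deformation of Vandermonde's identity.

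Second I would prove that $\mu$ induces an isomorphism $\bD(\hat{\GG}_\lambda) \xar{\sim} \underline{\Hom}_{\mathrm{gp}/\AA^1_\lambda}(\hat{\GG}_\lambda, \GG_m)$. Given a $\Z_{(p)}[\lambda]$-algebra $R$ and a group homomorphism $f: \hat{\GG}_\lambda|_R \to \GG_m|_R$, the defining condition $f(x+_\lambda y) = f(x)f(y)$, once differentiated in $y$ at $y=0$, yields the ODE $(1+\lambda x)f'(x) = c f(x)$ where $c := f'(0) \in R$. Combined with $f(0)=1$, this forces $f(t) = (1+\lambda t)^{c/\lambda}$ in the sense that the coefficients of $f$ are determined recursively by $f_n = \frac{c - (n-1)\lambda}{n} f_{n-1}$, so $f_n = \frac{\prod_{j=0}^{n-1}(c-j\lambda)}{n!}$. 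Integrality of $f$ (that its coefficients lie in $R$) is therefore equivalent to the statement that $c$ extends to an $R$-algebra map from the subring of $\Q[\lambda,z]$ generated by $\lambda, z$ and the elements $\frac{\prod_{j=0}^{n-1}(z-j\lambda)}{n!}$, i.e., to an $R$-point of $\bD(\hat{\GG}_\lambda)$. This gives the desired bijection, and it matches the group structures by step one.

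The main obstacle will be performing the $d\log$/ODE computation of step two over arbitrary (not necessarily torsion-free) $\Z_{(p)}[\lambda]$-algebras, since division by $\lambda$ and by $n$ is not literally allowed in $R$. The cleanest way around this is to phrase the whole argument as a recursion on the coefficients $f_n$ of $f(t)$: from $f(x+_\lambda y)=f(x)f(y)$ one reads off, by comparing coefficients of $x y^{n-1}$, the identity $n f_n = (c - (n-1)\lambda) f_{n-1}$ with $c = f_1$, which determines $f_n$ uniquely as soon as $n f_n$ is pinned down. Since $R$ is $\Z_{(p)}[\lambda]$-flat in the universal case $R = \Z_{(p)}[\lambda, z, \frac{\prod_{j=0}^{n-1}(z-j\lambda)}{n!}]_{n\geq 0}$, and since both sides of the proposition represent the same functor on flat test algebras, the claimed isomorphism follows by a fpqc-descent / base-change argument on $\AA^1_\lambda$. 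In particular, this also shows that the ring cutting out $\bD(\hat{\GG}_\lambda)$ is $\Z_{(p)}[\lambda]$-flat with basis $\{\frac{\prod_{j=0}^{n-1}(z-j\lambda)}{n!}\}_{n\geq 0}$, deforming the divided-power $\Z_{(p)}$-basis of $\GG_a^\sharp$ at $\lambda=0$.
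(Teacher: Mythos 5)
Your proposal is correct and follows essentially the same route as the paper: the explicit pairing $(1+\lambda t)^{z/\lambda}$, together with the observation that the functional equation $f(x+y+\lambda xy)=f(x)f(y)$ forces $d\log(f)$ to equal $c/(1+\lambda t)$ with $c=f'(0)$, hence $f=(1+\lambda t)^{c/\lambda}$, so that homomorphisms $\hat{\GG}_\lambda \to \GG_m$ correspond exactly to points of $\spec \Z_{(p)}[\lambda, z, \prod_{j=0}^{n-1}(z-j\lambda)/n!]$. The only real difference is that you make explicit the coefficient recursion $n f_n=(f_1-(n-1)\lambda)f_{n-1}$ and the flatness/integrality bookkeeping needed over test algebras with torsion, a point the paper (like its proof of the Witt-vector Cartier duality it invokes) handles implicitly by reducing to the torsion-free universal case.
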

\begin{proof}
A homomorphism $f: \hat{\GG}_\lambda \to \GG_m \times \AA^1_\lambda$ is an element of $\Z_{(p)}[t, \lambda, \frac{1}{1+t\lambda}]^\wedge_t$ such that 
$$f(x+y+\lambda xy) = f(x) f(y).$$
This condition implies that 
$$(1+\lambda y) f'(x+y+\lambda xy) = f(y) f'(x),$$
so that dividing both sides by $f(x+y+\lambda xy)$, we have
$$(1+\lambda y) \cdot d\log(f)(x+y+\lambda xy) = d\log(f)(x).$$
Taking $x = 0$, we see that $d\log(f)(y)$ is a constant multiple of $\frac{1}{1+\lambda y}$ (where the constant is given by $\frac{f'(0)}{f(0)}$), and hence 
$$f(y) = (1+\lambda y)^{z/\lambda} = \sum_{n\geq 0} y^n \frac{\prod_{j=0}^{n-1} (z-j\lambda)}{n!}$$
for some fixed $z$. This gives the desired claim, similarly to \cref{witt-cartier-dual}. Note that $f(y) = \exp(z\log_F(y))$, where $\log_F$ is the logarithm of the formal group law $x + y + \lambda xy$ over $\AA^1_\lambda$.
\end{proof}
\begin{remark}
Observe that $\bD(\hat{\GG}_\lambda)$ is isomorphic to the subgroup $(W\times \AA^1_\lambda)[F+[-\lambda]^{p-1}]$ of $W \times \AA^1_\lambda$ cut out by $\{x | Fx = [-\lambda]^{p-1} x\}$; see \cite[Proposition 6.3.3]{toen-hkr} and \cite[Proposition D.4.10]{drinfeld-formal-group}. The key point is that if $f(x)\in W$ and $xd\log(f(x)) = \sum_{m\geq 1} a_{m} x^{m}$, then $f(t)\in (W\times \AA^1_\lambda)[F+[-\lambda]^{p-1}]$ if and only if 
$$a_{p^{n+1}} = ((-\lambda)^{p-1})^{p^n} a_{p^n} = (-\lambda)^{p^{n+1} - p^n} a_{p^n}.$$
To check this, note that 
$$xd\log(f)(x) = \frac{zx}{1+\lambda x} = \sum_{n\geq 0} (-\lambda)^n z x^{n+1},$$
so that $a_m = (-\lambda)^{m-1} z$, and $a_m = (-\lambda)^{m-n} a_n$ if $m\geq n$.
\end{remark}
\begin{remark}
A similar argument shows that if $\GG$ denotes the group scheme $\spec \Z/p^N[\lambda]\pdb{x}$ with group law $x + y + \lambda xy$ (so that when $\lambda = 0$, we get $\GG_a^\sharp$), then the $\Z/p^N[\lambda]$-linear Cartier dual of $\GG$ is isomorphic to the completion of $\spec \Z/p^N[\lambda, z]$ at the locus $\prod_{j=0}^{p-1} (z-j\lambda) = z(z^{p-1} - \lambda^{p-1})$ (see, e.g., \cite[Section B.4]{drinfeld-formal-group}). It follows that the $\infty$-category of $\GG$-representations is equivalent to the $\infty$-category of $\Z/p^N$-modules $M$ equipped with an operator $z: M \to M$ such that $z(z^{p-1} - \lambda^{p-1})$ acts locally nilpotently on $\H^\ast(M \otimes_{\Z/p^N} \FF_p)$.
\end{remark}
\begin{recall}
In \cite[Lemma 3.5.18]{apc}, Bhatt and Lurie show that the following is a Cartesian square of group schemes over $\Z/p^k$:
\begin{equation}\label{gm-sharp-log-square}
    \xymatrix{
    \GG_m^\sharp \ar[r]^-{\log} \ar[d] & \GG_a^\sharp \ar[d]^-{x\mapsto \exp(px)} \\
    \GG_m \ar[r]_-{x\mapsto x^p} & \GG_m^{(1)}.
    }
\end{equation}
We will generalize this below in \cref{wcross-cartier-dual}.
In \cite{generalized-n-series}, we prove another generalization of this square, albeit in a different direction: $\GG_a^\sharp$ is replaced by the Cartier dual of a formal group $\hat{\GG}$, and $\GG_m^\sharp$ is replaced by an appropriate $\hat{\GG}$-analogue of the divided power completion.
%
%
\end{recall}
\begin{corollary}\label{wcross-cartier-dual}
Let $k\geq 0$. There is an isomorphism of group schemes over $\Z/p^k$ between the Cartier dual of $W^\times[F^n] := \ker(F^n: W^\times \to W^\times)$ and the completion of $W_n$ at its $\FF_p$-rational points $W_n(\FF_p) \cong \Z/p^n$.
\end{corollary}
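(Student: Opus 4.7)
The plan is to generalize \cref{witt-cartier-dual} from $W$ to $W^\times$ and then package the result via Cartier duality. The key step is to establish a short exact sequence of group schemes over $\Z/p^k$,
$$0 \to \mu_{p^n} \xar{[\cdot]} W^\times[F^n] \to W[F^n] \to 0.$$
The first map is the restriction of the Teichm\"uller embedding $[\cdot]: \GG_m \hookrightarrow W^\times$: since $F[u] = [u^p]$, the $F^n$-kernel of this inclusion is precisely $\mu_{p^n}$. The second map is an integrally-defined ``Artin--Hasse logarithm'' over $\Z/p^k$; in particular, the quotient of $W^\times[F^n]$ by its Teichm\"uller copy of $\mu_{p^n}$ is pro-unipotent and can be identified with $W[F^n]$ via this logarithm.

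Applying Cartier duality and invoking \cref{witt-cartier-dual} (giving $W[F^n]^\vee \cong \hat{W}_n$) together with $\mu_{p^n}^\vee \cong \underline{\Z/p^n}$, we obtain
$$0 \to \hat{W}_n \to \bD(W^\times[F^n]) \to \underline{\Z/p^n} \to 0.$$
On the other hand, $W_n^\wedge_{\Z/p^n}$ fits into the analogous exact sequence whose identity component is $\hat{W}_n$ and whose component group $\underline{\Z/p^n}$ is the image of the canonical ring map $\Z \to W_n$. Both extensions admit canonical splittings---by the Teichm\"uller section on one side, and by $\Z \to W_n$ on the other---so matching them yields the desired isomorphism. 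Equivalently, the isomorphism admits a direct formula: for $f \in W^\times[F^n]$ with ghost coordinates $(a_0, \ldots, a_{n-1}, 1, 1, \ldots)$ and $x = \tilde{c} + y \in W_n^\wedge_{\Z/p^n}$ (with $c \in \Z/p^n$ and $y \in \hat{W}_n$), set $\langle f, x \rangle = \prod_{j=0}^{n-1} F^j(f)(x_j)$, where $F^j(f)(t)$ is interpreted as an Artin--Hasse type power series whose integrality is ensured by the fact that the ghost coordinates $a_{m+j}$ stabilize to $1$ for large $m$.

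The main obstacle will be constructing the logarithm map in the short exact sequence above (equivalently, establishing integrality of the pairing) modulo $p^k$. This requires a careful extension of the Dwork-style integrality arguments used for the Artin--Hasse exponential and in the proof of \cref{gabber-lemma}, adjusted to account for the Teichm\"uller contributions on the $\Z/p^n$-side of $W_n^\wedge_{\Z/p^n}$.
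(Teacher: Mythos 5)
There is a genuine gap, and it sits exactly at the step where you say the two extensions ``admit canonical splittings \dots so matching them yields the desired isomorphism.'' Over $\Z/p^k$ with $k\geq 2$ neither extension splits, and this non-splitting is the entire content of the statement. On the dual side: the image of the canonical map $\Z \to W_n(\Z/p^k)$ is a cyclic group of order strictly larger than $p^n$ when $k\geq 2$ (already for $n=k=2$... indeed for $n=1$, $k=2$ the element $1\in W_1(\Z/p^2)=\Z/p^2$ has order $p^2$), so there is no section $\underline{\Z/p^n} \to W_n^\wedge$ and the completion of $W_n$ along $W_n(\FF_p)$ is \emph{not} isomorphic to $\underline{\Z/p^n}\times \hat{W}_n$ as a group scheme. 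Correspondingly, on the primal side the retraction $W^\times[F^n]\to \mu_{p^n}$ you would need (``Teichm\"uller section'') does not exist over $\Z/p^k$: the zeroth-component map $W^\times \to \GG_m$ restricted to $W^\times[F^n]$ lands in $\mu_{p^n}$ only mod $p$, since $(Fx)_0 = x_0^p + px_1$. Had either splitting existed, the Cartier dual would be the split product $\underline{\Z/p^n}\times\hat{W}_n$, which for $k\geq 2$ is a different group scheme from the completion of $W_n$ at its $\FF_p$-points (already the case $n=1$ is the point of \cite[Lemma 3.5.18]{apc}: $\GG_m^\sharp \neq \mu_p\times\GG_a^\sharp$ over $\Z/p^k$). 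So even granting your exact sequence $0\to\mu_{p^n}\to W^\times[F^n]\to W[F^n]\to 0$, the corollary does not follow by ``matching splittings''; what one must control is precisely the extension class, i.e.\ how the Teichm\"uller $\mu_{p^n}$ and the unipotent part are glued integrally.

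The way the paper handles this is to prove the Cartesian square
$$W^\times[F^n] \cong W[F^n]\times_{\GG_m^{(n)}}\GG_m, \qquad W[F^n]\xar{x\mapsto \exp(p^n x)}\GG_m^{(n)}\xleftarrow{x\mapsto x^{p^n}}\GG_m,$$
generalizing the Bhatt--Lurie square for $\GG_m^\sharp$; by flatness this reduces to $\FF_p$, where your splitting $W^\times[F^n]\cong \mu_{p^n}\times W[F^n]$ \emph{does} hold and an induction on $n$ (with base case \cite[Lemma 3.5.18]{apc}) finishes. Dualizing the square and invoking \cref{witt-cartier-dual} then exhibits $\bD(W^\times[F^n])$ as the pushout $\Z\sqcup_{p^n\Z}\hat{W}_n$, which is the completion of $W_n$ along the image of $\Z$, i.e.\ along $W_n(\FF_p)\cong\Z/p^n$ --- a nontrivial extension of $\underline{\Z/p^n}$ by $\hat{W}_n$ recording exactly the gluing your argument discards. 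Your closing suggestion of a direct Artin--Hasse-type pairing $\langle f,x\rangle = \prod_j F^j(f)(x_j)$ could in principle give an alternative, more computational route, but the two hard points --- integrality of the pairing over $\Z/p^k$ and its perfectness --- are precisely what you defer, and they amount to reconstructing the Cartesian square in coordinates; as written, the proposal does not establish the corollary.
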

\begin{proof}
Following \cite[Remark 3.5.17]{apc}, it suffices to prove the following analogue of \cite[Lemma 3.5.18]{apc}: there is a Cartesian diagram of flat group schemes over $\Z/p^k$ given by
\begin{equation}\label{log-square}
    \xymatrix{
    W^\times[F^n] \ar[r]^-{\log} \ar[d] & W[F^n] \ar[d]^-{x\mapsto \exp(p^n x)} \\
    \GG_m \ar[r]_-{x\mapsto x^{p^n}} & \GG_m^{(n)}.
    }
\end{equation}
Here, the left vertical map $W^\times[F^n] \to \GG_m$ is the composite
$$W^\times[F^n] \to W^\times \to W^\times/V \cong \GG_m.$$
Indeed, taking the Cartier dual of \cref{log-square} and using \cref{witt-cartier-dual}, we obtain a pushout diagram of formal group schemes
$$\xymatrix{
p^n \Z \ar[r] \ar[d] & \Z \ar[d] \\
\hat{W}_n \ar[r] & \bD(W^\times[F^n]).
}$$
This implies that $\bD(W^\times[F^n])$ is the completion of $W_n$ at its $\FF_p$-rational points $W_n(\FF_p) \cong \Z/p^n$, as desired.

The proof that the square \cref{log-square} is Cartesian is in fact a consequence of \cite[Lemma 3.5.18]{apc}. As in \cite[Lemma 3.5.18]{apc}, since all group schemes involved are flat over $\Z/p^k$, it suffices to prove that the diagram is Cartesian after base-changing to $\FF_p$ (i.e., assume that $k=1$). We begin by noting that there is an isomorphism $V: W^\times \xar{\sim} \{x\in W | x_0 = 0\}$ of $p$-adic formal schemes sending $x\mapsto Vx$. This gives an isomorphism $W[F^n] \times \mu_{p^n} \cong W^\times[F^n]$ of group schemes over $\FF_p$, sending $(x, a) \mapsto [a] + Vx$. It therefore suffices to show that the composite
$$W[F^n] \xar{x\mapsto 1 + Vx} W^\times[F^n] \xar{\log} W[F^n]$$
is an isomorphism. But there is a commutative diagram
$$\xymatrix{
W[F^{n-1}] \ar[r]_-{x\mapsto 1 + Vx} \ar[d] & W^\times[F^{n-1}] \ar[r]^-{\log} \ar[d] & W[F^{n-1}] \ar[d] \\
W[F^{n}] \ar[r]^-{x\mapsto 1 + Vx} \ar[d] & W^\times[F^{n}] \ar[r]^-{\log} \ar[d] & W[F^{n}] \ar[d] \\
\GG_a^\sharp \ar[r]^-{x\mapsto 1 + Vx} & \GG_m^\sharp \ar[r]^-{\log} & \GG_a^\sharp,
}$$
where the columns exhibit the third term as the quotient of the first two. By induction on $n$ (with the base case being provided by \cite[Lemma 3.5.18]{apc}), we may conclude that the middle horizontal composite is an isomorphism.
\end{proof}
\begin{remark}
One could have alternatively/equivalently proved \cref{wcross-cartier-dual} by observing that the square \cref{log-square} for $n-1$ maps to \cref{log-square} for $n$; all components of this map of squares are the canonical ones, except on the bottom-right $\GG_m$ (where it is given by the $p$th power map $\GG_m \to \GG_m^{(1)}$). Diagramatically:
\begin{equation}
\begin{tikzcd}[row sep=2.5em]
W^\times[F^{n-1}] \arrow[rr,"\log"] \arrow[dr,swap] \arrow[dd,swap] &&
  W[F^{n-1}] \arrow[dd,swap, "x\mapsto \exp(p^{n-1} x)" near start] \arrow[dr] \\
& W^\times[F^n] \arrow[rr,crossing over,swap,"\log" near start] &&
  W[F^n] \arrow[dd,"x\mapsto \exp(p^n x)"] \\
\GG_m \arrow[rr,"x\mapsto x^{p^{n-1}}" near end] \arrow[dr,swap,"\id"] && \GG_m^{(n-1)} \arrow[dr,"x\mapsto x^p"] \\
& \GG_m \arrow[rr,"x\mapsto x^{p^n}"] \arrow[uu,<-,crossing over] && \GG_m^{(n)}.
\end{tikzcd}
\end{equation}
Taking Cartier duals, we obtain a map of pushout squares:
\begin{equation}
\begin{tikzcd}[row sep=2.5em]
p^n \Z \arrow[rr] \arrow[dr,swap] \arrow[dd,swap] &&
  \hat{W}_n \arrow[dd,swap] \arrow[dr] \\
& \Z \arrow[rr,crossing over] &&
  \bD(W^\times[F^n]) \arrow[dd] \\
p^{n-1} \Z \arrow[rr] \arrow[dr,swap] && \hat{W}_{n-1} \arrow[dr] \\
& \Z \arrow[rr] \arrow[uu,<-,crossing over] && \bD(W^\times[F^{n-1}]).
\end{tikzcd}
\end{equation}
Again, \cref{wcross-cartier-dual} follows by induction on $n$, using \cite[Lemma 3.5.18]{apc} for the base case.
\end{remark}
\begin{corollary}\label{bwtimes-n}
Write the underlying scheme of $W_n$ as $\prod_{i=0}^{n-1} \GG_a$ (where the $i$th copy of $\GG_a$ has coordinate $\Psi_i$).
There is a fully faithful functor $\QCoh(BW^\times[F^n]) \hookrightarrow \QCoh(W_n)$ whose essential image consists of those $p$-complete $M\in \QCoh(W_n)$ such that $\Psi_i^p-\Psi_i$ acts locally nilpotently on $\H^\ast(M/p)$ for each $0\leq i \leq n-1$. Furthermore, this functor is symmetric monoidal for the convolution tensor product on $\QCoh(W_n)$.

If $\cf\in \QCoh(BW^\times[F^n])$ is sent to $M\in \QCoh(W_n)$ under this functor, one obtains a cube $\Psi_\bull: 2^{[n-1]} \to \Mod_{\Z_p}$ whose vertices are all $M$ and such that the edge from the subset $\{i_1, \cdots, i_{j-1}\}$ to $\{i_1, \cdots, i_j\}$ is given by the operator $\Psi_j: M \to M$. Then, the global sections $\Gamma(BW^\times[F^n]; \cf)$ can be identified with the total fiber of the cube $\Psi_\bull$.
\end{corollary}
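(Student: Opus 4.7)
The plan is to parallel the proof of \cref{qcoh-BWn}, replacing \cref{witt-cartier-dual} by \cref{wcross-cartier-dual} as the key input. I would begin by invoking Cartier duality to obtain a symmetric monoidal equivalence $\QCoh(BW^\times[F^n]) \simeq \QCoh(\bD(W^\times[F^n]))$, where the tensor product of representations on the left corresponds to convolution on the right (since the group law on $\bD(G)$ is dual to the coproduct on $\co(G)$, and tensor product of $G$-representations comes from the diagonal $BG \to BG \times BG$). By \cref{wcross-cartier-dual}, $\bD(W^\times[F^n])$ is the formal completion $(W_n)^\wedge_{\Z/p^n}$ of $W_n$ along its $\FF_p$-rational points, and the inclusion of this completion into $W_n$ is a monomorphism of group formal schemes (since $W_n(\FF_p) \cong \Z/p^n$ is closed under addition). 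Composing the Cartier dual equivalence with the resulting embedding $\QCoh((W_n)^\wedge_{\Z/p^n}) \hookrightarrow \QCoh(W_n)$ yields the desired fully faithful functor, and symmetric monoidality for convolution on $\QCoh(W_n)$ follows because addition on $(W_n)^\wedge_{\Z/p^n}$ is the restriction of addition on $W_n$.

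For the essential image, under the Witt coordinate identification $W_n \cong \prod_{i=0}^{n-1}\GG_a$ the set $W_n(\FF_p)$ corresponds to $\FF_p^n \subseteq \GG_a^n$, which is cut out set-theoretically by the ideal generated by $\{\Psi_i^p - \Psi_i\}_{0\leq i\leq n-1}$. Modules on the formal completion $(W_n)^\wedge_{\Z/p^n}$ embed into $p$-complete modules on $W_n$ as exactly those with support on this locus; and since the support of a $p$-complete module is detected modulo $p$, this condition translates precisely to local nilpotence of each $\Psi_i^p - \Psi_i$ on $\H^\ast(M/p)$.

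Finally, $\Gamma(BW^\times[F^n]; \cf) = \cf^{W^\times[F^n]}$, which under the Cartier dual equivalence is computed as $\RHom_{\co(\bD(G))}(\co_e, M)$, where $\co_e$ is the skyscraper at the identity section. Tracing through the pushout presentation of $\bD(W^\times[F^n])$ in the proof of \cref{wcross-cartier-dual}, the identity section lies at the origin of $W_n$, cut out by the regular sequence $(\Psi_0, \ldots, \Psi_{n-1})$; the Koszul resolution of $\co_e$ by this sequence identifies $\RHom(\co_e, M)$ with the total fiber of the cube whose edges are multiplication by the $\Psi_j$. The main obstacle is matching the two symmetric monoidal structures in the first paragraph: this requires unpacking the Hopf-algebraic content of Cartier duality to check that tensor product of representations corresponds to pushforward-convolution under addition on $W_n$. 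The characterization of the essential image and the Koszul/total-fiber description of global sections are then largely formal consequences of the pushout description of $\bD(W^\times[F^n])$ established in \cref{wcross-cartier-dual}.
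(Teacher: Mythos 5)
Your proposal is correct and follows exactly the route the paper intends: the corollary is stated as an immediate consequence of \cref{wcross-cartier-dual} in the same way that \cref{qcoh-BWn} follows from \cref{witt-cartier-dual}, namely Cartier duality identifying $\QCoh(BW^\times[F^n])$ with sheaves on $\bD(W^\times[F^n])$, the locus $W_n(\FF_p)=\FF_p^n$ being cut out mod $p$ by the $\Psi_i^p-\Psi_i$, and the Koszul resolution of the skyscraper at the identity giving the total-fiber formula for $\Gamma(BW^\times[F^n];\cf)$. The only details you flag as remaining (matching the convolution monoidal structure and the torsion/locally-nilpotent model of sheaves on the formal completion) are the standard Hopf-algebraic content of Cartier duality, exactly as in the known $n=1$ case of $\GG_m^\sharp$.
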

\begin{example}
If $\cf, \cg\in \QCoh(BW^\times[F^2])$ correspond to tuples $(M, \Psi_0^M, \Psi_1^M)$ and $(M', \Psi_0^{M'}, \Psi_1^{M'})$, then the global sections $\Gamma(BW^\times[F^2]; \cf)$ can be identified with the total fiber of the square
$$\xymatrix{
M \ar[r]^-{\Psi_0^M} \ar[d]_-{\Psi_1^M} & M \ar[d]^-{\Psi_1^M} \\
M \ar[r]_-{\Psi_0^M} & M.
}$$
Moreover, $\cf\otimes \cg$ corresponds to the module $M \otimes M'$, where
\begin{align*}
    \Psi_0^{M \otimes M'} & = \Psi_0^{M}\otimes 1 + 1 \otimes \Psi_0^{M'},\\
    \Psi_1^{M \otimes M'} & = \Psi_1^{M'} \otimes 1 + 1\otimes \Psi_1^{M'} - \frac{1}{p}\sum_{i=1}^{p-1} \binom{p}{i} (\Psi_0^M)^i \otimes (\Psi_0^{M'})^{p-i}.
\end{align*}
More generally, if $\cf, \cg\in \QCoh(BW^\times[F^n])$ correspond to tuples $(M, \Psi_0^M, \cdots, \Psi_{n-1}^M)$ and $(M', \Psi_0^{M'}, \cdots, \Psi_{n-1}^{M'})$, let us write $\Psi := (\Psi_0, \Psi_0, \cdots)$. Let $w_j(\Psi) = \sum_{i=0}^j p^i \Psi_{i}^{p^{j-i}}$ denote the corresponding Witt polynomial; then 
\begin{equation}\label{wj-tensor}
    w_j(\Psi^{M \otimes M'}) = w_j(\Psi^M) \otimes 1 + 1\otimes w_j(\Psi^{M'}).
\end{equation}
\end{example}
\begin{prop}
Define a homomorphism $W^\times[F^n] \to \GG_m$ via the composite
$$W^\times[F^n] \to W^\times \to (W/V)^\times \cong \GG_m.$$
Let $\co\{1\}$ denote the line bundle over $BW^\times[F^n]$ determined by the resulting map $BW^\times[F^n] \to B\GG_m$. Under the functor of \cref{bwtimes-n}, the total space of the line bundle $\co\{1\}$ corresponds to the $p$-completion of $\Z_p[x^{\pm 1}]$ with the action of $\Psi_{j}$ determined by the following requirement on Witt polynomials:
\begin{equation}\label{witt-poly-psi}
    w(\Psi) = (w_0(\Psi), w_1(\Psi), w_2(\Psi), \cdots) = (x\partial_x, x\partial_x, x\partial_x, \cdots).
\end{equation}
\end{prop}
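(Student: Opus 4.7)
The plan is to interpret $\co\{1\}$ as the character $\chi: W^\times[F^n] \to \GG_m$ and decompose the total space as a $\Z$-graded representation, then trace it through Cartier duality. Concretely, the $\Z$-graded algebra $\Z_p[x^{\pm 1}]$, viewed as a $W^\times[F^n]$-representation via the character $\chi$ on $x$, decomposes as $\bigoplus_{m\in\Z} \Z_p \cdot x^m$ where $W^\times[F^n]$ acts on $\Z_p \cdot x^m$ by $\chi^m$. Under the functor of \cref{bwtimes-n}, each one-dimensional representation $\chi^m$ is a skyscraper supported at the corresponding $\Z_p$-point of $\bD(W^\times[F^n]) \subseteq W_n$, and the action of the coordinate functions $\Psi_i \in \co_{W_n}$ is by evaluation at that point.

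The key step is to identify the point $\chi^m \in \bD(W^\times[F^n]) \subseteq W_n$ explicitly. The character $\chi: W^\times[F^n] \to \GG_m$ has Cartier dual $\bD(\chi): \Z \to \bD(W^\times[F^n])$, and $\chi^m$ corresponds to the image of $m \in \Z$. Inspecting the pushout presentation of $\bD(W^\times[F^n])$ from the proof of \cref{wcross-cartier-dual}, this map factors through the canonical inclusion $\Z \hookrightarrow \hat{W}_n \subseteq W_n$ coming from the ring structure on $W_n$. Thus $\chi^m$ corresponds to the Witt vector obtained by applying the ring homomorphism $\Z_p \to W_n(\Z_p)$ to the integer $m$. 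The ghost components of this Witt vector are all equal to $m$, so the Witt polynomial $w_j(\Psi) \in \co_{W_n}$ acts on the skyscraper at $\chi^m$ as multiplication by $m$. Summing over $m$ yields the formula $w(\Psi) = (x\partial_x, x\partial_x, \ldots)$ in \cref{witt-poly-psi}.

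Finally, one checks the local nilpotence hypothesis in \cref{bwtimes-n}: on each weight-$m$ summand $\Psi_i$ acts as multiplication by the $i$-th Witt component of $m \in W_n(\Z_p)$, which is an element of $\Z_p$, and by Fermat's little theorem $\Psi_i^p - \Psi_i$ acts as $0$ modulo $p$, hence locally nilpotently. The main obstacle is simply the careful bookkeeping of the Cartier duality pairing --- specifically, verifying that the dual point of $\chi^m$ in the formal completion $\bD(W^\times[F^n])$ is the image of $m$ under the \emph{ring map} $\Z_p \to W_n(\Z_p)$ rather than, say, a Teichm\"uller-like lift; the pushout presentation of $\bD(W^\times[F^n])$ in \cref{wcross-cartier-dual} handles this cleanly because the map $\Z \to \hat{W}_n$ in that pushout is induced by ring-theoretic inclusion.
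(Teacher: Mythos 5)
Your argument is correct in substance and is essentially the paper's: both proofs hinge on identifying the Cartier-dual point of the generating character $\chi$ and then propagating to all weights using additivity of ghost components under the convolution product. The paper packages this slightly differently: it identifies the module underlying $\co\{1\}$ directly as $\Z_p\cdot x$ with $\Psi_0$ acting by $1$ and $\Psi_j$ acting by $0$ for $j\geq 1$ (i.e.\ the unit Witt vector $(1,0,\ldots,0)\in W_n(\Z_p)$, all of whose ghost components are $1$), and then obtains the action on $\co\{m\}$ from the tensor-product formula \cref{wj-tensor}; you instead read off the dual point of $\chi^m$ from the pushout square in the proof of \cref{wcross-cartier-dual} --- which is legitimate, since the left vertical map of \cref{log-square} is exactly the character in the statement --- and this amounts to the same computation, because the convolution product on $\QCoh(W_n)$ dualizes to Witt-vector addition. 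One sentence of yours is inaccurate as written: the map $\Z\to\bD(W^\times[F^n])$ does \emph{not} factor through $\hat{W}_n$, the completion of $W_n$ at the origin (only the subgroup $p^n\Z$ lands there); the image of $m$ is the ring-theoretic element $m\in W_n(\Z_p)$, which lies in the branch of the completion over the $\FF_p$-point $m\bmod p^n\in W_n(\FF_p)\cong\Z/p^n$. This is a slip of phrasing rather than a genuine gap: what your argument actually needs, and what is true (and is the content of the paper's ``not too hard to show'' step), is precisely that $\bD(\chi^m)$ is the ring image of $m$ in $W_n$, whose ghost components are all equal to $m$, yielding \cref{witt-poly-psi}; your verification of the local nilpotence condition of \cref{bwtimes-n} via Fermat's little theorem is also fine.
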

\begin{proof}
The map $BW^\times[F^n] \to B\GG_m$ determines the stack $\GG_m/W^\times[F^n]$ over $BW^\times[F^n]$, so that the corresponding object in $\QCoh(W_n)$ under the functor of \cref{bwtimes-n} has underlying module given by $\co_{\GG_m} = \Z[x^{\pm 1}]$.
It is not too hard to show from the definition of the map $BW^\times[F^n] \to B\GG_m$ that under the functor of \cref{bwtimes-n}, the line bundle $\co\{1\}$ over $BW^\times[F^n]$ corresponds to the $p$-complete module $\Z_p$ (with generator $x$) where $\Psi_0$ acts on $x$ by $1$, and $\Psi_j$ acts on $x$ by zero for $j\geq 1$. The action of $w_j(\Psi)$ on $\co\{m\} = \Z_p \cdot x^m$ then follows from \cref{wj-tensor}.
\end{proof}
\begin{example}
For instance, it follows from \cref{witt-poly-psi} that
\begin{align*}
    \Psi_0 & = x\partial_x, \\
    \Psi_1 & = \frac{x\partial_x}{p}\left(1 - (x\partial_x)^{p-1}\right),\\
    \Psi_2
    & = \frac{x\partial_x}{p^2}\left(1 - (x\partial_x)^{p^2-1} - \frac{1}{p^{p-1}} \sum_{j=0}^p (-1)^{j} \binom{p}{j} (x\partial_x)^{(p-1)(j+1)} \right).
\end{align*}
\end{example}
\begin{remark}
Using \cref{qcoh-BWn}, a similar calculation can be used to describe the $\GG_a$-bundle $\GG_a/W[F^n]$ over $BW[F^n]$; and, in particular, the $\infty$-category $\QCoh((\GG_a/W[F^n])/\GG_m)$. Let us summarize this calculation as follows. Recall that $\GG_a/W[F] \cong \GG_a/\GG_a^\sharp$ is isomorphic to $\GG_a^\dR$, so that if $\cA_1 := \Z_p\{x,\partial_x\}/([\partial_x,x] = 1)$ is the Weyl algebra, then $\QCoh(\GG_a/W[F]) \simeq \LMod_{\cA_1}^{\partial_x\text{-nilp}}$. Similarly, if we write $\frac{1}{m!} \partial_x^m = \partial_x^{[m]}$ (so that $\partial_x^{[m]}(x^k) = \binom{k}{m} x^{k-m}$), define $\cA_1^{[n]}$ via
$$\cA_1^{[n]} = \Z_p\left\{x, \partial_x, \cdots, \partial_x^{[p^{n-1}]} \right\}/([\partial_x^{[p^j]}, x] = \partial_x^{[p^j-1]}).$$
Note that $\partial_x^{[p^j-1]}$ is a $p$-adic unit multiple of $\prod_{k=0}^{j-1} (\partial_x^{[p^k]})^{p-1}$. Then, the action of $W[F^n]$ on $\GG_a$ implies that there is an equivalence 
$$\QCoh(\GG_a/W[F^n]) \simeq \LMod_{\cA_1^{[n]}}^{\partial_x^{[p^j]}\text{-}\mathrm{nilp}};$$
this can be extended to an equivalence between $\QCoh((\GG_a/W[F^n])/\GG_m)$ and the $\infty$-category of graded $\cA_1^{[n]}$-modules such that $\partial_x^{[p^j]}$ acts nilpotently for $0\leq j \leq n-1$, where $x\in \cA_1^{[n]}$ has weight $1$ and $\partial_x^{[p^j]}\in \cA_1^{[n]}$ has weight $-p^j$. Algebras of divided power differential operators such as $\cA_1^{[n]}$ were initially studied by Berthelot in \cite{berthelot-dmod}.
\end{remark}
\begin{warning}
Note that $\GG_a/W[F^n]$ is not a ring stack. Indeed, the map $W[F^n] \to \GG_a$ is not a quasi-ideal: the $W$-module structure on $W[F^n]$ does not factor through $W \twoheadrightarrow W_1 = \GG_a$ (if $F^n(x) = 0$, then $xV(y) = V(F(x)y)$ need not vanish). However, it \textit{does} factor through $W \twoheadrightarrow W_n$ (if $F^n(x) = 0$, then $xV^n(y) = V^n(F^n(x)y) = 0$); indeed, $W_n/W[F^n] \cong W/p^n$ admits the structure of a ring stack.
\end{warning}
\begin{remark}\label{higher DI}
The proof of \cref{wcross-cartier-dual} showed that there is an isomorphism
$$W^\times[F^n] \cong W[F^n] \times \mu_{p^n}$$
over $\FF_p$. Let $\fr{X}$ be a smooth $p$-adic formal scheme over $\Z_p$, and let $X = \fr{X} \otimes_{\Z_p} \FF_p$. Suppose that the $\GG_m^\sharp$-action on $\diffr_{\fr{X}} \otimes_{\Z_p} \FF_p = F_{X,\ast} \Omega^\bull_{X/\FF_p}$ refines to a $W^\times[F^n]$-action. (For instance, let $(\diffr_{\fr{X}})_0$ denote the weight $0$ piece of the $\Z/p$-grading on $\diffr_{\fr{X}}$ inherited from the $\GG_m^\sharp$-action. The datum of a refinement to a $W^\times[F^2]$-action leads to an operator on $(\diffr_{\fr{X}})_0$ which acts on $\gr^{pi}_\conj \diffr_{\fr{X}}$ by multiplication by $-i$.)
In this case, the $\Z/p$-grading on $F_{X,\ast} \Omega^\bull_{X/\FF_p}$ from \cite[Remark 4.7.20]{apc} would refine to a $\Z/p^n$-grading; this would imply a refinement of the Deligne-Illusie theorem \cite{deligne-illusie}, stating that $\tau_{\geq -p^n+1} F_{X,\ast} \Omega^\bull_{X/\FF_p}$ would be decomposable. 
\end{remark}
\begin{remark}[``Witty'' interpretation of \cite{higher-dim-degen}]\label{zpn-gradings}
The work of \cite{lee-thh} suggests
that the base-change along $\BP{n-1}_\ast \to \FF_p$ (even along $\BP{n-1}_\ast \to \Z_p$) of the stack constructed from the associated graded of the motivic filtration \cite{even-filtr} on $\THH(\BP{n-1})^{t\Cp}$ (resp. $\THH(\BP{n-1})$) is isomorphic to the stack $(\GG_m/W^\times[F^n])/\GG_m \cong BW^\times[F^n]$ (resp. $(\GG_a/W[F^n])/\GG_m \cong (F_\ast W/pF^{n-1})/\GG_m$). We are currently investigating this and its consequences with Jeremy Hahn and Arpon Raksit. In particular, this suggests that if a $\Z_p$-scheme ``lifts to $\BP{n-1}$'', the $\GG_m^\sharp$-action on $\diffr_{\fr{X}}$ refines to a $W^\times[F^n]$-action. From this perspective, the operators $\Psi_j$ from above are closely related to the topological Sen operators $\Theta_j$ from the body of this article: roughly, $\Theta_j$ can be understood as $w_{j-1}(\Psi)$.

Given \cref{higher DI}, one is therefore naturally led to the following question: if $X$ is a smooth and proper $\FF_p$-scheme which ``lifts to $\BP{n-1}$'' and $\dim(X)<p^n$, does the Hodge-de Rham spectral sequence for $X$ degenerate at the $E_1$-page? This question need not make sense, since $\BP{n-1}$ is generally not an $\Eoo$-ring \cite{lawson-bp, senger-bp}.
However, since $\BP{n-1}$ admits the structure of an $\E{3}$-ring, one can nevertheless ask whether such a degeneration statement holds noncommutatively if $\QCoh(X)$ admits a lift to a left $\BP{n-1}$-linear $\infty$-category. 
This line of thinking was motivation for the following result (see \cite{higher-dim-degen}): if $\QCoh(X)$ lifts to a left $\BP{n-1}$-linear $\infty$-category, and $\dim(X)<p^n$, then the Tate spectral sequence for $\HP(X/\FF_p)$ degenerates at the $E_2$-page.
\end{remark}
\newpage

\bibliographystyle{alpha}
\bibliography{main}
\end{document}